\documentclass[11pt,preprint,english]{amsart}%{article}%{elsarticle} %11pt

% Hyphen to make math expression like $1$-generic break across lines correctly: \nobreakdash-\hspace{0pt}

\usepackage[T1]{fontenc}
\usepackage[utf8]{inputenc} %\usepackage[latin9]{inputenc}
\usepackage{amsmath}
\usepackage{amsthm}
\usepackage{amssymb} %a4wide
\usepackage{mathrsfs}
\usepackage{xcolor}
\usepackage{mathtools}
\usepackage{enumitem}
\usepackage[bookmarks]{hyperref}
\usepackage{stmaryrd}
\usepackage{comment}
\usepackage{xcolor}

\theoremstyle{plain}
\newtheorem{thm}{\protect\theoremname}[section]
  \theoremstyle{plain}
  \newtheorem{fact}[thm]{\protect\factname}
  \theoremstyle{definition}
  \newtheorem{defn}[thm]{\protect\definitionname}
  \theoremstyle{plain}
  \newtheorem{lem}[thm]{\protect\lemmaname}
  \theoremstyle{plain}
  \newtheorem{prop}[thm]{\protect\propositionname}
  \theoremstyle{plain}
  
  \theoremstyle{plain}
  \newtheorem{ex}[thm]{\protect\exname}
  \theoremstyle{plain}
  \newtheorem{cor}[thm]{\protect\corname}
  \theoremstyle{definition}
  \newtheorem{nota}[thm]{\protect\notaname}
  \newtheorem{conv}[thm]{\protect\convname}
  \theoremstyle{plain}
  \newtheorem{quest}[thm]{\protect\questname}
  \theoremstyle{remark}
  \newtheorem{rem}[thm]{\protect\remname}

\usepackage{babel}
  \providecommand{\definitionname}{Definition}
  \providecommand{\factname}{Fact}
  \providecommand{\lemmaname}{Lemma}
  \providecommand{\propositionname}{Proposition}
\providecommand{\theoremname}{Theorem}
\providecommand{\exname}{Example}
\providecommand{\examplename}{Example}
\providecommand{\corname}{Corollary}
\providecommand{\notaname}{Notation}
\providecommand{\convname}{Convention}
\providecommand{\questname}{Question}
\providecommand{\remname}{Remark}

%% `Elsevier LaTeX' style
%\bibliographystyle{elsarticle-harv}
%%%%%%%%%%%%%%%%%%%%%%%

\newcommand{\frk}{\mathfrak}
\newcommand{\Lc}{\mathcal{L}}
\newcommand{\tp}{\mathrm{tp}}
\newcommand{\dc}{\#^{\mathrm{dc}}}
\newcommand{\cspan}{\overline{\mathrm{span}}}
\newcommand{\e}{\varepsilon}

\newcommand{\rectangle}{\mathrlap{\sqsubset}\sqsupset}

\newcommand{\brackconv}{= 0}

\newcommand{\dinf}{d}%d_{\inf}

\providecommand{\dotdiv}{% Don't redefine it if available
  \mathbin{% We want a binary operation
    \vphantom{+}% The same height as a plus or minus
    \mathrm{% Change size in sub/superscripts
      \mathsurround=0pt % To be on the safe side
      \ooalign{% Superimpose the two symbols
        \noalign{\kern-.35ex}% but the dot is raised a bit
        \hidewidth$\smash{\cdot}$\hidewidth\cr % Dot
        \noalign{\kern.35ex}% Backup for vertical alignment
        $-$\cr % Minus
      }%
    }%
  }%
}

\newcommand{\vertiii}[1]{{\left\vert\kern-0.25ex\left\vert\kern-0.25ex\left\vert #1 
    \right\vert\kern-0.25ex\right\vert\kern-0.25ex\right\vert}}

\newcommand{\BLThm}{characterization}

\DeclareMathOperator{\cl}{cl}
\DeclareMathOperator{\tint}{int}

\makeatletter
\DeclareRobustCommand{\cset}{\@ifstar\star@cset\normal@cset}
\newcommand{\star@cset}[1]{\left\llbracket#1\right\rrbracket}
\newcommand{\normal@cset}[2][]{\mathopen{#1\llbracket}#2\mathclose{#1\rrbracket}}
\makeatother

\makeatletter
\DeclareRobustCommand{\mset}{\@ifstar\star@mset\normal@mset}
\newcommand{\star@mset}[1]{\left\{#1\right\}}
\newcommand{\normal@mset}[2][]{\mathopen{#1\{}#2\mathclose{#1\}}}
\makeatother

\makeatletter
\newcommand{\@imaxbig}[2]{\max\left\{#1,\allowbreak#2\right\}}
\newcommand{\@imaxsmall}[2]{\max\{#1,\allowbreak#2\}}
\newcommand{\@iminbig}[2]{\min\left\{#1,\allowbreak#2\right\}}
\newcommand{\@iminsmall}[2]{\min\{#1,\allowbreak#2\}}

\newcommand{\imax}{\@ifstar\@imaxsmall\@imaxbig}%{#1 \uparrow #2}
\newcommand{\imin}{\@ifstar\@iminsmall\@iminbig}%{#1 \downarrow #2}
\makeatother

\def\insep{inseparably}
\def\Insep{Inseparably}

\def\anesscont{an inherently non-discrete}%{an essentially continuous}
\def\esscont{inherently non-discrete}%{essentially continuous}
%{Essentially continuous}
%{Essentially Continuous}

\def\editcom#1{}%{{\footnotesize \color{red} #1\ (sponge)}}

\begin{document}

%\begin{frontmatter}

\title{Strongly Minimal Sets and Categoricity in Continuous Logic}

%% Group authors per affiliation:
\author{James Hanson}
\email{jehanson2@wisc.edu}
\address{Department of Mathematics, University of Wisconsin, Madison, WI 53706-1388, USA}

\date{\today}

\subjclass{Primary 03C66, 03C35, 03C45, Secondary 46B04}%{03C66}
\keywords{continuous logic, categoricity, strongly minimal sets}
\maketitle

\begin{abstract}
  The classical Baldwin-Lachlan characterization of uncountably categorical theories is known to fail in continuous logic in that not every \insep\ categorical theory has a strongly minimal set. Here we investigate these issues by developing the theory of strongly minimal sets in continuous logic and by examining \insep\ categorical expansions of Banach space.

  To this end we introduce and characterize `dictionaric theories,' theories in which definable sets are prevalent enough that many constructions familiar in discrete logic can be carried out. We also introduce, in the context of Banach theories, the notion of an `indiscernible subspace,' which we use to improve a result of Shelah and Usvyatsov \cite{SHELAH2019106738}. Both of these notions are applicable to continuous logic outside of the context of \insep\ categorical theories.

  Finally, we construct or present a slew of counterexamples, including an $\omega$-stable theory with no Vaughtian pairs which fails to be \insep\ categorical and an \insep\ categorical theory with strongly minimal sets in its home sort only over models of sufficiently high dimension. 

  % ; and a totally categorical theory in which for any strongly minimal set $D(x,\bar{a})$ over the prime model, there is $\bar{b}\equiv \bar{a}$ such that $D(x,\bar{a})$ and $D(x,\bar{b})$ have different dimensions.
  %Here we develop the theory of strongly minimal sets in continuous logic and examine the extent to which the Baldwin-Lachlan characterization is recovered in the presence of strongly minimal sets. 

%We also discuss and characterize common conditions that ensure the presence of strongly minimal sets (or at the very least strongly minimal imaginaries) in \insep\ categorical theories. An important technical step in these characterizations is showing that definable sets are `prevalent' in $\omega$-stable theories.

% We present a counterexample to the most direct translation of the Baldwin-Lachlan \BLThm\, in continuous logic: An $\omega$-stable theory with no Vaughtian pairs that fails to be \insep\ categorical.

% {\color{red} Need more here to address new stuff. Should tout dictionaric and indiscernible subspaces in abstract more.}
\end{abstract}
%\MSC[2010]{03C90 \sep 03C57} %NEED TO FIX
\begin{comment}

%\begin{keyword}
%Metric structures \sep Continuous logic %\sep  Bi-interpretation \sep Computable structure theory

%\end{keyword}

%\end{frontmatter}

\section*{\color{red} To Do}
\label{sec:todo}
DON'T FORGET TO CHANGE ROMAN THING BACK AT THE END (ENUMERATE STYLE)
\begin{itemize}
\item Fix references to BenYaacov2008 to BYaacov2010 (published version of paper) and check relevant theorem numbers and stuff
\item Use the better notation from my thesis for stuff like $[\varphi=0]$. (Ugh, did I already do that in the version of this in my thesis?)
\item Add description of binding precedence for $\tint$ and $A^{<\e}$ operators. Explain that it's the same rules as exponents and logs, although with $\cap$ and $\cup$ it's ambiguous.
\item Remove concept of definable predicate?
\item Standardize commas in if-them sentences?
\item Change the whole open-in-$X$ terminology? Eh, it's useful to have it as a word sometimes, but maybe limit its use.
\item Change $\min$/$\max$ notation?
\item Decide if I'm going to but boxes at the ends of theorems/lemmas/corollaries that don't have proofs and then do it.
\item Standardize the formatting of references to numbered elements of theorems within proofs (italicize or not)
\end{itemize}
\end{comment}

\section{Introduction}

The classical Baldwin-Lachlan characterization of uncountably categorical theories gives detailed structural information about such theories. In particular, each such theory has a strongly minimal set definable over its prime model, and every model of the theory is `controlled' by any such strongly minimal set. This structural picture fails in continuous logic in the context of \insep\ categorical theories. For example the theory of infinite dimensional Hilbert spaces ($\mathsf{IHS}$) does not contain anything resembling a strongly minimal set. 

Nevertheless, there is a meaningful notion of strongly minimal sets in continuous logic which is a non-trivial generalization of the notion in discrete logic. In this paper we will develop this machinery and examine \insep\ categorical theories which do contain strongly minimal sets, as well as the known class of structures---expansions of Banach space---which do not contain strongly minimal sets.\editcom{ Sentences too repetitive?}

\subsection{Overview}

%{\color{red} NEED TO UPDATE THE OVERVIEW}

After preliminary material in Section \ref{sec:prelim}, Section \ref{sec:VP} deals with a few mild variations on the notion of Vaughtian pairs in continuous logic. We show that none of them can occur in an \insep\ categorical theory.

Section \ref{sec:dict} deals with the issue of handling definable sets in continuous logic.  We introduce and characterize a niceness condition that ensures the prevalence of definable sets, called `dictionaricness,' All discrete theories are easily seen to be dictionaric, but a continuous theory may fail to be. We show that in dictionaric theories many of the manipulations which are trivial in discrete logic can be performed, or at least approximately performed.\editcom{ Expanded this to two sentences.} For example, the intersection of two definable sets can in general fail to be definable, but in a dictionaric theory given definable sets $D$ and $E$ there are arbitrarily small perturbations $E'$ of $E$ such that $D\cap E'$ is definable (Corollary~\ref{cor:strong_min_and_cat_in_cont:1}).
% such as taking the intersection of two definable sets, which can fail to be definable in continuous logic.
To end the section we show that all $\omega$-stable theories (and more generally totally transcendental theories) are dictionaric, despite the fact that superstable theories may fail to be. %Finally, we show that randomizations of discrete theories {\color{red}(OR MORE?)} are dictionaric.

Section \ref{sec:SMS} deals with (strongly) minimal sets in continuous logic. 
%The theory of $(\mathbb{R},+)$ is shown to be strongly minimal and to not interpret any infinite discrete structures.
The general development of strongly minimal sets in continuous logic resembles
their development in discrete logic, so much so that some of our proofs
follow Tent and Ziegler \cite{tent_ziegler_2012} closely, but there are
a few important differences:
\begin{itemize}
\item The definition needs to be slightly\editcom{} stronger than the most obvious
direct translation---``Every definable subset is either
compact or co-pre-compact.''---which does not work. There are
two problems with this definition:
\begin{itemize}
\item It might be that the set in question has many distinct non-algebraic
types but not enough definable subsets to distinguish between them.
\item A strongly minimal set can have a definable subset that is neither compact nor co-pre-compact. %There is a strongly minimal set with a definable subset that is neither compact nor co-pre-compact.
\end{itemize}
\item It is not clear whether or not the property of having no Vaughtian pairs is sufficient
to ensure that any minimal set is strongly minimal in an arbitrary
theory. There are two stronger hypotheses, both satisfied by $\aleph_1$-categorical
theories and both of which are strong enough to ensure that any minimal
set is strongly minimal. In particular, if $T$ is dictionaric and
has no Vaughtian pairs or if $T$ has no open-in-definable Vaughtian
pairs, then any minimal set is strongly minimal.
\item The proof that minimal sets over $\aleph_0$-saturated structures are
strongly minimal works without much modification, but in continuous logic sometimes we
need to work with approximately $\aleph_0$-saturated structures. We show that
minimal sets over approximately $\aleph_0$-saturated structures are
strongly minimal as well. (See \cite[Def.\ 1.3]{Yaacov2007} for the definition of approximately $\aleph_0$-saturated.)
\item In general, algebraic closure in continuous logic does not have finite
character---only countable character and approximate finite character---but in strongly minimal sets algebraic closure does have finite character.
%which means that the dimension of finite dimensional sets is well-defined.
This is notable because there are $\aleph_1$-categorical theories
in which $\mathrm{acl}$ is a pregeometry that does not have finite
character, namely $\mathsf{IHS}$.
\item In discrete logic, if $p$ is a strongly minimal type based on some
set $A$, then a corresponding strongly minimal set is definable over
$A$. In continuous logic, if $A$ is not a model, we need to invoke an extra assumption and
work with a weaker notion than strongly minimal set. 
\begin{itemize}
\item If $T$ is dictionaric, then for any strongly minimal $p$ based on some set $A$, there
is an $A$-definable `approximately strongly minimal set' corresponding
to $p$. 
\item Conversely (with no assumptions about $T$), if $D$ is an approximately
strongly minimal set definable over $A$, then it corresponds to a
strongly minimal type based on $A$. 
\item If $D$ is an approximately strongly minimal set definable
over $A$ and $\mathfrak{M}\supseteq A$ is any model, then there
is a $(D(\mathfrak{M})\cup A)$-definable strongly minimal set $E\subseteq D$ corresponding
to the same strongly minimal type (again with no assumptions about
$T$).
\editcom{ Removed `fortunately' from beginning of bullet point.}
\item Every known example of an $A$-definable (strongly) minimal set has
an $A$-definable (strongly) minimal imaginary quotient, but it is
not clear that this is always true.\editcom{ Changed `it's' to 'it is' and `if' to `that.'} 
\end{itemize}
\item In discrete logic if $S_{n}(\mathfrak{M})$ is topologically scattered
(if $T$ is totally transcendental, for instance), then every non-algebraic
open set contains a type that is minimal over $\mathfrak{M}$. In
continuous logic the correct topometric analog of topologically scattered (i.e.\
CB-analyzable \cite{BenYaacov2008}) is not strong enough to guarantee the existence of
any minimal or strongly minimal types. In particular, there is an $\aleph_1$-categorical
theory that does not even interpret a strongly minimal set, namely
$\mathsf{IHS}$.
\end{itemize}
Each of these differences with the exception of  the last one is either a mild technical
issue or the fortuitous lack of a potential mild technical issue.\editcom{ Changed `All' to `Each' and `except' to `with the exception of.'}  
The last difference is the most important one in that not every \insep\ categorical theory is in some way `based on' a strongly minimal set. 
%in that it is a major
%obstruction to the Baldwin-Lachlan characterization of inseparably
%categorical theories in continuous logic.

Section \ref{sec:Bald-Lach} starts with a partial generalization of the Baldwin-Lachlan \BLThm\ to the context of continuous logic.
Subsections \ref{sec:theor-with-locally} and \ref{sec:ultr-theor-theor} %\ref{sec:some-theories-that}
presents a couple of common conditions that ensure the presence of minimal sets in an $\omega$-stable theory. Namely, theories with a locally compact model have minimal sets, and ultrametric theories have minimal imaginaries. We also characterize the relationship between ultrametric theories and theories with totally disconnected type spaces.\editcom{ Changed `In particular' to `Namely.'} 

We continue Section~\ref{sec:Bald-Lach} by presenting a counterexample to the most direct translation of the classical Baldwin-Lachlan classification, as well as  examples of \insep\ categorical theories that do have strongly minimal sets in their home sort, but only over models of sufficiently high dimension.\editcom{ Changed beginning of sentence. Added `in their home sort.'} %Finally, we discuss the issue of the number separable models of an \insep\ categorical theory, with some mild progress.

Section~\ref{sec:ind-sub-abs} focuses on \insep\ categorical expansions of Banach space (or Banach theories). 
We present a novel proof that infinite dimensional Hilbert spaces do not interpret any non-trivial locally compact theories and in particular any strongly minimal theories, a fact which is implicit in \cite{Ben-Yaacov2004}. We also introduce the notion of an `indiscernible subspace' to present an improvement of the results of \cite{SHELAH2019106738}, after developing the relevant machinery. Following an observation regarding an application of dictionaricness to \insep\ categorical Banach theories, we present some relevant counterexamples, including one which resolves a question of \cite{SHELAH2019106738}.\editcom{ Changed `After' to `Following.'}

%\tableofcontents
%\newpage

\section{ Preliminaries} \label{sec:prelim}

%{\color{red} Need to explain formula thing explicitly.}

For the general formalism of continuous logic and the majority of the notation used here, see \cite{MTFMS}. A divergence that we should remark upon is that in the present work we will not bother with the distinction between formulas and definable predicates.

\begin{conv}
  A \emph{(real valued) formula in the variables $\bar{x}$ and over the parameters $A$} is a continuous function from $S_{\bar{x}}(A)$ to $\mathbb{R}$. We may say that a formula is \emph{$I$-valued} for some interval $I \subseteq \mathbb{R}$ (such as $[0,1]$ or $[-1,1]$) if we wish to highlight an intended restriction on the range of the formula. For any formula $\varphi(\bar{x})$ over the parameters $A$ and any tuple of elements $\bar{b}$ of the same sorts as $\bar{x}$, we understand $\varphi(\bar{a})$ or $\varphi^{\frk{M}}(\bar{a})$ to be $\varphi(\tp(\bar{b}/A))$, where $\frk{M}$ is some model containing $A\bar{b}$.
\end{conv}
Note that in general a formula may depend on up to countably many parameters (in the sense that if $\varphi: S_n(A)\rightarrow \mathbb{R}$ is a formula, then there is a countable $A_0\subseteq A$ such that $\varphi$ factors through the natural restriction map from $S_{\bar{x}}(A)$ to $S_{\bar{x}}(A_0)$). 
%We also take the class of restricted formulas to be slightly broader than in \cite{MTFMS}. Restricted formulas are the smallest class containing all atomic formulas and the constant $1$ and closed under $\max$, $\min$, $+$, and scaling by any rational number. {\color{red} DO I ACTUALLY NEED THIS?}

\begin{conv}
  In this paper when we refer to \emph{a discrete} metric or metric space, we mean a uniformly discrete metric or metric space (i.e.\ one for which there is an $\e > 0$ such that for any distinct $x$ and $y$, $d(x,y) > \e$).

  \emph{The discrete metric} refers to the metric on a given set for which any two distinct $x$ and $y$ have $d(x,y)=1$.
\end{conv}

\subsection{Notation}
\label{sec:nota}

Here are the notational conventions used in this paper that are not found in \cite{MTFMS}.

\begin{nota}
Let $(X,d)$ be a metric space. Let $x\in X$, $A \subseteq X$, and $\varepsilon > 0$.
\begin{enumerate}[label=(\roman*)]
\item $B_{\leq \varepsilon}(x) = \{y \in X : d(x,y) \leq \varepsilon \}$

\item $B_{< \varepsilon}(x) = \{ y \in X : d(x,y) < \varepsilon \}$

\item $d(x,A) = \inf\{d(x,y):y \in A\}$

\item Given $B \subseteq X$, the \emph{Hausdorff distance between $A$ and $B$}, written $d_H(A,B)$, is $\max\{\sup_{a\in A}\dinf(a,B),\sup_{b\in B}\dinf(b,A)\}$. 

\item $A^{\leq \varepsilon} = \{y \in X : d(x,A) \leq \varepsilon\}$

\item $A^{< \varepsilon} = \{y \in X : d(x,A) < \varepsilon \} = \bigcup_{y\in A}B_{<\varepsilon}(y) $

\item $\#^\mathrm{dc} A$, the \emph{metric density character of $A$}, is the minimum cardinality of a metrically dense subset of $A$.

\item We say that $B\subseteq X$ is \emph{$({>}\varepsilon)$-separated} if $d(x,y) > \varepsilon$ for every pair of distinct points $x,y\in B$.

\item  $\#^\mathrm{ent}_{>\varepsilon} A  = \sup\{|B|\,:\,B \subseteq A\text{ and }B\text{ }({>}\varepsilon)\text{-separated}\}$. This quantity is called the \emph{$(>\varepsilon)$-metric entropy of $A$}.\footnote{There is some inconsistent usage of the term `metric entropy' in the literature. Sometimes it is the logarithm of our $\#^{\mathrm{ent}}_{>\e} A$ or some similarly defined quantity.} (Note that this supremum is not always attained.)

\item $\overline{A}=\bigcap_{\e > 0}A^{<\e}$ is the \emph{metric closure} of $A$.
\end{enumerate}
\end{nota}

$d_H(A,B)$, which can be equivalently defined as $\inf\{\e > 0 : A \subseteq B^{<\e}, B \subseteq A^{<\e}\}$, is also called the \emph{Hausdorff metric}, although note that in general it is only a pseudo-metric unless restricted to closed sets.

\begin{nota}
  In situations in which both a topology and a metric are relevant (such as in a type space), we will use $\cl A$ to represent the topological closure, as opposed to $\overline{A}$, the metric closure. The intended meaning will also be reinforced by context and occasionally explicit reminders. We will use $\tint A$ to represent the topological interior. We do not need notation for metric interior, as we never use the concept.

  If we need to specify a closure or interior relative to some subspace, we will use subscripts. $\cl_B A$ is the closure of $A$ in the subspace $B$, for instance.
\end{nota}

We take the $^{<\e}$ and $^{\leq \e}$ operators to bind more tightly than infix and prefix operators, so $\tint A^{<\e}$ is $\tint (A^{<\e})$ and $B\cup C^{\leq \delta}$ is $B \cup (C^{\leq \delta})$.\editcom{New sentence. }

\begin{nota}
If $S_n(A)$ is a type space and $\varphi$ is an $A$-formula with free variables amongst $x_0,\dots,x_{n-1}$, we will let $\cset{\varphi < \varepsilon}$ denote the set of types $p \in S_n(A)$ satisfying $p \models \varphi < \varepsilon$. The type space in question will be clear from context. Likewise for $\cset{\varphi \leq \varepsilon}$, $\cset{\varphi > \varepsilon}$, $\cset{\varphi \geq \varepsilon}$, $\cset{\varphi = \varepsilon}$, and $\cset{\varphi \neq \varepsilon}$. %{\color{red} WANT TO NOT USE THIS:} We will let $[\varphi]$ denote the zeroset $[\varphi \brackconv]$ in keeping with the convention that $\mathfrak{M} \models \varphi$ means $\mathfrak{M} \models \varphi = 0$.
If it is necessary to denote parameters we may write something like $\cset{\varphi(-,\overline{a}) < \varepsilon}$ to avoid specifying free variables.  We may also write something like $\cset{\varepsilon \leq \varphi < \delta}$ to mean $\cset{\varphi \geq \varepsilon} \cap \cset{\varphi < \delta}$.

If $\mathfrak{M}$ is a structure and $\rectangle\; \in \{<,>,\leq,\geq,=,\neq\}$, then $\cset{\varphi(\mathfrak{M})\;\rectangle \varepsilon}$ is the set of tuples $\overline{a} \in \mathfrak{M}$ such that $\mathfrak{M} \models \varphi\;\rectangle \varepsilon$. Expressions such as $\cset{\varepsilon \leq \varphi(\mathfrak{M}) < \delta}$ have the obvious interpretation. If $F$ is an arbitrary (typically closed) subset of the type space $S_n(A)$, then we will use $F(\frk{M})$ similarly.
\end{nota}
To avoid confusion with the established logical roles of $\wedge$ and $\vee$, we will avoid using this notation for $\min$ and $\max$.%, but we would like to have a concise infix notation for these functions.
\begin{comment}
\begin{nota}
 $x \uparrow y = \max(x,y)$ and $x \downarrow y = \min(x,y)$. {\color{red} Am I still going to use this?}
\end{nota}

{\color{red} NEED TO CHANGE THIS TOO:} Note that for $[0,1]$-valued formulas, $\mathfrak{M} \models \varphi \uparrow \psi \leq 0$ iff $\mathfrak{M} \models \varphi \leq 0$ and $\mathfrak{M} \models \psi \leq 0$ and likewise $\mathfrak{M} \models \varphi \downarrow \psi \leq 0$ iff $\mathfrak{M} \models \varphi \leq 0$ or $\mathfrak{M} \models \psi \leq 0$.
\end{comment}

\subsection{Basic Facts and Definitions}

\label{sec:basic-facts}

\begin{defn}
  A \emph{zeroset} is a set (of types) of the form $\cset{\varphi = 0}$.
\end{defn}
Sets of the form $\cset{\varphi\leq \psi}$, $\cset{\varphi\geq\psi}$, and $\cset{\varphi=\psi}$ can be equivalently expressed as zerosets and will be freely referred to as such. Note that a subset of a given type space is a zeroset if and only if it is a closed $G_\delta$ set, by Urysohn's lemma.

%{\color{red} Maybe should establish this notation? Maybe not?}
\begin{defn}
A zeroset $F\subseteq S_n(A)$ is \emph{algebraic} if for every $\mathfrak{M} \supseteq A$, $F(\mathfrak{M})$ is metrically compact.
\end{defn}

The following lemma is a way to verify the algebraicity of a given zeroset in a single model.
%This basic lemma will be used in a couple of places in the paper. %Recall that a zeroset is algebraic if its set of realizations is metrically compact in every model.

\begin{lem} \label{lem:basic-alg}
If $F$ and $G$ are $M$-zerosets such that $F\subseteq \tint G$
and $G(\mathfrak{M})$ is metrically compact, then $F$ is algebraic.
\end{lem}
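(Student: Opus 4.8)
The plan is to turn the topological hypothesis $F\subseteq\tint G$ into a quantitative inclusion of the form $\cset{\varphi\leq\delta_0}\subseteq G$ for a formula $\varphi$ defining $F$, and then to transfer the metric compactness of $G(\mathfrak{M})$ to metric compactness of $F(\mathfrak{N})$ for an arbitrary model $\mathfrak{N}\supseteq M$ by reflecting separated configurations from $\mathfrak{N}$ back into $\mathfrak{M}$ using the density of realized types.

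First I would write $F=\cset{\varphi=0}$ for a nonnegative $M$-formula $\varphi$. Then $F=\bigcap_{k\geq 1}\cset{\varphi\leq 1/k}$ is a decreasing intersection of closed subsets of the compact space $S_n(M)$, and it is disjoint from the closed set $S_n(M)\setminus\tint G$; by compactness some single stage is already contained in $\tint G$, so there is $\delta_0>0$ with $D:=\cset{\varphi\leq\delta_0}\subseteq\tint G\subseteq G$ and $F\subseteq\cset{\varphi<\delta_0}\subseteq D$. Since $D(\mathfrak{M})=\{\bar a\in\mathfrak{M}^n:\varphi^{\mathfrak{M}}(\bar a)\leq\delta_0\}$ is a $d$-closed subset of the metrically compact set $G(\mathfrak{M})$, it is itself metrically compact, hence totally bounded; put $N_\e:=\#^{\mathrm{ent}}_{>\e}D(\mathfrak{M})<\infty$ for each $\e>0$.

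The crucial step will be the claim that $\#^{\mathrm{ent}}_{>\e}F(\mathfrak{N})\leq N_\e$ for every model $\mathfrak{N}\supseteq M$ and every $\e>0$. Granting this, every $F(\mathfrak{N})$ is totally bounded, and since it is the zero set in $\mathfrak{N}^n$ of the $d$-uniformly continuous function $\varphi^{\mathfrak{N}}$ it is $d$-closed, hence $d$-complete, hence metrically compact, so $F$ is algebraic. To prove the claim, I would take a $({>}\e)$-separated tuple $\bar a_1,\dots,\bar a_k\in F(\mathfrak{N})$ and consider its type over $M$. This type lies in the open subset $V\subseteq S_{kn}(M)$ defined by the conditions $\varphi(\bar x_i)<\delta_0$ for $1\leq i\leq k$ together with $d(\bar x_i,\bar x_j)>\e$ for $1\leq i<j\leq k$ (here $\varphi(\bar a_i)=0<\delta_0$ because $\tp(\bar a_i/M)\in F$), so $V$ is nonempty; as $\mathfrak{M}$ realizes a dense set of types over $M$, $V$ contains a type realized in $\mathfrak{M}$, yielding $\bar c_1,\dots,\bar c_k\in\mathfrak{M}^n$ with $\varphi^{\mathfrak{M}}(\bar c_i)<\delta_0$ and $d(\bar c_i,\bar c_j)>\e$. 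These $\bar c_i$ lie in $D(\mathfrak{M})$ and are $({>}\e)$-separated, so $k\leq N_\e$; since an infinite $({>}\e)$-separated subset of $F(\mathfrak{N})$ would contain finite such subsets of every size, taking the supremum over all such tuples proves the claim.

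The main obstacle is exactly this transfer, and it is the only place the hypotheses must be handled with care: we are given metric compactness of $G$ only in the single model $\mathfrak{M}$ --- indeed $G$ itself need not be algebraic --- so one cannot simply invoke $F\subseteq G$. What makes the argument work is that $F\subseteq\tint G$ lets us trade the closed condition ``$\varphi(\bar x_i)=0$'' for the open condition ``$\varphi(\bar x_i)<\delta_0$'', so that density of the realized types can reflect a separated configuration from $\mathfrak{N}$ down into $\mathfrak{M}$ while keeping it inside $D\subseteq G$. The only delicate bookkeeping is the distinction between strict and non-strict inequalities --- $<\delta_0$ for openness of $V$, and $\leq\delta_0$ for the inclusion $D\subseteq G$ --- and this is routine.
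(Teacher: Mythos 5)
Your proof is correct and takes essentially the same route as the paper's: sandwich $F$ inside a sublevel set $\cset{\varphi\leq\delta_0}\subseteq\tint G$ by compactness of the type space, use metric compactness of $G(\mathfrak{M})$ to bound the $({>}\e)$-entropy of that sublevel set in $\mathfrak{M}$, and transfer the bound to every elementary extension. The only cosmetic difference is in the transfer step, where you reflect a separated configuration from $\mathfrak{N}$ down into $\mathfrak{M}$ via density of realized types, while the paper writes out the equivalent $\inf$--$\max$ sentence explicitly and invokes elementarity.
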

\begin{proof}
Find a formula $\varphi(x;\overline{a})$ with $\overline{a}\in \mathfrak{M}$ such that $F \subseteq \cset{\varphi(-;\overline{a})<\frac{1}{3}} \subseteq \cset{\varphi(-;\overline{a}) \leq \frac{2}{3}} \subseteq \tint G$. For each $\varepsilon>0$ there is an $n_\varepsilon < \omega$ such that $\cset{\varphi(\mathfrak{M};\overline{a}) \leq \frac{2}{3}}$ contains a $({>}\varepsilon)$-separated set of cardinality $n_\varepsilon$, but does not contain one of cardinality $n_\varepsilon + 1$, so in particular if we consider the sentence
\[\chi = \inf_{x_0,\dots,x_{n_\varepsilon}} \max_{i<j\leq n_\varepsilon} \imax{\frac{1}{\varepsilon}\left(\varepsilon \dotdiv d(x_i,x_j)\right) }{ \frac{3}{2}\imin{\varphi(x_i;\overline{a}) }{ \frac{2}{3}}},\]
we have $\mathfrak{M} \models \chi = 1$. This will be true of any elementary extension of $\mathfrak{M}$ as well, so if we assume that in some $\mathfrak{N} \succ \mathfrak{M}$ we have that $F(\mathfrak{N})$ is not metrically compact, then for some $\varepsilon>0$ there is an infinite $({>}\varepsilon)$-separated set of elements in $F(\mathfrak{N})$, but this would imply that $\mathfrak{N} \models \chi < 1$ since any element of $F(\mathfrak{N})$ is also an element of $\cset{\varphi(\mathfrak{N};\overline{a}) < \frac{1}{3}}$. This is a contradiction, so $F$ must be algebraic. 
\end{proof}

\begin{defn}\label{defn:strong_min_and_cat_in_cont:1}
\leavevmode
\begin{enumerate}[label=(\roman*)]
\item Given a closed set $F\subseteq S_{n}(T)$, a closed set $Q\subseteq F$
is \emph{definable relative to $F$} (or relatively definable in $F$, or definable-in-$F$) if for every
$\varepsilon>0$ there is an open-in-$F$ set $U$ such that $Q\subseteq U\subseteq Q^{<\varepsilon}$. %{\color{red} Remove the hyphenated terms?}

\item A point $p\in S_{n}(T)$ is \emph{$d$-atomic} if $\{p\}$ is a definable
set. It is relatively $d$-atomic in $F$ if $\{p\}$ is a relatively definable
subset of $F$.\footnote{We have chosen the term `$d$-atomic' instead of `$d$-isolated' (as in \cite{BenYaacov2008})  to avoid confusion with `isolated with respect to $d$' and to emphasize that this notion plays a role analogous to that of atomic (as in principal) types in discrete logic.}    
\end{enumerate}
\end{defn}

Note that we have defined `relatively definable' sets in a way that is distinct from a common understanding of the notion in discrete logic, namely those subsets of a given type-definable set (i.e.\ a given closed set) which are the intersection of it with a definable set (i.e.\ a clopen set). These two definitions are equivalent in discrete logic, but are not equivalent in continuous logic. We will show in Proposition~\ref{prop:ext} that, under some assumptions, one direction of the equivalence holds, i.e.\ every relatively definable $Q \subseteq F$ is actually of the form $D\cap F$ for some definable set $D$.

\begin{lem}\label{lem:strong_min_and_cat_in_cont:4}
Given $Q\subseteq F\subseteq S_{n}(T)$, with $Q$ and $F$ closed,
$Q$ is definable relative to $F$ if and only if there is a formula
$\varphi:S_{n}(T)\rightarrow[0,1]$ such that $F\cap\cset{\varphi\brackconv}=Q$ %(where $[P]$ is the zeroset of $P$) 
and for all $x\in F$, $d(x,Q)\leq \varphi(x)$.
\end{lem}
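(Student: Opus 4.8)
\emph{Plan.} I would prove the two implications separately; the substance is in the forward direction. For $(\Leftarrow)$, suppose $\varphi\colon S_n(T)\to[0,1]$ is as described, and let $\e>0$. Put $U=F\cap\cset{\varphi<\e}$. Since $\varphi$ is continuous, $\cset{\varphi<\e}$ is open, so $U$ is open in $F$; moreover $Q=F\cap\cset{\varphi=0}\subseteq U$, and for $x\in U$ we have $d(x,Q)\leq\varphi(x)<\e$, so $U\subseteq Q^{<\e}$. Hence $Q$ is definable relative to $F$.

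For $(\Rightarrow)$, the idea is a layered Urysohn construction. For each $n\geq 1$, definability of $Q$ in $F$ gives a set $U_n$ open in $F$ with $Q\subseteq U_n\subseteq Q^{<2^{-n}}$; write $U_n=F\cap V_n$ with $V_n\subseteq S_n(T)$ open, so that $Q\subseteq V_n$ and $F\cap V_n\subseteq Q^{<2^{-n}}$. Since $S_n(T)$ is compact Hausdorff (hence normal), Urysohn's lemma yields a continuous $\theta_n\colon S_n(T)\to[0,1]$ that is $0$ on $Q$ and $1$ on $S_n(T)\setminus V_n$. I would then set
\[\varphi:=\min\!\left(1,\ 2\sum_{n\geq 1}2^{-n}\theta_n\right).\]
The series converges uniformly, so $\varphi$ is continuous, hence a formula, and it is visibly $[0,1]$-valued.

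Then I would verify the two clauses. If $x\in Q$, each $\theta_n(x)=0$, so $\varphi(x)=0$. Conversely, if $x\in F$ and $\varphi(x)=0$, then each $\theta_n(x)=0$; since $Q$ is topologically closed it is also metrically closed (the metric topology is finer), so $\overline Q=Q$, and if $x\notin Q$ there is some $n$ with $x\notin Q^{<2^{-n}}\supseteq F\cap V_n$, forcing $x\notin V_n$ and hence $\theta_n(x)=1$, a contradiction. Thus $F\cap\cset{\varphi=0}=Q$. For the inequality, fix $x\in F\setminus Q$ and let $n_0\geq 1$ be least with $x\notin V_{n_0}$ (it exists by the previous step). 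If $n_0=1$, then $\theta_1(x)=1$, so the bracketed sum is $\geq 1$ and $\varphi(x)=1\geq d(x,Q)$, the metric being $[0,1]$-valued. If $n_0\geq 2$, then $x\in V_{n_0-1}$, so $d(x,Q)<2^{-(n_0-1)}=2\cdot 2^{-n_0}$, while $\theta_{n_0}(x)=1$ gives $\varphi(x)\geq 2\cdot 2^{-n_0}$ (no truncation occurs, as $2\cdot 2^{-n_0}\leq\tfrac12$); so again $d(x,Q)\leq\varphi(x)$.

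The only genuine obstacle is that one cannot simply take $\varphi$ to be the function $x\mapsto d(x,Q)$: on $F$ this need not be continuous in the logic topology, since the neighborhoods $Q^{<\e}$ are metrically but not logically open. Layering by the scales $2^{-n}$ is what circumvents this, and the factor $2$ together with the truncation at $1$ is precisely what keeps $\varphi$ $[0,1]$-valued while still dominating $d(\cdot,Q)$ on $F$.
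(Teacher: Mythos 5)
Your proof is correct and follows essentially the same route as the paper's: both directions match, and the forward direction is the same layered-Urysohn construction, summing bump functions at geometric scales so that the sum dominates $d(\cdot,Q)$ on $F$. The only (cosmetic) differences are that you apply Urysohn's lemma in the ambient space $S_n(T)$ directly, which spares you the paper's final Tietze extension step, and you use fixed dyadic weights with a truncation at $1$ in place of the paper's telescoping $\varepsilon_n-\varepsilon_{n+1}$ weights; you should also state explicitly, as the paper does, that the diameter bound of $1$ on the metric is assumed without loss of generality.
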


\begin{proof}
$(\Leftarrow)$. Assuming that such a function exists, for every $\varepsilon>0$
the set $F\cap\cset{\varphi<\frac{\varepsilon}{2}}$ is relatively open in $F$ and by construction
we have that $Q\subseteq F\cap\cset{\varphi<\frac{\varepsilon}{2}}\subseteq Q^{<\varepsilon}$.

$(\Rightarrow)$. Assume without loss of generality that the metric diameter of $F$
is at most $1$. Assume that $Q$ is definable relative to $F$. Let
$U_{0}=F$ and $\varepsilon_{0}=1$. For each $n<\omega$, find $\varepsilon_{n}>0$
small enough that $\varepsilon_{n}<2^{-n}$, $\varepsilon_{n}<\varepsilon_{n-1}$,
and $Q^{\leq\varepsilon_{n}}\subseteq U_{n-1}$ (which must exist
by compactness). Then let $U_{n}=\tint Q^{\leq\varepsilon_{n}}$,
which by assumption contains $Q$. Note that by construction $\bigcap_{n<\omega}U_{n}=\bigcap_{n<\omega}Q^{\leq\varepsilon_{n}}=Q$.

Since $F$ is a compact Hausdorff space, and therefore normal, we
can find for each $n<\omega$ a continuous functions $\varphi_{n}:F\rightarrow[0,1]$
such that $\varphi_{n}\upharpoonright Q^{\leq\varepsilon_{n}}=0$ and $\varphi_{n}\upharpoonright(F\smallsetminus U_{n-1})=1$.
Now let $\varphi(x)=\sum_{n<\omega}(\varepsilon_{n}-\varepsilon_{n+1})\varphi_{n+3}(x)$.
Note that this is a uniformly convergent series of continuous functions
(since $\varepsilon_{n}$ is a monotonically decreasing sequence of
positive numbers and since the $\varphi_{n}$ are uniformly bounded), so
$\varphi(x)$ is a continuous function. Furthermore note that the zeroset
$\cset{\varphi\brackconv}$ is precisely $Q$. Finally note that if $\varepsilon_{n}<d(x,Q)\leq\varepsilon_{n-1}$,
then we have that $x\notin Q^{\leq\varepsilon_{n}}$ and in particular
$x\notin U_{n}$, so $x\in F\smallsetminus U_{n}$. This implies that
$\varphi_{k}(x)=1$ for all $k\geq n+1$, so we have that $\varphi(x)$ is at
least $\varepsilon_{n-2}>\varepsilon_{n-1}$ (or $1$ if $n-2<0$).
If $x\in Q$ then $d(x,Q)=\varphi(x)=0$, so we have that for any $x\in F$,
$d(x,Q)\leq \varphi(x)$, as required.

Finally since $S_{n}(T)$ is a normal space we can use the Tietze
extension theorem to extend $\varphi$ to a continuous function on all of
$S_{n}(T)$.
\end{proof}

In type spaces the function in Lemma~\ref{lem:strong_min_and_cat_in_cont:4} can be taken to be the actual distance predicate of the set, but in arbitrary subspaces this is not always possible. 

\begin{lem} \label{lem:def-rel-def}
For $D$, $E$, and $F$ closed sets, if $D\subseteq F$ is definable
relative to $F$ and $E\subseteq D$ is definable relative to $D$,
then $E$ is definable relative to $F$. In particular if $D$ is
a definable set and $E$ is definable relative to $D$ then $E$ is
definable.
\end{lem}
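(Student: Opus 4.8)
The plan is to chain together the $\varepsilon$-$\delta$ definitions of relative definability directly, with a small quantitative bookkeeping step to make the two given ``closeness'' conditions compose. Recall that $E \subseteq D$ definable relative to $D$ means: for every $\varepsilon > 0$ there is an open-in-$D$ set $U$ with $E \subseteq U \subseteq E^{<\varepsilon}$, and similarly $D \subseteq F$ definable relative to $F$ gives, for every $\delta > 0$, an open-in-$F$ set $V$ with $D \subseteq V \subseteq D^{<\delta}$.

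First I would fix $\varepsilon > 0$ and apply relative definability of $E$ in $D$ to get an open-in-$D$ set $U$ with $E \subseteq U \subseteq E^{<\varepsilon/2}$. Since $U$ is open in $D$, write $U = W \cap D$ for some $W$ open in $F$ (using that $D$ is a subspace of $F$). Next I would apply relative definability of $D$ in $F$ with a parameter $\delta$ to be chosen; the natural first guess is to take $\delta$ small enough that $D^{<\delta} \cap W \subseteq E^{<\varepsilon}$, and then set the witnessing open-in-$F$ neighborhood of $E$ to be $V \cap W$ where $D \subseteq V \subseteq D^{<\delta}$. Then $V \cap W$ is open in $F$, contains $E$ (since $E \subseteq D \subseteq V$ and $E \subseteq U \subseteq W$), and is contained in $E^{<\varepsilon}$ by the choice of $\delta$. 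This would finish the proof, and the ``in particular'' clause follows by taking $F = S_n(T)$ (a definable set $D$ is by definition definable relative to the whole type space).

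The main obstacle is justifying the existence of such a $\delta$: one needs that $E^{<\varepsilon/2} \cap W$, together with being close to $D$, forces closeness to $E$. This is where compactness of the ambient type space $S_n(T)$ enters. The cleanest way is to argue by contradiction using compactness: if no $\delta$ worked, one would get points $x_n \in D^{<1/n} \cap W$ with $d(x_n, E) \geq \varepsilon$; a convergent subnet has a limit $x$ with $x \in \overline{W}$... but here care is needed because $W$ is merely open, not closed. An alternative, which I would actually prefer, is to avoid $W$ entirely in the metric estimate: shrink to a smaller open-in-$D$ set. Concretely, since $U \subseteq E^{<\varepsilon/2}$ and $U$ is open in $D$, for the closed set $D \setminus U$ we have $d(x, E) \geq \varepsilon/2$ is too weak; instead use that $\{x \in D : d(x,E) \le \varepsilon/2\}$ need not be contained in $U$. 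So the honest route is: replace $U$ by $U' = U \cap \{x : d(x,E) < \varepsilon\}$ — still open in $D$, still contains $E$ — write $U' = W' \cap D$ with $W'$ open in $F$, and then choose $\delta$ by compactness so that every point of $F$ within $\delta$ of $D$ and lying in $W'$ is within $\varepsilon$ of $E$; the contradiction argument now works because on $W'$ the condition $d(-,E) < \varepsilon$ is already built in, and one only needs it to be inherited by nearby points of $D$, which is a uniform continuity / compactness statement about the distance predicate $d(-,E)$ restricted to the compact set $D$. I expect this compactness step to be the only real content; everything else is set-theoretic manipulation of neighborhoods.
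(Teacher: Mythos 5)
There is a genuine gap at the key quantitative step. You need a $\delta>0$ with $W'\cap D^{<\delta}\subseteq E^{<\varepsilon}$, and your justification is that for $x\in W'$ with $d(x,D)<\delta$ the nearby point $y\in D$ ``inherits'' closeness to $E$. But the condition $d(\cdot,E)<\varepsilon$ is only guaranteed on $U'=W'\cap D$, not on all of $W'$, and the witnessing point $y$ need not lie in $W'$: $W'$ is open in the logic topology, and in a topometric space metric balls are not open in that topology, so no choice of $\delta$ forces $y\in W'$ from $x\in W'$ and $d(x,y)<\delta$. The $1$-Lipschitzness of $d(\cdot,E)$ with respect to the metric is true but useless here, since you have no upper bound on $d(y,E)$ to transfer to $x$. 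The compactness-by-contradiction route also breaks down twice over: a limit of a net in $W'$ can escape $W'$ (it is open, not closed), and $\{x: d(x,E)\geq\varepsilon\}$ is not closed, because $d(\cdot,E)$ is only lower semicontinuous in the logic topology, so the limit point need not violate anything. (Also, your replacement $U'=U\cap\{x:d(x,E)<\varepsilon\}$ is just $U$ again, since $U\subseteq E^{<\varepsilon/2}$; and a set of the form $\{x:d(x,E)<\varepsilon\}$ is in general not open in the logic topology --- that is essentially what definability of $E$ would assert.)

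The paper's proof sidesteps exactly this by trading open neighborhoods for continuous functions, via Lemma~\ref{lem:strong_min_and_cat_in_cont:4}: it chooses continuous $P_0,P_1$ on $F$ whose zerosets in $F$ and in $D$ are $D$ and $E$ respectively, with $d(x,D)\leq P_0(x)$ on $F$ and $d(x,E)\leq P_1(x)$ on $D$. A continuous function on a compact topometric space is automatically uniformly continuous with respect to the metric; letting $\alpha$ be such a modulus for $P_1$, the function $P=\imin{P_0+\alpha(P_0)+P_1}{1}$ does the job: for $x\in F$ take $y\in D$ realizing $d(x,D)$ (the infimum is attained by compactness), and then $d(x,E)\leq d(x,y)+d(y,E)\leq P_0(x)+P_1(y)\leq P_0(x)+P_1(x)+\alpha(P_0(x))$. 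The uniform continuity of $P_1$ with respect to $d$ supplies precisely the metric stability that membership in your open set $W'$ lacks; replacing the open set by such a dominating continuous function is the missing idea.
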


\begin{proof}
Using the previous lemma and the Tietze extension theorem, let $P_{0}$
be a continuous function on $F$ such that $\cset{P_{0}\brackconv}=D$ and $d(x,D)\leq P_{0}(x)$
for all $x\in F$. Furthermore let $P_{1}$ be a continuous function
on $F$ such that $D\cap\cset{P_{1}\brackconv}=E$ and such that $d(x,E)\leq P_{1}(x)$
for all $x\in D$. Since $P_{1}$ is continuous and $F$ is a compact
topometric space, $P_1$ is uniformly continuous with regards to the metric.
Let $\alpha:[0,1]\rightarrow[0,1]$ be a continuous, non-decreasing
modulus of uniform continuity of $P_{1}$ (i.e.\ a function satisfying $\alpha(0)=0$ and $|P_{1}(x)-P_{1}(y)|\leq\alpha(d(x,y))$
for all $x$ and $y$). Finally let $P(x)=\imin{P_{0}(x)+\alpha(P_{0}(x))+P_{1}(x)}{1}$.\editcom{ Move $\alpha(0) = 0$.} 

First note that the zeroset of $P(x)$ is precisely $E$. Furthermore,
for any $x\in F$ we can find $y\in D$ such that $d(x,y)=d(x,D)$
(by compactness), so we have that $d(x,E)\leq d(x,y)+d(y,E)\leq d(x,D)+P_{1}(y)\leq P_{0}(x)+P_{1}(x)+\alpha(d(x,y))\leq P_{0}(x)+P_{1}(x)+\alpha(P_{0}(x))$.
So since $d(x,E)\leq1$, we have that $d(x,E)\leq P(x)$, as required.
\end{proof}

%{\color{red} NEW LINE }
In discrete logic it is a basic fact that if $\tp(ab)$ is atomic, then $\tp(a/b)$ is atomic as well. This may fail in continuous logic \cite[Rem.\ 12.14]{MTFMS}, but some approximation of it is still true, which is summarized in the following lemma.

\begin{lem}
If $\mathrm{tp}(a\overline{b})$ is atomic and $\mathfrak{M}$
is any model containing $\bar{b}$, then for every $\varepsilon>0$ there exists $a^{\prime}\overline{b}^{\prime}\in\mathfrak{M}$
such that $a\overline{b}\equiv a^{\prime}\overline{b}^{\prime}$ and
$d(\overline{b},\overline{b}^{\prime})<\varepsilon$.
\end{lem}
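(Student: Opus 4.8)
Write $p=\mathrm{tp}(a\overline{b})\in S_{x\overline{y}}(T)$, where $x$ matches $a$ and $\overline{y}$ matches $\overline{b}$. Since $\mathrm{tp}(a\overline{b})$ is atomic, $\{p\}$ is a definable subset of $S_{x\overline{y}}(T)$, so by Lemma~\ref{lem:strong_min_and_cat_in_cont:4} together with the remark following it we may take a formula $\psi(x\overline{y})$ which is the distance predicate of $\{p\}$ on the type space, i.e.\ $\psi(q)=d(q,p)$ for all $q\in S_{x\overline{y}}(T)$. The plan is then to use the key property of \emph{definable} (as opposed to merely type-definable) sets: for every $\mathfrak{N}\models T$ and every $a'\overline{b}'\in\mathfrak{N}$ one has $\psi^{\mathfrak{N}}(a'\overline{b}') = d\bigl((a',\overline{b}'),\,p(\mathfrak{N})\bigr)$, where $p(\mathfrak{N})$ is the (nonempty) set of realizations of $p$ inside $\mathfrak{N}$. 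In other words, the type-space distance from a tuple to $p$ is actually witnessed by honest realizations of $p$ living in the same model.

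With this in hand the argument is short. First I would observe that $\inf_{c\in\mathfrak{M}}\psi(c,\overline{b})=0$: the expression $\theta(\overline{y}):=\inf_x\psi(x,\overline{y})$ is a formula, its value at $\overline{b}$ depends only on $\mathrm{tp}(\overline{b})$, and computing it in the monster gives $\theta(\overline{b})\le\psi(a,\overline{b})=d(p,p)=0$ since $a\overline{b}\models p$; evaluating the same formula in $\mathfrak{M}$ shows $\inf_{c\in\mathfrak{M}}\psi(c,\overline{b})=\theta(\overline{b})=0$. Now fix $\varepsilon>0$ and choose $c\in\mathfrak{M}$ with $\psi(c,\overline{b})<\varepsilon$. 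By the definable-set property applied to $\mathfrak{M}$, $d\bigl((c,\overline{b}),\,p(\mathfrak{M})\bigr)=\psi(c,\overline{b})<\varepsilon$, so there is a tuple $a'\overline{b}'\in p(\mathfrak{M})$ with $d\bigl((c,\overline{b}),(a',\overline{b}')\bigr)<\varepsilon$. Then $a'\overline{b}'\in\mathfrak{M}$, $a'\overline{b}'\models p$ (so $a'\overline{b}'\equiv a\overline{b}$), and $d(\overline{b},\overline{b}')\le d\bigl((c,\overline{b}),(a',\overline{b}')\bigr)<\varepsilon$, which is exactly what is required.

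The only non-routine step is the invocation of the defining property of definable sets, i.e.\ that the distance predicate to $\{p\}$ genuinely measures distance to realizations of $p$ within each model rather than merely distance in the type-space metric; this is what upgrades ``$\mathrm{tp}(c\overline{b})$ is close to $p$'' to ``there is a genuine realization of $p$ in $\mathfrak{M}$ close to $(c,\overline{b})$,'' and hence close to $\overline{b}$ in the $\overline{y}$-coordinate. Everything else is bookkeeping, and in particular no saturation of $\mathfrak{M}$ is used. It is precisely this step that would fail if $p$ were only a zeroset rather than a definable point, which is the phenomenon behind the failure of the exact discrete statement noted in \cite[Rem.\ 12.14]{MTFMS}.
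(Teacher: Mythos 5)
Your proposal is correct and follows essentially the same route as the paper: take a distance predicate $D(x,\overline{y})$ for the definable set $\{\mathrm{tp}(a\overline{b})\}$, note that $\inf_x D(x,\overline{b})=0$ is part of $\mathrm{tp}(\overline{b})$ and hence holds in $\mathfrak{M}$, pick $c\in\mathfrak{M}$ with $D(c,\overline{b})<\varepsilon$, and use the defining property of distance predicates to find an actual realization $a'\overline{b}'\in\mathfrak{M}$ within $\varepsilon$ of $(c,\overline{b})$. Your closing remark correctly identifies the crux — that definability (not mere type-definability) is what lets the type-space distance be witnessed by realizations in $\mathfrak{M}$ — so there is nothing to add.
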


\begin{proof}
Let $D(x,\overline{y})$ be a distance predicate for the definable
set $\mathrm{tp}(a\overline{b})$. We have that $\mathfrak{M}\models\inf_{x}D(x,\overline{b})=0$,
since this is part of the type of $\overline{b}$. Let $a_{0}$ be
such that $D(a_{0},\overline{b})<\varepsilon$. Since $D$ is a distance
predicate this implies that there is $a^{\prime}\overline{b}^{\prime}\in D(\mathfrak{M})$
with $d(a_{0}\overline{b},a^{\prime}\overline{b}^{\prime})<\varepsilon$.
$a^{\prime}\overline{b}^{\prime}$ is the required sequence of elements. 
\end{proof}

The following proposition and corollary are almost certainly known but we could not find them in the literature. They are very similar to Fact 1.5 in \cite{Yaacov2007}. %PRONOUN

\begin{prop} \label{Prop:Prime-Hom}
If a countable theory $T$ has a prime model, then it has a unique
prime model. Moreover if $\overline{a}\equiv\overline{b}$ with $\overline{a}\in\mathfrak{M}$
and $\overline{b}\in\mathfrak{N}$, with $\mathfrak{M}$ and $\mathfrak{N}$
prime, then for every $\varepsilon>0$ there is an isomorphism $f:\mathfrak{M}\rightarrow\mathfrak{N}$
such that $d(f(\overline{a}),\overline{b})<\varepsilon$.
\end{prop}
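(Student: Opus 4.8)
The plan is to mimic the classical back-and-forth argument for uniqueness of prime models, but to work with approximations throughout, since in continuous logic exact realizations of atomic types inside a prime model may only be approximated. First I would recall the standard characterization: a model is prime over $\emptyset$ if and only if it is atomic (every finite tuple has an atomic type) and, in the continuous setting, approximately $\aleph_0$-saturated in the appropriate sense — or at least I would use that in a prime model $\mathfrak{M}$, for every finite tuple $\bar{c}\in\mathfrak{M}$, every $\varepsilon>0$, and every type $q$ over $\bar{c}$ consistent with $\tp(\bar{c})$ that is "approximately atomic enough," there is a tuple in $\mathfrak{M}$ approximately realizing $q$ within $\varepsilon$. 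The key mechanism is the lemma just proved: if $\tp(a\bar{b})$ is atomic and $\mathfrak{N}\ni\bar{b}$ is any model, then for every $\varepsilon>0$ there is $a'\bar{b}'\in\mathfrak{N}$ with $a'\bar{b}'\equiv a\bar{b}$ and $d(\bar{b},\bar{b}')<\varepsilon$. This is exactly the extension step we need, and it is the workhorse of the construction.

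Next I would set up the back-and-forth. Enumerate a countable metrically dense subset $\{m_i\}_{i<\omega}$ of $\mathfrak{M}$ and a countable dense subset $\{n_i\}_{i<\omega}$ of $\mathfrak{N}$, and fix a summable sequence $\varepsilon_i>0$ with $\sum_i\varepsilon_i$ as small as desired (bounded by the target $\varepsilon$, after first handling the initial condition $d(f(\bar{a}),\bar{b})<\varepsilon$ by reserving part of the budget). Build finite partial correspondences $\bar{a}\bar{c}_k\mapsto\bar{b}\bar{d}_k$ with $\bar{a}\bar{c}_k\equiv\bar{b}\bar{d}_k$ (so in particular all these tuples have atomic type, being subtuples of tuples of a prime model), alternating: at even stages take the least-indexed $m_i$ not yet $\varepsilon_k$-close to a coordinate already in the domain, append it to get $\bar{a}\bar{c}_k m_i$ with atomic type, and use the lemma to find $\bar{b}\bar{d}_k e\in\mathfrak{N}$ with $\bar{b}\bar{d}_k e\equiv\bar{a}\bar{c}_k m_i$ and $d(\bar{b}\bar{d}_k,\bar{b}\bar{d}_k)$... — more precisely, apply the lemma with the roles arranged so that the already-constructed $\mathfrak{N}$-side tuple moves by less than $\varepsilon_k$; symmetrically at odd stages go from $\mathfrak{N}$ to $\mathfrak{M}$. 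The summability of the $\varepsilon_i$ guarantees that each coordinate sequence is Cauchy, hence converges (the models are complete metric spaces), and that both dense sets get exhausted in the limit, so the resulting map extends to a surjective isometry $f:\mathfrak{M}\to\mathfrak{N}$ which preserves all formulas by continuity, i.e.\ an isomorphism, with $d(f(\bar{a}),\bar{b})<\varepsilon$ by the reserved budget. Uniqueness of the prime model is then the special case where we start with the empty tuple (or where $\mathfrak{M}=\mathfrak{N}$ need not be assumed — take $\bar{a}=\bar{b}=\emptyset$ in two prime models and the isomorphism falls out).

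The main obstacle is the bookkeeping that makes the approximations actually converge while simultaneously ensuring density is achieved on both sides: one must be careful that when the lemma is invoked it moves the \emph{previously fixed} part of the image tuple by a controlled amount (not just the new coordinate), and that the "move by $<\varepsilon_k$" applies to the whole accumulated tuple so that telescoping gives a genuine Cauchy sequence in each fixed coordinate slot. A secondary subtlety is justifying that the limit map is well-defined and isometric on all of $\mathfrak{M}$ — this follows because an isometry between dense subsets of complete metric spaces extends uniquely, and because types depend uniformly continuously on their arguments, so $\equiv$ is preserved in the limit. I would also remark that this is the point where approximate $\aleph_0$-saturation of prime models (or the atomicity-based extension lemma standing in for it) is genuinely needed, paralleling the discussion of approximately $\aleph_0$-saturated structures elsewhere in the paper; no new idea beyond the stated lemma is required, only its iterated and symmetric application.
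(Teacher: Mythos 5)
Your proposal is correct and takes essentially the same route as the paper: the same extension lemma for atomic types drives an approximate back-and-forth with a summable error budget, and completeness of the models turns the coordinatewise Cauchy sequences into a limit isomorphism with the reserved budget controlling $d(f(\overline{a}),\overline{b})$. The one bookkeeping issue you flag (making density survive the accumulated perturbations) is handled in the paper by enumerating \emph{tail dense} sequences, so that every element is approximated by elements inserted arbitrarily late, whose subsequent total perturbation is arbitrarily small; your least-index scheme can be made to work but is fiddlier at exactly that point.
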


\begin{proof}
Let $\mathfrak{M}$ and $\mathfrak{N}$ be prime. Let $\overline{a}\in\mathfrak{M}$
and $\overline{b}\in\mathfrak{N}$ with $\overline{a}\equiv\overline{b}$.
Fix $\varepsilon>0$. By the omitting types theorem, every type realized
in $\mathfrak{M}$ or $\mathfrak{N}$ is atomic. Let $\{a_{i}\}_{i<\omega}$
be an enumeration of a tail dense sequence in $\mathfrak{M}$ (i.e.\ a sequence in which
every final segment is dense), starting with $\overline{a},$ and
let $\{b_{i}\}_{i<\omega}$ be an enumeration of a tail dense sequence
in $\mathfrak{N}$ , starting with $\overline{b}$. We are going to
build an array $\{c_{j}^i\}_{i<\omega,j\leq i}\subseteq\mathfrak{M}$
and an array $\{e_{j}^i\}_{i<\omega,j\leq i}\subseteq\mathfrak{N}$
with the following properties:
\begin{itemize}
\item
For every $i<\omega$ and $j\leq i$, $d(c_{j}^i,c_{j}^{i+1})<2^{-i-2}\varepsilon$
and $d(e_{j}^i,e_{j}^{i+1})<2^{-i-2}\varepsilon$.
\item
At each stage $i$, $c_{0}^i c_{1}^i \dots c_{i}^i \equiv e_{0}^i e_{1}^i \dots e_{i}^i$.
\item
For every $k<\omega$, $c_{2k}^{2k}=a_{k}$ and $e_{2k+1}^{2k+1}=b_{k}$.
\end{itemize}
For the initial step, let $c_{0}^0=a_{0}$. Since $\mathrm{tp}(a_{0})$
is atomic, we can find $e_{0}^0\in\mathfrak{M}$ such that $c_{0}^0 \equiv e_{0}^0$.

On odd step $2k+1$, given $c_{\leq 2k}^{2k}$ and
$e_{\leq 2k}^{2k}$ satisfying $c_{\leq 2k}^{2k}\equiv e_{\leq 2k}^{2k}$,
let $e_{j}^{2k+1}=e_{j}^{2k}$ for $j\leq2k$ and let $e_{2k+1}^{2k+1}=b_{k}$.
In particular note that $d(e_{j}^{2k},e_{j}^{2k+1})=0<2^{-2k-2}\varepsilon$
for each $j\leq2k$. Now by the lemma we can find $c_{\leq 2k+1}^{2k+1}\in\mathfrak{M}$
such that $d(c_{\leq 2k}^{2k},c_{\leq 2k}^{2k+1})<2^{-2k-3}\varepsilon$
and such that $c_{\leq 2k+1}^{2k+1}\equiv e_{\leq 2k+1}^{2k+1}.$

On even step $2k+2$, given $c_{\leq 2k+1}^{2k+1}$
and $e_{\leq 2k+1}^{2k+1}$ satisfying $c_{\leq 2k+1}^{2k+1}\equiv e_{\leq 2k+1}^{2k+1}$,
let $c_{j}^{2k+2}=c_{j}^{2k}$ for $j\leq2k+1$ and let $c_{2k+2}^{2k+2}=a_{k}$.
In particular note that $d(c_{j}^{2k+1},c_{j}^{2k+1})=0<2^{-2k-3}\varepsilon$
for each $j\leq2k+1$. Now by the lemma we can find $e_{\leq 2k+2}^{2k+2}\in\mathfrak{N}$
such that $d(e_{\leq 2k+1}^{2k+1},e_{\leq 2k+1}^{2k+2})<2^{-2k-4}\varepsilon$
and such that $e_{\leq 2k+2}^{2k+2}\equiv c_{\leq 2k+2}^{2k+2}$.

For each $j<\omega,$ let $c_{j}=\lim_{i\rightarrow\omega}c_{j}^{i}$
and $e_{j}=\lim_{i\rightarrow\omega}e_{j}^{i}$. By construction these
are all Cauchy sequences and thus have limits in $\mathfrak{M}$ and
$\mathfrak{N}$ respectively. Also by uniform continuity of formulas
we have that $c_{0}c_{1}\dots\equiv e_{0}e_{1}\dots$. %WHAT IS GOING ON WITH THE DOT PLACEMENT HERE?

We need to show that the sequences $c_{<\omega}$ and $e_{<\omega}$
are metrically dense in $\mathfrak{M}$ and $\mathfrak{N}$ respectively.
This follows by the fact that the sequences $a_{<\omega}$ and $b_{<\omega}$
were tail dense. For any $x\in\mathfrak{M}$ and any $\delta>0$ there
is an $a_{i}$ such that $d(x,a_{i})<\frac{1}{2}\delta$ and $d(a_{i},c_{j})<\frac{1}{2}\delta$
for some $c_{j}$, implying that $d(x,c_{j})<\delta$. Therefore $c_{<\omega}$
is dense in $\mathfrak{M}$. Likewise for $e_{<\omega}$ in $\mathfrak{N}$,
so we have that the elementary map $f_{0}:c_{<\omega}\rightarrow e_{<\omega}$
extends to an isomorphism $f:\mathfrak{M}\rightarrow\mathfrak{N}$. 

Now note that if $\overline{c}$ is the initial segment of $c_{<\omega}$
of the same length as $\overline{a}$, then we have by construction
that $d(\overline{c},\overline{a})<\sum_{i<\omega}2^{-i-2}\varepsilon=\frac{1}{2}\varepsilon$.
Likewise if $\overline{e}$ is the initial segment of $e_{<\omega}$
of the same length as $\overline{b}$, then by construction we have
that $d(\overline{e},\overline{b})<\frac{1}{2}\varepsilon$. Therefore
we have that $d(f(\overline{a}),\overline{b})\leq d(f(\overline{a}),f(\overline{c}))+d(\overline{e},\overline{b})<\frac{1}{2}\varepsilon+\frac{1}{2}\varepsilon$,
as required.
\end{proof}

\begin{cor}
If $\mathfrak{M}$ is a prime model of a countable theory, then
it is approximately $\aleph_0$-homogeneous (i.e.\ satisfies that for any finite tuples $\overline{a}\equiv \overline{b}$ and for any $\varepsilon>0$ there is an automorphism $\sigma$ of $\mathfrak{M}$ such that $d(\sigma(\overline{a}),\overline{b}) < \varepsilon$).
\end{cor}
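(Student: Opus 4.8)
The plan is to observe that this is the special case $\mathfrak{N} = \mathfrak{M}$ of Proposition~\ref{Prop:Prime-Hom}. Concretely, suppose $\mathfrak{M}$ is a prime model of the countable theory $T$, and let $\overline{a} \equiv \overline{b}$ be finite tuples from $\mathfrak{M}$ and $\varepsilon > 0$. Since $\mathfrak{M}$ is in particular a prime model and $\overline{a} \in \mathfrak{M}$, $\overline{b} \in \mathfrak{M}$ with $\overline{a} \equiv \overline{b}$, the ``moreover'' clause of Proposition~\ref{Prop:Prime-Hom} applied with both prime models taken to be $\mathfrak{M}$ yields an isomorphism $f : \mathfrak{M} \rightarrow \mathfrak{M}$ — that is, an automorphism $\sigma := f$ of $\mathfrak{M}$ — with $d(\sigma(\overline{a}), \overline{b}) < \varepsilon$. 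As $\overline{a}$, $\overline{b}$, and $\varepsilon$ were arbitrary, this is exactly the assertion that $\mathfrak{M}$ is approximately $\aleph_0$-homogeneous.

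There is essentially no obstacle here: all of the work (the back-and-forth construction using atomicity of types realized in prime models, together with the Cauchy-completeness estimates that control how far the approximating tuples move) has already been carried out inside the proof of Proposition~\ref{Prop:Prime-Hom}, and the corollary is obtained simply by not requiring the source and target of the constructed isomorphism to be distinct. The only point worth a remark is that the isomorphism produced in the proposition is genuinely a surjective isometric elementary map, hence a bona fide automorphism when the two models coincide, so it qualifies as the $\sigma$ demanded by the definition of approximate $\aleph_0$-homogeneity. Accordingly I would present the proof as a one-line deduction from the proposition rather than repeating any of the construction.
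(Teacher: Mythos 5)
Your proposal is correct and matches the paper's intent exactly: the paper states this corollary with no proof, treating it as the immediate special case $\mathfrak{N}=\mathfrak{M}$ of Proposition~\ref{Prop:Prime-Hom}, which is precisely your one-line deduction. Nothing is missing.
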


Unlike in discrete logic, in continuous \insep\ categorical theories it may be necessary to look in imaginary sorts for strongly minimal sets (see Example \ref{ex:ult-example}, the unit ball of a $p$-adic $L^\infty$ space). \cite[Sec.\ 11]{MTFMS} alludes to the fact that the notion of imaginary sort in continuous logic ought to be more complicated than the corresponding notion in discrete logic, and \cite{10.2307/25747120} sketches this construction. For the sake of precision we will give an exact definition of what we mean by an imaginary sort here, as it is perhaps a little more complicated than someone familiar with discrete logic might suppose. To summarize the difference informally, we need to allow $\omega$-products and passing to definable subsets in addition to the familiar product and quotient sorts (where quotients are with regards to definable pseudo-metrics, not just definable equivalence relations). Note that this notion is more permissive than the traditional notion for discrete theories interpreted as continuous theories.\editcom{ Moved `than the traditional notion.'} 

%We will need the following fact about imaginaries, but its proof is long enough that we have relegated it to an appendix.
\begin{comment}

\subsection{Imaginaries}
\label{sec:imaginaries}

\end{comment}

\begin{defn}
  Given two pseudo-metrics $d_0$ and $d_1$ on some fixed set $X$, we say that $d_0$ \emph{uniformly dominates} $d_1$ if for every $\e > 0$, there is a $\delta>0$ such that for all $x,y \in X$, if $d_0(x,y)<\delta$, then $d_1(x,y)<\e$.

  We say that $d_0$ and $d_1$ are \emph{uniformly equivalent} if $d_0$ uniformly dominates $d_1$ and $d_1$ uniformly dominates $d_0$.
\end{defn}

\begin{defn} \label{defn:imag}
\emph{Imaginary sorts} are defined inductively:
\begin{itemize}
\item
The home sort is an imaginary sort.
\item
Any product of at most countably many imaginary sorts is an imaginary sort. A finite product sort is given the $\max$ metric, unless otherwise stated, and an $\omega$-product of sorts with metrics $\{d_k\}_{k<\omega}$ is given a metric of the form $\max_{k<\omega} a_k d_k$, where $a_k$ is a sequence of positive real numbers such that $\lim_{k\rightarrow \infty} a_k D_k = 0$ and $D_k$ is the diameter bound of the sort corresponding to $d_k$. We will typically take $a_k = 2^{-k}$.\footnote{Note that any such metric is always definable on the sort in question and any two such metrics are always uniformly equivalent.}
\item
Any quotient of an imaginary sort by a definable pseudo-metric is an imaginary sort.
\item
Any definable subset of an imaginary sort is an imaginary sort.
\end{itemize}
\end{defn}

%It is possible (with the help of Lemma~\ref{lem:imaginary-norm-form}) to characterize imaginaries in the sense of this definition as expansions which do not change the category of models, in the sense that these are the expansions for which every model of the original theory has a unique expansion to a model of the expanded theory. In discrete logic, it is also possible to show that any such expansion is a definable subset of a definable quotient of a product sort, which is very nearly equivalent to the standard definition of imaginary sort in discrete logic (the difference is only felt in the absence of parameters). Technically this characterization also requires that the theory in question have models with more than one element

One thing that should be noted is that, when talking about models, a quotient sort may contain more elements than the actual image of the corresponding quotient map, as we implicitly pass to the metric completion.

%%We will dispense with the notion of the rank of an imaginary immediately with the following normal form lemma:
Despite the apparent iterative complexity of the collection of imaginaries, all imaginaries can be definably re-expressed in a simple normal form:

%{\color{red} Add definitions of uniformly dominated and uniformly equivalent?}

%{\color{red} Maybe this should be in an appendix? Also done more carefully?}
\begin{comment}
\begin{lem}[Imaginary Normal Form]
Let $H^\omega$ be the sort of $\omega$-tuples of elements of the home sort. %{\color{red} Don't forget to undo the Roman thing.}%\leavevmode
\begin{enumerate}[label=(\roman*)] 
\item Every $\varnothing$-definable imaginary has a $\varnothing$-definable bijection with an imaginary that is a $\varnothing$-definable subset of a $\varnothing$-definable quotient of $H^\omega$.

\item {\color{red} Remove this as a bullet point? Seems less important.} For any parameter set $A$, any $A$-definable quotient of a $\varnothing$-definable imaginary has an $A$-definable bijection with an $A$-definable subset of a $\varnothing$-definable imaginary.

\item For any parameter set $A$, any $A$-definable imaginary has an $A$-definable bijection with an $A$-definable subset of a $\varnothing$-definable quotient of $H^\omega$.
\end{enumerate}
\end{lem}
\end{comment}

\begin{lem}[Imaginary Normal Form]\label{lem:imaginary-norm-form}
  For any parameter set $A$, any $A$-definable imaginary has an $A$-definable bijection with an $A$-definable subset of a $\varnothing$-definable quotient of $H^\omega$, the sort of $\omega$-tuples from the home sort.
\end{lem}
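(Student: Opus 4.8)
The plan is to prove the normal form by induction on the construction of the imaginary sort, following the inductive definition in Definition~\ref{defn:imag}, and reducing everything to a canonical shape: an $A$-definable subset of a $\varnothing$-definable quotient of $H^\omega$. First I would fix the target class $\mathcal{C}$ of ``nice'' sorts, namely those $A$-definable sorts that admit an $A$-definable bijection with an $A$-definable subset of a $\varnothing$-definable quotient of $H^\omega$, and show that $\mathcal{C}$ is closed under each of the four sort-forming operations. The home sort is trivially in $\mathcal{C}$ (take the quotient by the trivial pseudo-metric on $H^\omega$ and the subset $\{x : x_i = x_j \text{ for all } i,j\}$, which is $\varnothing$-definable since it is a zeroset of a formula, then compose with the obvious $A$-definable---in fact $\varnothing$-definable---bijection to $H$).

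The key closure steps are then: (1) \emph{countable products.} Given sorts $S_k$, each with an $A$-definable bijection onto $D_k \subseteq H^\omega/{\sim_k}$, I would use the fact that an $\omega$-tuple of $\omega$-tuples from $H$ is itself, via a $\varnothing$-definable pairing of $\omega \times \omega$ with $\omega$, an element of $H^\omega$; the product of the pseudo-metrics (suitably rescaled as in Definition~\ref{defn:imag}) is again a $\varnothing$-definable pseudo-metric on $H^\omega$, and the product of the $D_k$'s pulls back to an $A$-definable subset. Finite products are the special case where all but finitely many coordinates are collapsed. (2) \emph{quotients.} If $S \in \mathcal{C}$ via an $A$-definable bijection to $D \subseteq H^\omega/{\sim}$, and $\rho$ is an $A$-definable pseudo-metric on $S$, I would transport $\rho$ to an $A$-definable pseudo-metric $\rho'$ on $D$, extend it to a pseudo-metric on all of $H^\omega/{\sim}$ (here using that $D$ is definable-in-$H^\omega/{\sim}$, so one can take something like $\rho''(x,y) = \inf_{x',y' \in D}(d(x,x') + \rho'(x',y') + d(y,y'))$, which is $A$-definable), and combine with the quotient pseudo-metric on $H^\omega$ to get a single $\varnothing$-definable pseudo-metric whose quotient, intersected with the image of $D$, does the job; the delicate point is that the parameters $A$ defining $\rho$ are themselves (represented by) elements of imaginaries, so one must absorb them---this is where one uses that a pseudo-metric ``over $A$'' can be viewed as a $\varnothing$-definable pseudo-metric on the product of $H^\omega$ with the sort of (a tuple representing) $A$, then restrict to the fiber over the actual parameter. (3) \emph{definable subsets.} This is essentially immediate from Lemma~\ref{lem:def-rel-def}: an $A$-definable subset of $S \in \mathcal{C}$ pulls back along the $A$-definable bijection to an $A$-definable (by Lemma~\ref{lem:def-rel-def}, since it is definable relative to a definable set, hence definable) subset of $D \subseteq H^\omega/{\sim}$, and a definable subset of a definable subset is a definable subset of the ambient quotient.

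The main obstacle I anticipate is the bookkeeping around parameters in the quotient step: the pseudo-metric defining a quotient imaginary is allowed to be over $A$, and $A$ may itself live in previously-constructed imaginary sorts rather than the home sort, so one must be careful that ``$A$-definable pseudo-metric'' can be uniformly recoded as the fiber of a $\varnothing$-definable pseudo-metric on a larger $H^\omega$-based sort. The cleanest way to handle this is to carry, as part of the inductive hypothesis, a fixed $\varnothing$-definable quotient $H^\omega/{\sim}$ and a fixed $A$-tuple $\bar a$ from it whose definable closure contains everything in sight, and to phrase each definability claim relative to $\bar a$; then all the ``$A$-definable'' data becomes ``$\varnothing$-definable in the variables $(\text{new coordinates}, \bar a)$,'' and the constructions above go through with $\varnothing$-definable ingredients. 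A secondary technical point is verifying that the pseudo-metrics produced by the $\inf$-type formulas are genuinely definable predicates (equivalently, formulas in the sense of this paper) and are uniformly equivalent to the intended metrics on the relevant subsets, which is routine given uniform continuity of $\rho$ and compactness, exactly as in the proof of Lemma~\ref{lem:def-rel-def}.
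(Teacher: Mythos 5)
Your proposal follows essentially the same route as the paper: a structural induction over the sort-forming operations of Definition~\ref{defn:imag}, with Lemma~\ref{lem:def-rel-def} handling definable subsets, an extension-of-pseudo-metric argument handling quotients, and a separate parameter-absorption step turning an $A$-definable quotient into an $A$-definable subset of a $\varnothing$-definable one.

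Two details you leave as ``routine'' are where the actual content sits, so be careful. First, your candidate extension $\rho''(x,y)=\inf_{x',y'\in D}\bigl(d(x,x')+\rho'(x',y')+d(y,y')\bigr)$ does not work as written: both the triangle inequality for $\rho''$ and the identity $\rho''\upharpoonright D=\rho'$ require $\rho'\leq d$ pointwise, which is not implied by $\rho'$ being uniformly dominated by $d$. The fix is exactly the one in the proof of Lemma~\ref{lem:def-rel-def}: replace $d$ by $\alpha(d)$ for a concave, non-decreasing modulus $\alpha$ of uniform continuity of $\rho'$ with $\alpha(0)=0$ and $\rho'\leq\alpha(d)$. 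Second, for parameter absorption, ``the fiber over $\bar a$ of a $\varnothing$-definable pseudo-metric on the product sort'' needs an explicit device, since $\rho(x,y;\bar z)$ has no canonical meaning across distinct parameter values $\bar z\neq\bar w$; the paper uses $\rho'(x,\bar z,y,\bar w)=\max\bigl\{d(\bar z,\bar w),\,\sup_t|\rho(t,x,\bar z)-\rho(t,y,\bar w)|\bigr\}$, which is a pseudo-metric on $I\times H^\ell$ regardless of whether $\rho(\cdot,\cdot;\bar z)$ is one for arbitrary $\bar z$, and restricts to $\rho(\cdot,\cdot;\bar a)$ on the fiber. With those two formulas supplied, your argument matches the paper's.
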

\begin{proof}
First we will (a) prove the lemma for $A = \varnothing$, then we will (b) prove that any $A$-definable quotient and a $\varnothing$-definable imaginary has an $A$-definable bijection with an $A$-definable subset of a $\varnothing$-definable imaginary. Then, finally, we will use this to (c) prove the full lemma.

(a). We'll say that an imaginary that is a definable subset of a quotient of $H^\omega$  is in `normal form.' We'll show by structural induction that every imaginary is in definable bijection with one in normal form. The home sort is in definable bijection with a quotient of $H^\omega$, namely the one that ignores all of the $\omega$-tuple except for the first element.

Assume that $I$ is an imaginary that is a definable subset of an imaginary in normal form. Since a relatively definable subset of a definable set is definable (by Lemma \ref{lem:def-rel-def}), we have that $I$ can be re-expressed in normal form.

Assume that $I$ is an imaginary that is a definable quotient of an imaginary in normal form. First we need to show that if $D$ is a definable set in some sort $S$ and $\rho$ is a pseudo-metric on $D$, then there is a pseudo-metric $\rho^\prime$ on $S$ such that $\rho^\prime \upharpoonright D = \rho$. Assume without loss of generality that $d$ and $\rho$ are $[0,1]$-valued. Since $\rho$ is a definable pseudo-metric, it is uniformly dominated by $d$, so in particular we can find a continuous function $\alpha:[0,1]\rightarrow [0,1]$ such that
\begin{itemize}
\item  $\alpha(x)=0$ if and only if $x=0$,
\item  $\alpha$ is concave down and non-decreasing,
\item  and $\rho(x,y) \leq \alpha(d(x,y))$ for all $x,y$.
\end{itemize}
Note that by the first two conditions we have that $\alpha(d)$ is a metric uniformly equivalent to $d$. Consider the formula
\[
  \rho^\prime (x,y) = \inf_{z,w\in D} \alpha(d(x,z)) + \rho(z,w) + \alpha(d(w,y)).
\]
  First to see that $\rho^\prime \upharpoonright D = \rho$, for any $x,y,z,w \in D$ we have $\rho(x,y) \leq \rho(x,z) + \rho(z,w) + \rho(w,y) \leq \alpha(d(x,z)) + \rho(z,w) + \alpha(d(w,y))$, so $\rho(x,y) \leq \rho^\prime (x,y)$. But clearly $\rho^\prime (x,y) \leq \rho(x,y)$ so we have $\rho(x,y) = \rho^\prime (x,y)$ for $x,y\in D$ as required. $\rho^\prime$ clearly satisfies $\rho^\prime(x,x) = 0$, $\rho^\prime(x,y) \geq 0$, and $\rho^\prime (x,y) = \rho^\prime (y,x)$, so we only need to verify that $\rho^\prime$ satisfies the triangle inequality. Pick $x,y,z$. For any $u,v,s,t \in D$ clearly we have
  \begin{align*}
\rho(u,t) &\leq \rho(u,v)+\rho(v,s)+\rho(s,t) \leq \rho(u,v)+\alpha(d(v,s))+\rho(s,t) \\
 &\leq \rho(u,v) + \alpha(d(v,y))+\alpha(d(y,s)) +\rho(s,t).
  \end{align*}
So we have
\begin{align*}
  \rho^\prime(x,z) &\leq \alpha(d(x,u)) + \rho(u,t) + \alpha(d(t,z)) \\
&\leq \alpha(d(x,u))+\rho(u,v)+\alpha(d(v,y)) + \alpha(d(y,s))+\rho(s,t) + \alpha(d(t,z)) .
\end{align*}
So since this is true for any $u,v,s,t \in D$, we must have 
\[\rho^\prime(x,z) \leq \rho^\prime(x,y) + \rho^\prime(y,z),\]
 as required.

So we have a definable pseudo-metric $\rho^\prime$ extending $\rho$ to all of $S$. Now we need to argue that the (metric closure of the) image $D/\rho^\prime$ is a definable subset of $S/\rho^\prime$, 
 but this is trivial since the distance predicate of $D/\rho^\prime$ is just $\rho^\prime(x,D/\rho^\prime) =\inf_{y\in D} \rho^\prime(x,y)$, which is a formula because $D$ is a definable set.

We have shown that if we have a quotient of an imaginary in normal form we can re-express it as a definable subset of a quotient of a quotient of $H^\omega$. So then all we need to do is note that a quotient by some pseudo-metric $\rho^\prime$ of a quotient by some pseudo-metric $\rho$ is equivalent to a quotient by $\rho^\prime$ in the first place, since $\rho^\prime$ is still a definable pseudo-metric on the original sort. From this we get that $I$ can be expressed in normal form.\editcom{ Changed word `quotienting.' Split off last clause into sentence.} %{\color{red} I'm worried that this proof is too sloppy.}

Let $I$ be an imaginary that is a (possibly $\omega$-)product of imaginaries in normal form. Let $\{I_k\}_{k<\ell}$ be the family of imaginaries and let $I_k$ be the set $D_k$ that is a definable subset of $H^\omega / \rho_k$, where $\rho_k$ is some definable pseudo-metric.  Our first claim is that $\prod_{k<\ell} D_k$ is a definable subset of $X = \prod_{k<\ell} H^\omega / \rho_k$. If $\ell$ is finite, then the metric on $X$ is given by $\mathrm{max}(\rho_0,\dots,\rho_{\ell - 1})$ and so the distance predicate of $\prod_{k<\ell} D_k$ is just the maximum of the distance predicates of the $D_k$. If $\ell$ is infinite, then the metric on $X$ is given by $\max_{k<\omega}{2^{-k} \rho_k}$, so again we can define the distance predicate of  $\prod_{k<\omega} D_k$ in $X$ by $\max_{k<\omega} 2^{-k} \rho_k(x_k, D_k)$, 
% which is a definable predicate,
so again  $\prod_{k<\ell} D_k$  is a definable subset of $X$. All that's left is to argue that $X$ is equivalent to a quotient of $H^\omega$, but this is easy since the metric on $X$ is a formula on $((H^\omega)^\ell)^2$ and $(H^\omega)^\ell$ and $H^\omega$ clearly have a $\varnothing$-definable bijection between them. %{\color{red} How clear is clear?}

So by structural induction on imaginaries, we have that every $\varnothing$-definable imaginary has a $\varnothing$-definable bijection with an imaginary that is a $\varnothing$-definable subset of a $\varnothing$-definable quotient of $H^\omega$.

(b). Let $\rho(x,y;\overline{a})$ be an $A$-definable pseudo-metric on some $\varnothing$-definable imaginary sort $I$. Now consider the formula
\[
  \rho^\prime(x,\overline{z},y,\overline{w})=\imax{ d(\overline{z},\overline{w})} { \sup_{t}|\rho(t,x,\overline{z})-\rho(t,y,\overline{w})|}.
\]
This is clearly a pseudo-metric on $I\times H^\ell$ for some $\ell \leq \omega$ (since it is the supremum of a family of pseudo-metrics). The set $U \times \{\overline{a}\}$ is an $A$-definable subset of $U \times H^\ell$, so by the argument in part (a) its image in $(U \times H^\ell ) / \rho^\prime$ is a definable subset, which we'll call $D$. Furthermore, if $d(\overline{z},\overline{w}) > 0$, then $\rho^\prime (x,\overline{z},y,\overline{w})>0$ as well, so if some $x\overline{z} \in I \times H^\omega$ maps to $D$, then $\overline{z}= \overline{a}$. Finally,  the pseudo-metric $\rho^\prime$ restricted to the set $I \times {\overline{a}}$ is clearly identical to the pseudo-metric $\rho$ under the obvious bijection between $I$ and $I \times \{\overline{a}\}$. This implies that this bijection respects both of the quotients $I / \rho$ and $(I\times\{\overline{a}\}) / \rho^\prime$, but the second one is identically the definable set $D$, so $I / \rho$ has an $A$-definable bijection with an $A$-definable subset of the $\varnothing$-definable imaginary $(I\times H^\ell)/\rho^\prime$.

(c). We have that every $A$-definable imaginary has an $A$-definable bijection with an imaginary that is an $A$-definable subset $D$ of an $A$-definable quotient $H^\omega / \rho$. We also have that $H^\omega / \rho$ has an $A$-definable bijection with an $A$-definable subset $E$ of some $\varnothing$-definable imaginary (specifically a different quotient of $H^\omega$). Thinking of $D$ as an $A$-definable subset of $E$ we get that $D$ is an $A$-definable subset of some $\varnothing$-definable quotient of $H^\omega$.
\end{proof}

\begin{nota}
  Given an imaginary sort $I$ and a set of parameters $A$, we will write the type space of elements of $I$ over the parameters $A$ as $S_I(A)$.
\end{nota}

\section{Vaughtian Pairs} \label{sec:VP}

 There are two strengthenings of the no Vaughtian pairs condition that we will need to use later on. Here we develop familiar facts regarding these notions and, in particular, prove that \insep\ categorical theories satisfy these stronger conditions.

\begin{defn}
If $X\subseteq S_{1}(A)$ is a definable (resp.\ open or open-in-definable)
set containing a non-algebraic type, with $A$ countable, and if $\mathfrak{M}\succ\mathfrak{N}\supseteq A$
is a proper elementary pair such that $X(\mathfrak{M})=X(\mathfrak{N})$,
then we say that $(\mathfrak{M},\mathfrak{N})$ is a \emph{definable} (resp.\
\emph{open} or \emph{open-in-definable}) \emph{Vaughtian pair} (with regards to $X$).
A Vaughtian pair with no qualifier is a definable Vaughtian pair.
\end{defn}

Note that if a theory has no open-in-definable Vaughtian pairs then
it has no open Vaughtian pairs and no definable Vaughtian pairs, since open sets and definable sets are special cases of open-in-definable sets. In discrete logic these three notions are essentially the same;
in continuous logic, however, they are all distinct:\editcom{ Removed `of course.'} 

%{\color{red} FIGURE?}
\begin{ex} \label{ex:VP-examples}
\leavevmode
\begin{enumerate}[label=(\roman*)]
\item If $\mathfrak{M}$ is a model of any discrete theory with a Vaughtian
pair, then the structure $\mathfrak{M}^{\omega}$ with the metric
$d(\alpha,\beta)=2^{-i}$ where $i$ is the smallest index such that
$\alpha(i)\neq\beta(i)$ and with the function $f(\alpha)(i)=\alpha(i+1)$,
has a definable Vaughtian pair but no open Vaughtian pairs. If $\mathrm{Th}(\mathfrak{M})$
is $\omega$-stable but is not \insep\ categorical, then this
is an example of an $\omega$-stable theory with no open Vaughtian
pairs that is not \insep\ categorical.

\item (Example~\ref{ex:main-bad-example}) If $\mathfrak{N}$ is a discrete metric space with a single unary
$[0,1]$-valued predicate whose values are dense in $[0,1]$, then
$\mathrm{Th}(\mathfrak{N})$ has no definable Vaughtian pairs (because
every definable set is either finite or cofinite in every model) but
does have an open Vaughtian pair. %(Note that this is the same as Example \ref{ex:main-bad-example}.) {\color{red} CLEAN UP THESE TWO EXAMPLES}

\item The structure $\mathfrak{N}^{\omega}$---where $\mathfrak{N}$
is the structure in part \emph{(ii)} with the truncation map
$f(\alpha)(i)=\alpha(i+1)$ and with a $[0,1]$-valued unary predicate that is the predicate from part \emph{(ii)} evaluated on $\alpha(0)$---is an example of a structure whose theory has no definable Vaughtian pairs and no open Vaughtian pairs but which does have an open-in-definable Vaughtian pair.
\end{enumerate}
\end{ex}

\begin{proof} [Verification]
  \emph{(i)} is clear. \emph{(ii)} is verified in Example \ref{ex:main-bad-example}. For \emph{(iii)}, $\mathrm{Th}(\mathfrak{N}^{\omega})$
has no open Vaughtian pairs, by the same argument as in \emph{(i)}. This is an imaginary sort of $\mathfrak{N}$,
so in particular if $D$ is a definable subset of $\mathfrak{N}^{\omega}$
and for some $\sigma\in\mathfrak{N}^{<\omega}$, we look at the ball
$B_{\sigma}=\{\alpha\in\mathfrak{N}^{<\omega}:\sigma\prec\alpha\}$
and the function $f:B_{\sigma}\rightarrow\mathfrak{N}$ which maps
$f(\alpha)=\alpha(|\sigma|)$ (i.e.\ the first element of $\alpha$ not
in $\sigma$), then $D\cap B_{\sigma}$ is definable (since $B_{\sigma}$
is logically clopen) and the image $f(D\cap B_{\sigma})$ is a definable
subset of $\mathfrak{N}$. This implies that any definable subset
of $\mathfrak{N}^{\omega}$ is actually definable in the reduct where
the $[0,1]$-valued predicate is removed. This reduct is $\aleph_1$-categorical,
so it has no definable Vaughtian pairs. Therefore $\mathrm{Th}(\mathfrak{N}^{\omega})$
has no definable Vaughtian pairs. However if we fix some element $b\in\mathfrak{N}$
and note that if $D=\{\alpha:(\forall i>0)\alpha(i)=b\}$, then $D$
is a definable set, since $d(x,D)=\frac{1}{2}d(f(x),b^{\omega})$.
We then get an open-in-definable Vaughtian pair by considering the
same open set as in part \emph{(ii)}, even though this theory does not have any definable Vaughtian pairs or any open Vaughtian pairs.\editcom{ Moved `then.'} 
\end{proof}

%%There was a proposition here. It's gone now.

There is a strictly stable discrete theory which has no Vaughtian pairs
but which does have an imaginary Vaughtian pair, although the same cannot happen in a discrete superstable theory \cite{BOUSCAREN1989129}.
% COME BACK TO THIS

%{\color{red} THIS IS WHERE I STOPPED READING `CLOSELY.'}
\begin{prop}
\label{prop:no-VP}Suppose that $T$ is a countable theory, $U\subseteq D\subseteq S_{n}(B)$
is an open-in-definable set, and $B\subseteq\mathfrak{N}\prec\mathfrak{M}$
is a Vaughtian pair over $U$, then there exists a model $\mathfrak{A}$
such that for some open $V\subseteq D$ containing a non-algebraic
type, $\#^{\mathrm{dc}}\mathfrak{A}=\aleph_1$ but $\#^{\mathrm{dc}}V(\mathfrak{A})=\aleph_0$.
\end{prop}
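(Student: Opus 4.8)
The plan is to run a continuous‑logic version of the classical ``Vaughtian pair $\Rightarrow$ two‑cardinal model'' argument: build the big model as the metric completion of an elementary chain of length $\omega_1$ along which a fixed small relatively open subset of $D$ is frozen by an omitting‑types argument, while the rest of the model is forced to grow at a fixed metric scale.

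\emph{Set‑up.} Since $B$ is countable, a downward Löwenheim--Skolem argument---building the two models of the pair simultaneously so as to preserve $U(\mathfrak{M})=U(\mathfrak{N})$---lets us assume $\mathfrak{M},\mathfrak{N}$ are separable and, in addition, that $\mathfrak{N}$ is approximately $\aleph_0$‑saturated. Fix a non‑algebraic type $p\in U$. As $U$ is open in $D$, the closed sets $\{p\}$ and $D\smallsetminus U$ are disjoint in the (normal) space $S_n(B)$, so Urysohn gives a formula $\theta:S_n(B)\to[0,1]$ with $\theta(p)=0$ and $\theta\!\restriction\!(D\smallsetminus U)=1$; then $\cset{\theta<1}\cap D\subseteq U$. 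Choose $\e>0$ so small that $V:=\cset{\theta<\e}\cap D$ is a relatively open subset of $D$ containing $p$, contained in $U$, and so that $F:=\cset{(2\e\dotdiv\theta)\brackconv}=\cset{\theta\geq 2\e}$ is \emph{non‑algebraic} (possible, since $V$ can be taken an arbitrarily small neighborhood of $p$ while $S_n(B)$ itself is never algebraic, and $F$ is a zeroset disjoint from $\cl V$). Because $\mathfrak{N}$ is approximately $\aleph_0$‑saturated and $p$ is non‑algebraic, $V(\mathfrak{N})$ is metrically non‑compact, so $\dc V(\mathfrak{N})=\aleph_0$; and since $V\subseteq U$ we get $V(\mathfrak{M})\subseteq U(\mathfrak{M})=U(\mathfrak{N})\subseteq\mathfrak{N}$, hence $V(\mathfrak{M})=V(\mathfrak{N})$.

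\emph{The chain.} Fix $\delta_0>0$ witnessing that $F$ is non‑algebraic (an infinite $\delta_0$‑separated set of realizations of $F$ in a sufficiently large model). Build an increasing elementary chain $(\mathfrak{M}_\alpha)_{\alpha<\omega_1}$ of separable models of $T$, $\mathfrak{M}_0=\mathfrak{M}$, maintaining for each $\alpha$ that $(\cl V)(\mathfrak{M}_{\alpha+1})=(\cl V)(\mathfrak{M}_\alpha)$ and that $\mathfrak{M}_{\alpha+1}$ contains a point of $F$ at distance $\geq\delta_0$ from $\mathfrak{M}_\alpha$; at limits take metric completions of unions. A Cauchy limit of points that are each approximately outside $\cl V$ and new over earlier stages cannot lie in $V$, so the frozen condition propagates through limits and $V(\mathfrak{M}_\alpha)=V(\mathfrak{M})$ for all $\alpha\leq\omega_1$. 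The successor step uses the continuous omitting types theorem (the parameter language stays countable): the partial type $\tau_\alpha(x)$ asserting ``$\theta(x)\leq\e$, $d(x,D)\leq\e$, and $d(x,a)\geq 1/k$ for all $a\in\mathfrak{M}_\alpha$, $k<\omega$'' is not $d$‑atomic over the elementary diagram of $\mathfrak{M}_\alpha$ expanded by the type $r$ of the new point---a formula $d$‑isolating it would have to keep its small‑value sets metrically bounded away from $\mathfrak{M}_\alpha$ while staying consistent, impossible for a formula over $\mathfrak{M}_\alpha$---so one may realize $r$ (taken to be the type over $\mathfrak{M}_\alpha$ of a point in $F$, hence with $\theta\geq 2\e$, that is $\delta_0$‑far from $\mathfrak{M}_\alpha$, provided by non‑algebraicity of $F$) while omitting $\tau_\alpha$; since the new point has $\theta\geq 2\e>\e$ it does not realize $\tau_\alpha$, so the two requirements are compatible.

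\emph{Conclusion and main obstacle.} Let $\mathfrak{A}=\mathfrak{M}_{\omega_1}$ be the final completed union. The $\aleph_1$‑many added points are pairwise at distance $\geq\delta_0$, so $\dc\mathfrak{A}\geq\aleph_1$, while a union of $\omega_1$ countable dense sets is dense in $\mathfrak{A}$, so $\dc\mathfrak{A}=\aleph_1$; and $V(\mathfrak{A})=V(\mathfrak{M})=V(\mathfrak{N})\subseteq\mathfrak{N}$ gives $\dc V(\mathfrak{A})=\aleph_0$, as required. The delicate point is the successor step: simultaneously freezing $V$ (omitting $\tau_\alpha$) and genuinely enlarging the model at a fixed scale (realizing $r$). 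The second need is exactly where the hypothesis that $U$ contains a non‑algebraic type is used---it lets us shrink $V$ so that the complementary zeroset $F$ remains non‑algebraic---while the first hinges on the topometric subtleties of $d$‑isolation in the continuous omitting types theorem (and on the behaviour of the frozen condition under completion at limits). If no such $V$ can be chosen making $\tau_\alpha$ non‑$d$‑atomic, then some relatively open $V'\subseteq U$ containing a non‑algebraic type is relatively definable in $D$, and one instead runs the easier ``definable Vaughtian pair'' version, naming $V'$ by a predicate.
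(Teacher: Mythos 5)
Your overall architecture (a length-$\omega_1$ elementary chain, a fixed separation scale to force $\#^{\mathrm{dc}}\mathfrak{A}=\aleph_1$, locatability of the open set to control limit stages and the final completion) matches the paper's, but the successor step is where the proof lives, and there you have a genuine gap. The paper's engine is the classical one: it first produces a single countable pair $(\mathfrak{A}_1,\mathfrak{A}_0)$ of $\aleph_0$-homogeneous pre-structures realizing the same types, elementarily embedded in $(\mathfrak{M},\mathfrak{N})$, so that $\mathfrak{A}_1\cong\mathfrak{A}_0$, and then iterates by gluing on isomorphic copies of that one pair; the ``frozen'' behaviour of $U$ at every successor step is thereby inherited from the original Vaughtian pair. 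You instead try to manufacture each successor step from scratch by an omitting-types argument over $\mathfrak{M}_\alpha$. But the hypothesis that $(\mathfrak{M},\mathfrak{N})$ is a Vaughtian pair says nothing about $\mathfrak{M}_\alpha$ once the chain has left the original pair behind: since you do not maintain the invariant that $(\mathfrak{M}_{\alpha+1},\mathfrak{M}_\alpha)$ is isomorphic (or even elementarily equivalent as a pair) to $(\mathfrak{M},\mathfrak{N})$, there is no reason that $\mathfrak{M}_1$ admits \emph{any} proper elementary extension freezing $V$, and your one-sentence justification that $\tau_\alpha$ is ``not $d$-atomic'' is precisely the assertion that needs proof. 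Nothing rules out a type $q\in V$ not realized in $\mathfrak{M}_\alpha$ whose extension to $\mathfrak{M}_\alpha c$ (for $c\models r$) is $d$-atomic; such a type is realized in every model of the relevant theory, and then $\tau_\alpha$ cannot be omitted.

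There are two further problems. First, even granting non-$d$-atomicity, the omitting types theorem in continuous logic does not permit omitting an arbitrary non-$d$-atomic partial type: omissibility requires a strictly stronger metric non-principality condition, and this is a known failure point of the naive transfer from discrete logic, so the theorem cannot simply be cited here. Second, the invariant you propose, $(\cl V)(\mathfrak{M}_{\alpha+1})=(\cl V)(\mathfrak{M}_\alpha)$, is stronger than what can be maintained: already when passing to countable elementary sub-pre-structures of $(\mathfrak{M},\mathfrak{N})$, literal equality of realization sets is lost, which is why the paper only maintains that $U(\mathfrak{A}_0)$ is \emph{metrically dense} in $U(\mathfrak{A}_i)$ throughout the chain --- and why the conclusion of the proposition is $\#^{\mathrm{dc}}V(\mathfrak{A})=\aleph_0$ rather than $V(\mathfrak{A})=V(\mathfrak{M})$. (The ``without loss of generality $\mathfrak{N}$ is approximately $\aleph_0$-saturated'' step is a smaller flaw: it is false in general, since $S_n(B)$ need not be metrically separable, but you never actually need it.) To repair the argument, replace the omitting-types successor step by the construction of a single countable isomorphic pair realizing the same types and iterate that pair, tracking metric density rather than equality, as in the paper.
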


\begin{proof}
Find $p\in U$ that is non-algebraic and find a formula $\varphi$ such that $p\in\cset{\varphi<\frac{1}{2}}\subseteq U$. Note that $\varphi$ and $D$ are definable over some
countable set $B_{0}\subseteq B$. Now it is clear that $(\mathfrak{M},\mathfrak{N})$
is still an open-in-definable Vaughtian pair with regards to $V=\cset{\varphi<\frac{1}{2}}$.\editcom{ Removed restricted formula.} 

Let $(\mathfrak{D}_{1},\mathfrak{D}_{0})$ be a countable elementary
sub-pre-structure\footnote{Recall that a pre-structure is a metric structure in which the metric is allowed to be a possibly incomplete pseudo-metric \cite{MTFMS}.\editcom{Removed `as in.'}} of $(\mathfrak{M},\mathfrak{N})$ such that $\mathfrak{D}_{0}\supseteq B_{0}$.
By the same argument as in discrete logic, we can find countable pre-structures
$(\mathfrak{A}_{1},\mathfrak{A}_{0})\succeq(\mathfrak{D}_{1},\mathfrak{D}_{0})$
such that $\mathfrak{A}_{1}$ and $\mathfrak{A}_{0}$ realize
the same types, and are both (exactly) $\aleph_0$-homogeneous (as pre-structures, i.e.\ only over parameters that are actually in them),\editcom{Changed `in the pre-structures' to `in them.'} so that
in particular $\mathfrak{A}_{1}\cong\mathfrak{A}_{0}$. We can also
ensure that $\mathfrak{A}_{1}$ and $\mathfrak{A}_{0}$ both realize
dense subsets of $D$, which implies that $\mathfrak{A}_{0}$ realizes
a metrically dense subset of $U(\mathfrak{A}_{0})=U(\mathfrak{A}_{1})$,
since $U$ is relatively open in $D$.

We can run the same elementary chain argument as in discrete logic,
the argument that $\mathfrak{A}_{i}$ for limit $i$ is still isomorphic
(as a countable pre-structure) to $\mathfrak{A}_{0}$ still works,
but we need to argue that at each stage $i$ the set $U(\mathfrak{A}_{0})$
is still metrically dense in $U(\mathfrak{A}_{i})$. Obviously, by
construction, $U(\mathfrak{A}_{0})$ is metrically dense in $U(\mathfrak{A}_{1})$.
Suppose that for some $i<\omega_{1}$, $U(\mathfrak{A}_{0})$ is metrically dense in $U(\mathfrak{A}_{i})$ and
 $(\mathfrak{A}_{i+1},\mathfrak{A}_{i})\cong(\mathfrak{A}_{1},\mathfrak{A}_{0})$.
We get from this that $U(\mathfrak{A}_{i})$ is metrically dense in $U(\mathfrak{A}_{i+1})$,
implying that $U(\mathfrak{A}_{0})$ is metrically dense in $U(\mathfrak{A}_{i+1})$
as well. Suppose that for some limit $i<\omega_{1}$ we have that
for all $j<i$, $U(\mathfrak{A}_{0})$ is metrically dense in $U(\mathfrak{A}_{j})$.
Then any $a\in U(\mathfrak{A}_{i})$ is actually in $U(\mathfrak{A}_{j})$
for some $j<i$, so it is still in the metric closure of $U(\mathfrak{A}_{0})$,
therefore $U(\mathfrak{A}_{0})$ is still metrically dense in $U(\mathfrak{A}_{i})$.

After taking the union and metric completion to get $\mathfrak{A}=\overline{\bigcup_{i<\omega_1}\mathfrak{A}_i}$,
we need to argue that $U(\mathfrak{A}_{0})$ is still metrically dense
in $U(\mathfrak{A})$. Suppose that $a\in U(\mathfrak{A})$. Since
it is a relatively open set in a definable set, it must be the metric
limit of some sequence $a_{k}\in\bigcup_{i<\omega_{1}}U(\mathfrak{A}_{i})$.
Therefore since $U(\mathfrak{A}_{0})$ is dense in every $U(\frk{A}_i)$,\editcom{Changed `all of those.'} it
must be in the metric closure of $U(\mathfrak{A}_{0})$ as well, so
in particular $\#^{\mathrm{dc}}U(\mathfrak{A})=\aleph_0$.

Finally we need to show that $\#^{\mathrm{dc}}\mathfrak{A}=\aleph_1$.
Let $\varepsilon>0$ be such that for some $a\in\mathfrak{A}_{1}$,
$d(a,\mathfrak{A}_{0})>\varepsilon$. By isomorphism, for each $i<\omega_1$,
we can find an $a_{i}\in\mathfrak{A}_{i+1}\smallsetminus\mathfrak{A}_{i}^{<\varepsilon}$,
this is a $({>}\varepsilon)$-separated set of size $\aleph_1$, so
$\#^{\mathrm{dc}}\mathfrak{A}\geq\aleph_1$. On the other hand $\mathfrak{A}$
clearly has a dense subset of size $\aleph_1$, so $\#^{\mathrm{dc}}\mathfrak{A}=\aleph_1$,
as required.
\end{proof}
\begin{cor}
If $T$ is a countable $\aleph_1$-categorical theory, then $T$
has no open-in-definable Vaughtian pairs. 
\end{cor}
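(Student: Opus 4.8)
\emph{Proof proposal.} The plan is to run the continuous analogue of Vaught's theorem that an $\aleph_1$-categorical theory omits Vaughtian pairs, feeding on Proposition~\ref{prop:no-VP}, but with an extra precaution: an abstract isomorphism produced by categoricity need not fix the (countable) parameter set defining the set witnessing a Vaughtian pair, so the auxiliary ``large'' model has to behave the same over \emph{every} realization of the relevant type. So, suppose toward a contradiction that $T$ has an open-in-definable Vaughtian pair, with respect to some open-in-definable $X \subseteq D$ over a countable set. By Proposition~\ref{prop:no-VP} (together with the fact that every formula factors through a countable parameter set) we get a model $\mathfrak{A}\models T$ with $\#^{\mathrm{dc}}\mathfrak{A}=\aleph_1$, a countable $B_0\subseteq\mathfrak{A}$, a $B_0$-definable set $D$, and an open set $V\subseteq D$ containing a non-algebraic type $p$ with $\#^{\mathrm{dc}}V(\mathfrak{A})=\aleph_0$. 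Shrinking $V$, I would first arrange that $V=D\cap\cset{\varphi<\frac{1}{2}}$ for a single $B_0$-formula $\varphi$ with $p\in\cset{\varphi<\frac12}$ (possible by Urysohn's lemma applied to $p$ and a relatively open neighborhood inside $D$); this only decreases $\#^{\mathrm{dc}}V(\mathfrak{A})$, so it remains $\aleph_0$, and $V$ still contains $p$. Set $q:=\mathrm{tp}(B_0)$.

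Next I would produce a model $\mathfrak{B}\models T$ with $\#^{\mathrm{dc}}\mathfrak{B}=\aleph_1$ that is ``uniformly full at $q$'': for every tuple $\overline{b}\in\mathfrak{B}$ realizing $q$ one has $\#^{\mathrm{dc}}\bigl(D(\mathfrak{B})\cap\cset{\varphi(\mathfrak{B};\overline{b})<\frac12}\bigr)\geq\aleph_1$. The cleanest route is to use that an $\aleph_1$-categorical continuous theory is $\omega$-stable and hence has an approximately $\aleph_1$-saturated model $\mathfrak{B}$ of density character $\aleph_1$: given any $\overline{b}\models q$, an elementary map sending $B_0$ to $\overline{b}$ (which exists since $B_0\equiv\overline{b}$) extends to an automorphism of the monster carrying $p$ to a non-algebraic type $p'\in D\cap\cset{\varphi(-;\overline{b})<\frac12}$, and since $p'\models\varphi(-;\overline{b})\leq\frac12-\delta$ for some $\delta>0$, approximate realizations of $p'$ still lie in $\cset{\varphi(-;\overline{b})<\frac12}$; as $p'$ is non-algebraic there is an $\varepsilon>0$ with arbitrarily large $({>}\varepsilon)$-separated sets of realizations of $p'$ (by compactness), so approximate $\aleph_1$-saturation yields an $({>}\varepsilon/2)$-separated set of size $\aleph_1$ inside $\cset{\varphi(\mathfrak{B};\overline{b})<\frac12}$. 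Alternatively — and without invoking any stability theory — one can build $\mathfrak{B}$ directly as the union of an elementary chain $(\mathfrak{B}_\alpha)_{\alpha<\omega_1}$ of models of density character $\aleph_1$, at each successor stage adjoining, for each realization $\overline{b}$ of $q$ present so far, one more realization of $\varphi(-;\overline{b})<\frac12$ at distance $\geq\varepsilon/3$ from all those previously adjoined for that $\overline{b}$; the step is consistent by the same fact about non-algebraic types plus a pigeonhole argument (an $\aleph_2$-sized $({>}\varepsilon)$-separated family of realizations in the monster cannot lie within $\varepsilon/2$ of an $\aleph_1$-sized set), and any tuple realizing $q$ in $\mathfrak{B}$ appears at some stage $<\omega_1$ and is thereafter handled cofinally, giving it $\aleph_1$ realizations.

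Finally, since $\#^{\mathrm{dc}}\mathfrak{A}=\#^{\mathrm{dc}}\mathfrak{B}=\aleph_1$, categoricity gives an isomorphism $f\colon\mathfrak{A}\to\mathfrak{B}$; then $\overline{b}:=f(B_0)$ realizes $q$ in $\mathfrak{B}$, and $f$, being an isometry that preserves all formulas, maps $V(\mathfrak{A})=D(\mathfrak{A})\cap\cset{\varphi(\mathfrak{A};B_0)<\frac12}$ onto $D(\mathfrak{B})\cap\cset{\varphi(\mathfrak{B};\overline{b})<\frac12}$. Hence $\aleph_0=\#^{\mathrm{dc}}V(\mathfrak{A})=\#^{\mathrm{dc}}\bigl(D(\mathfrak{B})\cap\cset{\varphi(\mathfrak{B};\overline{b})<\frac12}\bigr)\geq\aleph_1$, a contradiction, so $T$ has no open-in-definable Vaughtian pairs. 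The main obstacle is the second step: because the isomorphism furnished by categoricity does not respect $B_0$, the auxiliary model must be large simultaneously over every realization of $q$, and securing this either requires quoting the ($\omega$-)stability of $\aleph_1$-categorical continuous theories and existence of small approximately saturated models, or a careful transfinite construction whose successor step must remain consistent over an unboundedly growing parameter set.
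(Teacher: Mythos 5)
Your argument is correct in its primary (Route~A) form, but it takes a genuinely different --- and considerably longer --- route than the paper. The paper's proof of this corollary is a one-liner on top of Proposition~\ref{prop:no-VP}: the model $\mathfrak{A}$ produced there realizes a non-algebraic type $p$ over a countable set with $\#^{\mathrm{dc}}p(\mathfrak{A})\leq\aleph_0$, hence $\mathfrak{A}$ is not $\aleph_1$-saturated, and an $\aleph_1$-categorical theory has its unique model of density character $\aleph_1$ ($\text{approximately}$) $\aleph_1$-saturated. No second model and no isomorphism are ever invoked, so the difficulty you single out --- that the isomorphism furnished by categoricity need not fix $B_0$ --- simply does not arise. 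Your two-model version is the classical shape of Vaught's argument, and Route~A patches the parameter problem correctly by demanding largeness of $D\cap\cset{\varphi(-;\overline{b})<\frac12}$ over \emph{every} realization $\overline{b}$ of $q$ and extracting it from approximate $\aleph_1$-saturation; but note that this rests on exactly the same background fact ($\aleph_1$-categoricity $\Rightarrow$ $\omega$-stability $\Rightarrow$ existence of an approximately $\aleph_1$-saturated model of density character $\aleph_1$) that powers the paper's one-line proof, so the detour through a second model and an isomorphism adds work without weakening the hypotheses.

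Route~B, offered as a stability-free alternative, has a genuine gap as written. In continuous logic a separable model may already have $2^{\aleph_0}$ elements, and $q$ is a type in countably many variables, so ``each realization of $q$ present so far'' can range over $2^{\aleph_0}$ tuples --- under $\neg\mathrm{CH}$, more than $\aleph_1$ of them --- even at a separable stage of the chain. Adjoining a fresh $(\geq\varepsilon/3)$-separated witness for each such tuple at each of $\omega_1$ stages does not obviously keep $\#^{\mathrm{dc}}\mathfrak{B}=\aleph_1$, and without that bound the isomorphism with $\mathfrak{A}$, and hence the contradiction, disappears. The construction can be repaired: at each stage handle only a countable metrically dense set of realizations of $q$ (the realizations of $q$ form a separable subset of the space of $\omega$-tuples at each stage) and transfer largeness to an arbitrary realization using the uniform continuity of $\varphi$ and of the distance predicate of $D$ in the parameter variables, together with the $\delta$ of slack below $\frac12$. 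But some such additional argument is required; the step as stated does not close.
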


\begin{proof}
Suppose that $T$ is a countable theory with an open-in-definable
Vaughtian pair, then by Proposition \ref{prop:no-VP}, $T$ has a model
$\mathfrak{A}$ with $\#^{\mathrm{dc}}\mathfrak{A}=\aleph_1$, but
for some non-algebraic type over a countable set $p$, $\#^{\mathrm{dc}}p(\mathfrak{A})\leq\omega$.
This implies that $\mathfrak{A}$ is not $\aleph_1$-saturated,
implying that $T$ is not $\aleph_1$-categorical.
\end{proof}

\begin{cor}
If $T$ is a countable $\aleph_1$-categorical theory, then $T$ has no open Vaughtian pairs in any imaginaries. In particular it has no imaginary Vaughtian pairs.
\end{cor}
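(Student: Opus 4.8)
The plan is to deduce the statement from Proposition~\ref{prop:no-VP} and its first corollary by transporting those results into imaginary sorts. First I would observe that it is enough to show that a countable $\aleph_1$-categorical theory $T$ has no \emph{open-in-definable} Vaughtian pairs in any imaginary sort: in any type space $S_I(A)$ both open sets and definable sets are open-in-definable (as remarked just after the definition of Vaughtian pairs), so this at once rules out both open Vaughtian pairs and imaginary (i.e.\ definable) Vaughtian pairs in imaginary sorts.

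Next I would invoke the Imaginary Normal Form (Lemma~\ref{lem:imaginary-norm-form}) to reduce to a concrete sort. Given an open-in-definable Vaughtian pair in an imaginary sort $I$ over a countable parameter set $A$, composing with the $A$-definable bijection supplied by that lemma lets us assume $I$ is an $A$-definable subset of a $\varnothing$-definable quotient $Q = H^\omega/\rho$, where $\rho$ is a $\varnothing$-definable pseudo-metric on the sort $H^\omega$ of $\omega$-tuples from the home sort; such a bijection induces a homeomorphism of type spaces taking open-in-definable sets to open-in-definable sets and non-algebraic types to non-algebraic types, and commuting with $X \mapsto X(\mathfrak{M})$, so it transports the Vaughtian pair faithfully. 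The substance of the argument is then the claim that the proof of Proposition~\ref{prop:no-VP} goes through with $S_n(B)$ replaced by $S_Q(B)$: the countable elementary sub-pre-structure, the passage to $\aleph_0$-homogeneous pre-structures realizing the same types, the length-$\omega_1$ elementary chain, and all of the metric-density bookkeeping take place in the home sort, and the sort $Q$ is functorial under elementary maps and metric completion, so every step survives; the construction is moreover insensitive to tuple length, so the $\omega$-tuples underlying $Q$ cause no trouble. Granting the imaginary version of Proposition~\ref{prop:no-VP}, the imaginary version of its first corollary follows by exactly the same computation: a countable theory carrying such a pair has a model $\mathfrak{A}$ with $\#^{\mathrm{dc}}\mathfrak{A} = \aleph_1$ in which some non-algebraic imaginary type over a countable set has metric density character only $\aleph_0$, whence $\mathfrak{A}$ is not $\aleph_1$-saturated and $T$ is not $\aleph_1$-categorical; together with the reduction of the first paragraph this gives the corollary.

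The step I expect to need the most care is the one place in the proof of Proposition~\ref{prop:no-VP} that actually mentions the ambient sort, namely the clause that the approximating pre-structures $\mathfrak{A}_1$ and $\mathfrak{A}_0$ may be taken to realize dense subsets of the definable set $D$. This is precisely why the reduction to $Q = H^\omega/\rho$ is worthwhile: there the distance predicate of $D$ composed with the quotient map is an honest formula on $S_{H^\omega}$ and $\rho$ is itself a formula, so the elements witnessing that the realizations of $D$ are dense can be produced by the usual home-sort L\"owenheim--Skolem bookkeeping, with no appeal to extra structure on the quotient.
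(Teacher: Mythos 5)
Your argument is correct, but it takes a genuinely different and much heavier route than the paper. The paper's entire proof is the one-line observation that an imaginary expansion of a $\kappa$-categorical theory is still $\kappa$-categorical: given an open (or definable) Vaughtian pair in an imaginary sort $I$ over a countable set, expand $T$ by the (countably many) sorts needed to build $I$; the resulting theory $T'$ is countable and still $\aleph_1$-categorical, since the density character of a model of $T'$ equals that of its home sort (quotients, countable products, and definable subsets do not raise density character), and the alleged pair is now an open-in-definable Vaughtian pair of $T'$ in one of its basic sorts, contradicting the immediately preceding corollary applied to $T'$. This reuses the home-sort result verbatim and requires no re-examination of the chain construction. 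You instead push Proposition~\ref{prop:no-VP} itself into the imaginary sorts: reduce via Lemma~\ref{lem:imaginary-norm-form} to definable subsets of $\varnothing$-definable quotients of $H^\omega$ and verify that every step of the $\omega_1$-chain argument survives. This does work --- you correctly flag the one genuinely sort-sensitive step (arranging that the countable approximants realize dense subsets of $D$), and the needed separability of $S_Q(B_0)$ for countable $B_0$ holds because formulas on $Q$ pull back to $\rho$-invariant formulas on $H^\omega$ --- but it obliges the reader to re-audit an entire proof where a transfer-of-categoricity remark suffices. What your route buys is independence from the fact that categoricity passes to imaginary expansions; what the paper's route buys is brevity and the ability to quote the previous corollary without modification. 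If you keep your version, state the imaginary analogue of Proposition~\ref{prop:no-VP} as an explicit lemma rather than asserting that the proof ``goes through,'' since that claim is doing real work.
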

\begin{proof}
An imaginary expansion of a $\kappa$-categorical theory is still $\kappa$-categorical.
\end{proof}

\section{Dictionaric Type Spaces} \label{sec:dict}

It is well known that definable sets are poorly behaved in continuous
logic. As it will turn out, $\omega$-stable theories (and, more generally, totally transcendental theories) have better
behavior with regards to definable sets than arbitrary continuous
first-order theories, even relative to strictly superstable theories. The specific property they have %%occurs in other
%%situations and 
seems to be important enough to have its own adjectival name. A similar, slightly weaker concept was  %briefly
discussed in the introduction of \cite{benyaacov2010} but was not developed. %{\color{red} Need to make sure the `weaker' claim is accurate.} %MORE??? %and the definition given there is

\subsection{Definition and Characterization}
\label{sec:defn-and-char}

\begin{defn}
\leavevmode
\begin{enumerate}[label=(\roman*)]
\item Let $X$ be a type space or a definable set. We say that $X$
is \emph{dictionaric} if for every $p\in X$ and closed $F\subseteq X$ with
$p\notin F$, there is a definable set $D\subseteq X$ such that $p\in\tint _{X}D$
and $D\cap F=\varnothing$ (i.e.\ $X$ has a basis of definable neighborhoods).

\item A theory $T$ is \emph{dictionaric} if for every $n<\omega$ and parameter
  set $A$, $S_{n}(A)$ is dictionaric.
  
\item A theory $T$ is \emph{dictionaric over models} if for every $n<\omega$ and model $\frk{M}\models T$, $S_n(\frk{M})$ is dictionaric.
\end{enumerate}
\end{defn}

Note that in these definitions when we talk about a definable set being dictionaric,
we are talking about it relative to some specific type space. It may
be the case that a definable set $D$ is dictionaric relative to $S_{n}(A)$,
but that there also is some larger parameter set $B\supset A$ such that $D$
fails to be dictionaric relative to $S_{n}(B)$. %(But of course $D$ will still be definable in $S_{n}(B)$.)
It is easy to construct such an example in which $D$ is a singleton in $S_N(A)$. This perspective is justified by thinking of $D$ as the type space of the imaginary corresponding to $D$ (over the parameters in question).\editcom{Removed `But of course....'} %An easy example would be if $D$ were a singleton in $S_n(A)$.

Every discrete theory is clearly dictionaric. With some work it is possible to show that randomizations of discrete theories (as defined in \cite{BYKRandom})
are dictionaric as well.\editcom{Changed `Obviously.'}

The following is a simple example of a theory which fails to be dictionaric in a drastic way.\editcom{Changed wording.}%n extreme failure of dictionaricness in continuous logic.
%{\color{red} FIGURE?}
\begin{ex}
\label{ex:main-bad-example}
A strictly superstable theory $T$ such that $S_1(T)$ is not a singleton but such that the only definable subsets of $S_1(T)$ are $\varnothing$ and $\cset{d(x,x)\brackconv}$ (i.e.\ the whole home sort). (Note that this is the same as Example \ref{ex:VP-examples} (ii).)
\end{ex}

\begin{proof}[Verification]
Let $\mathfrak{M}$ be a structure whose universe is $[0,1]$ with a discrete metric and which has a single unary predicate $P$ such that $P(x)=x$ (note that the first $x$ is an element of the structure but the second $x$ is a truth value). To see that $T=\mathrm{Th}(\mathfrak{M})$ is weakly minimal and, in particular, superstable, note that it can be realized as a continuous reduct of a discrete weakly minimal theory: the theory with an infinite sequence of unary predicates $Q_n$ which says that every finite quantifier free type occurs (i.e.\ the standard example of a strictly superstable weakly minimal theory with trivial pregeometry). We can define $P=\sum_{n<\omega}2^{-n-1}Q_n$ and take the reduct where we forget the $Q_n$'s and this will be the theory $T$.\editcom{Removed `then.'} The type space $S_1(\varnothing)$ is homeomorphic to $[0,1]$ with the standard topology. For any $0<\varepsilon<1$ and any closed set $F\subseteq S_1(\varnothing)$, $F^{<\varepsilon}=F$, so the only definable sets are clopen sets, which are $\varnothing$ and $\cset{d(x,x)\brackconv}=S_1(\varnothing)$. $T$ cannot be $\omega$-stable because $\#^\mathrm{dc} S_1(\varnothing) = 2^{\aleph_0}$.
\end{proof}

We will now present a characterization of dictionaric type spaces and definable sets.\editcom{New sentence.}

\begin{prop}
\label{prop:dict-equivalents}Let $X$ be a type space or a definable
set. The following are equivalent:
\begin{enumerate}[label=(\roman*)]
\item $X$ is dictionaric.

\item For every $\varepsilon>0$, $X$ has a basis of open sets $U$
satisfying $\overline{U}\subseteq U^{<\varepsilon}$.

\item Definable sets separate disjoint closed subsets of $X$.

\item For every closed $F,G\subseteq X$ with $F\cap G=\varnothing$,
there is a definable set $D$ such that either $F\subseteq D$ and
$D\cap G=\varnothing$ or $G\subseteq D$ and $D\cap F=\varnothing$.

\item $X$ has a network of
definable sets (i.e.\ for every $p\in U\subseteq X$, with $U$ open, there is a definable
set $D$ such that $p\in D\subseteq U$).

\item For every $p\in U\subseteq X$, with $U$ open, and every $\varepsilon>0$,
there is an open set $V\subseteq U$ and a closed set $F\subseteq X$
such that $p\in V$ and $d_{H}(V,F)<\varepsilon$ (where $d_{H}$
is the Hausdorff metric on sets).
\end{enumerate}
\end{prop}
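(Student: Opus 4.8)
The plan is to prove the equivalences by a cycle of implications, exploiting the definitions and the two lemmas on relative definability that were established earlier (Lemmas~\ref{lem:strong_min_and_cat_in_cont:4} and~\ref{lem:def-rel-def}), plus compactness of $X$ and normality of $X$ as a compact Hausdorff space. A convenient order is $(i)\Rightarrow(v)\Rightarrow(vi)\Rightarrow(ii)\Rightarrow(i)$ to close a first loop through the "local/basis" conditions, and then $(i)\Leftrightarrow(iii)\Leftrightarrow(iv)$ separately for the "separation" conditions; alternatively one big cycle $(i)\Rightarrow(iii)\Rightarrow(iv)\Rightarrow(i)$ and $(i)\Rightarrow(v)\Rightarrow(vi)\Rightarrow(ii)\Rightarrow(i)$. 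I will describe the individual implications.

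For $(i)\Rightarrow(v)$: given $p\in U$ open, the complement $F=X\smallsetminus U$ is closed and $p\notin F$, so dictionaricness hands us a definable $D$ with $p\in\tint_X D$ and $D\cap F=\varnothing$, i.e.\ $p\in D\subseteq U$; this is the network condition. $(v)\Rightarrow(i)$ is equally quick: given $p\notin F$ closed, pick the open set $U=X\smallsetminus F$, get a definable $D$ with $p\in D\subseteq U$; but we need $p\in\tint_X D$, so instead first choose an open $V$ with $p\in V\subseteq\cl V\subseteq U$ (normality) and apply the network condition to $p\in V$ to get definable $D$ with $p\in D\subseteq V$; this $D$ still need not have $p$ in its interior, so one repeats once more — apply the network condition to get definable $D'$ with $p\in D'\subseteq\tint V'$ where $V'$ is an open set with definable... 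Actually the cleanest route is to observe that a network of definable sets together with the definition of a definable set (for every $\e$ an open-in-$X$ $O$ with $D\subseteq O\subseteq D^{<\e}$) lets one nest: using the network get $p\in D_1\subseteq U$, then get $p\in D_2 \subseteq \tint_X D_1$ by applying the network condition to the open set $\tint_X D_1$ — but $p$ need not be in that interior. The honest fix, and I expect this to be the main technical nuisance, is to iterate the definable-set approximation: for a definable $D$ with $p\in D$, there is an open $O$ with $D\subseteq O\subseteq D^{<\e}$; one does not automatically get $p$ in the interior. So instead I would prove $(v)\Rightarrow(i)$ through $(vi)$ and $(ii)$, which handle interiors correctly.

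For the remaining implications: $(v)\Rightarrow(vi)$ — given $p\in U$ and $\e>0$, the network gives definable $D$ with $p\in D\subseteq U$; let $V=D^{<\e/2}\cap U$, which is open and contains $p$, and let $F=D$; then $V\subseteq F^{<\e}$ trivially and $F=D\subseteq D\subseteq \cl V$ with $F\subseteq V^{<\e}$ as well since $F\subseteq V$, so $d_H(V,F)<\e$. For $(vi)\Rightarrow(ii)$: cover $X$ by the $V$'s produced from $p\in X$ with the ambient open set $U=X$; the condition $d_H(V,F)<\e$ for closed $F$ forces $\overline V\subseteq F^{<\e}\subseteq V^{<2\e}$ after unwinding, giving (after rescaling $\e$) a basis of opens $U$ with $\overline U\subseteq U^{<\e}$. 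For $(ii)\Rightarrow(i)$: given $p\notin F$ closed, pick $\delta$ with $B_{\leq 2\delta}(p)$-type separation from $F$ — more precisely, choose $\e>0$ with $\cset{\text{$\e$-ball around $p$ misses }F}$... using (ii), take nested opens $U_0\supseteq \overline{U_1}\supseteq U_1\supseteq\overline{U_2}\supseteq\cdots$ with $p\in U_n$, $U_n$ disjoint from $F$, and $\overline{U_n}\subseteq U_{n+1}{}^{<\e_n}$ with $\sum\e_n<\infty$ and small; then $D=\bigcap_n U_n=\bigcap_n\overline{U_n}$ is closed, contains $p$, is disjoint from $F$, and the Hausdorff-approximation between consecutive levels lets one verify $D$ is definable directly (for each target $\e$, some $U_n$ works as the open approximation $D\subseteq U_n\subseteq D^{<\e}$ once the tail sums are controlled), and $p\in U_1\subseteq\tint_X D$. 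This is exactly the style of the $(\Rightarrow)$ direction of Lemma~\ref{lem:strong_min_and_cat_in_cont:4}, which I would cite for the bookkeeping. Finally the separation conditions: $(i)\Rightarrow(iii)$ is by a compactness argument — cover the compact set $F$ by finitely many definable neighborhoods each disjoint from $G$, and take their union, which is definable (a finite union of definable sets is definable) and separates; $(iii)\Rightarrow(iv)$ is immediate (take $D\supseteq F$ disjoint from $G$); and $(iv)\Rightarrow(i)$: given $p\notin F$, apply (iv) to $\{p\}$ and $F$ — wait, $\{p\}$ need not be closed... it is, in a Hausdorff type space. If the separating $D$ satisfies $\{p\}\subseteq D$, $D\cap F=\varnothing$ we still need $p\in\tint_X D$, so apply (iv) instead to $\overline{B}$ and $F$ where $\overline B$ is a closed neighborhood of $p$ disjoint from $F$ (normality); whichever of the two alternatives holds, one of the resulting definable sets contains a neighborhood of $p$ and is disjoint from $F$ (if $F\subseteq D$ disjoint from $\overline B$, then $X\smallsetminus D$ is open, contains $p$... but need not be definable) — so I would instead take $\overline B$ to be a closed neighborhood and note that (iv) applied to $\overline B, F$ giving "$\overline B\subseteq D$, $D\cap F=\varnothing$" is the good case, while the other case "$F\subseteq D$, $D\cap \overline B=\varnothing$" is handled by shrinking: $\tint_X(X\smallsetminus D)$... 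The real fix is to iterate (iv) the way one iterates to get interiors, again mirroring Lemma~\ref{lem:strong_min_and_cat_in_cont:4}. I expect the bookkeeping around getting $p$ into the topological interior (rather than merely into) a definable set to be the main obstacle, and the remedy in each case is the same nested-sequence / Hausdorff-approximation trick already deployed in the proof of Lemma~\ref{lem:strong_min_and_cat_in_cont:4}.
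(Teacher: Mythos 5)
Your outer architecture is reasonable and several legs are essentially right: $(i)\Rightarrow(v)$ is immediate, your detour $(v)\Rightarrow(vi)\Rightarrow(ii)$ works (the paper instead goes $(v)\Rightarrow(ii)$ directly via $U=D^{<\e/2}$ and proves $(ii)\Leftrightarrow(vi)$ separately, but the content is the same), and $(iii)\Rightarrow(iv)$ is trivial. The two implications where the proof actually lives are, however, the two you leave unfinished, and in both cases the problem is more than bookkeeping. For $(ii)\Rightarrow(i)$: your nested sequence decreases, $U_0\supseteq\overline{U_1}\supseteq U_1\supseteq\cdots$, so $D=\bigcap_n U_n\subseteq U_1$ and the claim ``$p\in U_1\subseteq\tint_X D$'' has the inclusion backwards; a decreasing intersection gives you no open set inside $D$, and a definable set containing $p$ need not contain $p$ in its topological interior. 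The construction must be run increasingly: pick $U_0\ni p$ with $\overline{U_0}\subseteq U_0^{<\e_0}$ inside the target open set, and at stage $n+1$ cover the compact set $\overline{U_n}$ by finitely many basic opens $O$ with $\overline{O}\subseteq O^{<\e_{n+1}}$, all confined to $U_n^{<\e_n}$, and let $U_{n+1}$ be the finite union (the basis property survives finite unions). Then $D=\bigcap_n U_n^{<\e_n}$ satisfies $U_0\subseteq D$, which is exactly what yields $p\in\tint_X D$, while $U_n\subseteq D\subseteq U_n^{<\e_n}$ gives $D\subseteq\tint_X D^{<\delta}$ for every $\delta>0$, i.e.\ definability, with no Cauchy-chain argument needed.

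For the separation conditions: you close the cycle with $(iv)\Rightarrow(i)$ and correctly identify the obstruction --- the unfavourable alternative of $(iv)$ hands you a definable neighbourhood of $F$ whose complement is open but not definable --- but ``iterate $(iv)$'' does not resolve it, since a second application to $\overline{B}$ and $D$ can again return the unfavourable alternative and nothing forces termination. The cycle should be closed the other way, $(i)\Rightarrow(iv)\Rightarrow(iii)\Rightarrow(i)$, with the work placed in $(iv)\Rightarrow(iii)$: take disjoint open $U\supseteq F$, $V\supseteq G$, let $L=X\smallsetminus(U\cup V)$ be a closed separator, apply $(iv)$ to $F\cup G$ versus $L$; if $D\supseteq F\cup G$ then $D\cap U$ and $D\cap V$ work (the intersection of a definable set with an open set is definable whenever it is closed), and if $D\supseteq L$ one runs a second application against a new separator for $F$ versus $D\cup V$ --- a case analysis that terminates in two rounds rather than an open-ended iteration. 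Once $(iii)$ is in hand, $(iii)\Rightarrow(i)$ is one line: for $p\in V\subseteq\cl V\subseteq U$, separate $\cl V$ from $X\smallsetminus U$, and the definable set containing $\cl V$ contains the open set $V\ni p$. Note finally that your $(i)\Rightarrow(iii)$ as written produces only one definable set containing $F$ and missing $G$, whereas $(iii)$ requires two \emph{disjoint} definable sets; from $(i)$ this is fixable by first confining the $F$-side inside an open $U$ with $\cl U\cap G=\varnothing$ and then building the $G$-side disjoint from $\cl U$, but it does need to be said.
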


\begin{proof}

We will prove \emph{$\text{(i)}\Rightarrow \text{(v)}\Rightarrow \text{(ii)} \Rightarrow \text{(i)}$}, \emph{$\text{(i)}\Rightarrow \text{(iv)}\Rightarrow \text{(iii)} \Rightarrow \text{(i)}$}, and \emph{$\text{(ii)}\Leftrightarrow \text{(vi)}$}.\editcom{Added sentence.}
  
\emph{$\text{(i)}\Rightarrow\text{(v)}$.} This is immediate.

\emph{$\text{(v)}\Rightarrow\text{(ii)}$.} For any definable set $D$ and $\varepsilon>0$,
the set $U=D^{<\varepsilon/2}$ satisfies $\overline{U}\subseteq D^{\leq\varepsilon/2}\subseteq D^{<\varepsilon}$.
So since given $x\in D\subseteq U$ we can always find $\varepsilon>0$
small enough that $D^{\leq\varepsilon/2}\subseteq U$ (by compactness),
we have that \emph{(ii)} holds.

\emph{$\text{(ii)}\Rightarrow\text{(i)}$.} Let $x\in V\subseteq X$ be a point and an
open neighborhood. Find $O$ open such that $x\in O$ and $\overline{O}\subseteq V$.
Find $\varepsilon_{0}>0$ small enough that $\overline{O}^{\leq2\varepsilon_{0}}\subseteq V$.
Using the assumption of \emph{(ii)}, find an open neighborhood $U_{0}$
of $x$ such that $\overline{U}_{0}\subseteq O$ and such that $\overline{U}_{0}\subseteq U_{0}^{<\varepsilon_{0}}$.
For each $n<\omega$, given $U_{n}$, find $V_{n}$ open such that
$\overline{U}_{n}\subseteq V_{n}$ and $\overline{V}_{n}\subseteq U_{n}^{<\varepsilon_{n}}$.
Then find $\varepsilon_{n+1}>0$ small enough that $\overline{V}_{n}^{\leq\varepsilon_{n+1}}\subseteq U_{n}^{<\varepsilon_{n}}$
and $\varepsilon_{n+1}<\imin{\varepsilon_{n}}{2^{-n}}$. Using the assumption
of \emph{(ii)}, for each $y\in\overline{U}_{n}$, find an open neighborhood
$O_{n,y}$ such that $\overline{O}_{n,y}\subseteq V_{n}$ and such
that $\overline{O}_{n,y}\subseteq O_{n,y}^{<\varepsilon_{n+1}}$.
By compactness this cover has a finite subcover. Let $U_{n+1}$ be
its union. Note that we have that $\overline{U}_{n}\subseteq\overline{U}_{n+1}\subseteq U_{n+1}^{<\varepsilon_{n+1}}\subseteq\overline{U}_{n+1}^{\leq\varepsilon_{n+1}}\subseteq U_{n}^{<\varepsilon_{n}}\subseteq\overline{O}^{\leq2\varepsilon_{0}}\subseteq V.$

Now finally let $D=\bigcap_{n<\omega}\overline{U}_{n}^{\leq\varepsilon_{n}}=\bigcap_{n<\omega}U_{n}^{<\varepsilon_{n}}$.
Clearly $U_{0}\subseteq D\subseteq V$, so $D$ is a neighborhood
of $x$ that is a sub-neighborhood of $V$. Furthermore $D$ is clearly
closed. To see that $D$ is definable, note that for any $\delta>0$
there is an $n$ such that $\varepsilon_{n}<\delta$, so we have that
$D^{<\delta}\supseteq U_{n}^{<\varepsilon_{n}}\supseteq D$. Therefore
$D\subseteq\tint D^{<\delta}$ for every $\delta>0$ and $D$
is definable. Therefore $X$ is dictionaric.

\emph{$\text{(i)}\Rightarrow\text{(iv)}$.} Let $F$ and $G$ be disjoint closed sets. For each
point $x\in F$, find a definable neighborhood $D_{x}$ disjoint from
$G$. By compactness finitely many of these cover $F$. Since the
union of finitely many definable sets is definable we have that their
union $D$ is a neighborhood of $F$ disjoint from $G$.

\emph{$\text{(iv)}\Rightarrow\text{(iii)}$.} Given $F$ and $G$, disjoint closed sets, let $U\supseteq F$
and $V\supseteq G$ be disjoint open neighborhoods and let $L=X\smallsetminus(U\cup V)$
be a closed separator between $F$ and $G$. Now let $D$ be the guaranteed
definable set separating $F\cup G$ and $L$. If $D\supseteq F\cup G$,
then $D\cap U$ and $D\cap V$ are the required definable sets (because
if the intersection of a definable set and an open set is closed, then
it is definable). If $D\supseteq L$, then consider the disjoint closed
sets $F$ and $D\cup V$. Find $U^{\prime}\supseteq F$ and $V^{\prime}\supseteq D\cup V$
disjoint open neighborhoods, and let $L^{\prime}=X\smallsetminus(U^{\prime}\cup V^{\prime})$
be a closed separator between $F$ and $D\cup V$. Let $E$ be the
guaranteed definable set separating $F\cup D\cup V$ and $L^{\prime}$.
If $E\supseteq F\cup D\cup V$, then $E\cap U^{\prime}$ is the required
definable set for $F$. If $E\supseteq L^{\prime}$, then $E\cup U^{\prime}$
is the required definable set for $F$. So in any case we have that
there are definable, disjoint $E$ and $D$ such that $F\subseteq E$
and $G\subseteq D$.

\emph{$\text{(iii)}\Rightarrow\text{(i)}$,} If $x\in U\subseteq X$ with $U$ open, we
can find an open set $V$ such that $x\in V$ and $\overline{V}\subseteq U$.
A definable set separating $\overline{V}$ and $X\smallsetminus U$ is
the required definable neighborhood of $x$.

\emph{$\text{(ii)}\Rightarrow\text{(vi)}$.} This is immediate.

\emph{$\text{(vi)}\Rightarrow\text{(ii)}$.} Given $x\in V\subseteq X$, with $V$ open,
find open $V^{\prime}$ such that $x\in V^{\prime}$ and $\overline{V}^{\prime}\subseteq V$.
Fix $\varepsilon>0$, find $U\subseteq V^{\prime}$ with $x\in U$
and such that for some closed set $F$, $d_{H}(U,F)<\frac{\varepsilon}{5}$.
By the definition of the Hausdorff metric this implies that $U\subseteq\overline{U}\subseteq F^{\leq\varepsilon/3}\subseteq U^{\leq2\varepsilon/3}\subseteq U^{<\varepsilon}$,
so we get the required basis of open sets.
\end{proof}

\subsection{Properties of Dictionaric Type Spaces}
\label{sec:prop-of-dict}

In this section we will show that dictionaric type spaces enjoy a few nice properties, but first we will need the following lemma.\editcom{Extended sentence.}
%Dictionaric type spaces enjoy a few nice properties. We will need the following lemma.
\begin{lem}
\label{lem:open-in-definable-locatable}If $D\subseteq X$ is a definable
set and $U\subseteq D$ is open in $D$, then $U^{<\varepsilon}$ is open (in $X$) for every $\varepsilon>0$.
\end{lem}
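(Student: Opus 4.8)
The plan is to unwind the definitions and exhibit, for each point $p \in U^{<\varepsilon}$, a witnessing open neighborhood of $p$ contained in $U^{<\varepsilon}$. Since $U$ is open in $D$, write $U = W \cap D$ for some open $W \subseteq X$. Fix $p \in U^{<\varepsilon}$, so there is $q \in U$ with $d(p,q) < \varepsilon$; pick $\delta > 0$ with $d(p,q) + \delta < \varepsilon$, i.e.\ $d(p,q) < \varepsilon - \delta$.

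The key point is to use the definability of $D$ to "thicken" $q$ into $W$ while staying near $p$. Since $q \in W$ and $W$ is open, there is an open $W' \ni q$ with $\cl W' \subseteq W$; combined with $q \in D$, the distance predicate of $D$ gives us an open neighborhood of $q$ inside $D^{<\eta}$ for any $\eta$, but what we actually want is the reverse: every point sufficiently $d$-close to $q$ and inside a suitable topological neighborhood of $q$ lies in $U^{<\varepsilon}$. Concretely, I would argue that the set $(W \cap D)^{<\varepsilon} = U^{<\varepsilon}$ contains the topologically open set $B$ defined as follows. Because $D$ is definable there is a formula $\psi$ with $\cset{\psi = 0} = D$ and $d(x, D) \le \psi(x)$ on all of $X$ (the distance predicate). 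Then consider $B = \cset{\psi < \delta/2} \cap (W')^{?}$ — but this still references $W'$, which is a neighborhood of $q$, not of $p$. The honest move: I would instead show directly that $U^{<\varepsilon} = \bigcup_{q \in U} B_{<\varepsilon}(q)$ where each ball $B_{<\varepsilon}(q) = \cset{d(-,q) < \varepsilon}$ — but $d(-,q)$ for a fixed type $q$ is generally \emph{not} continuous on the type space, so these metric balls need not be open. This is exactly the subtlety, and it is why we cannot just say "metric $\varepsilon$-neighborhoods of open sets are open."

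The correct approach uses the definability of $D$ essentially. I would proceed: given $p \in U^{<\varepsilon}$ with witness $q \in U$, $d(p,q) = r < \varepsilon$, set $\eta = (\varepsilon - r)/3$. Since $q \in U = W \cap D$ and $W$ is open, and since $D$ is definable, there is (using the distance predicate $\psi$ of $D$) an open set $O \ni q$ with $O \subseteq W$ and $O \subseteq D^{<\eta}$; shrinking, assume $O$ is a basic open set $\cset{\chi < \eta'}$ for a formula $\chi$ with $\chi(q) = 0$. Now I claim the open set $V := \cset{d(-,O) < \eta}$ — no, again $d(-,O)$ need not be continuous. So the truly clean route is to observe that $U^{<\varepsilon}$ itself need only be shown open, and to do this I would show its complement is closed, or better: I would show $U^{<\varepsilon} = D^{<\varepsilon} \cap (\text{something open})$ is false in general and instead note $U^{<\varepsilon} = \bigcup \{ D'^{<\varepsilon} : D' \subseteq U \text{ definable, } D' = \cl D' \}$, and each $D'^{<\varepsilon}$ \emph{is} open because $D'$ is definable (its distance predicate is a formula, hence continuous, so $\cset{d(-,D') < \varepsilon}$ is open). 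By dictionaricness-free reasoning — actually just by the network/basis property available here since $D$ is a definable set, Lemma-style — every point of the open-in-$D$ set $U$ has a definable neighborhood within $U$; taking the union of the $\varepsilon$-neighborhoods of a covering family of such definable sets yields $U^{<\varepsilon}$ as a union of open sets.

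Thus the main obstacle is precisely that for a fixed type $q$, the function $p \mapsto d(p,q)$ is only lower semicontinuous, so raw metric neighborhoods of arbitrary sets fail to be open; the resolution is to cover $U$ from inside by \emph{definable} sets $D'$ (whose distance predicates are genuine continuous formulas) and write $U^{<\varepsilon} = \bigcup_{D'} \cset{d(-,D') < \varepsilon}$, a union of open sets. I would close by double-checking the two inclusions of this set equality: "$\supseteq$" is clear since $D' \subseteq U$; "$\subseteq$" needs that every $p$ with $d(p,U) < \varepsilon$ has $d(p,D') < \varepsilon$ for some such $D'$, which follows by taking $q \in U$ with $d(p,q) < \varepsilon$ and a definable neighborhood $D' \subseteq U$ of $q$, so $d(p,D') \le d(p,q) < \varepsilon$.
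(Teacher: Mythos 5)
You correctly isolate the real subtlety (for a fixed type $q$ the map $p \mapsto d(p,q)$ is only lower semicontinuous, so metric neighborhoods of arbitrary sets need not be open), but your resolution has a genuine gap. Writing $U^{<\varepsilon} = \bigcup_{D'} (D')^{<\varepsilon}$ over definable $D' \subseteq U$ requires that every point of $U$ have a definable neighborhood inside $U$ relative to $D$ --- that is, it requires $D$ to be dictionaric. This does \emph{not} follow from $D$ being definable; it is precisely the nontrivial property this section studies, and the lemma carries no dictionaricness hypothesis on $X$ or $D$. In Example~\ref{ex:main-bad-example} the whole home sort is a definable set whose only definable subsets are $\varnothing$ and itself, so for a proper open $U$ your right-hand union is empty while $U^{<\varepsilon}$ is not. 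Even in the lemma's main application (Proposition~\ref{prop:heredit}), where $X$ \emph{is} dictionaric, you cannot just intersect a definable neighborhood taken in $X$ with $D$ to get a definable subset of $D$: intersections of definable sets are not in general definable (this is the point of Corollary~\ref{cor:strong_min_and_cat_in_cont:1}), so the covering family you need is still not available.

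The paper's proof uses only the single distance predicate of $D$. First it shows $U \subseteq \tint U^{<\delta}$ for every $\delta>0$: given $x \in U$, pick $\varepsilon$ with $x \notin (D\smallsetminus U)^{\leq 2\varepsilon}$ (possible since $D\smallsetminus U$ is closed); then $D^{<\varepsilon}$ is open because $d(-,D)$ is a formula, $D^{<\varepsilon} = (D\smallsetminus U)^{<\varepsilon} \cup U^{<\varepsilon}$, and removing the closed set $(D\smallsetminus U)^{\leq 2\varepsilon}$ leaves an open neighborhood of $x$ contained in $U^{<\varepsilon}$. Then it writes $U^{<\varepsilon} = \bigcup_{0<\delta<\varepsilon} (\tint U^{<\delta})^{<\varepsilon-\delta}$, a union of metric neighborhoods of \emph{open} sets, and these are open by the standard openness property of type spaces as topometric spaces. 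If you want to salvage your outline, you must replace ``cover $U$ by definable subsets'' with an argument of this kind that leans only on the definability of $D$ itself.
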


\begin{proof}
For each $x\in U$, find $\varepsilon>0$ small enough that $x\notin(D\smallsetminus U)^{\leq2\varepsilon}$.
Now consider $D^{<\varepsilon}=(D\smallsetminus U)^{<\varepsilon}\cup U^{<\varepsilon}$.
Note that $(D\smallsetminus U)^{<\varepsilon}\cup U^{<\varepsilon}\smallsetminus(D\smallsetminus U)^{\leq2\varepsilon}=U^{<\varepsilon}\smallsetminus(D\smallsetminus U)^{\leq2\varepsilon}$
is an open set containing $x$ and contained in $U^{<\varepsilon}$.
Since this remains true for any smaller $\varepsilon>0$ we have that
$U\subseteq\tint U^{<\delta}$ for every $\delta>0$.

To see that $U^{<\varepsilon}$ is actually open, first note that $U^{<\varepsilon}\supseteq\bigcup_{0<\delta<\varepsilon}(\tint U^{<\delta})^{<\varepsilon-\delta}$
by the triangle inequality and $U^{<\varepsilon}\subseteq\bigcup_{0<\delta<\varepsilon}(\tint U^{<\delta})^{<\varepsilon-\delta}$
since 
\[\bigcup_{0<\delta<\varepsilon}(\tint U^{<\delta})^{<\varepsilon-\delta}\supseteq\bigcup_{0<\delta<\varepsilon}U^{<\varepsilon-\delta}=U^{<\varepsilon}.\]
So we have that $U^{<\varepsilon}=\bigcup_{0<\delta<\varepsilon}(\tint U^{<\delta})^{<\varepsilon-\delta}$,
implying that $U^{<\varepsilon}$ is an open set.
\end{proof}
\begin{prop}[Hereditariness to Definable Subsets] \label{prop:heredit}
If $X$ is a dictionaric type space or definable set, and $D$ is
a definable subset of $X$, then $D$ is dictionaric.
\end{prop}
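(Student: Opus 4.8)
The plan is to verify that $D$ satisfies one of the equivalent conditions of Proposition~\ref{prop:dict-equivalents}; concretely, I will produce for each $p\in D$ and each closed $G\subseteq D$ with $p\notin G$ a definable set $E\subseteq D$ with $p\in\tint_D E$ and $E\cap G=\varnothing$ — and ``definable'' is unambiguous here by Lemma~\ref{lem:def-rel-def}, since $D$ is definable, so a subset of $D$ definable relative to $D$ is definable. As definable sets are closed, $D$ is closed in $X$, so $G$ and $\{p\}$ are closed in $X$ too, and closures and interiors relative to $D$ are just the ambient ones intersected with $D$. Two features of the situation will do the real work. Since $D$ is definable, $x\mapsto d(x,D)$ is a formula and, more to the point, by Lemma~\ref{lem:open-in-definable-locatable} the enlargement $\mathcal A^{<\delta}$ of any open-in-$D$ set $\mathcal A$ is open in $X$ (so $\mathcal A^{<\delta}\cap D$ is open in $D$); such sets are at once logic-open and metrically open, which is exactly what lets the usual ``dictionaric'' constructions be run inside $D$. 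And since $G$ is compact, $q\mapsto d(q,G)$ is lower semicontinuous, so for any $\eta$ with $0<\eta<d(p,G)$ the set $\{q\in D:d(q,G)>\eta\}$ is an open-in-$D$ neighborhood of $p$ whose $\eta$-enlargement inside $D$ still misses $G$; thus the construction can be confined to $\mathcal W:=D\smallsetminus G$ with no danger of enlargements leaking onto $G$.

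The construction would then follow the proof of the implication $\text{(ii)}\Rightarrow\text{(i)}$ in Proposition~\ref{prop:dict-equivalents}, but performed inside $D$: build an increasing chain of open-in-$D$ sets $\mathcal A_0\subseteq\mathcal A_1\subseteq\cdots$ with $p\in\mathcal A_0$ and $\cl_D\mathcal A_n\subseteq\mathcal A_{n+1}$, and reals $\varepsilon_0>\varepsilon_1>\cdots\to0$, so that the open-in-$D$ sets $\mathcal A_n^{<\varepsilon_n}\cap D$ decrease, all lie in $\mathcal W$, and satisfy $\cl_D(\mathcal A_{n+1}^{<\varepsilon_{n+1}}\cap D)\subseteq\mathcal A_n^{<\varepsilon_n}\cap D$. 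Then $E:=\bigcap_n(\mathcal A_n^{<\varepsilon_n}\cap D)$ is closed; $p\in\mathcal A_0\subseteq E$ with $\mathcal A_0$ open in $D$ gives $p\in\tint_D E$; $E\subseteq\mathcal W$ gives $E\cap G=\varnothing$; and $E$ is definable relative to $D$, since $\mathcal A_n\subseteq E$ for all $n$, so for each $\delta>0$, taking $n$ with $\varepsilon_n<\delta$ yields $E\subseteq\mathcal A_n^{<\varepsilon_n}\cap D\subseteq E^{<\delta}$ with the middle set open in $D$.

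The step I expect to be the main obstacle — and the only place where dictionaricness of $X$ proper (rather than just the facts above) is used — is the inductive construction of the $\mathcal A_{n+1}$: one must cover the compact set $\cl_D\mathcal A_n$ by open-in-$D$ neighborhoods whose closure in $D$ is metrically within $\varepsilon_{n+1}$ of them, and no such basis is available a priori precisely because $D$ is not yet known to be dictionaric. These come from intersecting with $D$ the ``good'' open sets of $X$ given by Proposition~\ref{prop:dict-equivalents}(ii) for $X$; the subtlety is that passing to the trace on $D$ can destroy metric closeness (a point of $D$ can be metrically near an open set $U\subseteq X$ only through $X\smallsetminus D$), so the open sets of $X$ that are restricted must themselves be chosen metrically close to their traces on $D$ — which is where the definability of $D$ re-enters, e.g.\ by only ever restricting sets that contain a metric enlargement $\mathcal B^{<\gamma}$ of an open-in-$D$ set $\mathcal B$ (open in $X$ by Lemma~\ref{lem:open-in-definable-locatable}), so that the trace is $\gamma$-dense in them, and then threading the bounds through the induction with the customary care about rapidly shrinking parameters. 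An alternative to this last step is to invoke Corollary~\ref{cor:strong_min_and_cat_in_cont:1}: choose, using dictionaricness of $X$, a definable neighborhood $B\subseteq X$ of $p$ metrically bounded away from $G$ and with $p$ sufficiently deep inside it, perturb $B$ slightly to a definable $B'$ with $B'\cap D$ definable, and take $E=B'\cap D$; one then need only check that a sufficiently small perturbation keeps $p$ in the $D$-interior of $E$ and keeps $E$ clear of $G$.
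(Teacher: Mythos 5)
Your first paragraph is fine, and your overall strategy is in fact the paper's: everything reduces to producing, around each point of $D$ and inside each open-in-$D$ neighborhood $V$, an open-in-$D$ set $O$ with $\cl_D O\subseteq O^{<\varepsilon}$, i.e.\ condition (ii) of Proposition~\ref{prop:dict-equivalents} for $D$ (which, note, is stated for definable sets as well as type spaces, so once you have (ii) for $D$ you may simply cite the proposition rather than re-running $(\text{ii})\Rightarrow(\text{i})$ inside $D$). But the step you yourself flag as the main obstacle is exactly where your sketch has a genuine gap, and the repair you offer does not work as stated. If $W$ is a ``good'' open set of $X$ chosen to \emph{contain} $\mathcal B^{<\gamma}$, then $W\cap D\supseteq\mathcal B$, but nothing makes $W\cap D$ metrically dense in $W$: the part of $W$ outside $\mathcal B^{<\gamma}$ is uncontrolled, so $\cl_D(W\cap D)\subseteq\cl_X W\subseteq W^{<\gamma}$ tells you only that points of the closure are near $W$, not near $W\cap D$. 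The condition you actually need is $\mathcal B\subseteq W\subseteq\mathcal B^{<\gamma}$, and there is no way to manufacture a good (or definable) such $W$ from dictionaricness of $X$: the covering argument that would force $W\supseteq\mathcal B$ must cover the compact set $\cl_X\mathcal B$ by sets inside $\mathcal B^{<\gamma}$, i.e.\ it needs $\cl_X\mathcal B\subseteq\mathcal B^{<\gamma}$ --- precisely the metric control over topological closures in $D$ that the proposition is trying to establish. The missing idea is to intersect $D$ not with the set supplied by dictionaricness of $X$ but with its metric \emph{enlargement}. The paper takes a definable $E\subseteq X$ with $x\in\tint_X E$ and $E\subseteq V^{<\varepsilon}$ (here $V$ is open in $D$ with $x\in V$, and $V^{<\varepsilon}$ is open in $X$ by Lemma~\ref{lem:open-in-definable-locatable}) and sets $F=E^{\leq\varepsilon}\cap D$. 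The point is that for each $e\in E$ the witness $v\in V\subseteq D$ with $d(e,v)<\varepsilon$ automatically satisfies $d(v,E)<\varepsilon$ and hence lies in $F$; so $E\subseteq F^{<\varepsilon}$, whence $F\subseteq E^{<2\varepsilon}\cap D\subseteq\tint_D F^{<4\varepsilon}$, and an open-in-$D$ set squeezed around $F$ inside $V$ has the required property. Without this enlarge-then-intersect move your induction cannot get started.

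Your proposed fallback is worse: Corollary~\ref{cor:strong_min_and_cat_in_cont:1} is deduced from Proposition~\ref{prop:strong_min_and_cat_in_cont:1}, whose proof \emph{begins} by invoking Proposition~\ref{prop:heredit} to conclude that $D$ is dictionaric, so appealing to it here is circular. (If it were available, your outline would indeed close quickly --- disjoint closed sets in a compact topometric space are at positive metric distance, so a small Hausdorff perturbation $B'$ of a definable neighborhood $B$ of $p$ with $B\cap G=\varnothing$ still misses $G$, and $B'\cap D$ would be the desired relatively definable neighborhood --- but the dependency runs the other way in this paper.)
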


\begin{proof}
Let $D$ be a definable set in $X$. Let $U$ satisfying $x\in U\subseteq D$
be an open neighborhood of $x$ in $D$. Find $V$ an open set in
$D$ such that $x\in V$ and $\overline{V}\subseteq U$. Then let
$\varepsilon>0$ be small enough that $\overline{V}^{<3\varepsilon}\cap(D\smallsetminus U)=\varnothing$.
By the preceding lemma we have that $V^{<\varepsilon}$ is open as
a subset of $X$, so by dictionaricness we can find a definable set
$E$ such that $x\in\tint E$ and $E\subseteq V^{<\varepsilon}$.
Now consider the set $F=E^{\leq\varepsilon}\cap D$. By the triangle
inequality this is disjoint from $D\smallsetminus U$ and so is a subset
of $U$. Furthermore it is a neighborhood of $x$ in $D$. Also, since
$E\subseteq V^{<\varepsilon}$ and $V\subseteq D$, we have that $E\subseteq F^{<2\varepsilon}$,
so $F\subseteq\tint _{D}F^{<4\varepsilon}$. Let $O$ be an open
in $D$ set such that $F\subseteq O$ , $\overline{O}\subseteq U$,
and $\overline{O}\subseteq\tint _{D}F^{<4\varepsilon}$. Then
we have that $O$ is an open neighborhood of $x$, smaller than $U$
such that $\overline{O}\subseteq O^{<4\varepsilon}$. Since we can
do this for arbitrary $x$, $U$, and $\varepsilon$, we have by Proposition
\ref{prop:dict-equivalents} that $X$ is dictionaric. 
\end{proof}
The analog of the previous result fails for arbitrary closed $F\subseteq X$; there is a dictionaric type space with a closed subset that fails
to be `relatively dictionaric' (compare this to the fact that any subspace of a totally disconnected compact Hausdorff space is totally disconnected\footnote{Or has fewer than $2$ points, if your definition of totally disconnected excludes spaces with $1$ or $0$ points.}).\editcom{removed `i.e.'}

Compare the following Proposition~\ref{prop:ext} to the fact that if $X$ is a totally disconnected compact Hausdorff space, $F \subseteq X$ is a closed subset, and $Q \subseteq F$ is relatively clopen, then there is a clopen set $D \subseteq X$ such that $D\cap F = Q$.\editcom{Added sentence.}

%{\color{red} FIGURE?}

\begin{prop}[Extension] \label{prop:ext} If $X$ is a dictionaric type space or
definable set and we have $Q\subseteq F\subseteq X$ with $Q$ and
$F$ closed and $Q$ relatively definable in $F$, then there is a definable
set $D\subseteq X$ such that $D\cap F=Q$.
\end{prop}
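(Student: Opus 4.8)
The plan is to use the distance-predicate characterization of relative definability (Lemma~\ref{lem:strong_min_and_cat_in_cont:4}) together with dictionaricness to build, by a back-and-forth-style approximation, a decreasing sequence of definable neighborhoods of $Q$ in $X$ whose intersection cuts out exactly $Q$ on $F$. First I would fix, via Lemma~\ref{lem:strong_min_and_cat_in_cont:4} and the Tietze extension theorem, a formula $\psi : X \to [0,1]$ with $F \cap \cset{\psi = 0} = Q$ and $d(x,Q) \leq \psi(x)$ for all $x \in F$; the zeroset $Z = \cset{\psi = 0}$ is then a closed $G_\delta$ containing $Q$ with $Z \cap F = Q$. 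The naive hope ``take a definable $D$ with $Q \subseteq D \subseteq Z$'' fails because $Z$ need not be definable and $Q$ need not be definable in $X$, so the real work is to squeeze a definable set in between using only the \emph{relative} definability of $Q$ in $F$.

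The key steps, in order, would be: (1) Using that $Q$ is relatively definable in $F$, for each $\varepsilon > 0$ find an open-in-$F$ set $U_\varepsilon$ with $Q \subseteq U_\varepsilon \subseteq Q^{<\varepsilon}$; extend $U_\varepsilon$ to an open set $\widehat U_\varepsilon$ in $X$ with $\widehat U_\varepsilon \cap F = U_\varepsilon$, and shrink it (intersecting with $\cset{\psi < \varepsilon}$ and with a small metric neighborhood of $Q$) so that $\widehat U_\varepsilon \cap F \subseteq Q^{<\varepsilon}$ while $\widehat U_\varepsilon \supseteq Q$. (2) Invoke dictionaricness in the form of Proposition~\ref{prop:dict-equivalents}(i)/(v): cover the compact set $Q$ by finitely many definable sets contained in $\widehat U_\varepsilon$ and take their (definable) union to get a definable $D_\varepsilon$ with $Q \subseteq \tint D_\varepsilon$ and $D_\varepsilon \subseteq \widehat U_\varepsilon$, so in particular $D_\varepsilon \cap F \subseteq Q^{<\varepsilon}$. (3) Recursively arrange the $D_{\varepsilon_n}$ (with $\varepsilon_n \downarrow 0$) to be \emph{decreasing}: at stage $n+1$, first pass to a definable neighborhood of $Q$ inside $\tint D_{\varepsilon_n}$ (using Proposition~\ref{prop:dict-equivalents} again) before carrying out step (2), so that $Q \subseteq \tint D_{\varepsilon_{n+1}} \subseteq D_{\varepsilon_{n+1}} \subseteq D_{\varepsilon_n}$. (4) Set $D = \bigcap_{n < \omega} D_{\varepsilon_n}$. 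Then $D$ is closed; it is definable because for any $\delta > 0$ choosing $n$ with $\varepsilon_n < \delta$ gives $D \subseteq D_{\varepsilon_n} \subseteq \tint D_{\varepsilon_{n-1}} \subseteq \tint D^{<\delta}$ (a chain of definable sets each inside the previous one's metric neighborhood, exactly as in the proof of (ii)$\Rightarrow$(i) of Proposition~\ref{prop:dict-equivalents}); and $D \cap F = \bigcap_n (D_{\varepsilon_n} \cap F) \subseteq \bigcap_n Q^{<\varepsilon_n} = \overline{Q} = Q$, while $Q \subseteq D$ by construction, so $D \cap F = Q$.

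The main obstacle I expect is step (3): making the definable neighborhoods genuinely nested while simultaneously controlling their trace on $F$. The point is that dictionaricness gives definable neighborhoods but says nothing directly about their behavior relative to the (non-definable) closed set $F$, so one must interleave the ``shrink toward $Q$ inside the previous definable set'' move with the ``control $D_\varepsilon \cap F \subseteq Q^{<\varepsilon}$'' move, and check that finite unions of definable sets coming from the compactness cover of $Q$ stay inside the target open set. A secondary technical point is verifying definability of the countable intersection $D$: this is not automatic for intersections of definable sets, but it works here because the construction forces $Q \subseteq \tint D_{\varepsilon_{n+1}}$ with $D_{\varepsilon_{n+1}} \subseteq D_{\varepsilon_n}$, which is precisely the nesting condition used in Proposition~\ref{prop:dict-equivalents} to certify definability of such an intersection.
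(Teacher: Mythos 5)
Your reduction of the problem to controlling the trace on $F$ is sound as far as it goes: steps (1) and (2) correctly produce, for each $\varepsilon>0$, a definable $D_\varepsilon$ with $Q\subseteq\tint D_\varepsilon$ and $D_\varepsilon\cap F\subseteq Q^{<\varepsilon}$, and the computation $D\cap F=\bigcap_n(D_{\varepsilon_n}\cap F)\subseteq\bigcap_n Q^{<\varepsilon_n}=Q$ is fine. The genuine gap is in step (4): a decreasing intersection of definable sets, each contained in the interior of the previous one, need not be definable, and the nesting condition you cite from Proposition~\ref{prop:dict-equivalents} is not the one you have arranged. In that proof the intersected sets are of the form $U_n^{<\varepsilon_n}$ for an \emph{increasing} chain of open sets $U_n$ with $\overline{U}_{n+1}\subseteq U_n^{<\varepsilon_n}$; definability of the intersection $D$ comes from the fact that $D$ contains every $U_n$, so that $U_n^{<\varepsilon_n}\subseteq D^{<\varepsilon_n}$. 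In your construction the only metric control on $D_{\varepsilon_n}$ is on its trace $D_{\varepsilon_n}\cap F$; off $F$ nothing forces $D_{\varepsilon_{n-1}}\subseteq D^{<\delta}$, which is what your chain $\tint D_{\varepsilon_{n-1}}\subseteq\tint D^{<\delta}$ requires. Concretely, in the (discrete, hence dictionaric) theory of pairwise disjoint infinite unary predicates $P_n$, the clopen sets $D_n=\cset{\neg P_0\wedge\dots\wedge\neg P_n}$ satisfy $D_{n+1}\subseteq\tint D_n$, yet $\bigcap_n D_n$ is the single non-isolated $1$-type, which is not definable. The auxiliary move in step (1) of intersecting $\widehat U_\varepsilon$ with ``a small metric neighborhood of $Q$'' cannot repair this: $Q^{<\varepsilon}$ is not topologically open, and having $Q\subseteq\tint Q^{<\varepsilon}$ for every $\varepsilon$ would say precisely that $Q$ is definable in $X$, which is not assumed and fails in the motivating examples.

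The paper avoids this by never taking an intersection. Proposition~\ref{prop:ext} is deduced from Lemma~\ref{lem:strong-ext} (applied with $F_i=F$ for all $i$): starting from a function $f$ witnessing relative definability, one builds a single closed set $A$ by deleting from $X$ open ``annuli'' $V_k$ around the sets $F\cap\cset{4^{-k-1}\leq f\leq 4^{-k}}$, proves $Q\subseteq\tint A^{<\varepsilon}$ for all $\varepsilon$ by a Cauchy-chain argument, and then uses dictionaricness to refill the residual part of $A$ with definable sets $D_k$ confined to annuli disjoint from $F$, so that the resulting union is closed, definable, and meets $F$ only in $Q$. If you want to salvage your outline, you would need the quantitative control $D_{\varepsilon_n}\subseteq D_{\varepsilon_{n+1}}^{<\delta_n}$ with $\delta_n\to0$ summably, and it is unclear how to extract this from dictionaricness alone, precisely because $Q$ itself need not be definable in $X$.
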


\begin{proof} Use the following Lemma~\ref{lem:strong-ext} with $F_{i}=F$ for all $i<\omega$ to get some
definable set $D$ and formula $\varphi$ such that $D\cap F\cap\cset{\varphi\le1}=Q$, but $\cset{\varphi\leq1}=X$,
so $D\cap F=Q$, as required.
\end{proof}

Although Proposition \ref{prop:ext} is the more attractive statement, we will occasionally need this technical strengthening.

\begin{lem}  \label{lem:strong-ext}
Let
$\{F_{i}\}_{i<\omega}$ be a family of closed sets in $X$, a type space or definable set (not necessarily dictionaric), and let $Q$ be
a closed set such that $Q\subseteq F_{i}$ and $Q$ is relatively definable in $F_{i}$
for each $i<\omega$.
\begin{enumerate}[label=(\roman*)]
\item There is a closed set $C\subseteq X$ and a formula $\varphi:X\rightarrow[0,1]$ with $Q\subseteq\cset{\varphi\brackconv}$,  such that
\begin{itemize}
\item$Q\subseteq C$ and for each $\varepsilon>0$,
$Q\subseteq\tint _{X}C^{<\varepsilon}$, and 
\item for each $i<\omega$, $C\cap F_{i}\cap\cset{\varphi\leq2^{-i}}=Q$.
\end{itemize}
\item\emph{(Strong Extension)}  If $X$ is dictionaric, then $C$ can be taken to be definable.
\end{enumerate}
\end{lem}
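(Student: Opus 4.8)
The plan is to reduce to the distance‑predicate description of relative definability (Lemma~\ref{lem:strong_min_and_cat_in_cont:4}) and then carry out an inductive construction in the spirit of the proof of Proposition~\ref{prop:dict-equivalents}, $\mathrm{(ii)}\Rightarrow\mathrm{(i)}$. \emph{Set-up.} We may assume the metric diameter of $X$ is at most $1$. For each $i<\omega$, Lemma~\ref{lem:strong_min_and_cat_in_cont:4} together with the Tietze extension theorem produces a continuous $\psi_i\colon X\to[0,1]$ with $\cset{\psi_i\brackconv}\cap F_i=Q$ and $d(x,Q)\le\psi_i(x)$ for all $x\in F_i$; as $X$ is a compact topometric space each $\psi_i$ is moreover uniformly continuous for the metric. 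Put $\varphi:=\sum_{i<\omega}2^{-i-1}\psi_i$, a uniformly convergent series of continuous $[0,1]$‑valued functions and hence a $[0,1]$‑valued formula with $\cset{\varphi\brackconv}=\bigcap_i\cset{\psi_i\brackconv}\supseteq Q$ and $\cset{\varphi\brackconv}\cap F_i=Q$ for every $i$. The reason for this choice is that $F_i\cap\cset{\varphi\le 2^{-i}}$ is the union of $Q$ with the closed ``shells'' $S^i_n:=F_i\cap\cset{2^{-n-1}\le\varphi\le 2^{-n}}$ for $n\ge i$, and $\varphi$ vanishing on $Q$ makes each $S^i_n$ disjoint from $Q$. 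Hence, writing $R_n:=\bigcup_{i\le n}S^i_n$ (a closed set disjoint from $Q$), any closed $C$ with $Q\subseteq C$ and $C\cap R_n=\varnothing$ for every $n$ automatically satisfies $C\cap F_i\cap\cset{\varphi\le 2^{-i}}=Q$ for all $i$, and $Q\subseteq\cset{\varphi\brackconv}$ holds as well. So everything comes down to producing a closed $C\supseteq Q$, disjoint from every $R_n$, with $Q\subseteq\tint_X C^{<\e}$ for each $\e>0$ and, for part~(ii), definable.

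\emph{Construction of $C$.} I would build $C=\bigcap_n C_n$ with $X=C_0\supseteq C_1\supseteq\cdots$ closed, obtaining $C_{n+1}$ from $C_n$ by deleting an open neighbourhood $V_n\supseteq R_n$ with $\overline{V}_n\cap Q=\varnothing$ (possible by normality of $X$, since $R_n$ and $Q$ are disjoint closed sets) and with $V_n\subseteq\cset{2^{-n-2}<\varphi<2^{-n+1}}$, so each $C_n$, hence $C$, contains $Q$ and misses every $R_n$. The mechanism behind the locatedness condition is that a point $x$ lying in some $F_i$ and close to a point $q\in Q$ in the logic topology is automatically metrically close to $Q$: $\psi_i(x)$ is then close to $\psi_i(q)=0$ by continuity, and $d(x,Q)\le\psi_i(x)$ because $x\in F_i$, so $d(x,C)\le d(x,Q)$ is small. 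The remaining points of a logic‑neighbourhood of $Q$ fall outside the relevant $F_i$'s, and must be accommodated by choosing the Tietze extensions of the $\psi_i$ and the neighbourhoods $V_n$ with enough care that $C$ still contains enough of $X\smallsetminus\bigcup_i F_i$ to remain metrically thick near $Q$ — which is feasible because near $Q$ the value of $\varphi$ is small, so only the deletions $V_n$ of large index intervene there, and those can be taken metrically tiny. For part~(ii) one interleaves, with the deletions, the thickening step from the proof of Proposition~\ref{prop:dict-equivalents}, $\mathrm{(ii)}\Rightarrow\mathrm{(i)}$, replacing successive approximations by metric fattenings $D^{<\delta}$ of definable neighbourhoods (available by dictionaricness), so that $C$ is definable; the deletions preserve definability because they remove only regions bounded away from $\cset{\varphi\brackconv}$ in $\varphi$‑value.

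\emph{Main obstacle.} It is exactly this simultaneous control: being located at $Q$ (let alone definable) pushes $C$ toward a metrically thick set, while disjointness from the shells $R_n$ keeps $C$ barely larger than $\cset{\varphi\brackconv}$. The obvious attempt — take $C_{n+1}\subseteq C_n$ definable, $\e_n$‑dense in $C_n$, and disjoint from $R_{n+1}$ — breaks down, because a point of $C_n$ lying in the metric interior of some $F_i$ whose $\varphi$‑value falls in the range of a shell is metrically isolated from the complement of $R_{n+1}$, so no $\e_n$‑dense subset of $C_n$ can avoid $R_{n+1}$. The way out is to insist only on metric thickness of $C$ near $Q$, not on $\e$‑density in the previous approximation; carrying this out, together with the bookkeeping that keeps $C$ definable for part~(ii), is where the real work lies.
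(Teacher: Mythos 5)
Your reduction is sound: with $\varphi$ a weighted sum of the relative distance predicates, the second bullet does come down to producing a closed $C\supseteq Q$ that avoids the shells and is still located at $Q$, and you have correctly identified that the whole difficulty lives in the locatedness claim $Q\subseteq\tint_X C^{<\varepsilon}$. But that claim is exactly what your proposal does not prove --- you defer it to ``choosing the Tietze extensions of the $\psi_i$ and the neighbourhoods $V_n$ with enough care'' and end by conceding that ``this is where the real work lies.'' The paper's proof of this lemma consists almost entirely of that work. Two concrete things go wrong in your set-up. First, your normalization $\varphi=\sum_i 2^{-i-1}\psi_i$ with shells at scale $2^{-n}$ yields no metric control on the shell $S^n_n=F_n\cap\cset{2^{-n-1}\leq\varphi\leq 2^{-n}}$: from $2^{-n-1}\psi_n(x)\leq\varphi(x)\leq 2^{-n}$ you only get $\psi_n(x)\leq 2$, hence no bound on $d(x,Q)$. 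The paper separates the scales quadratically (weights $2^{-i}$, shells at levels $4^{-k}$) precisely so that the $k$-th shell satisfies $f_k\leq 2^{-k}$ and hence lies in $Q^{\leq 2^{-k}}$; without some such separation the deleted sets need not shrink metrically toward $Q$ at all, and the deleted points themselves (which lie in every logic neighbourhood $\cset{\varphi<\delta}$ once $n$ is large) cannot be certified to be within $\varepsilon$ of $C$.

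Second, even after fixing the scales, the mechanism that makes the deletions harmless is a specific one that your sketch does not contain: the paper takes the deleted open set $V_k$ to lie not merely in a $\varphi$-band but inside $U_{k+3}^{<2^{-k+1}}$, the metric $2^{-k+1}$-fattening of a much deeper sublevel set $U_{k+3}=\cset{f<4^{-k-3}}$, and then runs a chain argument: any deleted point is within $2^{-k+1}$ of a point of smaller $f$-value, which is either already in $C$ or deleted at a strictly larger index, and iterating produces either a terminating chain landing in $C$ or a Cauchy sequence converging into $\cset{f\brackconv}\subseteq C$, with total length bounded by a geometric series. Note that the limit lands in $\cset{f\brackconv}$, which is generally strictly larger than $Q$ since the $F_i$ need not cover $X$; this is why the paper proves locatedness of $C$ at all of $\cset{f\brackconv}$ rather than at $Q$, and it is exactly how the points of a logic neighbourhood of $Q$ lying outside every $F_i$ --- the ones you flag as needing to be ``accommodated'' --- are handled. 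Your observation that the naive $\varepsilon$-dense refinement breaks down is correct, but identifying the obstacle is not the same as removing it: as written, the proposal establishes the easy half of the lemma and leaves the essential estimate unproved. (The treatment of part (ii) by ``interleaving the thickening step'' of Proposition~\ref{prop:dict-equivalents} is also not quite right --- the paper instead covers the residual shells of $C\smallsetminus\cset{f\brackconv}$ by definable sets supplied by dictionaricness and checks that the union remains closed and located --- but this is secondary to the gap above.)
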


\begin{proof}  \emph{(i).} Without loss of generality we may assume that $F_{i}\subseteq F_{i+1}$, since we have that 
  if $Q$ is relatively definable in two closed
  sets $F$ and $G$, then $Q$ is relatively definable in $F\cup G$ as well. So we can replace $F_{i}$ by $\bigcup_{j\leq i}F_{j}$ if necessary. %{\color{red} THIS IS SUSPICIOUS TO ME. I'M NOT SURE THIS IS TRUE?}
  To see that such a $Q$ is relatively definable in $F\cup G$, fix $\e > 0$ and find $U \subseteq F$ and $V \subseteq G$, each relatively open, such that $Q \subseteq U$ and $Q \subseteq V$ and such that if $a \in U$ or if $a \in V$, then $d(a,Q) < \e$. $U\cup V$ is a relatively open neighborhood of $Q$ in $F\cup G$ with the same property, so $Q$ is relatively definable in $F\cup G$.\editcom{Redid this paragraph.}
  %assume that there is an $\e >0 $ such that for every neighborhood $U \supseteq Q$ there exists an $a \in U \cap (F\cup G)$ with $\dinf (a,Q)\geq \e$. Since $Q$ is relatively definable in $F$ and $G$ individually, we have that there are neighborhoods $V, W \supseteq Q$ such that for every $b \in V \cap F$, $\dinf (b,Q) < \e $ and for every $c \in W \cap G$, $\dinf (c,Q) < \e$. By applying the first assumption to $V\cap W$, we get that there is an $a \in V \cap W \cap (F\cup G)$ such that $\dinf (a,Q) \geq \e$. Either $a \in F$ or $a \in G$, and in either case we have a contradiction, so our initial assumption must be false. Since we can do this for any $\e > 0$, we have that $Q$ is relatively definable in $F\cup G$.

%  in $F\cup G$, $\tint _{F\cup G}Q^{<\varepsilon}\supseteq\tint _{F}Q^{<\varepsilon}\cup\tint _{G}Q^{<\varepsilon}\supseteq Q$,

%First we want to construct a closed set $A\subseteq X$ such that
%$Q\subseteq A$ with the following property: For each $\varepsilon>0$,
%$Q\subseteq\tint _{X}A^{<\varepsilon}$. This part of the proof
%will not rely on dictionaricness at all.

Assume without loss of generality that the metric diameter of $X$ is at most $1$.\editcom{Added this sentence.} For each $i<\omega$, let $f_{i}:X\rightarrow[0,1]$ be a continuous
function witnessing that $Q$ is relatively definable in $F_{i}$,
i.e.\ $Q\subseteq\cset{f_{i}\brackconv}$ and for every $x\in F_{i}$, $\dinf(x,Q)\leq f_{i}(x)$.
Let $f(x)=\sum_{i<\omega}2^{-i}f_{i}$. Note that $\cset{f\brackconv}\cap F_{i}=Q$
for every $i<\omega$.

For each $k<\omega$, let $G_{k}=F_{k}\cap\cset{4^{-k-1}\leq f\leq4^{-k}}$,
let $U_{k}=\cset{f<4^{-k}}$, and let $V_{k}=\cset{4^{-k-2}<f<4^{-k+1}}\cap U_{k+3}^{<2^{-k+1}}$;
note that this $V_k$ is an open set. Also note that $G_{k}\subseteq\cset{4^{-k-2}<f<4^{-k+1}}$
and that\editcom{Added `that.'} if $p\in G_{k}$, then $f(p)\leq4^{-k}$. This implies in particular that $2^{-k}f_{k}(p)\leq4^{-k}$, and
so also $f_{k}(p)\leq2^{-k}$ and $d(p,Q)\leq2^{-k}$. This in turn implies that
$p\in Q^{\leq2^{-k}}\subseteq U_{k+3}^{<2^{-k+1}}$, and thus $G_{k}\subseteq V_{k}$.

Let $A=X\smallsetminus\bigcup_{k<\omega}V_{k}$. $A$ is clearly closed. 
Note that $\cset{f\brackconv}\cap V_{k}=\varnothing$ for each $k<\omega$, so we
have that $Q\subseteq\cset{f\brackconv}\subseteq A$. We want to show that for any
$\varepsilon>0$, $Q\subseteq\tint _{X}A^{<\varepsilon}$. 

Assume that $q\notin A$. It must be in $V_{k}$ for some $k<\omega$.
Let $q_{0}=q$ and $k(0)=k$. Assume we are given $q_{\ell}$ and $k(\ell)$
such that $q_{\ell}\in V_{k(\ell)}$. By construction there exists
$q_{\ell+1}\in U_{k(\ell)+3}$ such that $d(q_{\ell},q_{\ell+1})<2^{-k(\ell)+1}$.
Stop if $q_{\ell+1}\in A$,\editcom{Moved `Stop.'} otherwise $q_{\ell+1}\in V_{k(\ell+1)}$
for some $k(\ell+1)$ strictly larger than $k(\ell)$.

After the construction, in either case, the total distance traversed
along the sequence $q_{0},q_{1},\dots$ is $\leq\sum_{k\leq m<\omega}2^{-m+1}=2^{-k+2}$.
If we stopped, then the final point is in $A$, so $d(q,A)<2^{-k+3}$.
If the sequence never stopped, then it is a Cauchy sequence whose
limit, $q_{\omega}$, by continuity has $f(q_{\omega})=0$, so $q_{\omega}\in A$
as well. So we have that $\cset{f\brackconv}\subseteq\cset{f<2^{-k-1}}\subseteq A^{<2^{-k+3}}$,
hence $Q\subseteq\cset{f\brackconv}\subseteq\tint _{X}A^{<\varepsilon}$ for every
$\varepsilon>0$. Hence\editcom{Removed `In other words.'} we have verified the first bullet point of the lemma for the set $A$.

Let $\varphi(x)=\imin*{4\sqrt{f(x)}}{ 1}$. Clearly this is a
continuous $[0,1]$-valued function on $X$.\editcom{Added paragraph break.}

We need to adjust $A$ in order to satisfy the second bullet point in the $i=0$ case; specifically, we need to ensure that $C \subseteq \cset{\varphi < 2^{-0}}=\cset{\varphi < 1}$. It is not hard to show that if $O \supseteq Q$ is any open neighborhood such that $\overline{O} \subseteq \cset{\varphi<1}$, then $B = A\cap \overline{O}$ still satisfies the property stated in the first bullet point of the lemma and clearly satisfies $B \subseteq \cset{\varphi < 1}$.\editcom{Removed some words and comma splice.}

%Find $\e>0$ small enough that $Q^{\leq \varepsilon} \subseteq [P < 1]$. Find an open set $U \supseteq Q$ such that $\overline{U} \subseteq [P < 1 ] \cap \tint _X A^{<\e}_0$. Find $\delta > 0$ with $\delta < \frac{\e}{3}$ small enough that $\overline{U}^{\leq \delta} \subseteq [P < 1 ] \cap \tint _X A^{<\e}_0$. Now find an open set $V \supseteq [P = 1]$ such that $\overline{V} \cap \overline{U}^{\leq \delta} = \varnothing$, and finally let $A= A_0 \smallsetminus V$. Clearly $A \supseteq Q$. We need to verify that the first bullet point of the theorem still holds for $A$. Fix $\gamma < \delta$ and let $ W  = \tint _X A_0^{<\gamma}$. We want to show that $W \cap U \subseteq \tint _X A^{<\gamma}$. Let $p$ be some point in $W\cap U$. By construction, $d(p,V) \geq d(p, \overline{V}) > \delta > \gamma$. Therefore there must be some point $q \in A_0 \smallsetminus \overline{V} \subseteq A$ such that $d(p,q) < \gamma$, so $p \in A^{<\gamma}$. Since we can do this for any $p$ we have that $W \cap U \subseteq \tint _X A^{<\gamma}$, as required. Since this is true for arbitrarily small $\gamma$ it is true for any $\gamma > 0$.

\editcom{Removed `Now.'} To verify the second bullet point, consider $B\cap F_{i}\cap\cset{\varphi\leq2^{-i}}$.
If $i=0$, then since $B\subseteq\cset{\varphi<2^{-0}}$ we have that for any $x \in B$, $\varphi(x)<1$,
and so $4\sqrt{f(x)}<1$, $\sqrt{f(x)}<\frac{1}{4}$, and $f(x)<4^{-2}$. % Originally $f(x)<4^{-2}\leq4^{-2}$. Not sure why
If $i>0$, then $2^{-i}<1$, so $4\sqrt{f(x)}\leq2^{-i}$ and we have
$\sqrt{f(x)}\leq2^{-i-2}$ and $f(x)\leq4^{-i-2}$. So for any $i$, $\cset{f<4^{-i-1}}\cap B\cap F_{i}\subseteq\cset{f\brackconv}$.\footnote{To see this, note that if $x\in \left( \cset{f<4^{-i-1}}\cap A\cap F_{i} \right) \smallsetminus \cset{f\brackconv}$, then $x \in \cset{0 < f < 4^{-i-1}}$ and for all $k \geq i$, $x \in F_k \supseteq F_i$. Therefore $x$ must be in $G_k$ for some $k > i$, but the $G_k$ are all disjoint from $A$ by construction and therefore also disjoint from $B$. Thus we have a contradiction.\editcom{Moved to footnote.}} This implies
that $\cset{\varphi\leq2^{-i-1}}\cap B\cap F_{i}\subseteq\cset{f\brackconv}$, but $F_{i}\cap\cset{f\brackconv}=Q$,
so we have $B\cap F_{i}\cap\cset{\varphi\leq2^{-i-1}}=Q$, as required. So  part \emph{(i)} is satisfied by setting $C = B$.\editcom{Changed wording of last sentence.}

\emph{(ii).} Assume that $X$ is dictionaric.  Continuing from just after the definition of $\varphi(x)$  in the proof of part \emph{(i)}, we need to cover $A\smallsetminus\cset{f\brackconv}$ by definable sets so that the
overall union will be closed (this will be enough to imply that the union is definable) without spoiling the property stated in the second bullet point of the lemma.

For each $k<\omega$, let $H_{K}=A\cap\cset{4^{-k-1}\leq f\leq4^{-k}}$.
Note that by construction $H_{k}\cap F_{k}=\varnothing$. Let $W_{k}=\cset{4^{-k-2}<f<4^{-k+1}}\smallsetminus F_{k}$,
which is an open neighborhood of $H_{k}$. Let $D_{k}$ be a definable
set such that $H_{k}\subseteq D_{k}\subseteq W_{k}$. Finally let
$E=A\cup\bigcup_{k<\omega}D_{k}$.

First to see that $E$ is closed, note that any convergent net $\{q_{i}\}_{i\in I}$
in $E$ either eventually stays within some $W_{k}$ or has $\lim_{i\in I}f(q_{i})=0$.
In the first case, $E\cap(W_{k-1}\cup W_{k}\cup W_{k+1})$ is relatively
closed in $W_{k-1}\cup W_{k}\cup W_{k+1}$, since in that set it is
a finite union of closed sets, so the net converges to a point in $E$.
In the second case, the net must be converging to a point in $\cset{f=0}\subseteq A\subseteq E$
by continuity.

To see that $E$ is definable, note that for any $\varepsilon>0$,
\begin{align*}
  \tint _{X}E^{<\varepsilon}&=\tint _{X}\cset*{A\cup\bigcup_{k<\omega}D_{k}}^{<\varepsilon}\\
                            &\supseteq\tint _{X}A^{<\varepsilon}\cup\bigcup_{k<\omega}\tint _{X}D_{k}^{<\varepsilon}\\
                            &\supseteq\cset{f\brackconv}\cup\bigcup_{k<\omega}D_{k}=E.
\end{align*}
Now we want to argue that for each $k<\omega$, $Q$ is relatively clopen in $E\cap F_{k}$.
Note that by construction $\cset{4^{-k-2}<f<4^{-k+1}}\cap E=\cset{4^{-k-2}<f<4^{-k+1}}\cap\bigcup_{k-2\leq\ell\leq k+2}D_{\ell}$,
so we have that $\cset{4^{-k-2}<f<4^{-k+1}}\cap E\cap F_{k-2}=\varnothing$,
since each $D_{\ell}$, for $k-2\leq\ell$, is disjoint from $F_{k-2}$
(because $F_{m}\subseteq F_{m+1}$ for all $m<\omega$).\editcom{Removed `are all.'} This implies
that $\cset{0<f<4^{-k+1}}\cap E\cap F_{k-2}=\varnothing$ as well. Therefore
$\cset{f<4^{-k+1}}\cap E\cap F_{k-2}\subseteq\cset{f\brackconv}$, but $\cset{f\brackconv}\cap F_{k-2}=Q$,
so we have that $Q$ is relatively clopen in $E\cap F_{k-2}$, for any $k<\omega$,
as required.

Finally since $E$ is a definable set and $X$ is dictionaric, we
have that $E$ is dictionaric as well. Let $D\subseteq E$ be a definable
set such that $Q\subseteq D\subseteq\cset{\varphi<1}$.\footnote{This is only necessary
to handle the $i=0$ case.\editcom{Moved to footnote.}} %{\color{red} IS IT POSSIBLE TO CLEAN THIS UP?}
 Consider $D\cap F_{i}\cap\cset{\varphi\leq2^{-i}}$.
If $i=0$, then since $D\subseteq\cset{\varphi<2^{-0}}$ we have that for any $x\in D$, $\varphi(x)<1$,
so $4\sqrt{f(x)}<1$, $\sqrt{f(x)}<\frac{1}{4}$, and $f(x)<4^{-2}$. % Originally $f(x)<4^{-2}\leq4^{-2}$. Not sure why
If $i>0$, then $2^{-i}<1$, so $4\sqrt{f(x)}\leq2^{-i}$ and we have
$\sqrt{f(x)}\leq2^{-i-2}$ and $f(x)\leq4^{-i-2}$. We have already
established that $\cset{f<4^{-i-1}}\cap D\cap F_{i}\subseteq\cset{f\brackconv}$, so we
also have $\cset{\varphi\leq2^{-i-2}}\cap D\cap F_{i}\subseteq\cset{f\brackconv}$. This implies
that $\cset{\varphi\leq2^{-i-1}}\cap D\cap F_{i}\subseteq\cset{f\brackconv}$, but $F_{i}\cap\cset{f\brackconv}=Q$,
so we have $D\cap F_{i}\cap\cset{\varphi\leq2^{-i-1}}=Q$, as required. So part \emph{(ii)} is satisfied by setting $C = D$.\editcom{Changed last sentence.}
\end{proof}

The following Proposition~\ref{prop:strong_min_and_cat_in_cont:1} and Corollary~\ref{cor:strong_min_and_cat_in_cont:1} are not used elsewhere in this paper but are useful in their own right.\editcom{Added this paragraph.}

\begin{prop}[Approximate Intersection]\label{prop:strong_min_and_cat_in_cont:1}
If $X$ is a dictionaric type space or definable set, $D$ is a definable
subset of $X$, $F\subseteq X$ is a closed set, and $U\supseteq F$
is an open-in-$X$ set, then there is a definable set $E$ such that
$F\subseteq E\subseteq U$, and such that $D\cap E$ is definable.
\end{prop}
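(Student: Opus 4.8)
The plan is to take an arbitrary definable neighborhood $E_0$ of $F$ inside $U$, which in general will have a badly behaved intersection $D\cap E_0$, and then enlarge $E_0$, still inside $U$, by a definable piece of $D$ chosen precisely so as to turn $D\cap E$ into a finite union of definable sets. Throughout, recall that $X$ and $D$ are compact Hausdorff (a definable set is a closed subset of a type space) and that $D$ is itself dictionaric by Proposition~\ref{prop:heredit}. First I would use dictionaricness of $X$, exactly as in the proof of $\mathrm{(i)}\Rightarrow\mathrm{(iv)}$ of Proposition~\ref{prop:dict-equivalents} (cover the compact set $F$ by definable neighborhoods contained in $U$ and take a finite subunion), to obtain a definable set $E_0$ with $F\subseteq E_0\subseteq U$.

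Next, set $\partial := \cl(D\smallsetminus E_0)\cap E_0$; since $E_0$ is closed this is a closed subset of $D$, and $\partial\subseteq E_0\cap D\subseteq D\cap U$. Applying the same covering argument inside the dictionaric space $D$ yields a set $D_2$, definable relative to $D$ (hence definable, by Lemma~\ref{lem:def-rel-def}), with $\partial\subseteq\tint_D D_2\subseteq D_2\subseteq D\cap U$. Now let $K := (D\cap E_0)\smallsetminus\tint_D D_2$, a closed subset of $D$. The crucial observation is that $K\subseteq\tint_D(E_0\cap D)$: if some $x\in K$ failed to lie in the relative interior of $E_0\cap D$ in $D$, then, since $D\smallsetminus(E_0\cap D)=D\smallsetminus E_0$, we would have $x\in\cl(D\smallsetminus E_0)\cap E_0=\partial\subseteq\tint_D D_2$, contradicting $x\notin\tint_D D_2$. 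Using dictionaricness of $D$ once more, pick a set $D_3$, definable relative to $D$ (hence definable), with $K\subseteq\tint_D D_3\subseteq D_3\subseteq\tint_D(E_0\cap D)\subseteq E_0$.

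I then claim $E:=E_0\cup D_2$ works. It is a finite union of definable sets, hence definable; $F\subseteq E_0\subseteq E$; and $E\subseteq U$ because $E_0\subseteq U$ and $D_2\subseteq D\cap U$. For the main point, $D\cap E=(D\cap E_0)\cup D_2$, and the decomposition $D\cap E_0=\bigl((D\cap E_0)\cap\tint_D D_2\bigr)\cup K$, together with $(D\cap E_0)\cap\tint_D D_2\subseteq D_2$ and $K\subseteq D_3$, gives $D\cap E\subseteq D_2\cup D_3$; conversely $D_2\subseteq D\cap E$ and, using $D_3\subseteq E_0$, also $D_3\subseteq D\cap E$, so $D\cap E=D_2\cup D_3$ is a finite union of definable sets and therefore definable.

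The step I expect to be the main obstacle is recognizing that one should \emph{not} try to prove $D\cap E$ definable by showing that its distance predicate is continuous: that approach genuinely fails, because the distance predicate of $D\cap E_0$ can be discontinuous even at points far from $D\cap E_0$. The fix is instead to exhibit $D\cap E$ as an explicit finite union of sets definable relative to $D$, which forces one to correctly identify the ``bad set'' $\partial$ as $\cl(D\smallsetminus E_0)\cap E_0$, to prove the containment $K\subseteq\tint_D(E_0\cap D)$, and to be careful that the repairing set $D_3$ can simultaneously be taken inside $E_0$ (so that it really is contained in $D\cap E$) and large enough to cover all of $K$.
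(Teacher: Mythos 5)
Your proof is correct, but it takes a genuinely different route from the paper's. The paper first picks a definable $G\subseteq D$ with $F\cap D\subseteq G\subseteq U$ (using Proposition~\ref{prop:heredit}), checks via a short net argument that $G\cup F$ is relatively definable in the closed set $D\cup F\cup(X\smallsetminus U)$, and then invokes the Extension Proposition~\ref{prop:ext} (and hence the technical Lemma~\ref{lem:strong-ext}) to produce a definable $E\subseteq X$ with $E\cap\bigl(D\cup F\cup(X\smallsetminus U)\bigr)=G\cup F$; the containments $F\subseteq E\subseteq U$ and the identity $D\cap E=G$ then fall out immediately. You instead build $E=E_0\cup D_2$ by hand and exhibit $D\cap E=D_2\cup D_3$ as a finite union of sets definable relative to $D$, using only the covering argument from Proposition~\ref{prop:dict-equivalents}, closure of definable sets under finite unions, and Lemma~\ref{lem:def-rel-def}; your identification of the bad set as $\partial=\cl(D\smallsetminus E_0)\cap E_0$ and the containment $K\subseteq\tint_D(E_0\cap D)$ both check out. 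What each buys: your argument is more elementary and self-contained, bypassing the Strong Extension machinery entirely, at the cost of the boundary-set bookkeeping; the paper's argument is shorter given Proposition~\ref{prop:ext} and yields the finer control that $D\cap E$ can be made equal to \emph{any} prescribed definable $G\subseteq D$ with $F\cap D\subseteq G\subseteq U$, whereas your $D\cap E$ is merely some definable set squeezed between $D\cap E_0$ and $D\cap U$ (which is all the statement requires).
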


\begin{proof}
First since $D$ is definable it is itself
dictionaric by Proposition \ref{prop:heredit}.\editcom{Removed `in a dictionaric type space.'} Let $G\subseteq D$ be a definable set such that $G\subseteq U$
and $G\supseteq F\cap D$.

Now all we need to do is argue that $G\cup F$ is relatively definable
in $D\cup F\cup(X\smallsetminus U)$, then the result follows from 
Proposition \ref{prop:ext}.

To see that $G \cup F$ is relatively definable in $D\cup F\cup(X\smallsetminus U)$,
first note that since $G$ is definable in $D$, $G$ is definable
in $X$. Now notice that any net $\{x_{i}\}_{i\in I}$ converging
to a point in $G\cup F$ must either be limiting to a point in $G$
or be eventually contained in $G\cup F$, so in either case we have
that $\lim_{i\in I}d(x,G\cup F)=0$, so $G\cup F$ is relatively definable.
Now let $E$ be an extension of $G\cup F$ to all of $X$. We have
that $E$ is the required definable set.\editcom{Split up last sentence.}
\end{proof}

\begin{cor}[Approximate Intersection of Definable Sets]\label{cor:strong_min_and_cat_in_cont:1}
  If $X$ is a dictionaric type space or definable set and $D$ and $E$ are definable subsets of $X$, then for every $\e > 0$ there is a definable set $E'\supseteq E$ such that $D\cap E'$ is definable, and $d_H(E,E') < \e$.
\end{cor}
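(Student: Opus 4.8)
The plan is to deduce this directly from Proposition~\ref{prop:strong_min_and_cat_in_cont:1}. Since $E$ is a definable subset of $X$, it is in particular closed, so it is a legitimate choice for the closed set in that proposition. Moreover, because $E$ is definable its distance predicate $d(x,E)$ is a (continuous) formula, so $E^{<\e/2}=\cset{d(-,E)<\e/2}$ is an open-in-$X$ set, and it obviously contains $E$. (Alternatively, this openness is the special case $U=D=E$ of Lemma~\ref{lem:open-in-definable-locatable}.)

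Now apply Proposition~\ref{prop:strong_min_and_cat_in_cont:1} to the dictionaric $X$, the definable set $D$, the closed set $F:=E$, and the open-in-$X$ set $U:=E^{<\e/2}\supseteq E$. This yields a definable set $E'$ with $E\subseteq E'\subseteq E^{<\e/2}$ such that $D\cap E'$ is definable. It remains only to estimate the Hausdorff distance: since $E\subseteq E'$ we have $\sup_{a\in E}d(a,E')=0$, and since $E'\subseteq E^{<\e/2}$ every $b\in E'$ satisfies $d(b,E)<\e/2$, so $\sup_{b\in E'}d(b,E)\leq\e/2$. Hence $d_H(E,E')\leq\e/2<\e$, as required; and $E'\supseteq E$ by construction.

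There is essentially no obstacle here, so I expect the ``main step'' to be merely citing the right prior result with the right instantiation; the one point needing a (trivial) verification is that $E^{<\e/2}$ is genuinely open in $X$, which is exactly what allows it to be fed into Proposition~\ref{prop:strong_min_and_cat_in_cont:1} as the open neighborhood of the closed set $E$.
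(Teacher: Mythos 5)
Your proposal is correct and is exactly the paper's proof: the paper likewise obtains $E'$ by applying Proposition~\ref{prop:strong_min_and_cat_in_cont:1} with $F=E$ and $U=E^{<\e/2}$. Your added verifications (openness of $E^{<\e/2}$ and the Hausdorff estimate) are routine and fine.
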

\begin{proof}
  Apply Proposition~\ref{prop:strong_min_and_cat_in_cont:1} with $D=D$, $F=E$, and $U=E^{<\e/2}$.
\end{proof}

Although Corollary~\ref{cor:strong_min_and_cat_in_cont:1} is an easy consequence of Proposition~\ref{prop:strong_min_and_cat_in_cont:1}, for definable sets $D$ and $E$, $d_H(D,E) < \e$ can be a very useful condition. It implies that quantifying over elements of $D$ is `approximately the same as' quantifying over elements of $E$ to within an accuracy of $\e$. If $D$ is a uniformly discrete set, then there is an $\e>0$ such that $d_H(D,E)<\e$ implies that there is a definable equivalence relation $\sim$ on $E$ such that $E/\sim$ has a canonical bijection with $D$.\editcom{Added this paragraph.}

\begin{prop}[Preservation under Quotients] \label{prop:dict-quot}
If $X$ is a dictionaric type space or definable set and $\rho$ is
a definable pseudo-metric on $X$, then $X/\rho$ is dictionaric.
\end{prop}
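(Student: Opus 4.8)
The plan is to verify characterization (ii) from Proposition~\ref{prop:dict-equivalents} for the quotient space $X/\rho$: for every $\e > 0$, exhibit a basis of open sets $V$ in $X/\rho$ with $\overline{V} \subseteq V^{<\e}$ (where the closure and the $(\cdot)^{<\e}$-operation are computed with respect to the quotient metric $\rho$). Write $\pi : X \to X/\rho$ for the quotient map and recall that $\pi$ is continuous, so preimages of $\rho$-open sets are open in $X$. Since $\rho$ is a definable pseudo-metric it is uniformly dominated by the metric $d$ on $X$; fix, for the given $\e$, a $\delta > 0$ such that $d(x,y) < \delta$ implies $\rho(x,y) < \e/2$, and assume (harmlessly, as in earlier proofs) that $d$ is $[0,1]$-valued.

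First I would fix a point $\bar p = \pi(p) \in X/\rho$ and a $\rho$-open neighborhood $W$ of $\bar p$, and set $U = \pi^{-1}(W)$, an open neighborhood of $p$ in $X$. Applying dictionaricness of $X$ (via Proposition~\ref{prop:dict-equivalents}(ii)) at scale $\delta$, I get an open $O \subseteq X$ with $p \in O$, $\overline{O} \subseteq U$, and $\overline{O} \subseteq O^{<\delta}$. The candidate neighborhood in the quotient is $V = \tint_{X/\rho}\pi(O)$, or more robustly $V$ a small $\rho$-ball around $\bar p$ contained inside $\pi(O)$; the key point is that $\pi(O) \subseteq W$ and that the $\rho$-metric relations transfer. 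Concretely, if $\pi(x) \in \overline{V}^{\rho}$ then there are points of $O$ that are $\rho$-close to $x$; pushing the witnessing $d$-closeness $\overline{O} \subseteq O^{<\delta}$ through the uniform domination inequality $d < \delta \Rightarrow \rho < \e/2$ shows $\pi(x) \in \pi(O)^{<\e/2} \subseteq V^{<\e}$. Thus $\overline{V} \subseteq V^{<\e}$, and $V$ can be taken inside the arbitrary neighborhood $W$, giving the required basis.

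The main obstacle is bookkeeping the interaction between the two metrics and the quotient topology: the quotient metric $\rho$ on $X/\rho$ is $\inf$ of $\rho$-lengths (in the completion), so "$\rho$-close in $X/\rho$" need not lift to "$\rho$-close in $X$" on the nose, only approximately; and one must be careful that $\pi(O)$ — the image of an open set — need not be open in $X/\rho$, which is why I pass to its interior or to an explicit $\rho$-ball. The clean way to handle this is to do all estimates with a little room to spare: choose $\delta$ for $\e/4$ rather than $\e/2$, build the chain $p \in O_0 \subseteq \overline{O_0} \subseteq O_1 \subseteq \cdots$ with geometrically shrinking scales exactly as in the $\text{(ii)}\Rightarrow\text{(i)}$ argument of Proposition~\ref{prop:dict-equivalents}, and observe that uniform domination converts each $d$-scale bound into a $\rho$-scale bound, so the resulting intersection is $\rho$-closed and the whole construction witnesses condition (ii) downstairs. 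Alternatively, and perhaps more cleanly, one can argue directly with characterization (vi) of Proposition~\ref{prop:dict-equivalents}: given $\bar p \in W \subseteq X/\rho$, lift to $p \in U \subseteq X$, use (vi) in $X$ to find open $U' \ni p$ and closed $F$ with $d_H(U',F) < \delta$ inside $U$, then push forward to get $\pi(U')$ (open after taking interior) and $\overline{\pi(F)}$ closed with $\rho$-Hausdorff distance $< \e$, using that $\pi$ is $1$-Lipschitz from $(X,\rho)$ to $(X/\rho,\rho)$ and that $\rho \leq \alpha(d)$ for the modulus $\alpha$ coming from definability of $\rho$. Either route reduces the statement to the already-established equivalences, so there is no genuinely new difficulty beyond careful metric accounting.
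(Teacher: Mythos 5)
There is a genuine gap, and it sits exactly where you defer to ``careful metric accounting.'' Your whole construction is anchored at a single lift $p$ of $\bar p=\pi(p)$, but the fiber $\pi^{-1}(\bar p)=\cset{\rho(-,p)=0}$ is in general a large closed set, and the open $O\ni p$ supplied by dictionaricness of $X$ says nothing about the rest of that fiber. The quotient map $\pi$ is a closed map (by compactness) but not an open map, so $\pi(O)$ need not be a neighborhood of $\bar p$: to have $\bar p\in\tint_{X/\rho}\pi(O)$ you would need an open $\rho$-saturated set squeezed between $\pi^{-1}(\bar p)$ and the saturation of $O$, and nothing provides one. (Picture collapsing an interval to a point: the image of a small neighborhood of an interior point of the collapsed set is the single point $\bar p$, with empty interior.) So neither ``pass to the interior'' nor ``take an explicit $\rho$-ball inside $\pi(O)$'' produces the set $V$ your verification of condition (ii) requires; the route through (vi) fails for the same reason, since $d_H(U',F)<\delta$ upstairs gives no control over $\tint_{X/\rho}\pi(U')$. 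Even the natural repair --- cover the compact fiber $\pi^{-1}(\bar p)$ by finitely many such $O$'s and take $V=(X/\rho)\smallsetminus\pi(X\smallsetminus O)$, which \emph{is} an open neighborhood of $\bar p$ --- runs aground on the metric estimate: a point of $\cl V$ lifts to a point of $\cl O$, whose $d$-close (hence $\rho$-close) witness lies in $O$ and hence in $\pi(O)$, but not necessarily in $V$, so you do not get $\cl V\subseteq V^{<\e}$.

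The paper avoids all of this by verifying the network characterization (v) of Proposition~\ref{prop:dict-equivalents}, which never asks $\bar p$ to be interior to anything: since $X$ is dictionaric one separates the closed fiber $\pi^{-1}(\bar p)$ from the closed set $X\smallsetminus\pi^{-1}(W)$ by a definable $D$ with $\pi^{-1}(\bar p)\subseteq D\subseteq\pi^{-1}(W)$, and then invokes the one genuinely new ingredient your proposal is missing: the image $\pi(D)$ of a \emph{definable} set under the quotient map is again definable, with distance predicate $\inf_{y\in D}\rho(x,y)$ --- a formula precisely because $D$ is definable. Then $\bar p\in\pi(D)\subseteq W$ gives the required network of definable sets. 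If you want to salvage your approach, this definability of images (together with working with the whole fiber rather than one point of it) is the fact to isolate; the purely topologico-metric transfer of conditions (ii) or (vi) through $\pi$ does not go through.
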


\begin{proof}
Let $q\in U\subseteq X/\rho$ be a type with open neighborhood $U$,
consider $\pi^{-1}(q)\subseteq\pi^{-1}(U)$ where $\pi:X\rightarrow X/\rho$
is the natural projection map. By continuity $\pi^{-1}(q)$ is closed
and $\pi^{-1}(U)$ is open. Let $D$ be a definable set such that
$\pi^{-1}(q)\subseteq D$ and $D\cap U=\varnothing$. The projection
$\pi(D)$ is a definable set (with distance function $\inf_{y\in D}\rho(x,y)$).\editcom{Changed `its' to `with.'}
It's clearly contained in $U$ and contains $q$, so $X/\rho$ has
a network of definable sets and by Proposition \ref{prop:dict-equivalents}
$X/\rho$ is dictionaric.
\end{proof}
\begin{prop} \label{prop:dict-omega}
If $S_{n}(T)$ is dictionaric for every $n<\omega$, then $S_\omega(T)$ is dictionaric as well. %(where $S_\omega(T)$ is endowed with the metric $d(p,q)=\inf\left\{ \sup_{i<\omega}2^{-i}d(a_{i},b_{i}):\overline{a}\models p,\overline{b}\models q\right\}$ or any equivalent metric).
% and $S_{\omega}(T)$ is endowed with the metric $d(p,q)=\inf\left\{ \sup_{i<\omega}2^{-i}d(a_{i},b_{i}):\overline{a}\models p,\overline{b}\models q\right\} $, then $S_{\omega}(T)$ is dictionaric as well.
\end{prop}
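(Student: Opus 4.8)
The plan is to verify characterization \emph{(i)} of Proposition~\ref{prop:dict-equivalents}: every point of $S_\omega(T)$ has a basis of definable neighborhoods. Recall that $S_\omega(T)$ carries the inverse limit topology of the spaces $S_n(T)$ along the restriction maps $\pi_n\colon S_\omega(T)\to S_n(T)$, so the cylinders $\pi_n^{-1}(U)$ with $U\subseteq S_n(T)$ open form a basis. I will fix on $H^\omega$ the metric $d(\bar a,\bar b)=\sup_{k<\omega}2^{-k}d(a_k,b_k)$ allowed by Definition~\ref{defn:imag}, and on each $H^n$ I will use the metric $d_n(\bar a,\bar b)=\max_{k<n}2^{-k}d(a_k,b_k)$ instead of the unweighted $\max$ metric; the two metrics on $H^n$ are bi-Lipschitz equivalent, and which closed subsets are definable depends only on the uniform equivalence class of the metric (immediate from the definition of a definable set), so $S_n(T)$ is still dictionaric with respect to $d_n$.

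The crux is the claim that \emph{$\pi_n^{-1}$ sends definable subsets of $S_n(T)$ to definable subsets of $S_\omega(T)$.} Fix $D\subseteq S_n(T)$ definable. Then $\pi_n^{-1}(D)$ is closed, so it is enough to check $\pi_n^{-1}(D)\subseteq\tint\bigl((\pi_n^{-1}D)^{<\varepsilon}\bigr)$ for each $\varepsilon>0$. I first claim $d\bigl(q,\pi_n^{-1}(r)\bigr)=d_n\bigl(\pi_n(q),r\bigr)$ for all $q\in S_\omega(T)$ and $r\in S_n(T)$: the inequality $\ge$ holds because $\pi_n$ is $1$-Lipschitz, and for $\le$, realize $q$ by an $\omega$-tuple $\bar a$ in a monster model and choose $(b_k)_{k<n}\models r$ with $\max_{k<n}2^{-k}d(a_k,b_k)$ as close as desired to $d_n(\pi_n(q),r)$; extending by $b_k:=a_k$ for $k\ge n$ produces an $\omega$-tuple $\bar b$ with $\pi_n(\tp(\bar b))=r$ and $d(\bar a,\bar b)=\max_{k<n}2^{-k}d(a_k,b_k)$, since $\bar a$ and $\bar b$ agree in every coordinate $\ge n$. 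Consequently $\pi_n^{-1}(D^{<\varepsilon})\subseteq(\pi_n^{-1}D)^{<\varepsilon}$: if $\pi_n(q)\in D^{<\varepsilon}$, pick $r\in D$ with $d_n(\pi_n(q),r)<\varepsilon$, whence $d(q,\pi_n^{-1}(D))\le d(q,\pi_n^{-1}(r))<\varepsilon$. Now definability of $D$ gives
\[
\pi_n^{-1}(D)\subseteq\pi_n^{-1}\bigl(\tint_{S_n(T)}D^{<\varepsilon}\bigr)\subseteq\pi_n^{-1}(D^{<\varepsilon})\subseteq(\pi_n^{-1}D)^{<\varepsilon},
\]
and since the first set is open (by continuity of $\pi_n$) this shows $\pi_n^{-1}(D)$ is definable.

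With this in hand the proposition is short. Given $p\in S_\omega(T)$ and a closed $F\subseteq S_\omega(T)$ with $p\notin F$, fix a cylinder neighborhood $\pi_n^{-1}(V)$ of $p$ disjoint from $F$, so $\pi_n(p)\in V$ and $\pi_n^{-1}(V)\cap F=\varnothing$. By dictionaricness of $S_n(T)$ there is a definable $D_n\subseteq S_n(T)$ with $\pi_n(p)\in\tint_{S_n(T)}D_n$ and $D_n\subseteq V$; then $D:=\pi_n^{-1}(D_n)$ is definable by the crux, contains $p$ in its interior (as $\pi_n^{-1}(\tint_{S_n(T)}D_n)$ is an open subset of $D$ through $p$), and lies in $\pi_n^{-1}(V)$, hence misses $F$. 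So $D$ is the desired definable neighborhood. The one delicate point is the metric identity $d\bigl(q,\pi_n^{-1}(r)\bigr)=d_n\bigl(\pi_n(q),r\bigr)$, which is exactly why the weighted metric $d_n=\max_{k<n}2^{-k}d$ on $H^n$ was chosen at the outset: modifying an $\omega$-tuple only in coordinates $<n$ changes its $d$-distance by precisely the corresponding $d_n$-distance. Everything else — the inverse limit topology on $S_\omega(T)$, extracting a cylinder, and feeding disjoint closed sets into the dictionaricness of $S_n(T)$ — is routine.
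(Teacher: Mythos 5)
Your proof is correct and follows essentially the same route as the paper: localize $p$ inside a finite-variable cylinder, apply dictionaricness of $S_n(T)$ there, and pull the resulting definable set back to $S_\omega(T)$. Your ``crux'' --- the identity $d(q,\pi_n^{-1}(r))=d_n(\pi_n(q),r)$ showing that $\pi_n^{-1}$ preserves definability under the weighted metric of Definition~\ref{defn:imag} --- is precisely the step the paper asserts without proof (``this set is still definable in the sort of $\omega$-tuples''), so you have simply filled in that detail.
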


\begin{proof}
This follows from the fact that given any type $p\in S_{\omega}(T)$ and an open
neighborhood $U$, there is a restricted formula $\varphi(\overline{x})$
such that $p(\overline{x},...)\in\cset{\varphi(\overline{x})<\frac{1}{3}}$
and $\cset{\varphi(\overline{x})\leq\frac{2}{3}}\subseteq U$. By dictionaricness
of $S_{|\overline{x}|}(T)$ we can find a definable set $D$ such
that $\cset{\varphi(\overline{x})<\frac{1}{3}}\subseteq D\subseteq\cset{\varphi(\overline{x})\leq\frac{2}{3}}$
and this set is still definable in the sort of $\omega$-tuples under the metric
given in Definition \ref{defn:imag}, so $S_{\omega}(T)$ is
dictionaric as well. 
\end{proof}
\begin{cor} 
  \label{cor:dict-hered}If $T$ is dictionaric, then $T^{\mathrm{eq}}$ is dictionaric over parameters from the home sort (i.e.\ if $A$ is a set of parameters from the home sort and $I$ is some imaginary, then the type space $S_I(A)$ is dictionaric). In particular, $T^\mathrm{eq}$ is dictionaric over models.

  Furthermore, if $T$ is dictionaric over models, then $T^{\mathrm{eq}}$ is as well.\editcom{Removed `dictionaric over models.'}
\end{cor}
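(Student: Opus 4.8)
The plan is to combine the Imaginary Normal Form (Lemma~\ref{lem:imaginary-norm-form}) with the three preservation results already proved: dictionaricness of all finite type spaces passes up to $S_\omega$ (Proposition~\ref{prop:dict-omega}), passes to quotients by definable pseudo-metrics (Proposition~\ref{prop:dict-quot}), and passes to definable subsets (Proposition~\ref{prop:heredit}). First I would reduce the parameters away: for any parameter set $A$ from the home sort, let $T_A$ be $T$ with a constant named for each element of $A$. Since $S^{T_A}_n(B) = S^{T}_n(A\cup B)$ for every $B$, the theory $T_A$ is again dictionaric, and for an $A$-definable imaginary $I$ we have $S_I(A) = S^{T_A}_I(\varnothing)$ with $I$ now a $\varnothing$-definable imaginary of $T_A$. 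So it is enough to show that if a theory is dictionaric then each of its $\varnothing$-definable imaginary sorts is dictionaric.

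So assume $T$ is dictionaric and let $I$ be a $\varnothing$-definable imaginary. By Lemma~\ref{lem:imaginary-norm-form} there is a $\varnothing$-definable bijection between $I$ and a $\varnothing$-definable subset $D$ of a $\varnothing$-definable quotient $H^\omega/\rho$. Now run the chain: $H^\omega = S_\omega(T)$ is dictionaric by Proposition~\ref{prop:dict-omega} (using that $S_n(T)$ is dictionaric for all $n$); hence $H^\omega/\rho$ is dictionaric by Proposition~\ref{prop:dict-quot}; hence its definable subset $D$ is dictionaric by Proposition~\ref{prop:heredit}. Finally, a $\varnothing$-definable bijection and its ($\varnothing$-definable) inverse are both uniformly continuous, so the induced homeomorphism $S_I(\varnothing)\to S_D(\varnothing)$ carries the metric of $S_D(\varnothing)$ to a metric uniformly equivalent to that of $S_I(\varnothing)$; since characterization~(ii) of Proposition~\ref{prop:dict-equivalents} (a basis of opens $U$ with $\overline U\subseteq U^{<\varepsilon}$) refers only to the topology and the uniform-equivalence class of the metric, $S_I(\varnothing)$ is dictionaric. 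This proves the main clause.

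For the ``in particular'' clause, note that every model $\mathfrak N\models T^{\mathrm{eq}}$ is $\mathfrak M^{\mathrm{eq}}$ for $\mathfrak M\models T$ its home-sort reduct, with every imaginary element of $\mathfrak N$ definable over $\mathfrak M$, so $S_I(\mathfrak N) = S_I(\mathfrak M)$, which is dictionaric by the main clause with $A = \mathfrak M$. For the final sentence, assume only that $T$ is dictionaric over models and fix $\mathfrak M\models T$. Then $T_{\mathfrak M}$ is dictionaric over models --- any $\mathfrak N\models T_{\mathfrak M}$ has a home-sort reduct modeling $T$, hence $S_n(\mathfrak N)$ is dictionaric --- and in particular $S_n(T_{\mathfrak M}) = S_n(\mathfrak M)$ is dictionaric for every $n$, so Proposition~\ref{prop:dict-omega} gives that $S_\omega(\mathfrak M)$ is dictionaric. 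The same chain (Propositions~\ref{prop:dict-quot} and~\ref{prop:heredit} together with the normal form) then makes $S_I(\mathfrak M)$ dictionaric for every imaginary $I$, and as above this transfers to models of $T^{\mathrm{eq}}$. The only real obstacle here is the bookkeeping: each preservation lemma must be applied over the correct base, which is precisely where the passage to $T_A$ or $T_{\mathfrak M}$ enters and where the two hypotheses (``dictionaric'' versus ``dictionaric over models'') are used; the auxiliary point that a definable bijection preserves dictionaricness is immediate from Proposition~\ref{prop:dict-equivalents}(ii).
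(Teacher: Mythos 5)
Your proof is correct and follows exactly the route the paper takes: reduce via the Imaginary Normal Form (Lemma~\ref{lem:imaginary-norm-form}) and then chain Propositions~\ref{prop:dict-omega}, \ref{prop:dict-quot}, and \ref{prop:heredit}. The paper's proof is a one-sentence citation of those same three propositions plus the normal form; you have simply spelled out the parameter bookkeeping (passing to $T_A$ or $T_{\mathfrak M}$) and the transfer along a definable bijection that the paper leaves implicit.
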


\begin{proof}
By Lemma~\ref{lem:imaginary-norm-form} every imaginary is equivalent to a definable subset of a definable quotient of the sort of $\omega$-tuples, so the result follows from the Propositions~\ref{prop:heredit}, \ref{prop:dict-quot}, and \ref{prop:dict-omega}.
\end{proof}

Unfortunately  we cannot, in general, conclude that $T^{\mathrm{eq}}$ is itself dictionaric. There are even discrete counterexamples, such as $\mathsf{RCF}$, where if we consider the hyperimaginary given by quotienting by the `infinitesimally close' equivalence relation (which can also be seen as a continuous imaginary sort)\editcom{Changed wording in parenthetical comment.} and if we let $a$ be any infinite element of this hyperimaginary, then $S_1(a)$ fails to be dictionaric in the connected component of $\tp(a)$. This phenomenon is related to failure of elimination of hyperimaginaries.\editcom{Removed `In particular.'} If $T$ is a discrete theory that eliminates hyperimaginaries, then the continuous $T^{\mathrm{eq}}$ is dictionaric. This implies that any stable or supersimple discrete theory will have dictionaric continuous $T^{\mathrm{eq}}$, as such theories always eliminate hyperimaginaries \cite{pillay1987, supersimple2000}. It would be nice to know if this extends to continuous dictionaric theories.

\begin{quest}
If $T$ is a continuous dictionaric theory and $T$ is also stable or supersimple, does it follow that $T^\mathrm{eq}$ is dictionaric?
\end{quest}

\subsection[Omega-Stable Theories are Dictionaric]{$\omega$-Stable Theories are Dictionaric}
\label{sec:w-stab-dict}

Now we will see that, fortunately, the technicality present in Corollary~\ref{cor:dict-hered} is not relevant in the rest of the paper, as every $\omega$-stable theory is dictionaric (and so also has dictionaric $T^\mathrm{eq}$). This is a topometric analog of the fact that scattered compact Hausdorff spaces are automatically totally disconnected.\editcom{Added `topometric analog of.'} But first we will need to following Lemma~\ref{lem:important-sep-lemma}, which is a metric space analog of the fact that functions from countable sets to $[0,1]$ are not surjective.\editcom{Added last sentence.}

\begin{lem}\label{lem:important-sep-lemma}
If $(X,d)$ is any separable metric space, and $f:X\rightarrow[0,1]$
is any function (not necessarily continuous), then for all but countably
many $r\in[0,1]$, $\{f\leq r\}\subseteq\overline{\{f<r\}},$ %{\color{red} Change to wide hat notation? Or put notation in earlier?}
where $\{f \;\rectangle r\}$ is $\{x \in X : f(x) \; \rectangle r\}$ and $\bar{A}$ is the metric closure of $A$.\editcom{Changed some notation and the final `where' statement.}
%where $\overline{A}$ is the metric closure of $A$.
%$\hat{A}=A^{\mathcircumflex}$ is the metric closure of $A$.
\end{lem}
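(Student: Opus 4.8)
The plan is to identify, for each "bad" $r \in [0,1]$, a genuine obstruction living in a fixed countable set, so that the collection of bad $r$'s injects into something countable. First I would unpack what it means for $r$ to be \emph{bad}: there is a point $x \in X$ with $f(x) \leq r$ but $x \notin \overline{\{f < r\}}$, i.e.\ there is some $\delta > 0$ (depending on $x$ and $r$) such that $B_{<\delta}(x) \cap \{f < r\} = \varnothing$. This last condition says precisely that $f(y) \geq r$ for every $y \in B_{<\delta}(x)$. So $r$ being bad is witnessed by a pair $(x,\delta)$ with $x \in X$, $\delta$ a positive rational, and $f(y) \geq r$ on the whole ball $B_{<\delta}(x)$, together with the requirement $f(x) \leq r$ (hence in fact $f(x) = r$).

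Next I would fix a countable dense subset $X_0 \subseteq X$ and argue that each witnessing pair can be replaced by one using a point of $X_0$ and a rational radius. If $(x,\delta)$ witnesses that $r$ is bad, pick $x' \in X_0$ with $d(x,x') < \delta/2$ and a rational $\delta'$ with $0 < \delta' < \delta/2$; then $B_{<\delta'}(x') \subseteq B_{<\delta}(x)$, so $f \geq r$ on $B_{<\delta'}(x')$. Thus for every bad $r$ there is a pair $(x', \delta') \in X_0 \times (\mathbb{Q} \cap (0,\infty))$ such that $\inf_{y \in B_{<\delta'}(x')} f(y) = r$; the infimum equals $r$ because $f \geq r$ on the ball while $f(x) = r$ and $x$ lies in a slightly larger ball — more carefully, choose $\delta'$ also small enough that $f(x) \le r$ still forces the infimum over $B_{<\delta'}(x')$ to be exactly $r$, using that $f \ge r$ everywhere on $B_{<\delta}(x) \supseteq B_{<\delta'}(x')$ and that we may shrink to keep $x$ relevant; alternatively just note $\inf_{B_{<\delta'}(x')} f \ge r$ and separately that any bad $r$ arising from $(x',\delta')$ is determined by, hence $\le$, that infimum, and re-examine to pin down equality. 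The cleanest formulation: define $g(x',\delta') = \inf_{y \in B_{<\delta'}(x')} f(y)$ for $(x',\delta') \in X_0 \times (\mathbb{Q}\cap(0,\infty))$; I claim every bad $r$ is of the form $g(x',\delta')$ for some such pair, and since $X_0 \times (\mathbb{Q}\cap(0,\infty))$ is countable, the set of bad $r$ is countable.

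To nail the claim, given bad $r$ witnessed by $(x,\delta)$ as above, I would take $x' \in X_0$ with $d(x,x')$ small and rational $\delta'$ with $d(x,x') < \delta' < \delta - d(x,x')$, so that $B_{<\delta'}(x')$ both contains $x$ and is contained in $B_{<\delta}(x)$. Then $f \geq r$ on $B_{<\delta'}(x')$ gives $g(x',\delta') \geq r$, while $x \in B_{<\delta'}(x')$ with $f(x) = r$ gives $g(x',\delta') \leq r$; hence $g(x',\delta') = r$, completing the argument.

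The main obstacle is the bookkeeping in the previous paragraph: one must choose the rational radius $\delta'$ so that the ball $B_{<\delta'}(x')$ simultaneously sits inside the "forbidden" ball $B_{<\delta}(x)$ (to inherit $f \ge r$) and still contains the original point $x$ (to force the infimum down to $r$); this is possible exactly because $d(x,x')$ can be made strictly less than $\delta/2$, leaving room for a rational $\delta'$ strictly between $d(x,x')$ and $\delta - d(x,x')$. Everything else is routine, and no continuity of $f$ is used anywhere — the separability of $X$ is the only real hypothesis, exactly as in the analogous fact that a function from a countable set to $[0,1]$ cannot be surjective.
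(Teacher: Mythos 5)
Your proof is correct, but it takes a different route from the paper's. You construct an explicit injection from the set of bad values into the countable set $X_0\times(\mathbb{Q}\cap(0,\infty))$: each bad $r$ is recovered as $\inf_{y\in B_{<\delta'}(x')}f(y)$ for a rational ball chosen to sit inside the ``forbidden'' ball $B_{<\delta}(x)$ while still containing the witness $x$ (your final paragraph pins down the radius condition $d(x,x')<\delta'<\delta-d(x,x')$ correctly, and the hedging in your middle paragraph is superfluous once that is in place). The paper instead argues by contradiction via metric entropy: assuming uncountably many bad $r$, it picks witnesses $x_r$ with $f(x_r)=r$ and $d(x_r,\{f<r\})>0$, uses a pigeonhole over $\varepsilon=1/n$ to extract an uncountable subfamily with $d(x_r,\{f<r\})>\varepsilon$ uniformly, and then observes that for $r>s$ in this subfamily $x_s\in\{f<r\}$, so the witnesses form an uncountable $(>\varepsilon)$-separated set, contradicting separability. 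Your coding argument is slightly more constructive and makes the analogy with ``a function from a countable set to $[0,1]$ is not surjective'' explicit; the paper's argument is shorter and reuses the separated-set vocabulary ($\#^{\mathrm{ent}}_{>\varepsilon}$) that recurs elsewhere in the paper. Both use separability in an essential and comparable way, and either proof would serve.
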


\begin{proof}
Assume that there are uncountably many $r\in[0,1]$ such that $\{f\leq r\}\not\subseteq\overline{\{f<r\}}$.
For each such $r$, let $x_{r}$ be a witnessing element of $\{f=r\}$
satisfying $\dinf(x_{r},\{f<r\})>0$.  Since there are uncountably many such
$x_{r}$, there is an $\varepsilon>0$ such that for some uncountable
$R$, $d(x_{r},\{f<r\})>\varepsilon$ for all $r\in R$. This
implies that if $r,s\in R$ and $r>s$, then $d(x_{r},x_{s})>\varepsilon$,
but this is an uncountable $(>\varepsilon)$\nobreakdash-\hspace{0pt}separated set, contradicting
that $X$ is a separable metric space. 
\end{proof}
\begin{prop}\label{prop:strong_min_and_cat_in_cont:3}
\leavevmode
\begin{enumerate}[label=(\roman*)]
\item If $X$ is a small type space or definable set (i.e.\ $X$ is metrically
separable), then $X$ is dictionaric.

\item If $T$ is  %{\color{red} Define t.t.\ explicitly somewhere?}
  %or
  hereditarily small (i.e.\
$S_{n}(\overline{a})$ is metrically separable for every finite tuple of parameters
$\overline{a}$), then $T$ is dictionaric.

In particular, if
\begin{itemize}
\item $T$
is $\omega$-stable, %totally transcendental (i.e.\ every countable reduct of $T$ is $\omega$-stable),
\item hereditarily $\aleph_0$-categorical (i.e.\ $T_{\overline{a}}$ is
$\aleph_0$-categorical for every finite tuple of parameters\footnote{This does not automatically follow from $\aleph_0$-categoricity in continuous logic \cite[Ex.\ 17.7]{MTFMS}.}), or 
\item if
$T$ has an $\aleph_0$-saturated separable model (as opposed to just an approximately $\aleph_0$-saturated separable model),
\end{itemize}
 then $T$
is dictionaric.\footnote{Note that the second bullet point is a special case of the third bullet point.\editcom{Added footnote.}} %{\color{red} Remove the i.e.'s ?}

Furthermore, totally transcendental theories---those in which every countable reduct is $\omega$-stable---are dictionaric.

\item\emph{(Strong Intersection Property)} If $S_n(A)$ is a small type space and $D\subseteq S_{n}(A)$ is
definable, then for any formula  $\varphi:S_{n}(A)\rightarrow[0,1]$, for all but countably many
$r\in[0,1]$, we have that $\cset{\varphi\leq r}$, $\cset{\varphi\geq r}$, $\cset{\varphi\leq r}\cap D$, and
$\cset{\varphi\geq r}\cap D$ are all definable.

%\item A theory $T$ in an uncountable language $\Lc$ is dictionaric if and only if for every countable $\Lc_0 \subset \Lc$, there is a countable $\Lc_1$ with $\Lc_0\subseteq \Lc_1\subset \Lc$ such that $T\upharpoonright \Lc_1$ is dictionaric.

%In particular, any totally transcendental theory (i.e.\ one in which every countable reduct is $\omega$-stable) is dictionaric.
\end{enumerate}
\end{prop}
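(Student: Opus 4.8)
The plan is to isolate the single technical statement that drives all three parts: \emph{in a small compact topometric space $X$ (a small type space or a definable set inside one), for every formula $\varphi\colon X\to[0,1]$ and all but countably many $r\in[0,1]$, the closed set $\cset{\varphi\le r}$ (and, symmetrically, $\cset{\varphi\ge r}$) is definable.} Granting this, part \emph{(i)} is immediate: given $p\notin F$ with $F$ closed, use normality of $X$ to pick $\varphi$ with $\varphi(p)=0$ and $\varphi\upharpoonright F=1$, choose a good $r\in(0,1)$, and note that $\cset{\varphi\le r}$ is a definable set with $p\in\cset{\varphi<r}\subseteq\tint\cset{\varphi\le r}$ and $\cset{\varphi\le r}\cap F=\varnothing$. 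The first two assertions of part \emph{(iii)} are exactly the driving statement applied to $\varphi$ and to $1-\varphi$ on $X=S_n(A)$; the two ``$\cap D$'' assertions follow by applying it inside the definable set $D$ (which, being a closed subspace of a small type space, is itself a small compact topometric space) to $\varphi\upharpoonright D$ and then invoking Lemma~\ref{lem:def-rel-def} to upgrade ``definable relative to $D$'' to ``definable.'' Taking the union of the four resulting countable exceptional sets finishes \emph{(iii)}.

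For part \emph{(ii)} I would first reduce dictionaricness of an arbitrary $S_n(A)$ to that of $S_n(\overline{a})$ for finite tuples $\overline{a}$: given $p\in S_n(A)$ and a closed $F\not\ni p$, a basic open neighbourhood $\cset{\theta(\overline{x},\overline{a})<\delta}\subseteq S_n(A)\smallsetminus F$ with $\overline{a}\subseteq A$ finite is the preimage under the restriction map $\pi\colon S_n(A)\to S_n(\overline{a})$ of an open set, and a definable neighbourhood of $\pi(p)$ lying inside that open set pulls back along $\pi$ (which is metrically nonexpansive, and along which realizations can be transported with essentially optimal metric cost) to a definable neighbourhood of $p$ contained in $S_n(A)\smallsetminus F$. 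Thus hereditary smallness plus part \emph{(i)} yields dictionaricness of every $S_n(A)$. The three bulleted classes are then seen to be hereditarily small: $\omega$-stability directly gives separable $S_n(\overline{a})$; an $\aleph_0$-saturated separable model $\frk M$ exhibits all of $S_n(\overline{a})$ as the metric closure of the separable image of $\frk M^n$; hereditary $\aleph_0$-categoricity is a special case of the latter; and for totally transcendental $T$ each $S_n(\overline{a})$ is CB-analyzable, hence metrically separable.

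The heart of the argument, and the step I expect to be delicate, is the driving statement. Recall that $C_r:=\cset{\varphi\le r}$ is definable exactly when $d(\,\cdot\,,C_r)$ is continuous; this distance function is automatically lower semicontinuous (compactness of $C_r$ together with lower semicontinuity of the metric makes each $C_r^{\le\alpha}$ closed), so the task is to secure upper semicontinuity, equivalently the openness of $C_r^{<\varepsilon}$ for every $\varepsilon>0$. By Lemma~\ref{lem:strong_min_and_cat_in_cont:4} (with the ambient space as $F$) this amounts to producing a continuous $g\colon[0,1]\to[0,1]$ with $g^{-1}(0)=[0,r]$ dominating the distance, so the real content is that the modulus $t\mapsto\sup\{d(x,C_r):\varphi(x)\le r+t\}$ tends to $0$ as $t\to0^+$ for all but countably many $r$. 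This is where Lemma~\ref{lem:important-sep-lemma} enters: a failure at $r$ produces, by compactness of $X$, a point $x\in\cset{\varphi=r}$ that is a topological limit of points lying a fixed metric distance $\ge\varepsilon$ from $C_r$; feeding an appropriate auxiliary function built from the distances $x\mapsto d(x,\cset{\varphi\le s})$ into Lemma~\ref{lem:important-sep-lemma} shows that such an $r$ can occur only countably often, since otherwise the separable space $X$ would contain an uncountable $({>}\varepsilon)$-separated set. Making this bookkeeping precise — in particular choosing the auxiliary function so that the conclusion of Lemma~\ref{lem:important-sep-lemma} translates into genuine openness of the metric thickenings $C_r^{<\varepsilon}$ rather than a mere metric-closure statement, and running the symmetric case $\cset{\varphi\ge r}$ in tandem via $1-\varphi$ — is the main obstacle; the rest is routine.
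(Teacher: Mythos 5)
Your architecture matches the paper's exactly — reduce everything to the statement that in a small $X$, all but countably many sublevel sets $\cset{\varphi\le r}$ are definable; get (i) by Urysohn plus a good $r$; get (ii) by reducing to finite parameter sets and using that a $\overline{a}$-definable set stays definable over larger parameter sets; get (iii) by running the argument inside $D$ and invoking Lemma~\ref{lem:def-rel-def}. But the one step you flag as ``the main obstacle'' is the entire content of the proposition, and your sketch of it (extracting a bad point by compactness, then feeding ``an appropriate auxiliary function built from the distances $x\mapsto d(x,\cset{\varphi\le s})$'' into Lemma~\ref{lem:important-sep-lemma}) is both incomplete and aimed at the wrong target: you are trying to prove a quantitative modulus statement about $t\mapsto\sup\{d(x,C_r):\varphi(x)\le r+t\}$, and it is not clear what function you would apply the lemma to or why a failure at uncountably many $r$ yields a single $({>}\varepsilon)$-separated set. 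The actual resolution is much shorter and needs no modulus at all: apply Lemma~\ref{lem:important-sep-lemma} directly to $f=\varphi$ on the separable metric space $(X,d)$ to get $\cset{\varphi\le r}\subseteq\overline{\cset{\varphi<r}}$ for all but countably many $r$ (the reverse inclusion holds for every $r$ by uniform continuity of $\varphi$ with respect to $d$), and then observe that the metric closure of an open set is \emph{automatically} definable, since $\overline{U}^{\,<\varepsilon}=U^{<\varepsilon}$ and $U^{<\varepsilon}$ is open for open $U$ (Lemma~\ref{lem:open-in-definable-locatable} with $D$ the whole space), so $\cset{\varphi\le r}\subseteq\tint\cset{\varphi\le r}^{<\varepsilon}$ for every $\varepsilon>0$. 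Your reformulation via Lemma~\ref{lem:strong_min_and_cat_in_cont:4} and upper semicontinuity of $d(\cdot,C_r)$ is a detour that the ``closure of an open set'' observation bypasses entirely.

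A secondary error: your justification of the totally transcendental case — ``each $S_n(\overline{a})$ is CB-analyzable, hence metrically separable'' — is false. CB-analyzability does not imply metric separability, and a totally transcendental theory in an uncountable language need not be hereditarily small (e.g.\ a discrete theory with uncountably many distinct constants). The correct route, which your own framework supports, is to pass to a countable reduct containing the countably many symbols and parameters of the separating formula; that reduct is $\omega$-stable, hence dictionaric, and a definable set of a reduct remains definable in the full theory.
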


\begin{proof}
(i). By the previous lemma, for any continuous function $P:S_{n}(T)\rightarrow[0,1]$
we have that for all but countably many $r\in[0,1]$, $\cset{P\leq r}=\overline{\cset{P<r}}$.
If a closed set is the metric closure of an open set then it is automatically
definable since $\cset{P\leq r}^{<\varepsilon}=\cset{P<r}^{<\varepsilon}$,
which is open.

(ii). Given a type $p$ and an open neighborhood $U$ there is a restricted
formula $\varphi(\overline{x};\overline{a})$ such that $p(\overline{x})\in\cset{\varphi(\overline{x};\overline{a})<\frac{1}{3}}$
and $\cset{\varphi(\overline{x};\overline{a})\leq\frac{2}{3}}\subseteq U$.
%By passing to a countable reduct of the theory in which $\varphi$
%is definable and looking at the now small type space $S_{n}(\overline{a})$,
From which we get that there is a definable set $D$ such that $\cset{\varphi(\overline{x};\overline{a})\leq\frac{1}{3}}\subseteq D\subseteq\cset{\varphi(\overline{x};\overline{a})<\frac{2}{3}}$, namely, $\cset{\varphi(\bar{x};\bar{a}) \leq r}$ for some $r\in(\frac{1}{3},\frac{2}{3})$.
This is still definable in the full theory over the full parameter
set, so it is the required definable neighborhood of $p$.

Each of the bulleted conditions implies that $T$ is hereditarily small, and the fact that totally transcendental theories are dictionaric follows easily from the fact that each of their countable reducts are dictionaric, so the full result follows.

(iii). This follows from the fact that $D\subseteq S_{n}(A)$ is small
whenever $S_{n}(A)$ is small.
\end{proof}

It is possible to produce an example showing that in Proposition~\ref{prop:strong_min_and_cat_in_cont:3} we cannot in general guarantee that $\cset{\varphi = r}$ is a definable set for any $r \in [0,1]$, as in there is an $\omega$-stable theory and a $[0,1]$-valued formula $\varphi(x)$ such that $\cset{\varphi=r}$ fails to be definable for every $r \in [0,1]$.

%\textit{Warnings:}

Dictionaricness is not preserved under arbitrary reducts (any
non-totally transcendental discrete theory interprets Example~\ref{ex:main-bad-example}). %a non-dictionaric theory).
Superstability does not imply dictionaricness (Example~\ref{ex:main-bad-example}) and neither
does this plausible sounding separation axiom:
\begin{itemize}
\item[$(\ast)$] For every $p,q\in X$,
with $p\neq q$, there are disjoint definable sets $D$ and $E$ such that
$p\in\tint _{X}D$, $q\in\tint _{X}E$.
\end{itemize}
Examples of $(\ast)$ are actually fairly prevalent. Every theory is bi-interpretable with a theory satisfying $(\ast)$: If $\frk{S}=(S^1,X,Y)$ is the structure whose universe is the unit circle in $\mathbb{R}^2$ with the standard metric and which has predicates $X$ and $Y$ for the $x$- and $y$-coordinates, then for any structure $\frk{M}$, $\mathrm{Th}(\frk{M} \times \frk{S})$, where we take product structures to have both of the factor structures' metrics as predicates, has the required property. The key is that if $\varphi$ is any $S^1$-valued formula in the language of $\frk{M}$, then the set $\{\left<x,y\right> \in \frk{M} \times \frk{S} : d(\varphi^{\frk{M}}(x),y)\leq \e\}$ is always definable, with similar statements for $n$-tuples. Theories whose models have more than one element always interpret $\frk{S}$ and theories whose models have one or fewer elements automatically satisfy $(\ast)$.\editcom{Restructured paragraph and made example more explicit.} %An example of this is slightly harder to construct, but can be made weakly minimal like Example~\ref{ex:main-bad-example}.

These examples, however, still have imaginary sorts failing $(\ast)$, which raises the following question.
\editcom{Added this paragraph and question.}

\begin{quest}
  If $T$ is a theory such that every type space over $T^{\mathrm{eq}}$ satisfies $(\ast)$, does it follow that $T$ or $T^{\mathrm{eq}}$ is dictionaric?
\end{quest}

\section{\Insep\ Categorical Theories with Strongly Minimal Sets}

\subsection{(Strongly) Minimal Sets} \label{sec:SMS} There are existing definitions of minimal set and strongly minimal set in the literature given in \cite{Noquez2017}, but both of these definitions are too weak. The definition of strongly minimal set given there fails to generalize the notion of strongly minimal in discrete logic and instead corresponds to a countably type-definable set containing a unique non-algebraic type which furthermore is minimal (i.e.\ a type for which every forking extension is algebraic).\editcom{Changed wording and added `furthermore' to make this sentence a little clearer.} The definition of minimal set given there is trivial in the sense that under it every countable theory with non-compact models has a minimal zeroset over any given separable model. %{\color{red} Should I still have this?}
%every separable model $\mathfrak{M}$ of any countable theory $T$ contains an $\mathfrak{M}$-zeroset $F$ such that $F(\mathfrak{M})$ is non-compact and such that for every $\mathfrak{M}$-definable predicate $P$, either $F(\mathfrak{M})\cap [P(\mathfrak{M})]$ is compact or for every $\varepsilon>0$, $F(\mathfrak{M})\cap[P(\mathfrak{M}) \geq \varepsilon]$ is compact.
The definitions given here are equivalent to the definitions given in \cite{Noquez2017} with the extra stipulation that the sets are definable rather than just zerosets. In particular this means that the notion of a strongly minimal \emph{theory} given in \cite{Noquez2017} is equivalent to the one given here.

\begin{defn}
\leavevmode
\begin{enumerate}[label=(\roman*)]
\item A non-algebraic definable set $D$ is \emph{minimal} (over the set $A$)
if for each pair $F,G\subseteq D$ of disjoint $A$-zerosets,
at most one of $F$ or $G$ is non-algebraic.

\item A definable set is \emph{strongly minimal} if it is minimal over every
set of parameters.
\end{enumerate}
\end{defn}
The na\"ive translation of the definition of minimal set---every set
$\cset{P(\mathfrak{M})\brackconv}\cap D(\mathfrak{M})$ is either compact or co-pre-compact (i.e.\ has a complement with a compact closure)---does not work:
\begin{ex}
\label{ex:not-comp-nor-co-comp}
A strongly minimal set $D$ with a definable set $E\subseteq D$ that
is neither compact nor co-pre-compact.
\end{ex}

\begin{proof}[Verification]
Let $\mathfrak{M}$ be a structure whose universe is $\omega\times S^{1}$,
where $S^{1}\subseteq\mathbb{C}$ is the unit circle with the standard
Euclidean metric. Let the distance between any points in distinct
circles be $1$. Let $D$ be the entire structure, and let $E$ be the subset of $S_1(\frk{M})$ given by  $\{(n,e^{2\pi ki/(n+1)}):k\leq n\}$ (where we are identifying elements of $\frk{M}$ with their types in $S_1(\frk{M})$) together with the unique non-algebraic type. This set is clearly closed. 
% be
% i.e. on the $n$th circle $E$ consists of the $(n+1)$th roots of
% unity. As a subset of the type space $S_{1}(\mathfrak{M})$, $E$
% is $E(\mathfrak{M})$ together with the unique non-algebraic type. %{\color{red} Need a clearer exposition of this.}
% $E\subseteq S_{1}(\mathfrak{M})$ is clearly closed.
To see that it is a definable set, pick $\varepsilon>0$ and consider $E^{<\varepsilon}$.
For any $n>\frac{4\pi}{\varepsilon}$, $E^{<\varepsilon}$ contains
all of the circle $\{n\}\times S^{1}$. There are only finitely many
$n\leq\frac{4\pi}{\varepsilon}$, and on each of these $E^{<\varepsilon}$
is an open set since the logic topology agrees with the metric topology
on each individual circle in $\mathfrak{M}$. Therefore $E^{<\varepsilon}$
is an open set, and so $E$ is definable.
\end{proof}
Another example is $(-\infty,0]\cup\{\ln(1+n):n<\omega\}$, which
is an $\mathbb{R}$-definable subset of $\mathbb{R}$ (which will be shown to be strongly minimal as a metric space with the appropriate metric in Theorem~\ref{thm:strong_min_and_cat_in_cont:2}).\editcom{Changed `is' to `will be.'}
%Example \ref{exa:min-non-triv}).

\subsubsection{Some Characterizations}

Here we present some more traditional characterizations of minimal sets.\editcom{Changed this to actually be a sentence.}
%For more traditional characterizations of minimal sets:
\begin{prop}
For a definable set $D$ over the structure $\mathfrak{M}$ the following
are equivalent:
\begin{enumerate}[label=(\roman*)]
\item $D$ is minimal.

\item For each restricted $M$-formula $\varphi$, at most one of $\cset{\varphi(\mathfrak{M})\leq\frac{1}{3}}$
or $\cset{\varphi(\mathfrak{M})\geq\frac{2}{3}}$ is non-compact. (In particular
we only need to check compactness in $\mathfrak{M}$, not in arbitrary
elementary extensions of $\mathfrak{M}$.)\footnote{Note that there is nothing special about $\frac{1}{3}$ and $\frac{2}{3}$ (beyond the fact that they are distinct) or the fact that these are inequalities rather than equalities, so we get the same statement with $\cset{\varphi=0}$ and $\cset{\varphi=1}$. The proof of the statement in the proposition is conceptually clearer, however.\editcom{Added `beyond the fact....' Made into footnote.}}

\item $D$ is dictionaric (as a subset of $S_{n}(\mathfrak{M})$\/) %Is the italic correction even doing anything here?
and for every $M$-definable subset $E$ of $D$, either $E(\mathfrak{M})$
is compact or $D(\mathfrak{M})\cap\cset{d(\mathfrak{M},E)\geq\varepsilon}$
is compact for every $\varepsilon>0$.
\end{enumerate}
\end{prop}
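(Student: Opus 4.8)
The plan is to prove $\text{(i)}\Leftrightarrow\text{(ii)}$ and $\text{(i)}\Leftrightarrow\text{(iii)}$, using that a zeroset is algebraic precisely when it has metrically compact realizations in \emph{every} elementary extension of $\mathfrak{M}$, while the conditions in (ii) and (iii) only mention $\mathfrak{M}$ itself; bridging this gap is exactly what Lemma~\ref{lem:basic-alg} is for.

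\smallskip
\emph{$\text{(i)}\Rightarrow\text{(ii)}$.} Given a restricted $M$-formula $\varphi$ on $D$, the sets $\cset{\varphi\leq\frac13}$ and $\cset{\varphi\geq\frac23}$ (intersected with $D$) are disjoint $M$-zerosets inside $D$, so by minimality at most one is non-algebraic; as an algebraic zeroset is in particular metrically compact in $\mathfrak{M}$, at most one of the two is non-compact. \emph{$\text{(ii)}\Rightarrow\text{(i)}$.} Let $F,G\subseteq D$ be disjoint $M$-zerosets. By normality of the type space fix an $M$-formula $\psi$ on $D$ with $F\subseteq\cset{\psi=0}$ and $G\subseteq\cset{\psi=1}$, and then (using uniform density of restricted formulas in $C(D)$) replace $\psi$ by a restricted formula $\varphi$ with $\lVert\varphi-\psi\rVert<\frac1{10}$, so that still $F\subseteq\cset{\varphi<\frac13}$ and $G\subseteq\cset{\varphi>\frac23}$. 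By (ii), say $\cset{\varphi(\mathfrak{M})\leq\frac13}$ is compact (the other case is symmetric). I would then mimic the proof of Lemma~\ref{lem:basic-alg}, but relativized to $D$: since $D$ is definable, its distance predicate $d(x,D)$ is a formula, so for each $\varepsilon>0$ the statement that there are $n$ pairwise $({>}\varepsilon)$-separated $x_0,\dots,x_{n-1}$ with $d(x_i,D)=0$ and $\varphi(x_i)<\frac13$ is expressible by a sentence $\chi_{\varepsilon,n}$; taking $n=n_\varepsilon$ one past the maximal separated cardinality of the compact set $\cset{\varphi(\mathfrak{M})\leq\frac13}$ makes $\chi_{\varepsilon,n_\varepsilon}$ hold ($=1$) in $\mathfrak{M}$, hence in every elementary extension, which rules out an infinite $({>}\varepsilon)$-separated subset of $F$ (whose members satisfy $\varphi=0$ and $d(-,D)=0$) in any such extension. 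So $F$ is algebraic, and thus at most one of $F,G$ is non-algebraic.

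\smallskip
\emph{$\text{(iii)}\Rightarrow\text{(i)}$.} Let $F,G\subseteq D$ be disjoint $M$-zerosets. By Proposition~\ref{prop:dict-equivalents} there is a definable $E\subseteq X$ with, say, $F\subseteq E\cap D$ and $E\cap G=\varnothing$; since $E\cap D$ and $G$ are disjoint closed subsets of the compact topometric space $X$ and the metric is lower semicontinuous and genuine, $d(G,E\cap D)\geq\varepsilon_0>0$, so $G\subseteq D\cap\cset{d(-,E)\geq\varepsilon_0}$. Now apply the hypothesis of (iii) to $E\cap D$. If $(E\cap D)(\mathfrak{M})$ is compact, then (as $E\cap D$ is definable) the metric-entropy sentences built from $d(-,E\cap D)$ transfer to all elementary extensions, so $E\cap D$, hence $F$, is algebraic. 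If instead $D(\mathfrak{M})\cap\cset{d(\mathfrak{M},E)\geq\varepsilon}$ is compact for every $\varepsilon>0$, then in the same way—using the distance predicates of $D$ and $E$—compactness of $D\cap\cset{d(-,E)\geq\varepsilon_0}$ transfers upward, so $G$ is algebraic. Either way at most one of $F,G$ is non-algebraic.

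\smallskip
\emph{$\text{(i)}\Rightarrow\text{(iii)}$.} The second clause is immediate from minimality: for $M$-definable $E\subseteq D$ and $\varepsilon>0$ the sets $E$ and $D\cap\cset{d(-,E)\geq\varepsilon}$ are disjoint $M$-zerosets of $D$, so at most one is non-algebraic; hence either $E$ is algebraic (so $E(\mathfrak{M})$ compact) or $D(\mathfrak{M})\cap\cset{d(\mathfrak{M},E)\geq\varepsilon}$ is compact for every $\varepsilon>0$. For dictionaricness I would proceed as follows. First, minimality forces at most one non-algebraic type $p_0\in D$ (two distinct non-algebraic types could be placed in disjoint $M$-zerosets of $D$ via a separating formula, each necessarily non-algebraic). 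Next, every closed $F\subseteq D$ with $p_0\notin F$ is algebraic: Urysohn gives a formula vanishing on $F$ with value $1$ at $p_0$, a zeroset-neighbourhood of $F$ and a zeroset-neighbourhood of $p_0$ inside $D$ are disjoint $M$-zerosets, the latter non-algebraic, so the former—hence $F$—is algebraic. From this one deduces that every type $q\in D$ with $q\neq p_0$ is $d$-atomic: otherwise, for some $\delta>0$ every logic-neighbourhood of $q$ meets $\cset{d(-,q)\geq\delta}$, and extracting from this a closed set through $q$ with infinitely many $({>}\delta)$-separated types avoiding $p_0$ contradicts the previous step. Consequently every closed $F\subseteq D$ with $p_0\notin F$ is in fact a \emph{definable} set, because a closed set all of whose types are $d$-atomic $F$ satisfies, for each of its points $q$ and each $\delta>0$, $q\in\cset{d(-,q)<\delta}\subseteq\tint F^{<\delta}$ (using that $\{q\}$ definable makes $d(-,q)$ continuous). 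Finally, to witness dictionaricness at $p\in V\subseteq D$ with $V$ relatively open: if $p\neq p_0$, separate $p$ from $(D\setminus V)\cup\{p_0\}$ by a formula $\psi$ with $\psi(p)=0$ and value $1$ on that set; then $\cset{\psi\leq\frac12}\cap D$ is a definable neighbourhood of $p$ contained in $V$. If $p=p_0$, pick a relatively open $V'$ with $p_0\in V'$ and $\cl_D V'\subseteq V$, put $G=D\setminus V'$ (a definable set avoiding $p_0$, so $d(-,G)$ is continuous), and take $Q=D\cap\cset{d(-,G)\geq\delta}$ for small $\delta>0$; then $Q\subseteq V$, $p_0$ lies in the relatively open set $D\cap\cset{d(-,G)>\delta}\subseteq Q$, every other point of $Q$ is $d$-atomic, hence has a relatively open metric-$\varepsilon$ neighbourhood inside $Q^{<\varepsilon}$, so $Q$ is definable relative to $D$ and thus definable by Lemma~\ref{lem:def-rel-def}.

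\smallskip
The main obstacle is exactly the proof that $D$ minimal implies $D$ dictionaric. Within it, two points need real care when $S_n(\mathfrak{M})$ is not metrizable: the passage from ``$q$ is not $d$-atomic'' to ``there is a closed non-algebraic set through $q$ avoiding $p_0$'' (a net, rather than a sequence, converges to $q$ from far away, and one must still recover an infinite separated family or argue directly with total-boundedness bookkeeping); and the verification that the set $Q=D\cap\cset{d(-,G)\geq\delta}$ used for the $p_0$-case is definable—in general such a set need not be, and the argument above crucially uses both that $G$ is a definable set and that every point of $Q$ other than $p_0$ is $d$-atomic.
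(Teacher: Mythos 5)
Your proof is correct, but it is organized differently from the paper's, which closes the single cycle $\text{(i)}\Rightarrow\text{(ii)}\Rightarrow\text{(iii)}\Rightarrow\text{(i)}$ so that the only hard implication, $\text{(ii)}\Rightarrow\text{(iii)}$, can lean directly on the formula-level dichotomy of (ii): for non-algebraic $p\in V\subseteq\overline{V}\subseteq U$, each boundary point of $V$ is separated from $p$ by a restricted formula, (ii) forces the half containing that boundary point to be compact in $\mathfrak{M}$, Lemma~\ref{lem:basic-alg} makes the point algebraic, hence realized and $d$-atomic, and $\overline{V}$ is then a topologically closed union of open and definable sets and so definable. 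You instead prove $\text{(i)}\Leftrightarrow\text{(ii)}$ and $\text{(i)}\Leftrightarrow\text{(iii)}$ separately, which costs two extra pieces of machinery: a version of Lemma~\ref{lem:basic-alg} relativized to $D$ for $\text{(ii)}\Rightarrow\text{(i)}$ (legitimate, since $d(-,D)$ is a formula and can be folded into the entropy sentence), and, for $\text{(i)}\Rightarrow\text{(iii)}$, a structural analysis of minimal sets (unique non-algebraic type $p_0$, all other types $d$-atomic, every closed set missing $p_0$ definable) that the paper only develops later, in Proposition~\ref{prop:some-min-stuff} and Corollary~\ref{cor:min-set-type-correspondence}. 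The one step you flag as delicate --- passing from ``$q\neq p_0$'' to ``$q$ is $d$-atomic'' via a net argument --- can be short-circuited: your step showing that every closed $F\subseteq D$ with $p_0\notin F$ is algebraic already gives that $\{q\}$ is algebraic, hence that $q$ is realized in $\mathfrak{M}$ (algebraic types over a model lie in $\overline{\mathfrak{M}}=\mathfrak{M}$), and realized types are $d$-atomic via the formula $d(x,a)$; this is exactly the ``algebraic, and therefore definable'' move in the paper's $\text{(ii)}\Rightarrow\text{(iii)}$. Your $\text{(iii)}\Rightarrow\text{(i)}$ also differs in a minor way: where the paper applies dictionaricness a second time to enclose $E^{\geq\e}$ in a definable $H$ before transferring compactness upward, you transfer compactness of $D\cap\cset{d(-,E)\geq\e_0}$ directly with an entropy sentence in the distance predicates of $D$ and $E$; both work for the same reason. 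The net effect is that your route is longer but establishes more along the way, while the paper's cycle gets dictionaricness essentially for free from (ii).
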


\begin{proof}
In this proof we will use the notation $A^{\geq \e}$ for the set types with distance from $A$ greater than or equal to $\e$.\editcom{Added sentence.}
  
\emph{$\text{(i)}\Rightarrow\text{(ii)}$.} This is obvious.

\emph{$\text{(ii)}\Rightarrow\text{(iii)}$.} First we will show that $D$ has a network
of definable sets, which is sufficient by Proposition \ref{prop:dict-equivalents} part \emph{(v)}. Let $p\subseteq D$ be a type and let $U$ be an open-in-$D$ neighborhood of $p$.  If $p$ is algebraic, then we are done, as $\{p\}$ is
a definable set. If $p$ is non-algebraic, then find $V$ such that
$p\in V\subseteq\overline{V}\subseteq U$.\editcom{Small format and notation changes.}

We want to argue that every $q\in\partial V$ is an algebraic type.
For each $q\in\partial V$ find restricted formula $\psi$ such that
$p\in\cset{\psi<\frac{1}{3}}$ and $q\in\cset{\psi>\frac{2}{3}}$. Since $p$
is non-algebraic and $V$ is open, it must be the case that $\cset{\psi(\mathfrak{M})\leq\frac{1}{3}}$ %V was W_0, not sure why.
is non-compact. Therefore $\cset{\psi(\mathfrak{M})\geq\frac{2}{3}}$ is
compact. Since $q$ is contained in the interior of a zeroset that
is compact in some model, it is algebraic by Lemma \ref{lem:basic-alg}.
Since every $q\in\partial V$ is algebraic, and therefore definable,
$\overline{V}$ is a union of open and definable sets that is topologically
closed, and therefore definable.\footnote{Note that the property `$A^{<\e}$ is open for every $\e >0$' is preserved under arbitrary unions.\editcom{Added footnote.}} Hence $D$ has a network of definable
sets and is dictionaric.\editcom{`Then' to `Hence.'}

\editcom{Removed `now.'} If $E\subseteq D$ is an $M$-definable set, then for any $\varepsilon>0$,
we can find a restricted formula $\varphi$ such that $E\subseteq\cset{\varphi<\frac{1}{3}}$
and $D\cap E^{\geq\varepsilon}\subseteq\cset{\varphi>\frac{2}{3}}$. Either
$\cset{\varphi(\mathfrak{M})\leq\frac{1}{3}}$ and therefore $E(\mathfrak{M})$
is compact, or $\cset{\varphi(\mathfrak{M})\geq\frac{2}{3}}$ and therefore
$D(\mathfrak{M})\cap\cset{d(\mathfrak{M},E)\geq\varepsilon}$ is compact.
This implies that either $E(\mathfrak{M})$ is compact or $D(\mathfrak{M})\cap\cset{d(\mathfrak{M},E)\geq\varepsilon}$
is compact for every $\varepsilon>0$, as required.

\emph{$\text{(iii)}\Rightarrow\text{(i)}$.} If $F,G\subseteq D$ are disjoint, $M$-zerosets then we can find $E\supseteq F$ such that $E\subseteq D$
is a definable set disjoint from $G$. $E(\mathfrak{M})$ is either
compact or $D(\mathfrak{M})\cap\cset{d(\mathfrak{M},E)\geq\varepsilon}$
is compact for every $\varepsilon>0$. If $E(\mathfrak{M})$ is compact
then $E(\mathfrak{N})$ and therefore $F(\mathfrak{N})$ is compact
in every model $\mathfrak{N}\succeq \frk{M}$, since $E$ is definable. Otherwise
there is some $\varepsilon>0$ small enough that $G\cap E^{<\varepsilon}=\varnothing$.
In that case find $H\supseteq E^{\geq \e}$\editcom{Changed notation.} such that
$H\subseteq D$ is a definable set disjoint from $E$. Since $H$
is disjoint from $E$ there is some $\delta>0$ small enough that
$H\cap E^{<\delta}=\varnothing$, so $H(\mathfrak{M})\subseteq D(\mathfrak{M})\cap\cset{d(\mathfrak{M},E)\geq\delta}$
is a compact set. Since $H$ is definable we have that $H(\mathfrak{N})$
is compact in every model $\mathfrak{N}\succeq \frk{M}$, therefore $G(\mathfrak{N})$
is as well.
\end{proof}
\editcom{Removed `Obviously.'} We will ultimately show that any minimal set contains a
unique non-algebraic type, but this will be a corollary of something
slightly more technical so we will defer this to later (Proposition \ref{prop:some-min-stuff} and Corollary \ref{cor:min-set-type-correspondence}). The condition
that $D$ be dictionaric when restricting attention to definable sets
is necessary in light of Example \ref{ex:main-bad-example}, since the theory there has more than one non-algebraic $\varnothing$-type
but appears `strongly minimal' with regards to definable sets in that every definable set is either finite or co-finite.

Compare the following Proposition~\ref{prop:vaught-minimal} to the classical facts that minimal sets in theories with no Vaughtian pairs or over $\omega$-saturated models are strongly minimal.\editcom{Added paragraph.}
%{\color{red} WASN'T THINKING ABOUT FIGURES BEFORE THIS.}

\begin{prop}
\label{prop:vaught-minimal}
\leavevmode
\begin{enumerate}[label=(\roman*)]
\item If $D$ is a minimal set over a model
$\mathfrak{M}$ and either
\begin{itemize}
\item $T$ is dictionaric and has no Vaughtian
pairs or
\item $T$ has no open-in-definable Vaughtian pairs,
\end{itemize}
   then $D$
is strongly minimal.

\item If $D$ is a minimal set over $\mathfrak{M}$, an approximately
$\aleph_0$-saturated model, then $D$ is strongly minimal.
\end{enumerate}
\end{prop}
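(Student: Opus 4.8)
The plan is to reduce both parts to a single contradictory configuration and then rule it out using the hypothesis at hand. First I would note that $D$ is strongly minimal if and only if it is minimal over every model $\mathfrak{N}\succeq\mathfrak{M}$: any parameter set over which $D$ is defined may be taken to contain $M$ and then extended to such an $\mathfrak{N}$, and disjoint non-algebraic zerosets over a subset remain disjoint non-algebraic zerosets over $\mathfrak{N}$. So I would suppose for contradiction that $D$ fails to be minimal over some model $\mathfrak{N}\succeq\mathfrak{M}$, with disjoint non-algebraic $\mathfrak{N}$-zerosets $F,G\subseteq D$; passing to a larger $\mathfrak{N}$ I may assume $F(\mathfrak{N})$ and $G(\mathfrak{N})$ are both non-totally-bounded, and via a Urysohn function I may fix a formula $\varphi$ over a countable subset of $\mathfrak{N}$ with $F\subseteq\cset{\varphi\le\frac{1}{3}}$ and $G\subseteq\cset{\varphi\ge\frac{2}{3}}$, so that $\cset{\varphi(\mathfrak{N})\le\frac{1}{3}}\cap D(\mathfrak{N})$ and $\cset{\varphi(\mathfrak{N})\ge\frac{2}{3}}\cap D(\mathfrak{N})$ are disjoint and non-totally-bounded.

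For part (ii) I would pull this configuration down into $\mathfrak{M}$. Let $\bar{n}$ list the parameters of $\varphi$ that lie outside $\mathfrak{M}$, and let $q(\bar{y})$ be the partial type over $M$ asserting, for each $k<\omega$, that $\cset{\varphi(x;\bar{y})\le\frac{1}{3}}\cap D$ and $\cset{\varphi(x;\bar{y})\ge\frac{2}{3}}\cap D$ each contain a $({>}\varepsilon)$-separated set of size $k$ (each such assertion being a single zero-condition of the form $\inf_{\bar{x}}\max(\cdots)=0$ built as in the proof of Lemma~\ref{lem:basic-alg}, with a term $d(x_i,D)$ added to force membership in $D$). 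This type is consistent, since $\bar{n}$ realizes it, and its conditions share a modulus of continuity in $\bar{y}$ inherited from $\varphi$, uniformly in $k$. Using that $\mathfrak{M}$ is approximately $\aleph_0$-saturated \cite{Yaacov2007}---absorbing the parameters of $D$ into the argument---I would find $\bar{b}\in\mathfrak{M}$ realizing every condition of $q$ to within an error that may be taken as small as desired; since $D$ is a definable set, an approximate $({>}\varepsilon)$-separated size-$k$ subset of $\cset{\varphi(x;\bar{b})\le\frac{1}{3}}\cap D$ yields a genuine $({>}\frac{\varepsilon}{2})$-separated size-$k$ subset of $\cset{\varphi(x;\bar{b})\le 0.4}\cap D$, and similarly on the $\ge 0.6$ side. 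Then $\cset{\varphi(x;\bar{b})\le 0.4}\cap D$ and $\cset{\varphi(x;\bar{b})\ge 0.6}\cap D$ are disjoint $M$-zerosets inside $D$, each non-totally-bounded in $\mathfrak{M}$ and hence non-algebraic, contradicting minimality of $D$ over $\mathfrak{M}$.

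For part (i) I would, in both cases, derive a contradiction by constructing a Vaughtian pair, adapting the argument of Tent and Ziegler \cite{tent_ziegler_2012}. Take $\mathfrak{N}_0\succeq\mathfrak{M}$ of small density character containing the parameters of $\varphi$, so that $D$ splits over $\mathfrak{N}_0$ into the disjoint non-algebraic zerosets $\cset{\varphi\le\frac{1}{3}}\cap D$ and $\cset{\varphi\ge\frac{2}{3}}\cap D$. I would pick a realization $c$ of a non-algebraic type in $S_D(\mathfrak{N}_0)$ concentrating on $\cset{\varphi\le\frac{1}{3}}$ and build a model $\mathfrak{N}_1\succ\mathfrak{N}_0$ that is minimal (prime, $d$-atomic, constructible) over $\mathfrak{N}_0\cup\{c\}$---such a model exists over a model together with one element by an omitting-types/elementary-chain construction in the spirit of the proof of Proposition~\ref{Prop:Prime-Hom}---arranged, using that $\mathrm{acl}$ restricted to $D$ is a pregeometry ($D$ being minimal over $\mathfrak{M}$), so that every realization of $D$ added in passing from $\mathfrak{N}_0$ to $\mathfrak{N}_1$ lies, like $c$, in $\cset{\varphi\le\frac{1}{3}}$ and hence outside the relatively open set $X=\{x\in D:\varphi(x)>\frac{1}{2}\}$. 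Then $X(\mathfrak{N}_1)=X(\mathfrak{N}_0)$ while $X$ is an open-in-definable set containing a non-algebraic type, so $(\mathfrak{N}_1,\mathfrak{N}_0)$ is an open-in-definable Vaughtian pair---impossible under the second hypothesis of (i). Under the first hypothesis, dictionaricness (Propositions~\ref{prop:dict-equivalents} and \ref{prop:ext}) lets me replace the two zerosets by definable subsets of $D$ and $X$ by an actual definable subset of $D$ containing one piece and disjoint from the other, making $(\mathfrak{N}_1,\mathfrak{N}_0)$ an ordinary definable Vaughtian pair---impossible since $T$ has no Vaughtian pairs.

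I expect the main obstacle to be in part (i): producing the ``minimal'' model $\mathfrak{N}_1$ over $\mathfrak{N}_0\cup\{c\}$ in continuous logic and carrying out the pregeometry/exchange bookkeeping over the base $\mathfrak{N}_0$ (not merely over $\mathfrak{M}$), so as to verify that $X$ genuinely does not grow, all while $d$-atomicity and algebraic closure behave only approximately or with merely countable character in general---though, conveniently, $\mathrm{acl}$ does have finite character inside the minimal set itself, which is what makes the exchange step usable. For part (ii) the corresponding difficulty is entirely one of approximations: checking that the common modulus of continuity is really uniform in $k$, handling the parameters of $D$, and confirming that an approximate realization coming from approximate $\aleph_0$-saturation upgrades to a genuine splitting of $D$ over $M$---which is exactly why approximate $\aleph_0$-saturation, rather than mere approximate realizability of one-types, is the hypothesis one wants.
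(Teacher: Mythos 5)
Your part (ii) is essentially the paper's argument: both proofs use approximate $\aleph_0$-saturation to move the splitting parameters to within distance $\gamma$ of parameters in $\mathfrak{M}$ and then transfer infinite $({>}\e)$-separated sets across a small Hausdorff perturbation of $D$ and a small perturbation of $\varphi$; whether one phrases this as approximately realizing $\tp(\bar{b}/\bar{a})$ (as the paper does) or as approximately realizing a partial type listing the separated-set conditions (as you do) is immaterial.

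Part (i) is where you diverge, and there is a genuine gap. Your plan is to pick $c$ realizing a non-algebraic type in $D\cap\cset{\varphi\leq\frac{1}{3}}$ over $\mathfrak{N}_0$ and to build $\mathfrak{N}_1$ prime/constructible over $\mathfrak{N}_0\cup\{c\}$ in which the other half does not grow. Two things block this. First, the proposition carries no stability hypothesis, and prime (or atomic, or constructible) models over $\mathfrak{N}_0\cup\{c\}$ need not exist for an arbitrary theory --- Proposition~\ref{Prop:Prime-Hom} is a uniqueness statement and gives no existence; without total transcendence the atomic types over $\mathfrak{N}_0 c$ need not be dense. Second, even granting $\mathfrak{N}_1$, the bookkeeping that every new element of $D$ lands in $\cset{\varphi\leq\frac{1}{3}}$ would require exchange for $\mathrm{acl}$ on $D$ \emph{over $\mathfrak{N}_0$}; but the pregeometry is only established for strongly minimal sets, and over $\mathfrak{N}_0$ the set $D$ has (by the very hypothesis for contradiction) two non-algebraic types, so there is no exchange available there --- the argument is circular at exactly the point where it needs traction. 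The paper avoids both problems by never leaving $\mathfrak{M}$: it approximates $\tp(\bar{a}/\mathfrak{M})$ topologically by types of tuples $\bar{b}_i\in\mathfrak{M}$, observes that for each $i$ the set $D\cap\cset{\varphi(-;\bar{b}_i)\leq\frac{1}{3}}$ is an $\mathfrak{M}$-zeroset avoiding the generic type and hence \emph{algebraic} by minimality over $\mathfrak{M}$ (so it is literally equal in $\mathfrak{M}$ and in any fixed proper extension $\mathfrak{N}$), and then takes an ultraproduct of the pairs $(\mathfrak{N},\mathfrak{M},\bar{b}_i)$ to produce a pair $(\mathfrak{N}_\omega,\mathfrak{M}_\omega)$ in which $\bar{b}_\omega\equiv_{\mathfrak{M}}\bar{a}$ makes the half non-algebraic while it still fails to grow --- i.e., the Vaughtian pair is manufactured by compactness rather than by a prime-model construction. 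This is the continuous analogue of deducing elimination of $\exists^\infty$ from no Vaughtian pairs, and it is the step your proposal is missing.
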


\begin{proof}
\emph{(i).} Assume without loss of generality that $T$ is countable. (If we prove this
for each countable reduct of $T$ over which $D$ is definable, then
it will be true for the whole theory.) Assume that $D$ is a minimal set containing the non-algebraic type $p$ and that $p$ is not a strongly minimal type. Let
$q_{0},q_{1}$ be distinct non-algebraic extensions of $p$ to some
parameter set $A\supseteq\mathfrak{M}$. Let $\varphi(\overline{x};\overline{a})$
be a restricted formula with $\overline{a}\in A$ such that $\varphi(q_{i};\overline{a})=i$
for both $i<2$. Let $\{\overline{b}_{i}\}_{i<\omega}$ be a sequence of
tuples from $\mathfrak{M}$ such that $\mathrm{tp}(\overline{b}_{i}/\mathfrak{M})\rightarrow\mathrm{tp}(\overline{a}/\mathfrak{M})$.
It must be the case that either infinitely many $i<\omega$ have $\varphi(p;\overline{b}_{i})<\frac{2}{3}$
or infinitely many $i<\omega$ have $\varphi(p;\overline{b}_{i})>\frac{1}{3}$.
Without loss of generality assume that infinitely many $i<\omega$ have $\varphi(p;\overline{b}_{i})>\frac{1}{3}$
and restrict to a sub-sequence on which this is always true.

Now we have that for each $i<\omega$, $D\cap\cset{\varphi(-;\overline{b}_{i})\leq\frac{1}{3}}$
is algebraic. Let $\mathfrak{N}\succ\mathfrak{M}$ be a proper elementary
extension. Note that since $D\cap\cset{\varphi(-;\overline{b}_{i})\leq\frac{1}{3}}$
is algebraic we have that $D(\mathfrak{M})\cap\cset{\varphi(\mathfrak{M};\overline{b}_{i})\leq\frac{1}{3}}=D(\mathfrak{N})\cap\cset{\varphi(\mathfrak{N};\overline{b}_{i})\leq\frac{1}{3}}$
for each $i<\omega$.

Let $\mathfrak{A}_{i}=(\mathfrak{N},\mathfrak{M},\overline{b}_{i})$
and take a non-principal ultraproduct
\(
\mathfrak{A}_{\omega}=\prod_{i<\omega}\mathfrak{A}_{i}/\mathcal{U}\allowbreak=(\mathfrak{N}_{\omega},\mathfrak{M}_{\omega},\overline{b}_{\omega}).
\)
Note that $\mathfrak{N}_{\omega}\succ\mathfrak{M}_{\omega}$,\footnote{This
is not true for arbitrary ultraproducts of proper elementary pairs;
you need a uniform `width' of the extensions, as in you need a fixed $\e>0$ such that most $\mathfrak{N}_i$ contain an $a$ with $\dinf(a,\frk{M}_i)>\e$.} that $D(\mathfrak{M}_{\omega})\cap\cset{\varphi(\mathfrak{M}_{\omega};\overline{b}_{\omega})\leq\varepsilon}=D(\mathfrak{N}_{\omega})\cap\cset{\varphi(\mathfrak{N}_{\omega};\overline{b}_{\omega})\leq\varepsilon}$
for any $0<\varepsilon<\frac{1}{3}$, and that $\mathrm{tp}(\overline{b}_{\omega}/\mathfrak{M})=\mathrm{tp}(\overline{a}/\mathfrak{M})$.
Therefore $p$ has two non-algebraic extension $r_{0},r_{1}$ to $S_{n}(\mathfrak{M}_{\omega})$
such that $\varphi(r_{i};\overline{b}_{\omega})=i$ for both $i<2$. This
implies that $D(\mathfrak{M}_{\mathfrak{\omega}})\cap\cset{\varphi(\mathfrak{M}_{\omega};\overline{b}_{\omega})\leq\varepsilon}$
is not metrically compact for any $\varepsilon>0$.

If $T$ is dictionaric we can find a definable set $E$ such that
$D\cap\cset{\varphi(-;\overline{b}_{\omega})\leq\frac{1}{6}}\subseteq E\subseteq D\cap\cset{\varphi(-;\overline{b}_{\omega})<\frac{1}{3}}$,
then we will have that $E(\mathfrak{M}_{\omega})=E(\mathfrak{N}_{\omega})$
is a non-compact definable set, witnessing that $(\mathfrak{N}_{\omega},\mathfrak{M}_{\omega})$
is a Vaughtian pair.

Otherwise the set $D\cap\cset{\varphi(-;\overline{b}_{\omega})<\frac{1}{3}}$
is open-in-definable and unbounded (it is unbounded because $D\cap\cset{\varphi(-;\overline{b}_{\omega})\leq\frac{1}{6}}$
is closed and non-algebraic), witnessing that $(\mathfrak{N}_{\omega},\mathfrak{M}_{\omega})$
is an open-in-definable Vaughtian pair.

\emph{(ii).} Let $D(x;\overline{a})$ with $\overline{a}\in\mathfrak{M}$,
our approximately $\aleph_0$-saturated model, be a definable set such
that over some set of parameters $B$ there are distinct non-algebraic
types $p_{0},p_{1}\in D$. Let $\varphi(x;\overline{b})$ be a formula
such that $\varphi(p_{i};\overline{b})=i$ for both $i<2$. Since $p_{0}$
and $p_{1}$ are non-algebraic we have that $D\cap\cset{\varphi(-;\overline{b})\leq\frac{1}{6}}$
and $D\cap\cset{\varphi(-;\overline{b})\geq\frac{5}{6}}$ are both non-algebraic.
Let $\varepsilon>0$ be small enough that $\#_{>\varepsilon}^{\mathrm{ent}}D(\mathfrak{N})\cap\cset{\varphi(\mathfrak{N};\overline{b})\leq\frac{1}{3}}$
and $\#_{>\varepsilon}^{\mathrm{ent}}D(\mathfrak{N})\cap\cset{\varphi(\mathfrak{N};\overline{b})\geq\frac{2}{3}}$
are both infinite in any model $\mathfrak{N}$ (this is always possible
because $D\cap\cset{\varphi(-;\overline{b})<\frac{1}{3}}$ and $D\cap\cset{\varphi(-;\overline{b})>\frac{2}{3}}$
are relatively open in $D$).

Find $\delta>0$ small enough that $\delta<\frac{1}{3}\varepsilon$
and if $d(x,y)<\delta$, then $|\varphi(x;\overline{b})-\varphi(y;\overline{b})|<\frac{1}{9}$.
Find $\gamma>0$ small enough that if $d(\overline{z},\overline{w})<\gamma$,
then $\sup_{x}|D(x;\overline{z})-D(x;\overline{w})|<\delta$. By approximate
$\aleph_0$-saturation we can find $\overline{c}\overline{e}\equiv\overline{a}\overline{b}$
such that $d(\overline{a},\overline{c})<\gamma$ and $\overline{c}\overline{e}\in\mathfrak{M}$.
Let $\{u_{i}\}_{i<\omega}$ be an infinite $({>}\varepsilon)$-separated
set in $D(\mathfrak{M};\overline{c})\cap\cset{\varphi(\mathfrak{M};\overline{e})\leq\frac{1}{3}}$
and let $\{v_{i}\}_{i<\omega}$ be an infinite $({>}\varepsilon)$-separated
set in $D(\mathfrak{M};\overline{c})\cap\cset{\varphi(\mathfrak{M};\overline{e})\geq\frac{2}{3}}$.
By construction we have that $d_{H}(D(\mathfrak{M};\overline{a}),\allowbreak D(\mathfrak{M};\overline{c}))<\delta$
(where $d_{H}$ is the Hausdorff metric), %{\color{red} Put centralized definition of Hausdorff distance somewhere?}
 so we can find $\{w_{i}\}_{i<\omega}\subseteq D(\mathfrak{M};\overline{a})$
and $\{t_{i}\}_{i<\omega}\subseteq D(\mathfrak{M};\overline{a})$
such that $d(v_{i},w_{i})<\delta$ and $d(u_{i},t_{i})<\delta$ for
each $i<\omega$. By construction this implies that $\varphi(w_{i};\overline{e})<\frac{4}{9}$
and $\varphi(t_{i};\overline{e})>\frac{5}{9}$ for every $i<\omega$, and hence we have that $D(\mathfrak{M};\overline{a})\cap\cset{\varphi(\mathfrak{M};\overline{e})\leq\frac{4}{9}}$
and $D(\mathfrak{M};\overline{a})\cap\cset{\varphi(\mathfrak{M};\overline{e})\geq\frac{5}{9}}$
are both not metrically compact and therefore not algebraic. Therefore
$D(x;\overline{a})$ is not minimal over $\mathfrak{M}$.

So by the converse we have that if $D$ is a minimal set over
an approximately $\aleph_0$-saturated model, then it is strongly minimal.
\end{proof}
It is unclear whether or not Proposition \ref{prop:vaught-minimal} part \emph{(i)} can be improved to theories with no Vaughtian pairs in general, in that while a minimal
set over $A$ is dictionaric over $A$, it may not be dictionaric
over some $B\supseteq A$.

\subsubsection{The Pregeometry of a Strongly Minimal Set}

\begin{prop}
If $D$ is a strongly minimal set definable over the set $A$, then
$X \mapsto \mathrm{acl}(XA)$ restricted to $D$ is the closure operator
of a pregeometry with finite character.
\end{prop}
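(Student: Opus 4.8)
The plan is to verify the pregeometry axioms for the operator $\mathrm{cl}\colon X\mapsto\mathrm{acl}(XA)\cap D(\mathfrak{M})$ on the subsets of $D(\mathfrak{M})$, where $\mathfrak{M}$ is a sufficiently saturated monster model. Monotonicity, extensivity (immediate since $X\subseteq D(\mathfrak{M})$), and idempotence are routine consequences of the standard behaviour of algebraic closure, so the content is in establishing the \emph{exchange property} and \emph{finite character}. Two facts about a strongly minimal set $D$ will be used repeatedly: first, $D$ is minimal, hence dictionaric, over \emph{every} parameter set, so Proposition~\ref{prop:dict-equivalents} and the surrounding machinery are available over any base; second, over every parameter set $D$ has a unique non-algebraic type (Proposition~\ref{prop:some-min-stuff}, Corollary~\ref{cor:min-set-type-correspondence}), whose realization set is therefore exactly the complement in $D(\mathfrak{M})$ of the algebraic closure of that base.

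For exchange, suppose $a,b\in D(\mathfrak{M})$ with $a\in\mathrm{acl}(Cb)\smallsetminus\mathrm{acl}(C)$ for some $C\supseteq A$, and suppose toward a contradiction that $b\notin\mathrm{acl}(Ca)$. Then $b\notin\mathrm{acl}(C)$ as well, so $\mathrm{tp}(b/C)$ and $\mathrm{tp}(b/Ca)$ are the respective (unique) non-algebraic types. Using dictionaricness of $D$ over $Cb$ together with Lemma~\ref{lem:basic-alg}, obtain a $C$-formula $\theta$ so that $\cset{\theta(-;b)=0}\cap D$ is an algebraic set containing $a$, fix a scale $\varepsilon>0$, put $N:=\#^{\mathrm{ent}}_{>\varepsilon}(\cset{\theta(-;b)=0}\cap D)<\infty$, and replace $\theta$ by its conjunction with the $C$-definable constraint ``$\#^{\mathrm{ent}}_{>\varepsilon}(\cset{\theta(-;y)=0}\cap D)\le N$'' on the parameter variable, so that this bound persists for every $b'\equiv_C b$. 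Since $\mathrm{tp}(b/Ca)$ is non-algebraic its realization set is not totally bounded, so for some $\delta>0$ there are infinitely many $({>}\delta)$-separated realizations $b'$; each satisfies $ab'\equiv_C ab$, hence $a\in\cset{\theta(-;b')=0}$, and each carries the entropy bound. Substituting these into an $\inf$-over-$(m{+}1)$-tuples sentence over $C$ in the style of the sentence $\chi$ in the proof of Lemma~\ref{lem:basic-alg} (asking for $m{+}1$ $({>}\delta)$-separated admissible parameters each putting $x$ into the corresponding zeroset) shows that $a$ lies in an algebraic $C$-zeroset for every $m$, so the number of conjugates of $a$ over $C$ at scale $\varepsilon$ is bounded and $a\in\mathrm{acl}(C)$, the contradiction. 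This is the Tent--Ziegler argument \cite{tent_ziegler_2012} with counting of realizations replaced by metric-entropy bounds.

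For finite character, suppose $a\in\mathrm{acl}(XA)\cap D(\mathfrak{M})$; by countable character of $\mathrm{acl}$ we may assume $X$ is countable. If $a\notin\mathrm{acl}(X_0A)$ for every finite $X_0\subseteq X$, I claim we reach a contradiction. For each such $X_0$, $\mathrm{tp}(a/X_0A)$ is non-algebraic, hence is the unique non-algebraic type over $X_0A$, so its realization set equals $D(\mathfrak{M})\smallsetminus\mathrm{acl}(X_0A)$ and is a single $\mathrm{Aut}(\mathfrak{M}/X_0A)$-orbit; since $\mathrm{acl}(X_0A)$ is invariant under that group, $x\mapsto d(x,\mathrm{acl}(X_0A))$ is constant on that orbit, with value $r_{X_0}:=d(a,\mathrm{acl}(X_0A))$, and so $D(\mathfrak{M})\subseteq\mathrm{acl}(X_0A)^{\leq r_{X_0}}$. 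By approximate finite character of $\mathrm{acl}$, $r_{X_0}\to 0$ as $X_0$ ranges over the directed family of finite subsets of $X$, so $\mathrm{acl}(XA)\supseteq\bigcup_{X_0}\mathrm{acl}(X_0A)$ is metrically dense in $D(\mathfrak{M})$. But $\mathrm{acl}(XA)$ has cardinality at most $2^{\aleph_0+|A|+|T|}$, whereas $D$ is non-algebraic, so $D(\mathfrak{M})$ is not totally bounded and hence, for $\mathfrak{M}$ saturated enough relative to $A$ and $T$, has metric density character strictly exceeding $|\mathrm{acl}(XA)|$ --- a contradiction. Thus $a\in\mathrm{acl}(X_0A)$ for some finite $X_0\subseteq X$, which is finite character.

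The main obstacle I expect is the continuous-logic bookkeeping in the exchange step: turning the algebraicity of $a$ over $Cb$ into the right $C$-definable data and making the $\inf$-sentence counting argument precise despite unattained infima and the need to work at a fixed scale $\varepsilon$. By contrast, finite character --- the property that genuinely separates strongly minimal sets from arbitrary continuous theories --- has the short argument above once uniqueness of the non-algebraic type and approximate finite character of $\mathrm{acl}$ are in hand.
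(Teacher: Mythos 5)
Your exchange argument has a genuine gap at the step where you conclude $a\in\mathrm{acl}(C)$. Having produced, for each $m$, a tuple of $m+1$ pairwise $({>}\delta)$-separated admissible parameters $b'$ with $\theta(a;b')=0$, you assert that the $C$-zeroset $S_m$ of all $x$ admitting such a configuration is algebraic. Nothing in your hypotheses forces this: the entropy bound $N$ on each fiber $\cset{\theta(-;y)=0}\cap D$ says only that each witness $y$ serves at most $N$ pairwise $({>}\varepsilon)$-separated $x$'s, and since the pool of admissible witnesses is infinite this gives no bound on $\#^{\mathrm{ent}}_{>\varepsilon}S_m$. (Already in discrete logic: if $\theta(x,y)$ says $x=f(y)$ for a surjection $f$ with infinite fibers, every fiber of $\theta$ in the variable $x$ is a singleton, yet every $x$ lies in every $S_m$.) What you have written is only the first half of the Tent--Ziegler counting argument; the contradiction lives entirely in the half you omit: one must show that the witness set $\cset{\theta(a;-)=0}\cap D$ is co-pre-compact in $D$ (each $\cset{\theta(a;-)\geq\varepsilon'}\cap D$ is algebraic by strong minimality over $Ca$, since the witness set contains the non-algebraic $b$), that the set of $x$ whose witness set is co-pre-compact is a non-algebraic, hence co-pre-compact, $C$-zeroset containing $a$, and finally that $N+1$ separated such $x$'s admit a common witness $b'$, violating the fiber bound. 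This is exactly the delicate bookkeeping you flagged, and the paper sidesteps it entirely: exchange follows from uniqueness of the non-algebraic type over every parameter set by a pure cardinality argument. In your notation: if $b\notin\mathrm{acl}(Ca)$, then $\mathrm{tp}(ab/C)$ is the unique type of a pair (generic over $C$, generic over $C$ together with the first coordinate); take a set $A'$ of more than $|\mathcal{L}|+|C|+2^{\aleph_0}$ realizations of the generic type over $C$ and a single $b^*$ generic over $CA'$; then $a'b^*\equiv_C ab$ for every $a'\in A'$, so $\mathrm{tp}(a/Cb)$ has more realizations than any compact metric space can hold and $a\notin\mathrm{acl}(Cb)$, the desired contradiction. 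No formulas, definability, or entropy estimates are needed.

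Your finite character argument is correct but takes a genuinely different route from the paper's. You derive a global contradiction --- the small set $\mathrm{acl}(XA)$ would be metrically dense in the non-compact $D(\mathfrak{M})$ --- from approximate finite character together with the observation that $x\mapsto d(x,\mathrm{acl}(X_0A))$ is constant on the orbit of the generic type; granting strong homogeneity of the monster, this works. The paper argues locally instead: approximate the distance predicate $\chi(\bar{x};\bar{b})$ of an algebraic set containing $a$ by a restricted formula $\varphi$ to within $\frac{1}{4}d(p,\mathfrak{C})$, where $p$ is the unique non-algebraic global type in $D$; then $D\cap\cset{\varphi(-;\bar{b})\leq\frac{1}{4}d(p,\mathfrak{C})}$ still omits $p$, is therefore algebraic, contains $a$, and depends on only finitely many of the parameters. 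Besides being shorter, the paper's version yields the stronger, perturbation-robust form of finite character stated immediately after the proposition, which your density argument does not obviously recover.
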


\begin{proof}
$\mathrm{acl}$ automatically obeys reflexivity and transitivity, so
all we need to verify is finite character and exchange.

For finite character, suppose that $a\in D(\mathfrak{C})$ and $a\in\mathrm{acl}(B)$
for some set $B\supseteq A$ (not necessarily in $D$). Let $\chi(\overline{x};\overline{b})$
be a distance predicate for an algebraic subset of $D$ containing
$a$, with $\overline{b}\in B$. Let $p$ be the unique non-algebraic
global type in $D$. Find a restricted formula $\varphi$ such that
$\sup_{\overline{x},\overline{y}}|\chi(\overline{x};\overline{y})-\varphi(\overline{x};\overline{y})|<\frac{1}{4}d(p,\mathfrak{C})$.
$\varphi(\overline{x};\overline{b})$ only depends on finitely many
parameters in the tuple $\overline{b}$. Note the following: $\cset{\chi(-;\overline{b})\brackconv}\subseteq D\cap\cset{\varphi(-;\overline{b})\leq\frac{1}{4}d(p,\mathfrak{C})}\subseteq\cset{\chi(-;\overline{b})\leq\frac{1}{2}d(p,\mathfrak{C})}\not\ni p$.
Therefore $D\cap\cset{\varphi(-;\overline{b})\leq\frac{1}{4}d(p,\mathfrak{C})}$
contains $a$ and is algebraic, so $a\in\mathrm{acl}(B_{0})$ for some
finite $B_{0}\subseteq B$.

For exchange, let $b\in D(\mathfrak{C})\smallsetminus\mathrm{acl}(A)$
and let $c\in D(\mathfrak{C})\smallsetminus\mathrm{acl}(Ab)$. We want to
show that $b\notin\mathrm{acl}(Ac)$. Since $D$ has a unique non-algebraic
type over any parameter set, any such pair like $bc$ has the same
type. Let $B^{\prime}$ be a set of realizations of $\mathrm{tp}(b/A)$
of cardinality $(|\mathcal{L}|+|A|+2^{\aleph_0})^{+}$ and let $c^{\prime}$
be a realization of the unique non-algebraic type in $D$ over the
set $AB^{\prime}$. For any $b_{0}^{\prime},b_{1}^{\prime}\in B^{\prime}$
we have that $b_{0}^{\prime}c^{\prime}\equiv_{A}b_{1}^{\prime}c^{\prime}$,
but there are too many realizations of $\mathrm{tp}(b_{0}^{\prime}/Ac^{\prime})$
for it to be algebraic, so we must have $b^{\prime}\notin\mathrm{acl}(Ac^{\prime})$
for any $b^{\prime}\in B^{\prime}$. Since $b^{\prime}c^{\prime}\equiv bc$,
the same must be true for $b$ and $c$, so $b\notin\mathrm{acl}(Ac)$,
as required. 
\end{proof}
Note that in a strongly minimal set, $\mathrm{acl}$ has finite character relative to arbitrary parameters, not just those in the strongly minimal set. In fact it has finite character in a particularly strong sense in that if $c \in \mathrm{acl}(AB)$, where $c$ is in a strongly minimal set over $A$, then there is a finite tuple $\overline{c}\in C$ and an $\varepsilon>0$ such that for any $\overline{e}$ with $d(\overline{c},\overline{e})<\varepsilon$, $b \in \mathrm{acl}(A\overline{e})$.

It follows from this proposition that all of the machinery of pregeometries, such as
bases and invariant dimension number, works
in continuous logic just as it does in discrete logic. The following corollary summarizes this more precisely.\editcom{Changed this last bit into a longer sentence.} %In particular:
\begin{cor}
If $D$ is a strongly minimal set definable over the set $A$ (which
we may assume without loss of generality is countable), then for any model $\mathfrak{M}\supseteq A$,
we have that $D(\mathfrak{M})\subseteq\mathrm{acl}(AB)$ for any $B\subseteq D(\mathfrak{M})$ which is
a basis with regards to the pregeometry induced by the closure operator
$\mathrm{acl}(-\cup A)$.

Any two bases of $D(\mathfrak{M})$ have the
same cardinality, any two bases of the same cardinality are elementarily
equivalent, and any basis is an $A$-indiscernible set. So in particular
if $B_{i}\subseteq D(\mathfrak{M}_{i})$ for both $i<2$ are bases of the
same cardinality, then any bijection $f:B_{0}\rightarrow B_{1}$ extends
to an elementary map $f^{\prime}:D(\mathfrak{M}_{0})\equiv D(\mathfrak{M}_{1})$.

Finally, for any $X\subseteq D(\mathfrak{C})$, if $\#^{\mathrm{dc}}X > \#^\mathrm{dc} \mathrm{acl}(A) + |\mathcal{L}|$
then $\mathrm{dim}(X)=\#^{\mathrm{dc}}X$. So in particular, if $T$ and $A$ are countable, then for any $X\subseteq D(\mathfrak{C})$ with $\#^\mathrm{dc}X$ uncountable, $\mathrm{dim}(X) = \#^\mathrm{dc}X$.
\end{cor}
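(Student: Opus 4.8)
The plan is to derive everything from two inputs: the preceding proposition, which says that $\mathrm{acl}(-\cup A)$ restricted to $D$ is a pregeometry with finite character, and the fact (established later in Proposition~\ref{prop:some-min-stuff} and Corollary~\ref{cor:min-set-type-correspondence}, but available to us here) that a strongly minimal set has a \emph{unique} non-algebraic type over \emph{any} parameter set. Given these, the first three assertions are essentially pure pregeometry theory, run exactly as in the discrete case (cf.\ \cite{tent_ziegler_2012}); only the dimension formula genuinely uses the metric.

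For the first assertion: a basis of $D(\mathfrak{M})$ is, by definition, a maximal independent subset, and in a finite-character pregeometry a maximal independent set spans (the usual exchange argument), so $D(\mathfrak{M})\subseteq\mathrm{acl}(AB)\cap D$. Since $AB\subseteq\mathfrak{M}$, $D$ is $A$-definable, and $\mathrm{acl}(AB)\subseteq\mathrm{acl}(\mathfrak{M})=\mathfrak{M}$, this refines to the identity $D(\mathfrak{M})=\mathrm{acl}(AB)\cap D(\mathfrak{C})$, which I will use repeatedly. Cardinality-invariance of bases is the standard Steinitz argument: the finite-dimensional case by repeated exchange, and (after noting that one basis is infinite iff the other is) the infinite case by finite character --- each element of $B'$ lies in $\mathrm{acl}(AF)$ for some finite $F\subseteq B$, these $F$ must exhaust $B$ by independence, so $|B|\le|B'|\cdot\aleph_0=|B'|$, and symmetrically.

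The core is the claim that for each $n<\omega$, any two $A$-independent $n$-tuples of distinct elements of $D$ realize the same type over $A$; induct on $n$. For $n=1$, an independent singleton is a non-algebraic element of $D$ over $A$, hence realizes the unique non-algebraic type of $D$ over $A$. For the inductive step, given independent tuples $\overline{b}$ and $\overline{c}$, use an $A$-automorphism to move the first $n$ coordinates of $\overline{b}$ onto those of $\overline{c}$, reducing to the case where the two tuples agree on their first $n$ coordinates; then each of the two last coordinates is non-algebraic over $A$ together with those common first $n$ coordinates, and since $D$ is minimal over \emph{that} parameter set (here strong minimality, not just minimality, is essential) they realize the same type there. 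This immediately yields that every basis is an $A$-indiscernible set, that bases of equal cardinality have the same type over $A$ (hence are elementarily equivalent), and that a bijection $f\colon B_0\to B_1$ between bases of equal cardinality is a partial $A$-elementary map. To produce $f'$, extend $f$ to an automorphism $\hat f\in\mathrm{Aut}(\mathfrak{C}/A)$ (legitimate, as $B_0$ is small); then $\hat f$ carries $\mathrm{acl}(AB_0)$ onto $\mathrm{acl}(AB_1)$ and fixes $D(\mathfrak{C})$ setwise, hence carries $D(\mathfrak{M}_0)=\mathrm{acl}(AB_0)\cap D(\mathfrak{C})$ onto $D(\mathfrak{M}_1)=\mathrm{acl}(AB_1)\cap D(\mathfrak{C})$; take $f'=\hat f\upharpoonright D(\mathfrak{M}_0)$.

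For the dimension formula, write $\mu=\#^{\mathrm{dc}}\mathrm{acl}(A)+|\mathcal{L}|$ and let $B\subseteq X$ be a maximal $A$-independent subset (which exists by Zorn and finite character), so $\kappa:=\dim(X)=|B|$ and $X\subseteq\mathrm{acl}(AB)$. For the upper bound, finite character gives $\mathrm{acl}(AB)\cap D=\bigcup\{\mathrm{acl}(AB_0)\cap D:B_0\subseteq B\text{ finite}\}$, a union of at most $|B|$ pieces when $B$ is infinite; and for finite $B_0$, downward L\"owenheim--Skolem provides a model over $AB_0$ of density character $\le\#^{\mathrm{dc}}A+|\mathcal{L}|\le\mu$, and this model contains $\mathrm{acl}(AB_0)$, so $\#^{\mathrm{dc}}(\mathrm{acl}(AB_0)\cap D)\le\mu$. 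Hence $\#^{\mathrm{dc}}X\le\kappa+\mu$; the hypothesis $\#^{\mathrm{dc}}X>\mu$ forces $\kappa$ (and everything else) infinite and $\kappa>\mu$, so $\#^{\mathrm{dc}}X\le\kappa$. For the reverse inequality, pick a metrically dense $Y\subseteq X$ with $|Y|=\#^{\mathrm{dc}}X$; every point of $\overline{X}$ is a metric limit of a sequence from $Y$ and so lies in $\mathrm{acl}(Y)$, whence $B\subseteq X\subseteq\mathrm{acl}(AY)\cap D$ and $\kappa=\dim(B)\le\dim(\mathrm{acl}(AY)\cap D)\le|Y|=\#^{\mathrm{dc}}X$. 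Thus $\dim(X)=\#^{\mathrm{dc}}X$, and the final ``in particular'' is the case $T,A$ countable, where $\mu=\aleph_0$. I expect this last part to be the main obstacle: unlike in discrete logic, an independent set need not be metrically discrete, so equating $\dim$ with $\#^{\mathrm{dc}}$ is not automatic and requires the density-character bookkeeping above, in particular the bound $\#^{\mathrm{dc}}\mathrm{acl}(AB_0)\le\mu$ for finite $B_0$.
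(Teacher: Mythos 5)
Your proof is correct. The paper states this corollary without proof, presenting it as a summary of the standard pregeometry machinery made available by the preceding proposition together with the uniqueness of the non-algebraic type over any parameter set (Corollary~\ref{cor:min-set-type-correspondence}, which the paper also forward-references in the proposition's own proof); your argument is exactly the intended instantiation of that machinery, and the one place where genuinely new care is required --- replacing cardinality counting by density-character counting in the dimension formula --- is handled correctly via the L\"owenheim--Skolem bound $\#^{\mathrm{dc}}\mathrm{acl}(AB_0)\le\#^{\mathrm{dc}}\mathrm{acl}(A)+|\mathcal{L}|$ for finite $B_0$ and finite character of $\mathrm{acl}$ on $D$.
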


The following lemma is useful for understanding the metric properties of strongly minimal sets. Perhaps unsurprisingly, the metric in a strongly minimal set always behaves somewhat like a locally finite edge relation in a discrete strongly minimal set.\editcom{Added this sentence.} We are including it in this subsection because its proof relies on the pregeometric structure of strongly minimal sets.

\begin{lem}[Uniform Local Compactness]\label{lem:uniform-local-compactness}
  Let $D$ be a strongly minimal set definable over the set $A$. There exists an $\e > 0$ such that for every $\delta > 0$, there exists an $n<\omega$ such that for any model $\frk{M}\supseteq A$,  every closed $\e$-ball $B$ in $D(\frk{M})$ can be covered by at most $n$ open $\delta$ balls.\editcom{Added `at most.'}

  So in particular for any $a,b \in D(\frk{M})$ with $d(a,b)< \e $, we have that $a\in \mathrm{acl}(b)$.\editcom{Changed wording.}
\end{lem}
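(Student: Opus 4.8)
The plan is to reduce both assertions to a single claim: there is an $\e>0$ such that for every model $\frk{M}\supseteq A$ and every $b\in D(\frk{M})$ the zeroset $\cset{d(x,b)\leq\e}\cap D$ is algebraic. Granting this, the ``in particular'' clause is immediate, since any $a\in D(\frk{M})$ with $d(a,b)<\e$ lies in the algebraic $Ab$-zeroset $\cset{d(x,b)\leq\e}\cap D$, and every element of an algebraic zeroset over a set $C$ lies in $\acl(C)$. The covering statement follows by a routine compactness argument: fix $\delta>0$; since $\cset{d(x,b)\leq\e}\cap D$ is algebraic over every model and for every $b$, there is an $n=n_\delta<\omega$, independent of $b$ and $\frk{M}$, bounding the size of any $({>}\delta/2)$-separated subset of $\cset{d(x,b)\leq\e}\cap D(\frk{M})$ (otherwise compactness would produce a model containing a point $b$ whose closed $\e$-ball in $D$ carries an infinite $({>}\delta/2)$-separated set); a maximal $({>}\delta/2)$-separated subset of a given closed $\e$-ball then has fewer than $n$ elements, and the open $\delta$-balls around its elements cover the ball.

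To produce $\e$, I would first work one ball at a time. For $b$ in the monster model let $q_b$ be the unique non-algebraic type of $D$ over $Ab$ (it exists since $D$, being strongly minimal, is minimal over every parameter set), and let $\e_b$ be the value of the formula $d(x,b)$ at $q_b$. Since $\tp(b/Ab)$ is algebraic it is distinct from $q_b$, and since $d$ is a metric the value of $d(x,b)$ at $q_b$ cannot be $0$ (else a realization of $q_b$ would equal $b$); hence $\e_b>0$. Moreover, for every $\e'<\e_b$ the zeroset $\cset{d(x,b)\leq\e'}\cap D$ is algebraic. To see this, choose a model $\frk{M}\supseteq A$ with $b\in\frk{M}$; then $D$ is a minimal set over $\frk{M}$, hence dictionaric as a subset of $S_n(\frk{M})$, and the unique non-algebraic type $q$ of $D$ over $\frk{M}$ restricts to $q_b$, so $\cset{d(x,b)\leq\e'}\cap D$ is an $\frk{M}$-zeroset in $D$ whose defining formula takes value $\e_b>\e'$ at $q$ and therefore does not contain $q$. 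Finally I would show that any $\frk{M}$-zeroset $F\subseteq D$ with $q\notin F$ is algebraic: by dictionaricness every point of $F$ has a definable neighborhood in $D$ avoiding $q$, so by compactness $F\subseteq G$ for some $\frk{M}$-definable $G\subseteq D$ with $q\notin G$; since $D$ is compact Hausdorff there is an $\frk{M}$-zeroset $F'\subseteq D$ with $q\in F'$ and $F'\cap G=\varnothing$; as $F'$ contains the non-algebraic type $q$ it is non-algebraic, so minimality of $D$ forces $G$, and hence $F$, to be algebraic.

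The remaining --- and only genuinely delicate --- point is uniformity: I claim $\e_b\geq\mu/2$ for all $b$, where $\mu=d(g,g')$ for an $A$-independent pair $g,g'$ of elements of $D$. Here $\mu$ is well defined by uniqueness of the non-algebraic type over $Ag$, and $\mu>0$ because $g'\notin\acl(Ag)$ forces $g\neq g'$. If $b\notin\acl(A)$, then for $g\models q_b$ the pair $g,b$ is $A$-independent, so $\e_b=d(g,b)=\mu$. If $b\in\acl(A)$, then $\acl(Ab)=\acl(A)$, so both members $g,g'$ of an $A$-independent pair lie outside $\acl(Ab)$ and therefore realize $q_b$; hence $d(g,b)=d(g',b)=\e_b$, and the triangle inequality gives $\mu=d(g,g')\leq d(g,b)+d(b,g')=2\e_b$.

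Putting this together, any $\e$ with $0<\e<\mu/2$ satisfies $\e<\e_b$ for every $b$, so $\cset{d(x,b)\leq\e}\cap D$ is algebraic for every $b$ in every model, which is the claim, and the lemma follows. The main obstacle is the uniformity step; everything else combines standard facts about algebraic zerosets, the characterization of minimal sets, and a compactness argument. The key realization is that uniformity is forced purely by the triangle inequality together with the fact that, over a parameter set over which $b$ is algebraic, all generic elements of $D$ are equidistant from $b$.
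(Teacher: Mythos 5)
Your proof is correct, and its core coincides with the paper's: both arguments choose $\e$ strictly below half the distance $\mu$ between the two members of an $A$-independent pair of generics, and both uniformity arguments rest on the same observation — that all generics over $Ae$ are equidistant from an algebraic $e$, so the triangle inequality forces that common distance to be at least $\mu/2$. Where you differ is in the packaging. The paper never isolates your single uniform claim (that every closed $\e$-ball of $D$ is an algebraic zeroset over its center). Instead it fixes $\delta$, records a covering number $m_\delta$ for the ball around one generic $b$, encodes ``my closed $\e$-ball is covered by $m_\delta$ open $\delta$-balls'' as a formula $\chi_\delta(x)$, uses minimality to conclude that $D\cap\cset{\chi_\delta=0}$ is algebraic, and then runs the equidistance argument to show that the $\e$-balls around the remaining (algebraic) centers all sit inside a single algebraic set, yielding a second covering number $k_\delta$ and the bound $n_\delta=\max\{m_\delta,k_\delta\}$. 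You replace the definable predicate $\chi_\delta$ with a closing compactness argument that extracts the uniform bound from the algebraicity of all the balls at once. This is a clean trade: you get the ``in particular'' clause for free and avoid the $\chi_\delta$ bookkeeping, at the cost of an ultraproduct at the end — which does go through, since the relevant conditions ($d(x_i,b)\leq\e$, $d(x_i,x_j)\geq\delta/2$, $x_i\in D$) are closed, so unboundedly large separated sets would survive into an ultraproduct and contradict the total boundedness of an algebraic set. Your auxiliary step that a zeroset of $D$ omitting the generic type is algebraic (via dictionaricness of $D$ over a model plus the disjoint-zerosets definition of minimality) is also sound and matches how the paper uses minimality elsewhere.
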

\begin{proof}
  For the purposes of this proof we will use the notations $B_{\leq \alpha}(x)$ and $B_{<\alpha}(x)$ to represent closed and open balls in $D$ (rather than in the ambient structure).
  
  Let $p$ be the generic type in $D$ over $A$. Let $b$ be a realization of $p$. Let $q$ be the generic type in $D$ over $Ab$. We have that $d(b,q) > 0$. Find $\e >0$ satisfying $\e < \frac{d(b,q)}{2}$. The set $\cset{d(x,b)\leq \e}$ does not contain $q$, so it must be algebraic. This implies that for every $\delta > 0$ there is an $m_\delta < \omega$ such that $B_{\leq \e}(b)$ can be covered by $m_\delta$ open $\delta$-balls. Consider the formula
    \[
    \chi_\delta(x)\coloneqq \sup_{y_0,\dots,y_{m_\delta-1}}\inf_{z}\max \{d(x,z)\dotdiv \e,\max_{i<m_\delta}\delta\dotdiv d(y_i,z)\}.
  \]
  Note that $\chi_\delta(x)$ cannot take on negative values.
  Let $c_0,\dots,c_{m_\delta}$ be chosen so that the open $\delta$-balls centered at these points cover $B_{\leq\e}(b)$. Since $B_{\leq \gamma}$ is compact for some $\gamma > \e$, by compactness there is a $\gamma > \e $ such that $B_{\leq \gamma}(b)$ is also covered by $\bigcup_{i < m_\delta}B_{<\delta}(c_i)$. Then by compactness again we get that there is some $\theta<\delta$ such that $\bigcup_{i<m_\delta}B_{<\theta}(c_i)$ covers $B_{\leq \gamma}(b)$. By plugging in these $c_i$'s for the $y_i$'s in $\gamma_\delta(b)$, we get a witness that $\chi_\delta(a) > \min\{\gamma-\e, \delta-\theta\}>0$. Conversely, we have that for any $b \models D$, if $\chi_\delta(b) > 0$, then $B_{\leq \delta}(b)$ can be covered by $m_\delta$ open $\delta$-balls. The set $\cset{\chi_\delta(x) > 0}$ is an open neighborhood of $p$, so we have that $D \cap \cset{\chi_\delta(x)= 0}$ is algebraic. For any $e \in D$ with $\chi_\delta(e) = 0$, we must have that $d(e,p) > \e$. To see this, let $f$ be a realization of $q$. We have that $d(b,f) \geq d(b,q) > 2\e$, by the choice of $\e$. Since $e$ is in $\mathrm{acl}(\varnothing)$, $b$ and $f$ have the same type over $Ae$, implying that $d(b,e)=d(f,e)$. By the triangle inequality, these must both be greater than $\e$. This implies that the set $F = (\cset{\chi_\delta(x)=0}\cap D)^{\leq \e}\cap D$ does not contain $p$ and is therefore also algebraic. This implies that there is some $k_\delta<\omega$ such that $F(\frk{C})$ is covered by $k_\delta$ open $\delta$-balls, which implies that for any $e \in D$ with $\chi_\delta(e)=0$, $B_{\leq \e}(e)$ is also covered by $k_\delta$ open $\delta$-balls (since it is a subset of $F(\frk{C})$). Therefore we have that for any $g \in D$, $B_{\leq \e}(g)$ is covered by no more than $n_\delta = \max\{m_\delta,k_\delta\}$ open $\delta$-balls.

  Since we can do this for any $\delta > 0$, and since this property is preserved under passing to substructures (possibly increasing $n$ as a function of $\delta$), we have the required result.\editcom{Added parenthetical comment.}
\end{proof}

\subsubsection{Strongly Minimal Types}

Now we would like to give definitions of strongly minimal types directly and relate them to the notion of strongly minimal sets. As part of this we will need a notion of `approximate algebraicity.'

\begin{defn}\label{def:strongly-minim-types}
\leavevmode
\begin{enumerate}[label=(\roman*)]
\item A type $p\in S_{n}(A)$ is said to be \emph{$(<\varepsilon)$-algebraic}
if for any $\mathfrak{M}\supseteq A$ and any $q\in S_{n}(\frk{M})$ such
that $q\upharpoonright A=p$, $d(q,\mathfrak{M})<\varepsilon$ (thinking of $\frk{M}$ as the set of types it realizes in $S_{n}(\frk{M})$).

\item A set of types is said to be \emph{$(<\varepsilon)$-algebraic} if every type in it is.

\item A type $p\in S_{n}(A)$ is \emph{pre-minimal (over $A$)} if for all sufficiently small
$\varepsilon>0$, $p$ is $d$-atomic in the set of non-$(<\varepsilon)$-algebraic
types in $S_{n}(A)$.\footnote{Pre-minimal types have the same relationship with minimal sets that strongly minimal types have with strongly minimal sets. There is a terminological issue we are inheriting from discrete logic here. In discrete logic, strongly minimal sets have strongly minimal types, and weakly minimal sets have minimal types, but minimal sets do not get a name for their special types (which are admittedly not very special). Calling such types `weakly minimal,' while arguably sensible, would invite confusion. `Locally minimal' is another option, but this is unfortunate when talking about `locally minimal global types' and erroneously suggests a direct relationship with local types.}% is not a flawless solution, either, as it might imply some relevance to local types, but we could not settle on a better alternative.} %{\color{red} THIS CONFLICTS WITH THE STANDARD DEFINITION OF MINIMAL TYPE. `Minimally non-algebraic'? `Strongly Minimal over a Model'?}

\item A type $p\in S_{n}(A)$ is \emph{strongly minimal} if it has a unique
pre-minimal global extension.\editcom{Removed `where $\frk{C}....$'} %, where $\frk{C}$ is the monster model. 
\end{enumerate}
\end{defn}

Note that if a zeroset $F$ is $(<\varepsilon)$-algebraic, then in any model $\mathfrak{M}$, by compactness $F(\mathfrak{M})$ is covered by finitely many $(<\varepsilon)$-balls with centers in $\mathfrak{M}$. Something approximating the converse is true as well, but we won't need it. Also note that a zeroset $F$ is algebraic if and only if it is $(<\e)$-algebraic for every $\e > 0$.

Note that the set of non-$(<\varepsilon)$-algebraic types is always
closed, and if $\mathfrak{M}$ is a model, then the set of non-$(<\varepsilon)$-algebraic
types in $S_{1}(\mathfrak{M})$ is precisely $S_{1}(\mathfrak{M})\smallsetminus\mathfrak{M}^{<\varepsilon}$,
so in particular a type $p\in S_{n}(A)$ is strongly minimal if for
all sufficiently small $\varepsilon>0$, $p$ has a unique global
extension in the set $S_{n}(\mathfrak{C})\smallsetminus\mathfrak{C}^{<\varepsilon}$ (where $\frk{C}$ is the monster model),
and furthermore that extension is relatively $d$-atomic in $S_{n}(\mathfrak{C})\smallsetminus\mathfrak{C}^{<\varepsilon}$.\editcom{Added `where $\frk{C}....$'}

%(FIX FIX FIX)
%Compare (iii) with the following definition of strongly
Compare this with the following definition of strongly
minimal types in discrete logic: A type $p\in S_{n}(A)$ is strongly
minimal if it has a unique global extension in the set $S_{n}(\mathfrak{C})\smallsetminus\mathfrak{C}$,
and furthermore that extension is relatively isolated in $S_{n}(\mathfrak{C})\smallsetminus\mathfrak{C}$.

Note that the definition of strongly minimal type implies that if
$p$ is a strongly minimal type and $q$ is a global extension of
it that is not equal to $p$'s special extension, then $q\in\mathfrak{C}^{<\varepsilon}$
for all sufficiently small $\varepsilon>0$, so in particular $q$
is realized and therefore algebraic. This implies that over any parameter
set containing the domain of $p$, $p$ has a unique non-algebraic
extension.

The following is a characterization of strongly minimal types in terms of Morley rank and degree, which is developed in the context of continuous logic in \cite{BenYaacov2008}. The notion of Morley rank used here is the one corresponding to the $(f,\varepsilon)$-Cantor-Bendixson derivative defined in that paper, but the statement of this proposition is not sensitive to the particular kind of Morley rank used, since they are all asymptotically equivalent as $\varepsilon \rightarrow 0$ and since in a totally transcendental theory the statement that $Md_\varepsilon(p)=1$ for all sufficiently small $\varepsilon>0$ is precisely the same as the statement that $p$ is a stationary type.\editcom{Changed sentence structure to make it clearer.} The machinery of Morley rank\editcom{Removed `s.'} is not used anywhere else in this paper, and this proposition is included only for comparison to the classical statement that a type $p$ is strongly minimal if and only if $M(R,d)(p)=(1,1)$ (where $M(R,d)(p)=(MR(p),Md(p))$) so we will omit actually defining Morley rank here.\editcom{Not sure if I want to standardize my ordered pair notation.}
\begin{prop}
A type $p\in S_{n}(A)$ is strongly minimal if and only if $M(R,d)_{\varepsilon}(p)=(1,1)$
for all sufficiently small $\varepsilon>0$.
\end{prop}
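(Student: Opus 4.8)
The plan is to route everything through the explicit description of strongly minimal types recorded just above the proposition, together with the standard behavior of the $(f,\varepsilon)$-Cantor--Bendixson analysis from \cite{BenYaacov2008}. Concretely, I will use three facts about $(f,\varepsilon)$-Morley rank and degree (exact analogues of the classical ones, available from \cite{BenYaacov2008}): that $MR_\varepsilon(r)$ is computed by iterating the $(f,\varepsilon)$-CB derivative of the relevant type space; that $MR_\varepsilon(p)=\max\{MR_\varepsilon(q) : q\in S_n(\mathfrak{C}),\; q\upharpoonright A=p\}$ while $Md_\varepsilon(p)$ counts the global extensions of $p$ attaining this maximum; and that both quantities are monotone nonincreasing in $\varepsilon$, so that conditions quantified by ``for all sufficiently small $\varepsilon$'' are well-behaved and leave room to absorb bounded rescalings of $\varepsilon$.

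The technical core is a single claim: a \emph{global} type $q\in S_n(\mathfrak{C})$ satisfies $MR_\varepsilon(q)=1$ for all sufficiently small $\varepsilon>0$ if and only if $q$ is pre-minimal. Here $MR_\varepsilon(q)\geq 1$ says exactly that $q$ survives the first $(f,\varepsilon)$-CB derivative of $S_n(\mathfrak{C})$, i.e.\ that no open neighborhood of $q$ has metric diameter $\le\varepsilon$; up to a bounded change of scale this is the same as $q$ being non-$(<\varepsilon)$-algebraic, and for a global type $q$ one has $d(q,\mathfrak{C})>0$ iff $q$ is non-$(<\varepsilon)$-algebraic for some (equivalently, all sufficiently small) $\varepsilon$, which is just non-algebraicity. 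On the other hand, the first CB derivative $X^{(1)}_\varepsilon\subseteq S_n(\mathfrak{C})$ agrees, again up to a bounded change of scale, with the closed set of non-$(<\varepsilon)$-algebraic types; so $MR_\varepsilon(q)\ge 2$ says that every formula in $q$ contains an infinite $({>}\varepsilon)$-separated family of points of $X^{(1)}_\varepsilon$, which is precisely the failure of $\{q\}$ being relatively definable in $X^{(1)}_\varepsilon$, i.e.\ the failure of relative $d$-atomicity there. Combining, ``$MR_\varepsilon(q)=1$ for all small $\varepsilon$'' is exactly pre-minimality of $q$.

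Granting the claim, the proposition is bookkeeping. If $p$ is strongly minimal it has a unique pre-minimal global extension $q$, and (as observed just after the definition of strongly minimal type) every other global extension of $p$ is realized and hence has $MR_\varepsilon=0$ for all $\varepsilon>0$; by the claim $MR_\varepsilon(q)=1$ for all small $\varepsilon$, so $q$ is the unique global extension of $p$ of maximal rank $1$, and therefore $M(R,d)_\varepsilon(p)=(1,1)$ for all sufficiently small $\varepsilon$. Conversely, assume $M(R,d)_\varepsilon(p)=(1,1)$ for all sufficiently small $\varepsilon$. For each such $\varepsilon$ there is a unique global extension $q_\varepsilon$ of $p$ with $MR_\varepsilon(q_\varepsilon)=1$, every other global extension having rank $0$; using monotonicity of $MR_\varepsilon$ together with the hypothesis holding at every small scale, one checks $MR_{\varepsilon'}(q_\varepsilon)=1$ for small $\varepsilon'\le\varepsilon$, whence $q_{\varepsilon'}=q_\varepsilon$, so there is a single global extension $q$ with $MR_\varepsilon(q)=1$ for all small $\varepsilon$. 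By the claim $q$ is pre-minimal, and it is the only pre-minimal global extension of $p$, since any such has maximal rank $1$ and $Md_\varepsilon(p)=1$. Hence $p$ is strongly minimal.

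The main obstacle is making the two ``up to a bounded change of scale'' identifications in the technical core genuinely precise: the $(f,\varepsilon)$-CB derivative's notion of an $\varepsilon$-isolated point, the notion of a $(<\varepsilon)$-algebraic zeroset, and the metric on the type space each carry their own constant factors, and one must verify that, after replacing $\varepsilon$ by a fixed small multiple of itself, the first CB derivative of $S_n(\mathfrak{C})$ really is squeezed between the set of non-$(<c_1\varepsilon)$-algebraic types and the set of non-$(<c_2\varepsilon)$-algebraic types; the ``for all sufficiently small $\varepsilon$'' quantifier together with monotonicity is exactly what renders these constants harmless. A minor point worth recording is that the hypothesis $MR_\varepsilon(p)=1$ already forces the rank to be an ordinal rather than $\infty$, so no separate argument is needed to exclude that.
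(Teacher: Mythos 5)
Your argument is correct and follows essentially the same route as the paper: identify the first $\varepsilon$-Cantor--Bendixson derivative of $S_n(\mathfrak{C})$ with $S_n(\mathfrak{C})\smallsetminus\mathfrak{C}^{<\varepsilon}$ (the non-$(<\varepsilon)$-algebraic types), so that $M(R,d)_\varepsilon(p)=(1,1)$ at all small scales unwinds to ``unique pre-minimal global extension,'' with the nesting $(S_n(\mathfrak{C}))'_\delta\supseteq(S_n(\mathfrak{C}))'_\varepsilon$ for $\delta<\varepsilon$ supplying the arbitrarily small relative neighborhoods in the converse. The rescaling constants you worry about do not actually arise, since the paper takes that identification to be exact and notes the statement is insensitive to the particular variant of $\varepsilon$-Morley rank.
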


\begin{proof}
$(\Rightarrow)$. Assume that $p\in S_{n}(A)$ is strongly minimal.
Let $q$ be the unique pre-minimal global extension, so that in particular
for all sufficiently small $\varepsilon>0$, $q$ is relatively $d$-atomic in $S_{n}(\mathfrak{C})\smallsetminus\mathfrak{C}^{<\varepsilon}$.
$(S_{n}(\mathfrak{C}))_{\varepsilon}^{\prime}=S_{n}(\mathfrak{C})\smallsetminus\mathfrak{C}^{<\varepsilon}$,
so $q$ is relatively $d$-atomic in $(S_{n}(\mathfrak{C}))_{\varepsilon}^{\prime}$.
Therefore $q\notin(S_{n}(\mathfrak{C}))_{\varepsilon}^{\prime\prime}$
and $q$ is contained in open subsets of $(S_{n}(\mathfrak{C}))_{\varepsilon}^{\prime}$
of arbitrarily small diameter, so $M(R,d)_{\varepsilon}(q)=(1,1)$.
$q$ is the unique extension of $p$ to $(S_{n}(\mathfrak{C}))_{\varepsilon}^{\prime}$,
so $M(R,d)_{\varepsilon}(p)=(1,1)$ as well.

$(\Leftarrow)$. Since $p$ has ordinal Morley ranks it has global
non-forking extensions. Since $Md_{\varepsilon}(p)=1$ for all sufficiently
small $\varepsilon>0$, it has a unique global non-forking extension.
Let $q$ be its unique global non-forking extension. For each sufficiently
small $\varepsilon>0$, we have that $p$ is contained in a relatively open subset
of $(S_{n}(\mathfrak{C}))_{\varepsilon}^{\prime}$ of diameter $\leq2\varepsilon$.
For any $0<\delta<\varepsilon$, we have that $(S_{n}(\mathfrak{C}))_{\delta}^{\prime}\supseteq(S_{n}(\mathfrak{C}))_{\varepsilon}^{\prime}$,
therefore $q$ has open neighborhoods of arbitrarily small diameter
in $(S_{n}(\mathfrak{C}))_{\varepsilon}^{\prime}=S_{n}(\mathfrak{C})\smallsetminus\mathfrak{C}^{<\varepsilon}$.
And thus $q$ is relatively $d$-atomic in $S_{n}(\mathfrak{C})\smallsetminus\mathfrak{C}^{<\varepsilon}$, and $q$ is a pre-minimal global extension of $p$.

We still need to show that $q$ is the unique pre-minimal global extension
of $p$. Assume that $p$ has another pre-minimal global extension $r$.
By the $(\Rightarrow)$ direction we would have that $MR_{\varepsilon}(r)=1$
for all sufficiently small $\varepsilon>0$, but that would contradict
that $Md_{\varepsilon}(p)=1$ for all sufficiently small $\varepsilon>0$.
\end{proof}
There is one additional hiccup in the development of strongly minimal
sets in continuous logic. Although we typically think of strongly minimal types as coming from strongly minimal sets, it is not hard to show that the following is true in discrete logic:
\begin{itemize}
\item[] If $p\in S_{1}(A)$ is a type whose unique non-algebraic extension to some parameter set $B \supseteq A$ is contained in a strongly minimal set $D$, definable over $B$, then there is a strongly minimal set $E$ definable over $A$ containing $p$ as its unique non-algebraic type.\editcom{Changed $D$ to $E$.}
\end{itemize}
%This is because
%$p$'s Morley rank and degree must be witnessed by some definable
%set(??????). 
This pleasant fact fails in continuous logic.
\begin{ex}\label{ex:approx-min}
An $\omega$-stable theory with a strongly
minimal type over $\varnothing$ but no $\varnothing$-definable strongly minimal set.
\end{ex}
\begin{proof}[Verification]
For each $i$ let $S_{i}$ be the sphere in an infinite dimensional
Hilbert space with radius $2^{-i}$ (i.e.\ the\editcom{Added `the.'} set of vectors of norm $2^{-i}$). Let $\mathfrak{M}$ be a metric
space whose universe is $\bigsqcup_{i<\omega}S_{i}$, and let the distance
between any points in distinct $S_{i}$ be $1$. Let $T=\mathrm{Th}(\mathfrak{M})$.

The type space $S_{1}(\varnothing)$ is topologically homeomorphic
to $\omega+1$ with the order topology. The unique non-isolated type
is strongly minimal. If we let $\{a_{i}\}_{i<\omega}$ be a sequence
of points such that $a_{i}\in S_{i}\subseteq\mathfrak{M}$, then the
set $\{a_{i}\}_{i<\omega}\cup \{p\}$ is definable, by essentially the same
argument as in Example \ref{ex:not-comp-nor-co-comp}.
\end{proof}

Assuming we are working over models, the relationship between strongly minimal types and strongly minimal sets is precisely as it is in discrete logic. Curiously, this arguably relies on the same kind of behavior that gives us the mild pathology of Example \ref{ex:not-comp-nor-co-comp}.

\begin{prop}
\label{prop:find-minimal}
If $p\in S_1(\mathfrak{M})$ is a strongly minimal (resp.\ pre-minimal) type with $\mathfrak{M}$ a model, then there is an $\mathfrak{M}$-definable strongly minimal (resp.\ minimal) set $D$ containing $p$ (with no assumptions on $S_1(\mathfrak{M})$ or $T$).
\end{prop}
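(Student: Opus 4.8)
The plan is to reduce to the pre-minimal case and to realize $D$ as a ``one-point compactification''-type set: a metrically compact set of realized types together with $p$ playing the role of the point at infinity, exactly as in the sets appearing in Example~\ref{ex:not-comp-nor-co-comp}. A strongly minimal type is in particular pre-minimal over $\mathfrak{M}$, so it suffices to build an $\mathfrak{M}$-definable minimal $D \ni p$ from pre-minimality, and then observe at the end that strong minimality of $p$ forces $D$ to be strongly minimal. Two features of working over a model will be used repeatedly: first, $a \mapsto \tp(a/\mathfrak{M})$ is an isometry of $\mathfrak{M}$ onto the set of realized types of $S_1(\mathfrak{M})$, and if a closed $G \subseteq S_1(\mathfrak{M})$ consists only of realized types then $G(\mathfrak{N}) = G(\mathfrak{M})$ for every $\mathfrak{N} \succeq \mathfrak{M}$, so $G$ is algebraic if and only if it is metrically compact; second, the non-algebraic types of $S_1(\mathfrak{M})$ are exactly those lying in some $F_\varepsilon := S_1(\mathfrak{M}) \smallsetminus \mathfrak{M}^{<\varepsilon}$, the closed set of non-$(<\varepsilon)$-algebraic types.

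By pre-minimality fix $\varepsilon_n \downarrow 0$ with $p$ $d$-atomic in $F_{\varepsilon_n}$ for every $n$, and by Lemma~\ref{lem:strong_min_and_cat_in_cont:4} (together with Tietze) fix formulas $\varphi_n : S_1(\mathfrak{M}) \to [0,1]$ with $F_{\varepsilon_n} \cap \cset{\varphi_n = 0} = \{p\}$ and $d(x,p) \le \varphi_n(x)$ for all $x \in F_{\varepsilon_n}$. I aim to build a closed set $D$ with: (a) $D$ is definable (i.e.\ $D^{<\delta}$ is open for every $\delta > 0$); (b) $p \in D$; and (c) $D \cap F_{\varepsilon_n} = \{p\}$ for every $n$ --- so that $D \smallsetminus \{p\}$ consists of realized types --- and $D \smallsetminus W$ is metrically compact for every open $W \ni p$. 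Granting (a)--(c), $D$ is minimal over $\mathfrak{M}$: it is non-algebraic since $p \in D$, and if $F,G \subseteq D$ are disjoint $\mathfrak{M}$-zerosets, then at most one contains $p$; if $p \notin F$, then $F$ is a closed subset of $D \smallsetminus (S_1(\mathfrak{M}) \smallsetminus F)$, which by (c) is a metrically compact set of realized types, so $F$ is metrically compact and realized, hence algebraic.

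To construct $D$ I would run an inductive ``telescoping with swallowing'' argument. The equalities $F_{\varepsilon_n} \cap \cset{\varphi_n = 0} = \{p\}$ are what let us keep the construction inside a nested family of sets $\cset{\varphi_n < \eta_n}$ meeting each $F_{\varepsilon_n}$ only in $\{p\}$, which yields the first half of (c); Lemma~\ref{lem:strong-ext}(i), which needs no dictionaricness, can be used to extract an initial closed approximation $C$ that is already ``definable at $p$'', i.e.\ $p \in \tint C^{<\delta}$ for all $\delta > 0$. The substantive step is then to repair $C$ away from $p$: every point of $C \smallsetminus \{p\}$ is a realized type, but $C^{<\delta}$ need not be open there, so one must enlarge $C$ by logic-open chunks around those points until the resulting $D$ has $D^{<\delta}$ open for all $\delta$ --- this is precisely the mechanism by which $E$ is seen to be definable in Example~\ref{ex:not-comp-nor-co-comp} --- while arranging that the added chunks accumulate only at $p$, so that both clauses of (c) survive. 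This simultaneous bookkeeping, keeping $D$ definable while keeping $D \smallsetminus \{p\}$ purely realized and $D$ metrically compact away from $p$, is the main obstacle; it works because the $\varphi_n$ constrain the padding to stay $(<\varepsilon_n)$-close to $\mathfrak{M}$ for every $n$ and to concentrate near $p$, so the competing requirements turn out to be compatible.

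Finally, for the strongly minimal case: if $p$ is strongly minimal it has a unique non-algebraic extension over every $B \supseteq \mathfrak{M}$, and since any non-algebraic type of $D$ over $B$ restricts to a non-algebraic type of $D$ over $\mathfrak{M}$ --- necessarily $p$ --- the set $D$ has a unique non-algebraic type over every such $B$. Because $D$ is an $\mathfrak{M}$-definable set, the geometry captured by (c) (metric compactness away from the generic type) is absolute and persists over any $B$, via Lemma~\ref{lem:basic-alg} applied over $B$; this gives that $D$ is minimal over every $B$, hence strongly minimal. I expect this last transfer to require a short but careful argument rather than being immediate.
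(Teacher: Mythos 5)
Your architecture is essentially the paper's: reduce to pre-minimality, take $F_k=S_1(\mathfrak{M})\smallsetminus\mathfrak{M}^{<2^{-k}}$, apply Lemma~\ref{lem:strong-ext}~(i) (which indeed needs no dictionaricness) to get a closed set $C$ and a formula $\varphi$ with $C\cap F_k\cap\cset{\varphi\leq 2^{-k-1}}=\{p\}$, observe that everything in $\cset{\varphi\brackconv}\smallsetminus\{p\}$ is realized in $\mathfrak{M}$, and then produce a definable ``one-point compactification'' of $p$ by realized types; your reduction of minimality to conditions (a)--(c) is also correct. The gap is exactly where you flag it: the repair step that turns $C$ into a definable set. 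You propose to ``enlarge $C$ by logic-open chunks around those points,'' citing the mechanism of Example~\ref{ex:not-comp-nor-co-comp}, and then assert the bookkeeping works out. As stated this move fails: a logic-open neighborhood of a realized type in $S_1(\mathfrak{M})$ will in general contain non-algebraic types and need not be metrically compact or consist of realized types, so adding (closures of) such chunks destroys precisely the two clauses of your condition (c) that make $D$ minimal. The tension you identify between ``make $D^{<\delta}$ open'' and ``keep $D\smallsetminus\{p\}$ realized and compact'' is real, and enlargement by open sets does not resolve it.

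The device that does resolve it is replacement, not enlargement. Set $G_k=C\cap\cset{2^{-k-2}\leq\varphi\leq 2^{-k-1}}$; since $G_k\cap F_k=\varnothing$, each $G_k$ is covered by $\mathfrak{M}^{<2^{-k}}$, so by compactness there is a \emph{finite} set $M_k\subseteq\mathfrak{M}$ with $d_H(G_k,M_k)<2^{-k}$. Now take $D=\cset{\varphi\brackconv}\cup\bigcup_k M_k$: the finite sets $M_k$ are simultaneously definable, metrically compact, and realized, so (c) survives, while their Hausdorff proximity to the annuli $G_k$ --- which together with $\cset{\varphi\brackconv}$ fill out a neighborhood of $\cset{\varphi\brackconv}$ inside $C$ --- is what makes $D^{<\varepsilon}$ open and hence $D$ definable (this, rather than open padding, is the correct analogue of Example~\ref{ex:not-comp-nor-co-comp}). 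Your final transfer to strong minimality is fine but can be done more cheaply than by an absoluteness argument: every type of $D$ over $\mathfrak{M}$ other than $p$ is realized, so all of its extensions are algebraic, and strong minimality of $p$ handles the extensions of $p$; this is Proposition~\ref{prop:some-min-stuff}~(iii).
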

\begin{proof}
For each $k<\omega$, let $F_k = S_1(\mathfrak{M}) \smallsetminus \mathfrak{M}^{<2^{-k}}$. Let $\ell<\omega$ be large enough that $p\in F_k$ for any $k\geq \ell$. By Lemma \ref{lem:strong-ext} part \emph{(i)}, we can find a closed set $A\subseteq S_1(\mathfrak{M})$ and a formula $\varphi:S_1(\mathfrak{M})\rightarrow [0,1]$ such that
\begin{itemize}
\item  $p\in \cset{\varphi\brackconv} \subseteq A$,
\item  $p \in \cset{\varphi\brackconv} \subseteq \tint A^{<\varepsilon}$ for every $\varepsilon > 0$, and
\item  $\cset{\varphi \leq 2^{-k-1}}\cap A \cap F_k = \{p\}$ for every $\ell \leq k<\omega$.
\end{itemize}
 Note that while $\cset{\varphi\brackconv}$ may contain types other than $p$, if $q\in \cset{\varphi\brackconv} \smallsetminus \{p\}$, then $q \notin F_k$ for every $\ell \leq k < \omega$, so in particular $q$ is algebraic and realized in $\mathfrak{M}$.

For each $k$ with $\ell \leq k < \omega$, let $G_k = A \cap \cset{2^{-k-2}\leq \varphi \leq 2^{-k-1}}$. Note that $G_k \cap F_k = \varnothing$, since at most it could contain $\{p\}$, but it cannot contain $p$. This implies that $G_k \subseteq \mathfrak{M}^{<2^{-k}} = \bigcup_{a\in \mathfrak{M}}B_{<2^{-k}}(a)$, so by compactness there is a finite set $M_k \subseteq \mathfrak{M}$ such that $G_k \subseteq M_k^{<2^{-k}}$ and by restricting to a subset if necessary we may assume that $M_k \subseteq G_k^{<2^{-k}}$ as well, i.e.\ $d_H(G_k, M_k) < 2^{-k}$. Note that as a finite union of elements of $\mathfrak{M}$, each $M_k$ is an $\mathfrak{M}$-definable set.

Let $D = \cset{\varphi\brackconv} \cup \bigcup_{\ell \leq k < \omega} M_k$. To see that $D$ is closed, note that $\cset{\varphi\brackconv}$ is clearly closed so we only need to argue that the accumulation points of $\bigcup_{\ell \leq k < \omega} M_k$ are all contained in $D$. Let $q$ be an accumulation point of $\bigcup_{\ell \leq k < \omega} M_k$  and assume that $\varphi(q)>0$. Find $U$, an open neighborhood of $q$, such that $\overline{U}$ is disjoint from $\cset{\varphi\brackconv}$. Find $\varepsilon > 0$ small enough that $\overline{U}^{\leq \varepsilon}$ is disjoint from $\cset{\varphi\brackconv}$.  By compactness there is a $k<\omega$ large enough that $(A\cap \cset{\varphi\leq 2^{-k}})^{\leq 2^{-k}}$ is disjoint from $\overline{U}^{\leq \varepsilon}$. This implies that $D \cap \overline{U}^{\leq \varepsilon}$ is a finite set, so $q$ must be in $D$. If $\varphi(q)=0$ then $q\in \cset{\varphi\brackconv} \subseteq D$, therefore $D$ is closed.

To see that $D$ is definable, pick $\varepsilon>0$ and note that $D^{< \varepsilon / 2} \supseteq \cset{\varphi\brackconv} \cup \bigcup_{m \leq k < \omega} G_k$ for some $m<\omega$ (since $d_H(G_m, M_m) < 2^{-m}$ for every $m < \omega$), so we have
\[ D^{< \varepsilon} \supseteq \left( \cset{\varphi\brackconv} \cup \bigcup_{m \leq k < \omega} G_k  \right) ^{< \varepsilon / 2} \cup \bigcup_{\ell \leq k < \omega} M_k^{<\varepsilon}.\]
So we just have to argue that $\cset{\varphi\brackconv} \subseteq \tint  \left( \cset{\varphi\brackconv} \cup \bigcup_{m \leq k < \omega} G_k  \right) ^{< \varepsilon / 2}$. Find $\delta >0$ small enough that $\delta < \frac{\varepsilon}{2}$ and $\cset{\varphi\leq \delta}^{\leq \delta} \cap A \subseteq  \cset{\varphi\brackconv} \cup \bigcup_{m \leq k < \omega} G_k$, which must be possible by compactness. Let $q \in \cset{\varphi<\delta} \cap \tint  A^{< \delta}$. We have that $\dinf(q,A) < \delta$, so let $r\in A $ such that $d(q,r) < \delta$. This implies that $r \in \cset{\varphi\leq \delta}^{\leq \delta}$, so in particular $r \in \cset{\varphi\brackconv} \cup \bigcup_{m \leq k < \omega} G_k$. Therefore 
\begin{align*}
  \cset{\varphi\leq \delta} \cap \tint A^{<\delta} &\subseteq \tint  \left(\cset{\varphi\brackconv} \cup \bigcup_{m \leq k < \omega} G_k \right)^{<\delta} \\
  &\subseteq \tint  \left(\cset{\varphi\brackconv} \cup \bigcup_{m \leq k < \omega} G_k \right)^{<\varepsilon / 2}, 
\end{align*}

as required. So $D \subseteq \tint D^{<\varepsilon}$ for every $\varepsilon > 0$, and thus $D$ is definable. By construction every type $q \in D \smallsetminus \{p\}$ is algebraic (and realized in $\mathfrak{M}$), so $D$ is a minimal set. If the unique non-algebraic type in $D$ is strongly minimal then $D$ is a strongly minimal set as well, by Proposition \ref{prop:some-min-stuff} (the proof of that proposition does not rely on this proposition).
\end{proof}

Really if $A$ is any parameter set, $p\in S_1(A)$ is a strongly minimal type, $D$ is any definable set containing $p$, and $\mathfrak{M} \supseteq A$ is a model, then we get an $A\cup D(\mathfrak{M})$-definable strongly minimal set containing $p$. And we also have the same for any open or open-in-definable set containing $p$.

\subsubsection{Approximately (Strongly) Minimal Pairs}\label{sec:appr-strongly-minim}
We can recover a fact analogous to Proposition~\ref{prop:find-minimal} without assuming anything about the parameter set, but we
need a dictionaric theory and a slight weakening of the notion of strongly minimal set. % slightly weaker notion of strongly minimal.

\begin{defn}\label{defn:approx-stronk-min}
\leavevmode
\begin{enumerate}[label=(\roman*)]
\item A pair $(D,\varphi)$ of a definable set $D\subseteq S_{n}(A)$ and
an $A$-formula $\varphi$ is \emph{approximately minimal} if the zeroset
$\cset{\varphi\brackconv}\subseteq D$ is non-algebraic and for every pair $F,G\subseteq D$
of disjoint $A$-zerosets, every model $\mathfrak{M}\supseteq A$,
and every $\varepsilon>0$, at least one of $F\cap\cset{\varphi\leq \varepsilon}$ and $G\cap\cset{\varphi\leq \varepsilon}$ is $(<\varepsilon)$-algebraic.

\item A pair $(D,\varphi)$ is \emph{approximately strongly minimal} if it is approximately
minimal over every set of parameters over which it is definable. 

\item A definable set $D$ is approximately (strongly) minimal if
there is some formula $\varphi$ such that $(D,\varphi)$ is approximately (strongly)
minimal.
\end{enumerate}
\end{defn}

Obviously a (strongly) minimal set is approximately (strongly) minimal
if we just let $\varphi$ be the distance predicate of the set. Note that
for any kind of minimality it is sufficient to check sets of the form $\cset{\varphi \leq r}$ with $\varphi$ restricted and $r$ rational.\editcom{Changed last bit to be more precise.} %restricted closed conditions. 

\begin{prop}
\label{prop:some-min-stuff}
\leavevmode
\begin{enumerate}[label=(\roman*)]
\item If $(D,\varphi)$ is approximately minimal (over the set $A$), then
there is a unique non-algebraic $A$-type $p\in\cset{\varphi\brackconv}\subseteq D$. We
say that $p$ is the \emph{generic type} of $(D,\varphi)$. %$(D,\varphi)$ \emph{has generic type $p$}, or that \emph{$p$ is the generic type of $(D,\varphi)$}.

\item If $(D,\varphi)$ is approximately strongly minimal (resp.\ approximately minimal), then its generic type is strongly minimal (resp.\ pre-minimal).

\item If $(D,\varphi)$ is (approximately) minimal and its generic type is strongly minimal, then $(D,\varphi)$ is (approximately) strongly minimal (where a pair $(D,\varphi)$ is \emph{strongly minimal} if $D$ is strongly minimal).
\end{enumerate}
\end{prop}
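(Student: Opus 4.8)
The plan is to prove the three parts in turn, each building on the previous. \emph{Part (i).} I would argue by contradiction. Suppose $p_{0}\neq p_{1}$ were distinct non-algebraic $A$-types, both in $\cset{\varphi\brackconv}\subseteq D$. Using Hausdorffness of $S_{n}(A)$, pick an $A$-formula $\psi$ with $\psi(p_{0})=0$ and $\psi(p_{1})=1$ and set $F=D\cap\cset{\psi\leq\frac{1}{3}}$ and $G=D\cap\cset{\psi\geq\frac{2}{3}}$, disjoint $A$-zerosets inside $D$. A zeroset containing a non-algebraic type is itself non-algebraic (a metrically closed subset of a metrically compact set is compact), so by the observation that a zeroset is algebraic if and only if it is $(<\varepsilon)$-algebraic for every $\varepsilon$, both $F$ and $G$ fail to be $(<\varepsilon)$-algebraic once $\varepsilon$ drops below a fixed threshold. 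Since $p_{i}\in\cset{\varphi\brackconv}\subseteq\cset{\varphi\leq\varepsilon}$, the sets $F\cap\cset{\varphi\leq\varepsilon}$ and $G\cap\cset{\varphi\leq\varepsilon}$ are then both non-$(<\varepsilon)$-algebraic for all sufficiently small $\varepsilon$, contradicting approximate minimality. The existence of at least one non-algebraic type in $\cset{\varphi\brackconv}$ is immediate: that zeroset is non-algebraic, hence not $(<\varepsilon_{0})$-algebraic for some $\varepsilon_{0}$, which exhibits a type in it that is not $(<\varepsilon_{0})$-algebraic and therefore non-algebraic.

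\emph{Part (ii).} For the ``resp.\ pre-minimal'' clause, let $(D,\varphi)$ be approximately minimal over $A$ with generic type $p$, fix a small $\eta>0$, and aim to produce, for all sufficiently small $\varepsilon>0$, a relatively open neighborhood of $p$ of metric diameter $O(\eta)$ inside the closed set $Z_{\varepsilon}$ of non-$(<\varepsilon)$-algebraic $A$-types; by Lemma~\ref{lem:strong_min_and_cat_in_cont:4} this is exactly $d$-atomicity of $\{p\}$ in $Z_{\varepsilon}$, i.e.\ pre-minimality. The key step is a localized version of part (i): separate $p$ by a formula from a zeroset $G\subseteq D$ that contains every $A$-type at metric distance $\geq\eta$ from $p$, let $F$ be a zeroset neighborhood of $p$ in $D$, and note that approximate minimality forces $G\cap\cset{\varphi\leq\varepsilon}$ to be $(<\varepsilon)$-algebraic (the $F$-side contains the non-algebraic $p$), so every non-$(<\varepsilon)$-algebraic type in $D\cap\cset{\varphi\leq\varepsilon}$ is within $\eta$ of $p$; separating $p$ also from the open set $\cset{d(x,D)>0}\cup\cset{\varphi>\varepsilon}$ (which is disjoint from $\cset{\varphi\brackconv}$) by a formula, one glues the two pieces into the desired neighborhood. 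For the ``resp.\ strongly minimal'' clause, apply the pre-minimal statement over the monster $\mathfrak{C}$ to see that the global generic type $\tilde{p}$ of $(D,\varphi)$ is pre-minimal; $\tilde{p}$ restricts to $p$ since restrictions of non-algebraic types are non-algebraic and part (i) gives uniqueness; and $\tilde{p}$ is the \emph{only} pre-minimal global extension of $p$, because any pre-minimal global $q\supseteq p$ is non-algebraic and satisfies $\varphi=0$ and $d(x,D)=0$ (both $A$-formulas vanishing at $p$), so $q\in\cset{\varphi\brackconv}\subseteq D$ over $\mathfrak{C}$, whence $q=\tilde{p}$ by part (i) over $\mathfrak{C}$; thus $p$ is strongly minimal.

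\emph{Part (iii).} In the non-approximate case, suppose $D$ is minimal over $A$ with strongly minimal generic type $p$ and let $B$ be arbitrary. If $F,G\subseteq D$ were disjoint $B$-zerosets both non-algebraic, each would contain a non-algebraic $B$-type; restricting these to $A$ gives non-algebraic types in $D$ over $A$, each equal to $p$ by part (i), so the two $B$-types are both non-algebraic extensions of $p$ to $B$, which coincide because a strongly minimal type has a unique non-algebraic extension over any set --- contradicting $F\cap G=\varnothing$. Hence $D$ is minimal over every $B$, i.e.\ strongly minimal. In the approximate case one runs the same dichotomy relative to $\varphi$: part (i) now gives that $\cset{\varphi\brackconv}$ over any $B$ has a unique non-algebraic type $p_{B}$ (namely the unique non-algebraic extension of $p$, using strong minimality), and to control the ``junk'' in $D\cap\cset{\varphi\leq\varepsilon}\smallsetminus\cset{\varphi\brackconv}$ one feeds in the neighborhood estimate from part (ii) --- pulled back along the restriction map $S_{n}(B)\rightarrow S_{n}(A)$ --- together with the strong finite-character behavior of $\mathrm{acl}$ in a strongly minimal set (Lemma~\ref{lem:uniform-local-compactness}), to conclude that for $\varepsilon$ small a non-$(<\varepsilon)$-algebraic type of $D\cap\cset{\varphi\leq\varepsilon}$ over $B$ must be a small perturbation of $p_{B}$; two disjoint $B$-zerosets cannot both contain such a type.

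The step I expect to be the main obstacle is the uniform bookkeeping in part (ii) (and its reuse in part (iii)). Because the type-space metric is only lower semicontinuous, the sets ``at distance $\geq\eta$ from $p$'' are not zerosets, and the threshold ``$\varepsilon$ below which $F\cap\cset{\varphi\leq\varepsilon}$ is non-$(<\varepsilon)$-algebraic'' a priori depends on the type involved; turning these into a single $\varepsilon_{0}(\eta)$ requires replacing the offending metric sets by zerosets cut out by formulas and combining compactness of $S_{n}(A)$ with the fact that $D$ is a genuine definable set, so that $d(x,D)$ is a formula. Parts (i) and the non-approximate half of (iii) are short by comparison.
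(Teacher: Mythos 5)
Parts (i) and the non-approximate half of (iii) are essentially the paper's own arguments and are fine. The gap is in the key step of part (ii). You propose to find a zeroset $G\subseteq D$ with $p\notin G$ that contains \emph{every} type of $D$ at metric distance $\geq\eta$ from $p$, and then to run the approximate-minimality dichotomy on the disjoint pair $(F,G)$. No such $G$ exists in general: the types of $D$ that are metrically far from $p$ can accumulate \emph{topologically} at $p$, and then every closed set (in particular every zeroset) containing them contains $p$ as well. This is not the bookkeeping issue about lower semicontinuity of $d$ that you flag at the end; it is precisely the phenomenon that makes ``approximately minimal'' strictly weaker than ``minimal.'' Concretely, in Example~\ref{ex:approx-min} the type space $S_1(\varnothing)$ is homeomorphic to $\omega+1$, every isolated type lies at metric distance $1$ from the generic type $p$, and yet the isolated types converge to $p$ in the logic topology; with $D$ the whole space and $\varphi$ a suitable formula vanishing only at $p$, this is an approximately minimal pair for which your $G$ would be forced to contain $p$, so the dichotomy gives nothing. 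For the same reason your second separation step fails: $p$ need not lie in the topological interior of $D$ (only $\cset{\varphi<\varepsilon}$ is guaranteed to be an open neighborhood of $p$, and it need not be contained in $D$), so one cannot ``separate $p$ from $\cset{d(x,D)>0}\cup\cset{\varphi>\varepsilon}$'' by a formula.

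The repair cannot keep the shape of your argument; it has to avoid separating $p$ topologically from the metrically far part of $D$ altogether. The intended mechanism is: only $D\cap\cset{\varphi\geq\tfrac{1}{2}\varepsilon}$ gets excluded topologically (it is closed and omits $p$, hence lies at positive $d$-distance from $p$, so some $\delta$-thickening of it is a closed set still omitting $p$); the remaining junk $D\cap\cset{0<\varphi<\tfrac{1}{2}\varepsilon}$ is not separated from $p$ at all, but each band $D\cap\cset{\gamma\leq\varphi\leq\sigma}$ with $0<\gamma<\sigma<\tfrac{1}{2}\varepsilon$ is shown to be $(<\tfrac{1}{2}\varepsilon)$-algebraic (apply the definition to the disjoint zerosets $D\cap\cset{\varphi\geq\gamma}$ and $\cset{\varphi\brackconv}$), hence no point of it can be $\delta$-close to a non-$(<\varepsilon)$-algebraic type; and the types near $p$ that are not in $D$ are handled by working inside the open set $D^{<\delta}$ rather than inside $D$. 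One then shows directly that every non-$(<\varepsilon)$-algebraic type in $D^{<\delta}\smallsetminus(D\cap\cset{\varphi\geq\tfrac{1}{2}\varepsilon})^{\leq\delta}$ is within $\delta$ of $p$ itself. Since your approximate half of part (iii) is routed through the part (ii) estimate (and through Lemma~\ref{lem:uniform-local-compactness}, which concerns genuinely strongly minimal sets and risks circularity here), it inherits the gap; the intended argument there is just the contrapositive, producing two distinct non-algebraic extensions of the generic type from a failure of approximate strong minimality.
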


\begin{proof}
  \emph{(i).} Assume that there are two distinct non-algebraic types, $q_0$ and $q_1$  contained in $\cset{\varphi = 0}\subseteq S_n(A)$. 
  % If $\cset{P=0}\subseteq S_{n}(A)$ is a singleton, then we are done,
  % so let $q_{0},q_{1}\in\cset{P=0}\subseteq S_{n}(A)$ be two distinct types.
Let $F_{0},F_{1}\subseteq\cset{\varphi=0}$ be disjoint $A$-zerosets such that
$p_{i}\in F_{i}$ for both $i<2$. Since for each $\varepsilon>0$,
at least one of $F_0$ and $F_1$ must be $(<\varepsilon)$-algebraic, it must be that for some $i<2$, $F_i(\mathfrak{C})\subseteq \mathfrak{C}^{<\varepsilon}$ for arbitrarily small $\varepsilon>0$, so in particular $F_i(\mathfrak{C})\subseteq \mathfrak{C}$, implying that $F_i$ is algebraic.
This is a contradiction, therefore there cannot be two non-algebraic types in $\cset{\varphi=0}\subseteq S_{n}(A)$,
but $\cset{\varphi=0}$ is non-algebraic so there must be at least one.

\emph{(ii).} Let $X = A$ if $(D,\varphi)$ is approximately minimal over $A$, and let $X=\mathfrak{C}$ if $(D,\varphi)$ is approximately strongly minimal. We need to show that $p$ is $d$-atomic in the set of non-$(<\varepsilon)$-algebraic types in $S_n(X)$. For any $\varepsilon>0$, let $F_\varepsilon$ denote the (closed) set of non-$(<\varepsilon)$-algebraic types in $S_n(X)$. Fix $\varepsilon>0$ small enough that  $p \in F_\varepsilon$. Find $\delta > 0$ small enough that $\delta < \frac{1}{2} \varepsilon$ and $p\notin (D \cap \cset{\varphi\geq \frac{1}{2}\varepsilon})^{\leq \delta}$, and then consider $U = \left( D^{<\delta}\smallsetminus  (D\cap\cset{\varphi\geq \frac{1}{2}\varepsilon}) ^{\leq \delta} \right) \cap  F_\varepsilon$.  Note that $p\in U$ and that $U$ is relatively open in $F_\varepsilon$. We want to show that $U \subseteq B_{\leq \delta}(p)$. Let $q\in U$. By construction, this implies that $d(q,D) < \delta$ so there is some $r\in D$ such that $d(q,r) < \delta$. Also\editcom{Added `also.'} by construction, $r$ cannot be in $D \cap \cset{\varphi\geq \frac{1}{2}\varepsilon}$. Assume that $r\in D \cap \cset{0<\varphi<\frac{1}{2}\varepsilon}$. There must be some $0<\gamma < \sigma < \frac{1}{2}\varepsilon$ such that $r\in D \cap \cset{\gamma \leq \varphi \leq \sigma}$, but $D\cap\cset{\gamma \leq \varphi \leq \sigma}$ is $(<\frac{1}{2}\varepsilon)$-algebraic, so in particular $r \notin F_{\varepsilon/2}$. But this is a contradiction since for any model $\mathfrak{M} \supseteq X$ (i.e.\ $\mathfrak{M}\supseteq A$ or $\mathfrak{M}=\mathfrak{C}$) and extension $q^\prime$ to $S_n(\mathfrak{M})$, there is an extension $r^\prime$ to $S_n(\mathfrak{M})$ such that $d(q^\prime,\mathfrak{M})\leq d(q^\prime, r^\prime) + d(r^\prime,\mathfrak{M}) < \delta + \frac{1}{2}\varepsilon < \varepsilon$. But $q \in F_\varepsilon$, so $q$ has an extension $q^{\prime\prime}$ with $d(q^{\prime\prime},\mathfrak{M})\geq \varepsilon$.

Therefore $r$ must be in $\cset{\varphi=0}$. Assume that $r\neq p$. This implies that $r$ is algebraic so that in particular $r\in \mathfrak{M}$ for any $\mathfrak{M}\supseteq X$, which is again a contradiction since for any extension $q^\prime$ of $q$ there is an extension $r^\prime$ of $r$ such that $d(q^\prime,\mathfrak{M})\leq d(q^\prime,r^\prime) + d(r^\prime,\mathfrak{M}) < \delta + 0 < \frac{1}{2}\varepsilon < \varepsilon$. Hence\editcom{Changed to `Hence.'} it must be the case that $r=p$. Since this is true for any $q\in U$, this implies that $U \subseteq B_{<\delta}(p) \subseteq B_{\leq \delta}(p)$. Since we can do this for any sufficiently small $\delta > 0$, we have that $p$ is relatively $d$-atomic in $F_\varepsilon$, and the same is true for any sufficiently small $\varepsilon>0$.

\emph{(iii).} If $D$ is approximately minimal but not approximately strongly
minimal then its generic type has two distinct non-algebraic
extensions to some set of parameters, so it is not strongly minimal.
The only thing to prove is that if $D$ is minimal and its generic type is strongly minimal then $D$ is strongly minimal (and not
just approximately strongly minimal). This follows from the fact that if $p$
is the global strongly minimal type in $D\subseteq S_{n}(\mathfrak{C})$,
then any $q\in D\smallsetminus\{p\}$ must be an extension of some type
in $D\subseteq S_{n}(A)$. If it is an extension of $p\upharpoonright A$,
then it is algebraic and if it is an extension of some $r\neq p\upharpoonright A$
then it also must be algebraic. 
\end{proof}
It is easy to come up with an example of a definable set $D$ such
that $(D,\varphi)$ and $(D,\psi)$ are approximately strongly minimal but
have different generic types. The union of two disjoint %approximately
strongly minimal sets for instance.\editcom{Changed `works' to `for instance.'}
\begin{cor}\label{cor:min-set-type-correspondence}
\leavevmode
\begin{enumerate}[label=(\roman*)]
\item If $D\subseteq S_{n}(A)$ is minimal, then there is a unique non-algebraic
type $p\in D$.

\item If $D$ is strongly minimal, then there is a unique non-algebraic
type $p\in D\subseteq S_{n}(A)$ for any set of parameters $A$ over
which $D$ is definable.
\end{enumerate}
\end{cor}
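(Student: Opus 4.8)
The plan is to obtain both parts as essentially immediate consequences of Proposition~\ref{prop:some-min-stuff}(i), taking $\varphi$ to be the distance predicate of $D$.

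For part \emph{(i)}, suppose $D\subseteq S_n(A)$ is minimal and let $\varphi(x)=d(x,D)$ be the distance predicate of $D$. Since $D$ is closed we have $\cset{\varphi=0}=D$, which is non-algebraic because $D$, being minimal, is by definition non-algebraic. I would then check that $(D,\varphi)$ is approximately minimal over $A$ — this is the remark immediately preceding Proposition~\ref{prop:some-min-stuff}, but to spell it out: given disjoint $A$-zerosets $F,G\subseteq D$, for every $\varepsilon>0$ we have $F\cap\cset{\varphi\leq\varepsilon}=F$ and $G\cap\cset{\varphi\leq\varepsilon}=G$, since $F,G\subseteq D\subseteq\cset{\varphi\leq\varepsilon}$. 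Hence the ``approximately minimal'' requirement collapses to the assertion that for every $\varepsilon>0$ at least one of $F,G$ is $(<\varepsilon)$-algebraic, i.e.\ that at least one of $F,G$ is algebraic; and this holds because minimality of $D$ says at most one of $F,G$ is non-algebraic. Now Proposition~\ref{prop:some-min-stuff}(i) produces a unique non-algebraic $A$-type $p\in\cset{\varphi=0}=D$, as required.

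For part \emph{(ii)}, if $D$ is strongly minimal then by definition it is minimal over every set of parameters, and in particular over every $A$ over which it is definable; applying part \emph{(i)} over each such $A$ gives the statement.

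There is no real obstacle here, as all the work is already contained in Proposition~\ref{prop:some-min-stuff}(i). The only point requiring any care is the bookkeeping observation that $\cset{\varphi\leq\varepsilon}\cap F=F$ whenever $F\subseteq D$ and $\varphi$ is the distance predicate of $D$, which is precisely what makes the approximate notion reduce to the ordinary one for this choice of $\varphi$.
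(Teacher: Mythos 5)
Your proof is correct and follows exactly the route the paper intends: the remark preceding Proposition~\ref{prop:some-min-stuff} already observes that a (strongly) minimal set becomes an approximately (strongly) minimal pair upon taking $\varphi$ to be its distance predicate, and the corollary is then immediate from Proposition~\ref{prop:some-min-stuff}(i). Your bookkeeping check that $F\cap\cset{\varphi\leq\varepsilon}=F$ for $F\subseteq D$ is the right (and only) detail to verify.
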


The strongly minimal type whose existence is guaranteed by Corollary~\ref{cor:min-set-type-correspondence} is also referred to as the \emph{generic type} of the corresponding set.

Finally we come to the advantage we gain by passing to this weaker notion, as promised at the beginning of this section. %the point of the notion of approximately minimal sets.
Given a strongly minimal type in a dictionaric theory, we can always find an approximately strongly minimal pair pointing to that type and\editcom{Added `and.'} definable over the same set that the type is over, and likewise with pre-minimal types and minimal sets.

\begin{prop}\label{prop:strong_min_and_cat_in_cont:2}
\leavevmode
\begin{enumerate}[label=(\roman*)]
\item If $S_{n}(A)$ is dictionaric and $p\in S_{n}(A)$ is a pre-minimal type,
then there is an $A$-definable approximately minimal pair $(D,P)$
pointing to $p$.

%(ii) If $\mathfrak{M}$ is a model and $p \in S_{n}(A)$ is a minimal type, then there is an $\mathfrak{M}$-definable minimal set $D$ pointing to $p$.

\item If $D\subseteq S_{n}(A)$ is approximately minimal over a model $\mathfrak{M}$,
then there is a $(D(\mathfrak{M})\cup A)$-definable minimal set $E\subseteq D$
pointing to the same type.

\item If $D\subseteq S_{n}(A)$ is approximately strongly minimal (as part of the pair $(D,P)$),
then for any model $\mathfrak{M}\supseteq A$, there is a $(D(\mathfrak{M})\cup A)$-definable
strongly minimal set $E\subseteq D$ pointing to the same type. Furthermore, for any model $\mathfrak{N} \succ \mathfrak{M}$, $E(\mathfrak{N}) \smallsetminus \cset{P(\mathfrak{N})\brackconv} \subseteq D(\mathfrak{M})$.
\end{enumerate}
\end{prop}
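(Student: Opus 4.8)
The plan is to prove the three parts in order. Part~(i) is the only one that invokes dictionaricness --- through the Strong Extension Lemma~\ref{lem:strong-ext}(ii) --- while parts~(ii) and~(iii) share a single construction modeled on the proof of Proposition~\ref{prop:find-minimal}, but carried out with the given formula $P$ in place of a formula supplied by Lemma~\ref{lem:strong-ext}. For part~(i), let $F_k \subseteq S_n(A)$ be the closed set of non-$(<2^{-k})$-algebraic types; these increase with $k$, and for $k$ past some $N$ we have $p \in F_k$ and, by pre-minimality, $\{p\}$ relatively definable in $F_k$. I would apply Lemma~\ref{lem:strong-ext}(ii) to the family $\{F_{N+k+1}\}_{k<\omega}$ with $Q = \{p\}$, producing a definable set $D$ and a $[0,1]$-valued $A$-formula $P$ with $p \in \cset{P=0}$, with $p \in \tint D^{<\varepsilon}$ for all $\varepsilon > 0$, and --- the key point --- with a factor-of-two gap between the non-algebraicity level and the cut level: $D \cap F_{N+k+1} \cap \cset{P \leq 2^{-k}} = \{p\}$ for every $k$. (Replacing $P$ by $\max\{P, d(\,\cdot\,, D)\}$ if necessary, we may assume $\cset{P=0} \subseteq D$.) Taking $k = 0$ already gives $D \cap F_{N+1} = \{p\}$, so every type of $D$ other than $p$ is $(<2^{-N-1})$-algebraic. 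To check approximate minimality: $\cset{P=0} \subseteq D$ is non-algebraic (it contains $p$); and given disjoint $A$-zerosets $F, G \subseteq D$ and $\varepsilon > 0$ with, say, $p \notin G$, either $\varepsilon \geq 2^{-N-1}$, in which case every type of $G \subseteq D \smallsetminus \{p\}$ is $(<2^{-N-1})$-algebraic hence $(<\varepsilon)$-algebraic, or $\varepsilon < 2^{-N-1}$, in which case one picks an integer $k$ with $2^{-N-k-1} \leq \varepsilon \leq 2^{-k}$ (possible since $[\log_2(1/\varepsilon) - N - 1,\log_2(1/\varepsilon)]$ has length $N+1 \geq 1$), so that $F_\varepsilon \subseteq F_{N+k+1}$ and $\cset{P \leq \varepsilon} \subseteq \cset{P \leq 2^{-k}}$, whence any type of $G \cap \cset{P \leq \varepsilon}$ failing to be $(<\varepsilon)$-algebraic would lie in $D \cap F_{N+k+1} \cap \cset{P \leq 2^{-k}} = \{p\}$, contradicting $p \notin G$. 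Either way $G \cap \cset{P \leq \varepsilon}$ is $(<\varepsilon)$-algebraic, so $(D, P)$ is an $A$-definable approximately minimal pair with generic type $p$.

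For parts~(ii) and~(iii), set $S_k := D \cap \cset{2^{-k-1} \leq P \leq 2^{-k}}$. Applying approximate minimality over $\mathfrak{M}$ (which holds in part~(iii) since $(D,P)$ is approximately strongly minimal and $A \subseteq \mathfrak{M}$) to the disjoint $\mathfrak{M}$-zerosets $\cset{P=0} \cap D$ and $\cset{P \geq 2^{-k-1}} \cap D$ with $\varepsilon = 2^{-k}$, and using that $\cset{P=0}\cap D$ contains the generic type, which is pre-minimal (Proposition~\ref{prop:some-min-stuff}(ii)) and hence non-$(<2^{-k})$-algebraic for $k$ large, I get that $S_k$ is $(<2^{-k})$-algebraic for all large $k$. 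Since $S_k \subseteq D$ and $D$ is definable, every type of $S_k$ lies within $2^{-k+1}$ of $D(\mathfrak{M})$, so by compactness there is a finite $M_k \subseteq D(\mathfrak{M})$ with $d_H(S_k, M_k) < 2^{-k+1}$. Put $E := \cset{P=0} \cup \bigcup_{k \geq N} M_k$, for $N$ large. Then $E \subseteq D$; $E$ is closed (an accumulation point of $\bigcup_k M_k$ with positive $P$-value has a neighborhood meeting only finitely many $M_k$, hence lies in $E$); $E$ is definable relative to $D$, since for $\varepsilon > 0$ one can take $\delta$ small enough that $\cset{P < \delta} \cap D \subseteq \cset{P=0} \cup \bigcup_{k\text{ large}} S_k \subseteq E^{<\varepsilon}$, an open-in-$D$ neighborhood of $\cset{P=0}$, which together with the openness of each $M_k^{<\varepsilon}$ gives $E \subseteq \tint_D E^{<\varepsilon}$, so $E$ is definable by Lemma~\ref{lem:def-rel-def}; and $E$ is definable over $A \cup D(\mathfrak{M})$, since $\cset{P=0}$ is an $A$-zeroset and each $M_k$ is a finite set of elements of $D(\mathfrak{M})$. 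Since the $M_k$ consist of realized --- hence algebraic --- points, every type of $E$ other than the unique (by Proposition~\ref{prop:some-min-stuff}(i)) non-algebraic type of $\cset{P=0}$ over $A \cup D(\mathfrak{M})$ is algebraic, so $E$ is a minimal set pointing to the same type as $(D,P)$; this is part~(ii). For part~(iii), $(D,P)$ being approximately strongly minimal makes its generic type strongly minimal (Proposition~\ref{prop:some-min-stuff}(ii)), so $E$ is a minimal set whose generic type is strongly minimal, and Proposition~\ref{prop:some-min-stuff}(iii) upgrades $E$ to a strongly minimal set. The ``furthermore'' is then immediate: for $\mathfrak{N} \succ \mathfrak{M}$ the finite sets $M_k$ gain no new realizations, so $E(\mathfrak{N}) = \cset{P(\mathfrak{N})=0} \cup \bigcup_{k \geq N} M_k$, whence $E(\mathfrak{N}) \smallsetminus \cset{P(\mathfrak{N})=0} \subseteq \bigcup_k M_k \subseteq D(\mathfrak{M})$.

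I expect the one genuinely delicate point to be part~(i): arranging the factor-of-two gap in the application of Lemma~\ref{lem:strong-ext} so that approximate minimality can be verified uniformly across all $\varepsilon$, including the ``middle'' range $2^{-N-1} \le \varepsilon < \mathrm{diam}$ where the lemma's conclusion is vacuous and one instead leans on the equality $D \cap F_{N+1} = \{p\}$. In parts~(ii) and~(iii) the main care lies in the parameter bookkeeping --- keeping $E$ definable over $A \cup D(\mathfrak{M})$ rather than over all of $\mathfrak{M}$ --- and in verifying definability of $E$ through the relative-definability criterion of Lemma~\ref{lem:def-rel-def}.
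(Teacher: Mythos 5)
Your proof is correct and follows essentially the same route as the paper: part~(i) is the paper's argument of applying Lemma~\ref{lem:strong-ext}(ii) to a shifted family of non-$(<2^{-k})$-algebraic type sets and then matching each $\varepsilon$ to the right index, and parts~(ii)--(iii) reproduce the $M_k$-covering construction of Proposition~\ref{prop:find-minimal} (which is exactly what the paper cites for these parts), with the same use of Proposition~\ref{prop:some-min-stuff} to upgrade minimal to strongly minimal and the same observation about realized types for the ``furthermore'' clause. The only cosmetic difference is that in~(ii)/(iii) you derive the $(<2^{-k})$-algebraicity of the annuli $S_k$ directly from approximate minimality of $(D,P)$ rather than re-invoking Lemma~\ref{lem:strong-ext}, which is a perfectly fine (and slightly more self-contained) way to run the same construction.
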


\begin{proof}
\emph{(i).} This follows from applying Lemma~\ref{lem:strong-ext} to the closed set $\{p\}$ which is relatively definable in the set $F_i \subseteq S_{n}(A)$, where $F_i$ is the set of all non-$(<2^{-i-k})$-algebraic types and $k>0$ is chosen so that $p \in F_i$ for every $i<\omega$. The lemma gives us a definable set $D$ and a formula $\varphi$ such that for each $i<\omega$, $D\cap F_i \cap \cset{\varphi\leq 2^{-i}} = {p}$. To see that $(D,\varphi)$ is an approximately minimal pair, pick $\varepsilon>0$. Find $i<\omega$ such that $2^{-i-1}< \varepsilon \leq 2^{-i}$ and let $G,H \subseteq D$ be disjoint zerosets. At most one of $G$ or $H$ can contain $p$, so assume without loss of generality that $p \notin G$. We have that $G \cap F_i \cap \cset{\varphi \leq 2^{-i}} \subseteq {p}$, so $G \cap F_i \cap \cset{\varphi \leq \varepsilon} \subseteq {p}$ as well, but $p\notin G$, so $G \cap F_i \cap \cset{\varphi \leq \varepsilon}$. This implies that $G \cap \cset{\varphi \leq \varepsilon}$ is contained in the set of $(<2^{-i-k})$-algebraic types so in particular since $2^{-i-k}\leq 2^{-i-1}< \varepsilon$, we have that every type in $G\cap\cset{\varphi\leq \varepsilon}$ is $(<\varepsilon)$-algebraic. Therefore $(D,\varphi)$ is an approximately minimal pair.

\emph{(ii).} This follows from the comment after Proposition \ref{prop:find-minimal}.

\emph{(iii).} Most of this follows from part \emph{(ii)} and Proposition \ref{prop:some-min-stuff}. The only thing we need to verify is the last sentence, which follows from the fact that anything in $D(\mathfrak{N})$ not realizing the strongly minimal type over $\mathfrak{M}$ must be algebraic over $\mathfrak{M}$ in the first place and so realized in it. The strongly minimal type is contained in $\cset{\varphi\brackconv}$, so the result follows.
\end{proof}
%Being an approximately strongly minimal theory does not imply anything particularly nice like stability.

Note that parts \emph{(ii)} and \emph{(iii)} do not require any assumptions about the type space. The following example shows\editcom{Added `s.'} that the dictionaricness stipulation in part \emph{(i)} cannot be removed.
\begin{ex}
%(i) A non-dictionaric, superstable, approximately strongly minimal
%theory with no strongly minimal sets.

%Let $\mathcal{L}=\{P\}$ be a language with a unary $1$-Lipschitz
%$[0,1]$-valued predicate. Let $\mathfrak{M}$ be an $\mathcal{L}$-structure
%whose universe is $\bigsqcup_{n<\omega}[0,2^{-n}]$ and whose metric
%is the discrete metric. For any $x\in[0,2^{-n}]$, let $P(x)=x$.
%Finally let $T=\mathrm{Th}(\mathfrak{M})$.

A non-dictionaric superstable theory with a strongly minimal type over $\varnothing$ but no 
 approximately strongly minimal sets over $\varnothing$.
\end{ex}

\begin{proof} [Verification]
Let $\mathcal{L}=\{P_{0},P_{1}\}$ be a language with two unary $1$-Lipschitz
$[0,1]$-valued predicates and let $\mathfrak{M}$ be an $\mathcal{L}$-structure
whose universe is $\omega\times[0,1]$ and whose metric is given
by $d((n,x),(m,y))=1$ if $n\neq m$ and $d((n,x),(n,y))=2^{-n}+d(x,y)$
if $x\neq y$. Let $P_{0}((n,x))=2^{-n}$ and $P_{1}((n,x))=x$. Finally
let $T=\mathrm{Th}(\mathfrak{M})$.

The type space $S_{1}(\varnothing)$ is homeomorphic to $(\omega+1)\times[0,1]$.\editcom{Changed `topologically' to `homeomorphic to.'}

%Each type corresponding to a point of the form $(\mathrm{\ensuremath{\omega}},x)$
%is strongly minimal. If $\mathfrak{M}$ is a model then $\mathfrak{M}$ is an open subset of $S_1(\mathfrak{M})$ and $S_1(\mathfrak{M})\smallsetminus \mathfrak{M}$ is homeomorphic to $S_1(\varnothing)$ and consists of their non-forking extensions (in particular the theory has $U$-rank $1$, and so is superstable).

Note that if a definable set has non-empty intersection with one of the sets of types of the form $\{n\}\times [0,1]$ for $n<\omega$, then it must contain all of it, because this set is metrically isolated from the rest of the type space and is topologically connected but uniformly metrically discrete. So to show that none of the types in $\{\omega \}\times [0,1]$ are pointed to by an approximately strongly minimal set, all we need to do is show that if a definable set contains one such type then it must contain some type in $\{n\}\times [0,1]$ for some $n<\omega$. This follows immediately because if $p \in \{\omega \}\times [0,1]$, then it is the limit of types in $\{n\}\times [0,1]$ for $n < \omega$ that are uniformly metrically separated. So if $F$ is a closed set whose intersection with $\{n\}\times [0,1]$ for $n<\omega$ is empty and whose intersection with $\{\omega\}\times [0,1]$ is precisely ${p}$, then $p\notin \tint  F^{<\varepsilon}$ for any $0 < \varepsilon < 1$ and so $F$ is not definable.\editcom{Added `so.'}

Thus if $D$ is a definable set containing some $p\in \{\omega \} \times [0,1]$, then $D$ must contain all of $\{ n \} \times [0,1]$ for sufficiently large $n<\omega$. And so since $D$ is closed it must contain all of $\{ \omega \} \times [0,1]$.  If we let $\varphi$ be a formula such that $\varphi(p)=0$, then for any $\varepsilon > 0$, $D\cap \cset{\varphi\leq \varepsilon}$ contains some $q\in \{\omega \} \times [0,1]$ with $q\neq p$.
\end{proof}
 
%(ii) Let $\{\mathfrak{V}_{i}\}_{i<\omega}$ be a sequence of models
%of $ZFC$ and let $\mathfrak{M}$ be the structure whose universe
%is $\bigsqcup_{i<\omega}\mathfrak{V}_{i}$, with $d(x,y)=1$ for any
%$x$ and $y$ in distinct $\mathfrak{V}_{i}$ and $d(x,y)=2^{-i}$
%for any distinct $x$ and $y$ in some $\mathfrak{V}_{i}$. Let the
%language be $\mathcal{L}=\{\in_{0},\in_{1},\dots\}$, where each $\in_{i}$
%is $\{0,1\}$-valued and equal to the membership relationship on $\mathfrak{V}_{i}$
%and is identically $1$ elsewhere. $\mathrm{Th}(\mathfrak{M})$ is approximately
%strongly minimal and dictionaric, but little else.

In all of the examples of approximately minimal sets we know of there %PRONOUN
is an obvious\editcom{Removed quotes.} pseudo-metric $\rho$ on $D$ such that $D/\rho$ is
a minimal set whose generic type corresponds exactly to the generic type in the original set. For instance, in Example~\ref{ex:approx-min} there is a definable pseudo-metric $\rho$ such that $\rho(x,y)=0 $ if and only if either $x$ and $y$ are both in the same Hilbert space sphere or they are not in a Hilbert space sphere and $x=y$. It is not clear if this is always possible. In the case of discrete strongly minimal types, however, it is so.\editcom{Fixed comma splice, moved `however,' and added `so.'} %essentially 
\begin{defn}
  A type $p$ is \emph{discrete} if there is an $\varepsilon>0$ such that if $a,b\models p$ and $d(a,b)<\varepsilon$, then $a=b$.
\end{defn}
\begin{prop}%Dictionaricness assumption can probably be removed.
If $(D,\varphi)$ is an approximately minimal pair over some parameter set $A$ pointing to a discrete pre-minimal type $p$ in a dictionaric type space, then there is an $A$-definable set $E\subseteq D$ containing $p$ and an $A$-definable equivalence relation $\rho$ on $E$ such that $E/\rho$ is minimal and such that the quotient map is a bijection when restricted to the pre-minimal type. (In particular this means that if $p$ is strongly minimal, then the corresponding type in $E/\rho$ is strongly minimal as well.)
\end{prop}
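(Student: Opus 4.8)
\emph{Proof proposal.} The plan is to first replace $D$ by a better-behaved $A$-definable subset carrying the approximate-minimality data, and then to build on it a definable equivalence relation $\rho$ that crushes the ``algebraic debris'' accumulating on $p$ into finitely many points per layer while, thanks to discreteness, keeping distinct realizations of $p$ in distinct $\rho$-classes. The guiding example is Example~\ref{ex:approx-min}: there one takes $E$ to be the whole structure and declares two points $\rho$-equivalent exactly when they lie on a common Hilbert sphere; this is detected by an honest formula because the sphere a point lies on, and that sphere's diameter, are first-order invariants, and the construction below is meant to reproduce this behaviour abstractly.

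First I would reduce to the case that $T$ and $A$ are countable (it suffices to treat each countable reduct over which $D$, $\varphi$ and a discreteness witness $\varepsilon_0\in(0,1)$ for $p$ are all defined) and that every sort has diameter at most $1$. Since $D$ is itself dictionaric by Proposition~\ref{prop:heredit}, I would run the construction behind Proposition~\ref{prop:strong_min_and_cat_in_cont:2}(i) \emph{inside} $D$: apply Lemma~\ref{lem:strong-ext}(ii) to the relatively definable closed set $\{p\}$ in the family $\{F_i\}_{i<\omega}$, where $F_i\subseteq D$ is the set of non-$({<}2^{-i-k})$-algebraic types and $k\ge 1$ is chosen so that $p\in F_i$ for every $i$. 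This yields an $A$-definable $E\subseteq D$ with $p\in E$ and an $A$-formula $\psi:E\to[0,1]$ such that $\cset{\psi\brackconv}\subseteq E$ has unique non-algebraic type $p$ and $E\cap F_i\cap\cset{\psi\leq 2^{-i}}=\{p\}$ for every $i$. The use of this last identity is quantitative: any $A$-type $q\in E$ with $q\neq p$ and $\psi(q)\leq 2^{-i}$ must fail to lie in $F_i$, hence is $({<}2^{-i-k})$-algebraic; so each ``layer'' $B_i:=\cset{2^{-i-1}\leq\psi\leq 2^{-i}}\cap E$ is $({<}2^{-i-k})$-algebraic, and by the remark after Definition~\ref{def:strongly-minim-types} together with the sentence trick of Lemma~\ref{lem:basic-alg}, there is a single $m_i<\omega$ such that in \emph{every} model $\frk M\supseteq A$ the set $B_i(\frk M)$ is covered by at most $m_i$ balls of radius $2^{-i-k}$ with centres in $\frk M$. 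I would also record that $\psi$, being a formula on the compact topometric space $S_n(A)$, is uniformly continuous for the metric, and that realizations of $p$ are $\varepsilon_0$-separated while $\psi$ vanishes on all of them.

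Next comes the heart of the argument: producing the equivalence relation $\rho$ on $E$. The intention is that $\rho$ should identify two elements of $E$ exactly when they lie in a common layer $B_i$ and are close \emph{on the scale of that layer} (within roughly $2^{-i-k}$) --- so that identification never occurs across different layers, occurs inside $\cset{\psi\brackconv}$ only on the diagonal (since there $\psi=0$ forces the admissible radius to be $0$), and therefore never merges two realizations of $p$. Concretely I would look for an $A$-formula built from $d$, $\psi$, and (via the uniform finite covers) a bounded amount of ``which piece of $B_i$ am I in'' data, describing this ``same layer and close on that layer'' relation; then verify that its zeroset is genuinely an equivalence relation on $E$ --- the delicate point is transitivity, which forces the cut-offs between consecutive layers and the collapsing radii inside them to be chosen compatibly, possibly after enlarging $k$ --- and that this zeroset is clopen in $E^2$ (open because ``different layer or too far apart within the layer'' is an open condition), so that the associated $\{0,1\}$-valued pseudo-metric $\rho$ is a formula. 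By construction the $\rho$-class of any realization of $p$ is a singleton, so the quotient map $E\to E/\rho$ is injective on $p(\frk C)$. I expect \emph{this} paragraph to be the main obstacle: extracting from the merely topological assertion ``$B_i$ is $({<}2^{-i-k})$-algebraic, uniformly'' an actual $A$-formula whose zeroset is a clopen equivalence relation collapsing each layer but no more.

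Finally I would check that $E/\rho$ is minimal with generic type the image $\bar p$ of $p$. It is an $A$-definable imaginary set by Proposition~\ref{prop:dict-quot} and the usual distance-predicate computation $\inf_{y\in E}\rho(\,\cdot\,,y)$. Each layer $B_i$ maps, in every model, onto at most $m_i$ points, so each type of $E/\rho$ other than $\bar p$ has a finite realization set in every model and is in particular algebraic; moreover these types accumulate only at $\bar p$, so $S_{E/\rho}(A)$ looks like $\omega+1$ with limit point $\bar p$. Hence any zeroset of $E/\rho$ not containing $\bar p$ is finite and therefore algebraic, so no two disjoint zerosets can both be non-algebraic: $E/\rho$ is minimal with generic type $\bar p$. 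For the parenthetical statement, if $p$ is strongly minimal then, since $\rho$ is $A$-definable and the quotient map restricts to a bijection on the generic realizations over any parameter set, $\bar p$ is strongly minimal as well, and then $E/\rho$ is strongly minimal by Proposition~\ref{prop:some-min-stuff}(iii).
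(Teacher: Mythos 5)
The central step of your plan---producing an $A$-formula whose zeroset is a clopen equivalence relation collapsing each layer $B_i$ to at most $m_i$ points---is a genuine gap, and you have in effect flagged it yourself. The relation ``$x$ and $y$ lie in the same layer and are within $2^{-i-k}$ of each other'' is not transitive: a chain in $B_i$ with consecutive distances just under $2^{-i-k}$ can have total spread up to roughly $m_i\cdot 2^{-i-k+1}$, its transitive closure need be neither closed nor definable, and there is no definable, automorphism-invariant way to assign a point of $B_i$ to one of the $m_i$ covering balls, since those balls overlap and their centres are model-dependent. No choice of $k$ repairs this, so the layered machinery of Lemma~\ref{lem:strong-ext} does not by itself produce $\rho$; you have reduced the proposition to a harder problem than the one you started with. (You are also aiming for more than is asked: minimality of $E/\rho$ does not require each non-generic type to have finitely many realizations, only that it be algebraic.)

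The discreteness hypothesis is meant to be used much more directly, and the argument is essentially three lines. Let $\varepsilon>0$ witness discreteness, so $d(a,b)<\varepsilon$ implies $a=b$ for $a,b\models p$. A single compactness argument yields $\delta>0$ such that the distance function on $D(\frk{C})\cap\cset{\varphi(\frk{C})\leq\delta}$ omits the interval $\left[\tfrac14\varepsilon,\tfrac34\varepsilon\right)$: otherwise one finds two points of the zeroset $D\cap\cset{\varphi\brackconv}$ at distance in that range, contradicting the $\varepsilon$-separation of realizations of $p$. Given this metric gap, a truncated rescaling of $d(x,y)\dotdiv\tfrac14\varepsilon$ (the paper uses $\rho(x,y)=\imin{\frac{4}{3\varepsilon}(d(x,y)\dotdiv\frac14\varepsilon)}{1}$) is already a $\{0,1\}$-valued definable equivalence relation on $D\cap\cset{\varphi\leq\delta}$---transitivity of its zeroset is exactly what the gap buys---and it restricts to equality on $p(\frk{C})$. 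One then takes $E$ to be any definable set with $\cset{\varphi\brackconv}\subseteq E\subseteq\cset{\varphi<\delta}$, which exists because $D$ is dictionaric by Proposition~\ref{prop:heredit}; for $\delta$ small enough every type of $E$ other than $p$ is $({<}\tfrac14\varepsilon)$-algebraic or algebraic, hence becomes algebraic in $E/\rho$, giving minimality. In short: the equivalence relation should come from a gap in the distance spectrum forced by discreteness plus compactness, not from collapsing finite covers layer by layer.
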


\begin{proof}

Let $\varepsilon>0$ be such that if $a,b\models p$ and $d(a,b) < \varepsilon$ then $d(a,b)=0$.

By compactness there must be a $\delta>0$ such that for any $a,b \in D(\mathfrak{C})\cap \cset{\varphi(\mathfrak{C})\leq \delta}$, if $d(a,b) < \frac{3}{4}\varepsilon$, then $d(a,b) < \frac{1}{4} \varepsilon$. This implies that in the zeroset $D\cap \cset{\varphi \leq \delta}$, the formula $\rho(x,y)=\imin{\frac{4}{3\varepsilon}(d(x,y)\dotdiv \frac{1}{4}\varepsilon)}{1}$  is a $\{0,1\}$-valued equivalence relation which is equality on the set of realizations of $p$. Let $E$ be a definable subset of $D$ such that $\cset{\varphi\brackconv}\subseteq E \subseteq \cset{\varphi<\delta}$. Clearly $\rho$ is the required equivalence relation on $E$.
\end{proof}

The leaves the question in general.\editcom{Added sentence.}
%{\color{-red-} I believe it is possible to prove this without needing to assume dictionaricness and without needing to pass to a definable subset, but the proof is more complicated.}

\begin{quest}
If $D\subseteq S_{n}(A)$ is approximately minimal with generic type $p$ then, does there always exist an $A$-definable pseudo-metric
$\rho$ on $D$ such that $D/\rho$ is minimal with generic type $q$ and the quotient map $D\rightarrow D/\rho$ restricted to the
set of realizations of $p$ is a bijection with the set of realizations
of $q$?
\end{quest}

\subsubsection{Strongly Minimal Theories That Do Not Interpret Infinite Discrete Theories}

Obviously if $\mathfrak{M}$ is a discrete strongly minimal structure
and $X$ is some compact metric space with a transitive automorphism
group, then $\mathfrak{M}\times X$ is strongly minimal, but this
example is in some sense trivial, as $\mathfrak{M}$ and
$\mathfrak{M}\times X$ are bi-interpretable.\editcom{Took words out of parenthetical comment.} In \cite{Noquez2017}, Noquez raised the question of whether or not there are any non-trivially\editcom{Removed quotes.} continuous examples of strongly minimal theories. 
 Here is a minimally
non-trivial continuous example, giving a positive answer:

\begin{ex}
\label{exa:min-non-triv} Consider $\mathbb{R}$ with the metric $d(x,y)=\frac{|x-y|}{1+|x-y|}$.
$\mathrm{Th}(\mathbb{R},+)$ is strongly minimal but does not interpret
any infinite discrete structure.
\end{ex}

   We will verify these statements in Theorems~\ref{thm:strong_min_and_cat_in_cont:2} and \ref{thm:group_char}.\editcom{Removed proof around this. Felt silly.}
   One can similarly show\editcom{Since the proof moved, the beginning of this sentence didn't make sense.} that the same is true of $(\mathbb{R}^n,+)$ for any $n<\omega$. The underlying additive group of a $p$-adic
field, $\mathbb{Q}_{p}$, with the appropriate metric, is likewise strongly
minimal, although it does have a discrete strongly minimal imaginary.

% This method of showing that a theory does not interpret a certain
% kind of theory by looking at automorphisms that move things by arbitrarily
% small metric distances will show up again in the proof that $\mathsf{IHS}$
% does not interpret a strongly minimal set.

It turns out that, in the context of strongly minimal theories, this condition of being unable to interpret an infinite discrete structure has a very tight topological characterization in terms of models and is also very constraining in the special case of strongly minimal groups.

\begin{defn}
  A continuous theory $T$ is \emph{\esscont} if it does not interpret an infinite discrete structure.%\footnote{This term is somewhat folkloric. The only reference containing it we could find is \cite{Ealy2012}.} %published?
\end{defn}

%\subsubsection{Characterization of \ESSCONT\  Strongly Minimal Theories}
%\label{sec:ess-cont-stronk-min}

%MAKE INTO ONE SECTION

The goal of the rest of this subsection is to prove the following characterization of \esscont\ strongly minimal theories.

\begin{thm}\label{thm:strong_min_and_cat_in_cont:2}
  Let $T$ be a continuous strongly minimal theory. $T$ is \esscont\ if and only if some model of $T$ has a non-compact connected component.
\end{thm}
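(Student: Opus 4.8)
The plan is to prove both directions by analyzing the interaction between the logic topology and the metric on the type space $S_1(T)$, using the pregeometry structure and Uniform Local Compactness (Lemma~\ref{lem:uniform-local-compactness}). For the easier direction, suppose $T$ is not \esscont, so $T$ interprets an infinite discrete structure $N$; by passing to a suitable imaginary and using that $T$ is strongly minimal, I would argue that $N$ is (essentially) a quotient of a strongly minimal set and hence itself strongly minimal as a discrete structure — in particular $N$ has arbitrarily large finite models and an infinite model, all of which are uniformly discrete and uniformly separated. Then in any sufficiently saturated/large model $\frk{M}$ of $T$, the interpreted copy of an infinite discrete strongly minimal structure sits inside a connected component only if that component is a single point (a connected uniformly discrete space is a point). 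Working this through, one sees that every model of $T$ must then have all connected components compact: roughly, the generic type $p$ of $D$ over a model, together with Lemma~\ref{lem:uniform-local-compactness}, forces a definable $\e$-neighborhood structure, and if some component were non-compact we could read off an infinite $(>\e)$-separated definable (or relatively definable, via dictionaricness since $\omega$-stable $\Rightarrow$ dictionaric — but strongly minimal theories are totally transcendental, hence dictionaric) family, contradicting the rigidity forced by the interpreted discrete structure. I would spell this out as: \emph{if every connected component of every model is compact, then the whole home sort in a saturated model is a ``locally compact pregeometry,'' and one interprets the discrete strongly minimal set of components-at-generic-distance.}

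For the main direction, assume some model $\frk{M}_0$ of $T$ has a non-compact connected component $C_0$. First I would upgrade this: in a monster model $\frk{C}$, since $D = $ home sort is strongly minimal and all non-algebraic points have the same type, the connected component of the generic type $p$ over $\varnothing$ must be non-compact (if it were compact, Lemma~\ref{lem:basic-alg} would make $p$ algebraic after shrinking, or a uniform-local-compactness argument would force every component compact, contradicting $C_0$). Then I want to show $T$ is \esscont, i.e., rule out any interpreted infinite discrete structure. Suppose for contradiction $T$ interprets an infinite discrete structure; then, since $T$ is strongly minimal, it interprets an infinite discrete strongly minimal set $E$ (in some $T^{\mathrm{eq}}$ imaginary), and $E$ is definable over some finite parameter tuple. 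The key obstruction — and the heart of the argument — is to derive a contradiction from the coexistence of: (a) a non-compact connected component in the home sort, and (b) a uniformly discrete, infinite, definable imaginary with exchange. My intended mechanism is a dimension/definability count: the generic of $E$ has $\mathrm{acl}$-dimension $1$ over its base, so by the pregeometry correspondence it is interalgebraic (over parameters) with the generic of the home sort $D$; thus there is a definable finite-to-finite correspondence between (a neighborhood of) the home-sort generic and the discrete set $E$. Since $E$ is uniformly discrete and the correspondence is definable and finite-to-finite, Uniform Local Compactness (Lemma~\ref{lem:uniform-local-compactness}) transfers: there is $\e>0$ such that the closed $\e$-ball around the generic of $D$ is finite (its points are all algebraic over the center via the correspondence to $E$), hence — spreading over the component by connectedness and compactness of the logic topology — the connected component of $p$ is covered by finitely many finite pieces, so it is finite, hence a single point (being connected). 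This contradicts non-compactness of $C_0$.

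The main obstacle I anticipate is making rigorous the step ``interprets an infinite discrete structure $\Rightarrow$ interprets an infinite discrete \emph{strongly minimal} structure which is interalgebraic with the home-sort generic.'' This requires: (i) knowing strongly minimal theories eliminate $\exists^\infty$ and that in a strongly minimal theory every infinite interpreted set has an infinite definable subset which is itself minimal (this is the continuous analog of the classical fact, and should follow from the $\omega$-stability / totally transcendental hypothesis together with Proposition~\ref{prop:some-min-stuff} and the dictionaricness of totally transcendental theories, Proposition~\ref{prop:strong_min_and_cat_in_cont:3}); and (ii) the fact that a discrete minimal set $E$ in a strongly minimal (home-sort) theory has its generic interalgebraic over a suitable base with the home-sort generic — this is where I'd invoke exchange for the home-sort pregeometry, observing that over a base containing $\acl$ of the parameters defining $E$, a generic $e \in E$ cannot be independent from a home-sort generic $a$ (else dimension would be $2$), and conversely, so $e \in \acl(a,\text{base})$ and $a\in\acl(e,\text{base})$. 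Once (i) and (ii) are in hand, the transfer of Lemma~\ref{lem:uniform-local-compactness} through a definable finite-to-finite correspondence to conclude local finiteness of the home sort — and hence finiteness of connected components — is routine.

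For the converse packaging: having shown \esscont\ $\Leftrightarrow$ (no non-compact component in any model is ruled out), I would also record explicitly that the non-compact-component hypothesis is a statement checkable in a single model because connected components and their (non)compactness are preserved under elementary equivalence for strongly minimal $T$ — all models realize the same type space $S_1(\varnothing)$ up to the relevant homeomorphism-plus-metric data, by the pregeometry/homogeneity corollary following the pregeometry proposition. This closes both directions.
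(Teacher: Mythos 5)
There are two genuine gaps here. First, the direction that carries almost all of the weight in the paper --- \esscont\ implies some model has a non-compact connected component, i.e.\ contrapositively, that compactness of all connected components lets you \emph{build} an infinite discrete imaginary --- is reduced in your proposal to the single sentence that ``one interprets the discrete strongly minimal set of components-at-generic-distance.'' But the partition into connected components is not a priori a definable quotient in continuous logic: there is no reason the relation ``$x$ and $y$ lie in the same component'' is given by a formula. The paper's proof of this direction replaces it with the chain relations $E_\e$ (finitely many $\e$-steps) and then has to prove two separate compactness claims --- that the $E_\e$-class of a generic is compact, and that there is a \emph{uniform} bound $k$ on the length of witnessing chains for all pairs, not just generic ones --- before it can write down a $\{0,1\}$-valued formula defining $E_\xi$. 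None of that is present or even gestured at in your proposal, and without it this direction is simply missing.

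Second, the covering argument in your ``main direction'' is unsound. The correspondence between the home-sort generic and the generic of a discrete strongly minimal imaginary is algebraic-to-algebraic, which in continuous logic means \emph{compact}-to-compact, not finite-to-finite: in Example~\ref{ex:not-comp-nor-co-comp} ($\omega\times S^1$) the fiber over a point of the discrete imaginary of circles is an entire circle. So ``the closed $\e$-ball around the generic of $D$ is finite'' is false; and even after replacing ``finite'' by ``compact,'' the step ``spreading over the component by connectedness \dots the component is covered by finitely many pieces'' does not follow --- a connected, locally compact, non-compact space such as $\mathbb{R}$ is covered by compact balls with no finite subcover, so local compactness plus connectedness never yields compactness of the component. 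The mechanism that actually closes this direction (and is what the paper uses) is different: truncate the correspondence formula to a $\{0,1\}$-valued predicate $\psi(a,\cdot)$ on generics; uniform continuity then makes $\psi(a,\cdot)$ locally constant, hence constant on the connected component of $b$, and constancy at the value $0$ on a non-compact component contradicts algebraicity of $\cset{\varphi(a,y)=0}$. Separately, your closing remark that (non)compactness of components is checkable in any single model by elementary equivalence is false --- the paper points out that the prime model of such a theory can have all components compact while larger models do not --- and the correct reduction to a component containing a generic goes through Lemma~\ref{lem:strong_min_and_cat_in_cont:1}.
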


\begin{proof}[Proof of $\Rightarrow$]
  We will prove the contrapositive. Assume that every model of $T$ has compact connected components. (Although note for later that we only actually use the fact that connected components of generic elements are compact.) For any $\e >0 $, we will let $E_\e$ be the equivalence relation on models of $T$ induced by $a E_\e b$ if and only if there exists a sequence $c_0,\dots,c_n$ with $c_0=a$, $c_n=b$ and $d(c_i,c_{i+1})<\e $ for every $i<n$. We want to show the following.
  \begin{itemize}
  \item[$(\ast)$] There is a $\xi > 0$ such that the equivalence relation $E_\xi$ is definable (by a $\{0,1\}$-valued formula) and has infinitely many equivalence classes in any model of $T$.
  \end{itemize}

  From which it is immediate that $T$ is not \esscont\, as the imaginary quotient by this equivalence relation is infinite and discrete.

  % \emph{Claim 1:} For every $\e > 0$, there is a formula $\eta_\e(x,y)$ such that for any $a$ and $b$ in a model of $T$, $aE_\e b$ if and only if $\eta_\e(a,b)>0$.

  % \emph{Proof of claim 1.} Consider the formula 
  % \begin{align*}
  %   \eta_\e(x,y)\coloneqq (\e\dotdiv d(x,y)) + \sum_{1<i<\omega}2^{-i}\sup_{z_1,\dots,z_{i-1}}\e \dotdiv \max\{& d(x,z_1),\\[-1.1em]%[-1.15em]
  %   &\max_{k<i}  d(z_k,z_{k+1}),  \\
  %    & d(z_{i-1},y) \}
  % \end{align*}

  % A direct calculation shows that if $aE_\e b$, then $\eta_\e(a,b) > 0$. Conversely, if $\eta_\e(a,b)> 0$, then either $\e \dotdiv d(a,b) > 0$ or $\sup_{z_1,\dots,z_{i-1}} \e \dotdiv \max\{d(x,z_1),\allowbreak \max_{k<i}d(z_k,z_{k+1}),\allowbreak d(z_{i-1},y)\} > 0$, and in either case we have a witness that $aE_\e b$.\hfill $\qed_{\text{claim 1}}$

  Let $\frk{M}$ be some model of $T$ and let $p$ be the generic type over $\frk{M}$.

  \emph{Claim:} There is some $\e > 0$ such that for any $a\in \frk{C}$ with $a \models p$, the $E_\e$-equivalence class of $a$ is compact.

  \emph{Proof of claim.} Fix $a \in \frk{C}$ with $a\models p$. Let $F$ be the connected component of $a$. By assumption $F$ is compact, and by Lemma~\ref{lem:uniform-local-compactness}, there is some $\delta >0$ such that every closed $\delta$-ball in any model of this theory is compact. Let $b_0,\dots,b_{n-1}$ be elements of $F$ chosen so that $U = \bigcup_{i<n}B_{<\delta}(b_i)$ covers $F$. Since $\cl U\subseteq \bigcup_{i<n}B_{\leq\delta }(b_i)$ is compact and $F$ is a connected component of it (and in particular is closed), we have that there is some clopen-in-$\cl U$ set $Q \subseteq U$ such that $F \subseteq Q$. Since $Q$ is relatively closed in $\cl U$, it is closed, and since $Q$ is relatively open in $U$, it is open, so it is actually clopen.  Moreover, there is some $\e > 0$ such that $Q^{\leq \e}$ is disjoint from the complement of $Q$, since $\cl U$ is compact. Clearly we have that the $E_\e$-equivalence class of $a$ is contained in $Q$. It's not hard to see that an $E_\e $-equivalence class must be closed (for any $\e>0$), so we have that it is compact.\hfill $\qed_{\text{claim}}$ 

 % Note that this implies that for any $\delta > 0$ with $\delta < \e$, the $E_\delta$-equivalence class of $a$ is compact as well.

  Fix $\e > 0$ as in the claim. Now note that for any $b \models p$, the $E_\e$-equivalence class of $b$ must be isometric to the $E_\e$-equivalence class of $a$, since there is an automorphism of the monster model taking $b$ to $a$. By the compactness of the equivalence class, there exists a natural number $n$ such that if $aE_\e c$, then there exists a sequence $e_0,\dots,e_{n}$ such that $a=e_0$, $c=e_{n}$, and $d(e_i,e_{i+1})<\e$ for each $i<n$ (even if we only need a shorter chain, we can just use repetitions). This fact is uniformly true for any $b \models p$. By compactness, there is a $\xi >0$ with $\xi < \e$ such that this statement is still true and such that for realizations of $p$, the $E_\xi$-equivalence classes are the same as the $E_\e$-equivalence classes. By construction, we have that if $aE_\xi b$ and $\neg a E_\xi c$, then $d(b,c) \geq \e$. %By local compactness, we can find a $\gamma > 0$ small enough that if $aE_\xi b$ and $\neg a  E_\xi c$, then $d(b,c)> \gamma$.

  Now consider the formulas
  \begin{align*}
    \lambda(x,y) &:=  \inf_{z_1,\dots,z_{n }} \max_{i\leq n}d(z_i,z_{i+1})\dotdiv \e, \\ %, d(z_{n_\delta},y)\dotdiv \frac{\gamma}{2}\right\}, \\
\eta(x,y) &\coloneqq \sup_{z_1,\dots,z_{n}} \min_{i\leq n}\xi\dotdiv d(z_i,z_{i+1}),\text{ and} \\
    \chi(x) &\coloneqq \inf_y \max\{\lambda(x,y),\eta(x,y)\},
    \end{align*}
    where $z_0$ is understood to mean $x$ and $z_{n+1}$ is understood to mean $y$ (note the $+1$, but also note that the $+1$ is not actually essential, rather it just makes the formulas $\lambda$ and $\eta$ easier to write).

    To unpack what these mean, given $a$ and $b$, $\lambda(a,b) > 0$ implies that for any sequence $c_0,...,c_{n+1}$ with $c_0 = a$ and $c_{n+1}=b$, for some $i\leq n$, we have $d(c_i,c_{i+1}) > \e$ (although note that $\lambda(a,b) > 0$ is actually slightly stronger than this statement in insufficiently saturated models).  On the other hand, $\eta(a,b) > 0$ means that there is a chain $c_0,\dots,c_{n+1}$ with $c_0 =a$, $c_{n+1}=b$, and $d(c_i,c_{i+1})< \xi$ for every $i\leq n$. So, putting it together, $\chi(a) > 0$ implies that for every $b$, one of these two conditions holds. By the above statements regarding any $a$ satisfying $p$, we have that $\chi(a) > 0$.

    So we have that $\cset{\chi(x) > 0}$ is an open neighborhood of $p$ in $S_1(\frk{M})$, so in particular, $\cset{\chi(x) = 0}$ is compact (note, also, that $\chi$ always takes on non-negative values). 

    % Now, note that any $b$ satisfying $\chi(b) > 0$ must have that its entire $E_\xi$-equivalence class is contained in the set
    % \[
    %   (\dots((B_{<\xi}(b)\underbrace{)^{<\xi})^{<\xi}\dots)^{<\xi}}_{n_\delta},
%     \]
%     i.e. every element's membership is witnessed by a chain of length at most $n_\delta+1$ (note, however, that for other elements of $b$'s $E_\xi$ equivalence class, we may need a chain of length up to $2n_\delta+1$).
% WRONG
%     Since $E_\xi$ is an equivalence relation, this implies sthat $\cset{\chi(\frk{C})=0}$ is a union of $E_\xi$-equivalence classes.

    \emph{Claim:} There is a natural number $k$, such that for any $a$ and $b$, if $aE_\xi b$, then this is witnessed by a chain of length no more than $k$.

    \emph{Proof of claim.} Let $X$ be a finite subset of $\cset{\chi(\frk{C})=0}$ such that $\cset{\chi(\frk{C})=0}\subseteq X^{<\xi}$. For any $a \in \cset{\chi(\frk{C})=0}$, let $F_a\subseteq \cset{\chi(\frk{C})=0}$ be the $E_\xi$-equivalence class of $a$ computed inside $\cset{\chi(\frk{C})=0}$, i.e.\ the set of all elements $b$ of $\cset{\chi(\frk{C}=0)}$ satisfying $aE_\xi b$ with a witnessing chain contained entirely in $\cset{\chi(\frk{C})=0}$. Note that by construction, $\cset{\chi(\frk{C})=0}=\bigcup_{x\in X}F_x$. Also note that each $F_a$ is closed, and therefore compact (as a set of elements of $\frk{C}$).

    For each $x \in X$, we have that
    \[
      F_x \subseteq B_{<\xi}(x)\cup (B_{<\xi}(x))^{<\xi}\cup ((B_{<\xi}(x))^{<\xi})^{<\xi}\cup \dots.
    \]
    By the compactness of $F_x$, some finite initial segment of this is sufficient to cover $\cset{\chi(\frk{C})=0}$. Let $\ell$ be the longest length of one of these initial segments. Let $k = \ell + 1 + n$.

    To see that this $k$ is sufficient, we need to consider three cases.
    \begin{enumerate}
    \item $\chi(a) > 0$. In this case, by our previous discussion of $\chi(x)$, we know that there is a chain of length no more than $n+1$ witnessing $aE_\xi b$, which is no more than $k$.
    \item $\chi(a) = 0$ and the chain witnessing $aE_\xi b$ is entirely contained in $\cset{\chi(\frk{C})=0}$. In this case, we have that $b \in F_a$. There is some $x \in X$ with $d(a,x)< \xi$, so by the definition of $\ell$ we get a chain of length $1+\ell$ witnessing that $aE_\xi b$.
    \item $\chi(a)=0$ and the chain witnessing $aE_\xi b$ is not entirely contained in $\cset{\chi(\frk{C})=0}$. In this case, let $c_0,\dots,c_{m}$, with $c_0 = a$ and $c_m = b$, be the chain witnessing that $a E_\xi b$. Let $c_i$ be the first such that $\chi(c_i)=0$ but $\chi(c_{i+1})> 0$. By construction, we can find a chain of length no more than $\ell$ witnessing that $aE_\xi c_i$ and a chain of length no more than $n$ witnessing that $c_{i+1} E_\xi b$. Concatenating these chains gives a witnessing chain of length no more than $\ell+1+n$, which is $k$.\hfill $\qed_{\text{claim}}$
    \end{enumerate}

    Now, since this is true for all $a$ and $b$, by compactness, there must actually be some $\delta > 0$ with $\delta < \xi$ such that for any $a$ and $b$, with $aE_\xi b$, there is a chain $c_0,\dots,c_k$, with $c_0=a$, $c_k=b$, and $d(c_i,c_{i+1})<\delta$ for all $i<k$.   We have just established that for any $a$ and $b$, if $aE_\xi b$, then $\eta(a,b) > \xi - \delta$, but if $\neg a E_\xi b$, then $\eta(x,y) = 0$. So consider the formula
    \[
      E(x,y)\coloneqq\min\left\{\frac{(\xi-\delta)\dotdiv \eta(x,y)}{\xi-\delta},1\right\}.
      \]
      We now have that this $E(x,y)$ is $\{0,1\}$-valued and has $E(x,y)=0$ if and only if $xE_\xi y$, as required. To see that this equivalence relation has infinitely many equivalence classes in every model, note that if it had finitely many equivalence classes then that model would itself be compact, since its equivalence classes are compact, which is a contradiction.
\end{proof}
It is not hard to show that if some model of $T$ has non-compact connected components, then every non-prime model has non-compact connected components. The set $E$ in the example mentioned in the text after Example~\ref{ex:not-comp-nor-co-comp} (i.e.\ the definable subset of $\mathbb{R}$) shows that this can fail for the prime model.

In order to prove the other direction of Theorem~\ref{thm:strong_min_and_cat_in_cont:2} we will need to collect some lemmas.

\begin{lem}\label{lem:strong_min_and_cat_in_cont:1}
If $T$ is a strongly minimal theory, $\frk{M}\models T$, and for any $a$ realizing the generic type over $\frk{M}$, the connected component of $a$ (in the monster model) is compact, then no model of $T$ has a non-compact connected component.
\end{lem}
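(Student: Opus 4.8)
The plan is to reduce the statement to the algebraicity (hence metric compactness) of the classes of a suitable definable equivalence relation. Recall that the proof of Theorem~\ref{thm:strong_min_and_cat_in_cont:2} in the direction already established only uses the hypothesis that the connected component of a realization of the generic type over $\mathfrak{M}$ is compact---exactly the hypothesis here---and from it produces a $\xi>0$ together with a $\{0,1\}$-valued $\varnothing$-formula $E(x,y)$ satisfying $E(x,y)=0$ if and only if $x\,E_\xi\,y$, where $a\,E_\xi\,b$ means that there is a finite sequence $a=c_0,c_1,\dots,c_m=b$ with $d(c_i,c_{i+1})<\xi$ for all $i$. Reexamining that construction, $\xi$ is only ever required to be sufficiently small, so I will additionally arrange that $\xi<\varepsilon_0$, where $\varepsilon_0>0$ is a constant as in Lemma~\ref{lem:uniform-local-compactness} applied to the home sort; in particular any two elements of the home sort of any model at distance less than $\varepsilon_0$ are interalgebraic.

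Next, fix a model $\mathfrak{N}\models T$ and an element $b$ of its home sort, and set $F=\cset{E(b,x)\brackconv}\subseteq S_1(b)$ (with $x$ the free variable), a clopen set---in particular a zeroset---over $b$. The main step is to show that $F$ is algebraic. I claim $F$ does not contain the generic type $p_b\in S_1(b)$: if $a\models p_b$ had $E(b,a)=0$, then $b\,E_\xi\,a$, so (working in the monster) there is a finite chain from $b$ to $a$ all of whose steps are below $\xi<\varepsilon_0$, and iterating the interalgebraicity clause of Lemma~\ref{lem:uniform-local-compactness} along the chain gives $a\in\acl(b)$, contradicting that $p_b$ is non-algebraic. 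Since $T$ is strongly minimal, the home sort is minimal over $\{b\}$, so I can choose, by normality of $S_1(b)$, a $b$-zeroset $G$ in the home sort with $p_b\in G$ and $G\cap F=\varnothing$; then $G$ is non-algebraic, and minimality over $\{b\}$ forces $F$ to be algebraic. Hence $F(\mathfrak{N})$ is metrically compact.

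Finally, $F(\mathfrak{N})=\{c\in\mathfrak{N}:E(b,c)=0\}$ is a clopen subset of the metric space $\mathfrak{N}$ (the map $c\mapsto E(b,c)$ is metrically continuous and $\{0,1\}$-valued) and contains $b$, so the connected component $K$ of $b$ in $\mathfrak{N}$ is contained in $F(\mathfrak{N})$; being a connected component, $K$ is also closed. Thus $K$ is a closed subset of the metrically compact set $F(\mathfrak{N})$, hence metrically compact. As $b$ and $\mathfrak{N}$ were arbitrary, no model of $T$ has a non-compact connected component.

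The one point that needs care is the borrowing of the formula $E$ from the proof of Theorem~\ref{thm:strong_min_and_cat_in_cont:2}: I need the threshold $\xi$ there to be takeable below the uniform-local-compactness constant $\varepsilon_0$ without disturbing the conclusion that $E(x,y)=0$ iff $x\,E_\xi\,y$ (which is fine, since that conclusion is insensitive to shrinking $\xi$ at the stage where it is chosen), and I use only the easy implication ``$E(b,a)=0$ yields a $\xi$-chain from $b$ to $a$,'' so the uniform chain-length bound does not enter. Everything else is elementary topology together with the basic fact that a zeroset inside a minimal set which omits the unique non-algebraic type is algebraic.
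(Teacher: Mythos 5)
Your proof is correct, and while it shares the paper's overall skeleton, it justifies the key compactness step by a genuinely different route. Both arguments extract the relation $E_\xi$ from the proof of Theorem~\ref{thm:strong_min_and_cat_in_cont:2} (which, as you observe, only uses compactness of the connected components of generic elements), note that each $E_\xi$-class is clopen so that every connected component lies inside one, and finish because components are closed. Where you diverge is in showing that the class of an \emph{arbitrary} element of an \emph{arbitrary} model is compact: the paper disposes of this in one sentence by citing the earlier proof for the assertion that all $E_\xi$-classes are compact (a fact that argument only really spells out for generic elements), whereas you use the definability of $E$ to realize the class of $b$ as a $b$-zeroset $F$, show $F$ omits the generic type over $b$ by walking a $\xi$-chain through the interalgebraicity radius of Lemma~\ref{lem:uniform-local-compactness}, and then invoke minimality of the home sort over $\{b\}$ to conclude that $F$ is algebraic and hence has compact realizations in \emph{every} model. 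This buys an explicit, model-independent proof of the crucial compactness claim, at the cost of needing the definable formula $E$ itself (the paper's version needs only the relation) plus the algebraicity machinery. One bookkeeping remark: shrinking $\xi$ alone, for a fixed $\e$, is not obviously harmless, since the construction of $E$ uses that the $E_\xi$- and $E_\e$-classes coincide on generics; the clean way to force $\xi$ below the constant of Lemma~\ref{lem:uniform-local-compactness} is to shrink the $\e$ of the first claim at the outset (smaller $\e$ yields smaller, still compact, classes), after which $\xi<\e$ gives what you need. This does not affect the correctness of your argument.
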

\begin{proof}
  Since the proof of Theorem~\ref{thm:strong_min_and_cat_in_cont:2} only uses that the connected components of generic elements are compact, we have that there is some $\xi > 0$ such that the equivalence relation $E_\xi$ (as defined in that proof) has compact equivalence classes. For any $a$ the connected component of $a$ is contained in the $E_\xi$-equivalence class of $a$, and so since connected components are always closed, we have that the connected component of $a$ is compact.
\end{proof}

The following is a (weak) analog of a well known fact in discrete logic that given two strongly minimal sets in an uncountably categorical theory, there is a definable finite-to-finite correspondence between them. %Not sure if this is actually true: %The (correctly) analogous fact ought to be true in general in continuous logic, but we have not pursued this here.

\begin{lem}\label{lem:strong_min_and_cat_in_cont:2}
  Let $T$ be a strongly minimal theory. Suppose that $I$ is a strongly minimal imaginary. There exists a formula $\varphi(x,y)$ (using at most the parameters needed to define $I$), with $x$ a variable in the sort $I$ and $y$ a variable in the home sort, such that for any generic $a \in I$, the zeroset of $\varphi(a,y)$ is algebraic and has each realization generic over $\frk{M}$ and for any generic $b$ in the home sort, the zeroset of $\varphi(x,b)$ is algebraic and has each realization generic over $\frk{M}$.
  % is the distance predicate of an algebraic subset of the home sort such that for every generic $b$ in the home sort, there is some $a \in I$ such that $\varphi(b,a)=0$.

  %Furthermore, there is a fixed $n$ such that for any generic $b \in H$, where $H$ is the home sort, there are precisely $n$ generic $a$'s in $I$ such that $\varphi(a,b)=0$.
\end{lem}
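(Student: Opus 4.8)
The target is a continuous-logic version of the classical fact that any two strongly minimal sets in an uncountably categorical theory admit a definable finite-to-finite correspondence. Write $A$ for a set of parameters defining $I$ and put $B=\mathrm{acl}^{\mathrm{eq}}(A)$; both the home sort and $I$ are strongly minimal sets, so by the pregeometry results of the previous subsection $\mathrm{acl}$ restricted to each is a pregeometry with finite character, and over $B$ the generic types $p_I$ and $p_H$ (of $I$ and of the home sort) are stationary. The plan is to (1) produce a mutually interalgebraic pair --- a generic $a\in I$ and a generic $b$ of the home sort with $b\in\mathrm{acl}(Ba)$ and $a\in\mathrm{acl}(Bb)$ --- and then (2) convert the algebraic zerosets witnessing this into a single formula $\varphi(x,y)$ with the required fibre behaviour.

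For part (1) I would start with $a\models p_I$. Since $I$ is interpretable, Lemma~\ref{lem:imaginary-norm-form} exhibits it as a $\varnothing$-definable quotient of an $A$-definable subset of $H^{\omega}$, so $a\in\mathrm{dcl}^{\mathrm{eq}}(B\bar{c})$ for some $\omega$-tuple $\bar{c}$ from the home sort; since $a\notin B$ lies in the strongly minimal set $I$, the strong finite-character property of $\mathrm{acl}$ in strongly minimal sets (the remark following the pregeometry proposition) gives a finite subtuple with $a\in\mathrm{acl}^{\mathrm{eq}}(B\bar{c}_0)$, which I take of minimal length; then $\bar{c}_0$ is $B$-independent and all of its coordinates are generic over $B$. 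Comparing dimensions, $\dim(a\bar{c}_0/B)=|\bar{c}_0|$ whereas $\dim(a/B)+\dim(\bar{c}_0/B)=|\bar{c}_0|+1$, so $\mathrm{tp}(a/B\bar{c}_0)$ forks over $B$; thus $p_I$ is non-orthogonal to $p_H$, and --- since these are strongly minimal types over the algebraically closed set $B$ --- this yields a single generic $b$ of the home sort with $b\in\mathrm{acl}(Ba)$, and then $a\in\mathrm{acl}(Bb)$ by the rank-one dimension equality $\dim(ab/B)=1$. Finally, replacing $(a,b)$ by a $B$-conjugate I may take $a$ generic over $\frk{M}$; then since $a\in\mathrm{acl}(Bb)\subseteq\mathrm{acl}(\frk{M}b)$ while $a\notin\frk{M}$, strong minimality of the home sort forces $b\notin\frk{M}$, so $b$ is generic over $\frk{M}$ as well.

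For part (2): as $b\in\mathrm{acl}(Ba)$, I would choose a $Ba$-zeroset $Z_0(a)\ni b$ that is algebraic and shrink it so that none of its realizations lies in $\mathrm{acl}(B)$ --- possible because $b$ is generic over $B$ --- and write $Z_0(a)=\cset{\psi_0(y;a)=0}$ for a $B$-formula $\psi_0$; symmetrically pick a $B$-formula $\psi_1(x;y)$ with $\cset{\psi_1(x;b)=0}$ an algebraic $Bb$-zeroset containing $a$ and avoiding $\mathrm{acl}(B)$, and set
\[
  \varphi(x,y)=\imax{\psi_0(y;x)}{\psi_1(x;y)}.
\]
For any $a'$ generic over $\frk{M}$ we have $a'\equiv_B a$, so $\cset{\varphi(a',y)=0}$ is an $\mathrm{Aut}(\frk{C}/B)$-conjugate of $\cset{\varphi(a,y)=0}$, which is a zeroset contained in the algebraic zeroset $Z_0(a)$, hence algebraic; it contains a conjugate of $b$, and each of its realizations lies in $\mathrm{acl}(Ba')\smallsetminus\mathrm{acl}(B)$. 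Since $a'$ is generic over $\frk{M}$, the pregeometry dimension count gives $\mathrm{acl}^{\mathrm{eq}}(Ba')\cap\frk{M}=B$, so a realization lying in $\frk{M}$ would lie in $\mathrm{acl}(B)\cap H$ --- impossible; by strong minimality of the home sort, every realization of $\cset{\varphi(a',y)=0}$ is therefore generic over $\frk{M}$. The symmetric argument, with the roles of the two sorts exchanged, handles $\cset{\varphi(x,b')=0}$ for generic $b'$ in the home sort, and $\varphi$ uses only parameters from $B=\mathrm{acl}^{\mathrm{eq}}(A)$.

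The main obstacle I anticipate is part (1): upgrading ``$I$ is interpretable, hence internal to and non-orthogonal to the home sort'' to an actual interalgebraic pair involving a single home-sort element over $B$. In discrete logic this is routine non-orthogonality calculus for strongly minimal types, but in continuous logic one must verify that the needed ingredients --- forking symmetry, stationarity of $p_I$ and $p_H$ over $B$, and the dimension equalities used above --- are available; they are, since a strongly minimal theory is stable, but that verification is where the care lies. A secondary point is the shrinking step in part (2) forcing fibre-realizations to be generic, which rests on separating $b$ from $\mathrm{acl}(B)$ inside an algebraic zeroset.
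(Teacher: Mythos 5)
There is a genuine gap at the heart of part (1). You reduce to ``$p_I$ is non-orthogonal to $p_H$'' and then assert that, because these are strongly minimal types over the algebraically closed set $B=\mathrm{acl}^{\mathrm{eq}}(A)$, non-orthogonality yields a \emph{single} generic $b$ in the home sort with $b\in\acl(Ba)$. That implication is false, already in discrete logic: take $T$ to be the theory of an infinite $\mathbb{F}_2$-vector space $V$ together with a regular torsor $A$ over it. Both sorts are strongly minimal and non-orthogonal (the difference of two independent generics of $A$ is a generic of $V$, witnessing non-orthogonality with $n=2$), yet for a generic $a\in A$ one has $\acl^{\mathrm{eq}}(a)\cap V$ finite, so no generic of $V$ is algebraic over a single generic of $A$ over the empty (algebraically closed) base; a finite-to-finite correspondence only exists after naming a base point. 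Your own exchange argument exhibits the problem: minimality of $|\bar{c}_0|=n$ gives $c_n\in\acl(Ba\,c_1\cdots c_{n-1})$, i.e.\ interalgebraicity only over the enlarged base $B c_1\cdots c_{n-1}$, and you have no mechanism for absorbing those extra generic home-sort parameters. Working over the model $\frk{M}$ (which is what the lemma actually requires, since the fibres must consist of elements generic over $\frk{M}$ and the formula in the intended application is an $\frk{M}$-formula) would make the statement true, but the reduction from $n$ witnesses to one over $\frk{M}$ still needs an argument you have not supplied --- and the whole route presupposes forking symmetry, stationarity, and cross-sort exchange for $\acl$ in continuous logic, none of which the paper develops and which you only flag as ``where the care lies.''

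The paper avoids all of this with a much shorter device: let $c\in I$ be generic over $\frk{M}$ and let $\frk{N}$ be prime over $\frk{M}c$. Since $\frk{N}\succ\frk{M}$ properly, the home sort grows; any $b\in H(\frk{N})\smallsetminus H(\frk{M})$ is automatically generic over $\frk{M}$ (as $\acl(\frk{M})=\frk{M}$), and its type over $\frk{M}c$ is $d$-atomic by primeness, hence algebraic, because in a strongly minimal sort the generic type over a set of parameters is never $d$-atomic (every logical neighborhood of it contains realized types at definite metric distance). A symmetric application gives $c\in\acl(\frk{M}b)$, and the two distance predicates are combined by a maximum exactly as in your part (2). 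I would recommend replacing your part (1) with this prime-model argument; your part (2) is then essentially the paper's, though note that the genericity of the fibre realizations comes for free there (they are $\frk{M}b$- resp.\ $\frk{M}c$-conjugates of $c$ resp.\ $b$), so the shrinking step and the claim $\acl^{\mathrm{eq}}(Ba')\cap\frk{M}=B$ --- which again quietly invokes forking calculus --- become unnecessary.
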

\begin{proof}
  Let $A$ be the set of parameters needed to define $I$ and let $\frk{M}$ be a separable model of $T$ containing $A$. Let $c$ be an element of $I(\frk{C})$ that is generic over $\frk{M}$, and let $\frk{N} \succ \frk{M}$ be a model of $T$ that is prime over $\frk{M}b$. %{\color{red}(COUNTABLE THEORY ISSUE?)}
  Because $\frk{N}$ is a proper elementary extension of $\frk{M}$ we must have that $H(\frk{N})$ is strictly larger than $H(\frk{M})$, where $H$ is the home sort. Let $b$ be some element of $H(\frk{N})\smallsetminus H(\frk{M})$. We necessarily have that $b$ is generic over $\frk{M}$, but $\tp(c/\frk{M}b)$ is $d$-atomic, which by strong minimality means that it must be algebraic. Let $\varphi_0(x,y)$ be a formula such that $\varphi_0(b,y)$ is the distance predicate of $\tp(c/\frk{M}b)$.

  A symmetric argument gives that $\tp(b/\frk{M}c)$ is also algebraic and therefore $d$-atomic, so let $\varphi_1(x,y)$ be a formula such that $\varphi_1(x,c)$ is the distance predicate of $\tp(b/\frk{M}c)$. The formula $\varphi(x,y)=\max\{\varphi_0(x,y),\varphi_1(x,y)\}$ has the required properties.
  % a $[0,1]$-valued $\frk{M}$-formula such that $\varphi(e,f)=0$ if and only if $ef \equiv_\frk{M} bc$ (this is always possible, since $\frk{M}$ is separable). %Since $\tp(c/\frk{M}b)$ is algebraic, there is a continuous function $\alpha : \mathbb{R}_{\geq 0}\to \mathbb{R}_{\geq 0}$ such that $\alpha(0)=0$ and for any $e$ in the home sort, $\dinf (e, \cset{\psi(b,\frk{C})=0}) \leq \alpha(\psi(b,e))$. So let
%   \[
% \varphi(x,y)=\inf_{z:H}d(y,z)+\alpha(\psi(x,y)).
%   \]
% A direct computation gives that $\varphi(b,y)$ is the distance predicate of the set $\cset{\psi(b,\frk{C})=0}$ and that $\varphi(x,y)$ and $\psi(x,y)$ have the same zeroset. 
  %
  % Now let $c^\prime$ be any realization of $\tp(c/\frk{M})$ and let $b^\prime$ be such that $b^\prime c^\prime \equiv_{\frk{M}}bc$. Then we have that the zeroset of $\varphi(b^\prime,y)$ is algebraic and only contains elements generic over $\frk{M}$.% is the distance predicate of an algebraic set containing only elements generic over $\frk{M}$.
%
  % The fact that there are only finitely many such $b^\prime$ for some such $c^\prime$ follows from the symmetry of forking in stable theories. For a direct proof, let $\frk{B}$ be the elementary sub-model of $\frk{N}$ containing $\frk{M}c$. By no Vaughtian pairs, it must be the case that $\frk{B}=\frk{N}$, so again we must have that $\tp(b/\frk{M}c)$ is algebraic. %Since $\varphi(x,c)=0$ axiomatizes this type, there can only be finitely many $b^\prime$ such that $\varphi(b^\prime,c)=0$ holds, so this must be true for any $c^\prime \equiv_\frk{M} c$ as well.
\end{proof}

\begin{rem}\label{rem:char-essent-cont}
  As an aside, one might wonder whether or not two strongly minimal sets in an \insep\ categorical theory always have a definable compact-to-compact correspondence. The answer is yes. In Lemma~\ref{lem:strong_min_and_cat_in_cont:2} (which, of course, generalizes to arbitrary \insep\ categorical theories), the set $\cset{\varphi(x,y)=0}\cap F(x,y)$, where $F(x,y)$ is the partial type consisting of all types for which at least one of the projections is generic, is relatively definable in $F(x,y)$. By Proposition~\ref{prop:ext}, we can find a definable set $D(x,y)$ such that $D(x,y)\cap F(x,y)=\cset{\varphi(x,y)=0}\cap F(x,y)$.  By strong minimality, it is easy to show that $D(x,y)$ is a compact-to-compact relation between the two sets, but it may fail to be either total or surjective. By uniform local compactness, for sufficiently small $\e > 0$, $\cset{D(x,y)\leq \e}$ is still a compact-to-compact relation. By Lemma~\ref{lem:important-sep-lemma}, we may choose $\e$ so that $\cset{D(x,y)\leq \e}$ is definable. Since the interior of $\cset{D(x,y)\leq \e}$ contains the `rank $2$' type (type of a pair of mutually generic elements), we have (where $A$ and $B$ are the strongly minimal sets in question) that the sets $U=\{a\in A: \neg (\exists b \in B) D(a,b) \leq \e\}$ and $V = \{b \in B:\neg (\exists a \in A) D(a,b)\leq \e\}$ are both pre-compact, so we have that $\cset{D(x,y)\leq \e}\cup (\overline{A}\times \overline{B})$ is a definable compact-to-compact correspondence between $A$ and $B$.

  This is still true for any pair of non-orthogonal strongly minimal types in an arbitrary theory, but a minor technicality is that one would need to show that the type spaces corresponding to $A\times B$ are dictionaric.

  The original correspondence is actually slightly nicer than merely definable. It has the property that for any generic $a \in A$, the set $\cset{\varphi(a,y)=0}$ is definable and likewise for any generic $b\in B$ as well as the property that the distance predicates of these definable sets are uniformly definable, i.e.\ there is a formula $\psi(x,y)=0$ such that for any generic $a \in A$, $\psi(a,y)=0$ is the distance predicate of $\cset{\varphi(a,y)=0}$ (with a similar statement for $B$). This additional niceness is not always possible. Consider $\mathbb{R}$ as a metric space and the $\mathbb{R}$-definable subset $E = \{\pm\ln(1+n):n<\omega\}$ (which is very similar to the definable set mentioned after Example~\ref{ex:not-comp-nor-co-comp}). Although the strongly minimal types in the home sort $H$ and $E$ are non-orthogonal in that they are literally the same type, there is no compact-to-compact correspondence between $H$ and $E$ with this additional property. To see this, note that if a formula $\chi(a,y)$ is a distance predicate for any parameter $a$, then the function $a \mapsto \cset{\chi(a,y)=0}$ is continuous with regards to the Hausdorff metric. Assume that  $\varphi(x,y)=0$ is a definable compact-to-compact correspondence between $H$ and $E$, where $x$ is a variable of sort $H$ and $y$ is a variable of sort $E$. Then consider the set $\cset{\varphi(x,0)=0}$. This must contain some $a\in H$. By continuity, we have that $\varphi(a',0)$ must be $0$ for all $a^\prime$, but this implies that $0$ is related to a non-compact set of $a'$'s in $A$, which is a contradiction.

  In this example it can be arranged that $\cset{\varphi(x,b)=0}$ is uniformly definable as a function of $b$, but it is not too difficult to show that if we take $E\cup \mathbb{R}_{\geq 0}$ and $E \cup \mathbb{R}_{\leq 0}$ as our two strongly minimal sets, then neither direction can be taken to be uniformly definable.
\end{rem}

Now we can finish the proof of the characterization.

      \begin{proof}[Proof of Theorem~\ref{thm:strong_min_and_cat_in_cont:2} $\Leftarrow$] %{\color{red}NEED TO CLEAN THIS UP}]

        Let $\frk{M}$ be the prime model of $T$. Assume that some model $\frk{N} \succeq \frk{M}$ of $T$ has a non-compact connected component $C$. If $C$ does not contain an element that is generic over $\frk{M}$, then by Lemma~\ref{lem:strong_min_and_cat_in_cont:1} there is a model of $T$ that contains a non-compact connected component $C^\prime$ which contains an element that is generic over $\frk{M}$, so we may assume without loss of generality that $C$ contains an element generic over $\frk{M}$. Let $a$ be some such generic element. %Let $\e$ be as it is in Lemma~\ref{lem:uniform-local-compactness}. We have that $C$ is a subset of the $E_\e$-equivalence class of $a$, where we are defining $E_\e$ as in the proof of the $\Rightarrow$ direction. By our choice of $\e$ and the fact that $\acl$ induces a pregeometry on models of $T$, we have that for any $b$ in the $E_\e$-equivalence class of $a$, $b \in \acl(a)$, so by exchange $b$ is generic over $\frk{M}$ as well. $C$ is contained in the $E_\e$-equivalence class of $a$, so we have the same for any element of $C$ as well. The same will be true over any model.

        Now assume that $T$ is not \esscont. Let $I$ be an imaginary sort of $T$ that is infinite and discrete. Since $T$ is \insep\ categorical, there is a strongly minimal set $D \subseteq I$ (possibly over some larger set of parameters). Let $J$ be the imaginary sort corresponding to $D$. Let $\frk{N}\succeq\frk{M}$ be a separable elementary extension containing the parameters necessary to define $D$. By Lemma~\ref{lem:strong_min_and_cat_in_cont:2}, there is an $\frk{M}$-formula $\varphi(x,y)$, with $x$ a variable in $J$ and $y$ a variable in the home sort, such that %, and an $n<\omega$ such that
        %\begin{itemize}
      %\item
        for any $a$ in $J(\frk{C})$, generic over $\frk{N}$, the set of $b$'s in the home sort for which $\varphi(a,b)=0$ holds is compact and consists only of elements generic over $\frk{N}$.% and
%        \item for any $b$ in the home sort generic over $\frk{N}$ there are precisely $n$ $a$'s in $J$, generic over $\frk{M}$, such that $\varphi(a,b)=0$ holds. 
       % \end{itemize}

        %Note that while we proved this last statement for elements of the monster, since $\tp(a/\frk{N}b)$ is algebraic, it is true in any model $\frk{A}\succeq \frk{N}$ containing $a$ as well. Let $a$ be some element of the home sort generic over $\frk{N}$ and let $\frk{A}\succeq \frk{N}$ be prime over $\frk{N}a$. Also note that $\frk{N}$ is separable. Let $C$ be the connected component of $a$.
        
        Fix $b \in H(\mathfrak{C})$ generic, and note that the connected component of $b$ is non-compact. By compactness there must exist an $\e > 0$ such that for any generic $a$, if $\varphi(a,b)> 0$, then $\varphi(a,b)>\e$. So consider the formula $\psi(x,y)\coloneqq \min\{\frac{1}{\e}\varphi(x,y),1\}$. We have that for generic $a$, $\psi(a,b)\in\{0,1\}$. This is part of the type of $b$, so for any generic $b$ this must be true as well. By uniform continuity of $\psi(x,y)$, this implies that there is a $\delta>0$ such that if generic $b,b^\prime\in H(\frk{C})$ satisfy $d(b,b^\prime)<\delta$, then $\psi(a,b)=\psi(a,b^\prime)$, but this implies that for any $b^\prime$ in the connected component of $b$, $\psi(a,b^\prime)=\varphi(a,b^\prime)=0$, which is a contradiction. Therefore no such $I$ can exist, and $T$ is \esscont.
%
%
 %       
        %Now consider $C$ as a metric space. Note that since connected components are always closed, $C$ is locally compact by Lemma~\ref{lem:uniform-local-compactness}. Furthermore, it is clearly connected. Let $C^\ast$ be the one-point compactification of $C$, with the single new point $\ast$. Since $C$ is a locally compact, connected metric space, $C^\ast$ is a compact, connected, metrizable space. 
%
        %Let $\mathcal{F}= \{F_i\}_{i<}$ be a sequence of closed subsets of $C^\ast$ where $F_i = \{\ast\}$ for all $i<n$ and the rest of the $F_i$ are an enumeration of the non-empty sets of the form $C \cap \cset{\varphi(a,\frk{A})=0}$ for $a \in J(\frk{A})$. Note that, since $\frk{A}$ is separable and $J$ is discrete, this is indeed a countable sequence of sets. %Also note that, by exchange, any set of the form $C\cap \cset{\varphi(a,\frk{C})=0}$ for $a \in J(\frk{C})$ is either empty or already on this list.
        %Now, by assumption, we clearly have that the space $C^\ast$ and the sequence $\mathcal{F}$ satisfy the assumptions of Proposition~\ref{prop:SierpinskiGeneralization}, therefore there is some $x \in C^\ast$ such that $x$ is in strictly fewer than $n$ elements of $\mathcal{F}$. Clearly $x$ cannot be $\ast$, so $x$ must be some element of $C$, but this contradicts the fact that every element of $C$ is necessarily related to precisely $n$ elements of $J(\frk{A})$ by $\varphi(x,y)=0$. So we must have that no such imaginary $I$ can exist. Therefore, $T$ is \esscont\.
      \end{proof}

      \begin{cor}
        For any strongly minimal theory $T$, the following are equivalent:
        \begin{itemize}
        \item $T$ is \esscont.
        \item Some model of $T$ has a non-compact connected component.
        \item Every generic element of a model of $T$ has a non-compact connected component.
        \item $T$ does not have a $\varnothing$-definable $\{0,1\}$-valued equivalence relation on its home sort with infinitely many equivalence classes.
        \end{itemize}
      \end{cor}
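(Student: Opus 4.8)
The plan is to establish the three--cycle $(2)\Rightarrow(1)\Rightarrow(4)\Rightarrow(2)$, which gives the equivalence of the first, second, and fourth bullet points, and then to tie in the third bullet point by proving $(3)\Rightarrow(2)$ and $(2)\Rightarrow(3)$ separately. Almost all of the work is already done: $(2)\Rightarrow(1)$ is exactly the $\Leftarrow$ direction of Theorem~\ref{thm:strong_min_and_cat_in_cont:2}, and the machinery behind $(4)\Rightarrow(2)$ is the $\Rightarrow$ direction of that same theorem, so this corollary is mostly a matter of repackaging what has been proved.

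For $(1)\Rightarrow(4)$ I would argue the contrapositive. If $(4)$ fails, fix a $\varnothing$-definable $\{0,1\}$-valued equivalence relation $E$ on the home sort with infinitely many classes. Then, by Definition~\ref{defn:imag}, the quotient of the home sort by $E$ is an imaginary sort of $T$; since $E$ is $\{0,1\}$-valued this sort carries the discrete metric, and since $E$ has infinitely many classes the sort is infinite. Hence $T$ interprets an infinite discrete structure and is not \esscont, contradicting $(1)$.

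For $(4)\Rightarrow(2)$ I would again use the contrapositive: assume $(2)$ fails, i.e.\ every model of $T$ has only compact connected components. This is precisely the hypothesis of the $\Rightarrow$ direction of Theorem~\ref{thm:strong_min_and_cat_in_cont:2} (indeed, the proof there only uses compactness of the connected components of generic elements), so that argument applies and yields statement $(\ast)$: a $\xi>0$ such that $E_\xi$ is a $\varnothing$-definable $\{0,1\}$-valued equivalence relation with infinitely many classes in every model of $T$. But that is exactly the negation of $(4)$. Combined with $(2)\Rightarrow(1)$ from Theorem~\ref{thm:strong_min_and_cat_in_cont:2} and $(1)\Rightarrow(4)$ above, the three--cycle closes.

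Finally, for the third bullet point: $(3)\Rightarrow(2)$ is immediate, since the monster model is itself a model of $T$ and realizes the generic type, so under $(3)$ it has a non-compact connected component. For $(2)\Rightarrow(3)$ I would invoke the contrapositive of Lemma~\ref{lem:strong_min_and_cat_in_cont:1}: since some model of $T$ has a non-compact connected component, it cannot be that the connected component of every realization of the generic type over the prime model $\frk{M}$ is compact, so some realization has a non-compact connected component; and since the set of realizations of the generic type is $\frk{M}$-indiscernible (by the corollary following the pregeometry proposition), any two are conjugate by an automorphism of the monster, so their connected components are isometric, and hence every generic element of every model of $T$ has a non-compact connected component. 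I expect the only genuinely delicate point to be the bookkeeping needed to see that ``having a non-compact connected component'' is stable under passing between a submodel and the monster in the $(2)\Rightarrow(3)$ step; this is handled by noting, via Lemma~\ref{lem:uniform-local-compactness}, that a (necessarily closed) connected component is compact if and only if it is covered by finitely many closed $\e$-balls for the fixed $\e$ supplied by that lemma, a condition that transfers along elementary inclusions.
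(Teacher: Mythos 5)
Your proof is correct, and it is essentially the intended derivation: the paper states this corollary without a separate argument, as an immediate consequence of Theorem~\ref{thm:strong_min_and_cat_in_cont:2} (whose two directions give your $(2)\Rightarrow(1)$ and, via the statement $(\ast)$ isolated in its proof, your $(4)\Rightarrow(2)$), of the observation that a $\varnothing$-definable $\{0,1\}$-valued equivalence relation with infinitely many classes yields an infinite discrete imaginary (your $(1)\Rightarrow(4)$), and of Lemma~\ref{lem:strong_min_and_cat_in_cont:1} together with homogeneity of the monster for the third bullet.

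The one step whose justification does not quite work as written is the transfer in $(2)\Rightarrow(3)$: the condition ``the connected component of $a$ is covered by finitely many closed $\e$-balls'' concerns a set that is not definable, so it does not literally ``transfer along elementary inclusions''; indeed, to transfer anything about the connected component you would already need to know that it is the same set in $\frk{N}$ and in the monster. The clean way to extract what you need from Lemma~\ref{lem:uniform-local-compactness} is the containment argument: for the $\e>0$ supplied by that lemma, every $b'$ in the strongly minimal set with $d(b,b')<\e$ lies in $\acl(b)$, so every $E_\e$-chain starting at a generic $a$ stays inside $\acl(a)$; since each $E_\e$-class is clopen, the connected component of $a$ is contained in $[a]_{E_\e}\subseteq\acl(a)$, which lies inside every model containing $a$. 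Hence the connected component of a generic element is literally the same set whether computed in $\frk{N}$ or in the monster, and no compactness condition needs to be transferred. With that substitution the argument is complete. (Two smaller remarks: the appeal to indiscernibility of bases is unnecessary, since any two realizations of the unique non-algebraic type over $\varnothing$ are conjugate by an automorphism of the monster by homogeneity alone; and one should note that bullet three is vacuously satisfied in models, such as the prime models of the relevant examples, that contain no generic elements at all.)
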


      %And, once again,
      The example given immediately after Example~\ref{ex:not-comp-nor-co-comp} (i.e.\ $\mathbb{R}$ with the appropriate metric), as well as some of the examples given in Remark~\ref{rem:char-essent-cont}, show that the prime model may fail to have a non-compact connected component in \anesscont\ strongly minimal theory.
      
%{\color{red} Example showing that automorphism proof doesn't generalize?}
      
      After all of this, it is natural to wonder about the kinds of (pre-)geometries that can occur in \esscont\ strongly minimal theories. There are a myriad of specific questions in this vein.      It is possible that characterizing \esscont\ strongly minimal theories might be easier than characterizing discrete strongly minimal theories. A negative answer to either of the following two questions would be an indication that \esscont\ strongly minimal sets admit less complexity than discrete strongly minimal sets.
      
      \begin{quest}\label{quest:stronk-min-thys:1}
        Is there \anesscont\ strongly minimal set with the geometry of an algebraically closed field?
        \end{quest}

      \begin{quest}
        Is there \anesscont\ strongly minimal set with non-locally modular, flat pregeometry?
      \end{quest}

      A difficult-to-rigorize follow-up question would be this: Can a Hrushovski construction build \anesscont\ strongly minimal set?
        
      \begin{quest}
        Is there \anesscont\ strongly minimal set whose geometry is not isomorphic to the geometry of any discrete strongly minimal set?

        Is there a continuous, but not \esscont, strongly minimal theory with the same?
      \end{quest}

This last question becomes trivial on cardinality grounds if we consider pregeometries, rather than geometries.

\subsubsection{Characterization of Strongly Minimal Groups}
\label{sec:ess-cont-stronk-min-grp}

In this section we will give a topological group theoretic characterization of strongly minimal groups in continuous logic and identify which among them are \esscont.

%MAKE INTO GENERAL CHARACTERIZATION

\begin{defn}
  A theory of groups is a theory $T$ in a language containing a binary function symbol $x\cdot y$ (which we will freely write as concatenation), a unary function symbol $x^{-1}$, and a constant symbol $e$ such that $T$ implies the following.
%\begin{multicols}{2}
  \begin{itemize}
  \item $\sup_{xyz}d(x(yz),(xy)z)=0$
  \item $\sup_{x}d(ex,x)=0$
  \item $\sup_xd(x^{-1}x,e)=0$
  %\item[] 
  %\item  $\sup_xd(ex,x)=0$
  %\item  $\sup_xd(xx^{-1},e)=0$
  \end{itemize}
 % \end{multicols}
  A \emph{group structure} is a model of a theory of groups.
\end{defn}

Note that despite appearances, $\mathsf{IHS}$ is not an example of a theory of groups. In fact, it is possible to show that $\mathsf{IHS}$ does not even interpret a non-compact group. If we had taken the whole Hilbert space as the structure, rather than just the unit ball, and modified the metric accordingly, then the result theory would be a theory of groups.

The following was originally shown in \cite[Prop.\ 3.13]{benyaacov2010}.
We have given a full proof here for completeness.

\begin{prop}\label{prop:group-invar-met}
 Let $T$ be a theory of groups. There is a definable bi-invariant metric $\rho$ that is uniformly equivalent to $d$ (the original metric). 
\end{prop}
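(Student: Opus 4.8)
The plan is to define $\rho$ explicitly as a two-sided averaged form of $d$ and then check the three required properties. Set
\[
  \rho(x,y)\coloneqq\sup_{z,w}d(zxw,zyw),
\]
where $z$ and $w$ range over the home sort. This is a formula, being a double supremum of the formula $d(zxw,zyw)$, so once we know it is a metric it is automatically a definable metric. Symmetry and vanishing on the diagonal are inherited from $d$. For the triangle inequality, fix $z,w$ and note $d(zxw,zuw)\le d(zxw,zyw)+d(zyw,zuw)\le\rho(x,y)+\rho(y,u)$; taking the supremum over $z,w$ gives $\rho(x,u)\le\rho(x,y)+\rho(y,u)$. That $\rho$ is a genuine metric, not just a pseudo-metric, uses that every model of a theory of groups is an ordinary group: the usual derivations of a right identity and right inverses from the left-handed axioms go through verbatim, since the relevant term identities hold exactly in a model (the metric there being a metric, not a pseudo-metric). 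In particular $exe=x$, so $\rho(x,y)=0$ forces $d(x,y)=0$, i.e.\ $x=y$.

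Next I would verify bi-invariance. For any group elements $g,h$,
\[
  \rho(gxh,gyh)=\sup_{z,w}d(zgxhw,zgyhw).
\]
Since right translation by $g$ and left translation by $h$ are bijections of the group, $z\mapsto zg$ and $w\mapsto hw$ are surjective, so this supremum equals $\sup_{z',w'}d(z'xw',z'yw')=\rho(x,y)$. Hence $\rho$ is both left- and right-invariant.

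Finally, uniform equivalence with $d$. Taking $z=w=e$ gives $\rho(x,y)\ge d(x,y)$, so $\rho$ uniformly dominates $d$. For the other direction, the key point is that the term $t(z,x,w)=zxw$ is uniformly continuous; this is automatic in continuous logic, where function symbols carry moduli of uniform continuity and terms are closed under composition. So there is a function $\e\mapsto\delta(\e)>0$ such that $d(x,x')<\delta(\e)$ implies $d(zxw,zx'w)\le\e$ for every $z$ and $w$ simultaneously; taking the supremum over $z,w$ yields $d(x,x')<\delta(\e)\Rightarrow\rho(x,x')\le\e$. Thus $d$ uniformly dominates $\rho$, so $\rho$ and $d$ are uniformly equivalent, and being a formula $\rho$ is the desired definable bi-invariant metric.

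I do not expect a real obstacle: morally the proposition records that in continuous logic, in contrast with general topological groups, the group operation is always uniformly continuous (metrics are bounded and function symbols come equipped with uniform continuity moduli), and this is exactly what lets one symmetrize $d$ on both sides without destroying uniform equivalence. The only points needing care are the bookkeeping ones — that models of a theory of groups are honest groups (so translations are bijections and $e$ is two-sided) and that a supremum over a variable of a formula is again a formula, so that $\rho$ genuinely counts as definable.
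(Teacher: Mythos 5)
Your proof is correct and takes essentially the same route as the paper: the same formula $\rho(x,y)=\sup_{z,w}d(zxw,zyw)$, the same triangle-inequality computation, and the same appeal to uniform continuity of formulas for one direction of the equivalence. The only (cosmetic) difference is that you get $\rho\geq d$ directly by plugging in $z=w=e$ (after noting $e$ is two-sided), whereas the paper reduces by compactness to showing $\rho(a,b)=0\Rightarrow a=b$ and then runs the same group-axiom computation; your version is a slight streamlining.
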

\begin{proof}
  Consider the formula $\rho(x,y)=\sup_{z,w}d(zxw,zyw)$. First note that the formula is clearly bi-invariant in the sense that $\rho(x,y)=\rho(uxv,uyv)$ for any $x,y,u,v$. Also note that it is clearly symmetric, non-negative, and satisfies $\rho(x,x)=0$.

  To verify the triangle inequality, consider $a,b,c$ and assume we are working a sufficiently saturated model (since the triangle inequality is first-order, this will imply the same for all models of the theory). For any $f,g$ we have that $d(fag,fcg)\leq d(fag,fbg)+d(fbg,fcg)\leq \rho(a,b)+\rho(b,c)$, so the triangle inequality holds.

  This establishes that $\rho(x,y)$ is a pseudo-metric. To establish that it is a metric we need to show that it is uniformly equivalent to $d$. Since it is a definable pseudo-metric, it is automatically uniformly dominated by $d$, so we only need to show that $d$ is uniformly dominated by it. By compactness, it is enough to show that in any model $\frk{M}$ if $\rho(a,b)=0$, then $d(a,b)=0$. Let $a$ and $b$ in $\frk{M}$ have $\rho(a,b)=0$. Pass to an elementary extension $\frk{N}\succeq\frk{M}$ containing $c$ and $f$ such that $d(caf,cbf)=0$, so in particular $caf=cbf$. By the group axioms we have that $a=c^{-1}caff^{-1}=c^{-1}cbff^{-1}=b$, i.e.\ $d(a,b)=0$. By elementarity, this is true in $\frk{M}$ as well, so we are done.
\end{proof}

Proposition~\ref{prop:group-invar-met} seems to be in conflict with the well known fact that there are metrizable groups that do not admit a bi-invariant metric. There is a hidden assumption here, which is that the group operations are uniformly continuous, and not merely continuous. 

Our proof that strongly minimal groups are Abelian in continuous logic is heavily based on the proof of the analogous statement in \cite[Cor.\ 3.5.5]{buechler_2017}, originally proven in \cite{Reineke1975}. %We will also need the following fact.

%\begin{fact}[{\cite[Thm.\ 4.1]{benyaacov2010}}]\label{fact:strong_min_and_cat_in_cont:1}
%  A type-definable group in an $\omega$-stable theory is definable.
%\end{fact}

One should think of the following condition as being a natural analog of having finite order.

\begin{defn}
  Given a topological group $G$, we say than an element $a$ has \emph{pre-compact orbit} if the orbit of $a$,  $a^{\mathbb{Z}}$, is pre-compact (i.e.\ has compact closure).\footnote{Elements of a topological group with this property are referred to in some literature as just `compact.'}% We have elected to use our term to avoid any more overloading of the word `compact' as a solitary adjective.}
\end{defn}
\begin{lem}
  Let $G$ be a topological group with a topology induced by the metric $d$. If $a\in G$ has pre-compact orbit, then for every $\e > 0$ there are arbitrarily large $\ell$ such that $d(a^\ell,e)<\e$.
\end{lem}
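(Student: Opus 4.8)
The statement is essentially a pigeonhole argument on a pre-compact set. The orbit $a^{\mathbb{Z}}$ has compact closure $K = \overline{a^{\mathbb{Z}}}$, and the claim is that for each $\e > 0$ the set of $\ell \in \mathbb{Z}$ with $d(a^\ell, e) < \e$ is unbounded above.

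First I would fix $\e > 0$. By compactness of $K$, the open cover of $K$ by $(\e/2)$-balls has a finite subcover, so $K$ (and hence $a^{\mathbb{Z}}$) can be covered by finitely many, say $m$, sets of diameter less than $\e$. By the pigeonhole principle, among any $m+1$ distinct powers $a^{n}$ there are two, say $a^{i}$ and $a^{j}$ with $i < j$, lying in the same piece, so $d(a^{i}, a^{j}) < \e$. The key point is that since the metric $d$ on a group structure induces uniformly continuous group operations, left translation by $a^{-i}$ is uniformly continuous; but more simply, I want to use the bi-invariant metric $\rho$ from Proposition~\ref{prop:group-invar-met}, which is uniformly equivalent to $d$. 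Working with $\rho$ instead, $d(a^i,a^j) < \e$ translates (after passing through the uniform equivalence) to a bound $\rho(a^i,a^j) < \e'$, and by left-invariance $\rho(a^i,a^j) = \rho(e, a^{j-i}) = \rho(a^{j-i}, e)$, giving a positive power $k = j - i$ with $\rho(a^k, e)$ small, hence $d(a^k, e)$ small.

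To get \emph{arbitrarily large} such $\ell$, I would iterate: having found $k_1$ with $d(a^{k_1}, e)$ controlled, note that $a^{k_1}$ also has pre-compact orbit (its orbit sits inside $a^{\mathbb{Z}}$), or more directly, observe that if $d(a^k, e) < \e$ via the bi-invariant metric argument then $\rho(a^{tk}, e) \le t \rho(a^k, e)$ is \emph{not} what we want — instead I would simply rerun the pigeonhole argument on the powers $\{a^n : n > N\}$ for any prescribed bound $N$: the set $\{a^n : N < n \le N + m\}$ still has $m+1$ elements landing in $m$ pieces, producing $i < j$ in $(N, N+m]$ with $d(a^i, a^j)$ small, hence $k = j - i$ a positive integer with $\rho(a^k,e)$ small; but to force $k$ itself large rather than just $j$ large, I instead close up multiplicatively: the set $S_\e = \{\ell \ge 1 : d(a^\ell, e) < \e\}$ is nonempty by the above, and using near-bi-invariance $\rho(a^{\ell_1 + \ell_2}, e) \le \rho(a^{\ell_1 + \ell_2}, a^{\ell_2}) + \rho(a^{\ell_2}, e) = \rho(a^{\ell_1}, e) + \rho(a^{\ell_2}, e)$, so if $\rho(a^\ell, e)$ is small enough (say less than $\e'/2$ where $\e'$ controls $\e$), then all multiples $a^{t\ell}$ stay within $\rho$-distance $t \cdot \e'/2$ of $e$ — which still grows.

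The honest fix, and what I expect to be the cleanest route: run pigeonhole to get $i < j$ both in $(N, \infty)$ with $d(a^i,a^j) < \e$, then the positive power $k = j-i$ satisfies $\rho(a^k, e) < \e'$; now among $a^k, a^{2k}, a^{3k}, \dots$ apply pigeonhole again inside $K$ to find $s < t$ with $\rho(a^{sk}, a^{tk})$ small, so $\rho(a^{(t-s)k}, e)$ small with $(t-s)k$ as large as we please by choosing the window of multiples far out. Concretely: for the prescribed lower bound $N$, pick $t - s \ge N$ by taking a long enough run of multiples (a run of length $m+1$ among $\{a^{rk} : r > R\}$ forces a repeat within a block, but to guarantee the \emph{gap} $(t-s)k \ge N$ one takes the run to have length exceeding $m \cdot \lceil N/k\rceil$, forcing two indices in the same piece that are $\ge \lceil N/k\rceil$ apart in $r$). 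The main obstacle is purely bookkeeping: ensuring the produced exponent is both positive and exceeds the prescribed bound, which the bi-invariance of $\rho$ handles by converting $\rho(a^p, a^q) < \delta$ into $\rho(a^{q-p}, e) < \delta$ and then iterating the compactness pigeonhole on an arbitrarily far-out arithmetic progression of powers. Nothing here is deep; it is the standard "pre-compact orbit $\Rightarrow$ recurrence to the identity" argument, with the one subtlety being that we use $\rho$ rather than $d$ so that the translation identity $\rho(a^p, a^q) = \rho(a^{q-p}, e)$ is exact.
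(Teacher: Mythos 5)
Your proof is correct and is essentially the paper's argument: cover the compact orbit closure by finitely many small pieces, pigeonhole to find two powers $a^i,a^j$ that are $\delta$-close, and translate (using invariance of the metric, which both you and the paper invoke implicitly -- the lemma is only ever applied when $d$ is bi-invariant) to get $d(a^{j-i},e)<\e$. The one difference is how the ``arbitrarily large'' clause is secured: the paper centers its finite cover at orbit points $a^n$, $n\in X$, so that any $a^k$ with $k>\max X+m$ is automatically $\delta$-close to some $a^n$ with $k-n>m$, whereas you use a generic cover and need the long-run pigeonhole (a block of $m\lceil N/k\rceil+1$ indices forces two in one piece at distance $\geq\lceil N/k\rceil$) -- which works, though your intermediate step producing the exponent $k$ is redundant, since applying that same long-run pigeonhole directly to consecutive powers $a^r$ already yields an exponent gap $\geq N$.
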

\begin{proof}
  For any $\delta>0$, the set $\bigcup_{n\in\mathbb{Z}}B_{<\delta}(a^n)$ covers the closure of the orbit of $a$. By compactness we can find a finite set $X\subset \mathbb{Z}$ such that $\bigcup_{n\in X}B_{<\delta}(a^n)$ covers the closure of the orbit of $a$, and therefore the orbit of $a$. This implies that for any $m< \omega$ there exists $n \in X$ and $k > n+m$ such that $d(a^n,a^k)<\delta$. Since we can do this for any $\delta>0$ and by considering $a^ka^{-n}$, we have that there are arbitrarily large $\ell$ such that $d(a^\ell,e)<\e$.
\end{proof}

% \begin{defn}
%   Given a topological group $G$ with a topology induced by the metric $d$. We say that two elements $a$ and $b$ are \emph{almost conjugate} if for every $\e >0$ there is some $c$ such that $d(c^{-1}ac,b)<\e$.
% \end{defn}

\begin{lem}\label{lem:group-2}
  Let $G$ be a metric group with bi-invariant metric $d$. If all elements of $G$ have pre-compact orbit and all elements of $G\smallsetminus \{e\}$ are conjugate, then $|G|\leq 2$. %almost conjugate, then $|G|\leq 2$.
\end{lem}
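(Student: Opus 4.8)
The plan is to first use the bi\nobreakdash-invariance of $d$ together with the conjugacy hypothesis to show that $(G,d)$ is uniformly discrete, then to use this (and the preceding lemma) to reduce to a purely group\nobreakdash-theoretic statement, and finally to settle that statement with a short conjugation argument.

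First I would exploit bi\nobreakdash-invariance: for any $g$, conjugation by $g$ is an isometry fixing $e$, so $d(gxg^{-1},e)=d(x,e)$ for all $x$. If $G=\{e\}$ we are done, so assume some $a\neq e$ and set $r=d(a,e)>0$. Since every non\nobreakdash-identity element is conjugate to $a$, every $x\neq e$ has $d(x,e)=r$; and by right\nobreakdash-invariance $d(x,y)=d(xy^{-1},e)=r$ whenever $x\neq y$. Hence $(G,d)$ is $r$\nobreakdash-separated, so in particular $B_{<r}(e)=\{e\}$.

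Next I would show every element has finite order. The orbit $a^{\mathbb{Z}}$ is pre\nobreakdash-compact by hypothesis, so by the preceding lemma there is some $\ell\geq 1$ with $d(a^{\ell},e)<r$; by the previous paragraph this forces $a^{\ell}=e$. Thus $a$, and hence (being conjugate to $a$) every non\nobreakdash-identity element, has finite order, and since conjugate elements have equal order there is a common order $n\geq 2$ of all non\nobreakdash-identity elements. A routine argument gives that $n=p$ is prime: if $n=km$ with $1<k,m<n$, then $a^{k}\neq e$ has order $m\neq n$, contradicting that all non\nobreakdash-identity elements have order $n$.

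Finally I would handle the remaining group theory. If $p=2$ then $G$ has exponent $2$, hence is abelian, and in an abelian group every conjugacy class is a singleton, so ``all non\nobreakdash-identity elements conjugate'' leaves at most one non\nobreakdash-identity element; thus $|G|\leq 2$ and we are done. The crux is to rule out odd $p$, and this is the one genuinely nonroutine step. Suppose $p$ is odd. Then $a^{-1}\neq a$ and $a^{-1}\neq e$, so $a$ and $a^{-1}$ are conjugate; pick $g$ with $gag^{-1}=a^{-1}$. Then $g\neq e$, so $g$ has order $p$, and $g^{2}ag^{-2}=g a^{-1}g^{-1}=(gag^{-1})^{-1}=a$, i.e.\ $g^{2}\in C_{G}(a)$. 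Since $p$ is odd we have $\langle g^{2}\rangle=\langle g\rangle$, so $g\in C_{G}(a)$ and therefore $gag^{-1}=a\neq a^{-1}$, a contradiction. Hence $p=2$, which as above forces $|G|\leq 2$. Everything before the conjugation trick is bookkeeping with the bi\nobreakdash-invariant metric and the preceding lemma; the trick itself is where the parity of $p$ (and hence the pre\nobreakdash-compactness/finite\nobreakdash-order input) is really used.
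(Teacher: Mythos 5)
Your proof is correct, and it takes a genuinely different (and cleaner) route than the paper's. The decisive move you make that the paper does not is the opening observation that bi-invariance plus the conjugacy hypothesis forces the metric to be two-valued: every pair of distinct elements lies at the same distance $r=d(a,e)$, so $B_{<r}(e)=\{e\}$. Combined with the preceding lemma this immediately gives that every element has finite order, and from there the whole problem becomes exact group theory: a common prime order $p$, the case $p=2$ handled by exponent-two-implies-abelian, and odd $p$ ruled out by the computation $gag^{-1}=a^{-1}\Rightarrow g^2\in C_G(a)\Rightarrow g\in\langle g^2\rangle=\langle g\rangle\subseteq C_G(a)$. The paper never establishes discreteness or finite order; it instead tracks the approximate quantity $n_\e$ (the least positive $n$ with $d(a^n,e)<\e$), shows $n_\e$ is $1$ or prime, and runs the same parity-of-the-conjugating-exponent idea in approximate form, concluding $d(a,a^{-1})<\gamma$ for all $\gamma>0$ and hence $a=a^{-1}$ by a limiting argument. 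So the two proofs share the same core trick (conjugating $a$ to $a^{-1}$ and exploiting parity), but your discreteness observation replaces the paper's $\e$-$\gamma$ bookkeeping with exact identities, which is a real simplification given the hypotheses; the paper's approximate formulation is more robust in settings where one cannot first discretize, but that robustness is not needed here.
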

\begin{proof}
  We may assume that $G$ has more than one element. Fix $a \in G \smallsetminus \{e\}$. We need to show that $a^{2}=e$. If $a^{2}=e$, then we are done, so assume that $a^2\neq e$. % otherwise find $\e > 0$ small enough that $d(a,e)\geq \e$ and $d(a,a^{-1})\geq \e$.

  For any $\e > 0$, find the smallest positive $n_\e$ such that $d(a^{n_\e},e)<\e$, which, note, is larger than $1$ whenever $\e \leq d(a,e)$. Note that by the conjugacy condition and the bi-invariance of the metric, $n_\e$ does not depend on the choice of $a$.

  \emph{Claim:} $n_\e$ is either $1$ or a prime number.

  \emph{Proof of claim.} Assume that $n_\e$ is not $1$ and is the composite number $mk$, with $m,k>1$. By assumption, $a^m\neq e$, so since $a^{n_\e}$ and $a^m$ are conjugate, we can find $b$ such that $b^{-1}a^{n_\e}b=a^m$. By the bi-invariance of the metric we have that $d(a^{n_\e},e)=d(b^{-1}a^{n_\e}b,b^{-1}eb)=d(a^m,e) < \e$, contradicting the minimality of $n_\e$. \hfill $\qed_{\text{claim}}$
  
  % almost conjugate, we can find $b$ such that $d(b^{-1}a^mb,a^n) < \e - d(a^n,e)$, implying that
  % \begin{align*}
  %   d(a^m,e)&=d(b^{-1}a^mb,b^{-1}eb)\\&
  %   =d(b^{-1}ab,e)\\
  %           & \leq d(b^{-1}ab,a^n)+d(a^n,e)\\
  %           &< \e - d(a^n,e)+d(a^n,e)=\e ,
  % \end{align*}
  % contradicting the minimality of $n$. \hfill $\qed_{\text{claim}}$

  Suppose that $n_\e$ is not eventually $2$ as $\e \to 0$.  Since $a^{-1}\neq e$, we can find a $b \in G$ such that $b^{-1}ab=a^{-1}$. This implies that for any $k$, $b^{-k}ab^{k}=a^{(-1)^k}$. By our assumption, for any $\gamma > 0$, there is an $\e > 0$ with $\e < \delta$ such that $n_\e$ is odd and such that if $d(c,e) < \e$, then $d(c^{-1}ac,a) < \gamma$. Now we have that $b^{-n_\e}ab^{n_\e}=a^{-1}$, since $n_\e$ is odd, implying that $d(a,a^{-1})<\gamma$. Since we can do this for any $\gamma > 0$, we have that $a=a^{-1}$, which is a contradiction. Therefore $n_\e$ must be eventually $2$ as $\e \to 0$, but this implies that in fact $a^2 = e$.

  So, by the conjugacy condition, we have that every element a of $G$ satisfies $a^2=e$. It is not hard to show that this implies that $G$ is Abelian, but this implies that every element's conjugacy class consists solely of itself, and hence $|G|=2$, as required.
\end{proof}
\begin{lem}\label{lem:stronk-min-subgroup}  
 If $G$ is a strongly minimal group and $H$ is a type-definable, non-compact subgroup of $G$, then $H=G$.
\end{lem}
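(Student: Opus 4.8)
The plan is to first show that any non-compact type-definable subset of $G$ must contain the generic type of $G$, and then to use that $H$ is a \emph{subgroup} to bootstrap from that one type to all of $G$ via an identity of the form $b = c\cdot(c^{-1}b)$.

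Fix a set $A$ over which both $G$ and $H$ are (type-)definable, and let $p\in S_1(A)$ be the generic type of $G$ over $A$, whose existence and uniqueness are given by Corollary~\ref{cor:min-set-type-correspondence}. The first step is to prove $p\in H$. Suppose not. Since $S_1(A)$ is compact Hausdorff, hence normal, and $H$ is topologically closed with $p\notin H$, Urysohn's lemma yields a zeroset $F$ with $H\subseteq F$ and $p\notin F$; replacing $F$ by $G\cap F$ (still a zeroset, since $G$ is definable) we may assume $F\subseteq G$. By normality again pick a zeroset $F'\subseteq G$ with $p\in F'$ and $F'\cap F=\varnothing$. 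Then $F'$ is non-algebraic, since it contains the non-algebraic type $p$, so by minimality of $G$ the zeroset $F$ is algebraic; that is, $F(\mathfrak{M})$ is metrically compact for every model $\mathfrak{M}\supseteq A$. Now the realization map $\mathfrak{M}\to S_1(A)$ is $1$-Lipschitz for the metrics, and $H$, being topologically closed in $S_1(A)$, is metrically closed there; hence $H(\mathfrak{M})$ is a metrically closed subset of the compact set $F(\mathfrak{M})$, so $H(\mathfrak{M})$ is compact for every $\mathfrak{M}$, contradicting that $H$ is non-compact. Thus $p\in H$, and since $H$ is type-definable over $A$ and $p$ is a complete type over $A$, every realization of $p$ lies in $H$.

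Now I would show $H=G$. Let $b\in G$ be arbitrary, and let $c$ realize the generic type $q\in S_1(Ab)$ of $G$ over $Ab$ (again by Corollary~\ref{cor:min-set-type-correspondence}, using strong minimality to get minimality over $Ab$). Since $q$ is non-algebraic, so is its restriction to $A$, which therefore equals $p$; hence every realization of $q$ realizes $p$ and so lies in $H$ by the previous step — in particular $c\in H$. Next consider the $Ab$-definable bijection $\sigma\colon G\to G$ given by $\sigma(x)=x^{-1}b$. A definable bijection carries non-algebraic types to non-algebraic types, so $\sigma_\ast q$ is non-algebraic over $Ab$, and by uniqueness of $q$ we get $\sigma_\ast q=q$, i.e.\ $\tp(c^{-1}b/Ab)=q$. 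Therefore $c^{-1}b$ also realizes $q$ and so $c^{-1}b\in H$. Since $H$ is a subgroup, $b=c\cdot(c^{-1}b)\in H$. As $b$ was arbitrary, $H=G$.

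The argument is essentially the classical one for strongly minimal groups, so the only genuinely continuous-logic point — and the step I would be most careful about — is the first one: one must check that non-compactness of the \emph{type-definable} set $H$ really forces $p\in H$. This uses both the precise definition of an algebraic zeroset (compactness of realizations in \emph{every} model, not merely in the monster) and the fact that a topologically closed subset of a type space is metrically closed, so that $H(\mathfrak{M})$ inherits compactness from an algebraic zeroset containing it. Everything after that is formal group theory together with the unique-generic-type property of strongly minimal sets over arbitrary parameter sets.
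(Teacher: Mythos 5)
Your proof is correct, but it takes a genuinely different route from the paper's. The paper's argument is a two-line coset argument: if $H$ were proper, a non-trivial coset $aH$ would be another non-compact type-definable subset of $G$ disjoint from $H$, and two disjoint non-algebraic (type-definable) subsets contradict strong minimality. You instead first pin down that non-compactness forces the generic type $p$ into $H$, and then use the translation $x\mapsto x^{-1}b$ together with uniqueness of the generic over $Ab$ to write an arbitrary $b\in G$ as a product $c\cdot(c^{-1}b)$ of two realizations of $p$, both in $H$. Both arguments are sound; the coset proof is shorter and uses only that translations preserve non-compactness, while yours is more explicit about the continuous-logic bookkeeping (separating the closed set $H$ from $p$ by zerosets before invoking minimality, and checking that $H(\frk{M})$ inherits compactness from an algebraic zeroset) and, as a byproduct, establishes the standard fact that in a strongly minimal group every element is a product of two generics. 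One small remark: your first step alone already yields the paper's proof — if $H$ were proper, $H$ and a non-trivial coset would both be non-compact and hence would both have to contain the unique generic, which is absurd since they are disjoint — so the group-theoretic second half, while correct and informative, is not strictly needed.
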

\begin{proof}
  If $H$ is a proper subgroup of $G$, then the cosets of $H$ are also non-compact type-definable subsets of $G$. Furthermore, $H$ is disjoint from its non-trivial cosets, which contradicts strong minimality. %\hfill $\qed_{\text{claim}}$
\end{proof}

\begin{prop}\label{prop:stronk-min-Abel}
Let $T$ be a strongly minimal theory of groups. $T$ is Abelian.
\end{prop}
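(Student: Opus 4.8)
The plan is to adapt the classical argument that minimal groups are Abelian (\cite[Cor.\ 3.5.5]{buechler_2017}), replacing ``finite'' by ``compact (equivalently, algebraic)'' throughout, and to route the conclusion through Lemma~\ref{lem:group-2} applied to $G/Z(G)$. First I would make two harmless reductions: since a strongly minimal set is non-algebraic, $G$ is non-compact and in particular infinite; and by Proposition~\ref{prop:group-invar-met} we may replace the metric by a definable bi-invariant metric uniformly equivalent to it, which changes neither the definable group structure nor the class of metrically compact sets. As being Abelian is a first-order condition, it then suffices to derive a contradiction from the assumption that $G$ is non-Abelian.

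So suppose $G$ is non-Abelian. Then the center $Z(G) = \cset{\sup_x d(gx,xg)\brackconv}$, and for each non-central $a$ the centralizer $C(a) = \cset{d(ga,ag)\brackconv}$, are proper type-definable subgroups of $G$, hence compact by Lemma~\ref{lem:stronk-min-subgroup}; being compact type-definable sets they are also algebraic (over $\varnothing$, resp.\ over $a$). Two consequences: every $a \in G$ has pre-compact orbit, because $\langle a\rangle$ lies in $Z(G)$ if $a$ is central and in $C(a)$ otherwise; and, as I explain next, all non-central elements of $G$ are conjugate.

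For the conjugacy claim, let $p$ be the global generic type of $G$. Given a non-central $a$ and a parameter set $B\ni a$, take $h\models p{\upharpoonright}B$ and set $c = h^{-1}ah$; the set of $h'$ with $(h')^{-1}ah' = c$ is a coset of $C(a)$, hence algebraic over $ac$, so $h\in\acl(ac)\subseteq\acl(Bc)$, whence (by additivity of $\dim$ in the pregeometry of $G$) $\dim(c/B)=\dim(hc/B)\geq\dim(h/B)=1$, i.e.\ $c\models p{\upharpoonright}B$. Thus every non-central element is conjugate to a realization of $p$ over any prescribed parameter set. Given non-central $a_1,a_2$, I would first produce a conjugate $c_1$ of $a_1$ with $c_1\models p$ over $a_1a_2$, then a conjugate $c_2$ of $a_2$ with $c_2\models p$ over $a_1a_2c_1$; since $c_1\equiv_{a_1a_2}c_2$, an automorphism fixing $a_1$ and $a_2$ carries $c_1$ to $c_2$, exhibiting $c_2$ as a conjugate of $a_1$, so $a_1\sim c_2\sim a_2$.

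Finally I would pass to $\bar G = G/Z(G)$ with the quotient metric $\bar d(\bar g,\bar h) = \inf_{z\in Z(G)}d(g,hz)$. Since $Z(G)$ is compact, its cosets are compact, hence closed and mutually separated, so $\bar d$ is a genuine bi-invariant metric and $\bar G$ is a metric group of the sort treated in Lemma~\ref{lem:group-2} (it need not be an imaginary sort, but that lemma is purely abstract). Orbits in $\bar G$ are continuous images of pre-compact subsets of $G$, hence pre-compact, and every non-identity element of $\bar G$ is the image of a non-central element of $G$, so by the previous step all non-identity elements of $\bar G$ are conjugate. Lemma~\ref{lem:group-2} then forces $|\bar G|\leq2$; but in that case $G$ is a union of at most two cosets of the compact set $Z(G)$, hence compact, contradicting strong minimality. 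Therefore $G$ is Abelian. The step I expect to be the main obstacle is the conjugacy claim in the third paragraph: it depends on the dimension theory of strongly minimal sets set up earlier, and the two-stage choice of $c_1,c_2$ requires care about which parameter sets genericity is being preserved over.
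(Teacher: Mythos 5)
Your proof is correct, and it shares the paper's overall architecture: pass to a definable bi-invariant metric via Proposition~\ref{prop:group-invar-met}, observe that the center and the centralizers $C(a)$ of non-central elements are proper type-definable subgroups and hence compact (so algebraic) by Lemma~\ref{lem:stronk-min-subgroup}, show that all non-central elements are conjugate, and then feed $G/Z(G)$ with the quotient metric into Lemma~\ref{lem:group-2} to reach a contradiction with non-compactness. Where you genuinely diverge is in the conjugacy step. The paper proves that for non-central $g$ the conjugacy class $g^G$ is \emph{not} pre-compact, by exhibiting a uniformly bicontinuous bijection between the coset space $C(g)\backslash G$ and (the closure of) $g^G$ and showing the former cannot be pre-compact when $C(g)$ is compact and $G$ is not; strong minimality then forces the closures of any two such non-compact zerosets to meet, so any two non-central elements become conjugate after passing to a sufficiently saturated extension. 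You instead show that every non-central element is conjugate to a realization of the generic type over any prescribed parameter set, using the algebraicity of the cosets of $C(a)$ (so that the conjugator $h$ lies in $\acl(ac)$) together with additivity and monotonicity of dimension in the pregeometry of the strongly minimal set, and then conclude by uniqueness of the non-algebraic type over $a_1a_2$ and homogeneity of the monster. Your route is shorter and works directly in the monster, trading the paper's explicit metric analysis of $C(g)\backslash G$ (the continuous analogue of ``the conjugacy class of a non-central element is cofinite'') for the pregeometry calculus developed earlier in the section; the paper's route yields the additional geometric fact that non-central conjugacy classes are non-pre-compact. Both arguments are sound, and the step you flagged as the main obstacle does go through as you sketched it.
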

\begin{proof}
  For the sake of this proof we will write $A\backslash B$ to mean the collection of right cosets of $A$ in the set $B$, i.e.\ $\{Ab:b \in B\}$, as opposed to the set theoretic relative complement, which is written $A\smallsetminus B$ elsewhere in this paper. The $A\smallsetminus B$ notation does not occur in this proof, and the $A\backslash B$ notation does not occur outside of this proof.  %it is in the rest of the paper.
  
  Assume that $T$ is not Abelian. By Proposition~\ref{prop:group-invar-met} we may assume that $T$ has a bi-invariant metric. Let $Z$ be the center of $G$ (i.e.\ the set of all elements $g$ satisfying $gh=hg$ for all $h \in G$). %This is a type-definable set given by the closed condition $\sup_hd(gh,hg)=0$. Since $T$ is not Abelian, we have that $Z$ must be a proper subgroup of $G$, so by the claim it must be compact, and therefore definable.
  For any $g \in G$, let $C(g)$ be the centralizer of $g$ (i.e.\ the set of all elements $h$ satisfying $h^{-1}gh=g$). Note that the orbit of $g$ is contained in $C(g)$. This is a type-definable group defined by the closed condition $d(h^{-1}gh,g)=0$. For any $g \in G$ with $g \notin Z$, $C(g)$ is a type-definable proper subgroup of $G$, so by Lemma~\ref{lem:stronk-min-subgroup} it is compact, hence $g$ has pre-compact orbit.

  \emph{Claim:} For any $g \in G$ with $g \notin Z$, the set of conjugates of $g$, $a^G\coloneqq \{h^{-1}gh:h\in G\}$, is not pre-compact (i.e.\ has compact closure).

  \emph{Proof of claim.} %Assume that $g^G$ is pre-compact.
  We want to argue that there is a natural topological isomorphism between (the metric closure of) $C(g)\backslash G$ and (the metric closure of) $a^G$, where $C(g)\backslash G$ is made into a metric space by the Hausdorff metric on sets, which, by bi-invariance and the fact that $C(g)\backslash G$ is a set of cosets, is equal to $\rho(x,y) = \inf_{z \in C(g)}d(x,zy)$. %a metric as in Lemma~\ref{lem:strong_min_and_cat_in_cont:3}, i.e.\ $\rho(x,y) = \inf_{z \in C(g)}d(x,zy)$.
  Then we will show that $C(g)\backslash G$ is not pre-compact.

  It is a basic algebraic fact that there is a natural bijection between $C(g)\backslash G$ and $g^G$. Explicitly, for any $a$ and $b$, we have that if $a^{-1}ga=b^{-1}gb$, then $gab^{-1}=ab^{-1}g$, so $ab^{-1}\in C(g)$, implying that $C(g)b=C(g)ab^{-1}b=C(g)a$. This gives a natural function from $g^G$ to $C(g)\backslash G$. To see that it is surjective, note that for any $C(g)a$, the element $a^{-1}ga$ maps to $C(g)a$. To see that it is injective, if $G(g)a=G(g)b$, then this implies that $a=cb$ for some $c \in C(g)$, so we have that $a^{-1}ga=b^{-1}c^{-1}gcb=b^{-1}gb$, as required. 

  All we need to do is argue that this bijection is metrically uniformly continuous with metrically uniformly continuous inverse. I claim that for any $\e > 0$, there is a $\delta > 0$ such that if $d(ag,ga)< \delta$, then $d(a,C(g))<\e$. This follows from compactness and the fact that $C(g)$ is an algebraic set. %For any given $\gamma > 0$ find an $\e > 0$ small enough that for any $a$, if $d(b,c) < \e$, then $d(ab,ac)<\gamma$.
  For any given $\e > 0$, find such a $\delta > 0$ and consider $a$ and $b$ such that $d(a^{-1}ga,b^{-1}gb) < \delta$. By bi-invariance, this implies that $d(gab^{-1},ab^{-1}g) < \delta$, so we have that there is some $c \in C(g)$ with $d(ab^{-1},c) < \e$. This implies that $d(a,cb) < \e$, so we have that the distance between $C(g)a$ and $C(g)b$ in the $\rho$-metric is less than $\e$. Since we can do this for any $\e > 0$, we have that the map is uniformly continuous.

  Now we just need to show that the map has uniformly continuous inverse. For any $\e > 0$, find a $\delta > 0$ small enough that for any $c$ if $d(a,b) < \delta$, then $d(a^{-1}ca,b^{-1}cb)< \e$.  Assume that $C(g)a$ and $C(g)b$ have $\rho$-distance less than $\delta$. By bi-invariance, this implies that there is some $c \in C(g)$ such that $d(a,cb) < \delta$. This implies, by the choice of $\e$, that $d(a^{-1}ga,b^{-1}c^{-1}gcb) < \e$, and so $d(a^{-1}ga,b^{-1}gb) < \e$. Since we can do this for any $\e > 0$, we have that the inverse of the bijection is uniformly continuous.

  To show that $C(g)\backslash G$ is not pre-compact, first note that we have already established that $C(g)$ is compact. If $C(g)\backslash G$ were pre-compact, then this would imply that for any $\e > 0$ there is a finite set of elements $X \subseteq C(g)\backslash G$ such that $G \subseteq \bigcup_{x \in X}q^{-1}(x)^{<\e}$, where $q : G \rightarrow C(g)\backslash G$ is the natural quotient map. Since each pre-image $q^{-1}(x)$ is isometric to $C(g)$, this is enough to imply that $G$ is compact, which contradicts our assumptions. \hfill $\qed_{\text{claim}}$

  Now we have that for any $a$ and $b$ not in $Z$, the metric closure of $a^G$ is equal to the metric closure of $b^G$, because these sets are definable and non-compact, and so, by strong minimality, must have non-empty intersection in some elementary extension, which implies in the elementary extension that there is some $c$ such that $c^{-1}ac=b$. By passing to a sufficiently saturated elementary extension, we may assume that every pair $a$ and $b$ not in $Z$ are conjugate. This implies that in $G/Z$ any two non-identity elements are conjugate (because conjugacy commutes with group homomorphisms). We also still have that every element has pre-compact orbit (after endowing $G/Z$ with the Hausdorff metric on cosets, as we did with $C(g)\backslash G$), so by Lemma~\ref{lem:group-2} we have that $|G/Z|\leq 2$. This implies that $Z$ must be non-compact (otherwise $G$ would be compact), but then by Lemma~\ref{lem:stronk-min-subgroup} we have that $G=Z$, which contradicts our assumption that $G$ was not Abelian. 
  %(in a sufficiently saturated elementary extension of $G$) we'd have a uniformly continuous quotient map on $G$ with compact fibers.01
%
 % 
\end{proof}

The corresponding result in \cite{buechler_2017}---namely, Proposition 3.5.2---is the statement that any infinite $\omega$-stable group has an infinite definable Abelian subgroup. The proof as stated does not generalize to continuous $\omega$-stable groups, as we now have many different Morley ranks rather than a single one.

  \begin{quest}
    To what extent does \cite[Prop.\ 3.5.2]{buechler_2017} generalize to arbitrary $\omega$-stable groups in continuous logic? Is it true that every non-compact $\omega$-stable group has a definable non-compact Abelian subgroup? Does it follow if we assume that models of the theory are locally compact?
  \end{quest}

  It is known that type-definable subgroups in $\omega$-stable theories are definable \cite{benyaacov2010}, so it is not preposterous to hope that there may be nice definable subgroups in arbitrary $\omega$-stable continuous groups. %Really what makes the proof work in discrete logic and for strongly minimal groups is a 

  % While it does not completely fit with the broader themes of this paper,
  We would like to take a moment to negatively answer a question given in the introduction of \cite{benyaacov2010}, which to our knowledge does not have an answer at least in the published literature. In \cite{benyaacov2010}, Ben Yaacov asked whether or not type-definable groups in stable theories are always intersections of some family of definable groups. It turns out that, unlike in discrete logic, superstability is not enough to ensure that type-definable groups are the intersections of families of definable groups in continuous logic. It is not hard to show that the structure $(\mathbb{Q},+,\cos,\sin)$\footnote{One should think of this as $(\mathbb{Q},+)$ with a non-trivial homomorphism to the circle group $S^1$.} with the discrete metric has a weakly minimal and therefore superstable theory but also has the property that the type-definable group given by $\{\cos(x)=1,\sin(x)=0\}$ is not, in sufficiently saturated elementary extensions, the intersection of any family of definable subgroups. In fact, the only definable subgroups in this theory are the trivial subgroup and the group itself. This is in line with the general phenomenon, discussed in Section~\ref{sec:dict}, that, while $\omega$-stable theories are well behaved with regards to definable sets, even superstable theories are not. 

  For a full characterization of \esscont\ strongly minimal groups, we will need the following significant fact from the theory of topological groups.

  \begin{fact}[{\cite[Thm.\ 25]{Morris1977}}]\label{fact:lcHAg-char}
    Every locally compact Hausdorff Abelian group has an open subgroup topologically isomorphic to $\mathbb{R}^n\times K$ for some compact group $K$ and some non-negative integer $n$.
  \end{fact}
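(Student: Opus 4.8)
Since this is a classical result in the theory of topological groups, the plan is to cite it; but here is the shape of a proof. Let $G$ be a locally compact Hausdorff Abelian group and let $V$ be a compact symmetric neighborhood of the identity. First I would pass to the subgroup $H$ generated by $V$: since $H\supseteq V$ contains a neighborhood of the identity, $H$ is an open (hence also closed) subgroup of $G$, it is compactly generated, and as an open subgroup it is again locally compact Hausdorff Abelian. An open subgroup of $H$ is open in $G$, so it suffices to establish the statement for the compactly generated group $H$.

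The key input is the structure theorem for compactly generated locally compact Abelian groups (Pontryagin--van Kampen): such an $H$ is topologically isomorphic to $\mathbb{R}^n\times\mathbb{Z}^m\times K$ for a compact group $K$ and non-negative integers $n,m$. This is the substantive step and the one I would cite outright from \cite{Morris1977}; a proof of it proceeds by splitting off a maximal vector subgroup $\mathbb{R}^n$, showing that what remains has a compact open subgroup, and then analyzing the resulting finitely generated discrete quotient.

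Finally, since $\mathbb{Z}^m$ is discrete, $\{0\}^m$ is open in it, so $\mathbb{R}^n\times\{0\}^m\times K$ is an open subgroup of $\mathbb{R}^n\times\mathbb{Z}^m\times K$, and it is topologically isomorphic to $\mathbb{R}^n\times K$. Transporting back through the isomorphism, this gives an open subgroup of $H$, hence of $G$, of the required form. The only real obstacle is the compactly generated structure theorem itself; the reductions surrounding it are routine topological-group manipulations.
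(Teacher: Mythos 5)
The paper gives no proof of this Fact; it simply cites \cite[Thm.\ 25]{Morris1977}, which is exactly what you do, and your supplementary sketch (pass to the open compactly generated subgroup generated by a compact symmetric neighborhood of the identity, apply the Pontryagin--van Kampen structure theorem to get $\mathbb{R}^n\times\mathbb{Z}^m\times K$, then take the open subgroup $\mathbb{R}^n\times\{0\}^m\times K$) is the standard and correct route. Nothing to add.
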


  We will also need the following well known algebraic fact regarding Abelian groups.

  \begin{fact}\label{fact:alg-well-known}
    An Abelian group $G$ is divisible if and only if it is an injective object in the category of Abelian groups, i.e.\ if and only if for any group $H$ with subgroup $F \subseteq H$ and any homomorphism $f:F\rightarrow G$, there exists a homomorphism $g: H \to G$ extending $f$.\editcom{Added second `if and only if.'}
  \end{fact}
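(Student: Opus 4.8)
The plan is to prove the two implications separately; this is essentially Baer's criterion specialized to the ring $\mathbb{Z}$, so nothing deep is needed and the whole argument is self-contained.

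First I would handle the easy direction, that injectivity implies divisibility. Given $g \in G$ and a positive integer $n$, consider the subgroup $n\mathbb{Z} \subseteq \mathbb{Z}$ together with the homomorphism $h : n\mathbb{Z} \to G$ determined by $h(n) = g$. Injectivity of $G$ applied to the inclusion $n\mathbb{Z} \hookrightarrow \mathbb{Z}$ yields a homomorphism $\tilde{h} : \mathbb{Z} \to G$ extending $h$, and then $n \cdot \tilde{h}(1) = \tilde{h}(n) = g$, so $g$ is divisible by $n$. Since $g$ and $n$ were arbitrary, $G$ is divisible.

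For the converse, suppose $G$ is divisible, fix a group $H$ with subgroup $F \subseteq H$ and a homomorphism $f : F \to G$, and apply Zorn's lemma to the poset of pairs $(F', f')$ with $F \subseteq F' \subseteq H$ and $f' : F' \to G$ extending $f$, ordered by extension of the domain. Every chain has an upper bound obtained by taking unions, so there is a maximal element $(F^*, f^*)$, and I claim $F^* = H$. If not, pick $x \in H \smallsetminus F^*$ and let $I = \{ n \in \mathbb{Z} : nx \in F^* \}$, which is a subgroup of $\mathbb{Z}$, hence equal to $m\mathbb{Z}$ for some integer $m \geq 0$. If $m = 0$, extend $f^*$ to $F^* + \mathbb{Z}x$ by sending $x$ to $0$ (well defined since then $F^* \cap \mathbb{Z}x = \{0\}$). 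If $m > 0$, use divisibility of $G$ to pick $c \in G$ with $mc = f^*(mx)$, and extend by $f^{**}(y + nx) = f^*(y) + nc$ for $y \in F^*$, $n \in \mathbb{Z}$. In either case the resulting homomorphism properly extends $f^*$, contradicting maximality; hence $F^* = H$ and $f^*$ is the desired extension of $f$.

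The only point requiring any care — the ``main obstacle,'' such as it is — is checking that the formula $f^{**}(y + nx) = f^*(y) + nc$ in the $m > 0$ case is well defined: if $y + nx = y' + n'x$ with $y, y' \in F^*$, then $(n - n')x = y' - y \in F^*$, so $m \mid n - n'$; writing $n - n' = km$ one computes $f^*(y') - f^*(y) = f^*\bigl((n - n')x\bigr) = k\,f^*(mx) = kmc = (n - n')c$, which is exactly the required compatibility $f^*(y) + nc = f^*(y') + n'c$. Verifying additivity of $f^{**}$ and of the $m = 0$ extension is then routine, and the statement follows.
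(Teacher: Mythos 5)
Your proof is correct and complete: the first direction is the standard application of injectivity to the inclusion $n\mathbb{Z}\hookrightarrow\mathbb{Z}$, and the second is the usual Zorn's-lemma extension argument (Baer's criterion specialized to $\mathbb{Z}$), with the one delicate point---well-definedness of $f^{**}$ on $F^*+\mathbb{Z}x$ when $m>0$---checked correctly. The paper states this as a well-known Fact and gives no proof of its own, so there is nothing to compare against; your argument is the standard one and would serve as a complete justification.
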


  Facts~\ref{fact:lcHAg-char} and \ref{fact:alg-well-known} imply that every locally compact Hausdorff Abelian group can be written in the form $\mathbb{R}^n\times G$, with $n$ a non-negative integer and $G$ a totally disconnected locally compact Hausdorff group. (To see this, note that any homomorphism extending the inclusion map of $\mathbb{R}^n$ into $\mathbb{R}^n\times K$ to the entire group must be continuous, and $G$ can be taken to be the kernel of this extension.)

\begin{lem}\label{lem:strong_min_and_cat_in_cont:3} %DO I NEED THIS?
  If $G$ is a group structure with a bi-invariant metric and $H$ is a definable subgroup of $G$, then the imaginary sort given by quotienting by the definable pseudo-metric $\rho(x,y)=\inf_{z\in H}d(x,yz)$ corresponds to the set $G/H$ of left cosets of $H$. (And a similar metric gives the set of right cosets of $H$.)

  If $H$ is a normal subgroup, then the natural group structure on $G/H$ is definable on $G/\rho$.
\end{lem}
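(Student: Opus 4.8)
The plan is to verify two things for the quotient $G/\rho$ where $\rho(x,y) = \inf_{z\in H} d(x,yz)$: first, that $\rho$ is genuinely a definable pseudo-metric so that $G/\rho$ is a legitimate imaginary sort in the sense of Definition~\ref{defn:imag}, and that its points (after metric completion) biject with the left cosets $G/H$; second, in the normal case, that the group operations descend to $\varnothing$-definable (in the sense of continuous logic, uniformly continuous and given by formulas) functions on $G/\rho$.

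First I would check that $\rho$ is a definable pseudo-metric. Since $H$ is a definable set, $\inf_{z\in H} d(x,yz)$ is a formula (quantifying over a definable set preserves being a formula, by the basic theory of definable sets in \cite{MTFMS}); reflexivity $\rho(x,x)=0$ is immediate since $e\in H$. Symmetry uses bi-invariance: $\rho(x,y) = \inf_{z\in H} d(x,yz) = \inf_{z\in H}d(xz^{-1},y) = \inf_{w\in H} d(xw,y) = \rho(y,x)$, using that $H$ is closed under inverses and left-translating $d$. For the triangle inequality, given $x,y,w$ and $z_1,z_2\in H$, we have $d(x,wz_1z_2) \le d(x, y z_2) + d(yz_2, w z_1 z_2) = d(x,yz_2) + d(y, wz_1)$ by right-invariance of $d$; taking infima over $z_1, z_2$ and noting $z_1 z_2$ ranges within $H$ gives $\rho(x,w) \le \rho(x,y) + \rho(y,w)$. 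Next, $\rho(x,y) = 0$ iff $y^{-1}x$ lies in the metric closure of $H$, which is $H$ since $H$ is definable (hence closed); so $\rho(x,y)=0$ iff $xH = yH$, i.e. the $\rho$-classes are exactly the left cosets. The points of $G/\rho$ are the metric completion of the image of $G$; since each coset $yH$ is a closed (indeed definable, being a translate of $H$) subset of $G$ and the quotient metric between distinct cosets $xH, yH$ is $\inf_{z\in H} d(x,yz) > 0$, the image is already complete when $G$ is (and in a monster model it is), so the underlying set of $G/\rho$ is $G/H$. This is the content of the first paragraph of the statement, and it also covers the parenthetical remark about right cosets by the symmetric choice $\rho'(x,y) = \inf_{z\in H} d(x, zy)$.

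For the second statement, assume $H$ is normal. I would define the multiplication on $G/\rho$ by $\bar m(\pi(x), \pi(y)) = \pi(xy)$ and inversion by $\bar\iota(\pi(x)) = \pi(x^{-1})$, where $\pi : G \to G/\rho$ is the projection, and check these are well-defined and given by formulas that are uniformly continuous with respect to $\rho$. Well-definedness is the usual algebraic computation using normality: if $xH = x'H$ and $yH = y'H$ then $xyH = x'y'H$. For definability, the key point is a uniform-continuity estimate: I claim for every $\e>0$ there is $\delta>0$ such that $\rho(x,x')<\delta$ and $\rho(y,y')<\delta$ imply $\rho(xy, x'y') < \e$. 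Using bi-invariance of $d$: pick $z_1, z_2 \in H$ with $d(x, x'z_1) < \delta$ and $d(y, y'z_2) < \delta$; then $d(xy, x'z_1 y' z_2) \le d(xy, x'z_1 y) + d(x'z_1 y, x'z_1 y' z_2) < \delta + \delta$ by right- and left-invariance. Now by normality $x' z_1 y' z_2 = x' y' (y'^{-1} z_1 y') z_2 \in x'y' H$, so $\rho(xy, x'y') < 2\delta$, and we take $\delta = \e/2$. The distance predicate $\rho(\bar m(u,v), w)$ on $(G/\rho)^3$ can then be written as $\inf_{x,y\in G}\big(\rho(\pi(x),u)+\rho(\pi(y),v)+\rho(\pi(xy),w)\big)$, which is a formula because $\rho$ is, and the uniform-continuity estimate shows this genuinely defines the graph of $\bar m$ as a definable function; similarly for $\bar\iota$, where $\rho(\pi(x^{-1}),\pi(x'^{-1})) = \rho(x^{-1},x'^{-1})$ equals (using bi-invariance) $\inf_{z\in H}d(x^{-1}, x'^{-1}z)$, and one checks this is controlled by $\rho(x,x')$ via $d(x^{-1}, x'^{-1} z) = d(z^{-1} x', x)$ after substituting and using that $H$ is inverse-closed and normal.

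I expect the main obstacle to be nothing deep but rather bookkeeping: making the uniform-continuity estimates for $\bar m$ and $\bar\iota$ genuinely use bi-invariance of $d$ (rather than $\rho$) correctly, and being careful that ``definable function on the quotient'' is being verified in the precise sense of the paper — i.e. that the candidate formulas above actually are distance predicates of the graphs, which reduces to the well-definedness plus the uniform continuity. Since the metric $d$ is assumed bi-invariant (which we may always arrange for group structures by Proposition~\ref{prop:group-invar-met}, though here it is a hypothesis), all the translations behave isometrically and the estimates go through cleanly. One minor point to state explicitly is that normality is exactly what is needed to move the stray factor $y'^{-1} z_1 y'$ back into $H$; without it the multiplication does not descend, which is why the hypothesis appears.
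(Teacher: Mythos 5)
Your proof is correct and follows essentially the same route as the paper's: verify that $\rho$ is a definable pseudo-metric whose zero-classes are exactly the left cosets (using bi-invariance of $d$ and closedness of the definable set $H$), and then, for normal $H$, observe that $\rho(xy,w)$ is $\rho$-invariant and uniformly continuous in $(x,y,w)$ so that multiplication descends to the quotient sort. The only cosmetic difference is that the paper dispatches the pseudo-metric axioms in one line by identifying $\rho(x,y)$ with the Hausdorff distance $d_H(xH,yH)$, whereas you check the axioms directly; your extra uniform-continuity estimate for $\bar m$ just makes explicit what the paper compresses into ``$\rho(q(a)q(b),q(c))=\rho(ab,c)$ is a $\rho$-invariant formula.''
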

\begin{proof}
  The fact that $\rho$ is a pseudo-metric follows from the bi-invariance of the metric (really we only need right invariance). The easiest way to see this is that $\rho(x,y)$ is equal to $d_H(xH,yH)$, where $d_H$ is the Hausdorff metric on sets.

  We clearly have that if $a$ and $b$ are conjugate by an element of $H$, then $\rho(a,b)=0$. Conversely, assume that $\rho(a,b)=0$. This implies that for any $\e >0$ we can find a $c$ such that $d(a,bc)<\e$. By bi-invariance, this is equivalent to $d(e,a^{-1}bc)<\e$. So we have that for every $\e >0$, $\dinf (a^{-1}b,H)<\e$, which implies that $a^{-1}b\in H$, since $H$ is definable and therefore closed. % For any NOT DONE DO I NEED THIS?

  Now we just need to show that if $H$ is a normal subgroup, then the group structure on $G/H$ is definable on $G/\rho$. Let $q: G \rightarrow G/\rho$ be the quotient map. For any $a,b,c \in G$, we have that $\rho(q(a)q(b),q(c))$ is equal to $\rho(ab,c)$, since $H$ is a normal subgroup. This is therefore a $\rho$-invariant formula and corresponds to a formula on the imaginary sort.
\end{proof}

\begin{thm}\label{thm:group_char}
  A completely metrizable topological group $G$ can be given a metric under which it is a strongly minimal group if and only if it is non-compact and
  \begin{itemize}
  \item there is a prime $p$ such that every element of $G$ has order $p$ or
  \item $G$ is divisible and can be written as $\mathbb{Q}^\kappa\oplus\mathbb{R}^n\oplus H$, where $H$ has a compact subgroup $K$ such that $H/K = \bigoplus_p(\mathbb{Z}/p^\infty\mathbb{Z})^{\alpha_p}$, with $n$ and each $\alpha_p$ finite and $\kappa$ arbitrary, where $\mathbb{Z}/p^\infty \mathbb{Z}$ is the $p$-Pr\"ufer group.\footnote{The $p$-Pr\"ufer group is isomorphic to the group of $p^n$th roots of unity under multiplication.}
  \end{itemize}
  Furthermore, the resulting theory is \esscont\ if and only $G$ is of the second form with $n>0$.
\end{thm}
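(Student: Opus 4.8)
\emph{Setup and standing facts.} By Proposition~\ref{prop:stronk-min-Abel} we may assume $G$ is Abelian, and by Proposition~\ref{prop:group-invar-met} that its metric is bi-invariant. Two observations are used throughout. First, a strongly minimal set is non-algebraic, so $G$ is non-compact; and if $H\leq G$ is a proper definable subgroup that is not compact, then two distinct cosets of $H$ are disjoint non-algebraic definable sets, contradicting minimality --- so \emph{every proper definable subgroup of $G$ is compact}. Since strongly minimal theories are $\omega$-stable and type-definable subgroups are definable in that setting (\cite{benyaacov2010}), the same holds for proper type-definable subgroups, sharpening Lemma~\ref{lem:stronk-min-subgroup}. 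Second, by Lemma~\ref{lem:uniform-local-compactness} closed $\e$-balls are totally bounded for small $\e$, so (the metric being complete) $G$ is a locally compact Hausdorff Abelian group, and Fact~\ref{fact:lcHAg-char} gives it an open subgroup isomorphic to $\mathbb{R}^n\times K$ with $K$ compact.

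\emph{Forward direction.} For each prime $p$ the subgroup $\{x\in G : px = e\}$ is type-definable, hence compact or all of $G$. If it equals $G$ for some $p$, every non-identity element of $G$ has order $p$: the first alternative. So assume it is compact for every $p$; the target is the second alternative. The crucial step is to show $G$ is divisible. For a prime $p$, the closed subgroup $\overline{pG}$ is type-definable, hence compact or $G$; one first rules out that $\overline{pG}$ is compact for two distinct primes (their sum would be a compact dense subgroup, forcing $G$ compact). If $\overline{pG}=G$, then $pG$ is a non-meagre $F_\sigma$ subgroup of the Baire group $G$, so by a Pettis-type argument $x\mapsto px$ is surjective and $pG=G$. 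If $\overline{pG}$ is compact and proper, then $G/\overline{pG}$ (via Lemma~\ref{lem:strong_min_and_cat_in_cont:3}) is a non-compact strongly minimal imaginary group of exponent $p$, and one must extract a contradiction --- this is the delicate point, which I would handle by showing such a group is discrete (so $\overline{pG}$ is open in $G$) and then exploiting that $pG$ lies in a compact open set while $G$ is non-compact and $x\mapsto px$ has compact fibers, via an open-mapping/counting argument. Granting divisibility, Facts~\ref{fact:lcHAg-char} and \ref{fact:alg-well-known} with the structure theory of divisible locally compact Abelian groups give $G\cong\mathbb{Q}^\kappa\oplus\mathbb{R}^n\oplus H$, with $H$ divisible, possessing a compact open $K$ and $H/K$ discrete divisible torsion, hence $\bigoplus_p(\mathbb{Z}/p^\infty\mathbb{Z})^{\alpha_p}$. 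Here $n$ is finite since $\mathbb{R}^n$ is not locally compact for infinite $n$, and each $\alpha_p$ is finite since otherwise a coordinate projection of the Prüfer part (together with the complementary summands) yields a proper non-compact definable subgroup, contradicting the standing fact.

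\emph{Converse and the \esscont\ refinement.} Conversely, suppose $G$ is non-compact of one of the two forms. For the first form, give $G$ the discrete metric; then $\mathrm{Th}(G)$ is the strongly minimal theory of infinite $\mathbb{F}_p$-vector spaces. For the second form, give $\mathbb{Q}^\kappa$ and any complementary torsion-free part of $H$ the discrete metric, give $\mathbb{R}^n$ a bounded Euclidean metric (strongly minimal by the remark after Example~\ref{exa:min-non-triv}), and give $H$ a bi-invariant metric some closed ball of which is exactly $K$ and which restricts on each Prüfer quotient to a $p$-adic-type metric (so cosets of $K$ are uniformly separated and $H$ is locally compact, generalizing the metric on $\mathbb{Q}_p$); combine these on the direct sum by a $\max$ of rescaled factor metrics, as in Definition~\ref{defn:imag}. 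Strong minimality of the resulting theory is then checked by analyzing definable subsets of an arbitrary model: via a continuous-logic analog of positive-primitive (Baur--Monk) elimination for modules, together with the approximate-intersection tools of Section~\ref{sec:dict}, every definable subset is a small Hausdorff perturbation of a Boolean combination of cosets of definable subgroups; the explicit metric forces the definable subgroups to be exactly the compact ones and $G$ itself, so every definable subset is compact or co-pre-compact in every model. Finally, by Theorem~\ref{thm:strong_min_and_cat_in_cont:2}, $\mathrm{Th}(G)$ is \esscont\ iff some model has a non-compact connected component: in the first form all models are discrete; in the second form with $n=0$ the identity component of any model lies in every open subgroup, hence (as $\mathbb{Q}^\kappa$ is discrete) inside a compact subgroup coming from $H$, so all components are compact; and in the second form with $n>0$ the closed connected subspace $\{0\}\oplus\mathbb{R}^n\oplus\{0\}$ of $G$ is non-compact, so the component containing it is non-compact. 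Hence $\mathrm{Th}(G)$ is \esscont\ iff $G$ has the second form with $n>0$.

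\emph{Main obstacle.} The two substantial points are the divisibility step in the forward direction --- ruling out a compact proper $\overline{pG}$, which requires carefully combining the torsion hypothesis with uniform local compactness and the structure of $G/\overline{pG}$ --- and the strong-minimality verification in the converse, which must simultaneously handle the discrete, Euclidean, and $p$-adic pieces and show that their combination introduces no unexpected definable subgroups. I expect the latter to be the most laborious part, and the former to be the subtlest.
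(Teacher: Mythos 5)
Your converse direction contains the real gap. You propose to verify strong minimality by invoking ``a continuous-logic analog of positive-primitive (Baur--Monk) elimination'' and then concluding from the fact that ``every definable subset is compact or co-pre-compact in every model.'' Neither half of this works as stated: no such elimination theorem is available here (the language is just $+$, $^{-1}$, $e$ and the metric, and proving a continuous Baur--Monk theorem would be a substantial separate project), and, more fundamentally, the criterion you end on is exactly the na\"ive translation of strong minimality that this paper rejects --- Example~\ref{ex:main-bad-example} is a theory in which every definable set is finite or cofinite yet which has $2^{\aleph_0}$ non-algebraic $\varnothing$-types; by the characterization of minimality one also needs the relevant set to be dictionaric, i.e.\ to have enough definable sets to separate types. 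The proof in the paper avoids all of this by working with explicit metrics and automorphisms: for the second form it shows (the Claim in the proof) that each $\{x: p^nx=e\}$ is a \emph{definable, algebraic} set under the constructed metric, deduces that every element of an elementary extension $G'$ not in $G$ is divisible and torsion-free, concludes $G'\cong G\oplus\mathbb{Q}^\lambda$ with the discrete metric on the new summand, and then observes that the automorphism group acts transitively on $G'\smallsetminus G$ (and likewise one level up), which gives strong minimality directly. If you want to salvage your route you must at minimum supply the dictionaricness/type-counting half of the argument, at which point the automorphism argument is shorter anyway.

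In the forward direction your skeleton is close to the paper's but two steps need repair. First, your derivation of $pG=G$ from $\overline{pG}=G$ via a Pettis-type Baire-category argument is shaky: $G$ is completely metrizable but need not be separable or $\sigma$-compact (e.g.\ $\mathbb{Q}^\kappa$ discrete), so $pG$ is not obviously $F_\sigma$ or Baire-measurable, and non-meagreness is not free. The paper sidesteps this entirely: $\overline{pG}=G$ is a first-order condition, so an $\omega$-saturated elementary extension $G'$ is genuinely divisible; one classifies $G'$ via Fact~\ref{fact:lcHAg-char} and Fact~\ref{fact:alg-well-known} and then descends to $G$ as an open subgroup, using that the algebraic sets $\{x:px=e\}$ do not grow. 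Second, the subcase you flag as ``the delicate point'' ($\overline{pG}$ compact and proper while $\{x:px=e\}$ is compact) is in fact the easy one and needs no detour through discreteness of $G/\overline{pG}$: the map $x\mapsto px$ then has compact fibers and image contained in a compact set, which forces $G$ itself to be compact, a contradiction; this is precisely how the paper concludes that when $\overline{pG}$ is proper one must have $\{x:px=e\}=G$, i.e.\ the first bullet point. Your construction of metrics in the converse and your treatment of the \esscont\ clause via Theorem~\ref{thm:strong_min_and_cat_in_cont:2} are in the right spirit, but without a correct strong-minimality verification the converse is not proved.
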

\begin{proof}
  $(\Rightarrow)$. Suppose that $G$ is a strongly minimal group structure. %By passing to an elementary extension, we may assume that $G$ is $\aleph_1$-saturated.
  By Proposition~\ref{prop:stronk-min-Abel}, $G$ is Abelian.

  For any prime $p$, let  $\overline{pG}$ denote the metric closure of $\{px:x\in G\}$, which is a subgroup. It is definable by $d(x,\overline{pG})=\inf_{y}d(x,py)$. Suppose that for some $p$, $\overline{pG}$ is a proper subgroup of $G$. By Lemma~\ref{lem:stronk-min-subgroup}, this implies that $\overline{pG}$ is compact. Consider the type-definable subgroup $\{x\in G: px=e\}$. If this is compact, then we have that $G$ is homeomorphic to a product of two compact sets, which is a contradiction, therefore $\{x\in G: px=e\}$ must be all of $G$, by Lemma~\ref{lem:stronk-min-subgroup}, and so $G$ falls under the first bullet point.

  Now assume that for every $p$, $\overline{pG}$ is all of $G$. Let $G^\prime$ be an $\omega$-saturated elementary extension of $G$. We now have that $G^\prime$ is a divisible group. By Fact~\ref{fact:lcHAg-char}, $G^\prime$ can be written as $\mathbb{R}^n\oplus H$, where $H$ has a compact subgroup $K$ such that $H/K$ is a discrete Abelian group. $n$ must be finite by local compactness. By replacing $K$ with $G\cap K$, we may assume that $K$ is a subgroup of $G$. Since $G^\prime$ is divisible, we must have that $H$ is divisible as well. By the characterization of divisible Abelian groups, $H/K$ can be written as $\mathbb{Q}^\kappa\oplus \bigoplus_p (\mathbb{Z}/p^\infty\mathbb{Z})^{\alpha_p}$. By Fact~\ref{fact:alg-well-known}, we have that $H$ can be written as $\mathbb{Q}^k\oplus L$, where $K$ is a subgroup of $L$ and $L/K$ is isomorphic to $\bigoplus_p (\mathbb{Z}/p^\infty\mathbb{Z})^{\alpha_p}$. For each prime $p$, the type-definable subgroup $\{x \in G^\prime: px=e\}$ cannot be all of $G^\prime$ and so by Lemma~\ref{lem:stronk-min-subgroup} is compact. This implies that $\alpha_p$ must be finite. Since we can do this for every prime $p$, we have that $G^\prime$ falls under the case in the second bullet point.

  $G$ is an open subgroup of $G^\prime$. This implies that $\mathbb{R}^n\oplus \{e\} \subseteq G^\prime$ is also a subgroup of $G$. This implies that we can write $G$ as $\mathbb{R}^n\oplus L$ with $K \subseteq L$ and $L/K$ a discrete group. Each subgroup $\{x \in G: px = e\}$ is algebraic, so we must have that $\{x \in G: px = e\} = \{x \in G^\prime : px = e\}$. Thinking of $L/K$ as a subgroup of $H/K$, this implies that each factor of the form $\mathbb{Z}/p^\infty\mathbb{Z}$ is contained in $L/K$. Therefore $L/K$ must be of the form $\mathbb{Q}^\lambda\oplus \bigoplus_p(\mathbb{Z}/p^\infty\mathbb{Z})^{\alpha_p}$ for some $\lambda \leq \kappa$, and $G$ falls under the case in the second bullet point.

  $(\Leftarrow)$. Let $G$ be a completely metrizable group such that every element of $G$ has order $p$. Let $K$ be a compact open subgroup of $G$ such that $G/K$ is discrete. By standard group theory, $G$ and $K$ are, algebraically speaking, vector spaces over $\mathbb{F}_p$. Let $d^K$ be an arbitrary bi-invariant metric on $K$ inducing the topology with diameter at most $\frac{1}{2}$. Let $d$ be a metric on all of $G$ defined by $d(a,b)=d^K(ab^{-1},e)$ if $ab^{-1}\in K$ and $d(a,b)=1$ otherwise. Because $d^K$ is bi-invariant, this is a metric. Under this metric,  $K$ is contained in the $\frac{2}{3}$-ball of $e$ and so is in the algebraic closure of $\varnothing$ and is in every model of $\mathrm{Th}(G)$. If $G^\prime$ is any model of $\mathrm{Th}(G)$ and $G^{\prime\prime}$ is any elementary extension of $G^\prime$, it is not hard to show that any two elements of $G^{\prime\prime}\smallsetminus G^{\prime}$ are automorphic, so $\mathrm{Th}(G)$ is strongly minimal.

  Let $G$ be a completely metrizable group falling under the case in the second bullet point. Let $K$ be the compact subgroup and let $d^K$ be an arbitrary bi-invariant metric giving the topology on $K$ with diameter at most $\frac{1}{2}$. Given $a,b \in G$, write them as $(a_0,a_1)$ and $(b_0,b_1)$, where $a_0,b_0 \in \mathbb{R}^n$ and $a_1,b_1 \in \mathbb{Q}^\kappa \oplus H$. Put a metric on $G$ by $d(a,b)=1$ if $a_1 b_1^{-1} \notin K$ and $d(a,b)=\max\left\{\frac{\left\lVert a_0-b_0 \right\rVert_\infty}{1+\left\lVert a_0-b_0 \right\rVert_\infty},d^K(a_1b_1^{-1},e)\right\}$ otherwise. This is a metric which induces the topology on $G$. Note that $\frac{\left\lVert a_0-b_0 \right\rVert_\infty}{1+\left\lVert a_0-b_0 \right\rVert_\infty}$ is a metric on $\mathbb{R}^n$, so $d$ can be written as the maximum of a metric on $\mathbb{R}^n$ and a metric on $\mathbb{Q}^\kappa\oplus H$ and therefore is a metric inducing the product topology. Note that since $G$ is non-compact, the diameter of $G$ with regards to $d$ is $1$.

  Let $G^\prime$ be an elementary extension of $(G,d)$. The theory of $(G,d)$ entails that the metric on $G^\prime$ is $[0,1]$ valued, but also entails that for any $\e >0$ with $\e < 1$, the closed $\e$-ball of any element is compact (and isometric to the corresponding closed ball around $e$).

  \emph{Claim:} For any prime $p$ and any positive integer $n$, the type-definable set $\{x:p^nx =e\}$ is definable and algebraic.

  \emph{Proof of claim.} Each of these sets is compact in $G$, so we only need to show that they are definable. Fix $\e > 0$ and find $\delta > 0$ small enough that for $a \in K$ if $d^K(p^na,e)< \delta$, then there exists a $b \in K$ with $d(a,b) < \e$ such that $p^n b= e$ (this always exists by compactness). We may take $\delta$ to be less than $\frac{1}{2}$ and less than $\e$.

  For $a \in G$, suppose that $d(p^na,e)< \delta$. If we write $a$ as $(a_0,a_1)$ with $a_0 \in \mathbb{R}^n$ and $a_1 \in \mathbb{Q}^\kappa \oplus H$, then this implies that $\frac{\left\lVert p^na_0 \right\rVert_\infty}{1+\left\lVert p^n a_0 \right\rVert_\infty} < p^n \e$ and also that $p^n a_1 \in K$ with $d(p^na_1,e) < \delta$. By our choice of $\delta$, this implies that there is some $b_1 \in K$ such that $d(a_1,b_1) < \e$ and $p^nb_1=0$. This implies that $d((a_0,a_1),(0,b_1)) < \max\{\e,\delta\} < \e$. Therefore, we have that the relevant set is definable. \hfill $\qed_{\text{claim}}$

  The claim implies that any $a \in G^\prime \smallsetminus G$ is divisible and torsion free. By the classification of divisible Abelian groups and the fact that $G^\prime/K$ is locally homeomorphic to $G/K$, we have that $G^\prime$ is topologically isomorphic to $G\oplus \mathbb{Q}^\lambda$ for some cardinal $\lambda$. Furthermore, we have that for any $a,b \in G^\prime$, if $d(a,b) < 1$, then $b-a$ is in $G$. This implies that $G^\prime$ can be realized as $G\oplus \mathbb{Q}^\lambda$ with the discrete metric on $\mathbb{Q}^\lambda$ and the max metric on the product. From this we get that any two elements of $G^\prime \smallsetminus G$ are automorphic, and the same argument will work for any elementary extension $G^{\prime\prime}$ of $G^\prime$, so we have that $T$ is strongly minimal.

  The `furthermore' statement follows directly from Theorem~\ref{thm:strong_min_and_cat_in_cont:2}.
  % with $\e < \frac{1}{2}$ and suppose that $d(p^na,e) < \e$ for some $a \in G$. 
  % %Any element $a$ of $G^\prime \smallsetminus G$ must be torsion free, because for each prime $p$ and each positive integer $n$, the set of $x$ such that $p^nx  = e$ is algebraic, and therefore contained in $G$. This implies that, algebraically, $G^\prime/ G$ is isomorphic to $\mathbb{Q}^\lambda$ for some cardinal $\lambda$.
\end{proof}

It is possible that $G$ in the statement of Theorem~\ref{thm:group_char} is not of the form $K \times G/K$. An example of this is the additive group of the $p$-adic numbers with the appropriate metric, as discussed after Example~\ref{exa:min-non-triv}. $K$ will be the subgroup of $p$-adic integers or some scaling of them, and the $p$-adic numbers do not decompose as a direct sum with any of these groups as a factor.

It is also possible to give a bi-invariant metric to a group of one of the forms given in Theorem~\ref{thm:group_char} which will make it fail to be strongly minimal. This is very easy when the group is discrete---$\mathbb{Q}$ with the metric $d(x,y)$ which is $1$ when $x-y \in \mathbb{Z} \smallsetminus \{0\}$ and $2$ when $x-y \notin \mathbb{Z}$---but it is also possible when the group is of a form that would result in \anesscont\ theory---$\mathbb{R}$ with the metric $d(x,y)=\min\{|x-y|,1\}+d(x,y+\mathbb{Z})$, which is a metric as it is the sum of a metric and a pseudo-metric.

Characterizing the metrics which make the groups identified in Theorem~\ref{thm:group_char} strongly minimal seems difficult. We were unable to answer what ought to be the easiest question related to this issue.

\begin{quest}
  If $(G,d,+)$ is a group structure such that $(G,d,+)$ is topologically one of the groups specified in Theorem~\ref{thm:group_char}, $d$ is a bi-invariant metric, and $(G,d)$ is a strongly minimal metric space, does it follow that $(G,d,+)$ is strongly minimal?
\end{quest}

\def\Th{\mathrm{Th}}

Another direction for future study would be to replicate the very tight characterization of transitive faithful $\omega$-stable group actions on strongly minimal sets:
\begin{fact}[{\cite[Thm.\ 3.5.2]{buechler_2017}}]\label{fact:stronk-min-thys:1}
  If $(G,X)$ is a (discrete) $\omega$-stable transitive faithful group action with $X$ strongly minimal, then $\mathrm{MR}(G)\leq 3$ and
  \begin{enumerate}
  \item if $\mathrm{MR}(G) = 1$, then $G$ has a definable finite index subgroup $H$ which acts regularly on $X$ and
  \item if $\mathrm{MR}(G) \geq 2$, then there is a field $K$ definable on $X$ or $X\setminus \{a\}$ for some point $a$.
  \end{enumerate}
\end{fact}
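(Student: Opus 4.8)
The plan is to run the classical Hrushovski--Zil'ber analysis of the chain of stabilizers of generic tuples. Since the action is transitive, $X$ is a coset space $G/G_a$ for any $a\in X$, so in a group of finite Morley rank one has $\mathrm{MR}(G)=\mathrm{MR}(X)+\mathrm{MR}(G_a)=1+\mathrm{MR}(G_a)$. More generally, for a tuple $\bar a=(a_1,\dots,a_n)$ from $X$ that is generic (independent, each coordinate generic), the orbit $G_{\bar a}\cdot a_{n+1}$ of a further generic $a_{n+1}$ is a $\bar a$-definable subset of the strongly minimal set $X$, hence finite or cofinite, and $\mathrm{MR}(G_{\bar a a_{n+1}})=\mathrm{MR}(G_{\bar a})-\mathrm{MR}(G_{\bar a}\cdot a_{n+1})$. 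This gives the basic dichotomy: for each generic $\bar a$, either (i) every generic $G_{\bar a}$-orbit on $X$ is finite, or (ii) $G_{\bar a}$ is transitive on a cofinite subset of $X$.

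First I would show that case (i) forces $\mathrm{MR}(G_{\bar a})=0$: the connected component $G_{\bar a}^{0}$ has connected orbits on $X$, and these are contained in the finite orbits of $G_{\bar a}$, hence are singletons; so $G_{\bar a}^{0}$ fixes a cofinite, and therefore (being connected, it also fixes the finitely many remaining points) all of $X$, hence is trivial by faithfulness, so $G_{\bar a}$ is finite. Iterating the dichotomy starting from $n=0$, the Morley rank of successive generic stabilizers strictly drops in case (ii) and terminates at case (i); thus there is a finite $t$ with $\mathrm{MR}(G)=t$ and the action is sharply generically $t$-transitive (a unique generic type of $t$-tuple, transitively permuted, with finite stabilizer). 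When $t=1$ point stabilizers are finite, so the orbit map $G\to X$ is finite-to-one; passing to the connected component $G^{0}$ (which is still transitive, as its finitely many orbits on $X$ are each cofinite) and then to a suitable finite-index subgroup to kill the finite point stabilizer yields a definable finite-index subgroup acting regularly on $X$, which is conclusion (1).

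It remains to treat $t\in\{2,3\}$ and to exclude $t\ge 4$. Here I would invoke the standard reconstruction theorems for $\omega$-stable sharply generically $2$- and $3$-transitive actions on a strongly minimal set: from generic $2$-transitivity one interprets an algebraically closed field $K$ with $G\cong K^{+}\rtimes K^{\times}$ and $X$ in definable bijection with the affine line $\mathbb{A}^{1}(K)$; from generic $3$-transitivity one gets $G\cong\mathrm{PGL}_2(K)$ with $X$ in definable bijection with $\mathbb{P}^{1}(K)$, so $X\smallsetminus\{a\}$ carries a copy of $K$. In both cases a field is definable on $X$ or on $X\smallsetminus\{a\}$, giving conclusion (2). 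To rule out $t\ge 4$: generic $3$-transitivity already produces, as above, an algebraically closed $K$ with $X$ in definable finite-to-finite correspondence with $\mathbb{P}^{1}(K)$, so every element of $G$ acts as a definable permutation of $\mathbb{P}^{1}(K)$; by the description of definable one-variable functions in $\mathrm{ACF}$ such a permutation is generically fractional linear (up to a Frobenius twist in positive characteristic), whence $G$ embeds up to finite index in $\mathrm{PGL}_2(K)$ (or $\mathrm{P\Gamma L}_2(K)$), a group of Morley rank $3$ that admits no generically $4$-transitive action on $\mathbb{P}^{1}$ because the cross-ratio of four generic points is a definable invariant. This contradicts $\mathrm{MR}(G)=t\ge 4$ and completes the bound $\mathrm{MR}(G)\le 3$.

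The main obstacle is the field interpretation in the $2$- and $3$-transitive cases: reconstructing the addition and multiplication of $K$ directly from the transitive group action (an instance of the Hrushovski group/field configuration philosophy) is the genuinely substantive step, and it is also the source of the numerical bound, since the rank of the resulting fractional-linear groups $K^{+}\rtimes K^{\times}$ and $\mathrm{PGL}_2(K)$ over an algebraically closed field is what caps $\mathrm{MR}(G)$ at $3$. The remaining ingredients --- rank additivity along the stabilizer chain, the connectedness argument in case (i), and the $t=1$ regularity statement --- are routine finite-Morley-rank bookkeeping.
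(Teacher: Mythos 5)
This statement appears in the paper as a \emph{Fact} cited verbatim from \cite[Thm.\ 3.5.2]{buechler_2017}; the paper supplies no proof of its own, so there is nothing internal to compare your argument against. On its own terms, your sketch is the standard Hrushovski--Zil'ber analysis that Buechler's (and Poizat's) proof follows: rank additivity along the chain of generic stabilizers, the finite-orbit/cofinite-orbit dichotomy from strong minimality, termination giving $\mathrm{MR}(G)=t$ with generic sharp $t$-transitivity, and then the reconstruction theorems for $t=2,3$ together with the cross-ratio obstruction for $t\geq 4$. The outline is correct, with the honest caveat (which you flag) that the field-interpretation steps are invoked as black boxes --- they are the entire substance of the theorem, so as written this is a roadmap rather than a proof.

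One step deserves tightening: in the $t=1$ case you say one passes ``to a suitable finite-index subgroup to kill the finite point stabilizer,'' but one cannot in general shrink a transitive group to destroy a point stabilizer while remaining transitive. The clean argument is that $G^{0}$ is connected of Morley rank $1$, hence abelian by Reineke's theorem; its point stabilizers are therefore normal, so by transitivity they fix every point of $X$ and are trivial by faithfulness. Thus $H=G^{0}$ itself already acts regularly, and no further subgroup is needed. With that repair, and with the $t=2,3$ reconstruction supplied from the literature, your sketch matches the cited source.
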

Recall that a group action is \emph{transitive} if the orbit of every element is the entire set, it is \emph{faithful} if $\mathrm{Stab}(X)$ is trivial, and it is \emph{regular} if for any $a \in X$, the function $g \mapsto ga$ is a bijection. Note that in the first case, $H$ is strongly minimal.

A completely reckless conjecture would be that only the first case of Fact~\ref{fact:stronk-min-thys:1} can occur in \anesscont\ theory (this would be related to a negative answer to Question~\ref{quest:stronk-min-thys:1}).

\begin{quest}
  Is it possible to have a locally compact metric group $G$ with bi-invariant metric acting faithfully and transitively on a metric space $X$ such that $\Th(G,X)$ is \esscont, $\omega$-stable, and has $X$ strongly minimal without $G$ having a definable strongly minimal group with compact index that acts regularly on $X$?
\end{quest}

Local compactness also adds a new parameter to these questions regarding these kinds of characterizations.

\begin{quest}
  If $G$ is a (not necessarily locally compact) metric group with bi-invariant metric acting faithfully and transitively on a metric space $X$ such that $\Th(G,X)$ is $\omega$-stable and has $X$ strongly minimal, is $G$ necessarily locally compact?
\end{quest}

\subsection{A Partial Baldwin-Lachlan Characterization}%\section{The Baldwin-Lachlan characterization for theories with strongly minimal sets over the prime model}
\label{sec:Bald-Lach}

\subsubsection{Main Theorem}
\label{sec:main-thm-sec}

Assuming $T$ is a theory with strongly minimal sets, part of the
Baldwin-Lachlan characterization goes through exactly. This statement is analogous to the discrete statement `For a countable theory $T$, if $T$ has a prime model and a minimal set definable over it, then for any $\kappa \geq \aleph_1$, $T$ is $\kappa$-categorical if and only if $T$ has no Vaughtian pairs.' Our continuous generalization of this statement is made more complicated by a few factors. We strengthen the result by using the weakening of strongly minimal set given in Definition~\ref{defn:approx-stronk-min}. We also need one of two strengthenings of no Vaughtian pairs, either of which is sufficient. And, given the presence of certain counterexamples in continuous logic (such as Example~\ref{exa:MAIN-COUNTEREXAMPLE}), we would like to state the result both for definable sets in the home sort and for arbitrary imaginaries.

\begin{thm}\label{thm:main-thm}
  Let $T$ be a countable complete theory with non-compact models and let $\kappa$ be any uncountable cardinal.
  \begin{enumerate}[label=(\roman*)]
  \item \label{main-1} If $T$ has a prime model and an approximately minimal set definable over it, then the following are equivalent.
    \begin{enumerate}
    \item $T$ is $\kappa$-categorical.% for some $\kappa\geq \aleph_1$.
    %\item $T$ is $\kappa$-categorical for every $\kappa\geq \aleph_1$.
    \item \label{main-1-1} $T$ is dictionaric and has no Vaughtian pairs.
    \item \label{main-1-2} $T$ has no open-in-definable Vaughtian pairs. 
    \end{enumerate}
  \item \label{main-2} If $T$ has a prime model and an approximately minimal imaginary definable over it, then the following are equivalent.
    \begin{enumerate}
    \item $T$ is $\kappa$-categorical. %for some $\kappa \geq \aleph_1$.
      % \item $T$ is $\kappa$-categorical for every $\kappa \geq \aleph_1$.
    \item $T$ is dictionaric and $T^{\mathrm{eq}}$ has no Vaughtian pairs. %Don't need $T^{\mathrm{eq}}$ because dictionaric implies dictionaric over models.
    \item $T^{\mathrm{eq}}$ has no open Vaughtian pairs.
    \end{enumerate}
  \end{enumerate}
\end{thm}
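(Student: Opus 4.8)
The plan is to reduce everything to the classical Tent--Ziegler argument applied to a strongly minimal set over the prime model, tracking the continuous complications (density characters in place of cardinalities, approximate homogeneity, approximate algebraicity). Write $\mathfrak{M}_0$ for the prime model. First, for the implications ``$\text{\ref{main-1-1}}\Rightarrow$ categorical'' and ``$\text{\ref{main-1-2}}\Rightarrow$ categorical'' of part~\ref{main-1}: let $(D_0,\varphi)$ be the approximately minimal pair definable over $\mathfrak{M}_0$. By Proposition~\ref{prop:some-min-stuff} its generic type $p$ is pre-minimal over $\mathfrak{M}_0$, and since $\mathfrak{M}_0$ is a model, Proposition~\ref{prop:find-minimal} produces an $\mathfrak{M}_0$-definable minimal set $D$ with generic type $p$. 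Under either hypothesis (dictionaric and no Vaughtian pairs, or no open-in-definable Vaughtian pairs), Proposition~\ref{prop:vaught-minimal} part~\emph{(i)} upgrades $D$ to a strongly minimal set. Hence $X\mapsto\mathrm{acl}(X\mathfrak{M}_0)\upharpoonright D$ is a finite-character pregeometry (the pregeometry proposition in Section~\ref{sec:SMS}), and the accompanying corollary shows that for any $\mathfrak{N}\succeq\mathfrak{M}_0$ the set $D(\mathfrak{N})$, as an $\mathfrak{M}_0$-indiscernible set, is determined up to $\mathfrak{M}_0$-elementary equivalence by $\dim D(\mathfrak{N})$, and every element of $\mathfrak{N}$ lying over $D$ falls into $\mathrm{acl}$ of $\mathfrak{M}_0$ together with finitely many elements of $D(\mathfrak{N})$.

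The core of these directions is the continuous Tent--Ziegler control lemma: no Vaughtian pairs implies that for every $\mathfrak{N}\succeq\mathfrak{M}_0$ with uncountable density character $\kappa$ one has $\#^{\mathrm{dc}}D(\mathfrak{N})=\kappa$ (a smaller $D(\mathfrak{N})$ would, by downward L\"owenheim--Skolem inside $\mathfrak{N}$, give a Vaughtian pair over $D$), hence $\dim D(\mathfrak{N})=\kappa$; and, again by no Vaughtian pairs, $\mathfrak{N}$ is approximately prime over $\mathfrak{M}_0\cup D(\mathfrak{N})$. Granting that such approximately prime models are unique up to isomorphism over their parameter set (an approximate back-and-forth in the spirit of Proposition~\ref{Prop:Prime-Hom}, using finite character of $\mathrm{acl}$ in $D$) and that any two bases of $D(\mathfrak{N})$ of dimension $\kappa$ are conjugate, any two models of density character $\kappa$ are isomorphic. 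This works simultaneously for every uncountable $\kappa$, so both hypotheses actually give inseparable categoricity.

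For ``categorical $\Rightarrow\text{\ref{main-1-2}}$'': if $T$ had an open-in-definable Vaughtian pair, then iterating the elementary-chain construction in the proof of Proposition~\ref{prop:no-VP} to length $\kappa$ yields a model $\mathfrak{A}$ with $\#^{\mathrm{dc}}\mathfrak{A}=\kappa$ (the $\kappa$ successor stages give a uniformly separated set of that size, even for singular $\kappa$) while the witnessing non-algebraic open-in-definable set $V$ stays separable at every stage and in the completed union, so $\#^{\mathrm{dc}}V(\mathfrak{A})=\aleph_0$. On the other hand, $V$ contains a non-algebraic type, so for some $\varepsilon>0$ there is a $({>}\varepsilon)$-separated family of $\kappa$ realizations of a single non-algebraic type in $V$; closing it off gives a model $\mathfrak{B}$ with $\#^{\mathrm{dc}}\mathfrak{B}=\kappa=\#^{\mathrm{dc}}V(\mathfrak{B})$. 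Then $\mathfrak{A}\not\cong\mathfrak{B}$, contradicting $\kappa$-categoricity. This yields ``categorical $\Leftrightarrow\text{\ref{main-1-2}}$'', hence no Vaughtian pairs; combining with the previous paragraph, a $\kappa$-categorical $T$ satisfying the standing hypotheses is inseparably categorical, hence $\omega$-stable by the known continuous analog of Morley's theorem, hence dictionaric by Proposition~\ref{prop:strong_min_and_cat_in_cont:3} --- which gives ``categorical $\Rightarrow\text{\ref{main-1-1}}$'' and closes part~\ref{main-1}. Part~\ref{main-2} is obtained by running all of the above inside $T^{\mathrm{eq}}$: the approximately minimal imaginary is again definable over the model $\mathfrak{M}_0$, Propositions~\ref{prop:find-minimal}, \ref{prop:vaught-minimal} and \ref{prop:some-min-stuff} are stated for arbitrary type spaces and definable sets, an imaginary expansion of a $\kappa$-categorical theory is $\kappa$-categorical, and $T$ dictionaric gives $T^{\mathrm{eq}}$ dictionaric over models by Corollary~\ref{cor:dict-hered}; ``no open Vaughtian pairs in $T^{\mathrm{eq}}$'' plays the role that ``no open-in-definable Vaughtian pairs'' played in part~\ref{main-1}.

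The step I expect to be the main obstacle is the continuous control lemma together with the extraction of $\omega$-stability in the ``$\Rightarrow$ categorical'' directions. One must argue --- a continuous analog of ``a countable theory with a strongly minimal set and no Vaughtian pairs is totally transcendental'', plausibly leaning on dictionaricness --- that the hypotheses force $\omega$-stability, so that prime models over the relevant parameter sets exist, and one must make the assertion that $\mathfrak{N}$ is approximately prime over $\mathfrak{M}_0\cup D(\mathfrak{N})$ (with uniqueness) work using only approximate homogeneity and density characters rather than cardinalities. Ordering the implications so that $\omega$-stability is in hand exactly when it is invoked is the main bookkeeping hazard.
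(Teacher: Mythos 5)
Your reduction to a strongly minimal set over the prime model (via Propositions~\ref{prop:some-min-stuff}, \ref{prop:find-minimal} and \ref{prop:vaught-minimal}), the control of $\#^{\mathrm{dc}}D(\mathfrak{N})$ by no Vaughtian pairs, and the treatment of the ``categorical implies (b) and (c)'' directions all match the paper's outline. The genuine gap is at the heart of the ``(b) or (c) implies categorical'' direction. You assert that ``again by no Vaughtian pairs, $\mathfrak{N}$ is approximately prime over $\mathfrak{M}_0\cup D(\mathfrak{N})$'' and then lean on a uniqueness theorem for approximately prime models over a parameter set of density character $\kappa$. Neither claim is established, and you flag them yourself as the main obstacle. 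The first does not follow from no Vaughtian pairs: that hypothesis gives only \emph{minimality} of $\mathfrak{N}$ over $D(\mathfrak{N})$ (no proper elementary submodel contains $D(\mathfrak{N})$), not primality. The second is genuinely problematic in continuous logic: the back-and-forth of Proposition~\ref{Prop:Prime-Hom} that you invoke ``in the spirit of'' depends on enumerating tail-dense sequences of order type $\omega$ and summing a geometric series of $\varepsilon$'s, and it has no analog over an uncountable parameter set such as $\mathrm{acl}(A)$ for $A$ a basis of cardinality $\kappa$.

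The paper's argument avoids both claims. It fixes a single model $\mathfrak{A}$ prime over $\mathrm{acl}(A)$, with $A$ a basis of $D$ of cardinality $\kappa$; for an arbitrary $\mathfrak{B}$ of density character $\kappa$ it produces an isomorphism $D(\mathfrak{A})\cong D(\mathfrak{B})$ from conjugacy of bases, extends it to an elementary embedding $\mathfrak{A}\preceq\mathfrak{B}$ using only that $\mathfrak{A}$ is prime over $D(\mathfrak{A})$, and then applies no Vaughtian pairs once more to conclude that the embedding is surjective (a proper image would be a Vaughtian pair with respect to $D$, since the image already contains all of $D(\mathfrak{B})$). No uniqueness of prime models and no primality of $\mathfrak{B}$ over anything is required. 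To repair your route, replace the ``approximately prime plus unique'' step with this embed-and-use-no-Vaughtian-pairs argument. The remaining issue you correctly identify --- justifying the existence of the prime model over $\mathrm{acl}(A)$ --- is not resolved by your proposal, though the paper's own write-up is also terse on this point; your separate worry about iterating the Vaughtian-pair chain to length $\kappa$ is likewise real (the isomorphism-with-$\mathfrak{A}_0$ trick fails at limit stages of uncountable cofinality), and the intended route is to pass through $\omega$-stability first.
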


\begin{proof}
  Since $T$ has an approximately minimal set definable over its prime model, by Proposition~\ref{prop:strong_min_and_cat_in_cont:2} we have that there is a minimal set definable over its prime model.
  
  \ref*{main-1}. If $T$ is $\kappa$-categorical, then $T$ is $\omega$-stable and therefore dictionaric, and also has no open-in-definable Vaughtian pairs, and therefore no Vaughtian pairs.

  %If either of \ref*{main-1-1} or \ref*{main-1-2} is true,
  If either (b) or (c) is true, then by Proposition~\ref{prop:vaught-minimal}, the definable minimal set is strongly minimal. Let
$D$ be this strongly minimal set. Let $A$ be a basis in $D$ of
cardinality $\kappa$, and let $\mathfrak{A}$ be prime over $\mathrm{acl}(A)$.
If $\mathfrak{B}$ is a model of density character $\kappa$, then since
$T$ has no Vaughtian pairs, $D(\mathfrak{B})$ has density character
$\kappa$, and so we can find a basis $B$ of $D(\mathfrak{B})$ of
cardinality $\kappa$. Therefore we can find an isomorphism $D(\mathfrak{A})\cong D(\mathfrak{B})$.
Since $\mathfrak{A}$ is prime over $D(\mathfrak{A})$ we can extend
this isomorphism to an embedding $\mathfrak{A}\preceq\mathfrak{B}$,
but since $T$ has no Vaughtian pairs this must be an isomorphism.
Therefore all models of density character $\kappa$ are isomorphic
to $\mathfrak{A}$.

\ref*{main-2}. The proof here is the same as the proof of \ref*{main-1} with the following notes: If $T$ is dictionaric, then $T^\mathrm{eq}$ is dictionaric over models, which is enough to show that minimal sets are strongly minimal in case (b). In case (c), if $T$ has no open Vaughtian pairs in imaginaries, then it has no open-in-definable Vaughtian pairs in imaginaries, since definable subsets of imaginaries are imaginaries.
\end{proof}

Note that the conclusion of Theorem~\ref{thm:main-thm} also holds if we know that $T$ has an approximately minimal set (or imaginary) definable over every model, as then we can show that $T_A$ for some countable set of constants is $\kappa$-categorical, and therefore $\omega$-stable, implying that $T$ has a prime model with an approximately minimal set (or imaginary) definable over it.

Now we will explore some cases under which the assumptions of Theorem~\ref{thm:main-thm} are satisfied.

\subsubsection{Theories with a Locally Compact Model}%{Theories with a Locally Compact Model Have Strongly Minimal Sets}

Here we give a Baldwin-Lachlan characterization for theories with a locally compact model. We should note that having a locally compact model is not equivalent to having every model locally compact.\editcom{Added paragraph.}

\label{sec:theor-with-locally}
\begin{prop}
\label{prop:loc-comp}
\leavevmode
\begin{enumerate}[label=(\roman*)]
\item If $T$ is a theory with a non-compact, locally compact model
$\mathfrak{M}$, and $S_{1}(\mathfrak{M})$ is CB-analyzable (in particular if $T$ is $\omega$-stable or $S_{1}(\mathfrak{M})$ is small), then
pre-minimal types are dense in $S_{1}(\mathfrak{M})\smallsetminus\mathfrak{M}$
(which is closed).

\item If $T$ is a totally transcendental theory with non-compact models such that every model is locally compact, then strongly minimal global types are dense
among non-algebraic global types.
\end{enumerate}
\end{prop}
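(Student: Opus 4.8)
\emph{Proof proposal.}

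The first point for \emph{(i)} is that $F:=S_{1}(\mathfrak{M})\smallsetminus\mathfrak{M}$ is a nonempty closed set. Local compactness gives, for each $a\in\mathfrak{M}$, an $\varepsilon_{a}>0$ with $B_{\leq 2\varepsilon_{a}}(a)$ compact; since $\cset{d(x,a)\leq\varepsilon_{a}}\subseteq\tint\cset{d(x,a)\leq 2\varepsilon_{a}}$, Lemma~\ref{lem:basic-alg} makes $\cset{d(x,a)\leq\varepsilon_{a}}$ algebraic, so every type in it is realized in $\mathfrak{M}$, whence the logic-open set $\cset{d(x,a)<\varepsilon_{a}}$ is a neighbourhood of $a$ contained in $\mathfrak{M}$. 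Thus $\mathfrak{M}$ is open in $S_{1}(\mathfrak{M})$ and $F$ is closed; and $F\neq\varnothing$, for otherwise these neighbourhoods would cover the compact space $S_{1}(\mathfrak{M})$, a finite subcover giving $\mathfrak{M}=\bigcup_{i<n}B_{\leq 2\varepsilon_{a_{i}}}(a_{i})$ compact. Recall from the discussion after Definition~\ref{def:strongly-minim-types} that the non-$({<}\delta)$-algebraic types in $S_{1}(\mathfrak{M})$ are exactly $G_{\delta}:=S_{1}(\mathfrak{M})\smallsetminus\mathfrak{M}^{<\delta}$, which are closed and increase to $F$ as $\delta\downarrow 0$; and that by Proposition~\ref{prop:find-minimal} every pre-minimal type over $\mathfrak{M}$ is the generic type of an $\mathfrak{M}$-definable minimal set. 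So it suffices to show pre-minimal types are dense in $F$.

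Fix a nonempty relatively open $U\subseteq F$; we seek $p\in U$ that is $d$-atomic in $G_{\delta}$ for all small $\delta$ (Definition~\ref{def:strongly-minim-types}). Since $S_{1}(\mathfrak{M})$ is CB-analyzable in the sense of \cite{BenYaacov2008}, so is every closed subset---in particular each $G_{\delta}$---because deleting a point with a logic-open neighbourhood of small $d$-diameter persists when one intersects with a closed subset. The plan is then a transfinite-rank version of the classical argument that in a scattered type space a non-algebraic open set contains a minimal type: recursively build logic-open $W_{0}\supseteq\overline{W_{1}}\supseteq W_{1}\supseteq\cdots$ and reals $\delta_{0}>\delta_{1}>\cdots\to 0$ with $W_{0}\cap F\subseteq U$, with each $W_{n}\cap G_{\delta_{n}}$ nonempty, and with $W_{n}\cap G_{\delta_{n}}$ of $d$-diameter $<2^{-n}$. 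To pass from $W_{n}$ to $W_{n+1}$: choose $\delta_{n+1}<\delta_{n}$ small enough that $W_{n}\cap G_{\delta_{n+1}}\neq\varnothing$, run the topometric Cantor--Bendixson analysis of $G_{\delta_{n+1}}$ at scale $2^{-n-2}$, take the last derivative still met by $\overline{W_{n}}$, note that every point of $W_{n}$ in that derivative is deleted at the next stage and so has a relatively-logic-open neighbourhood of $d$-diameter $<2^{-n-2}$ within it, and localize $W_{n+1}$ around such a point while shrinking it off the next derivative. The intersection $\bigcap_{n}\overline{W_{n}}$ is nonempty by logic-compactness and, the diameters going to $0$, is a single point $p\in W_{0}\cap F\subseteq U$ lying in every $G_{\delta_{n}}$ and $d$-atomic in each of them; hence $p$ is pre-minimal.

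The delicate step---and the crux of the whole proposition, since it is precisely what fails for $\mathsf{IHS}$---is the localization: in a topometric space the logic topology and the metric are in general incomparable, so a point deleted at an \emph{earlier} stage of the analysis of $G_{\delta_{n+1}}$ may sit metrically far from the point we localize around yet lie inside all of its logic-open neighbourhoods, ruining the bound on the $d$-diameter of $W_{n+1}\cap G_{\delta_{n+1}}$. This is where local compactness of $\mathfrak{M}$ must be used: it forces the relevant part of $S_{1}(\mathfrak{M})$ near $F$ to be metrically locally compact---the algebraic sets $\cset{d(x,a)\leq\varepsilon_{a}}$, on which the two topologies agree, being large enough for this---so that after a preliminary shrinking one works inside a piece of $G_{\delta_{n+1}}$ on which the topometric analysis collapses to the ordinary topological one and diameters genuinely shrink. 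Turning this into a precise argument is where I expect the real work to lie.

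Finally, \emph{(ii)} is a consequence of \emph{(i)}. Given a non-algebraic global type $q_{0}$ and a basic logic-open $W\ni q_{0}$ in $S_{1}(\mathfrak{C})$, both are determined by finitely many parameters; fix an approximately $\aleph_{0}$-saturated $\mathfrak{M}\prec\mathfrak{C}$ containing them---it is non-compact, locally compact (every model is), and has $S_{1}(\mathfrak{M})$ CB-analyzable since $T$ is totally transcendental. Then $q_{0}\upharpoonright\mathfrak{M}$ is not realized in $\mathfrak{M}$ and lies in the logic-open $\widetilde{W}\subseteq S_{1}(\mathfrak{M})$ from which $W$ is pulled back, so by \emph{(i)} there is a pre-minimal type $p\in\widetilde{W}$ over $\mathfrak{M}$. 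By Proposition~\ref{prop:find-minimal} there is an $\mathfrak{M}$-definable minimal set $D$ with generic type $p$, by Proposition~\ref{prop:vaught-minimal}\emph{(ii)} (as $\mathfrak{M}$ is approximately $\aleph_{0}$-saturated) $D$ is strongly minimal, and by Corollary~\ref{cor:min-set-type-correspondence} $D$ has a unique non-algebraic global type $p^{\ast}$, which is strongly minimal, restricts to $p$ on $\mathfrak{M}$, and therefore lies in $W$. Hence strongly minimal global types are dense among the non-algebraic global types.
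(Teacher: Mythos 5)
There is a genuine gap in your treatment of part \emph{(i)}. The opening step (using Lemma~\ref{lem:basic-alg} to show $\mathfrak{M}$ is open in $S_1(\mathfrak{M})$, so that $F=S_1(\mathfrak{M})\smallsetminus\mathfrak{M}$ is closed and nonempty) matches the paper. But the central claim --- that pre-minimal types are dense in $F$ --- is exactly the part you leave unproved: your transfinite Cantor--Bendixson construction is a plan whose key localization step you yourself flag as not carried out (``where I expect the real work to lie''). The paper does not perform any such construction. It simply invokes the general fact, from \cite{BenYaacov2008} (the same Corollary 3.8 used later for wide types), that in a CB-analyzable topometric space the relatively $d$-atomic points of any \emph{closed} subset are dense in it. That fact needs no local compactness, so your diagnosis that local compactness is what rescues the topometric Cantor--Bendixson localization is off target: what fails for $\mathsf{IHS}$ is not the localization but the earlier step, namely that $\mathfrak{M}$ is open in $S_1(\mathfrak{M})$ so that $F$ is closed. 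Local compactness is used there, and once more at the end: given $p$ relatively $d$-atomic in $F$, dictionaricness (which CB-analyzability supplies) and Proposition~\ref{prop:ext} give a definable $D$ with $D\cap F=\{p\}$, and one needs local compactness again to see that an $M$-zeroset of $D$ avoiding $p$ --- a compact set of realized types --- is covered by finitely many algebraic balls and hence algebraic, so that $D$ is minimal. You also aim at a strictly stronger target than necessary: you try to make $p$ $d$-atomic in every $G_\delta=S_1(\mathfrak{M})\smallsetminus\mathfrak{M}^{<\delta}$ (Definition~\ref{def:strongly-minim-types}), whereas the paper only needs $d$-atomicity in $F=\bigcap_\delta G_\delta$ and then gets pre-minimality of the generic type of $D$ from Proposition~\ref{prop:some-min-stuff}; $d$-atomicity in $F$ does not transfer to the larger sets $G_\delta$, which is part of why your construction is harder than it needs to be.

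Your part \emph{(ii)} is fine and is essentially an expanded version of the paper's one-line argument: restrict to an approximately $\aleph_0$-saturated model containing the relevant parameters, apply \emph{(i)}, upgrade via Proposition~\ref{prop:vaught-minimal}\emph{(ii)} and Corollary~\ref{cor:min-set-type-correspondence}, and push the unique non-algebraic global extension back into the given open set. But it rests on \emph{(i)}, so the proposal as a whole is incomplete until the density of relatively $d$-atomic types in $F$ is either proved or correctly attributed to the CB-analyzability machinery of \cite{BenYaacov2008}.
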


\begin{proof}
\emph{(i).} First we need to see that $S_{1}(\mathfrak{M})\smallsetminus\mathfrak{M}$
is closed. For every $a\in\mathfrak{M}$ there is an $\varepsilon>0$
such that $B_{\leq2\varepsilon}(a)$ is compact. This implies that
$B_{\leq\varepsilon}(a)$ is algebraic over $a$ by Lemma \ref{lem:basic-alg}.
Therefore in particular $B_{<\varepsilon}(a)\cap\mathfrak{M}=B_{<\varepsilon}(a)\subseteq S_{1}(\mathfrak{M})$.
Therefore $\mathfrak{M}$ as a subset of $S_{1}(\mathfrak{M})$ is
a union of open sets and is itself open. Since $S_{1}(\mathfrak{M})$
is CB-analyzable, $d$-atomic-in-$S_{1}(\mathfrak{M})\smallsetminus\mathfrak{M}$
types are dense in $S_{1}(\mathfrak{M})\smallsetminus\mathfrak{M}$. Let
$p$ be any such type. Since $S_{1}(\mathfrak{M})$ is CB-analyzable,
it is dictionaric, so let $D\subseteq S_{1}(\mathfrak{M})$ be a definable
set such that $D\cap(S_{1}(\mathfrak{M})\smallsetminus\mathfrak{M})=\{p\}$.
Then $D$ is a minimal set: If $F,G\subseteq D$ are two $M$-zerosets, at most one of them can contain $p$, so at most one of
them can be non-algebraic. Since this is true of any type that is relatively 
$d$-atomic in $S_{1}(\mathfrak{M})\smallsetminus\mathfrak{M}$, we
have that pre-minimal types are dense in $S_{1}(\mathfrak{M})\smallsetminus\mathfrak{M}$.

\emph{(ii).} This is immediate from the fact that pre-minimal types over $\aleph_0$-saturated
structures are strongly minimal.
\end{proof}

\begin{cor}
  If $T$ is a theory with a locally compact model, then $T$ is inseparably categorical if and only if it is $\omega$-stable and has no Vaughtian pairs.
\end{cor}

\subsubsection{Ultrametric Theories and Theories with Totally Disconnected Type
Spaces}% Have Strongly Minimal Imaginaries}
\label{sec:ultr-theor-theor}
$p$-adic Banach spaces are a natural example of ultrametric structures (i.e.\ ultrametric metric structures). $\ell^{\infty}$ and $c_0$ spaces over $p$-adic fields are known to behave somewhat
analogously to Hilbert spaces. % over real and complex field.
In particular they have a good notion of orthonormal bases \cite[Thm. 5.16]{nla.cat-vn984637} and as such they are also \insep\ categorical.
This is not surprising as the unit balls of these spaces are in a
precise sense the inverse limit of the sequence of structures $(\mathbb{Z}/p^{n}\mathbb{Z})^{\omega}$
as $n\rightarrow\omega$.
\begin{ex} \label{ex:ult-example}
An \insep\ categorical ultrametric theory with no strongly minimal
set. \label{ex:no-strong-min}
\end{ex}
\begin{proof} [Verification]
Let $\mathfrak{M}=c_{0}(\omega,\mathbb{Z}_{p})$ be the unit ball
of the $p$-adic Banach space $c_{0}(\kappa,\mathbb{Q}_{p})$, i.e.
$\mathfrak{M}$ consists of elements $a$ of $\mathbb{Z}_{p}^{\omega}$
satisfying $a(i)\rightarrow0$ as $i\rightarrow\omega$. The metric
on $\mathfrak{M}$ is given by $d(a,b)=\sup d^{\mathbb{Z}_{p}}(a(i),b(i))$.
The language consists only of $+$ (note that since $\mathbb{Z}$
is dense in $\mathbb{Z}_{p}$, we don't actually need to have explicit
scalar multiplication functions).

$(\ast)$ Clearly we have that the binary relation $\cset{d(x,y)\leq p^{-1}}$ is
an equivalence relation. The imaginary obtained by quotienting by
this equivalence relation is clearly the infinite dimensional vector
field over the finite field $\mathbb{F}_{p}$. Furthermore models
of $\mathrm{Th}(\mathfrak{M})$ are prime over this imaginary. To see
this assume that $\mathfrak{A}\prec\mathfrak{B}$ is a proper elementary
pair of models of this theory. Let $b\in\mathfrak{B}\smallsetminus\mathfrak{A}$.
Let $a$ be an element of $\mathfrak{A}$ such that $d(b,a)=d(b,\mathfrak{A})$
(this always exists because the set of possible distances is reverse
well-ordered). Then we have that $d(b-a,0)=d(b,a)$ and $b-a\notin\mathrm{\ensuremath{\mathfrak{A}}}$.
If this is $1$, then we are done, since $b$ is necessarily in its
own equivalence class in the imaginary that is not contained in $\mathfrak{A}$,
otherwise $d(b-a,0)=p^{-n}$ for some $n>0$. The theory knows that
if $d(c,0)=p^{-n}$, then there is a unique element $e$ satisfying
$p^{n}e=c$. Let $p^{n}c=b-a$. We have that $c$ must be an element
of $\mathfrak{B}$ satisfying $d(c,\mathfrak{A})=1$. Therefore the
imaginary must be bigger in $\mathfrak{B}$ than it is in $\mathfrak{A}$. 

So since models of $\mathrm{Th}(\mathfrak{M})$ are prime over a strongly
minimal imaginary we have that $\mathrm{Th}(\mathfrak{M})$ is \insep\
categorical.

To see that $\mathrm{Th}(\mathfrak{M})$ has no strongly minimal sets,
note that it is enough to show that $S_{1}(\mathfrak{M})$ has no
pre-minimal types, since $\mathfrak{M}$ is approximately $\aleph_0$-saturated.
If $p\in S_{1}(\mathfrak{M})$ is some non-algebraic type, then the
argument in paragraph $(\ast)$ gives an $\mathfrak{M}$-definable
bijection between the smallest ball centered on some element of $\mathfrak{M}$
that contains $p$ and the entire structure. Since the unique non-algebraic
type $q$ satisfying $d(q,\mathfrak{M})=1$ is not pre-minimal, this implies
that $p$ is not pre-minimal. Since we could do this for any non-algebraic $p$, this implies that no type is pre-minimal, and so no type is strongly minimal.
\end{proof}

The relationship between ultrametric theories and theories with totally disconnected type spaces is summarized in the following theorem.

\begin{thm}
\label{thm:zero-dim-type-spaces}Let $T$ be a countable theory, the
following are equivalent:

\begin{enumerate}[label=(\roman*)]
\item For every $n<\omega$ and every parameter set $A$, $S_{n}(A)$
is totally disconnected (where we take a single point to be totally disconnected).\editcom{Extended parenthetical comment.}

\item For every finite parameter set $\overline{a}$, $S_{1}(\overline{a})$
is totally disconnected.

\item For every $n<\omega$, $S_{n}(\varnothing)$ is totally disconnected.

\item The diagonal in $S_{2}(\varnothing)$ (i.e.\ $\cset{d(x,y)=0}$)
has a basis of clopen neighborhoods.\editcom{Clarified statement.}

\item $T$ is dictionaric and there is a definable metric $\rho$, uniformly
equivalent to $d$, such that the distance set, $\rho(T)=\{\rho(a,b)|a,b\in\mathfrak{M}\models T\}$,
contains no neighborhood of $0$.

\item $T$ is dictionaric and there is a definable ultrametric $\rho$,
uniformly equivalent to $d$, such that $\rho(T)\subseteq\{0\}\cup\{2^{-i}|i<\omega\}$.
\end{enumerate}
\end{thm}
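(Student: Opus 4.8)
The plan is to prove Theorem~\ref{thm:zero-dim-type-spaces} by establishing a cycle of implications, leveraging the characterizations of dictionaricness from Proposition~\ref{prop:dict-equivalents} throughout. I would organize the proof around the cycle \emph{(vi)$\Rightarrow$(v)$\Rightarrow$(i)$\Rightarrow$(ii)$\Rightarrow$(iii)$\Rightarrow$(iv)$\Rightarrow$(vi)}, though some of these are near-trivial. The implications \emph{(vi)$\Rightarrow$(v)} and \emph{(i)$\Rightarrow$(ii)}, \emph{(i)$\Rightarrow$(iii)} are immediate (an ultrametric whose distance set avoids a neighborhood of $0$ certainly has a distance set avoiding a neighborhood of $0$; and $S_1(\bar a)$, $S_n(\varnothing)$ are special cases of $S_n(A)$). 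The substantive work is in \emph{(v)$\Rightarrow$(i)}, in the ``local-to-global'' steps \emph{(ii)$\Rightarrow$(iii)} and \emph{(iii)$\Rightarrow$(iv)} or a rearrangement thereof, and in the upgrade \emph{(iv)$\Rightarrow$(vi)} which is where an actual ultrametric gets manufactured.

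For \emph{(v)$\Rightarrow$(i)}: given a definable metric $\rho$ uniformly equivalent to $d$ whose distance set misses some interval $(0,\delta)$, every $\rho$-ball $B^\rho_{<\delta}(p)$ is $\rho$-clopen (its complement is a union of $\rho$-balls of radius $\geq\delta$), hence topologically clopen since $\rho$ is a formula, hence definable; uniform equivalence transfers this to a basis of definable clopen neighborhoods in the $d$-metric, giving total disconnectedness and (more than) dictionaricness. For the local-to-global implications I would use that a formula over $A$ factors through a countable subset of $A$, together with compactness: if $S_n(\varnothing)$ is totally disconnected for all $n$, then to see $S_n(A)$ is totally disconnected, use that any $S_n(A)$ is a closed subspace of $S_{n}(A_0)$ for countable $A_0$, and that $S_n(A_0)$ embeds (via the map sending $p$ to the type of the concatenation) into $S_{n+\omega}(\varnothing)$ composed with a restriction, so total disconnectedness descends. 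Connecting \emph{(iv)} (clopen basis for the diagonal in $S_2(\varnothing)$) to \emph{(i)/(ii)}: a clopen neighborhood of $\cset{d(x,y)=0}$ inside $\cset{d(x,y)\le\varepsilon}$ yields, by compactness, a uniform ``definable approximate equality'' relation, and iterating/taking these for a sequence $\varepsilon_k\to 0$ is exactly what is needed to build the ultrametric in the final step.

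The main obstacle---and the technically richest part---is \emph{(iv)$\Rightarrow$(vi)}: producing a genuine definable ultrametric $\rho$ with $\rho(T)\subseteq\{0\}\cup\{2^{-i}\}$. The plan is: for each $i<\omega$, the hypothesis gives a clopen $C_i\subseteq S_2(\varnothing)$ with $\cset{d(x,y)=0}\subseteq C_i\subseteq\cset{d(x,y)<2^{-i}}$; replacing $C_i$ by $\bigcap_{j\le i}C_j$ we may assume $C_{i+1}\subseteq C_i$. Since $C_i$ is clopen it is the zeroset of a $\{0,1\}$-valued formula $\chi_i(x,y)$, which is symmetric after replacing it by $\max\{\chi_i(x,y),\chi_i(y,x)\}$. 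The relations $\chi_i$ need not be transitive, so define $E_i(x,y)$ to be the ``$\chi_i$-chain-connectedness of length $\le n$'' relation and argue, via compactness exactly as in the proof of Theorem~\ref{thm:strong_min_and_cat_in_cont:2}, that for suitable indices these stabilize into genuine $\{0,1\}$-valued definable equivalence relations $R_i$ that are decreasing and separate the diagonal; here I need to be careful that the chaining doesn't blow the relation up past $\cset{d(x,y)<2^{-i+c}}$ for a controlled constant $c$, which one gets from uniform continuity of $d$ plus compactness. Finally set $\rho(x,y)=\inf\{2^{-i}: R_i(x,y)=0\}$ (with $\rho(x,y)=1$ if no $R_i$ relates them, after rescaling so $\mathrm{diam}\le 1$); this is a formula because it is a countable infimum of $\{0,2^{-i}\}$-valued formulas that is uniformly approximable, its values lie in $\{0\}\cup\{2^{-i}\}$ by construction, it is an ultrametric since the $R_i$ are nested equivalence relations, and it is uniformly equivalent to $d$ because $R_i(x,y)=0$ forces $d(x,y)$ small and $d(x,y)$ small forces $(x,y)\in C_i$ hence $R_i(x,y)=0$ up to an index shift. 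Dictionaricness then follows from \emph{(v)}, or directly since $\rho$-balls are definable clopen sets. The one place requiring genuine vigilance is the interplay between the non-transitivity of the $\chi_i$ and the metric bounds: I expect to spend most of the effort bookkeeping constants so that the transitive closure of the $i$-th relation still sits inside a ball of radius comparable to $2^{-i}$, which is precisely the kind of compactness-plus-uniform-continuity argument already rehearsed in the proof of Theorem~\ref{thm:strong_min_and_cat_in_cont:2}.
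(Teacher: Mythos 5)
Two steps in your cycle have genuine gaps. First, your \emph{(v)}$\Rightarrow$\emph{(i)} misreads condition \emph{(v)}: ``$\rho(T)$ contains no neighborhood of $0$'' means that no interval $[0,\delta)$ is \emph{contained in} $\rho(T)$ (there are gaps arbitrarily close to $0$), not that $\rho(T)$ is disjoint from some $(0,\delta)$; the intended examples ($p$-adic structures, with $\rho(T)=\{0\}\cup\{2^{-i}\}$) have distances accumulating at $0$. More seriously, even under your stronger reading the argument fails: metric balls in a topometric space need not be topologically open, so ``$B^\rho_{<\delta}(p)$ is $\rho$-clopen, hence topologically clopen since $\rho$ is a formula'' does not follow ($q\mapsto\rho(p,q)$ is only lower semicontinuous on the type space). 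Example~\ref{ex:main-bad-example} is a direct counterexample to your claim: there the metric is the discrete $\{0,1\}$-valued metric, whose distance set certainly misses $(0,1)$, yet $S_1(\varnothing)\cong[0,1]$ is connected. This is exactly why \emph{(v)} carries dictionaricness as a separate conjunct, and it must be used: given $p\in U$, first take a \emph{definable} $D$ with $p\in D\subseteq U$ (so that $\dinf(\cdot,D)$ is a formula), then $D^{\rho\le\delta}=D^{\rho<\delta}$ for a gap value $\delta\notin\rho(T)$ is clopen.

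Second, in \emph{(iv)}$\Rightarrow$\emph{(vi)} your transitivization by chain-closure is the wrong tool, and the boundedness you need is unsubstantiated. In Theorem~\ref{thm:strong_min_and_cat_in_cont:2} the chains stabilize because the relevant equivalence classes are \emph{compact}; here there is no such hypothesis, and the transitive closure of a clopen symmetric neighborhood $C_i\subseteq\cset{d<2^{-i}}$ of the diagonal may require unboundedly long chains and have classes of large $d$-diameter --- uniform continuity plus the triangle inequality give diameter at most $n2^{-i}$ for chains of length $n$, but nothing bounds $n$, and a blown-up closure destroys the direction ``$\rho$ small implies $d$ small'' of uniform equivalence. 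The standard device, which is what the paper uses, sidesteps transitivity entirely: if $\varphi_i$ is the $\{0,1\}$-valued indicator of $C_i$, then $\sup_z|\varphi_i(x,z)-\varphi_i(y,z)|$ is automatically a $\{0,1\}$-valued ultra-pseudo-metric whose zeroset is a clopen \emph{equivalence relation} containing the diagonal and contained in (the symmetrization of) $C_i$; setting $\rho(x,y)=\sup_{i<\omega}2^{-i}\sup_z|\varphi_i(x,z)-\varphi_i(y,z)|$ over a countable enumeration of such indicators yields the ultrametric of \emph{(vi)} directly, with uniform equivalence to $d$ by compactness since $\rho$ vanishes exactly on the diagonal. (The remaining legs of your cycle are fine in substance, though note that $S_n(A)$ surjects onto, rather than embeds into, $S_n(A_0)$ for $A_0\subseteq A$; the correct reduction separates two points of a putative continuum by a restricted formula over a finite tuple $\overline{a}$ and pushes the continuum into $S_n(\overline{a})\subseteq S_{n+|\overline{a}|}(\varnothing)$.)
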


\begin{proof}
\emph{$\text{(i)}\Rightarrow\text{(iii)}$.} This is immediate.

\emph{$\text{(iii)}\Rightarrow\text{(i)}$.} Assume that some $S_{n}(A)$ has a nondegenerate
continuum $C$. Let $p,q\in C$ be distinct types. By compactness
there must exist a restricted formula $\varphi(\overline{x};\overline{a})$
and a finite parameter set $\overline{a}\in A$ such that $p(\overline{x})\models\varphi(\overline{x};\overline{a})=0$
and $q(\overline{x})\models\varphi(\overline{x};\overline{a})=1$.
This implies that $p\upharpoonright\overline{a}$ and $q\upharpoonright\overline{a}$
are still distinct types, so since the natural projection $\pi:S_{n}(A)\rightarrow S_{n}(\overline{a})$
is continuous, $\pi(C)$ must be a nondegenerate continuum, thus $S_{n}(\overline{a})\subseteq S_{n+|\overline{a}|}(\varnothing)$
fails to be totally disconnected.

\emph{$\text{(iii)}\Rightarrow\text{(ii)}$.} Each $S_{1}(\overline{a})$ is a subspace
of $S_{1+|\overline{a}|}(\varnothing)$, so this is immediate as well.

\emph{$\text{(ii)}\Rightarrow\text{(iii)}$.} If $S_{1}(\varnothing)$ is not totally disconnected,
then we are done. 

Let $n$ be the first $n<\omega$ such that $S_{n}(\varnothing)$
is totally disconnected but $S_{n+1}(\varnothing)$ is not totally
disconnected. Consider the projection $\pi:S_{n+1}(\varnothing)\rightarrow S_{n}(\varnothing)$.
Since $S_{n}(\varnothing)$ is totally disconnected, any continuum
in $S_{n+1}(\varnothing)$ must be contained in a single fiber. This
fiber is isomorphic to $S_{1}(\overline{a})$ for some parameter set
$\overline{a}$, so $S_{1}(\overline{a})$ is not totally disconnected. 

\emph{$\text{(iii)}\Rightarrow\text{(iv)}$.} This is immediate.

\emph{$\text{(iv)}\Rightarrow\text{(vi)}$.} Let $\{\varphi_{i}\}_{i<\omega}$ be an enumeration
of $\{0,1\}$-valued restricted formulas of two variables corresponding
to indicator functions of clopen neighborhoods of the diagonal in
$S_{2}(\varnothing)$ (note that there are only countably many of
them since every $\{0,1\}$-valued formula is equal to
some $\{0,1\}$-valued restricted formula). Define $\rho$ by
\[
\rho(x,y)=\sup_{i<\omega}2^{-i}\underset{z}{\sup}|\varphi_{i}(x,z)-\varphi_{i}(y,z)|.
\]
This is a formula and so is continuous on $S_{2}(\varnothing)$.
It is an ultra-psuedo-metric since it is the supremum of a family
of ultra-pseudo-metrics. It also clearly only takes on values in the
set $\{0\}\cup\{2^{-i}|i<\omega\}$.

Since it vanishes on the diagonal, it must be uniformly dominated
by $d$. Let $p$ be a $2$-type not on the diagonal. There is some
clopen neighborhood $Q$ of the diagonal which does not contain $p$.
Let $\varphi_{i}$ be its indicator function. We then have $\rho^{p}\geq2^{-i}$,
so $\rho$ does not vanish anywhere besides the diagonal, so by compactness
it must be uniformly equivalent to $d$.

\emph{$\text{(vi)}\Rightarrow\text{(v)}$.} This is immediate.

\emph{$\text{(v)}\Rightarrow\text{(iii)}$.} For any $p\in U\subseteq S_{n}(\varnothing)$,
let $D$ be a definable set such that $p\in D\subseteq U$. Note that
a set definable relative to $d$ is definable relative to any metric
uniformly equivalent to $d$. By compactness there is an $\varepsilon>0$
small enough that $D^{\rho\leq\varepsilon}\subseteq U$. Since there
are arbitrarily small gaps in the distance set we can find $0<\delta<\varepsilon$
such that $\delta\notin\rho(T)$ and we have that $D^{\rho\leq\varepsilon}=D^{\rho<\varepsilon}$
is a clopen set. Therefore $S_{n}(\varnothing)$ has a basis of clopen
sets and is totally disconnected.
\end{proof}
\begin{lem}
If $X$ is a metrically separable ultrametric space, then its distance
set, $d(X)=\{d(x,y)|x,y\in X\}$, is countable. 
\end{lem}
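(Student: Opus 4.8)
The plan is to use the countable dense set to capture every realized distance as a distance between two points of that set, exploiting the fact that in an ultrametric space a sufficiently small perturbation of one endpoint of a pair does not change the distance between them.

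First I would fix a countable metrically dense subset $S=\{z_n\}_{n<\omega}$ of $X$. Let $r\in d(X)$ with $r>0$, and pick $x,y\in X$ with $d(x,y)=r$. By density choose $z_n\in S$ with $d(x,z_n)<r$ and $z_m\in S$ with $d(y,z_m)<r$ (e.g.\ pick points of $S$ within $r/2$ of $x$ and of $y$). Now apply the ultrametric inequality twice. From $d(z_n,x)<r$ and $d(x,y)=r$ we get $d(z_n,y)\le\max\{d(z_n,x),d(x,y)\}=r$; and from $r=d(x,y)\le\max\{d(x,z_n),d(z_n,y)\}$ together with $d(x,z_n)<r$ we get $d(z_n,y)\ge r$, so $d(z_n,y)=r$. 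Repeating this with $z_m$ in place of $x$: from $d(z_m,y)<r=d(z_n,y)$ we obtain, by the same two inequalities, $d(z_n,z_m)=r$. Hence every positive element of $d(X)$ lies in the set $\{d(z_n,z_m):n,m<\omega\}$.

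Since $\{d(z_n,z_m):n,m<\omega\}$ is countable, $d(X)\smallsetminus\{0\}$ is countable, and therefore so is $d(X)$. There is no real obstacle here; the only point that needs care is the strictness of the perturbation: density furnishes a point of $S$ at distance \emph{strictly} less than $r$ from each endpoint, and it is exactly this strictness that forces the ultrametric maximum to collapse onto the larger value $r$, so that the perturbed pair still realizes the distance $r$.
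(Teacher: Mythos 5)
Your proof is correct and follows essentially the same route as the paper's: both arguments take a countable dense subset and use the ultrametric "isosceles" property to show that perturbing each endpoint of a pair by strictly less than their distance leaves that distance unchanged, so every positive distance is already realized within the dense set. Your write-up simply makes explicit the two-sided ultrametric estimate that the paper leaves implicit.
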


\begin{proof}
Let $\{x_{i}\}_{i<\omega}$ be a countable dense subset of $X$. There
are only countably many distances $d(x_{i},x_{j})$. By the ultrametric
inequality if we choose $x_{i}$ close enough to $y$ and $x_{j}$
close enough to $z$, $d(y,z)=d(x_{i},x_{j})$.
\end{proof}
\begin{lem}
\label{lem:not-trivial-ultra}If $T$ is an ultrametric theory, then
the underlying metric of each $S_{n}(A)$ is an ultrametric. (Recall
that the metric on $n$-tuples is defined as the maximum of the componentwise
distances.)
\end{lem}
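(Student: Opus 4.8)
The plan is to pass to a monster model $\mathfrak{C}\models T$ and deduce the ultrametric inequality on $S_n(A)$ from the one holding in $\mathfrak{C}$. First I would note that since $T$ is ultrametric, $\mathfrak{C}$ satisfies
\[
\sup_{x,y,z}\bigl(d(x,z)\dotdiv\max\{d(x,y),d(y,z)\}\bigr)=0,
\]
and hence $d(a,c)\leq\max\{d(a,b),d(b,c)\}$ for all $a,b,c$ in the home sort of $\mathfrak{C}$; taking componentwise maxima, the same inequality holds for all $n$-tuples $\bar a,\bar b,\bar c$ from the home sort under the max metric.

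Then I would invoke the standard description of the type-space metric: for $p,q\in S_n(A)$, $d(p,q)=\inf\{d(\bar a,\bar b):\bar a,\bar b\in\mathfrak{C},\ \bar a\models p,\ \bar b\models q\}$, and, by strong homogeneity of $\mathfrak{C}$ over $A$, for any fixed $\bar b\models q$ one still has $d(p,q)=\inf\{d(\bar a,\bar b):\bar a\models p\}$. Given $p,q,r\in S_n(A)$, I would fix $\bar b\models q$, and for each $\varepsilon>0$ pick $\bar a\models p$ with $d(\bar a,\bar b)\leq d(p,q)+\varepsilon$ and $\bar c\models r$ with $d(\bar b,\bar c)\leq d(q,r)+\varepsilon$ (the second using the same observation with the roles of $p$ and $r$, keeping $\bar b$ fixed). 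Then
\[
d(p,r)\leq d(\bar a,\bar c)\leq\max\{d(\bar a,\bar b),d(\bar b,\bar c)\}\leq\max\{d(p,q),d(q,r)\}+\varepsilon,
\]
and letting $\varepsilon\to0$ gives $d(p,r)\leq\max\{d(p,q),d(q,r)\}$. Since $d$ on $S_n(A)$ is visibly symmetric and vanishes on the diagonal, this shows the metric of $S_n(A)$ is an ultrametric.

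The whole argument is a soft transfer, so I do not anticipate a genuine obstacle; the one point to state carefully is the reduction of $d(p,q)$ to distances between realizations inside a single (monster) model together with the fact that fixing one of the two realizations does not change the infimum — both standard consequences of saturation and homogeneity of $\mathfrak{C}$.
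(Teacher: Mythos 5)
Your proof is correct and follows essentially the same route as the paper: realize $p$, $q$, $r$ by tuples in a common (saturated) model with the relevant distances controlled, apply the ultrametric inequality componentwise, and push it down to $S_n(A)$. The only cosmetic difference is that the paper attains the infima exactly (by compactness) instead of using $\varepsilon$-approximations plus homogeneity.
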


\begin{proof}
Let $p,q,r\in S_{n}(A)$. We can find a model $\mathfrak{M}$ containing
$n$-tuples $\overline{a}\models p,$ $\overline{b}\models q$, and
$\overline{c}\models r$ such that $d(\overline{a},\overline{b})=d(p,q)$
and $d(\overline{b},\overline{c})=d(q,r)$. Then $d(p,r)\leq d(\overline{a},\overline{c})\leq \imax {d(\overline{a},\overline{b})}{  d(\overline{b},\overline{c})}=\imax{d(p,q)}{ d(q,r)}$. 
\end{proof}
Note that Lemma \ref{lem:not-trivial-ultra} isn't entirely trivial
in the sense that not all metric properties of models of theories
transfer directly to the type spaces of that theory.
\begin{cor}
\label{cor:bad-ultrametric}If $T$ is an ultrametric theory whose
distance set, 
\[d(T) \coloneqq \{d(a,b)|a,b\in\mathfrak{M}\models T\},\]
\editcom{Added $\coloneqq$.} is somewhere
dense, then its type space $S_{2}(T)$ is not small (and in particular
$T$ is not $\omega$-stable).
\end{cor}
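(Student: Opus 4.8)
The plan is to prove the inclusion $d(T)\subseteq d(S_2(T))$, where $d(S_2(T))$ denotes the set of distances between $2$-types, and then to contradict it using the two lemmas immediately preceding the corollary together with the compactness of the type space.

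First I would record that the atomic formula $d(x_0,x_1)$ defines a continuous function $\delta\colon S_2(T)\to\mathbb{R}$ whose image is precisely $d(T)$: any $p\in S_2(T)$ is realized by a pair $(a,b)$ in the monster model (a model of $T$), so $\delta(p)=d(a,b)\in d(T)$, and conversely $\delta(\tp(a,b))=d(a,b)$ for any $a,b$ in a model of $T$. Since $S_2(T)$ is compact, $d(T)=\delta(S_2(T))$ is a compact, hence closed, subset of $\mathbb{R}$. As it is closed and, by hypothesis, somewhere dense, it has nonempty interior and therefore contains a nondegenerate interval; in particular $d(T)$ is uncountable.

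Next I would establish $d(T)\subseteq d(S_2(T))$ by a diagonal trick. Fix $t=d(a,b)$ with $a,b$ in some model $\mathfrak{M}\models T$ and consider $p=\tp(a,b)$ and $q=\tp(a,a)$ in $S_2(T)$. Using the realizations $(a,b)\models p$ and $(a,a)\models q$, which lie in the common model $\mathfrak{M}$, gives $d(p,q)\le\max\{d(a,a),d(b,a)\}=t$. For the reverse inequality, any realizations $(b_0,b_1)\models p$ and $(c,c)\models q$ satisfy, by the ultrametric inequality for $T$, $t=d(b_0,b_1)\le\max\{d(b_0,c),d(c,b_1)\}=d((b_0,b_1),(c,c))$, so $d(p,q)\ge t$. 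Hence $d(p,q)=t$, and $t\in d(S_2(T))$.

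Finally, suppose for contradiction that $S_2(T)$ is small, i.e.\ metrically separable. By Lemma~\ref{lem:not-trivial-ultra} the metric on $S_2(T)$ is an ultrametric, so by the preceding lemma on separable ultrametric spaces its distance set $d(S_2(T))$ is countable. Then $d(T)\subseteq d(S_2(T))$ is countable, contradicting the uncountability established above. Hence $S_2(T)$ is not small, and in particular $T$ is not $\omega$-stable, since $\omega$-stable theories have $\#^{\mathrm{dc}}S_n(\varnothing)\le\aleph_0$. The point most easily overlooked is the closedness of $d(T)$: without the compactness of $S_2(T)$ the hypothesis ``somewhere dense'' would not force uncountability (for instance $\mathbb{Q}\cap[0,1]$ is countable yet somewhere dense), so invoking it there is essential.
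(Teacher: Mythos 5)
Your proof is correct, and it takes a genuinely different route from the paper's. You reduce the corollary to the two lemmas that immediately precede it: after observing (via continuity of $d(x_0,x_1)$ on the compact space $S_2(T)$) that $d(T)$ is closed and hence contains an interval, you show $d(T)\subseteq d(S_2(T))$ by pairing $\tp(a,b)$ against the diagonal type $\tp(a,a)$ --- the upper bound from the common realizations, the lower bound from the ultrametric inequality --- and then contradict the countability of the distance set of a separable ultrametric space, using Lemma~\ref{lem:not-trivial-ultra} to know that $S_2(T)$ is indeed ultrametric. The paper argues differently: it first arranges $0<r$ without loss of generality, uses compactness to realize every $t\in(r,s)$ as an exact distance, and then shows directly that two $2$-types with distinct internal distances $u>v$ are at type-space distance at least $v$, producing an uncountable $({>}r)$-separated family $\{p_t\}_{t\in(r,s)}$; notably, the paper's own proof uses only the ultrametric inequality inside models and never actually invokes either of the two preceding lemmas. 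The trade-off: the paper's construction yields the slightly stronger conclusion of a \emph{uniformly} separated uncountable set, whereas your argument is shorter, avoids the reduction to $r>0$, and puts the two stated lemmas to work. Your parenthetical about why closedness of $d(T)$ matters is well taken and is exactly the role compactness plays in the paper's version as well.
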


\begin{proof}
Assume that $d(T)$ is dense in some interval $(r,s)$. Assume without
loss that $0<r$. Since $d(T)$ is dense in $(r,s)$, for every $t\in(r,s)$
and every $\varepsilon>0$ we can find $a,b\in\mathfrak{M}$ for some
model such that $|t-d(a,b)|<\varepsilon$, so by compactness there
exists $c,e\in\mathfrak{N}$ such that $d(c,e)=t$. This means that
the set $\cset{|d(x,y)-t|=0}$ is non-empty in $S_{2}(\varnothing)$ for
every $t\in(r,s)$. Let $p$ and $q$ be $2$-types such that $d^{p}=u>v=d^{q}$.
Let $ab\models p$ and $ce\models q$ in some model. If $d(ab,ce)<v$,
then we have that $d(a,b)\leq \max\{d(a,c), d(c,e) , d(e,b)\}\leq v$,
which is a contradiction, so we have $d(ab,ce)\geq v$.

Since $r>0$, if we pick a type $p_{t}\in\cset{|d(x,y)-t|=0}$ for each
$t\in(r,s)$, the set $\{p_{r}|t\in(r,s)\}$ will be uncountable and
$({>}r)$-separated, so $S_{2}(T)$ is not small.
\end{proof}
It is possible for an ultrametric theory $T$ with somewhere dense
distance set to have $S_{1}(T)$ be a single point (even if we require that $T$ be superstable).\editcom{Changed second sentence into parenthetical comment.} %It is also possible for $T$ to be superstable.
\begin{cor}
Every totally transcendental ultrametric theory $T$ has totally disconnected
type spaces.
\end{cor}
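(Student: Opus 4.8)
The plan is to reduce the statement to condition \emph{(v)} of Theorem~\ref{thm:zero-dim-type-spaces}, using the original metric $d$ itself as the witnessing definable metric $\rho$. Since $d$ is (trivially) a definable predicate, is uniformly equivalent to itself, and is an ultrametric by the hypothesis that $T$ is ultrametric, once we know that $T$ is dictionaric and that the distance set $d(T)$ contains no neighborhood of $0$, Theorem~\ref{thm:zero-dim-type-spaces} immediately gives that every $S_n(A)$ is totally disconnected.

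First I would note that $T$ is dictionaric: this is exactly one of the conclusions of Proposition~\ref{prop:strong_min_and_cat_in_cont:3}, which asserts that totally transcendental theories are dictionaric. So the first clause of condition \emph{(v)} is free, and it remains only to control the distance set. For this, suppose toward a contradiction that $[0,\varepsilon)\subseteq d(T)$ for some $\varepsilon>0$; then $d(T)$ is dense in the interval $(0,\varepsilon)$, hence somewhere dense. Let $T_0$ be the reduct of $T$ to the signature consisting of the metric alone. This is a countable (indeed finite-signature) reduct, so by total transcendence $T_0$ is $\omega$-stable, and in particular $S_2(T_0)$ is metrically separable. On the other hand $T_0$ is still an ultrametric theory with the same somewhere dense distance set, so by Corollary~\ref{cor:bad-ultrametric} the type space $S_2(T_0)$ is not small — a contradiction. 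Hence $d(T)$ contains no neighborhood of $0$, condition \emph{(v)} of Theorem~\ref{thm:zero-dim-type-spaces} holds with $\rho=d$, and therefore condition \emph{(i)} holds: $S_n(A)$ is totally disconnected for every $n<\omega$ and every parameter set $A$.

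The only real subtlety — and hence the place to be careful rather than a genuine obstacle — is the passage to the metric-only reduct, which is needed so that Corollary~\ref{cor:bad-ultrametric} and the definition of total transcendence can be applied to the same object; the rest is bookkeeping, together with the elementary observation that a subset of $[0,\infty)$ containing an interval $[0,\varepsilon)$ is automatically dense in $(0,\varepsilon)$ and thus somewhere dense.
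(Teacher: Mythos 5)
Your overall strategy is the paper's: verify condition \emph{(v)} of Theorem~\ref{thm:zero-dim-type-spaces} with $\rho=d$, getting dictionaricness from Proposition~\ref{prop:strong_min_and_cat_in_cont:3} and the gap condition on the distance set from Corollary~\ref{cor:bad-ultrametric} applied to an $\omega$-stable countable reduct. Your handling of the distance set is careful and correct: you rightly pass to the metric-only reduct $T_0$ (which is countable, hence $\omega$-stable by total transcendence, and has the same distance set) before invoking Corollary~\ref{cor:bad-ultrametric}, since $T$ itself need not be $\omega$-stable when its language is uncountable.

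There is, however, one gap in the final step: Theorem~\ref{thm:zero-dim-type-spaces} is stated only for \emph{countable} theories, and you apply it directly to $T$. This is not a throwaway hypothesis to wave past, because the corollary is phrased for totally transcendental theories precisely to cover uncountable languages (for countable $T$, totally transcendental just means $\omega$-stable); the uncountable case is the only content beyond the countable one, and it is exactly the case your citation does not literally cover (e.g., the proof of $\text{(iv)}\Rightarrow\text{(vi)}$ in that theorem genuinely uses countability of the language to enumerate the $\{0,1\}$-valued restricted formulas). The repair is easy and you should state it: either observe that the particular implications you need, $\text{(v)}\Rightarrow\text{(iii)}\Rightarrow\text{(i)}$, never use countability of the language; or do what the paper does, namely apply the theorem to every countable reduct $T_0$ of $T$ (each of which is $\omega$-stable, hence dictionaric, with nowhere dense distance set) to conclude each such $T_0$ has totally disconnected type spaces, and then lift to $T$ by noting that any two distinct types over $T$ are already separated by a formula of some countable reduct, hence by a clopen set pulled back from that reduct's type space — equivalently, $\{0,1\}$-valued formulas are logically complete in the full theory.
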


\begin{proof}
Every countable reduct $T_{0}$ of $T$ is $\omega$-stable, so by
Corollary \ref{cor:bad-ultrametric}, $d(T_{0})$ is nowhere dense
in $[0,1]$, so Theorem \ref{thm:zero-dim-type-spaces} applies and
$T_{0}$ has totally disconnected type spaces. Since this is true
for every countable reduct, $\{0,1\}$-valued formulas are logically
complete in the full theory, and $T$ has totally disconnected type
spaces as well.
\end{proof}
%\textit{Warning:}
As a word of warning, not all ultrametric theories have totally disconnected or
even dictionaric type spaces.
\begin{cor}\label{cor:strong_min_and_cat_in_cont:2}
Every theory $T$ with totally disconnected type spaces is bi-interpretable
with a many-sorted discrete theory $T_{\mathrm{dis}}$.
\end{cor}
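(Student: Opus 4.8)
The plan is to invoke Theorem~\ref{thm:zero-dim-type-spaces}, which is available since total disconnectedness of all the $S_n(A)$ is condition~(i) there. First I would replace $T$ by a definitionally equivalent theory (this loses nothing for bi-interpretability) whose metric $d$ is itself an ultrametric valued in $\{0\}\cup\{2^{-i}:i<\omega\}$; the point of the replacement is that now $d$ ``factors through discrete data.'' For each $i<\omega$ the relation $x\sim_i y :\Leftrightarrow d(x,y)\le 2^{-i}$ is a $\varnothing$-definable, $\{0,1\}$-valued equivalence relation (one checks $\min\{2^i(d(x,y)\dotdiv 2^{-i}),1\}$ is a $\{0,1\}$-valued definable pseudo-metric cutting it out), so the quotient $H_i:=H/{\sim_i}$ is a uniformly discrete imaginary sort in the sense of Definition~\ref{defn:imag}, the projections $\pi_i\colon H_{i+1}\to H_i$ (well defined as $\sim_{i+1}$ refines $\sim_i$) are $\varnothing$-definable and surjective, and an element of $H$ is determined by its sequence of images in the $H_i$ and, conversely, is recoverable from any coherent such sequence by completeness of models.

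I would then \emph{define} $T_{\mathrm{dis}}$ to be the theory, in the many-sorted signature with sorts $\{H_i\}_{i<\omega}$, the function symbols $\{\pi_i\}_{i<\omega}$, and a relation symbol for every $\{0,1\}$-valued $\varnothing$-restricted formula of $T$ that is $\sim_i$-invariant in each coordinate for some $i$ (interpreted on the corresponding $H_i^k$), of the structure $\frk{M}\mapsto(H_i(\frk{M}),\pi_i,\dots)$ attached to any $\frk{M}\models T$. Since $T$ is countable this signature is countable, every sort carries a (uniformly) discrete metric, and the structure is visibly interpreted in $T$ (quotients of the home sort by $\varnothing$-definable pseudo-metrics are imaginaries, and the relations are $\{0,1\}$-valued formulas); so $T_{\mathrm{dis}}$ is a many-sorted discrete theory with an interpretation in $T$.

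For the reverse interpretation, inside any model of $T_{\mathrm{dis}}$ I would take the set $C$ of \emph{coherent sequences} $\{(x_i)\in\prod_i H_i : \pi_i(x_{i+1})=x_i\text{ for all }i\}$ inside the $\omega$-product sort, with the metric inherited from the product. The key step---and the one place I expect real work---is verifying that $C$ is a \emph{definable} (not merely type-definable) subset of $\prod_iH_i$: one checks that $4\sup_i 2^{-i}d_{H_i}(\pi_i(x_{i+1}),x_i)$ is a formula whose zeroset is exactly $C$ and which bounds $d(-,C)$ from above, using that if $i_0$ is the least index with $\pi_{i_0}(x_{i_0+1})\neq x_{i_0}$ then $x_0,\dots,x_{i_0}$ is already coherent and, by surjectivity of the $\pi_i$, extends to a coherent sequence agreeing with $(x_i)$ through coordinate $i_0$, so $d((x_i),C)$ lies between $2^{-i_0-2}$ and $2^{-i_0}$. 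Granting this, $C$ is a legitimate interpretation domain; I recover the metric of $T$ as the restriction of the product metric (which on coherent sequences agrees exactly with $d$ under $a\leftrightarrow(\bar\pi_i(a))_i$, both equalling $2^{-(i_0-1)}$ for $i_0$ the least index of disagreement), and I recover every other predicate $P$ of $T$ as a uniform limit of $\mathbb{Q}$-combinations of the relations of $T_{\mathrm{dis}}$---this is legitimate because total disconnectedness makes each $S_n(T)$ zero-dimensional, so every formula of $T$ is a uniform limit of simple functions built from $\{0,1\}$-valued formulas, hence a definable predicate over $T_{\mathrm{dis}}$.

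Finally I would check bi-interpretability by unwinding the two round trips: for $T\to T_{\mathrm{dis}}\to T$ the map $a\mapsto(\bar\pi_i(a))_i$ is a $\varnothing$-definable bijection of $H$ with $C$ that is an isometry and respects all symbols by construction; for $T_{\mathrm{dis}}\to T\to T_{\mathrm{dis}}$ the map $(x_j)_j\mapsto x_i$ identifies $C/{\sim_i}$ with $H_i$ (again by coherence and surjectivity of $\pi_i$), compatibly with the $\pi_i$ and the chosen relations. Apart from the definability of $C$, everything is either the content of Theorem~\ref{thm:zero-dim-type-spaces}, standard facts about zero-dimensional compacta, or standard facts about interpretations in continuous logic; in particular, once $C$ is known definable, the reverse map automatically sends \emph{every} model of $T_{\mathrm{dis}}$ to a model of $T$ (interpretations preserve elementary equivalence and $T_{\mathrm{dis}}$ is complete), so ``exotic'' models of $T_{\mathrm{dis}}$ need no separate treatment.
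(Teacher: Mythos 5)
Your proposal is correct and follows essentially the same route as the paper: apply Theorem~\ref{thm:zero-dim-type-spaces} to obtain a $\{0\}\cup\{2^{-i}:i<\omega\}$-valued definable ultrametric, pass to the discrete quotient sorts $H/\!\sim_i$ equipped with all induced $\{0,1\}$-valued relations, and recover the home sort as the definable set of coherent sequences inside the $\omega$-product sort. (One quibble: your explicit witness $4\sup_i 2^{-i}d_{H_i}(\pi_i(x_{i+1}),x_i)$ does not actually dominate $d(-,C)$ when the least index of disagreement $i_0$ is large---the class separation in $H_{i_0}$ only forces a value of order $2^{-2i_0}$ against a distance of order $2^{-i_0-1}$---but your own observation via the least index of disagreement is exactly the paper's argument that the clopen sets of partially coherent sequences satisfy $C\subseteq C_k\subseteq C^{<2^{-k}}$, which already gives definability of $C$.)
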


\begin{proof}
Since $T$ has totally disconnected type spaces, it is dictionaric.
By Theorem \ref{thm:zero-dim-type-spaces}, we can find an ultrametric
$\rho$ uniformly equivalent to $d$ such that $\rho(T)\subseteq\{0\}\cup\{2^{-i}|i<\omega\}$.
This means that we can define a sequence of $\{0,1\}$-valued equivalence
relations $E_{k}(x,y)=2^{k+1}\imin{2^{-k}\dotdiv\rho(x,y)}{1}$. 

Each imaginary $T_{k}=T/E_{k}$ is metrically discrete and dictionaric
by corollary \ref{cor:dict-hered}, so for every parameter set $A$,
$S_{n}((T/E_{k})_{A})$ is totally disconnected and metrically discrete
(the distance set of a type space is always a subset of the distance
set of the theory). If we form a many-sorted theory $\bigsqcup_{k<\omega}T/E_{k}$
with all the definable relations between different imaginaries, all
of the type spaces will still be totally disconnected and metrically
discrete, because for any finite set $T/E_{k_{0}},...,T/E_{k_{n-1}}$,
the largest $k_{i}$ theory has surjectively defined maps to the others,
so the mixed-sort type spaces are discrete quotients of some $S_{n}(T/E_{k})$
and are thus totally disconnected and metrically discrete.

To see that this is a bi-interpretation note that the discrete theory
has an $\omega$-ary imaginary consisting of the direct limit $\underset{\rightarrow}{\lim}T/E_{k}$; specifically we can consider the imaginary sort $\prod_{k<\omega} T/E_{k}$, and then note that the set 
\[D=\left\{ \alpha \in \prod_{k<\omega} T/E_{k} \, : \, (\forall k < \omega) \alpha(k) E_k \alpha(k+1) \right\}\]
is definable. To see that $D$ is definable, note that for each $k<\omega$, the set 
\[D_k = \left\{ \alpha \in \prod_{k<\omega} T/E_{\ell} \, : \, (\forall \ell < k) \alpha(\ell) E_\ell \alpha(\ell+1) \right\}\]
is clopen and satisfies $D\subseteq D_k \subseteq D^{<2^{-k}}$. $D$ is identical to the home sort of the original theory. 
\end{proof}

\editcom{Added sentence.} Now that we know that $\omega$-stable ultrametric theories are discrete theories in disguise, a Baldwin-Lachlan characterization is immediate.

\begin{cor}
  If $T$ is an ultrametric theory or has totally disconnected type spaces, then $T$ is \insep\ categorical if and only if it is $\omega$-stable and has no imaginary Vaughtian pairs.
\end{cor}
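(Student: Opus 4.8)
The plan is to split on whether $T$ is ultrametric or has totally disconnected type spaces, reduce the first case to the second, and then transport the question to the bi-interpretable many-sorted discrete theory $T_{\mathrm{dis}}$ furnished by Corollary~\ref{cor:strong_min_and_cat_in_cont:2}, where the classical Baldwin--Lachlan theorem \cite{tent_ziegler_2012} (or, if one prefers to stay inside the paper, Theorem~\ref{thm:main-thm}) takes over. The forward direction needs neither hypothesis: if the countable theory $T$ is \insep\ categorical, then it is in particular $\aleph_1$-categorical, hence $\omega$-stable (the continuous analog already invoked in the proof of Theorem~\ref{thm:main-thm}), and by the corollaries to Proposition~\ref{prop:no-VP} a countable $\aleph_1$-categorical theory has no open Vaughtian pairs in any imaginary sort, so in particular no imaginary Vaughtian pairs.

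For the reverse direction, suppose $T$ is $\omega$-stable and has no imaginary Vaughtian pairs. First, the ultrametric case reduces to the totally disconnected one: an $\omega$-stable theory is dictionaric by Proposition~\ref{prop:strong_min_and_cat_in_cont:3}, and if $T$ is moreover ultrametric then by Corollary~\ref{cor:bad-ultrametric} its distance set $d(T)$ is nowhere dense in $[0,1]$, hence contains no neighborhood of $0$; so $\rho = d$ witnesses condition (v) of Theorem~\ref{thm:zero-dim-type-spaces}, and $T$ has totally disconnected type spaces. Thus we may assume $T$ has totally disconnected type spaces. By Corollary~\ref{cor:strong_min_and_cat_in_cont:2}, $T$ is bi-interpretable with a countable many-sorted discrete theory $T_{\mathrm{dis}}$; since each sort of $T_{\mathrm{dis}}$ is a $\varnothing$-definable imaginary of $T$ and $T^{\mathrm{eq}}$ is $\omega$-stable, $T_{\mathrm{dis}}$ is $\omega$-stable, and since every Vaughtian pair of $T_{\mathrm{dis}}$ with respect to a definable set in one of its sorts is an imaginary Vaughtian pair of $T$, $T_{\mathrm{dis}}$ has no Vaughtian pairs. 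By the (routinely many-sorted version of the) Baldwin--Lachlan theorem, $T_{\mathrm{dis}}$ is uncountably categorical. Because $T_{\mathrm{dis}}$ carries the discrete metric on each sort, every model of $T_{\mathrm{dis}}$ has density character equal to its cardinality, and the home sort of $T$ realized inside a model of $T_{\mathrm{dis}}$ has that same density character; since the bi-interpretation is a bijection between isomorphism classes of models, uncountable categoricity of $T_{\mathrm{dis}}$ gives $\kappa$-categoricity of $T$ for every uncountable $\kappa$, i.e.\ \insep\ categoricity. Alternatively, one can bypass Baldwin--Lachlan: $T_{\mathrm{dis}}$, being $\omega$-stable with infinite models, has a minimal set definable over its prime model, which corresponds to a minimal --- hence approximately minimal --- imaginary of $T$ definable over the prime model of $T$, so Theorem~\ref{thm:main-thm}(ii) applies, and clause (b) there ($T$ dictionaric and $T^{\mathrm{eq}}$ with no Vaughtian pairs) is exactly our hypothesis, yielding clause (a) for every uncountable $\kappa$.

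All the genuinely substantial work is done before we reach $T_{\mathrm{dis}}$; the only point in the reverse direction that demands care is the bookkeeping at the interface with the bi-interpretation: one must check that the correspondence of Corollary~\ref{cor:strong_min_and_cat_in_cont:2} faithfully matches \insep\ categoricity of $T$ with uncountable categoricity of $T_{\mathrm{dis}}$ --- that is, that it preserves the density characters of models in both directions --- and that ``no imaginary Vaughtian pairs of $T$'' is literally ``no Vaughtian pairs of $T_{\mathrm{dis}}$'' under the identification of the sorts of $T_{\mathrm{dis}}$ with imaginary sorts of $T$. Both are immediate once one notes that $T_{\mathrm{dis}}$ is discrete sort by sort, so there is no gap between density character and cardinality and no gap between definable sets of $T_{\mathrm{dis}}$ and (imaginary) definable sets of $T$.
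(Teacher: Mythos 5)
Your proof is correct and follows exactly the route the paper intends: the paper states this corollary as ``immediate'' from Corollary~\ref{cor:strong_min_and_cat_in_cont:2}, i.e.\ transport to the bi-interpretable many-sorted discrete theory and invoke Baldwin--Lachlan (equivalently, Theorem~\ref{thm:main-thm}(ii) with a minimal imaginary over the prime model), with the forward direction coming from the corollaries to Proposition~\ref{prop:no-VP}. Your bookkeeping at the interface --- matching density characters with cardinalities and imaginary Vaughtian pairs with Vaughtian pairs of $T_{\mathrm{dis}}$ --- is exactly the content the paper leaves implicit.
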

%\begin{proof}
% \editcom{Added proof.} This follows immediately from Corollary~\ref{cor:strong_min_and_cat_in_cont:2}.
%\end{proof}

\subsubsection{When Can We Find Strongly Minimal Imaginaries over the Prime Model?}
\label{sec:when-stronk-im}

We will establish in Section~\ref{sec:theory-with-strongly} that there are \insep\ categorical theories with strongly minimal sets only over models of sufficiently high dimension. Such theories fail the assumptions of the first part of Theorem~\ref{thm:main-thm}, but what is unclear at the moment is the possibility of an \insep\ categorical theory that has a strongly minimal imaginary over some models but not others. In this section we will present some partial progress on this question, and show that this cannot happen if the strongly minimal set is discrete.

\begin{prop}
If $T$ is a dictionaric theory with no imaginary Vaughtian pairs such that for some model $\mathfrak{M}$ there is an infinite discrete $\mathfrak{M}$-definable imaginary, then for every model $\mathfrak{N}$ there is an infinite discrete $\mathfrak{N}$-definable imaginary. 
\end{prop}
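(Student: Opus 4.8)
The plan is to reduce, via the Imaginary Normal Form, to a definable subset of a fixed $\varnothing$-definable imaginary sort, treat trivially the case where the relevant parameter's type is realized in $\frk{N}$, and then use ``no imaginary Vaughtian pairs'' together with dictionaricness to handle the case where it is omitted.

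First I would apply Lemma~\ref{lem:imaginary-norm-form} to replace the given infinite discrete $\frk{M}$-definable imaginary by a definable subset $D\subseteq I$ of a $\varnothing$-definable quotient $I$ of $H^\omega$, with distance predicate $\partial(x;\bar a)$ for a countable tuple $\bar a$ from the home sort of $\frk{M}$, and with $D$ being $\e$-discrete for some fixed $\e>0$ (i.e.\ $D(\frk{C})$ is $({>}\e)$-separated). Two easy observations: (1) for each $n$ the sentence ``$I$ contains $n$ points pairwise at distance $\geq\e$'' holds in $\frk{M}$, hence in every model of $T$; and (2) for any $\bar b\equiv\bar a$ in $\frk{C}$ the set $\cset{\partial(x;\bar b)=0}$ is an automorphic image of $D$, hence is again an infinite, $\e$-discrete, $\bar b$-definable imaginary. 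Thus if $\tp(\bar a)$ is realized in $\frk{N}$ we are immediately done. Since naming $\bar a$ preserves dictionaricness and having no imaginary Vaughtian pairs, the only remaining content is: a model $\frk{N}\models T$ that \emph{omits} $p\coloneqq\tp(\bar a)$ still has an infinite discrete $\frk{N}$-definable imaginary.

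So assume $\frk{N}$ omits $p$. By Corollary~\ref{cor:dict-hered}, $S_I(\frk{N})$ is dictionaric. Passing to an $\aleph_1$-saturated $\frk{N}^\ast\succ\frk{N}$ and using observation~(1), $I(\frk{N}^\ast)$ contains an infinite $({>}\e/2)$-separated set, so $S_I(\frk{N})$ has a non-realized type $q$ with $\dinf(q,\frk{N})\geq\e/2>0$; by dictionaricness $q$ can be separated from $\frk{N}$ by a definable set $E\ni q$ with $E\cap\frk{N}^{<\e/3}=\varnothing$. The goal is to choose such an $E$ that is also (a) non-algebraic — so that $E$, viewed as an imaginary sort, is infinite — and (b) uniformly discrete. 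For (a) I would argue by contradiction: if every $\frk{N}$-definable subset of $I$ meeting the ``$({\geq}\e/2)$-far from $\frk{N}$'' region were algebraic, then iterating the ultraproduct/elementary-chain constructions of Proposition~\ref{prop:no-VP} and (the proof of) Proposition~\ref{prop:vaught-minimal} inside the sort $I$ would yield a proper elementary pair $(\frk{K}',\frk{K})$ and an $\frk{K}$-definable non-algebraic subset of $I$ whose realizations coincide in the two models — an imaginary Vaughtian pair — contradicting the hypothesis; dictionaricness is precisely what converts the open ``far from $\frk{K}$'' configuration into an honest definable set over $\frk{K}$, exactly as in Proposition~\ref{prop:vaught-minimal}. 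For (b), since $I$ carries an infinite $({>}\e/2)$-separated configuration in every model and $E$ lies in the far region, a further dictionaric shrinking (or passage to a definable subset supported on such a separated configuration, again promoting the finite data of observation~(1)) makes $E$ uniformly discrete without destroying non-algebraicity.

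The step I expect to be the main obstacle is exactly this last combination: simultaneously securing non-algebraicity \emph{and} uniform discreteness of a \emph{single} $\frk{N}$-definable subset of $I$, i.e.\ making the Vaughtian-pair construction run in the imaginary sort while staying inside the discrete part. By contrast, the normal-form reduction, the transfer of the first-order schemes, and the dictionaric separation of a non-realized type from $\frk{N}$ are routine given the machinery already developed.
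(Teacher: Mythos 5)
There is a genuine gap, and it sits exactly where you predicted: producing a \emph{single} $\frk{N}$-definable set that is simultaneously non-algebraic and uniformly discrete. Your plan is to pick an unrealized type $q\in S_I(\frk{N})$ at positive distance from the realized types and then shrink a definable neighborhood of $q$ until it becomes discrete. But discreteness is a property of the particular set $D(-;\bar a)$, not of the ambient sort $I$: away from $D$, the sort $I$ (a quotient of $H^\omega$) can carry arbitrary continuous metric structure, and nothing in your construction forces $q$ to live near $D$ --- indeed ``near $D$'' is not expressible over $\frk{N}$, since $\bar a\notin\frk{N}$. If $q$ is, say, a limit of a continuum of types at arbitrarily small mutual distances, then \emph{no} definable neighborhood of $q$ is uniformly discrete, and no amount of dictionaric shrinking helps. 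A secondary error: from ``$I(\frk{N}^\ast)$ contains an infinite $({>}\e/2)$-separated set'' you infer a non-realized type at distance $\geq\e/2$ from $\frk{N}$; this does not follow, because $I(\frk{N})$ itself already contains arbitrarily large $\e$-separated configurations (the same first-order scheme holds there), so the separated set in $\frk{N}^\ast$ may consist entirely of types close to realized ones. (One can still get \emph{some} unrealized type at positive distance by applying no-imaginary-Vaughtian-pairs to the full sort $I$, but without the quantitative bound, and in any case this does not rescue the discreteness step.)

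The missing idea, which is how the paper's proof proceeds, is to import the discreteness from $\frk{M}$ by \emph{approximating the parameters} rather than by hunting for a far-away type. Realized types are dense in $S_n(\varnothing)$, so choose $\bar b_i\in\frk{N}$ with $\tp(\bar b_i)\to\tp(\bar a)$; the conditions ``$D(-;\bar y)$ is a distance predicate'' and ``distinct points of its zeroset are at distance $1$'' are expressed by formulas $\chi_0,\chi_1,\eta$ vanishing at $\bar a$, hence taking values ${<}2^{-i}$ at $\bar b_i$. Dictionaricness is then used not to isolate a type but to sandwich a $\bar b_i$-definable set $E_i$ between the sublevel sets $\cset{D(-;\bar b_i)\le 2^{-i}}$ and $\cset{D(-;\bar b_i)<2^{-i+1}}$; for large $i$ the approximate discreteness forces every pair in $E_i$ to be at distance ${<}\tfrac14$ or ${>}\tfrac34$, so $\rho(x,y)=\min\{2(d(x,y)\dotdiv\tfrac14),1\}$ is a definable $\{0,1\}$-valued equivalence relation and $E_i/\rho$ is discrete. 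Infinitude is where no-imaginary-Vaughtian-pairs finally enters: if every $E_i/\rho$ were finite, the ultraproduct $(\frk{B},\frk{N}',\bar c)=\prod(\frk{A},\frk{N},\bar b_i)/\mathcal U$ (with $\frk{A}\succ\frk{N}$ proper) would have $\tp(\bar c)=\tp(\bar a)$, and every realization of $D(-;\bar c)$ in $\frk{B}$ would lie within $\tfrac14$ of $\frk{N}'$, hence inside $D(\frk{N}';\bar c)$ by $\{0,1\}$-valuedness --- an imaginary Vaughtian pair. Your instinct to invoke an ultraproduct Vaughtian-pair argument for non-algebraicity is in the right spirit, but it must be run against the sets $E_i$ attached to the approximating parameters, not against an arbitrary definable set in the far region of $I$.
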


\begin{proof}
Assume that $T$ is countable. By Lemma~\ref{lem:imaginary-norm-form} we may assume that the infinite discrete imaginary over $\mathfrak{M}$ is a definable subset $D(x,\overline{a})$ of some $\varnothing$-definable imaginary $I$. Assume without loss of generality that the metric on $D(-,\overline{a})$ is $\{0,1\}$-valued.

Since $D(x,\overline{a})$ is a distance predicate, part of $\mathrm{tp}(\overline{a})$ says that 
\[\chi_0(\overline{a})=\sup_x \inf_y \imax{D(y,\overline{a})}{ |D(x,\overline{a})-d(x,y)|}\] 
and
\[\chi_1(\overline{a}) = \sup_x \left|D(x,\overline{a})-\inf_y \imin{D(y,\overline{a})+d(x,y)}{1}\right|\]
both vanish.\editcom{Added `both vanish.'} (These are the axioms for distance predicates given in \cite{MTFMS} right before Theorem 9.12.) $\tp(\bar{a})$ also says that\editcom{Restructured this sentence too.} 
\[\eta(\overline{a}) = \sup_{x,y} \imin{d(x,y)}{ 1 \dotdiv d(x,y)} \dotdiv 4(D(x,\overline{a})+D(y,\overline{a}))\]
vanishes. (This is just  $\sup_{x,y \in D(-,\overline{a})} \imin{d(x,\allowbreak y)}{ 1 \dotdiv d(x,y)}$ expanded out.)

Fix $\mathfrak{N}$ another model of $\mathfrak{M}$ and let $\{\overline{b}_i\}_{i < \omega}$ be a sequence of elements of $\mathfrak{N}$ such that $\mathrm{tp}(\overline{b}_i)\rightarrow \mathrm{tp}(\overline{a})$ (we can choose it to be a sequence rather than a net since $T$ is countable), such that in particular $\chi_{0}(\overline{b}_i),\chi_{1}(\overline{b}_i),\eta(\overline{b}_i)< 2^{-i}$ for every $i<\omega$. For each $i<\omega$ let $E_i$ be a $\overline{b}_i$-definable set such that $\cset{D(-,\overline{b}_i) < 2^{-i+1}}\supseteq  E_i \supseteq \tint  E_i \supseteq \cset{D(-,\overline{b}_i) \leq 2^{-i}}$.

For each $i<\omega$, we have that
\[\mathfrak{A} \models   \sup_{x,y \in E_i} \imin{d(x,y)}{ 1 \dotdiv d(x,y)} \dotdiv 8\cdot 2^{-i+1} = 0.\]
So in particular for sufficiently large $i<\omega$, for any $x,y \in E_i$, either $d(x,y) < \frac{1}{4}$ or $d(x,y)>\frac {3}{4}$, implying that we can define a $\{0,1\}$-valued equivalence relation on $E_i$ by $\rho(x,y) = \imin{2(d(x,y)\dotdiv \frac{1}{4})}{1}$ for sufficiently large $i<\omega$.

Now assume that for all $i<\omega$ for which $\rho$ defines an equivalence relation on $E_i$, there are\editcom{Changed to `there are' from `it has.'} finitely many $\rho$-equivalence classes in $E_i$.\editcom{Added `in $E_i$.'} Let $\mathfrak{A}\succ \mathfrak{N}$ be a proper elementary extension. Fix a non-principal ultrafilter $\mathcal{U}$ on $\omega$ and consider the ultraproduct $(\mathfrak{B},\mathfrak{N}^\prime,\overline{c}) = \prod_{i<\omega}(\mathfrak{A},\mathfrak{N},\overline{b}_i)/\mathcal{U}$, where $(\mathfrak{A},\mathfrak{N},\overline{b}_i)$ is a structure whose universe is $\mathfrak{A}$ with a distance predicate for $\mathfrak{N}$ and constants for $\overline{b}_i$. Note that $\mathfrak{B}$ is a proper elementary extension of $\mathfrak{N}^\prime$, because $\mathfrak{A}$ is uniformly a proper elementary extension of $\mathfrak{N}$ in the product. Clearly we have that $\mathrm{tp}(\overline{c})=\mathrm{tp}(\overline{a})$, so $D(-,\overline{c})$ is a definable set which is infinite and has a $\{0,1\}$-valued metric.

We want to argue that $\mathfrak{B} \succ \mathfrak{N}^\prime$ is a Vaughtian pair with regards to the set $D(-,\overline{c})$. Let $\alpha(i)$ be a sequence such that $\alpha(i)\in (\mathfrak{A},\mathfrak{N},\overline{b}_i)$ and such that the limit $\alpha / \mathcal{U}$ is in $D(-,\overline{c})$. This means that we must have $\lim_\mathcal{U} D(\alpha(i),\overline{b}_i)=0$. Pick $\varepsilon>0$. For each $\alpha(i)$ we have that there is a $\beta(i)$ such that $D(\beta(i),\overline{b}_i) < 2^{-i}$ and $|D(x,\overline{b}_i)-d(\alpha(i),\beta(i))|<2^{-i}$, so in particular $\beta(i) \in E_i(\mathfrak{A})$.  Since $E_i$ only has finitely many $\rho$-classes, there must be a $\gamma(i) \in  E_i(\mathfrak{N})$ such that $d(\beta(i),\gamma(i)) < \frac{1}{4}$, implying that $d(\alpha(i),\gamma(i)) < \frac{1}{4} + 2^{-i+1}$ and in particular $d(\alpha(i),\mathfrak{N})<\frac{1}{4} + 2^{-i+1}$. So then in the limit we have $d(\alpha/\mathcal{U},\mathfrak{N}^\prime) \leq \frac{1}{4}$, but since $D(-,\overline{c})$ has a $\{0,1\}$-valued metric this implies that $\alpha/\mathcal{U} \in D(\mathfrak{N}^\prime, \overline{c})$, so $(\mathfrak{B},\mathfrak{N}^\prime)$ is a Vaughtian pair.

Since $T$ has no imaginary Vaughtian pairs, this is a contradiction and some $E_i$ must have infinitely many $\rho$-classes, so then $E_i/\rho$ is an infinite discrete imaginary over $\mathfrak{N}$.
\end{proof}

\begin{cor}
If $T$ is an \insep\ categorical theory with a discrete strongly minimal imaginary over some set, then it has a discrete strongly minimal imaginary over the prime model.
\end{cor}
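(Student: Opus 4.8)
The plan is to deduce this corollary from the preceding proposition together with the Baldwin--Lachlan-type characterization and the basic theory of dictionaricness developed earlier. First I would recall that an \insep\ categorical theory $T$ is $\aleph_1$-categorical, hence $\omega$-stable (this is standard and used freely in the excerpt), so by Proposition~\ref{prop:strong_min_and_cat_in_cont:3} it is dictionaric, and by the corollaries to Proposition~\ref{prop:no-VP} it has no open-in-definable Vaughtian pairs; in particular $T^{\mathrm{eq}}$ has no open Vaughtian pairs and hence no (imaginary) Vaughtian pairs. Moreover $T$, being $\omega$-stable, has a prime model $\mathfrak{M}_0$ (and it is unique by Proposition~\ref{Prop:Prime-Hom}).

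Next I would invoke the hypothesis: there is a discrete strongly minimal imaginary $D$ over some parameter set $A$. A strongly minimal set is in particular infinite (non-algebraic) and discrete, so $D$ is an infinite discrete imaginary over $A$. Passing to a model $\mathfrak{M} \supseteq A$, the set $D$ remains an infinite discrete $\mathfrak{M}$-definable imaginary (definability and discreteness are preserved; the metric is still uniformly discrete since these are first-order properties, or simply because $D(\mathfrak{M})\subseteq D(\mathfrak{C})$). So the hypotheses of the preceding Proposition are met: $T$ is dictionaric, $T$ has no imaginary Vaughtian pairs, and there is a model $\mathfrak{M}$ over which there is an infinite discrete definable imaginary. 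Therefore, by that Proposition, \emph{every} model of $T$ carries an infinite discrete $\mathfrak{N}$-definable imaginary; in particular the prime model $\mathfrak{M}_0$ does.

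Now I would upgrade ``infinite discrete imaginary over $\mathfrak{M}_0$'' to ``discrete strongly minimal imaginary over $\mathfrak{M}_0$.'' Here I would use that $T$ is $\omega$-stable, hence totally transcendental, and run the CB-analyzability / scatteredness argument: over $\mathfrak{M}_0$, inside the infinite discrete $\mathfrak{M}_0$-definable imaginary $E$, the type space $S_E(\mathfrak{M}_0)$ (which is dictionaric by Corollary~\ref{cor:dict-hered}, and CB-analyzable since $T$ is t.t.) has a pre-minimal type $p$: because $E$ is uniformly discrete, the set $E(\mathfrak{M}_0)$ of realized (algebraic) types is open, $S_E(\mathfrak{M}_0)\smallsetminus E(\mathfrak{M}_0)$ is a nonempty closed set (nonempty since $E$ is not compact), and by CB-analyzability it has a relatively $d$-atomic point $p$, which is then pre-minimal (this is exactly the argument of Proposition~\ref{prop:loc-comp}(i), where local compactness was used only to get that the realized types form an open set, a fact that here comes for free from discreteness). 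Since $\mathfrak{M}_0$ is a model, Proposition~\ref{prop:find-minimal} gives an $\mathfrak{M}_0$-definable minimal set $D_0 \subseteq E$ containing $p$, which is discrete (being a subset of the discrete $E$); and since $T$ has no Vaughtian pairs (indeed no open-in-definable ones), Proposition~\ref{prop:vaught-minimal} upgrades $D_0$ to a strongly minimal set. This $D_0$ is the desired discrete strongly minimal imaginary over the prime model.

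The main obstacle is the last step: producing a strongly minimal (not just infinite discrete) imaginary over the prime model. One must be a little careful that the ``infinite discrete imaginary'' produced by the Proposition really does support a pre-minimal type — this is where discreteness is essential (it makes the realized types open, so that the non-algebraic part is closed, exactly as in the locally-compact case), and where $\omega$-stability (total transcendence) is needed to guarantee CB-analyzable type spaces so that $d$-atomic types are dense. A slicker alternative, which I would mention as a remark, is: since $E$ is uniformly discrete, the quotient of $E$ by a sufficiently fine $\{0,1\}$-valued definable equivalence relation — or just $E$ itself — is essentially a discrete $\aleph_1$-categorical many-sorted structure, so classical Baldwin--Lachlan applied to that reduct directly yields a strongly minimal set definable over its prime model, which is interpretable over $\mathfrak{M}_0$. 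Either route works; the first keeps everything inside the continuous framework already developed.
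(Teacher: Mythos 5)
Your proposal is correct and follows essentially the same route as the paper: apply the preceding proposition to transfer the infinite discrete imaginary to the prime model, then use $\omega$-stability to locate a minimal set inside it and the absence of (imaginary) Vaughtian pairs to upgrade it to strongly minimal. The paper states this in one sentence; you have merely filled in the details of the $\omega$-stability step (CB-analyzability, discreteness making the non-algebraic types closed, Propositions~\ref{prop:find-minimal} and \ref{prop:vaught-minimal}), all of which check out.
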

\begin{proof}
By the previous proposition there is an infinite discrete imaginary over the prime model, maybe not strongly minimal, but by $\omega$-stability we can find a minimal set in it which must by no imaginary Vaughtian pairs be strongly minimal.
\end{proof}

\subsection{The Number of Separable Models}

Regarding the question of the number of separable models, we immediately get the following, generalizing a result originally due to Morley in discrete logic \cite{DBLP:journals/jsyml/Morley70}.

\begin{prop}
If $T$ is an \insep\ categorical theory with a minimal set or imaginary definable over the prime model, then $T$ has at most countably many separable models.
\end{prop}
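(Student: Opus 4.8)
The plan is to mimic the classical Baldwin–Lachlan counting argument, using the machinery already developed in this section. Since $T$ is \insep\ categorical, it is $\omega$-stable (hence dictionaric and with a prime model), and by Proposition~\ref{prop:strong_min_and_cat_in_cont:2} the approximately minimal set or imaginary over the prime model yields a genuine minimal set $D$ definable over the prime model $\frk{M}_0$; by Proposition~\ref{prop:vaught-minimal} (using that $T$ is dictionaric with no Vaughtian pairs, or via Corollary~\ref{cor:dict-hered} in the imaginary case) this $D$ is in fact strongly minimal. So $\mathrm{acl}$ restricted to $D$ is a pregeometry with finite character, and every model of $T$ carries a well-defined dimension $\dim(D(\frk{M}))$.

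The key step is to show that a separable model $\frk{M}$ of $T$ is determined up to isomorphism by $\dim(D(\frk{M})) \in \{0,1,2,\dots,\aleph_0\}$. First I would note that since $T$ has no Vaughtian pairs, $\#^{\mathrm{dc}}D(\frk{M})$ controls $\#^{\mathrm{dc}}\frk{M}$: a model is prime over $D(\frk{M})$ together with the prime model's parameters, and no proper elementary extension fixes $D$. For the classification, given two separable models $\frk{M},\frk{N}$ with $\dim(D(\frk{M})) = \dim(D(\frk{N})) = \lambda$, pick bases $B \subseteq D(\frk{M})$ and $B' \subseteq D(\frk{N})$ of the same cardinality $\lambda$. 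By the Corollary following the pregeometry proposition (that bases of the same cardinality are elementarily equivalent and any bijection between them extends to an elementary map $D(\frk{M}) \equiv D(\frk{N})$), we get $D(\frk{M}) \cong D(\frk{N})$ as elementary substructures of the monster; then, since each $\frk{M}_i$ is prime over $\mathrm{acl}(B_i \cup A)$ (where $A$ is the countable parameter set over which $D$ is defined, inside $\frk{M}_0$) and uniqueness of prime models holds (Proposition~\ref{Prop:Prime-Hom}, or rather primeness over a set), the isomorphism of the strongly minimal parts extends to an elementary embedding $\frk{M} \preceq \frk{N}$, which must be onto by the absence of Vaughtian pairs. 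Hence $\frk{M} \cong \frk{N}$.

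It remains to observe that there are at most countably many possible values of the invariant: $\dim(D(\frk{M}))$ ranges over the countable set $\omega \cup \{\aleph_0\}$ since a separable model cannot have an uncountable basis for a pregeometry on a separable set (indeed $\dim \le \#^{\mathrm{dc}}D(\frk{M}) \le \aleph_0$). Therefore $T$ has at most $\aleph_0$ separable models up to isomorphism. In the imaginary case the same argument runs with $D$ a strongly minimal imaginary, using that $T^{\mathrm{eq}}$ is dictionaric over models (Corollary~\ref{cor:dict-hered}) to invoke Proposition~\ref{prop:vaught-minimal}, and that a model of $T$ is still prime over $A$ together with the realizations of the generic type of $D$ in it.

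The main obstacle I anticipate is the bookkeeping around primeness: one must be careful that $\frk{M}$ really is prime (in the continuous sense, using Proposition~\ref{Prop:Prime-Hom} and the approximate homogeneity corollary) over $\mathrm{acl}(B \cup A)$ rather than merely containing it, and that the extension of the elementary map on $D$ to all of $\frk{M}$ is legitimate — this uses that $T$ is $\omega$-stable so that prime models over sets exist and are unique, and that no Vaughtian pairs forces the resulting embedding to be surjective. Everything else is a direct transcription of the discrete Baldwin–Lachlan argument, with the continuous subtleties already absorbed into the cited results.
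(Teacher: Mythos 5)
Your overall route is the same as the paper's: reduce to a strongly minimal set $D(x;\overline{a})$ over the prime model (via Lemma~\ref{lem:imaginary-norm-form} in the imaginary case and Proposition~\ref{prop:vaught-minimal} for strong minimality), show that two separable models whose copies of $D$ have equal dimension are isomorphic by extending an elementary map between bases, using primeness over the strongly minimal part and getting surjectivity from the absence of Vaughtian pairs, and then count the possible dimensions $\leq\omega$.

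The one genuine flaw is the assertion that ``every model of $T$ carries a well-defined dimension $\dim(D(\frk{M}))$.'' The set $D$ is defined over a tuple $\overline{a}$, and a separable model $\frk{M}$ generally contains many tuples $\overline{b}\equiv\overline{a}$; the dimension of $D(\frk{M};\overline{b})$ can depend on the choice of $\overline{b}$. Example~\ref{ex:no-dim} in this very paper exhibits a totally categorical theory in which, for the unique separable model, every value $n\leq\omega$ is attained as $\dim(D(\frk{M},\overline{b}))$ for suitable $\overline{b}\equiv\overline{a}$, so ``the'' dimension is not an invariant of the model. The paper circumvents this by fixing, in each separable model, a realization $\overline{b}$ of $\tp(\overline{a})$ for which $\dim(D(\frk{M};\overline{b}))$ is \emph{minimal} among realizations of that type, and comparing models via these minimal dimensions. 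Your count can also be salvaged without that device: what your isomorphism step actually proves is that if \emph{some} choice of parameters in $\frk{M}$ and some choice in $\frk{N}$ yield equal dimensions, then $\frk{M}\cong\frk{N}$; hence the nonempty sets of attainable dimensions attached to pairwise non-isomorphic separable models are pairwise disjoint subsets of $\omega\cup\{\aleph_0\}$, which still bounds the number of isomorphism classes by $\aleph_0$. But as written, the claim of a well-defined invariant is false and must be repaired one way or the other. The remaining bookkeeping (primeness over $\mathrm{acl}(B\cup A)$ via $\omega$-stability, surjectivity from no Vaughtian pairs, the imaginary case via $T^{\mathrm{eq}}$) matches the paper.
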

\begin{proof}
By Lemma~\ref{lem:imaginary-norm-form} we may assume that $D$ is a strongly minimal set (rather than a strongly minimal imaginary) by appending a $\varnothing$-definable imaginary sort.\editcom{Removed `So.'} Let $D(x;\overline{a})$ be a minimal set definable over the prime model. Since $T$ is dictionaric and has no Vaughtian pairs, this is a strongly minimal set.\editcom{Removed `Now.'} Let $\mathfrak{A}$ be any separable model of $T$. The prime model $\mathfrak{M}$ embeds into $\mathfrak{A}$ so we can find $\overline{b}\in A$ with $\overline{b} \equiv \overline{a}$ and we get that $D(x;\overline{b})$ is a strongly minimal set. It must be the case that $\mathrm{dim}(D(\mathfrak{A};\overline{b}))\leq \omega$, since $\mathfrak{A}$ is separable. Assume that we chose $\overline{b}$ so that $\mathrm{dim}(D(\mathfrak{A};\overline{b}))\leq \omega$ among parameters with the same type as $\overline{b}$. If $\mathfrak{B}$ is any other separable model we may find $\overline{c}\in \mathfrak{B}$ with $\overline{c}\equiv \overline{a}$ and $\mathrm{dim}(D(\mathfrak{B};\overline{c}))$ minimal. If $\mathrm{dim}(D(\mathfrak{A};\overline{b}))=\mathrm{dim}(D(\mathfrak{B};\overline{c}))$, then we get an elementary map $f:D(\mathfrak{A};\overline{b})\cup\overline{b}\rightarrow D(\mathfrak{B};\overline{c})\cup\overline{c}$, which extends to an isomorphism between $\mathfrak{A}$ and $\mathfrak{B}$, since $T$ has no (imaginary) Vaughtian pairs. Therefore since there are only countably many possible dimensions for separable models, we get that there are at most countably many separable models.
\end{proof}

\begin{lem}\label{Lem:Prime-over-approx}
If $T$ is an \insep\ categorical theory and $(D,P)$ is a $\varnothing$-definable approximately minimal pair, then every model $\mathfrak{M}$ of $T$ is prime over $\cset{P(\mathfrak{M})\brackconv}$.
\end{lem}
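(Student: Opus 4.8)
The plan is to reduce to the honest Baldwin--Lachlan situation for a genuine strongly minimal set and then transfer back, the transfer being controlled by the ``furthermore'' clause of Proposition~\ref{prop:strong_min_and_cat_in_cont:2}(iii). Since $T$ is \insep\ categorical it is $\omega$-stable --- hence dictionaric by Proposition~\ref{prop:strong_min_and_cat_in_cont:3} --- and has no open-in-definable Vaughtian pairs, so by Propositions~\ref{prop:vaught-minimal} and~\ref{prop:some-min-stuff} the pair $(D,P)$ is in fact approximately strongly minimal and its generic type $p$, the unique non-algebraic type in $\cset{P\brackconv}$ (Corollary~\ref{cor:min-set-type-correspondence}), is strongly minimal over $\varnothing$. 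Fix a model $\mathfrak{M}$; after renaming we may assume it contains the prime model $\mathfrak{M}_0$, and write $C=\cset{P(\mathfrak{M})\brackconv}$ and $C_0=\cset{P(\mathfrak{M}_0)\brackconv}$.

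By Proposition~\ref{prop:strong_min_and_cat_in_cont:2}(iii) applied with parameter set $\varnothing$ and model $\mathfrak{M}_0$, there is a strongly minimal set $E\subseteq D$, definable over a countable tuple $\bar d$ from $D(\mathfrak{M}_0)$, with generic type $p$ and with $E(\mathfrak{N})\smallsetminus\cset{P(\mathfrak{N})\brackconv}\subseteq D(\mathfrak{M}_0)$ for every $\mathfrak{N}\succ\mathfrak{M}_0$. Because $E$ is strongly minimal its only types over $\varnothing$ are $p$ and algebraic ones, so each point of $E(\mathfrak{M})$ either realizes $p$ --- and then lies in $\cset{P\brackconv}\subseteq C$ --- or lies in $\mathrm{acl}(\varnothing)$. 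Using that $\mathrm{acl}$ restricted to a strongly minimal set is a pregeometry with finite character, I would pick a basis $B$ of $E(\mathfrak{M})$ consisting of realizations of $p$; then $B\subseteq C$ and $E(\mathfrak{M})\subseteq\mathrm{acl}(B)\subseteq\mathrm{acl}(C)$. The same argument applied to $\mathfrak{M}_0$ produces a basis $B_0\subseteq C_0$ of $E(\mathfrak{M}_0)$.

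Next I would run the usual Baldwin--Lachlan argument relative to the $\bar d$-definable strongly minimal set $E$: by no Vaughtian pairs $\mathrm{acl}(E(\mathfrak{M})\cup\bar d)$ is an elementary submodel, hence equal to $\mathfrak{M}$, so every finite tuple of $\mathfrak{M}$ is algebraic over $E(\mathfrak{M})\cup\bar d$ (in the strong finite-character sense available inside a strongly minimal set) and therefore has a $d$-atomic type over it; together with the fact that $E$ was built via Proposition~\ref{prop:find-minimal} so that $p$ is realized densely in the relevant sense, the continuous omitting-types/prime-model machinery yields that $\mathfrak{M}$ is prime over $E(\mathfrak{M})\cup\bar d$. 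It then remains to absorb $\bar d$: since $\bar d\in\mathfrak{M}_0$ and, applying the lemma to the prime model itself ($\mathfrak{M}_0$ is prime over $C_0$ by approximate $\aleph_0$-homogeneity, the Corollary following Proposition~\ref{Prop:Prime-Hom}, together with $\mathfrak{M}_0=\mathrm{acl}(C_0)$, which comes from no Vaughtian pairs and $B_0\subseteq C_0$ having full dimension), we obtain $\bar d\in\mathrm{acl}(C_0)\subseteq\mathrm{acl}(C)$. Combined with $E(\mathfrak{M})\subseteq\mathrm{acl}(C)$ this gives $E(\mathfrak{M})\cup\bar d\subseteq\mathrm{acl}(C)$, and primeness over a set that lies in $\mathrm{acl}$ of a subset descends to that subset, so $\mathfrak{M}$ is prime over $C$.

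I expect the genuine obstacle to be exactly this last transfer step. In continuous logic the strongly minimal set $E$ need not be $\varnothing$-definable (Example~\ref{ex:approx-min}), so the auxiliary parameters $\bar d$ are unavoidable and must be shown harmless; this is where the ``furthermore'' clause of Proposition~\ref{prop:strong_min_and_cat_in_cont:2}(iii) is essential, since it forces $E(\mathfrak{M})\smallsetminus C\subseteq D(\mathfrak{M}_0)$ and pins $\bar d$ inside the prime model, which --- together with the observation that generic realizations of $p$ all land in $\cset{P\brackconv}$ --- is what lets $\bar d$ be absorbed into $\mathrm{acl}(C)$. A secondary, more bureaucratic point, which I would treat directly rather than by transcription from the discrete case, is making the phrase ``prime over $C$'' precise and verifying the atomicity/omitting-types step when neither $\mathfrak{M}$ nor $C$ is assumed separable.
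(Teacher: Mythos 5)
Your strategy is genuinely different from the paper's, and as written it does not close. The paper argues by contradiction in a few lines: given a model $\mathfrak{A}$, it takes $\mathfrak{B}\preceq\mathfrak{A}$ prime over $C=\cset{P(\mathfrak{A})\brackconv}$ (such a $\mathfrak{B}$ exists by $\omega$-stability), observes that the ``furthermore'' clause of Proposition~\ref{prop:strong_min_and_cat_in_cont:2}(iii) forces $E(\mathfrak{A})\subseteq\cset{P(\mathfrak{A})\brackconv}\cup D(\mathfrak{M}_0)\subseteq\mathfrak{B}$ and hence $E(\mathfrak{A})=E(\mathfrak{B})$, and concludes that $\mathfrak{B}\neq\mathfrak{A}$ would produce a Vaughtian pair over the non-algebraic definable set $E$. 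You correctly identified the key ingredient (the strongly minimal $E$ over the prime model, and the role of the ``furthermore'' clause), but you try to build the embedding directly and then transfer primeness from $E(\mathfrak{M})\cup\bar d$ down to $C$, and it is exactly this transfer that fails.

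Two steps are genuinely wrong. First, ``no Vaughtian pairs'' gives that $\mathfrak{M}$ is prime and minimal over $E(\mathfrak{M})\cup\bar d$; it does not give $\mathfrak{M}=\mathrm{acl}(E(\mathfrak{M})\cup\bar d)$, nor $\mathfrak{M}_0=\mathrm{acl}(C_0)$. Already in the totally categorical discrete theory of the modules $(\mathbb{Z}/4\mathbb{Z})^{(\kappa)}$ with strongly minimal set $D=2M$, no element $a$ with $2a\neq 0$ is algebraic over $D(M)$, even though $M$ is prime over $D(M)$. So your claim $\bar d\in\mathrm{acl}(C_0)\subseteq\mathrm{acl}(C)$ is unjustified, and with it the inclusion $E(\mathfrak{M})\cup\bar d\subseteq\mathrm{acl}(C)$. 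Second, even granting that inclusion, ``$\mathfrak{M}$ prime over $A$ and $A\subseteq\mathrm{acl}(C)$ implies $\mathfrak{M}$ prime over $C$'' is not a valid principle: given $\mathfrak{N}\supseteq C$, primeness over $A$ yields an embedding fixing $A$ pointwise, but $C\not\subseteq A$ in general (the algebraic elements of $\cset{P\brackconv}$ and the part of $C$ lying in $\mathrm{acl}(\bar d)$ need not belong to $E(\mathfrak{M})\cup\bar d$), so nothing forces that embedding to fix $C$ pointwise; and correcting it by an automorphism over $A$ collides with the fact that prime models in continuous logic are only \emph{approximately} homogeneous (Proposition~\ref{Prop:Prime-Hom}). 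The paper's contradiction argument sidesteps both issues by never transferring primeness between interalgebraic bases, and I would restructure your proof along those lines.
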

\begin{proof}
Assume that some model $\mathfrak{A}$ is not prime over $\cset{P(\mathfrak{A})\brackconv}$. Let $\mathfrak{B}\prec \mathfrak{A}$ be prime over $\cset{P(\mathfrak{A})\brackconv}$, so in particular $\cset{P(\mathfrak{A})\brackconv}=\cset{P(\mathfrak{B})\brackconv}$. Let $\mathfrak{M}$ be the prime model of $T$ and find some embedding $\mathfrak{M}\preceq \mathfrak{B}$. Since $D$ is $\varnothing$-definable, we can find a strongly minimal $E\subseteq M$ definable over $\mathfrak{M}$. Note that every element of $E(\mathfrak{C})\smallsetminus \cset{P(\mathfrak{C})\brackconv}$, where $\mathfrak{C}$ is the monster model, is algebraic over $\mathfrak{M}$. This implies that $E(\mathfrak{B})=E(\mathfrak{A})$ as well, but then this is a Vaughtian pair, which is a contradiction. Therefore every model $\mathfrak{M}$ of $T$ is prime over $\mathfrak{M}$.
\end{proof}

The difficulty in characterizing the number of separable models of an \insep\ categorical theory seems to be related to the phenomenon of non-$d$-finite types, identified by Ben Yaacov and Usvyatsov in \cite{Yaacov2007}. Finitary types in continuous logic can behave analogously to $\omega$-types in discrete logic. Ben Yaacov and Usvyatsov identified a class of types they call $d$-finite\editcom{Removed `types.'} which behave like discrete finitary types. In their paper they were able to prove that a superstable theory with `enough uniformly $d$-finite types' (where uniformly $d$-finite is a technical strengthening of $d$-finite) has either $1$ or infinitely many separable models, whereas in general an $\omega$-stable continuous theory can have any finite number of separable models, including $2$ (they also showed that in the presence of `enough $d$-finite types,' a continuous theory cannot have exactly $2$ separable models).

So in some easy cases we get the full Baldwin-Lachlan theorem on the number of separable models of an \insep\ categorical theory:

\begin{thm}
If $T$ is an \insep\ categorical theory and any of the following occur, then $T$ has either $1$ or $\omega$ separable models.
\begin{itemize}
\item
$T$ has a $\varnothing$-definable approximately minimal pair (possibly in an imaginary).
\item
$T$ is ultrametric or has totally disconnected type spaces.
\item
$T$ has enough uniformly $d$-finite types.
\end{itemize}
\end{thm}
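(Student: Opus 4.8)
The plan is to treat the three hypotheses in turn; the first is the substantive case, and the other two reduce to facts already in hand. Assume first that $(D,P)$ is a $\varnothing$-definable approximately minimal pair. Since $T$ is \insep\ categorical it is $\omega$-stable, so it has a prime model $\mathfrak{M}_0$ and no open-in-definable Vaughtian pairs. Let $p$ be the generic type of $(D,P)$; using the absence of Vaughtian pairs (via the minimal set over $\mathfrak{M}_0$ produced by Proposition~\ref{prop:strong_min_and_cat_in_cont:2}, made strongly minimal by Proposition~\ref{prop:vaught-minimal}, and Proposition~\ref{prop:some-min-stuff}), $p$ is a strongly minimal type; in particular $\mathrm{acl}$ restricted to the realizations of $p$ is a pregeometry with exchange and $p$ has a unique non-algebraic extension over every parameter set. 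Every type of $\cset{P\brackconv}\subseteq S_n(\varnothing)$ other than $p$ is algebraic, so its realizations lie in $\mathrm{acl}(\varnothing)$ and are canonically the same in every model. Consequently, for any separable model $\mathfrak{M}$, the set $\cset{P(\mathfrak{M})\brackconv}$ together with its induced elementary structure is determined up to isomorphism over $\varnothing$ by the single invariant $\dim p(\mathfrak{M})\le\aleph_0$ — here I use the dimension theory of the strongly minimal type $p$ (the corollary to the pregeometry proposition above: bases of equal cardinality are elementarily equivalent and any bijection between them extends to an elementary map). By Lemma~\ref{Lem:Prime-over-approx}, $\mathfrak{M}$ is prime over $\cset{P(\mathfrak{M})\brackconv}$, and the prime model over a given parameter set is unique up to isomorphism (Proposition~\ref{Prop:Prime-Hom}), so the isomorphism type of $\mathfrak{M}$ is determined by $\dim p(\mathfrak{M})$; since this value is itself an isomorphism invariant, distinct values give non-isomorphic separable models.

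It remains to identify which values occur. Put $d_0=\dim p(\mathfrak{M}_0)$. Every model contains an elementary copy of $\mathfrak{M}_0$, so $\dim p(\mathfrak{M})\ge d_0$ always; and for each cardinal $n$ with $d_0\le n\le\aleph_0$ there is a separable model with $\dim p(\mathfrak{M})=n$, built exactly as in the classical Baldwin--Lachlan argument (cf.\ the proof of Theorem~\ref{thm:main-thm}) as the $\omega$-stable prime model over $\mathfrak{M}_0$ together with an independent set of realizations of $p$ of size $n-d_0$ (of size $\omega$ when $n=\aleph_0$), which remains separable since $T$ is $\omega$-stable. Hence the number of separable models equals $\lvert\{\, n : d_0\le n\le\aleph_0\,\}\rvert$, which is $1$ when $d_0=\aleph_0$ and is $\aleph_0$ otherwise.

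For the second hypothesis, $T$ is \insep\ categorical, hence $\omega$-stable and (being countable) totally transcendental; in the ultrametric case the corollary to Theorem~\ref{thm:zero-dim-type-spaces} that every totally transcendental ultrametric theory has totally disconnected type spaces (which rests on Corollary~\ref{cor:bad-ultrametric} excluding a somewhere-dense distance set) applies, so in either case $T$ has totally disconnected type spaces. By Corollary~\ref{cor:strong_min_and_cat_in_cont:2}, $T$ is then bi-interpretable with a many-sorted discrete theory $T_{\mathrm{dis}}$, and this bi-interpretation sends models of density character $\kappa$ to models of cardinality $\kappa$ for every $\kappa$; in particular $T_{\mathrm{dis}}$ is an uncountably categorical discrete theory whose countable models correspond to the separable models of $T$. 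The classical (many-sorted) Baldwin--Lachlan theorem then gives that $T_{\mathrm{dis}}$, and hence $T$, has $1$ or $\aleph_0$ countable (respectively separable) models.

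For the third hypothesis, the theorem of Ben Yaacov and Usvyatsov \cite{Yaacov2007} already yields that a superstable theory with enough uniformly $d$-finite types has $1$ or infinitely many separable models, and an \insep\ categorical theory is superstable. The one remaining point, and the step I expect to be the main obstacle, is to show that here ``infinitely many'' is precisely $\aleph_0$. The cleanest route would be to extract from the hypothesis of enough uniformly $d$-finite types a minimal set or minimal imaginary definable over the prime model of $T$ (a $d$-finite minimal type over the prime model ought to carry an honest definable set, much as in Proposition~\ref{prop:find-minimal}), after which the Proposition above --- that an \insep\ categorical theory with a minimal set or imaginary definable over its prime model has at most countably many separable models --- caps the count at $\aleph_0$. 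Failing this extraction, one could instead reexamine the model-building construction of \cite{Yaacov2007} in the totally transcendental setting and verify that its output is classified, exactly as in the first case above, by a single dimension invariant.
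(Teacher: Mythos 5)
Your case split is the same as the paper's, and for the first two bullets you have correctly reconstructed (and substantially expanded) what the paper compresses into one line each. For the first bullet the paper's entire proof is ``this is covered by Lemma~\ref{Lem:Prime-over-approx}''; your elaboration --- generic type of $(D,P)$ is strongly minimal via Propositions~\ref{prop:strong_min_and_cat_in_cont:2}, \ref{prop:vaught-minimal} and \ref{prop:some-min-stuff}, every model is prime over $\cset{P(\mathfrak{M})\brackconv}$, and the isomorphism type is therefore classified by the single invariant $\dim p(\mathfrak{M})\in\{d_0,\dots,\aleph_0\}$ --- is exactly the intended argument, and your realization of every dimension in that range is the standard Baldwin--Lachlan construction. (One cosmetic point: the absence of open-in-definable Vaughtian pairs comes from \insep\ categoricity via Proposition~\ref{prop:no-VP}, not from $\omega$-stability as you phrase it.) For the second bullet you again match the paper: reduce to totally disconnected type spaces, invoke Corollary~\ref{cor:strong_min_and_cat_in_cont:2}, and transfer the classical count through the bi-interpretation.

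The only divergence is the third bullet. The paper simply declares the conclusion ``a direct corollary of Theorem 4.7 in \cite{Yaacov2007}'' and does no further work, whereas you stop short: you observe that ``$1$ or infinitely many'' does not by itself yield ``$1$ or $\omega$,'' and you offer two candidate repairs (extracting a minimal set over the prime model from $d$-finiteness, or re-running the Ben Yaacov--Usvyatsov construction and classifying its output by a dimension) without completing either. Relative to the paper your proof is therefore incomplete at this one point; relative to the literature your caution is not unreasonable, since the upper bound of $\aleph_0$ on the number of separable models of an \insep\ categorical theory is only established in this paper under the additional hypothesis of a minimal set or imaginary over the prime model. If you want a self-contained argument you do need to either take the precise statement of the cited Theorem 4.7 at face value, as the paper does, or actually carry out one of your two proposed repairs; as written, neither is done.
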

\begin{proof}
The only difficult case is the first one, but this is covered by Lemma \ref{Lem:Prime-over-approx}. The second case follows from the fact that such theories are interdefinable with many-sorted discrete theories, and the third case is a direct corollary of Theorem 4.7 in \cite{Yaacov2007}.
\end{proof}

The following example shows that we cannot hope to show that every model of an \insep\ categorical theory is exactly homogeneous, although at the moment they seem to always be approximately homogeneous.

\begin{ex} \label{ex:no-dim}
A totally categorical theory $T$ with a strongly minimal set such that for any strongly minimal set $D(x,\overline{a})$ in the home sort of the unique separable model $\mathfrak{M}$, for any $n\leq \omega$, there is a $\overline{b}\equiv\overline{a}$ such that $\mathrm{dim}(D(\mathfrak{M},\overline{b}))=n$.

\end{ex}
\begin{proof} [Verification]
The construction of this example is very similar to the construction of $T_\omega$ in Theorem \ref{thm:bad-example}, so we will only sketch the differences. Make the following two changes to the construction:

\begin{itemize}
\item
Define the score of an element $\alpha \in \mathfrak{W}$ by
\[s(\alpha) = \min \left\{ g(A):A\in G,\sum \alpha(A) = 0 \right\}.\]
\item
Remove the predicates $C_v$ from the language.
\end{itemize}

A similar analysis gives that the theory of this structure has a $\varnothing$-definable strongly minimal imaginary $I$ whose dimension in the prime model is infinite. Furthermore types over the prime model $\mathfrak{M}$ in the home sort are strongly minimal if and only if they correspond to elements $\alpha$ where $\alpha(0),\alpha(2),\alpha(4),\dots$ enumerates a linearly independent set and only one $\alpha(2i+1)$ is not realized in $I(\mathfrak{M})$. 

If, for example, $\alpha(0)$ is not realized in $I(\mathfrak{M})$, then the we can choose $\beta(2)\beta(4)\dots \equiv \alpha(2)\alpha(4)\dots$ such that the subspace of $I(\mathfrak{M})$ spanned by $\beta(2)\beta(4)\dots$ has arbitrary codimension $\leq \omega$, giving the required property for the theory of this structure.
\end{proof}

The problem in Example \ref{ex:no-dim} is that the strongly minimal set requires non-$d$-finite parameters. We can get some traction by assuming that we have a strongly minimal set definable over a $d$-finite tuple of parameters, but adapting the proof of the Baldwin-Lachlan theorem any further than this is unclear at the moment.

\begin{prop}
Suppose that $p(x,\overline{a})$ is a strongly minimal type, $\mathrm{tp}(\overline{a})$
is $d$-finite, and $T$ is a dictionaric theory. If $\mathfrak{M}$
is a model containing $\overline{a}$ and there is $n<\omega$ such
that for every $\varepsilon>0$, there is $\overline{b}\in\mathfrak{M}$
with $d(\overline{a},\overline{b})<\varepsilon$ and $\mathrm{dim}(p(\mathfrak{M},\overline{b}))\geq n$,
then $\mathrm{\mathrm{dim}(}p(\mathfrak{M},\overline{a}))\geq n$.
\end{prop}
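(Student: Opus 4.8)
The plan is to transfer a lower bound on the dimension of $p(\mathfrak M,\overline b)$ for nearby parameters $\overline b$ back to $\overline a$ using the finite character of $\mathrm{acl}$ in strongly minimal sets (the strong form recorded after the pregeometry proposition) together with $d$-finiteness of $\mathrm{tp}(\overline a)$. First I would fix $\varepsilon>0$ small and choose $\overline b\in\mathfrak M$ with $d(\overline a,\overline b)<\varepsilon$ and $\dim(p(\mathfrak M,\overline b))\geq n$; pick a basis $c_0,\dots,c_{n-1}$ of $n$ algebraically independent realizations of $p(\mathfrak M,\overline b)$ inside $\mathfrak M$. The goal is to produce, for each such $\varepsilon$ (or at least for a sequence $\varepsilon_k\to 0$), a tuple of $n$ algebraically independent realizations of $p(-,\overline a)$ in $\mathfrak M$; since $\mathfrak M$ is complete this will force $\dim(p(\mathfrak M,\overline a))\geq n$.

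The mechanism for passing from $\overline b$ to $\overline a$ is the following. By Proposition~\ref{prop:find-minimal} and the remarks after it, working over a model we get an $(\mathfrak M\text{-definable})$ approximately strongly minimal pair $(D,P)$ (using dictionaricness) with generic type $p$; more to the point, for parameters close to $\overline a$ the definable sets $D(-,\overline b)$ are Hausdorff-close to $D(-,\overline a)$ by uniform continuity of the distance predicate, and the formula $\chi$ witnessing algebraic independence of an $n$-tuple in a strongly minimal set is a fixed formula. Concretely, algebraic independence of $c_0,\dots,c_{n-1}$ over $\overline b$ means each $c_i\notin\mathrm{acl}(\overline b c_{<i})$, i.e.\ $c_i$ realizes the generic type over $\overline b c_{<i}$; by the strong finite character statement, there is a uniform $\delta>0$ such that any tuple $\overline e$ with $d(\overline b c_{<i},\overline e)<\delta$ still has $c_i\notin\mathrm{acl}(\overline e)$. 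Now use $d$-finiteness of $\mathrm{tp}(\overline a)$: there is a finite-tuple neighborhood control, so from $\overline a\equiv\overline b'$ for suitable $\overline b'$ arbitrarily close to $\overline a$ (here $\overline b$ plays the role of the approximating tuple) and an automorphism/elementary map moving $\overline b$ toward $\overline a$ within accuracy $\delta$, we can move the whole configuration $\overline b c_0\cdots c_{n-1}$ to some $\overline a c_0'\cdots c_{n-1}'$ with the $c_i'$ still in (a Hausdorff-perturbation of) $D(\mathfrak M,\overline a)$ and still pairwise algebraically independent over $\overline a$.

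The step I expect to be the main obstacle is precisely this last transfer: extracting from $d$-finiteness of $\mathrm{tp}(\overline a)$ the statement that realizations of $p(-,\overline b)$ for $\overline b$ near $\overline a$ can be ``pulled onto'' $D(\mathfrak M,\overline a)$ while preserving algebraic independence. The subtlety is that $d$-finiteness only gives approximate homogeneity over $\overline a$ up to accuracy controlled by how close $\overline b$ is to $\overline a$, and one must be careful that the $\varepsilon$-to-$\delta$ bookkeeping is uniform in $n$ — but it is, because the relevant formulas ($\chi$ for independence, the distance predicate of $D$, and the modulus from Lemma~\ref{lem:uniform-local-compactness}) are all fixed once $p$ and $n$ are fixed, and $n$ is a fixed natural number by hypothesis. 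Once the configuration is moved, Lemma~\ref{lem:uniform-local-compactness} (uniform local compactness of the strongly minimal set) ensures that the perturbed points $c_i'$, being within the generic distance of each other, cannot have collapsed into each other's algebraic closure, so they genuinely witness $\dim(p(\mathfrak M,\overline a))\geq n$. I would then note that $n$ being finite (as opposed to $\omega$) is essential here, exactly as in the statement, since the bound $\delta$ degrades with the length of the configuration being moved and we have no control for infinite configurations without further hypotheses.
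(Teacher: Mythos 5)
Your overall strategy is the right one — use $d$-finiteness to transport an independent $n$-tuple over $\overline{b}$ to one over $\overline{a}$, with an approximately strongly minimal pair and Hausdorff-closeness of $D(-,\overline{a})$ and $D(-,\overline{b})$ controlling the geometry — but the proposal has a genuine gap at exactly the point you flag as "the main obstacle," and the tools you invoke there do not close it. The elementary map supplied by $d$-finiteness gives you a tuple $\overline{e}$ with $\overline{a}\,\overline{e}\equiv\overline{b}\,\overline{c}$ and $d(\overline{e},\overline{c})<\varepsilon$, but $\overline{e}$ lives a priori only in the monster model, not in $\mathfrak{M}$; saying the $c_i'$ land "in a Hausdorff-perturbation of $D(\mathfrak{M},\overline{a})$" does not put them in $\mathfrak{M}$, and membership in $\mathfrak{M}$ is the entire content of the proposition. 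The missing step is: first choose $\varepsilon$ so small that $B_{\leq 2\varepsilon}(c)\cap p(\mathfrak{C},\overline{a})\subseteq\mathrm{acl}(c\,\overline{a})$ for every $c\models p(x,\overline{a})$ (this is where the approximately strongly minimal pair is really used); then, since $d_H(D(\mathfrak{M},\overline{a}),D(\mathfrak{M},\overline{b}))<\varepsilon$, each $c_i$ is within $\varepsilon$ of some $e_i^0\in D(\mathfrak{M},\overline{a})$, so each $e_i$ is within $2\varepsilon$ of $e_i^0$ and hence lies in $\mathrm{acl}(e_i^0\,\overline{a})\subseteq\mathfrak{M}$. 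Without this argument the transported witnesses never arrive in $\mathfrak{M}$ and the proof does not conclude.

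Two of your supporting steps are also reversed. The strong finite-character statement makes \emph{algebraicity} robust under perturbation of the witnessing parameters (it is an open condition); it does not say that $c_i\notin\mathrm{acl}(\overline{b}c_{<i})$ persists for all nearby parameter tuples, which is what you use. Fortunately that step is unnecessary: independence of $\overline{e}$ over $\overline{a}$ is automatic because it is part of the type $q(\overline{y},\overline{b})$ of an independent $n$-tuple and the transfer is elementary. Similarly, uniform local compactness says that sufficiently close points of the strongly minimal set \emph{are} in each other's algebraic closure — it is the mechanism for pulling $\overline{e}$ into $\mathfrak{M}$ as above, not a safeguard against the points "collapsing" into dependence, which again needs no safeguard once the map is elementary.
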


\begin{proof}
Let $q(\overline{y},\overline{a})$ be the type of an $n$-element
independent sequence in $p(x,\overline{a})$. Find $\varepsilon>0$
small enough that $B_{\leq2\varepsilon}(c)\cap p(\mathfrak{C},\overline{a})\subseteq\mathrm{acl}(c\overline{a})$
for every $c\models p(x,\overline{a})$. (Note that this works because
we can define an approximately strongly minimal pair over $\overline{a}$
pointing at $p(x,\overline{a})$; then, by examining the definition
of approximately strongly minimal pair we get that for any $c\models p(x,\overline{a})$
there is some $\varepsilon>0$ such that the inclusion $B_{\leq2\varepsilon}(c)\cap p(\mathfrak{C},\overline{a})\subseteq\mathrm{acl}(c\overline{a})$ holds, but then by automorphisms
of $\mathfrak{C}$ this works for every $c\models p(x,\overline{a})$.)
Fix an approximately strongly minimal pair $(D(x,\overline{a}),P(x,\overline{a}))$
pointing at $p(x,\overline{a})$ and find a $\gamma>0$ small enough
that if $\overline{a}\equiv\overline{b}$ and $d(\overline{a},\overline{b})<\gamma$,
then $d_{H}(D(\mathfrak{C},\overline{a}),D(\mathfrak{C},\overline{b}))<\varepsilon$.

By $d$-finiteness (of $\mathrm{tp}(\overline{b})$), we can find a $\delta>0$
such that for any $\overline{b}$ with $d(\overline{a},\overline{b})<\delta$
and any $\overline{c}\models q(\overline{y},\overline{b})$, we can
find $\overline{e}$ such that $\overline{a}\overline{e}\equiv\overline{b}\overline{c}$
and $d(\overline{e},\overline{c})<\varepsilon$.

Let $\overline{b}\in\mathfrak{M}$ be such that $\overline{a}\equiv\overline{b}$
and $d(\overline{a},\overline{b})<\imin{\delta}{\gamma}$. By construction
we have that $d_{H}(D(\mathfrak{M},\overline{a}),D(\mathfrak{M},\overline{b}))<\varepsilon$.
Let $\overline{c}$ be an independent tuple of length $n$ of realizations
of $p(x,\overline{b})$. Let $\overline{e}^{0}$ be a tuple of elements
of $D(\mathfrak{M},\overline{a})$ such that $d(\overline{c},\overline{e}^{0})<\varepsilon$.
By construction we also have that there is a tuple $\overline{e}$
(in the monster model) such that $\overline{a}\overline{e}\equiv\overline{b}\overline{c}$
and $d(\overline{e},\overline{c})<\varepsilon$. For each $i<n$,
we have that $d(e_{i}^{0},e_{i})\leq d(e_{i}^{0},c_{i})+d(c_{i},e_{i})<\varepsilon+\varepsilon$.
Therefore $e_{i}\in B_{\leq2\varepsilon}(e_{i}^{0})\cap p(\mathfrak{C},\overline{a})$
for each $i$, but this implies by construction that $e_{i}\in\mathrm{acl}(e_{i}^{0}\overline{a})$,
so in particular $e_{i}\in\mathfrak{M}$. Therefore $\overline{e}\in\mathfrak{M}$
and we have that $\mathrm{dim}(p(\mathfrak{M},\overline{a}))\geq n$.
\end{proof}

\begin{cor}
If $\mathrm{tp}(\overline{a})$ is $d$-finite, $p(x,\overline{a})$
is strongly minimal, and $T$ is dictionaric, then in any approximately
homogeneous model $\mathfrak{M}$, for any $\overline{a},\overline{b}\in\mathfrak{M}$
with $\overline{a}\equiv\overline{b}$, we have that $\mathrm{dim}(p(\mathfrak{M},\overline{a}))=\mathrm{dim}(p(\mathfrak{M},\overline{b}))$.\editcom{Added `we have that.'}
\end{cor}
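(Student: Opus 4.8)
The plan is to deduce this corollary from the preceding proposition by a standard symmetry argument. Fix an approximately homogeneous model $\mathfrak{M}$ and tuples $\overline{a},\overline{b}\in\mathfrak{M}$ with $\overline{a}\equiv\overline{b}$. Write $n=\mathrm{dim}(p(\mathfrak{M},\overline{a}))$ and $m=\mathrm{dim}(p(\mathfrak{M},\overline{b}))$, where at least one of these may be $\infty$; the goal is to show $n=m$, so by symmetry it suffices to show that for every natural number $k\le m$ we have $k\le n$ (handling $m=\infty$ by letting $k$ range over all finite values).

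First I would use approximate homogeneity to transport the situation across $\overline{a}\equiv\overline{b}$. Since $\mathrm{tp}(\overline{a})$ is $d$-finite, there is a $\delta>0$ witnessing $d$-finiteness at the relevant level; and since $p(x,\overline{a})$ is strongly minimal, Lemma~\ref{lem:uniform-local-compactness} (uniform local compactness) together with the finite-character behavior of $\mathrm{acl}$ in strongly minimal sets gives us an $\varepsilon>0$ so small that $B_{\le 2\varepsilon}(c)\cap p(\mathfrak{C},\overline{a})\subseteq \mathrm{acl}(c\overline{a})$ for every $c\models p(x,\overline{a})$. Now, given $k\le m$, pick an independent $k$-tuple $\overline{c}$ of realizations of $p(x,\overline{b})$ inside $\mathfrak{M}$. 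By approximate homogeneity of $\mathfrak{M}$ applied to $\overline{a}\equiv\overline{b}$, there is an automorphism $\sigma$ of $\mathfrak{M}$ with $d(\sigma(\overline{b}),\overline{a})<\min\{\delta,\gamma\}$ for a suitably chosen $\gamma>0$ coming from the uniform definability of $(D(x,\overline{a}),P(x,\overline{a}))$ as in the proposition. Then $\sigma(\overline{c})$ is an independent $k$-tuple of realizations of $p(x,\sigma(\overline{b}))$ in $\mathfrak{M}$, and $\sigma(\overline{b})$ is a tuple in $\mathfrak{M}$ that is $\equiv\overline{a}$ and $\varepsilon$-close (after adjusting constants) to $\overline{a}$ in the sense needed.

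At this point the hypothesis of the proposition is met: there is an $n'=k<\omega$ and, for the given $\varepsilon$, a $\overline{b}'=\sigma(\overline{b})\in\mathfrak{M}$ with $d(\overline{a},\overline{b}')<\varepsilon$ and $\mathrm{dim}(p(\mathfrak{M},\overline{b}'))\ge k$. Strictly speaking, the proposition quantifies over \emph{all} $\varepsilon>0$, so I would instead run the argument directly: the proposition's proof shows that whenever we can find, for the particular $\varepsilon$ and $\gamma$ extracted from $\overline{a}$, a close conjugate $\overline{b}'$ realizing dimension $\ge k$, then $\mathrm{dim}(p(\mathfrak{M},\overline{a}))\ge k$. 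Since approximate homogeneity supplies such a $\overline{b}'$ for every tolerance, we conclude $k\le n$. Running the symmetric argument (interchanging $\overline{a}$ and $\overline{b}$, which is legitimate since $\mathrm{tp}(\overline{b})=\mathrm{tp}(\overline{a})$ is also $d$-finite and $p(x,\overline{b})$ is also strongly minimal) gives $n\le m$ as well, hence $n=m$.

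The main obstacle, and the point needing care, is the quantifier structure: the proposition is stated with ``for every $\varepsilon>0$ there is $\overline{b}$ with $d(\overline{a},\overline{b})<\varepsilon$ and $\mathrm{dim}\ge n$'' as \emph{hypothesis}, and I must check that approximate homogeneity genuinely delivers this for each tolerance rather than just once. This is fine because approximate homogeneity lets us move $\overline{b}$ to within \emph{any} prescribed distance of $\overline{a}$ by an automorphism of $\mathfrak{M}$, and such an automorphism preserves $\mathrm{dim}(p(\mathfrak{M},-))$; so for each $\varepsilon>0$ the conjugate $\sigma_\varepsilon(\overline{b})$ works. The only subtlety beyond bookkeeping is ensuring the $d$-finiteness constant $\delta$ and the local-compactness/$\mathrm{acl}$ constant $\varepsilon$ are chosen from $\overline{a}$ before invoking homogeneity, which is exactly the order of quantifiers used in the proof of the proposition; no new ideas are required.
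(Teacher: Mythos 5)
Your proposal is correct and is essentially the argument the paper intends: the corollary follows immediately from the preceding proposition once you observe that approximate homogeneity supplies, for every $\varepsilon>0$, an automorphism $\sigma_\varepsilon$ of $\mathfrak{M}$ with $d(\sigma_\varepsilon(\overline{b}),\overline{a})<\varepsilon$, and that automorphisms preserve $\mathrm{dim}(p(\mathfrak{M},-))$, so the proposition's hypothesis holds for every finite $k\leq\mathrm{dim}(p(\mathfrak{M},\overline{b}))$ and symmetry finishes the proof. The re-derivation of the constants $\delta$, $\varepsilon$, $\gamma$ in your second paragraph is unnecessary --- those live inside the proof of the proposition, which you correctly end up applying as a black box.
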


For the following, recall that in continuous logic there is an $\omega$-stable theory with precisely $2$ separable models.\editcom{Added second sentence.} Also note that we do not know whether or not \insep\ categorical theories have enough $d$-finite types (in the technical sense of \cite{Yaacov2007}).

\begin{cor}
If $T$ is an \insep\ categorical theory with a strongly
minimal type definable over a $d$-finite tuple in the prime model, then $T$
does not have precisely $2$ separable models.
\end{cor}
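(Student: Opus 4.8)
The plan is to combine the previous proposition (which tells us that an \insep\ categorical theory with a strongly minimal type over a $d$-finite tuple in the prime model cannot have arbitrarily high-dimensional copies of the strongly minimal set behaving badly) with the classical Baldwin-Lachlan-style argument counting separable models, and then rule out the possibility of exactly two models by hand. First I would extract from the hypotheses a strongly minimal set: since $T$ is \insep\ categorical it is $\omega$-stable, hence dictionaric, and it has no Vaughtian pairs, so by Proposition~\ref{prop:strong_min_and_cat_in_cont:2} the strongly minimal type $p(x,\overline{a})$ over the $d$-finite tuple $\overline{a}$ in the prime model $\mathfrak{M}$ has an $\overline{a}$-definable approximately strongly minimal pair $(D(x,\overline{a}),P(x,\overline{a}))$, and by Proposition~\ref{prop:vaught-minimal} (no Vaughtian pairs) the associated minimal set is strongly minimal. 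For each model $\mathfrak{N}\succeq\mathfrak{M}$, I would fix a copy $\overline{b}\in\mathfrak{N}$ with $\overline{b}\equiv\overline{a}$ and consider $\operatorname{dim}(p(\mathfrak{N},\overline{b}))$, a cardinal $\leq\omega$ for separable $\mathfrak{N}$.

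Next I would invoke the preceding Corollary: since $\operatorname{tp}(\overline{a})$ is $d$-finite, $p(x,\overline{a})$ is strongly minimal, and $T$ is dictionaric, in any approximately homogeneous model (and every model of an \insep\ categorical theory is approximately homogeneous by the corollary to Proposition~\ref{Prop:Prime-Hom} applied after adjoining $\overline{a}$, or more simply since $\kappa$-categoricity forces a strong homogeneity property for separable models) the dimension $\operatorname{dim}(p(\mathfrak{N},\overline{a}))$ does not depend on the choice of representative $\overline{b}\equiv\overline{a}$. Combined with the no-Vaughtian-pairs uniqueness argument used in the proof of Theorem~\ref{thm:main-thm}\,\ref{main-1}: two separable models $\mathfrak{N}_0,\mathfrak{N}_1$ with bases $B_i\subseteq D(\mathfrak{N}_i,\overline{b}_i)$ of the same cardinality admit an elementary bijection extending to an isomorphism (using that $\mathfrak{N}_i$ is prime over $D(\mathfrak{N}_i,\overline{b}_i)$, which holds by Lemma~\ref{Lem:Prime-over-approx}, and that there are no Vaughtian pairs). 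Hence the separable models are classified, up to isomorphism, by the invariant $\operatorname{dim}(p(\mathfrak{N},\overline{a}))\in\{0,1,2,\dots,\omega\}$ — or rather by the sub-range of those dimensions actually realized. The prime model realizes the smallest such dimension, call it $n_0$, which is finite or $\omega$; by $\omega$-stability all dimensions $\geq n_0$ up to $\omega$ are realized (one can always add generics), so the number of separable models is $\omega-n_0+1$ if $n_0<\omega$, interpreting $\omega$ as countably infinite, which equals $\omega$ unless $n_0=\omega$ (in which case $T$ is categorical, one model).

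So the count is either $1$ or $\omega$, and in particular never $2$. The step I expect to be the main obstacle is the rigidity step: pinning down that the dimension invariant is genuinely well-defined on isomorphism classes, i.e.\ that two separable models with the same realized dimension are isomorphic, and conversely that models with different dimensions are non-isomorphic. The forward direction (same dimension $\Rightarrow$ isomorphic) is the Baldwin-Lachlan argument of Theorem~\ref{thm:main-thm} and goes through with Lemma~\ref{Lem:Prime-over-approx} handling primeness over the approximately minimal set. The converse (different dimensions $\Rightarrow$ non-isomorphic) is where the $d$-finiteness hypothesis is essential: without it, Example~\ref{ex:no-dim} shows the dimension can be changed by an automorphism-respecting reparametrization $\overline{a}\mapsto\overline{b}$, so there would be no well-defined invariant at all; the preceding Corollary is exactly what rescues us, guaranteeing $\operatorname{dim}(p(\mathfrak{N},\overline{a}))$ is independent of the choice of representative of $\operatorname{tp}(\overline{a})$ and hence a genuine isomorphism invariant. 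Once this is in place, counting the values $n_0,n_0+1,\dots,\omega$ and observing the gap prevents exactly two finishes the proof. I would keep the write-up short, citing Theorem~\ref{thm:main-thm}, Lemma~\ref{Lem:Prime-over-approx}, Proposition~\ref{prop:strong_min_and_cat_in_cont:2}, and the preceding Corollary, rather than redoing the back-and-forth in full.
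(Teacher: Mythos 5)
There is a genuine gap. Your whole classification scheme rests on the claim that \emph{every} separable model of an \insep\ categorical theory is approximately $\aleph_0$-homogeneous, so that the preceding corollary makes $\mathrm{dim}(p(\mathfrak{N},\overline{a}))$ a well-defined isomorphism invariant on \emph{all} separable models. That is not something the paper establishes: Proposition~\ref{Prop:Prime-Hom} gives approximate homogeneity only for the prime model, Fact 1.5 of \cite{Yaacov2007} gives it only for the approximately $\aleph_0$-saturated model, and the discussion surrounding Example~\ref{ex:no-dim} explicitly flags the homogeneity of arbitrary models of an \insep\ categorical theory as unresolved (``at the moment they seem to always be approximately homogeneous''). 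Without it, a model of ``dimension $k$'' could a priori also be a model of ``dimension $k'$'' for a different choice of $\overline{b}\equiv\overline{a}$, and both your invariant and your ``same dimension $\Rightarrow$ isomorphic'' step (which additionally needs each separable model to be prime over $D(\mathfrak{N},\overline{b})\cup\overline{b}$ --- Lemma~\ref{Lem:Prime-over-approx} is stated only for $\varnothing$-definable approximately minimal pairs) collapse. So the full count ``$1$ or $\omega$, classified by dimension'' is not justified by what is available.

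The fix is to prove much less. You only need \emph{three} pairwise non-isomorphic separable models, and you only need the dimension to be well-defined in the two models where homogeneity is actually known. Assuming $T$ is not $\aleph_0$-categorical, the prime model $\mathfrak{N}$ has $\mathrm{dim}(D(\mathfrak{N};\overline{a}))=n<\omega$ and the approximately $\aleph_0$-saturated model has dimension $\omega$, and in each of these two models the dimension is independent of the representative of $\mathrm{tp}(\overline{a})$ by the preceding corollary. Now take $b$ realizing the non-algebraic type of $D(x;\overline{a})$ over $\mathfrak{N}$ and let $\mathfrak{A}$ be atomic over $\mathfrak{N}b$; using Lemma 4.5 of \cite{Yaacov2007} one checks $\mathrm{dim}(D(\mathfrak{A};\overline{a}))=n+1$ exactly. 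An isomorphism of $\mathfrak{A}$ with the prime (resp.\ approximately saturated) model would carry $\overline{a}$ to some $\overline{b}\equiv\overline{a}$ there and force $n+1=n$ (resp.\ $n+1=\omega$), a contradiction --- note this uses homogeneity only of the \emph{target} model, never of $\mathfrak{A}$ itself. That yields at least three separable models without ever classifying them all.
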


\begin{proof}
Assume that $T$ is not $\aleph_0$-categorical. The prime model and the approximately $\aleph_0$-saturated model
are both approximately $\aleph_0$-homogeneous by Proposition \ref{Prop:Prime-Hom} and Fact 1.5 in \cite{Yaacov2007}, respectively, so if $D(x;\overline{a})$ is a strongly minimal set with $\mathrm{tp}(\overline{a})$ atomic and $d$-finite, then for any $\overline{b}\equiv \overline{a}$, $\mathrm{dim}(D(\mathfrak{M};\overline{a}))=\mathrm{dim}(D(\mathfrak{M};\overline{b}))$ where $\mathfrak{M}$ is either prime or approximately $\aleph_0$-saturated. We know that the approximately $\aleph_0$-saturated model must have $\mathrm{dim}(D(\mathfrak{M};\overline{a}))=\omega$, so we must have that $\mathrm{dim}(D(\mathfrak{N};\overline{a})) = n <\omega$ where $\mathfrak{N}$ is the prime model (otherwise $T$ would be $\aleph_0$-categorical).  

All we need to do is argue that there is a model $\mathfrak{A}\succ \mathfrak{N}$ which is neither prime nor approximately $\aleph_0$-saturated. Let $b$ realize the non-algebraic type over $\mathfrak{N}$ in $D(x;\overline{a})$ and let $\mathfrak{A}$ be atomic over $\mathfrak{N}b$. Clearly $\mathrm{dim}(D(\mathfrak{A};\overline{a}))>n$. We need to argue that it is $n+1$.

By Lemma 4.5 in \cite{Yaacov2007} if $\mathrm{dim}(D(\mathfrak{A};\overline{a}))>n+1$, then there is some $c$ realizing the non-algebraic type in $D(x;\overline{a})$ over $\mathfrak{A}b$, implying that $\mathrm{tp}(c/\mathfrak{A})$ forks over $\mathfrak{A}b$, but this can only happen if $c \in \mathrm{acl}(\mathfrak{A}b)$, contradicting that $c$ realizes the strongly minimal type over $\mathfrak{A}b$. Therefore $\mathrm{dim}(D(\mathfrak{A};\overline{a}))=n+1$ and $\mathfrak{A}$ is neither prime nor approximately $\aleph_0$-saturated and $T$ has at least $3$ separable models.
\end{proof}

%INCLUDE THIS?
%What we can't immediately conclude is that for all $\overline{b} \equiv \overline{a}$, $\mathrm{dim}(D(\mathfrak{A};\overline{b}))=n+1$, but we do know that if $\mathrm{dim}(D(\mathfrak{A};\overline{b}))>n+1$ for some $\overline{b}\equiv\overline{a}$, then for any $k<\omega$ there is  $\overline{c}\equiv \overline{a}$ in $\mathfrak{A}$ with $\mathrm{dim}(D(\mathfrak{A};\overline{c})) \geq k$ because we can embed $\mathfrak{A}$ as a proper substructure of itself, sending $\overline{a}$ to $\overline{b}$, just like the argument in the proof of Lemma 3.4.5 in \cite{buechler_2017}.

%\begin{prop}
%If $T$ is an \insep\ categorical theory with an approximately minimal set over the prime model, then models of $T$ are linearly ordered by elementary embedding. In particular if $\mathfrak{A} \preceq \mathfrak{B}$ and $\mathfrak{B}\preceq \mathfrak{A}$, then $\mathfrak{A}\cong \mathfrak{B}$.
%\end{prop}

\subsection{Counterexamples Involving Strongly Minimal Sets}
\label{sec:stronk-min-counterexa}

Here we present our two technical counterexamples relating to theories with strongly minimal sets.

\subsubsection{A Theory with Strongly Minimal Sets, but Only over Models of Dimension $\geq n$}
\label{sec:theory-with-strongly}

One might hope that if an \insep\ categorical theory has a strongly minimal set over some model, then it has one over its prime model, but this is not true.

\begin{thm} \label{thm:bad-example}
  For any $n\leq \omega$ there is an \insep\ categorical theory $T_n$ with a $\varnothing$-definable strongly minimal imaginary $I$ such that\editcom{Added bullets.}
  \begin{itemize}
  \item $T_n$ has models with $\mathrm{dim}(I(\mathfrak{M}))=k$ for each $k\leq \omega$ but
  \item $T_n$ has a strongly minimal set over $\mathfrak{M}$ in the home sort if and only if $\mathrm{dim}(I(\mathfrak{M}))\geq n$.
  \end{itemize}
\end{thm}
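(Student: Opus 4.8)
The plan is to construct, for each $n \leq \omega$, an explicit expansion of an infinite-dimensional $\mathbb{F}_2$-vector space (the natural home for strongly minimal imaginaries as in Example~\ref{ex:no-dim}). I would take as a base a $\varnothing$-definable strongly minimal imaginary $I$ which is an infinite $\mathbb{F}_2$-vector space, so that $\mathrm{dim}(I(\mathfrak{M}))$ ranges over all $k \leq \omega$ among models. The home sort will be built as a ``covering'' structure over $I$ (reminiscent of the $\mathfrak{W}$ construction referenced in Example~\ref{ex:no-dim} and Theorem~\ref{thm:bad-example}'s proof), consisting of $\omega$-tuples of elements whose ``scores'' track the $\mathbb{F}_2$-linear relations among coordinates, and I would tune the scoring function so that witnessing strong minimality of a home-sort type requires pinning down the types of $n$ independent coordinates over the model. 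The key design constraint is: a type $p$ over $\mathfrak{M}$ in the home sort should be pre-minimal exactly when it is ``as generic as possible given the data already in $\mathfrak{M}$,'' and the data needed to make such a $p$ exist is an $n$-dimensional generic subspace of $I(\mathfrak{M})$.

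The steps, in order: (1) define $T_n$ precisely, specifying the language, the home sort as a metric space of $\omega$-tuples over $I$ with the score-controlled metric (using a uniformly discrete metric à la Example~\ref{ex:main-bad-example}(i) on the combinatorial skeleton so that the type spaces are totally disconnected and everything is ultrametric, hence $\omega$-stable is easy to check via Corollary~\ref{cor:bad-ultrametric}-type arguments); (2) verify $T_n$ is $\omega$-stable and has no imaginary Vaughtian pairs, hence \insep\ categorical — this should follow by exhibiting that every model is prime over $I(\mathfrak{M})$ (the imaginary is strongly minimal and controls the structure, as in Example~\ref{ex:ult-example}'s verification), and that $I$ being strongly minimal with models prime over it kills Vaughtian pairs; (3) show $I$ realizes every dimension $k \leq \omega$ in models of $T_n$, which is automatic for an $\mathbb{F}_2$-vector-space imaginary; (4) the crux: analyze $S_1(\mathfrak{M})$ (home sort) for a model $\mathfrak{M}$ with $\mathrm{dim}(I(\mathfrak{M})) = d$, identify exactly which types are pre-minimal, and show a pre-minimal type exists iff $d \geq n$; (5) when $d \geq n$ invoke Proposition~\ref{prop:find-minimal} to get an $\mathfrak{M}$-definable strongly minimal set in the home sort containing the pre-minimal (in fact strongly minimal, by no Vaughtian pairs and Proposition~\ref{prop:vaught-minimal}) type; and (6) when $d < n$ argue directly that no definable subset of the home sort over $\mathfrak{M}$ is minimal, by showing every non-algebraic type over $\mathfrak{M}$ has at least two non-algebraic extensions to some parameter set (e.g.\ the parameter set obtained by adjoining one more generic element of $I$), which is precisely the obstruction since scores force branching when fewer than $n$ independent coordinates are fixed.

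The main obstacle I expect is step (4)/(6): engineering the score function and the combinatorial group $G$ of ``allowed supports'' so that the branching behavior of non-algebraic types has exactly the threshold $n$, and not $n-1$ or $n+1$, uniformly across all models and all parameter sets. This is the same kind of delicate tuning that Example~\ref{ex:no-dim} gestures at with its $\min\{g(A) : A \in G, \sum \alpha(A) = 0\}$ definition of score; here I need the analogous statement that a home-sort type over $\mathfrak{M}$ is $d$-atomic in the non-$(<\varepsilon)$-algebraic types iff the coordinates it has ``decided'' already span an $n$-dimensional generic space inside $I(\mathfrak{M})$. Getting the strong-minimality (rather than mere pre-minimality) over high-dimensional models will use that $T_n$ has no open-in-definable Vaughtian pairs (being \insep\ categorical, via the Corollary after Proposition~\ref{prop:no-VP}) together with Proposition~\ref{prop:vaught-minimal}(i); and getting a genuine strongly minimal \emph{set} rather than just a pair will use Proposition~\ref{prop:some-min-stuff}(iii) once the generic type is shown to be strongly minimal. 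The $n = \omega$ case needs separate (easier) handling: there no model of finite dimension admits a strongly minimal home-sort set, and the approximately $\aleph_0$-saturated model (infinite dimension) does.
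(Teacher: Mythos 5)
Your roadmap follows the same architecture as the paper's construction: an $\mathbb{F}_2$-vector-space imaginary $I=H/E$ obtained by quotienting the home sort, a home sort built from score-controlled $\omega$-tuples over $V^\omega$ with the string ultrametric, models prime over $I$, no Vaughtian pairs, and a type analysis showing that pre-minimality of a home-sort type over $\mathfrak{M}$ forces $n$ independent-over-$V$ elements to already lie in $I(\mathfrak{M})$. The logical scaffolding in steps (2), (5), and (6) is sound: in particular, since the theory is \insep\ categorical, ``no strongly minimal set over $\mathfrak{M}$'' does reduce (via Propositions~\ref{prop:find-minimal} and \ref{prop:vaught-minimal}) to ``no type in $S_1(\mathfrak{M})$ is strongly minimal,'' which your branching argument in (6) targets correctly.

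However, the theorem is an existence statement, and you have deferred precisely the part that constitutes the proof. Two concrete pieces are missing. First, the score function and the set $D$: the mechanism that makes the threshold land exactly at $n$ is that $s(\alpha)$ is finite if and only if some nonempty sub-sum of $\alpha(0),\alpha(2),\dots,\alpha(2n)$ lies in the prime copy $V$ of $I$; membership in $D$ then forces $\alpha(2i+1)=0$ for all $i\leq s(\alpha)$, so a non-algebraic type with all odd coordinates killed must have its $n+1$ even coordinates independent over $V$, and hence (to have a \emph{unique} non-algebraic extension, i.e.\ $\dim(\{\alpha(0),\dots,\alpha(2n)\}/\mathfrak{M})=1$) needs $\dim(I(\mathfrak{M}))\geq n$. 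You gesture at ``tuning'' this but never write it down, and you also omit the verification that $D$ is genuinely definable (not merely a zeroset), which requires exhibiting the clopen approximants $D_k$ and checking $d(\alpha,D)\leq 2^{-\ell}$ on them. Second, and more seriously, your step (2) cannot go through with the structure as described: ``no Vaughtian pairs over $I$'' is not automatic from $I$ being a quotient of the home sort. One needs predicates (the $Q_k$ and $P_k$ of the paper's proof) that tie \emph{every} coordinate $\alpha(k)$ back to the quotient $H/E$, so that from a new home-sort element at metric distance $2^{-k}$ from $\mathfrak{M}$ one can extract a new element of $I$; without these, the argument that every proper elementary extension grows $I$ -- and equally the whole type classification via the index $h(\alpha)$ -- has nothing to run on. Your appeal to ``the imaginary controls the structure, as in Example~\ref{ex:ult-example}'' borrows a mechanism (division by $p^n$) that has no counterpart here unless you build one in. Until the language and the score are pinned down, steps (4) and (6) are restatements of the desired conclusion rather than arguments.
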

\begin{proof}

Fix $n\leq \omega$. Let $V$ be the countable vector space over $\mathbb{F}_2$.  Let $\mathfrak{W} = V^\omega$ have the standard string ultrametric (i.e.\ $d(\alpha,\beta)=2^{-\ell}$ where $\ell$ is the length of the longest common initial segment of $\alpha$ and $\beta$). Let $f:V \rightarrow \omega$ be a fixed bijection. Let $G \subset \mathcal{P}(\{0 , 2, 4, \dots, 2n \})$ be the set of all non-empty subsets $A$ of $\{0 , 2, 4, \dots, 2n \}$ if $n<\omega$, and let $G \subset \mathcal{P}_\mathrm{fin}(\{0,2,4,\dots\})$ be the set of all non-empty finite subsets of $\{0,2,4,\dots\}$ if $n=\omega$. Let $g:  G \rightarrow \omega$ be a fixed injection. Let $\left<-,-\right> : \omega \times \omega \rightarrow \omega$ be a fixed pairing function.

For any $\alpha \in \mathfrak{W}$ we assign a score
\[s (\alpha) =  \min\left\{ \left< f\left( \sum \alpha(A) \right),g(A) \right> : A \in G\right\}.\]
Now let $D \subset \mathfrak{W}$ be the set of all $\alpha$ such that $\alpha(2i)=0$ for all $i > n$ and $\alpha(2i+1) = 0$ for all $i \leq s(\alpha)$.

\textit{Claim:} $D$ is a $\mathfrak{W}$-definable set.

\textit{Proof of claim.} For each $k<\omega$ let $D_k \subseteq \mathfrak{W}^\omega$ be the set of all $\alpha$ satisfying:

\begin{itemize}
\item
For any $i$ with $i > n$ and $2i < k$, the formula $\alpha(2i) = 0$.
\item
For any $i < \omega$ with $2i + 1 < k$, the formula 
\[\alpha(2i+1) \neq 0 \rightarrow \bigvee_{\left<v , A  \right> \leq i} \sum(\alpha(A)) = v.\]
%
%For any $v \in V$ and $A \in G$ with $2\left<v, G \right> + 1 < k$, $\alpha(2i$
\end{itemize}

In the language of $\mathfrak{W}$ we can say $\alpha(j)=v$ for any $j<\omega$ and $v\in V$ with a $\{0,1\}$-valued formula. Likewise note that there are only finitely many pairs $v,A$ with $\left<v,A \right> \leq i$, so the disjunction in the second family of formulas is first-order. So we have that each $D_k$ is definable by a $\{0,1\}$-valued formula.

It is clear that $D_k \supseteq D_{k+1}$ and $D(\mathfrak{W}) = \bigcap_{k<\omega} D_k(\mathfrak{W})$, so we may regard $D$ as the $\mathfrak{W}$-zeroset $\bigcap_{k<\omega} D_k$. Fix $\ell <\omega$ and find $m<\omega$ such that for any $v \in V$ with $f(v) \leq \ell$ and any $A \in G$ with $\max A \leq \ell$ and $\left<v , A \right> \leq m$.  Fix $\alpha \in D_{2m+1}$. Now find $\beta$ such that $\beta \upharpoonright \ell = \alpha \upharpoonright \ell$ and $\beta(i) = 0$ for all $i\geq \ell$, so in particular $d(\alpha,\beta) \leq 2^{-\ell}$. Now we have that $\beta \in D(\mathfrak{W})$, so we know that $d(\alpha,D(\mathfrak{W})) \leq 2^{-\ell}$ for all $\alpha \in D_{2m+1}$.

Since we can do this for arbitrarily large $\ell<\omega$ we can define a distance predicate for $D$ and $D$ is a definable set. \hfill $\square_{\text{claim}}$

Now consider the following set of $\{0,1\}$-valued formulas:

\begin{itemize}
\item
For each $v\in V$, $C_v(\alpha) = 0$ if and only if $\alpha(0)=v$.
\item
$P(\alpha,\beta,\gamma) = 0$ if and only if $\alpha(0) = \beta(0) + \gamma(0)$.
\item
For any even $k < \omega$, $Q_k (\alpha,\beta) = 0$ if and only if $\alpha(0) = \beta(k)$.
\item
For any odd $k < \omega$, $P_k(\alpha,\beta,\gamma) = 0$ if and only if $d(\beta,\gamma) \leq 2^{-k}$ and $\alpha(0) = \beta(k) + \gamma(k)$.
\end{itemize}

Now let $\mathfrak{A}= \left< D(\mathfrak{W}),\{C_v\}_{v\in V}, P, \{Q_{2k}\}_{k<\omega},\{R_{2k+1}\}_{k<\omega} \right>$ and $T_n = \mathrm{Th}(\mathfrak{A})$. As a reduct of $\mathrm{Th}(\mathfrak{W})$, it's clear that $T_n$ is $\omega$-stable. Furthermore since $\mathrm{Th}(\mathfrak{W})$ is \insep\ categorical we can easily construct an $\aleph_1$-saturated model of $T_n$. Let $V^\prime \supset V$ be the $\mathbb{F}_2$ vector space with dimension $\aleph_1$, let $\mathfrak{W}^\prime = (V^\prime)^\omega$, and let $\mathfrak{B} \succ \mathfrak{A}$ be the corresponding model of $T_n$. It's clear that $\mathfrak{W}^\prime$ is $\aleph_1$-saturated over $\mathfrak{W}$, so $\mathfrak{B}$ is $\aleph_1$-saturated as a model of $T_n$.

We can extend the definition of $s(\alpha)$ to cover $\alpha \in \mathfrak{W}^\prime$ if we allow for $s(\alpha) = \infty = \min \varnothing$. Then we can check, by the definitions of $D_k$, that $D(\mathfrak{W}^\prime)$ is precisely the set of $\alpha \in \mathfrak{W}^\prime$ such that $\alpha(2i)=0$ for all $i > n$ and $\alpha(2i+1)=0$ for all $i\leq s(\alpha)$ (where $i \leq \infty$ for all  $i<\omega$).

\textit{Claim:} $T_n$ is \insep\ categorical with a $\varnothing$-definable strongly minimal imaginary.

\textit{Proof of claim.} If we let $E$ denote the $\{0,1\}$-valued equivalence relation given by $E(x,y) = 2(d(x,y) \dotdiv \frac{1}{2})$, then it's clear than the quotient $H/E$ of the home sort $H$ by $E$ is a $\varnothing$-definable strongly minimal imaginary which is equivalent to a $\mathbb{F}_2$ vector space with constants for elements of the prime model. Now we just need to show that $T_n$ has no Vaughtian pairs over $H/E$. Suppose that $\mathfrak{M} \prec \mathfrak{N}$ are models of $T_n$ such that $H(\mathfrak{M})/E = H(\mathfrak{N})/E$. Let $\alpha$ be in $\mathfrak{N}\smallsetminus \mathfrak{M}$ and assume that $d(\alpha,\mathfrak{M})=2^{-k}$. Find $\beta \in \mathfrak{M}$ such that $d(\alpha, \beta) = 2^{-k}$ (this exists because the distance set is reverse well-ordered).

We may assume that $\mathfrak{M}$ and $\mathfrak{N}$ are separated and furthermore that $\mathfrak{N} \prec \mathfrak{B}$, so we can regard the elements of $\mathfrak{M}$ and $\mathfrak{N}$ as elements of $(V^\prime)^\omega$. In particular we have that for all $\alpha \in \mathfrak{N}$, $\alpha(2i)=0$ for all $i > n$  and $\alpha(2i+1) = 0$ for all $i \leq s(\alpha)$. 
 
Now we can see that $k$ cannot be even. If $k=2m$ for some $m$, then we can find $\gamma \in \mathfrak{M}$ such that $d(\alpha,\gamma) \leq 2^{-k-1}$ by finding the element of $H(\mathfrak{M})/E$ corresponding to $\alpha(2k)$ and replacing $\beta$ with $\gamma$ satisfying $\gamma(i)=\beta(i)$ for all $i \neq 2k$ and $\gamma(2k)=\alpha(2k)$.

So assume that $k=2m+1$ for some $m$. There is some element $c$ of $H(\mathfrak{N})/E$ such that for any $\sigma$ with $\sigma(0)=c$, we have $\mathfrak{N}\models P_{2m+1}(\sigma, \beta, \alpha)=0$. Therefore we can find such a $\sigma$ in $\mathfrak{M}$. But now there must be some $\alpha^\prime$ with $\mathfrak{M} \models P_{2m+1}(\sigma,\beta,\alpha^\prime)=0$, which implies that $d(\alpha,\alpha^\prime) \leq 2^{-k-1}$, which is also a contradiction.

Therefore there are no Vaughtian pairs over $H/E$ and $T_n$ is \insep\ categorical. \hfill $\square_{\text{claim}}$

Now it's clear that the models of $T_n$ are uniquely determined by $\textrm{dim}(H/E)$ and that every dimension $\geq 0$ is possible. So for each $k \leq \omega$ let $V_k \subset V^\prime$ be a vector space such that $\mathrm{dim}(V_k / V) = k$ and let $\mathfrak{M}_k \prec \mathfrak{B}$ be the corresponding model of $T_n$.  We need to characterize the types in $S_1(\mathfrak{M}_k)$.  Note that every type in $S_1(\mathfrak{M}_k)$ is realized in $\mathfrak{B}$, since it is $\aleph_1$-saturated.

Fix $k \leq \omega$. For each $\tau \in V_k^{<\omega}$, let $\zeta_\tau \in \mathfrak{M}_k$ be a fixed element satisfying $\zeta_\tau \supset \tau$.  For any $\alpha \in \mathfrak{W}^\prime$, assign an `index' $h(\alpha) = \left<\eta(\alpha) , X(\alpha) \right>$ by:

\begin{itemize}
\item
Let $\eta(\alpha) \in V_k ^ {\leq \omega}$ be the longest initial segment of $\alpha$ such that $\eta(\alpha)(i) \in V_k$ for every $i < |\eta(\alpha)|$.
\item
If $|\eta(\alpha)|$ is infinite then $X(\alpha) = \varnothing$.
\item
If $|\eta(\alpha)|$ is finite and even, then $X(\alpha) \subseteq \mathcal{P}_\mathrm{fin}(\omega) \times V_k$ is the set of pairs $\left<A , v \right>$ such that $\sum \alpha(2A) = v$.
\item
If $|\eta(\alpha)|$ is finite and odd, then $X(\alpha) \subseteq \mathcal{P}_\mathrm{fin}(\omega\cup\{-1\}) \times V_k$ is the set of pairs $\left<A , v \right>$ such that $\sum \alpha(2A) = v$, where we set $\alpha(-2) = \zeta_{\alpha \upharpoonright |\eta(\alpha)|} (|\eta(\alpha)|) + \alpha(|\eta(\alpha)|)$. (Recall that $\alpha \upharpoonright m$ is the sequence $\alpha(0),\allowbreak\alpha(1),\allowbreak\dots,\allowbreak\alpha(m-1)$, with length $m$.)

\end{itemize}

With the following two claims we will show that $h(\alpha)$ exactly captures $\mathrm{tp}(\alpha/ \mathfrak{M}_k)$.

\textit{Claim 1:} For any $\alpha,\beta \in \mathfrak{B}$, if $h(\alpha) \neq h(\beta)$ then $\alpha \not \equiv_{\mathfrak{M}_k} \beta$.

\textit{Claim 2:} For any $\alpha, \beta \in \mathfrak{B}$, if $h(\alpha) = h(\beta)$ then for any $\varepsilon>0$ there is an automorphism $\sigma$ of $\mathfrak{B}$, fixing $\mathfrak{M}_k$, such that $d(\sigma(\alpha),\beta) \leq \varepsilon$.

\textit{Proof of claim 1:} If either of $\eta(\alpha)$ or $\eta(\beta)$ is infinitely long, then the corresponding element is an element of $\mathfrak{M}_k$, so then $\alpha$ and $\beta$ clearly have different types over $\mathfrak{M}_k$. So assume that both $\eta(\alpha)$ and $\eta(\beta)$ are finite.

If $\eta(\alpha) \neq \eta(\beta)$, then there are elements of $\mathfrak{M}_k$ with different distances to $\alpha$ and $\beta$, implying that they have different types over $\mathfrak{M}_k$. So assume that $\eta(\alpha)=\eta(\beta)$.

If $\eta(\alpha) = \eta(\beta)$, then $X(\alpha) \neq X(\beta)$, and so this clearly gives an $\mathfrak{M}_k$-formula satisfied by $\alpha$ and not satisfied by $\beta$, so we have that $\alpha \not \equiv_{\mathfrak{M}_k} \beta$.

So we have that $h(\alpha) \neq h(\beta) \Rightarrow \alpha \not \equiv_{\mathfrak{M}_k} \beta$, as required. \hfill $\square_{\text{claim 1}}$

\textit{Proof of claim 2:} Assume that $h(\alpha) = h(\beta)$. If $\eta(\alpha) = \eta(\beta)$ is infinitely long, then $\alpha = \beta \in \mathfrak{M}_k$ and there is nothing to prove, so assume that $\eta(\alpha)=\eta(\beta)$ is finitely long.

First we will prove that there is an automorphism $\sigma_0$ of $\mathfrak{B}$ fixing $\mathfrak{M}_k$ such that $\sigma_0 (\alpha) (2i) = \beta(2i)$ for every $i<\omega$ and $\sigma_0(\alpha)(|\eta(\alpha)|) = \beta(|\eta(\alpha)|)$ if $|\eta(\alpha)|$ is odd.

If $|\eta(\alpha)|=|\eta(\beta)|$ is finite and even, then $X(\alpha)=X(\beta)$  is precisely the statement that 
\[\alpha(0)\alpha(2)\alpha(4)\dots \equiv_{V_k} \beta(0)\beta(2)\beta(4)\dots\]
 in the structure $V^\prime$. So we easily get an automorphism of $V^\prime$ taking $\alpha(0)\alpha(2)\alpha(4)\dots$ to $\beta(0)\beta(2)\beta(4)\dots$ fixing $V_k$. This extends to an automorphism of all of $\mathfrak{W}^\prime$ fixing $V_k^\omega$ which then induces an automorphism of $\mathfrak{B}$ fixing $\mathfrak{M}_k$ with the required property.

If $|\eta(\alpha)|=|\eta(\beta)|$ is finite and odd, then $X(\alpha) = X(\beta)$ is precisely the statement that
\[(\zeta_{\alpha \upharpoonright |\eta(\alpha)|} (|\eta(\alpha)|) + \alpha(|\eta(\alpha)|))\alpha(0)\alpha(2)\alpha(4)\dots\]
\[\equiv_{V_k} (\zeta_{\alpha \upharpoonright |\eta(\alpha)|} (|\eta(\alpha)|) + \beta(|\eta(\alpha)|))\beta(0)\beta(2)\beta(4)\dots\]
in the structure $V^\prime$. So we easily get an automorphism of $V^\prime$ taking
\[(\zeta_{\alpha \upharpoonright |\eta(\alpha)|} (|\eta(\alpha)|) + \alpha(|\eta(\alpha)|)),\alpha(0),\alpha(2),\alpha(4),\dots\]
 to
\[(\zeta_{\alpha \upharpoonright |\eta(\alpha)|} (|\eta(\alpha)|) + \beta(|\eta(\alpha)|)),\beta(0),\beta(2),\beta(4),\dots\]
fixing $V_k$. This extends to an automorphism of all of $\mathfrak{W}^\prime$ fixing $V_k^\omega$ which then induces an automorphism of $\mathfrak{B}$ fixing $\mathfrak{M}_k$ with the required property.

Now we need to argue that $h(\sigma_0(\alpha)) = h(\beta)$. If $|\eta(\alpha)|=|\eta(\beta)|$ is even, then there is nothing to prove. If $|\eta(\alpha)|=|\eta(\beta)|$ is odd, then the only thing to worry about is that we might have moved $\zeta_{\alpha \upharpoonright |\eta(\alpha)|} (|\eta(\alpha)|)$, but this is determined by $\alpha \upharpoonright |\eta(\alpha)| = \beta \upharpoonright |\eta(\beta)|$ which is unchanged and equal to $\sigma_0(\alpha) \upharpoonright |\eta(\sigma_0(\alpha))|$. So we have $h(\sigma_0(\alpha)) = h(\beta)$.

So now let $\gamma \in (V^\prime)^{\leq \omega}$ be the longest common initial segment of $\sigma_0(\alpha)$ and $\beta$, so that in particular $d(\sigma_0(\alpha),\beta)=2^{-|\gamma|}$. If $\gamma$ is infinitely long then we are done, so assume that $\gamma$ is finitely long. It must be the case that $|\gamma|$ is odd, since we ensured that $\sigma_0(\alpha)(2i) = \beta(2i)$ for every $i$. If $|\eta(\beta)|$ is odd, then it must be the case that $|\gamma|> |\eta(\beta)|$, since we ensured that $\sigma_0(\alpha)(|\eta(\beta)|)=\beta(|\eta(\beta)|)$. In any case we always have $|\gamma| > |\eta(\beta)|\geq 0$, so in particular we always have that the last element of $\gamma$, $\gamma(|\gamma|-1)$, is not in $V_k$.

Now define a map $\sigma_1 : \mathfrak{B} \rightarrow \mathfrak{B}$ by:
\begin{itemize}
\item
$\sigma_1(\chi) = \chi$ if $d(\chi,\beta) \geq 2^{-|\gamma|+1}$.

\item 
If $d(\chi,\beta) \leq 2^{-|\gamma|}$, so in particular $\chi(|\gamma|-1)=\beta(|\gamma|-1)$, then $\sigma_1(\chi)(j)=\chi(j)$, if $j \neq |\gamma|$, and $\sigma_1(\chi)(|\gamma|)=\chi(|\gamma|) + \beta(|\gamma|) + \sigma_0(\alpha(|\gamma|))$.

\end{itemize}
By checking the definition of $D$ we can see that $\sigma_1$ is a bijection on $\mathfrak{B}$. It's also clearly an isometric map. By checking the predicates in the language of $\mathfrak{B}$ we can see that $\sigma_1$ is an automorphism of $\mathfrak{B}$. Furthermore, clearly $\sigma_1(\alpha)(2i+1)=\beta(2i+1)$, so we have that $d(\sigma_1(\sigma_0(\alpha)),\beta) < d(\alpha,\beta)$, as required.

Now the only thing left to verify is that $\sigma_1$ fixes $\mathfrak{M}_k$. If $\chi$ is moved by $\sigma_1$, then $d(\gamma, \alpha) \leq 2^{-|\gamma|}$, so in particular $\chi(|\gamma|-1)=\beta(|\gamma|-1) \notin V_k$, so $\chi \notin \mathfrak{M}_k$. Therefore everything moved by $\sigma_1$ is not in $\mathfrak{M}_k$ and we have that $\sigma_1$ fixes $\mathfrak{M}_k$.

So by iterating the construction of $\sigma_1$ we can get the required automorphisms bringing $\alpha$ arbitrarily close to $\beta$. \hfill $\square_{\text{claim 2}}$

So we see that $\alpha \equiv_{\mathfrak{M}_k} \beta$ if and only if $h(\alpha) = h(\beta)$.

Now we need to determine the values of $h(\alpha)$ that correspond to a strongly minimal type. 

\textit{Claim:} If $\alpha(2i+1) \neq 0$ for some $i<\omega$, then $\mathrm{tp}(\alpha/\mathfrak{M}_k)$ is not strongly minimal.

\textit{Proof of claim.} If $\alpha \in \mathfrak{M}_k$, then $\mathrm{tp}(\alpha/\mathfrak{M}_k)$ is clearly not strongly minimal, so assume that $\alpha \notin \mathfrak{M}_k$. Since $\alpha \notin \mathfrak{M}_k$, we have that $\eta(\alpha)$ has finite length. Since $\alpha(2i+1) \neq 0$ for some $i<\omega$, we must have $s(\alpha) < \infty$ (this is determined by $\mathrm{tp}(\alpha/ \mathfrak{A})$ so \textit{a fortiori} it is determined by $\mathrm{tp}(\alpha/\mathfrak{M}_k)$). This implies that if $\beta$ has $\alpha(i) = \beta(i)$ for all $i \leq \imax{|\eta(\alpha)|} {(2s(\alpha)+1)}$ and $\alpha(2i)=\beta(2i)$ for all $i<\omega$, then $\alpha \equiv_{\mathfrak{M}_k} \beta$, implying that $\mathrm{\alpha/\mathfrak{M}_k}$ has many non-algebraic global extensions and is not strongly minimal. \hfill $\square_{\text{claim}}$

As a corollary of this we get that if $\alpha(0),\alpha(2),\dots,\alpha(2n)$ (where `$\alpha(0),\allowbreak\alpha(2),\dots,\alpha(2\omega)$' is understood to mean the infinite tuple $\alpha(0),\alpha(2),\dots$) are not linearly independent over $V$, then $\mathrm{tp}(\alpha/\mathfrak{M}_k)$ is not strongly minimal. Conversely we have that if $\alpha(0), \alpha(2), \dots, \alpha(2n)$ are linearly independent over $V$, then $\alpha(2i+1) = 0$ necessarily for every $i<\omega$, by the definition of $D$.

\textit{Claim:} If $\alpha(0), \alpha(2), \dots,\alpha(2n)$ are linearly independent over $V$ and
\[\mathrm{dim}(\{\alpha(0), \alpha(2), \dots,\alpha(2n)\} / \mathfrak{M}_k) > 1,\]
then $\mathrm{tp}(\alpha / \mathfrak{M}_k)$ is not strongly minimal.

\textit{Proof of claim.} If $\mathrm{dim}(\{\alpha(0), \alpha(2), \dots,\alpha(2n)\} / \mathfrak{M}_k) > 1$ then $\mathrm{tp}(\alpha / \mathfrak{M}_k)$ has more than one global non-algebraic extension and so is not strongly minimal. \hfill $\square_{\text{claim}}$

Clearly if $\mathrm{dim}(\{\alpha(0), \alpha(2), \dots,\alpha(2n)\} / \mathfrak{M}_k) = 0$, then we have that $\mathrm{tp}(\alpha/ \mathfrak{M}_k)$ is atomic and therefore not strongly minimal. So the only way for $\mathrm{tp}(\alpha / \mathfrak{M}_k)$ to be strongly minimal is if $\mathrm{dim}(\{\alpha(0), \alpha(2), \dots,\alpha(2n)\} / \mathfrak{M}_k)  = 1.$ 

As it turns out this is a precise characterization.

\textit{Claim:} $\mathrm{tp}(\alpha / \mathfrak{M}_k)$ is strongly minimal if and only if 
\[\mathrm{dim}(\{\alpha(0), \alpha(2), \dots,\alpha(2n)\} / \mathfrak{M}_k) = 1.\]
\textit{Proof of claim.} We have already shown the $\Rightarrow$ direction, so we just need to show the converse, but this is easy since $\mathrm{tp}(\alpha/ \mathfrak{M}_k)$ is clearly non-algebraic but also has a unique non-algebraic extension over any parameter set.\editcom{Changed wording.} \hfill $\square_{\text{claim}}$

Now finally we see that 
\[\mathrm{dim}(\{\alpha(0), \alpha(2), \dots,\alpha(2n)\} / \mathfrak{M}_k) = 1\] 
with
\[\mathrm{dim}(\{\alpha(0), \alpha(2), \dots,\alpha(2n)\} / V) = n+1\]
is possible if and only if $k \geq n$, and so $T_n$ has the required property.\editcom{Added `so.'}
\end{proof}

\subsubsection{A Counterexample to the Classical Characterization}%{A Counterexample to a Direct Translation of the Baldwin-Lachlan \BLThm}

One might hope that somehow the condition that a theory
be $\omega$-stable and have no Vaughtian pairs might be strong enough
to ensure that a theory is \insep\ categorical, but it is not
so. The full Baldwin-Lachlan characterization fails in continuous
logic, even after strengthening the no Vaughtian pairs condition. %\footnote{I would like to mention that I was unable to prove that all \insep\ categorical theories have no locatable Vaughtian pairs.}

In order to fully state the extent of the counterexample we are going to present in this section, we need a broad generalization of open sets and definable sets.
\begin{defn}
\leavevmode
\begin{enumerate}[label=(\roman*)]
\item A set  $X\subseteq S_n(A)$ is \emph{locatable} if $X \subseteq \tint  X^{<\varepsilon}$ for every $\varepsilon>0$. (Definable, open, and open-in-definable sets are all locatable.)

\item Given a countable parameter set $A$, a pair of models $\mathfrak{N}\succ \mathfrak{M} \supseteq A$, where $\mathfrak{N}$ is a proper elementary extension of $\mathfrak{M}$, is a \emph{locatable Vaughtian pair} if there is a locatable set $X \subseteq S_1(A)$ such that $X$ contains a non-algebraic type and $X(\mathfrak{M})=X(\mathfrak{N})$.
\end{enumerate}
\end{defn}

The most exotic kind of locatable sets that have explicitly appeared in this paper are open-in-definable sets. At the moment it is unknown whether or not \insep\ categorical theories can have locatable Vaughtian pairs, but it seems unlikely.

\begin{ex}
\label{exa:MAIN-COUNTEREXAMPLE} A countable $\omega$-stable theory with no locatable Vaughtian pairs that is not \insep\ categorical. %A countable, $\omega$-stable, $\aleph_0$-categorical, ultrametric theory $T$ with no locatable Vaughtian pairs  that is not \insep\ categorical.
\end{ex}

\begin{proof} [Verification]
Let $V$ be a countable vector space over a finite field $\mathbb{F}_{p}$.
Let $\mathfrak{M}$ be the structure whose universe is $V^{\omega}$
with the standard string ultrametric. For each $n,m<\omega$ with
$n\equiv m\,\mathrm{ (mod }\,2)$, let $P_{n,m}$ be a $\{0,1\}$-valued
quaternary relation such that $P_{n,m}(a,b,c,e)=0$ if and only if
$d(a,b)\leq2^{-n+1}$, $d(c,e)\leq2^{-m+1}$, and $a(n)-b(n)=c(m)-e(m)$.
For any fixed $n,m<\omega$ we have that if $d(a_{0},a_{1}),d(b_{0},b_{1})<2^{-n-1}$
and $d(c_{0},c_{1}),d(e_{0},e_{1})<2^{-m-1}$, then $P_{n,m}(a_{0},b_{0},c_{0},e_{0})=P_{n,m}(a_{1},b_{1},c_{1},e_{1})$,
so each $P_{n,m}$ is uniformly continuous.

We will show that $T$ is $\aleph_0$-categorical. To see that $T=\mathrm{Th}(\mathfrak{M})$ is $\omega$-stable and $\aleph_0$-categorical,
note that it is a reduct of an imaginary of $(V,+)$, which is $\omega$-stable
and $\aleph_0$-categorical. It is also a reduct of Example \ref{ex:no-strong-min}.

To see that $T$ is not \insep\ categorical, notice that if $W$
is the unique elementary extension of $V$ of cardinality $\aleph_1$,
then $(V\times W)^{\omega}$, $(W\times V)^{\omega}$, and $(W\times W)^{\omega}$
are the universes of three non-isomorphic models of $T$.

Finally to see that $T$ has no locatable Vaughtian pairs, we need
to analyze the structure of its type spaces more carefully. It is
sufficient to prove this statement\editcom{Changed `it' to `this statement.'} considering locatable subsets of $S_{1}(\mathfrak{M})$,
where $\mathfrak{M}$ is the unique separable model, since any countable
set of parameters can be found inside $\mathfrak{M}$. 

\textit{Claim:} Every non-algebraic type $p\in S_1(\mathfrak{M})$ is uniquely determined by the $k<\omega$ for which $d(p,\mathfrak{M})=2^{-k}$ and $\alpha\upharpoonright k$ for any $\alpha \in \mathfrak{M}$ with $d(\alpha,p)=2^{-k}$.

\emph{Proof of claim.} To see that this is true let $a,b$ be two elements of some $\mathfrak{N}\succ \mathfrak{M}$ with $d(a,\mathfrak{M})=d(b,\mathfrak{M})=2^{-k}$ and suppose that there are $\alpha, \beta \in \mathfrak{M}$ with $d(a,\alpha)=2^{-k}$ and $d(b,\beta)=2^{-k}$ and $\alpha\upharpoonright k = \beta \upharpoonright k$. The statement $\alpha\upharpoonright k = \beta \upharpoonright k$ is the same as saying that $d(\alpha,\beta)\leq 2^{-k}$, so since $T$ is ultrametric we have that $d(b,\alpha)\leq \imax{2^{-k}} { 2^{-k}} = 2^{-k}$, so $d(b,\alpha)=2^{-k}$. Therefore also $d(a,b)\leq 2^{-k}$. \hfill $\qed_{\text{claim}}$

$\mathfrak{N}$ can be written in the form $(U\times W)^\omega$ for some $\mathbb{F}_{p}$-vector spaces $U,W\succeq V$.

Assume that $d(a,b)=2^{-k}$. Either $a(k),b(k) \in U\smallsetminus V$ or $a(k),b(k) \in W \smallsetminus V$ (depending on whether $k$ is even or odd), and $a(k)\neq b(k)$, since $d(a,b)=2^{-k}$, so we can find an explicit automorphism of $U$ or $W$ fixing $V$ and taking $a(k)$ to $b(k)$. This extends to an automorphism $f$ of all of $\mathfrak{N}$ fixing $\mathfrak{M}$. So by replacing $a$ with $f(a)$, we may assume that $d(a,b)<2^{-k}$.

Assume that $d(a,b)<2^{-k}$. This implies that $a(k)=b(k)\notin V$. Let $\ell > k$ be the first such that $a(\ell) \neq b(\ell)$. Let $g:\mathfrak{N}\rightarrow \mathfrak{N}$ defined by the following: $g(c)=c$ if $c\upharpoonright \ell \neq a \upharpoonright \ell$, otherwise $g(c)(m)=c(m)$ for $m \neq \ell$ and $g(c)(\ell)=c(\ell) + b(\ell) - a(\ell)$. One can check that this is an automorphism of $\mathfrak{N}$ with the property that $d(g(a),b)<d(a,b)$. Note that $g$ fixes $\mathfrak{M}$.

By iterating, for any $k<\omega$ we can find an automorphism $h$ of $\mathfrak{N}$, fixing $\mathfrak{M}$. such that $d(h(a),b)\leq 2^{-k}$. Therefore $a\equiv_\mathfrak{M} b$, as required.

Let each non-algebraic type in $S_1(\mathfrak{M})$ be denoted by an element of $V^{<\omega}$.

Note that each non-algebraic type is metrically isolated. To see this, consider $\sigma_0, \sigma_1 \in V^{<\omega}$ with $\sigma_0 \neq \sigma_1$. Let $\tau$ be the longest common initial segment of $\sigma_0$ and $\sigma_1$. We have that $d(\sigma_0,\sigma_1)=2^{-|\tau|}$. In particular this implies that other types have distance at least $2^{-|\sigma|}$ to the type associated to some $\sigma \in V^{<\omega}$.

Now also note that for any $\sigma \in V^{<\omega}$, the sequence of types $\{\sigma \frown v\}_{v\in V}$ limits to $\sigma$, because the limiting type must be in the $(\leq 2^{-|\sigma|})$-ball whose center starts with $\sigma$, but it must have distance greater than $2^{-|\sigma|-1}$ from $\mathfrak{M}$, so $\sigma$ is the unique type that it can be.

Let $L\subseteq S_1(\mathfrak{M})$ be a locatable set containing a non-algebraic type $\sigma$. The claim is that for some $v\in V$, $L$ must contain $\sigma \frown v$ as well. To see this, note that $L^{<2^{-|\sigma|-2}}$ must be a neighborhood of $\sigma$, so it must contain infinitely many types of the form $\sigma \frown v$.\editcom{Replaced `of' with `of the form.'} Let $\sigma \frown v$ be in $L^{<2^{-|\sigma|-2}}$ and let $\tau \in L$ be such that $d(\sigma\frown v, \tau) < 2^{-|\sigma|-2}$. But since $\sigma$ is $(\geq 2^{-|\sigma|-1})$-metrically isolated, this implies that $\sigma \frown v = \tau$.

Now note that in any proper elementary extension of $\mathfrak{M} = (V\times V)^\omega$, one of the two copies of $V$ must grow, but this implies that one of the types $\sigma$ or $\sigma \frown v$ must be realized in the extension, so there cannot be a Vaughtian pair over $L$.
\end{proof}

Of course the theory of this structure has an imaginary Vaughtian pair.

As mentioned before,\editcom{Removed `note that.'} in discrete logic a superstable theory has no Vaughtian
pairs if and only if it has no imaginary Vaughtian pairs (although
the same is not true of strictly stable theories \cite{BOUSCAREN1989129}).\editcom{Removed `but.'}
This example shows that the same does not even hold for $\omega$-stable
theories in continuous logic.

The construction used in Theorem~\ref{thm:bad-example} and Example~\ref{exa:MAIN-COUNTEREXAMPLE} clearly rely very heavily on the ability to associate an entire copy of a strongly minimal set to a single element of another strongly minimal set. This seems like something that can only be done with discrete structures, which raises the following question.\editcom{Added paragraph.}

\begin{quest}
Does there exist a countable $\omega$-stable theory $T$
with no Vaughtian pairs that is not \insep\ categorical and does
not interpret a discrete strongly minimal set? A strongly minimal
set at all? In particular is there something like Example \ref{exa:MAIN-COUNTEREXAMPLE}
whose `underlying pregeometries' are not discrete? 
\end{quest}

\section{\Insep\ Categorical Expansions of Banach Space}
\label{sec:ind-sub-abs}

In this section we will consider expansions of Banach spaces.  We introduce the notion of an indiscernible subspace. An indiscernible subspace is a subspace in which types of tuples of elements only depend on their quantifier-free types in the reduct consisting of only the metric and the constant $\mathbf{0}$. Similarly to indiscernible sequences, indiscernible subspaces are always consistent with a Banach theory (with no stability assumption, see Theorem \ref{thm:exist}) but are not always present in every model. We will show that an indiscernible subspace always takes the form of an isometrically embedded real Hilbert space wherein the type of any tuple only depends on its quantifier-free type in the Hilbert space. The notion of an indiscernible subspace is of independent interest in the model theory of Banach and Hilbert structures, and in particular here we use it to improve the results of Shelah and Usvyatsov in the context of types in the full language (as opposed to $\Delta$-types). Specifically, in this context we give a shorter proof of Shelah and Usvyatsov's main result \cite[Prop.\ 4.13]{SHELAH2019106738}, we improve their result on the strong uniqueness of Morley sequences in minimal wide types \cite[Prop.\ 4.12]{SHELAH2019106738}, and we expand on their commentary on the ``induced structure'' of the span of a Morley sequence in a minimal wide type \cite[Rem.\ 5.6]{SHELAH2019106738}. This more restricted case is what is relevant to \insep\ categorical Banach theories, so our work is applicable to the problem of characterizing such theories. %characterizing \insep\ categorical Banach theories.

Finally, we present some relevant counterexamples and in particular we resolve (in the negative) the question of Shelah and Usvyatsov presented at the end of Section 5 of \cite{SHELAH2019106738}, in which they ask whether or not the span of a Morley sequence in a minimal wide type is always a type-definable set.

\subsection{Banach Theory Background}

For $K \in \{\mathbb{R}, \mathbb{C}\}$, we think of a $K$-Banach space $X$ as being a metric structure $\mathfrak{X}$ whose underlying set is the closed unit ball $B(X)$ of $X$ with metric $d(x,y) = \left\lVert x - y \right\rVert$.\footnote{For another equivalent approach, see \cite{MTFMS}, which encodes Banach structures as many-sorted metric structures with balls of various radii as different sorts.}  This structure is taken to have for each tuple $\bar{a} \in K$  an $|\bar{a}|$-ary predicate $s_{\bar{a}}(\bar{x}) = \left\lVert \sum_{i<|\bar{a}|} a_i x_i \right\rVert$, although we will always write this in the more standard form. % right-hand
Note that we evaluate this in $X$ even if $\sum_{i<|\bar{a}|} a_i x_i$ is not actually an element of the structure $\mathfrak{X}$.  For convenience, we will also have a constant for the zero vector, $\mathbf{0}$, and an $n$-ary function $\sigma_{\bar{a}}(\bar{x})$ such that $\sigma_{\bar{a}}(\bar{x}) = \sum_{i<|\bar{a}|} a_i x_i$ if it is in $B(X)$ and  $\sigma_{\bar{a}}(\bar{x}) = \frac{\sum_{i<|\bar{a}|} a_i x_i}{\left\lVert \sum_{i<|\bar{a}|} a_i x_i \right\rVert}$ otherwise. If $|a|\leq 1$, we will write $ax$ for $\sigma_{a}(x)$. Note that while this is an uncountable language, it is interdefinable with a countable reduct of it (restricting attention to rational elements of $K$). These structures capture the typical meaning of the ultraproduct of Banach spaces.  %As is common,
We will often conflate $X$ and the metric structure $\mathfrak{X}$ in which we have encoded $X$.

\begin{defn}
A \emph{Banach (or Hilbert) structure} is a metric structure which is the expansion of a Banach (or Hilbert) space. A \emph{Banach (or Hilbert) theory} is the theory of such a structure. The adjectives \emph{real} and \emph{complex} refer to the scalar field $K$.
\end{defn}

$C^{\ast}$- and other Banach algebras are commonly studied examples of Banach structures that are not just Banach spaces.

A central problem in continuous logic is the characterization of \insep\ categorical countable theories, that is to say countable theories with a unique model in each uncountable density character. The analog of Morley's theorem was shown in continuous logic via related formalisms \cite{ben-yaacov_2005, Shelah2011}, but no satisfactory analog of the Baldwin-Lachlan theorem or its precise structural characterization of uncountably categorical discrete theories in terms of strongly minimal sets is known. Some progress in the specific case of Banach theories has been made in \cite{SHELAH2019106738}, in which Shelah and Usvyatsov introduce the notion of a wide type and the notion of a minimal wide type, which they argue is the correct analog of strongly minimal types in the context of \insep\ categorical Banach theories.

\begin{defn}
A type $p$ in a Banach theory is \emph{wide} if its set of realizations consistently contain the unit sphere of an infinite dimensional real subspace.

A type is \emph{minimal wide} if it is wide and has a unique wide extension to every set of parameters.
\end{defn}
%(Fact \ref{fact:main} in the $n=1$ case)
%Using classical concentration of measure results of Dvoretzky and Milman,
In \cite{SHELAH2019106738}, Shelah and Usvyatsov were able to show that every Banach theory has wide complete types using the following classical concentration of measure results of Dvoretzky and Milman, which Shelah and Usvyatsov refer to as the Dvoretzky-Milman theorem.

\begin{fact}[Dvoretzky-Milman theorem] \label{fact:DM-thm}
Let $(X,\left\lVert \cdot \right\rVert)$ be an infinite dimensional real Banach space with unit sphere $S$ and let $f:S \rightarrow \mathbb{R}$ be a uniformly continuous function. For any $k<\omega$ and $\e > 0$, there exists a $k$-dimensional subspace $Y \subset X$ and a Euclidean norm\footnote{A norm $\vertiii{\cdot}$  is \emph{Euclidean} if it satisfies the parallelogram law, $2\vertiii{a}^2 + 2 \vertiii{b}^2 = \vertiii{a+b}^2 + \vertiii{a-b}^2,$ or, equivalently, if it is induced by an inner product.} $\vertiii{\cdot}$ on $Y$ such that for any $a,b \in S\cap Y$, we have $\vertiii{a} \leq \left\lVert a\right\rVert \leq (1 + \e)\vertiii{a}$ and $|f(a) - f(b)| < \e$.\footnote{Fact \ref{fact:DM-thm} without $f$ is (a form of) Dvoretzky's theorem.} %$1 - \e < \frac{\left\lVert a\right\rVert}{\vertiii{a}} < 1 + \e$
%For any $k<\omega$ and $\e > 0$, there is an $N < \omega$ such for any Banach space $(V,\left\lVert \cdot \right\rVert)$ with $\dim V \geq N$ and any $1$-Lipschitz function $f:V \rightarrow \mathbb{R}$, there exists a $k$-dimensional subspace $W \subseteq V$ and a Euclidean norm $\vertiii{\cdot}$ on $W$ such that 
%\begin{itemize}
%\item \emph{(Dvoretsky's theorem)} $1 - \e < \frac{\left\lVert a\right\rVert}{\vertiii{a}} < 1 + \e$ and
%\item $|f(a) - f(b)| < \e$.
%\end{itemize}
\end{fact}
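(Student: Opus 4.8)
This is a cited classical result---Dvoretzky's theorem together with V.\ Milman's concentration-of-measure refinement incorporating the auxiliary function $f$---so the plan is to recall the standard proof and to check that $f$ can be carried along without extra difficulty.

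First I would reduce to a finite-dimensional problem: since $X$ is infinite dimensional, for any $n<\omega$ one may choose an $n$-dimensional subspace $Z\subseteq X$, and it suffices to find $Y\subseteq Z$; the integer $n=n(k,\e)$ (which will end up exponentially large in $k/\e^2$) is fixed once the estimates below are in hand. On $Z$ fix a Euclidean norm $\vertiii{\cdot}$ in a good position by combining the John ellipsoid with the Dvoretzky--Rogers lemma, so that, writing $S_2$ for the $\vertiii{\cdot}$-unit sphere equipped with its Haar probability measure $\mu$, the mean $M=\int_{S_2}\left\lVert x\right\rVert\,d\mu$ and the Lipschitz constant $b=\max_{x\in S_2}\left\lVert x\right\rVert$ of $x\mapsto\left\lVert x\right\rVert$ (with respect to the geodesic metric on $S_2$) satisfy the Dvoretzky--Rogers bound $b/M\le C\sqrt{n/\log n}$. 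This bound is the technical heart of the argument.

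Next I would invoke L\'evy's isoperimetric (concentration) inequality on $S_2\cong S^{n-1}$, applied to two functions: the norm $x\mapsto\left\lVert x\right\rVert$, which is $b$-Lipschitz, and $f$ itself, which is uniformly continuous with respect to $\left\lVert\cdot\right\rVert$ and hence---since $\left\lVert\cdot\right\rVert\le b\vertiii{\cdot}$---uniformly continuous on $S_2$ for the geodesic metric. Concentration gives $\mu\{|\left\lVert x\right\rVert-M|>\tau M\}\to 0$ and $\mu\{|f(x)-\bar f|>\tau\}\to 0$ as $n\to\infty$, at rates governed respectively by $b/M$ and by the modulus of continuity of $f$, where $\bar f=\int_{S_2}f\,d\mu$ and $\tau>0$ is a small parameter chosen in terms of $\e$. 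Milman's random-subspace lemma---a $\delta$-net argument on $S^{k-1}$ fed by these two estimates---then shows that for $n$ large enough a Haar-random $k$-dimensional $Y\subseteq Z$ satisfies, with probability close to $1$, both $(1-\tau)M\le\left\lVert x\right\rVert\le(1+\tau)M$ for every $x$ on the $\vertiii{\cdot}$-unit sphere of $Y$ and $|f(x)-\bar f|<\tau$ for all such $x$; intersecting these two near-certain events yields such a $Y$. Replacing $\vertiii{\cdot}$ on $Y$ by the scalar multiple $(1-\tau)M\,\vertiii{\cdot}$ turns the first estimate into $\vertiii{a}\le\left\lVert a\right\rVert\le(1+\e)\vertiii{a}$ on $S\cap Y$ once $\tau$ is small enough that $(1+\tau)/(1-\tau)\le 1+\e$, and the second estimate gives $|f(a)-f(b)|<2\tau\le\e$ for $a,b\in S\cap Y$.

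The main obstacle is not the bookkeeping but the two genuinely nontrivial analytic inputs: L\'evy's concentration-of-measure inequality on the sphere, and the Dvoretzky--Rogers lemma supplying the lower bound $M\gtrsim\sqrt{(\log n)/n}\,b$ for the average norm relative to its Lipschitz constant---precisely what lets the Gaussian-type concentration exponent beat the cardinality of the net and thereby keeps $n$ only exponentially large in $k/\e^2$. By contrast, including $f$ is soft: it is just one more uniformly continuous function on the sphere, the same concentration estimate applies to it verbatim, and the intersection of two events of measure near $1$ remains nonempty. This is why, in the paper's application, the clause controlling $f$ comes essentially for free alongside the Euclidean approximation of the norm.
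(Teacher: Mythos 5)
The paper does not prove this statement at all: it is stated as a \emph{Fact} and attributed to the classical literature (Dvoretzky, Milman, and the presentation in \cite{SHELAH2019106738}), so there is no in-paper argument to compare yours against. Your sketch is the standard Milman concentration-of-measure proof --- reduction to an $n(k,\varepsilon)$-dimensional subspace, a Euclidean structure in John/Dvoretzky--Rogers position giving $b/M \lesssim \sqrt{n/\log n}$, L\'evy concentration applied to the norm and to $f$, a net argument over Haar-random $k$-dimensional subspaces, and a final rescaling --- and it is correct in outline; in particular your observation that carrying $f$ along is soft is exactly the point of the ``Ramsey--Dvoretzky--Milman phenomenon'' the paper alludes to. The one step stated slightly too quickly is the application of concentration to $f$: the function $f$ lives on the $\left\lVert \cdot \right\rVert$-unit sphere $S$, not on the Euclidean sphere $S_2$, so you must first pull it back via the radial projection $x \mapsto x/\left\lVert x\right\rVert$; the modulus of continuity of this composition degrades by a factor of order $b/M$, and one should restrict to the high-measure set where $\left\lVert x\right\rVert \approx M$ before quoting L\'evy's inequality. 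The Dvoretzky--Rogers bound $b/M \le C\sqrt{n/\log n}$ is exactly what makes the resulting concentration exponent still beat the size of the net, so the argument closes, but that dependence is worth making explicit rather than asserting that $f$ is ``uniformly continuous on $S_2$'' outright.
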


%Of course the same is true for $\omega$-uniformly continuous functions of any fixed modulus of uniform continuity $\omega$. 

Shelah and Usvyatsov showed that in a stable Banach theory every wide type has a minimal wide extension (possibly over a larger set of parameters) and that every Morley sequence in a minimal wide type is an orthonormal basis of a subspace isometric to a real Hilbert space. Furthermore, they showed that in an \insep\ categorical Banach theory, every inseparable model is prime over a countable set of parameters and a Morley sequence in some minimal wide type, analogously to how a model of a discrete uncountably categorical theory is always prime over some finite set of parameters and a Morley sequence in some strongly minimal type.

The key ingredient to our present work is the following result, due to Milman. %, which is an extension of the result referred to as the Dvoretzky-Milman theorem in \cite{SHELAH2019106738}.
 It extends the Dvoretzky-Milman theorem in a manner analogous to the extension of the pigeonhole principle by Ramsey's theorem.\footnote{The original Dvoretzky-Milman result is often compared to Ramsey's theorem, such as when Gromov coined the term \emph{the Ramsey-Dvoretzky-Milman phenomenon} \cite{gromov1983}, but in the context of Fact \ref{fact:main} it is hard not to think of the $n=1$ case as being analogous to the pigeonhole principle and the $n>1$ cases as being analogous to Ramsey's theorem.}

\begin{defn}\label{defn:main-defn}
Let $(X,\left\lVert \cdot \right\rVert)$ be a Banach space. If $a_0,a_1,\dots,a_{n-1}$ and $b_0,b_1,\dots,\allowbreak b_{n-1}$ are ordered $n$-tuples of elements of $X$, we say that $\bar{a}$ and $\bar{b}$ are \emph{congruent} if $\left\lVert a_i - a_j\right\rVert=\left\lVert b_i - b_j \right\rVert$ for all $ i,j \leq n$, where we take $a_{n}=b_{n}=\mathbf{0}$. We will write this as $\bar{a} \cong \bar{b}$.
\end{defn}

\begin{fact}[\cite{zbMATH03376472}, Thm.\ 3] \label{fact:main}
Let $S^\infty$ be the unit sphere of a separable infinite dimensional real Hilbert space ${H}$ and let $f:(S^\infty)^n \rightarrow \mathbb{R}$ be a uniformly continuous function. For any $\varepsilon>0$ and any $k<\omega$ there exists a $k$-dimensional subspace $V$ of $H$ such that for any $a_0,a_1,\dots,a_{n-1},b_0,b_1,\dots,b_{n-1}\in S^\infty$ with $\bar{a} \cong \bar{b}$, $|f(\bar{a})-f(\bar{b})| < \varepsilon$.
\end{fact}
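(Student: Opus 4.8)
The plan is to prove a finitary strengthening: for $m$ large, a "random" $k$-dimensional subspace of $\mathbb{R}^m$ works, the mechanism being concentration of measure on the orthogonal group. The first observation is that since every element of $S^\infty$ has norm $1$, two tuples $\bar a,\bar b\in(S^\infty)^n$ are congruent exactly when they have the same Gram matrix $(\langle a_i,a_j\rangle)_{i,j<n}$; so what we must produce is a $k$-dimensional subspace $V$ on which $f$ is, up to $\varepsilon$, a function of the Gram matrix alone. For any $k$-dimensional $V$, the group $O(V)\cong O(k)$ acts transitively on the $n$-tuples in $S^\infty\cap V$ with a fixed realizable Gram matrix $G$ (a linear isometry between the spans of two such tuples matching them up extends to $O(V)$), so the oscillation of $f$ over that Gram-class is the oscillation of $u\mapsto f(u\bar a_G)$ over $u\in O(k)$, for any fixed representative $\bar a_G\in V$.

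First I would reduce to Lipschitz $f$: mollify to get $\tilde f$ with $\lVert f-\tilde f\rVert_\infty<\varepsilon/8$ and Lipschitz constant $L=L(\varepsilon,f)$ controlled by the modulus of continuity of $f$; it then suffices to prove the statement for $\tilde f$ with $\varepsilon/2$ in place of $\varepsilon$. Work inside $\mathbb{R}^m$ for $m$ to be chosen, fix once and for all a $k$-dimensional subspace $V_0\subseteq\mathbb{R}^m$ (assume $k\ge n$, harmless since a $k$-dimensional subspace of a good $\max(k,n)$-dimensional one is still good), and for $G\in\mathcal{G}$ (the compact set of realizable Gram matrices) let $\bar a_G\in V_0$ have coordinate vectors the columns of $G^{1/2}$, which has unit columns and depends continuously on $G$. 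For a tuple $\bar a'\in(S^{m-1})^n$ and Haar-random $g\in O(m)$, the distribution of $\tilde f(g\bar a')$ depends only on the $O(m)$-orbit of $\bar a'$, i.e.\ only on $\operatorname{Gram}(\bar a')$; let $\bar E(G)$ be its Lévy mean, so $\bar E$ is a uniformly continuous function on $\mathcal{G}$.

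Next comes the concentration step. By the Gromov--Milman estimate for $O(m)$ with a bi-invariant metric, whose concentration function decays like $e^{-cmt^2}$, and since $g\mapsto\tilde f(g\bar a')$ is $O(L)$-Lipschitz, one gets $\Pr_g\!\big(|\tilde f(g\bar a')-\bar E(\operatorname{Gram}\bar a')|>\varepsilon/4\big)\le C_1\exp(-c_1 m\varepsilon^2/L^2)$, with the exponent tending to $-\infty$ as $m\to\infty$, uniformly in $\bar a'$. Now fix a finite $\delta$-net $\mathcal{N}$ of $\mathcal{G}$ and, for each $G\in\mathcal{N}$, a finite $\delta'$-net $\mathcal{M}_G$ of $O(k)$; applying the bound to each of the finitely many tuples $u\bar a_G$ ($G\in\mathcal{N}$, $u\in\mathcal{M}_G$) and taking a union bound, for $m$ large there is $g_0\in O(m)$ with $|\tilde f(g_0 u\bar a_G)-\bar E(G)|<\varepsilon/4$ for all such $G,u$ at once. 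Set $V=g_0 V_0$. Given congruent $\bar a,\bar b\in(S^\infty\cap V)^n$ with common Gram matrix $G'$, write $g_0^{-1}\bar a=u_a\bar a_{G'}$ and $g_0^{-1}\bar b=u_b\bar a_{G'}$ with $u_a,u_b\in O(k)$; choosing $G\in\mathcal{N}$ with $\lVert G-G'\rVert$ small and $u\in\mathcal{M}_G$ with $\lVert u\bar a_G-u_a\bar a_G\rVert$ small, and using uniform continuity of $\tilde f$ (to move $\bar a_{G'}\to\bar a_G$ and $u_a\to u$), of $\bar E$, and the Step-2 bound, one gets $|\tilde f(\bar a)-\bar E(G')|<\varepsilon/2$ and likewise for $\bar b$, hence $|\tilde f(\bar a)-\tilde f(\bar b)|<\varepsilon/2$, so $|f(\bar a)-f(\bar b)|<\varepsilon$; picking $\delta,\delta'$ small at the outset absorbs all the "small" terms.

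The main obstacle is the concentration step: making the estimate genuinely uniform over the continuum of tuples of a fixed Gram type while keeping the decay rate linear in $m$ — this is precisely where the Lipschitz reduction and the (non-trivial) Gromov--Milman bound for $O(m)$ are needed, and the bookkeeping of the two nets $\mathcal{N}$ and $\mathcal{M}_G$ is what upgrades "concentrated in measure" to "uniformly small everywhere on $S^\infty\cap V$." Everything else is soft: transitivity of the $O(k)$-action on Gram-classes, continuity of $G\mapsto G^{1/2}$, and repeated appeals to the uniform continuity of $f$. (I would not use induction on $n$; the $O(m)$-concentration handles all arities uniformly.)
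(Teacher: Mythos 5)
The paper does not prove this statement at all: it is quoted as Fact~\ref{fact:main} with a citation to Milman's paper, and is used as a black box (the surrounding discussion only remarks that a modern proof ``would go through the extreme amenability of the unitary group''). So there is no proof in the paper to compare yours against; what you have written is a proof of the cited theorem itself. On its own terms your argument is essentially correct, and it is the standard finitary concentration-of-measure proof of Milman's theorem --- i.e.\ the quantitative version of exactly the extreme-amenability route the paper alludes to. The skeleton is sound: congruence of unit-vector tuples is the same as equality of Gram matrices; $O(k)$ acts transitively on the tuples in $V_0$ with a fixed Gram matrix; the median of $g\mapsto \tilde f(g\bar a')$ over Haar-random $g\in SO(m)$ depends only on $\mathrm{Gram}(\bar a')$; Gromov--Milman concentration with rate $e^{-cmt^2/L^2}$ plus a union bound over the two nets (whose cardinalities are fixed before $m$ is chosen) produces a single $g_0$ that works simultaneously; and the net/continuity step transfers the conclusion from the finitely many test tuples to all of $(S^\infty\cap V)^n$.

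Two small points deserve attention but are not gaps. First, your final bookkeeping is off by a factor of two: if each of $\tilde f(\bar a)$ and $\tilde f(\bar b)$ is within $\varepsilon/2$ of $\bar E(G')$ you only get $|\tilde f(\bar a)-\tilde f(\bar b)|<\varepsilon$, not $<\varepsilon/2$; just aim for $\varepsilon/8$ at the concentration step and let the approximation errors eat the rest. Second, the net $\mathcal{N}$ of $\mathcal{G}$ must be fixed \emph{before} $m$, so you need the modulus of continuity of $\bar E=\bar E_m$ to be uniform in $m$; this does hold, because the median is $1$-Lipschitz under sup-norm perturbations of the function and $\sup_g\bigl|\tilde f(g\bar a_G)-\tilde f(g\bar a_{G'})\bigr|\le L\,\lVert \bar a_G-\bar a_{G'}\rVert$ with $G\mapsto \bar a_G$ (columns of $G^{1/2}$ in the fixed $V_0$) continuous independently of $m$ --- but this is worth saying explicitly, since otherwise the order of quantifiers looks circular. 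With those two remarks incorporated, the argument is a complete and correct proof of the Fact.
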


Note that the analogous result for inseparable Hilbert spaces follows immediately, by restricting attention to a separable infinite dimensional subspace. Also note that by using Dvoretzky's theorem and an easy compactness argument, Fact~\ref{fact:main} can be generalized to arbitrary infinite dimensional Banach spaces. Also note that while Fact~\ref{fact:DM-thm} and Fact~\ref{fact:main} are stated for real Banach spaces, analogous statements for complex Banach spaces can be easily derived from them.

%\paragraph{Connection to Extreme Amenability}

 A modern proof of Fact \ref{fact:main} would go through the extreme amenability of the unitary group of an infinite dimensional Hilbert space endowed with the strong operator topology, or in other words the fact that any continuous action of this group on a compact Hausdorff space has a fixed point, which was originally shown in \cite{10.2307/2374298}.  This connection is unsurprising. It is well known that the extreme amenability of $\mathrm{Aut}(\mathbb{Q})$ (endowed with the topology of pointwise convergence) can be understood as a restatement of Ramsey's theorem. It is possible to use this to give a high brow proof of the existence of indiscernible sequences in any first-order theory $T$:
 
 \begin{proof}
Fix a first-order theory $T$. Let $Q$ be a family of variables indexed by the rational numbers. The natural action of $\mathrm{Aut}(\mathbb{Q})$ on $S_Q(T)$, the Stone space of types over $T$ in the variables $Q$, is continuous and so by extreme amenability has a fixed point. A fixed point of this action is precisely the same thing as the type of a $\mathbb{Q}$-indexed indiscernible sequence over $T$, and so we get that there are models of $T$ with indiscernible sequences.
 \end{proof}

 A similar proof of the existence of indiscernible subspaces in Banach theories (Theorem \ref{thm:exist}) is possible, but requires an argument that the analog of $S_Q(T)$ is non-empty (which follows from Dvoretzky's theorem) and also requires more delicate bookkeeping to define the analog of $S_Q(T)$ and to show that the action of the unitary group of a separable Hilbert space is continuous. In the end this is more technical than a proof using Fact \ref{fact:main} directly. %and by restricting to a separable infinite dimensional subspace of $V$

\subsection{Asymptotically Hilbertian Spaces Do Not Interpret a Strongly Minimal Set}

For an overview of the properties of asymptotically Hilbertian spaces in the context of continuous logic, see \cite{HensonRaynaud}.

Compare the following with the fact that every discrete theory interprets a strongly minimal set.
\begin{prop}
If $T$ is the theory of an asymptotically Hilbertian space then it
does not have any non-algebraic locally compact imaginaries. In particular
it does not interpret a strongly minimal set.
\end{prop}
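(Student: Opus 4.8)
The plan is to prove the contrapositive, showing that if $T$ interprets a non-algebraic locally compact imaginary, then $T$ is not the theory of an asymptotically Hilbertian space. The engine of the argument is Fact~\ref{fact:main} (Milman's theorem), which tells us that any model $\mathfrak{X}$ of $T$ (being an infinite dimensional Banach space, so containing via Dvoretzky's theorem arbitrarily good finite-dimensional Euclidean subspaces) carries, for every uniformly continuous $f:(S^\infty)^n\to\mathbb{R}$ and every $k$, a $k$-dimensional subspace on which $f$ depends only on congruence type. Applied to the interpreting formulas, this will force the purported locally compact imaginary to have infinite metric entropy at some scale inside a compact (hence totally bounded) ball, a contradiction.

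First I would fix the setup: suppose $I$ is an imaginary sort interpreted in $T$ and $\pi:\mathfrak{X}^m\to I$ (a definable partial map, for some finite $m$, over $\varnothing$ after absorbing parameters into constants — or really over a tuple of parameters, which one handles by working in a model containing them) such that $I$ has a non-algebraic, locally compact type $q$. Local compactness gives an $\varepsilon>0$ and a realization $c\models q$ such that the closed $\varepsilon$-ball $B_{\le\varepsilon}(c)$ in $I(\mathfrak{C})$ is compact, while non-algebraicity means this ball contains infinitely many points at pairwise distance $>\delta$ for some $\delta>0$ in some elementary extension — wait, no: compactness of the ball means it is totally bounded, so I instead want to use non-algebraicity of $q$ to find, in a suitably saturated model, infinitely many realizations of $q$ that are pairwise $(>\delta)$-separated for a fixed $\delta$, and note they all lie within bounded distance of each other, so after translating we may assume they sit in a ball that is compact — giving the contradiction directly, OR more carefully: pick $c_0 \models q$ with $B_{\le \varepsilon}(c_0)$ compact; the formula $d_I(x,y)$ restricted to the fiber over this ball pulls back through $\pi$ to a uniformly continuous function on tuples from $\mathfrak{X}$, so by Fact~\ref{fact:main} (with $n = 2m$) applied inside the infinite-dimensional space, on a large enough Euclidean subspace $V$ the pulled-back distance depends only on the congruence types of the two $m$-tuples. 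Since $V$ is infinite-dimensional-after-passing-to-a-limit (taking $k\to\infty$ and using compactness/a union argument, or working directly in a model realizing an indiscernible subspace as in Theorem~\ref{thm:exist}), there are infinitely many mutually congruent, mutually non-congruent-to-identity configurations, producing infinitely many images under $\pi$ that are pairwise at distance bounded below yet all within the compact ball — contradiction.

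The cleanest organization is probably: (1) reduce to showing $T$ has no non-algebraic locally compact imaginary type $q$; (2) pass to a model of $T$ containing an indiscernible subspace $E \cong$ a real Hilbert space (Theorem~\ref{thm:exist}) large enough to contain the parameters needed for the interpretation and the type $q$; (3) observe that the interpreting map $\pi$, restricted to tuples from the unit sphere of $E$, has the property that $\pi(\bar a)$ and $\pi(\bar b)$ have the same type over the base whenever $\bar a \cong \bar b$ (this is exactly what "indiscernible subspace" buys us — types of tuples depend only on quantifier-free Hilbert-space data), hence $d_I(\pi(\bar a),\pi(\bar b))$ depends only on the congruence type of $(\bar a,\bar b)$; (4) choose an orthonormal sequence $e_1, e_2, \dots$ in $E$ and configurations $\bar a^{(i)}$ built from disjoint blocks of these so that all the $\bar a^{(i)}$ are mutually congruent but $\pi(\bar a^{(i)})$ are forced apart by $q$ being non-algebraic — concretely, first arrange that some $\pi(\bar a^{(0)})$ realizes $q$, then the congruent copies $\pi(\bar a^{(i)})$ all realize $q$ too and one checks they are pairwise $(>\delta)$-separated for a fixed $\delta$ using a further application of the congruence-invariance to the distance formula; (5) conclude the compact $\varepsilon$-ball around $\pi(\bar a^{(0)})$ contains an infinite $(>\delta)$-separated set, contradicting total boundedness.

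The main obstacle I anticipate is step (4): producing genuinely mutually non-equivalent images from congruent configurations, i.e.\ ruling out the degenerate possibility that $\pi$ collapses all these configurations to a single point. This is handled by the non-algebraicity hypothesis together with the same congruence-invariance principle: if $\pi$ collapsed them, then every tuple congruent to $\bar a^{(0)}$ would map to the single point $\pi(\bar a^{(0)})=c$, but by homogeneity of the Hilbert space (transitivity of the isometry group on congruent tuples, extended to automorphisms of the monster fixing the base) every realization of $q$ arises this way, forcing $q$ to be realized by a unique point over the base — contradicting that $q$ is non-algebraic. So the argument reduces to carefully tracking how the isometries of the indiscernible subspace extend to automorphisms fixing the relevant parameters, which is precisely the content of the indiscernible-subspace machinery developed earlier in the section. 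Once that bookkeeping is in place, the final "strongly minimal set" consequence is immediate: a strongly minimal set is a non-algebraic definable (hence imaginary) set all of whose proper-distance pieces are algebraic, and in particular in a locally compact model it would be locally compact — more simply, any strongly minimal theory has locally compact models by Proposition~\ref{prop:loc-comp}-type reasoning, so an interpreted strongly minimal set yields an interpreted non-algebraic locally compact imaginary, contradicting the first part.
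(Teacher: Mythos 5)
There is a genuine gap, and the quickest way to see it is that your argument never actually uses the hypothesis that $T$ is asymptotically Hilbertian: everything you invoke (Dvoretzky--Milman, Fact~\ref{fact:main}, the existence of indiscernible subspaces via Theorem~\ref{thm:exist}) is available in \emph{any} Banach theory with infinite-dimensional models, but the conclusion is false at that level of generality. For instance, $\ell^2$ expanded by a distance predicate for the standard orthonormal basis is a Banach theory interpreting an infinite, uniformly discrete, non-algebraic definable set, i.e.\ a non-algebraic locally compact imaginary. So some step must fail, and it is steps (3)--(5): by Lemma~\ref{lem:imaginary-norm-form} the imaginary is a definable subset of a quotient of $H^\omega$, so its elements are images of \emph{arbitrary} tuples from the home sort, whereas your construction only ever sees images of tuples drawn from the unit sphere of one indiscernible subspace $E$. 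There is no way to ``arrange that $\pi(\bar a^{(0)})$ realizes $q$'' with $\bar a^{(0)}\in E$ (in the example above the domain of the interpretation need not meet $E$ at all), and the dichotomy in step (4) does not close: in the collapse case, constancy of $\pi$ on congruent tuples from $E$ says nothing about tuples outside $E$, and even for tuples conjugate to $\bar a^{(0)}$ over the base, homogeneity only gives that their images have the same \emph{type} as $\pi(\bar a^{(0)})$, not that they equal it, so non-algebraicity of $q$ is not contradicted.

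What the asymptotically Hilbertian hypothesis buys, and what the paper's proof uses, is control over \emph{all} of a minimal elementary extension rather than just an indiscernible piece of it. The paper first replaces the locally compact imaginary by a strongly minimal one over the approximately $\aleph_0$-saturated model (as in Proposition~\ref{prop:loc-comp}); then a minimal chain $\mathfrak{M}\prec\mathfrak{B}\prec\mathfrak{A}$ decomposes as $\mathfrak{A}=\mathfrak{M}\oplus V$ with $V$ two-dimensional Euclidean, and the connected rotation group of $V$ acts by automorphisms over $\mathfrak{M}$. Because the strongly minimal imaginary has a definite metric gap between $I(\mathfrak{B})$ and $I(\mathfrak{A})\smallsetminus I(\mathfrak{B})$, a rotation performed in small steps cannot move $I(\mathfrak{B})$ off itself, yet the quarter-turn carries $\mathfrak{B}$ to a distinct submodel with the same $I$-part, contradicting categoricity (no imaginary Vaughtian pairs). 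To salvage your approach you would need the whole model, not merely a subspace, to be ``asymptotically indiscernible'' in a sense strong enough to force the quotient map to factor through congruence types; making that precise is essentially a reformulation of the paper's rotation argument rather than an alternative to it.
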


\begin{proof}
Assume that $T$ has a non-algebraic locally compact imaginary. By the same argument as in 
Proposition \ref{prop:loc-comp} this implies that it has a strongly minimal
imaginary. We may assume that this is definable over the unique approximately
$\aleph_0$-saturated model $\mathfrak{M}$. Since $T$ is $\aleph_1$-categorical
this implies that every model is prime over this strongly minimal
imaginary. Let $\mathfrak{A}\succ\mathfrak{B}\succ\mathfrak{M}$ be
a pair of proper elementary extensions such that in each extension
the dimension of the strongly minimal set increases by $1$. Since
$\mathfrak{B}\succ\mathfrak{M}$ is a minimal extension it must be
the case that the vector space dimension of the home sort increases
by precisely $1$. Likewise since $\mathfrak{A}\succ\mathfrak{B}$
is a minimal extension it must be the case that vector space dimension
of the home sort increases by precisely $1$. Let $b\in\mathfrak{B}\smallsetminus\mathfrak{M}$
realize the unique type of an element of norm $1$ orthogonal to $\mathfrak{M}$.
Likewise let $a\in\mathfrak{A}\smallsetminus\mathfrak{B}$ realize the
unique type of an element of norm $1$ orthogonal to $\mathfrak{B}$.
It must be the case that $\mathfrak{A}=\mathfrak{M}\oplus V$ where
$V$ is a $2$-dimensional Hilbert space generated by the orthogonal
basis $\{a,b\}$. Find $\varepsilon>0$ small enough that $\inf\{d(x,y):x\in I(\mathfrak{M}),y\in I(\mathfrak{B})\smallsetminus I(\mathfrak{M})\}>\varepsilon$
and $\inf\{d(x,y):x\in I(\mathfrak{B}),y\in I(\mathfrak{A})\smallsetminus I(\mathfrak{B})\}>\varepsilon$,
where $I$ is the strongly minimal imaginary. Such an $\varepsilon$
must exist since $I$ is strongly minimal. Now find $\delta>0$ small
enough that if $\sigma$ is a home sort automorphism fixing $\mathfrak{M}$ and 
 satisfying $d(x,\sigma(x))<\delta$ for every $x$,
then the induced automorphism of $I$, $\sigma_{I}$, satisfies $d(x,\sigma_{I}(x))<\varepsilon$
for every $x$ (such a $\delta$ must exist). Now let $\sigma$ be
the automorphism of $\mathfrak{A}$ that fixes $\mathfrak{M}$ and
rotates $V$ by an angle $\frac{\pi}{2n}$ small enough that $d(x,\sigma(x))<\delta$
for every $x\in\mathfrak{A}$. Assume without loss of generality that $\sigma^{n}(a)=b$
and $\sigma^{n}(b)=-a$. Now if we look at $\sigma_{I}^{n}$, by construction
it must be the case that $\sigma_{I}^{n}(I(\mathfrak{B}))\subseteq I(\mathfrak{B})$
and $\sigma_{I}^{n}(I(\mathfrak{B}))\not\subseteq I(\mathfrak{M})$.
Therefore if we look at the structure $\mathfrak{N}=\sigma_{I}^{n}(\mathfrak{B})$,
it must be the case that $a\in\mathfrak{N}$ and $I(\mathfrak{\mathfrak{B}})=I(\mathfrak{N})$,
but since $T$ has no imaginary Vaughtian pairs this implies that
$\mathfrak{N}=\mathfrak{A}$, which is a contradiction. 
\end{proof}

\subsection{Indiscernible Subspaces} \label{sec:ind-subsp}

\begin{defn}
 Let $T$ be a Banach {theory}. Let $\mathfrak{M}\models T$ and let $A\subseteq \mathfrak{M}$ be some set of parameters. %and let $\Delta$ be a set of $A$-formulas. %Finally, let $p\in S_1(A)$ be a type.
 An \emph{indiscernible subspace over $A$} is a real subspace $V$ of $\mathfrak{M}$ such that for any $n<\omega$ and any $n$-tuples $\bar{b},\bar{c} \in V$, $\bar{b} \equiv_A \bar{c}$ if and only if $\bar{b} \cong \bar{c}$.

{\sloppy If $p$ is a type over $A$, then $V$ is an \emph{indiscernible subspace in $p$ (over $A$)} if it is an indiscernible subspace over $A$ and $b\models p$ for all $b\in V$ with $\left\lVert b \right\rVert = 1$.}
%\begin{itemize}
%\item A \emph{$\Delta$-indiscernible subspace over $A$} is a subspace $V$ of $\mathfrak{M}$ which is isometric to a real Hilbert space and has that for any $b_0,b_1,\dots,b_{n-1},c_0,c_1,\dots,c_{n_11} \in V$, we have $\varphi^\mathfrak{C}(\bar{b}) = \varphi^{\mathfrak{C}}(\bar{c})$, where $\mathfrak{C}$ is the monster model.

%\item An \emph{indiscernible subspace over $A$} is a $\Delta$-indiscernible subspace over $A$ with $\Delta$ the collection of all $A$-formulas.

%\item {\sloppy If $p$ is a type over $A$, then $V$ is a \emph{($\Delta$-)indiscernible subspace in $p$ (over $A$)} if it is a \mbox{($\Delta$-)indiscernible} subspace over $A$ and $b\models p$ for all $b\in V$ with $\left\lVert b \right\rVert = 1$.}
%\end{itemize}
\end{defn}

Note that, as we have defined it, an indiscernible subspace is a real subspace even if $T$ is a complex Banach theory. Also note that an indiscernible subspace in $p$ is not literally contained in the realizations of $p$, but rather has its unit sphere contained in the realizations of $p$. It might be more accurate to talk about ``indiscernible spheres,'' but we find the subspace terminology more familiar.

Indiscernible subspaces are very metrically regular.

\begin{prop}
Suppose $V$ is an indiscernible subspace in some Banach structure. Then $V$ is isometric to a real Hilbert space.

In particular, a real subspace $V$ of a Banach structure is indiscernible over $A$ if and only if it is isometric to a real Hilbert space and for every $n<\omega$ and every pair of $n$-tuples $\bar{b},\bar{c}\in V$, $\bar{b}\equiv_A\bar{c}$ if and only if for all $i,j<n$, $\left<b_i,b_j\right> = \left<c_i,c_j\right>$. %\hfill $\qed$
\end{prop}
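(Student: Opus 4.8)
The plan is to establish that an indiscernible subspace $V$ satisfies the parallelogram law, which by the Jordan--von Neumann theorem is equivalent to being (isometric to) an inner product space; completeness of $V$ as a metric space then upgrades this to a real Hilbert space. The key observation is that congruence of tuples, $\bar b \cong \bar c$, depends only on the pairwise distances (including distances to $\mathbf 0$), so indiscernibility forces the norm on $V$ to be ``homogeneous'' enough that norms of linear combinations are determined by finitely many scalar parameters in a way that matches the Hilbert space case.

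First I would fix two unit vectors $b_0, b_1 \in V$ with $\left\lVert b_0 - b_1 \right\rVert = t$ for a given $t \in [0,2]$. I would then use the Dvoretzky--Milman machinery — concretely, the existence of indiscernible subspaces being consistent and taking a model where an infinite dimensional one is realized, together with Fact~\ref{fact:main} or Fact~\ref{fact:DM-thm} — to locate inside $V$ (or inside an elementarily equivalent indiscernible subspace) a $2$-dimensional subspace whose norm is within $\e$ of Euclidean. On such a near-Euclidean piece the quantities $\left\lVert a b_0 + c b_1 \right\rVert$ for rational scalars $a,c$ are, up to $\e$, given by the Euclidean formula $\sqrt{a^2 + 2ac\cos\theta + c^2}$ where $\cos\theta$ is pinned down by $t$. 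The point is that $\left\lVert a b_0 + c b_1 \right\rVert$, as a value of the formula $s_{(a,c)}(x_0,x_1)$, is an element of $\tp(b_0 b_1 / A)$, hence by indiscernibility is a function only of $\bar b \cong$, i.e.\ only of $t$ (and $a,c$); so the near-Euclidean approximation, valid for arbitrarily small $\e$, forces the exact identity $\left\lVert a b_0 + c b_1 \right\rVert = \sqrt{a^2 + 2ac\cos\theta + c^2}$ on all of the span of $b_0, b_1$ in $V$. Taking $(a,c) = (1,1)$ and $(1,-1)$ and comparing gives the parallelogram law $\left\lVert b_0 + b_1 \right\rVert^2 + \left\lVert b_0 - b_1 \right\rVert^2 = 2\left\lVert b_0\right\rVert^2 + 2 \left\lVert b_1\right\rVert^2$; scaling removes the unit-vector restriction, and density of the rationals plus continuity of the norm finishes it for arbitrary pairs.

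Once $V$ is known to be an inner product space, I would record that the inner product is recovered from the norm by polarization, so $\left\langle b_i, b_j\right\rangle$ is a continuous function of the pairwise norms $\left\lVert b_i - b_j\right\rVert$ and $\left\lVert b_i\right\rVert, \left\lVert b_j\right\rVert$; this shows $\bar b \cong \bar c$ is equivalent to the Gram matrices of $\bar b$ and $\bar c$ agreeing, which is exactly the reformulation in the ``in particular'' clause. Conversely, if $V$ is isometric to a real Hilbert space, then $\bar b \cong \bar c$ iff the Gram matrices agree (again by polarization), so the two conditions ``$\bar b \equiv_A \bar c \iff \bar b \cong \bar c$'' and ``$\bar b \equiv_A \bar c \iff$ Gram matrices agree'' are the same condition; this gives the stated equivalence with no further work.

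The main obstacle I anticipate is the transfer step: an indiscernible subspace $V$ in a given model need not itself contain a near-Euclidean $2$-dimensional subspace — one only knows that indiscernible subspaces are \emph{consistent} (Theorem~\ref{thm:exist}), so the near-Euclidean geometry lives a priori in some other model. The fix is that the relevant statement — ``for all unit $x_0, x_1$ with $\left\lVert x_0 - x_1\right\rVert = t$, and all rational $a,c$, $\left\lVert a x_0 + c x_1\right\rVert = \sqrt{a^2 + 2ac\cos\theta + c^2}$'' — is a property of $\tp(b_0 b_1/A)$ inside the indiscernible-subspace type, which is determined purely by the congruence data $t$; so it suffices to verify it in \emph{one} indiscernible subspace realizing the same congruence type, and Fact~\ref{fact:main} (or just Dvoretzky's theorem) guarantees such a near-Euclidean witness exists in a suitable model. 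Making this ``one model suffices'' argument precise — i.e.\ that the type of a pair in an indiscernible subspace with prescribed mutual distance is unique — is the technical heart, but it is exactly the content of indiscernibility combined with the consistency theorem.
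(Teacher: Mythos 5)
Your route is genuinely different from the paper's, and for infinite-dimensional $V$ it does work: applying Dvoretzky's theorem \emph{inside} $V$ produces, for each $\e>0$, a $2$-dimensional $(1+\e)$-Euclidean subspace $Y_\e\subseteq V$; by the intermediate value theorem one can pick unit vectors $y_0,y_1\in Y_\e$ with $\lVert y_0-y_1\rVert=t=\lVert b_0-b_1\rVert$, and since this congruent pair lies in the \emph{same} indiscernible subspace as $(b_0,b_1)$, indiscernibility legitimately gives $\lVert ab_0+cb_1\rVert=\lVert ay_0+cy_1\rVert$, which is within $O_{a,c,t}(\e)$ of $\sqrt{a^2+2ac(1-t^2/2)+c^2}$; letting $\e\to 0$ forces equality, hence the parallelogram law, and polarization then gives the ``in particular'' clause exactly as you describe. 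The paper's proof is quite different and shorter: from $(a,b)\cong(b,a)$ it deduces that the linear swap map is an isometry of any $2$-dimensional subspace $V_0\subseteq V$, so the isometry group of $V_0$ acts transitively on its unit circle, and John's maximal-ellipsoid theorem then forces the unit ball of $V_0$ to be an ellipse. That argument needs no ambient infinite-dimensionality.

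That difference matters, because your proof has a genuine gap in the finite-dimensional case and your proposed fix for it fails. If $\dim V=n<\infty$ with $n\ge 2$, Dvoretzky provides no $(1+\e)$-Euclidean plane inside $V$ once $\e$ is small relative to $n$, so your near-Euclidean witness must live in some \emph{other} indiscernible subspace (in some other model). But indiscernibility is a purely intra-subspace condition: it identifies types of congruent tuples lying in one and the same $V$, and says nothing relating two different indiscernible subspaces. The claim you identify as ``the technical heart''---that the type of a pair at prescribed mutual distance in an indiscernible subspace is determined by the congruence data, uniformly over all indiscernible subspaces---is false for complete types (in $H\oplus_2 H$ with a predicate for the distance to the first summand, both summands are indiscernible subspaces over $\varnothing$, yet their unit vectors have different types), and restricted to the quantifier-free norm data it is precisely the conclusion of the proposition, so invoking it there is circular. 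To close the gap you need an argument that sees only the given $V$; the swap-isometry/John's theorem argument is one, or you can keep your Dvoretzky argument for $\dim V=\infty$ and supply a separate treatment of finite-dimensional $V$.
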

\begin{proof}
For any real Banach space $W$, if $\dim W \leq 1$, then $W$ is necessarily isometric to a real Hilbert space. If $\dim V \geq 2$, let $V_0$ be a $2$-dimensional subspace of $V$. A subspace of an indiscernible subspace is automatically an indiscernible subspace, so $V_0$ is indiscernible. For any two distinct unit vectors $a$ and $b$, indiscernibility implies that for any $r,s\in \mathbb{R}$, $\left\lVert r a + s b\right\rVert = \left\lVert s a + r b\right\rVert$, hence the unique linear map that switches $a$ and $b$ fixes $\left\lVert \cdot \right \rVert$. This implies that the automorphism group of $(V_0, \left\lVert \cdot \right \rVert)$ is transitive on the $\left\lVert \cdot \right\rVert$-unit circle. By John's theorem on maximal ellipsoids \cite{MR0030135}, the unit ball of $\left\lVert \cdot \right \rVert$ must be an ellipse, so $\left\lVert \cdot \right \rVert$ is a Euclidean norm.

Thus every $2$-dimensional real subspace of $V$ is Euclidean and so $(V,\left\lVert \cdot \right \rVert)$ satisfies the parallelogram law and is therefore a real Hilbert space.

The `in particular' statement follows from the fact that in a real Hilbert subspace of a Banach space, the polarization identity \cite[Prop.\ 14.1.2]{Blanchard2002} defines the inner product  in terms of a particular quantifier-free formula:
\begin{equation*}
\left<x, y\right> = \frac{1}{4}\left( \left\lVert x + y \right\rVert ^2 - \left\lVert x - y \right\rVert^2 \right).\footnotemark \qedhere
\end{equation*}
%  $${\left<x, y\right>_{\mathbb{C}} = \frac{1}{4}\left( \left\lVert x + y \right\rVert ^2 - \left\lVert x - y \right\rVert^2  + i\left\lVert x - iy \right\rVert^2 - i \left\lVert x + iy \right\rVert^2  \right).}\qedhere$$ 
\end{proof}
\footnotetext{There is also a polarization identity for the complex inner product: $${\left<x, y\right>_{\mathbb{C}} = \frac{1}{4}\left( \left\lVert x + y \right\rVert ^2 - \left\lVert x - y \right\rVert^2  + i\left\lVert x - iy \right\rVert^2 - i \left\lVert x + iy \right\rVert^2  \right).}$$}

As mentioned in \cite[Cor.\ 3.9]{SHELAH2019106738}, it follows from Dvoretzky's theorem that if $p$ is a wide type and $\mathfrak{M}$ is a sufficiently saturated model, then $p(\mathfrak{M})$ contains the unit sphere of an infinite dimensional subspace isometric to a Hilbert space. We refine this by showing that, in fact, an indiscernible subspace can be found.  %By Dvoretzky's theorem this is equivalent to requiring that $p(\mathfrak{M})$ contain an infinite dimensional subspace isometric to a real Hilbert space \cite{SHELAH2019106738}. %\footnote{The qualitative content of Dvoretzky's theorem can be stated very concisely in model theoretic terms: If $T$ is a Banach {theory} with infinite dimensional models then there is a model $\mathfrak{M}\models T$ with an infinite dimensional $\mathbb{R}$-subspace $V\subseteq \mathfrak{M}$ that is isometric to a real Hilbert space.}

\begin{thm} \label{thm:exist} % Originally had \geq \aleph_0 on \kappa, but its not necessary 
Let $A$ be a set of parameters in a Banach {theory} $T$ and let $p$ be a wide type over $A$. For any $\kappa$, there is $\mathfrak{M} \models T$ and a subspace $V\subseteq \mathfrak{M}$ of dimension $\kappa$ such that $V$ is an indiscernible subspace in $p$ over $A$. In particular, any $\aleph_0 + \kappa+|A|$-saturated $\mathfrak{M}$ will have such a subspace.
\end{thm}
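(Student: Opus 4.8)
The plan is to deduce Theorem~\ref{thm:exist} from the Milman-type result, Fact~\ref{fact:main}, via a compactness argument. The target is a model $\mathfrak{M} \models T$ containing a $\kappa$-dimensional real subspace $V$ whose unit sphere lies in the realizations of $p$ and in which $A$-types of finite tuples are determined by congruence type. By the usual saturation argument, it suffices to produce a single model with such a $V$ of dimension $\kappa$; saturation of $\mathfrak{M}$ of the stated cardinality will then guarantee one exists there. So I would set up a language of constants $\{v_i : i < \kappa\}$ for a putative orthonormal basis, together with constants for $A$, and write down a theory $T'$ extending $T \cup \mathrm{Diag}_{\mathrm{el}}(A)$ whose models are exactly (expansions of) Banach structures $\mathfrak{M} \supseteq A$ equipped with a sequence $(v_i)_{i<\kappa}$ that is orthonormal (i.e.\ $\|{\sum a_i v_i}\| = (\sum a_i^2)^{1/2}$ for all real tuples $\bar a$, expressed by the appropriate continuous conditions on the predicates $s_{\bar a}$), whose unit sphere vectors realize $p$, and which is ``fully congruence-indiscernible'' in the sense that for each $n$, each pair of $n$-tuples of finite rational linear combinations of the $v_i$ with the same congruence type, and each $A$-formula $\varphi$, the values of $\varphi$ on the two tuples agree.

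First I would check finite satisfiability of $T'$. A finite fragment mentions only finitely many basis constants $v_{i_0}, \dots, v_{i_{m-1}}$, finitely many conditions asserting approximate agreement of $\varphi(\bar b) \approx \varphi(\bar c)$ (to within some $\e$) for finitely many $A$-formulas $\varphi$ and finitely many pairs $(\bar b, \bar c)$ of congruent tuples drawn from a finite set of rational combinations, and finitely many conditions forcing the $v_i$'s to be approximately orthonormal and their unit combinations to approximately realize $p$. Here is where Fact~\ref{fact:main} does the work: since $p$ is wide, by \cite[Cor.\ 3.9]{SHELAH2019106738} (Dvoretzky's theorem) we can find, in a sufficiently saturated model, an infinite-dimensional subspace $H \cong$ Hilbert space with unit sphere inside $p(\mathfrak{M})$; restricting to a separable such $H$ and applying Fact~\ref{fact:main} to the finitely many uniformly continuous functions $f_\varphi = \varphi(\bar x)$ (composed with the map sending a tuple of unit-sphere points to the relevant linear combinations — note these compositions are uniformly continuous) yields, for the given $\e$ and any finite $k$, a $k$-dimensional subspace $V_0 \subseteq H$ on which all the $f_\varphi$ are $\e$-congruence-invariant. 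Taking $k$ large enough to accommodate the finitely many $v_{i_j}$ and choosing an orthonormal basis of $V_0$ interprets the finite fragment. Hence $T'$ is consistent.

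A model $\mathfrak{M}'$ of $T'$ then restricts to a model of $T$ containing $A$, with a $\kappa$-indexed orthonormal family $(v_i)$; let $V = \overline{\mathrm{span}}_{\mathbb{R}}(v_i : i < \kappa)$ (completing inside $\mathfrak{M}'$ if necessary — this is harmless since it lands in a subspace of the unit ball after rescaling, and $V$ is a real Hilbert space of dimension $\kappa$). The orthonormality conditions make $V$ isometric to a real Hilbert space, and the conditions of $T'$ say exactly that $\bar b \cong \bar c$ implies $\bar b \equiv_A \bar c$ for tuples of rational combinations; uniform continuity of formulas and density of rational combinations upgrade this to all tuples in $V$, and the converse implication ($\equiv_A \Rightarrow \cong$) is automatic since the predicates $s_{\bar a}$ and $d$ are part of the language. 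The unit-sphere-realizes-$p$ condition is likewise preserved under the completion by continuity. Thus $V$ is an indiscernible subspace in $p$ over $A$ of dimension $\kappa$, and the final ``in particular'' clause follows because any model of $T'$ has density character at most $|T| + \kappa + |A| = \aleph_0 + \kappa + |A|$ and, by the standard back-and-forth, embeds elementarily over $A$ into any $(\aleph_0 + \kappa + |A|)$-saturated model of $T$. The main obstacle is the finite-satisfiability step: one must be careful that the compositions of $A$-formulas with the affine coordinate maps are genuinely uniformly continuous functions on a power of the Hilbert unit sphere (so that Fact~\ref{fact:main} applies), and that a single choice of $k$-dimensional $V_0$ can be made to work simultaneously for all the finitely many $f_\varphi$ appearing in the fragment — which is fine since Fact~\ref{fact:main} applies to a single uniformly continuous $f$ and we can bundle finitely many into one by taking a max, or iterate and nest the subspaces.
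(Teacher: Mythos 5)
Your overall strategy---realize the indiscernible subspace as a model of a theory $T'$ with constants for an orthonormal basis, reduce to finite satisfiability, and discharge each finite fragment with Fact~\ref{fact:main}---is sound and is at bottom the same argument as the paper's, which merely organizes the limit step as nested ultraproducts rather than a single compactness argument over named constants. But there is one concrete gap in your finite-satisfiability step. A finite fragment of $T'$ mentions several formulas $\varphi_1,\dots,\varphi_m$, so you need a single $k$-dimensional subspace of the Hilbert subspace $H$ inside $p(\mathfrak{M})$ on which \emph{all} of $f_{\varphi_1},\dots,f_{\varphi_m}$ are simultaneously $\e$-congruence-invariant, and neither of your two suggestions extracts this from Fact~\ref{fact:main}. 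Taking a max is simply wrong: $\max_i f_{\varphi_i}$ being $\e$-invariant on congruent tuples says nothing about the individual $f_{\varphi_i}$. Iterating and nesting fails for a different reason: Fact~\ref{fact:main} requires the ambient space to be an \emph{infinite-dimensional} Hilbert space, so after the first application you are left with a finite-dimensional $V_1$ and cannot apply the fact again inside it.

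This is precisely the point the paper's proof is built around: it processes one formula at a time, and between consecutive formulas it takes an ultraproduct of the structures $(\mathfrak{N}_k,W_k)$ to convert the family of $k$-dimensional, $2^{-k}$-approximately invariant subspaces into a single \emph{infinite-dimensional}, exactly invariant subspace, inside which Fact~\ref{fact:main} can be applied to the next formula. To repair your argument you must either import that device, or first prove a finite-dimensional version of Fact~\ref{fact:main} (for every $k$, $n$, $\e$ and modulus of uniform continuity there is a $K$ such that every such function on $(S^{K-1})^n$ admits a $k$-dimensional $\e$-invariant subspace), which itself requires an ultraproduct or compactness argument of the same flavor; with that in hand the nesting does go through, since each later subspace sits inside the earlier ones and invariance is inherited by subspaces. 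Two smaller points to tighten: upgrading congruence-invariance from rational combinations of the $v_i$ to arbitrary tuples of $V$ requires approximating a congruent pair by a \emph{congruent} pair of rational tuples (equivalently, uniform continuity of $\varphi$ as a function of the Gram matrix, which the paper establishes via compactness of $B(W')^m$ for a $2m$-dimensional $W'$); and a model of $T'$ need not have density character $\aleph_0+\kappa+|A|$, so invoke L\"owenheim--Skolem before embedding into the saturated model.
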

\begin{proof}
For any set $\Delta$ of $A$-formulas, call a subspace $V$ of a model $\mathfrak{N}$ of $T_A$ \emph{$\Delta$-indiscernible in $p$} if every unit vector in $V$ models $p$ and for any $n<\omega$ and any formula $\varphi \in \Delta$ of arity $n$ and any $n$-tuples $\bar{b},\bar{c} \in V$ with $\bar{b} \cong \bar{c}$, we have $\mathfrak{N}\models \varphi(\bar{b}) = \varphi(\bar{c})$.

Since $p$ is wide, there is a model $\mathfrak{N}\models T$ containing an infinite dimensional subspace $W$ isometric to a real Hilbert space such that for all $b\in W$ with $\left\lVert b \right\rVert = 1$, $b\models p$. This is an infinite dimensional $\varnothing$-indiscernible subspace in $p$.

Now for any finite set of $A$-formulas $\Delta$ and formula $\varphi$, assume that we've shown that there is a model $\mathfrak{N}\models T$ containing an infinite dimensional $\Delta$-indiscernible subspace $V$ in $p$ over $A$. We want to show that there is a $\Delta \cup \{\phi\}$-indiscernible subspace in $V$. By Fact \ref{fact:main}, for every $k<\omega$ there is a $k$-dimensional subspace $W_{k}\subseteq V$ such that for any unit vectors $b_0,\dots,b_{\ell -1},c_0,\dots,c_{\ell-1}$ in $W_{k}$ with $\bar{b}\cong\bar{c}$, we have that $|\varphi^{\mathfrak{N}}(\bar{b})-\varphi^{\mathfrak{N}}(\bar{c})| < 2^{-k}$. If we let $\mathfrak{N}_k = (\mathfrak{N}_k,W_k)$ where we've expanded the language by a fresh predicate symbol $D$ such that $D^{\mathfrak{N}_k}(x)=d(x,W_k)$, then an ultraproduct of the sequence $\mathfrak{N}_k$ will be a structure $(\mathfrak{N}_\omega,W_\omega)$ in which $W_\omega$ is an infinite dimensional Hilbert space.

\emph{Claim:} $W_\omega$ is $\Delta\cup\{\varphi\}$-indiscernible in $p$. % (in the reduct $\mathfrak{N}_\omega$).

%and let $\psi$ have a modulus of uniform continuity of $\alpha_\psi$ (i.e.\ $|\psi(\bar{x})-\psi(\bar{y})| \leq \alpha_\psi(d(\bar{x},\bar{y})))$
\emph{Proof of claim.} Fix an $m$-ary formula $\psi \in \Delta \cup \{\varphi\}$ and let $f(k)=0$ if $\psi \in \Delta$ and $f(k)=2^{-k}$ if $\psi = \varphi$. For any $k \geq 2m$ and $b_0,\dots,b_{m-1},c_0,\dots,c_{m-1}$ in the unit ball of $W_k$, there is a $2m$ dimensional subspace $W^\prime \subseteq W_k$ containing $\bar{b},\bar{c}$. By compactness of $B(W^\prime)^m$ (where $B(X)$ is the unit ball of $X$), we have that for any $\e > 0$ there is a $\delta(\e) > 0$ such that if $|\left<b_i,b_j \right> - \left<c_i,c_j \right>| < \delta(\e)$ for all $i,j < m$, then $|\psi^{\mathfrak{N}}(\bar{b})-\psi^{\mathfrak{N}}(\bar{c})| \leq f(k) + \e$. Note that we can take the function $\delta$ to only depend on $\psi$, specifically its arity and modulus of continuity, and not on $k$, since $B(W^\prime)^m$ is always isometric to $B(\mathbb{R}^{2m})^m$. Therefore, in the ultraproduct we will have $(\forall i,j<m)|\left<b_i,b_j \right> - \left<c_i,c_j \right>| < \delta(\e) \Rightarrow |\psi^{\mathfrak{N}}(\bar{b})-\psi^{\mathfrak{N}}(\bar{c})| \leq \e$ and thus  $\bar{b}\cong \bar{c} \Rightarrow \psi^{\mathfrak{N}_\omega}(\bar{b}) = \psi^{\mathfrak{N}_\omega}(\bar{c})$, as required. \hfill \hfill $\square_{\textit{Claim}}$

Now for each finite set of $A$-formulas we've shown that there's a structure $(\mathfrak{M}_\Delta,V_\Delta)$ (where, again, $V_\Delta$ is the set defined by the new predicate symbol $D$) such that $\mathfrak{M}_\Delta \models T_A$ and $V_\Delta$ is an infinite dimensional $\Delta$-indiscernible subspace in $p$. By taking an ultraproduct with an appropriate ultrafilter we get a structure $(\mathfrak{M},V)$ where $\mathfrak{M}\models T_A$ and $V$ is an infinite dimensional subspace. $V$ is an indiscernible subspace in $p$ over $A$ by the same argument as in the claim.

Finally note that by compactness we can take $V$ to have arbitrarily large dimension and that any subspace of an indiscernible subspace in $p$ over $A$ is an indiscernible subspace in $p$ over $A$, so we get the required result.
\end{proof}

Together with the fact that wide types always exist in Banach theories  with infinite dimensional models \cite[Thm.\ 3.7]{SHELAH2019106738}, we get a corollary. % unsurprising, but non-trivial corollaries. %REMOVE CITATION???? \cite[Thm.\ 3.7]{SHELAH2019106738} (namely the empty partial type)

\begin{cor} \label{cor:ind-subsp}
Every Banach {theory} with infinite dimensional models has an infinite dimensional indiscernible subspace in some model. In particular, every such theory has an infinite indiscernible set, namely any orthonormal basis of an infinite dimensional indiscernible subspace.
\end{cor}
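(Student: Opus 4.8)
The plan is to chain together the two principal results already established. First I would invoke the Shelah--Usvyatsov existence theorem for wide types: since $T$ has infinite dimensional models, \cite[Thm.\ 3.7]{SHELAH2019106738} supplies a complete wide type $p$ over $\varnothing$. Then I would apply Theorem~\ref{thm:exist} with the parameter set $A=\varnothing$ and, say, $\kappa=\aleph_0$, which produces a model $\mathfrak{M}\models T$ together with an infinite dimensional subspace $V\subseteq\mathfrak{M}$ that is an indiscernible subspace in $p$ over $\varnothing$. This already yields the first assertion of the corollary.

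For the ``in particular'' clause I would use the structure proposition preceding Theorem~\ref{thm:exist}, which guarantees that $V$, being an indiscernible subspace, is isometric to a real Hilbert space; fix an orthonormal basis $\{e_i\}_{i<\omega}$ of an infinite dimensional subspace of $V$ (one exists since $V$ is infinite dimensional). The key observation is that any two $n$-tuples of \emph{distinct} basis vectors are congruent in the sense of Definition~\ref{defn:main-defn}: for distinct indices one has $\left\lVert e_{i_k}-e_{i_\ell}\right\rVert=\sqrt{2}$, while $\left\lVert e_{i_k}-\mathbf{0}\right\rVert=\left\lVert e_{i_k}\right\rVert=1$, so the array of pairwise distances (including the distinguished zero vector adjoined in the definition of congruence) is the same for every such tuple, irrespective of the order of enumeration. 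By the definition of indiscernible subspace, congruent tuples have the same type over $\varnothing$, so $\{e_i\}_{i<\omega}$ is a totally indiscernible set, and the same computation applies verbatim to any orthonormal basis of any infinite dimensional indiscernible subspace.

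I do not anticipate a genuine obstacle here: this is essentially a bookkeeping corollary of Theorem~\ref{thm:exist} together with the structure theorem for indiscernible subspaces. The only two points deserving a word of care are (a) confirming that \cite[Thm.\ 3.7]{SHELAH2019106738} indeed provides a wide type over the empty set, so that no auxiliary parameters intrude into the resulting indiscernible set, and (b) the elementary but explicit verification that an orthonormal family is congruent in the technical sense of Definition~\ref{defn:main-defn}, which simultaneously handles the indiscernible-sequence and the stronger totally-indiscernible-set formulations.
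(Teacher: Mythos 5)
Your proposal is correct and follows exactly the route the paper takes: the corollary is obtained by combining the existence of wide types from \cite[Thm.\ 3.7]{SHELAH2019106738} with Theorem~\ref{thm:exist}, and the ``in particular'' clause is precisely the observation that distinct orthonormal vectors in an indiscernible subspace form congruent tuples (pairwise distance $\sqrt{2}$, norm $1$) and hence an indiscernible set. Your write-up is in fact more explicit than the paper's one-sentence justification, but the content is the same.
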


%It should be noted that Fact \ref{fact:main} as well as Theorem \ref{thm:exist} also follow from the extreme amenability of $\mathrm{Aut}(S^\infty)$ and a compactness argument.

\subsection{Minimal{ }Wide Types}

\subsubsection{Characterization of Morley Sequences in Terms of Indiscernible Subspaces}
\label{sec:mor-seq-char-ind-sub}

%The following concept was introduced by Shelah and Usvyatsov in \cite{SHELAH2019106738}, in which, among other things, they show that in a stable Banach {theory} every wide type has a minimal{ }wide extension. %For reasons that will be evident in Section \ref{sec:SMWS}, we have chosen to hyphenate `minimal{ }wide.'

%\begin{defn}
%Let $T$ be a Banach {theory} with monster model $\mathfrak{C}$. A type $p \in S_1(A)$ is \emph{minimal{ }wide} if it is wide (i.e.\ $p(\mathfrak{M})$ for $\mathfrak{M} \supseteq A$ consistently contains the unit sphere of an infinite dimensional subspace) and there is a unique wide extension of $p$ over $\mathfrak{C}$.
%\end{defn}

Compare the following Theorem \ref{thm:main} with this fact in discrete logic: If $p$ is a minimal type (i.e.\ $p$ has a unique global non-algebraic extension), then an infinite sequence of realizations of $p$ is a Morley sequence in $p$ if and only if it is an indiscernible sequence.

Here we are using the definition of Morley sequence for (possibly unstable) $A$-invariant types: Let $p$ be a global $A$-invariant type, and let $B\supseteq A$ be some set of parameters. A sequence $\{c_i\}_{i< \kappa}$ is a \emph{Morley sequence in $p$ over $B$} if for all $i< \kappa$, $\mathrm{tp}(c_i/Bc_{<i}) = p \upharpoonright Bc_{<i}$. Note that this definition of Morley sequence agrees with the standard definition for types that are stable in the sense of Lascar and Poizat (as described in \cite[Def.\ 4.1]{SHELAH2019106738}).

\begin{thm} \label{thm:main}
Let $p$ be a minimal{ }wide type over the set $A$. For $\kappa\geq \aleph_0$, a set of realizations $\{b_i\}_{i<\kappa}$ of $p$ is a Morley sequence in (the unique global minimal wide extension of) $p$ if and only if it is an orthonormal basis of an indiscernible subspace in $p$ over $A$. 
\end{thm}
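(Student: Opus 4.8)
The plan is to prove the two implications separately, establishing the `if' direction first since the `only if' direction will bootstrap off it. Throughout I would write $q$ for the unique global wide extension of $p$ and note first that $q$ is $A$-invariant: any $\sigma\in\mathrm{Aut}(\frk{C}/A)$ sends $q$ to a global wide extension of $\sigma(p)=p$, which must be $q$ again by uniqueness.

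For the `if' direction, suppose $\{b_i\}_{i<\kappa}$ is an orthonormal basis of an indiscernible subspace $V$ in $p$ over $A$; by the earlier proposition $V$ is (isometric to) a real Hilbert space and every unit vector of $V$ realizes $p$. I would fix $n<\kappa$, set $U_n=\cspan\{b_i:i\geq n\}$ (infinite dimensional since $\kappa\geq\aleph_0$), and observe that for any unit vector $u=\sum_{i\geq n}a_ib_i\in U_n$ the tuple $(b_0,\dots,b_{n-1},u)$ has the same Gram matrix as $(b_0,\dots,b_{n-1},b_n)$, hence is congruent to it; indiscernibility over $A$ then gives $u\equiv_{Ab_{<n}}b_n$. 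So the unit sphere of $U_n$ consists of realizations of $\mathrm{tp}(b_n/Ab_{<n})$, making that type wide. Since it also extends $p=\mathrm{tp}(b_n/A)$, the uniqueness clause in the definition of `minimal wide' forces it to coincide with the unique wide extension of $p$ to $Ab_{<n}$, which is $q\upharpoonright Ab_{<n}$ (the latter is wide because its realizations include those of $q$). As this holds for every $n$, $\{b_i\}_{i<\kappa}$ is a Morley sequence in $q$ over $A$.

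For the `only if' direction, suppose $\{b_i\}_{i<\kappa}$ is a Morley sequence in $q$ over $A$. I would invoke Theorem~\ref{thm:exist} to get an indiscernible subspace $W$ in $p$ over $A$ of dimension $\kappa$ with orthonormal basis $\{w_i\}_{i<\kappa}$; by the `if' direction this $\{w_i\}$ is itself a Morley sequence in $q$ over $A$. Because $q$ is $A$-invariant, the type over $A$ of a Morley sequence in $q$ of a fixed length is uniquely determined (transfinite induction, using that $q\upharpoonright Ac_{<\alpha}$ depends only on $\mathrm{tp}(c_{<\alpha}/A)$), so $(b_i)_{i<\kappa}\equiv_A(w_i)_{i<\kappa}$ and there is $\sigma\in\mathrm{Aut}(\frk{C}/A)$ with $\sigma(w_i)=b_i$. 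Since $\sigma$ is an automorphism of the Banach structure it preserves norms and all linear-combination symbols, so it is an isometry carrying $W=\cspan\{w_i\}$ onto $\cspan\{b_i\}$; transporting the defining properties of $W$ along $\sigma$ then shows $\cspan\{b_i\}$ is an indiscernible subspace in $p$ over $A$ of which $\{b_i\}$ is an orthonormal basis.

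I expect the `if' direction to be routine Gram-matrix bookkeeping plus the uniqueness of wide extensions, so the main obstacle is the `only if' direction: one needs to upgrade the mere \emph{existence} of an indiscernible subspace to the statement that this particular Morley sequence spans one. The device that makes this work is that any two Morley sequences of the same length in the $A$-invariant type $q$ have the same type over $A$, which lets us move the Hilbert-space geometry and the indiscernibility of a known good example by an $A$-automorphism; the point to be careful about is that this automorphism, being an automorphism of the Banach structure with its linear-combination function symbols, genuinely respects the linear structure, so that it sends one closed span onto the other.
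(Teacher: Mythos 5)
Your proposal is correct and follows essentially the same route as the paper: the forward direction via the tail-span argument (every unit vector in $\cspan\{b_i : i\geq n\}$ is congruent to, hence has the same type over $Ab_{<n}$ as, $b_n$, so that type is wide and by minimal wideness equals $q\upharpoonright Ab_{<n}$), and the converse via existence of an indiscernible subspace plus the fact that Morley sequences of equal length in the invariant type $q$ have the same type over $A$. The only difference is that the paper dispatches the converse in one sentence while you spell out the transporting automorphism explicitly.
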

\begin{proof}
All we need to show is that an orthonormal basis of an indiscernible subspace in $p$ over $A$ is a Morley sequence in $p$. The converse will follow from the fact that all Morley sequences in a fixed invariant type of the same length have the same type along with the fact that minimal wide types have a unique global wide extension, which is therefore invariant.

Let $V$ be an indiscernible subspace in $p$ over $A$. Let $\{e_i\}_{i<\kappa}$ be an orthonormal basis of $V$. By construction, $\mathrm{tp}(e_0/A) = p$. Let $q$ be the global minimal wide extension of $p$. Assume that for some $j<\kappa$ we've shown for all $i<j$ that $\mathrm{tp}(e_i/Ae_{<i}) = q \upharpoonright Ae_{<i}$. Let $W = \cspan(e_{\geq j})$. Since $V$ is an indiscernible subspace over $A$, for all unit norm $b,c\in W$, $b\equiv_{Ae_{<j}} c$, so in particular $\mathrm{tp}(b/Ae_{<j})$ is wide. Since $p$ is minimal{ }wide we must have $\mathrm{tp}(b/Ae_{<j}) = q\upharpoonright Ae_{<j}$. Therefore $\{e_i\}_{i<\kappa}$ is a Morley sequence.
\end{proof}

What is unclear at the moment is the answer to this question: 

\begin{quest}
If $p$ is a minimal wide type over the set $A$, is it stable in the sense of \cite[Def.\ 4.1]{SHELAH2019106738}? In other words, is every type $q$ extending $p$ over a model $\mathfrak{M}\supseteq A$  a definable type?
\end{quest}

\subsubsection{Strongly Minimal Wide Types} %[NOT DONE]}
\label{sec:stronk-min-wide-typ}
At the moment the contents of this section are little more than an observation, but hopefully in the future it may be a fruitful one.

%GIVE EXAMPLE OF MINIMAL WIDE TYPE THAT IS NOT STRONGLY MINIMAL WIDE?

%The orthogonal type in a Hilbert space with a named basis ought to be minimal wide but not strongly minimal wide. I already have this counterexample written out for a different reason. (Wait, why is it not strongly minimal wide? Why not just take the whole home sort as the definable set?)

In \cite{SHELAH2019106738}, Shelah and Usvyatsov construct minimal wide types in an arbitrary stable theory.\footnote{More specifically, they showed that any stable wide type has a minimal wide extension.} This is analogous to the construction of minimal types in discrete stable theories (i.e.\ fork until you do not have a non-algebraic forking extension), but just as with that construction, the method in \cite{SHELAH2019106738} does not give precise control over the resulting type.

There is a natural analog of strongly minimal types in the context of wide types. The relevant notion to generalize is Definition \ref{def:strongly-minim-types}. This gives the following.

\begin{defn}
  In a Banach theory $T$, a global type $p \in S_1(\frk{C})$ is \emph{strongly minimal wide} if it is $d$-atomic in the set of wide global types.

  An arbitrary type is \emph{strongly minimal wide} if it has a unique wide global extension and that extension is strongly minimal wide. 
\end{defn}

Just as with minimal and strongly minimal types in discrete logic, in a stable, non-$\omega$-stable Banach theory (with infinite dimensional models) there may fail to be any strongly minimal wide types, although there are always minimal wide types. Even in an $\omega$-stable theory, there may be minimal wide types that are not strongly minimal wide. Nevertheless, we do have following.
\begin{prop}
  Let $T$ be an $\omega$-stable Banach theory. Any open subset $U$ of $S_1(\frk{C})$ (the space of global types) containing a wide type contains a strongly minimal wide type.
 % If $T$ is an $\omega$-stable Banach theory and $U$ is any open subset of $S_1(\frk{C})$ (the space of global types) which contains a wide type, then $U$ contains a strongly minimal wide type.
\end{prop}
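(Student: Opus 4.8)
The plan is to mimic the classical proof that in a totally transcendental theory every non-algebraic open set contains a minimal type, but carried out inside the closed set $W$ of wide global types rather than inside all of $S_1(\frk{C})$. First I would observe that the set $W \subseteq S_1(\frk{C})$ of wide global types is closed: widingness is a property expressible as a (partial) type condition because ``$p$ is wide'' is equivalent to the set of realizations of $p$ consistently containing the unit sphere of an infinite dimensional real subspace, and by Theorem~\ref{thm:exist} (existence of indiscernible subspaces) this is witnessed by a single consistent type of an $\omega$-indexed configuration; an ultralimit of wide types is wide. (Alternatively, one invokes the characterization of wide types from \cite{SHELAH2019106738}.) The key point is that $W$ is closed and hence a compact topometric space in its own right.

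Next I would use $\omega$-stability. Since $T$ is $\omega$-stable, it is totally transcendental, hence by Proposition~\ref{prop:strong_min_and_cat_in_cont:3} dictionaric, and more importantly $S_1(\frk{C})$ is CB-analyzable in the sense of \cite{BenYaacov2008}: the $(f,\varepsilon)$-Cantor--Bendixson analysis terminates. The relevant consequence is that in any non-empty closed subset of a CB-analyzable type space, the relatively $d$-atomic points are dense. Applying this to the non-empty closed set $U \cap W$ (non-empty by hypothesis, since $U$ contains a wide type), I get a point $q \in U \cap W$ that is relatively $d$-atomic in $U \cap W$. Shrinking, by passing to a relatively clopen (or just relatively open with small diameter) neighborhood of $q$ inside $W$ contained in $U$, one upgrades this to $q$ being relatively $d$-atomic in $W$ itself: here one uses that $q$ has relatively open neighborhoods in $U\cap W$ of arbitrarily small diameter, and since $U$ is open in $S_1(\frk C)$, such a neighborhood is also relatively open in $W$. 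Thus $q$ is $d$-atomic in the set $W$ of wide global types, which is precisely the definition of $q$ being strongly minimal wide.

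The main obstacle I anticipate is the first step: carefully justifying that $W$, the set of wide global types, is closed (equivalently, that widingness is a closed condition on global types). The naive definition of wide type quantifies existentially over infinite-dimensional configurations, which is not obviously type-definable; one must use Theorem~\ref{thm:exist} or the Dvoretzky--Milman machinery to reduce ``$p$ is wide'' to the consistency of a fixed countable type (the type of an $\omega$-dimensional indiscernible subspace all of whose unit vectors realize $p$), and then closedness follows from compactness of the type space. A secondary technical point is confirming that the CB-analyzability / density-of-$d$-atomic-points argument, which is developed for full type spaces, transfers verbatim to the closed subspace $W$; this should be routine since the CB derivative is defined relative to any closed set and the analysis of a closed subset terminates whenever the analysis of the ambient space does. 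Once these two points are in hand, the extraction of $q$ and the verification that it lies in $U$ and is strongly minimal wide are immediate from the definitions.
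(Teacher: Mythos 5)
Your argument is correct and is essentially the paper's proof, which simply observes that the statement amounts to the density of the relatively $d$-atomic points in the closed set $W$ of wide global types and cites the CB-analyzability machinery of \cite{BenYaacov2008} (Proposition 3.7 there) for exactly that density. One cosmetic slip: $U\cap W$ is relatively open, not closed, in $W$, so you should apply the density result to the closed set $W$ itself and then intersect with the open set $U$ --- which is what your ``upgrade'' step effectively does anyway.
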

\begin{proof}
%  First we need to argue that an open set $U \subseteq S_1(\frk{C})$ has that $U(\frk{C})$ is wide 
  It is not hard to see that the stated proposition is equivalent to saying that strongly minimal wide types are dense in the set of wide types. This follows immediately from Proposition 3.7 in \cite{BenYaacov2008}. 
\end{proof}

Given a strongly minimal wide type $p$,  by Proposition \ref{prop:ext} we can find a definable set $D \subseteq S_1(A)$ such that $D\cap W = \{p\}$, where $W$ is the (closed) set of wide types in $S_1(A)$. Since the set of norm $1$ types is always definable (by the formula $1-\lVert x \rVert$), we can require that $D(x) \models \lVert x \rVert = 1$ as well.

The issue with continuing the analogy with strongly minimal types is that while there is an easy characterization of definable sets containing a unique non-algebraic type which happens to be strongly minimal, there is not a clear analogous characterization of definable sets containing a unique wide type which happens to be strongly minimal wide.

Without the requirement that $D(x)\models \|x\| =1$, we have a counterexample.

\begin{prop}
  Let $T$ be a Banach theory. For any zeroset $F(x) \models \|x\| = 1$, there is a definable set $D(x)$ such that $D(x) \cap \cset{\|x\| = 1} = F(x)$.
\end{prop}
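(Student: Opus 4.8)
The plan is to take the candidate $D := \cset{\psi(x) + \left\lVert x \right\rVert \le 1}$, where $\psi$ is a $[0,1]$-valued formula with $F = \cset{\psi = 0}$. Since every type in $F$ satisfies $\left\lVert x \right\rVert = 1$, we have $\cset{\psi = 0}\cap\cset{\left\lVert x \right\rVert = 1} = F$, and therefore
\[
D \cap \cset{\left\lVert x \right\rVert = 1} = \cset{\psi(x) \le 0}\cap\cset{\left\lVert x \right\rVert = 1} = F,
\]
which is the required intersection property. The set $D$ is a zeroset, hence closed, so by the standard criterion for definability (Definition~\ref{defn:strong_min_and_cat_in_cont:1}(i), applied with the whole type space as the ambient closed set) it remains only to show that for every $\e > 0$ there is an open $U$ with $D \subseteq U \subseteq D^{<\e}$, equivalently that $D \subseteq \tint D^{<\e}$.

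The two facts that make this work are: (a) $\mathbf 0 \in D$ always, since $\psi$ is $[0,1]$-valued; and (b) $\psi$ is uniformly continuous with respect to the metric, so we may fix a modulus $\omega$ and then a small $s_0 > 0$ with $s_0 - \omega(s_0) > 0$. Now fix $q \in D$ and $\e > 0$; we build a logic-open $U \ni q$ inside $D^{<\e}$. If $q \models \left\lVert x \right\rVert < \e/2$, take $U = \cset{\left\lVert x \right\rVert < \e/2}$: every $q' \in U$ has $d(q',\mathbf 0) < \e/2$ and $\mathbf 0 \in D$, so $q' \in D^{<\e}$. Otherwise $q \models \left\lVert x \right\rVert \ge \e/2$; shrink $s_0$ so that also $s_0 < \e/2$. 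Since $\psi(x) + \left\lVert x \right\rVert$ is continuous on the type space and at most $1$ at $q$, and $\left\lVert x \right\rVert \ge \e/2$ at $q$, there is a logic-open $U \ni q$ on which $\psi(x) + \left\lVert x \right\rVert < 1 + \left(s_0 - \omega(s_0)\right)$ and $\left\lVert x \right\rVert > s_0$. Given $q' \in U$ with a realization $b \models q'$: either $\psi(b) + \left\lVert b \right\rVert \le 1$, so $b \in D(\mathfrak C)$; or, setting $c := \bigl(1 - s_0/\left\lVert b \right\rVert\bigr) b$ (a legitimate scalar multiplication, as the scalar lies in $[0,1]$), we get $\left\lVert c \right\rVert = \left\lVert b \right\rVert - s_0$ and $\psi(c) \le \psi(b) + \omega(s_0)$, whence $\psi(c) + \left\lVert c \right\rVert \le \bigl(\psi(b) + \left\lVert b \right\rVert\bigr) + \omega(s_0) - s_0 < 1$, i.e.\ $c \in D(\mathfrak C)$, while $d(b,c) = s_0 < \e$. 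Either way $q' \in D^{<\e}$, so $U \subseteq D^{<\e}$ and $D$ is definable.

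The main point to get right — and the reason this statement is genuine content rather than a triviality — is the choice of $D$. The ``honest cone'' $\{\tp(\lambda a) : a \models F,\ \lambda \in [0,1]\}$ over $F$ need \emph{not} be a definable set, precisely in the situation (the one that makes the counterexample mentioned above necessary) where $F$ fails to be definable relative to $\cset{\left\lVert x \right\rVert = 1}$: a point of $F$ that is a logic-limit of sphere types far from $F$ obstructs definability of the cone. Replacing the cone by the $\psi$-weighted version $\cset{\psi(x) + \left\lVert x \right\rVert \le 1}$ automatically thickens it as $\left\lVert x \right\rVert$ decreases, and that thickening is exactly what rescues definability. One must also phrase $D$ using only $\psi$ and the norm formula $\left\lVert x \right\rVert$, since the Banach structure carries scalar multiplication by fixed scalars only; in particular an expression such as $\sigma_{\left\lVert x \right\rVert}(y)$ is not available as a term, so one cannot directly normalize a vector inside a formula. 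Everything else is the routine continuity bookkeeping indicated above.
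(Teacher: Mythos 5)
Your candidate $D=\cset{\psi(x)+\lVert x\rVert\le 1}$ has the right intersection with the unit sphere and is closed, but the definability argument has a genuine gap at the step ``fix a modulus $\omega$ and then a small $s_0>0$ with $s_0-\omega(s_0)>0$.'' A formula in continuous logic is only uniformly continuous, not Lipschitz; its modulus can be, say, $\omega(t)=\min\{1,\sqrt{t}\}$, for which $\omega(t)>t$ for \emph{every} $t\in(0,1)$, so no such $s_0$ exists. (Even for Lipschitz $\psi$ with constant $\ge 1$ the inequality fails; rescaling $\psi$ by a constant repairs only the Lipschitz case, not the general one, since for $\omega(t)=\sqrt t$ the condition $\omega(s_0)<Cs_0$ forces $s_0>1/C^2$ and hence cannot be met for all $\e$ with a single $C$.) Without $\omega(s_0)<s_0$ the key estimate $\psi(c)+\lVert c\rVert\le(\psi(b)+\lVert b\rVert)+\omega(s_0)-s_0<1$ collapses, and the problem is structural rather than cosmetic: moving a point distance $s$ toward the origin buys you exactly $s$ in the $\lVert x\rVert$ term (the norm is $1$-Lipschitz) but can cost you $\omega(s)\gg s$ in the $\psi$ term, and there is no guarantee of a better direction to move in. So the additive combination $\psi+\lVert x\rVert$ does not in general define a definable set, and I do not see how to repair the argument for this choice of $D$.

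The paper's construction avoids this precisely by making the ``target'' radially constant: it takes $\varphi$ with $\cset{\varphi=1}=F$ and sets $D=\{\mathbf 0\}\cup\{p: p\models 0<\lVert x\rVert\le\varphi(x/\lVert x\rVert)\}$. Scaling a realization $b$ down to $c=b\cdot\min\{1,\lVert b\rVert^{-1}\varphi(b/\lVert b\rVert)\}$ leaves $\varphi(x/\lVert x\rVert)$ unchanged (since $c/\lVert c\rVert=b/\lVert b\rVert$) while decreasing the norm to exactly the right value, which yields $d(q,D)\le\lVert q\rVert\dotdiv\varphi(q/\lVert q\rVert)$ --- a function of $q$ that is continuous away from $\mathbf 0$ and vanishes on $D$, from which definability follows. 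Your stated reason for rejecting this route --- that $x/\lVert x\rVert$ cannot be normalized inside a formula --- is a red herring: the paper concedes that $\varphi(x/\lVert x\rVert)$ is not a formula, but $D$ does not need to be cut out by a formula; it only needs to be a well-defined closed subset of the type space satisfying the criterion $D\subseteq\tint D^{<\e}$, which is verified directly. Your closing intuition is likewise inverted: it is the radial (cone-like) construction that is definable, and the additive thickening that is not.
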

\begin{proof}
  
Let $\varphi(x)$ be a $[0,1]$-valued formula such that $\cset{\varphi(x) = 1} = F(x)$.

We need to show that the set
\[
  D \coloneqq \{\mathbf{0}\}\cup\left\{p\in S_1(A): p(x)\models 0 < \lVert  x \rVert \leq   \varphi\left(\frac{x}{\lVert x \rVert}\right)\right\}
\]
is definable.\footnote{Note that while $\varphi\left( \frac{x}{\lVert  x \rVert} \right)$ is not technically a formula, $D$ is nevertheless well defined.}

 First we need to show that $D$ is closed. Let $\{q_i\}_{i \in I}$ be a net limiting to some type in $D$. There are two cases. Either $\lim_{i \in I}\lVert q_i \rVert$ is $0$ or it is strictly positive. The first case is covered by the fact that $\mathbf{0} \in D$, so assume that the second case holds. There must be some $\e >0$ and some $i\in I$ such that for all $j \geq i$, $\lVert  q_j \rVert > \e $. So now we have that $\lim_{i \in I}q_i \in D$ if and only if $\lim_{i \in I}\lVert q_i \rVert \leq \lim_{i \in I}   \varphi\left( \frac{x}{\lVert x \rVert} \right)$ by the continuity of $\varphi\left( \frac{x}{\lVert x \rVert} \right)$ away from $\mathbf{0}$.

To establish that $D$ is definable we need to show that if $\{q_i\}_{i\in I}$ limits to a type in $D$, then $\lim_{i\in I}d(q_i,D)=0$. If $\{q_i\}_{i\in I}$ limits to $\mathbf{0}$, then $d(q_i,D)\leq d(q_i,\mathbf{0})=\lVert  q_i \rVert \rightarrow 0$ as well. If $\{q_i\}$ is limiting to a point other than $\mathbf{0}$, then $\{\lVert q_i \rVert \}_{i\in I}$ is eventually uniformly positive. So there is an $i\in I$ such that for all $j \geq i$,
$$d(q_j,D)\leq d\left(q_j,q_j\min\left\{1,\lVert q_j \rVert^{-1}\varphi\left( \frac{q_j}{\lVert q_j \rVert} \right)\right\} \right)  = \lVert q_j \rVert \dotdiv   \varphi\left( \frac{q_j}{\lVert q_j \rVert} \right) . $$
We have already established that this quantity must go to $0$ if $\{q_i\}_{i\in I}$ is limiting to $D\smallsetminus \{\mathbf{0}\}$, so we have that $D$ is definable.% \hfill $\square_{\text{claim}}$

Now by construction we have that $D(x)\cap \cset{\varphi(x)=0}=F(x)$.
%Now by construction we have that $q$ is the only global type in $D$ with norm $1$, so it is certainly the only wide global type in $D$ as well.
\end{proof}

This implies that given a minimal wide type $p$ we can always find a definable set $D$ such that $D\cap \cset{\|x\|=1}=\{p\}$.

A natural attempt at a characterization would be definable sets which are wide but for which
\begin{itemize}
\item[] for every formula $\varphi(x)$ and every $\e>0$, there is an $n<\omega$ and a $\delta>0$ such that if a finite dimensional subspace $V$ with dimension at least $n$ has $\dinf(a,D) < \delta$ for all $a\in V$ of norm $1$, then for any $a,b \in V$ with norm $1$, we have that $|\varphi(a)-\varphi(b)|\leq \e$.
\end{itemize}
But it is only clear that this establishes that $D$ contains a unique wide type (which is therefore minimal wide).

The fundamental problem is that in general if $D$ is a definable set and $F$ is a closed set, then $D\cap F$ may fail to be relatively definable in $F$, in the sense of Definition~\ref{defn:strong_min_and_cat_in_cont:1}. This is of course in opposition to the behavior of relative definability in discrete logic. 

Even beyond this, what we really want is $D$ to be a nice definable subspace containing a unique wide type that happens to be strongly minimal wide, but it is still entirely unclear that this is always possible.

This all leaves the following questions.
\begin{quest}
  If $D(x)$ is a definable set such that $D(x)\models \|x \| = 1$ and $D(x)$ contains a unique global wide type $p$, is $p$ strongly minimal wide?
\end{quest}
\begin{quest}
  Is there a nice characterization of those definable sets which contain a unique wide type which happens to be strongly minimal wide?

\end{quest}
\begin{quest}
  Is every strongly minimal wide type contained in a definable subspace in which it is the unique wide type?
\end{quest}

\subsection{Banach Theory Counterexamples} \label{sec:count}
%present
Here we collect some counterexamples that may be relevant to any model theoretic development of the ideas presented in this paper.

\subsubsection{No Infinitary Ramsey-Dvoretzky-Milman Phenomena in General}

Unfortunately some elements of the analogy between the Ramsey-Dvoretzky-Milman phenomenon and discrete Ramsey theory do not work. In particular, there is no extension of Dvoretzky's theorem, and therefore Fact \ref{fact:DM-thm}, to $k \geq \omega$, even for a fixed $\e>0$. 
%fully general infinite version of Dvoretzky's theorem, even at finite $\varepsilon$ (i.e.\ up to isomorphism), rather than for all $\e >0$. %arbitrarily large
%Consequently, there is no such generalization of  Fact \ref{fact:main} either.
Recall that a linear map $T:X\rightarrow Y$ between Banach spaces is an \emph{isomorphism} if it is a continuous bijection. This is enough to imply that $T$ is invertible and that both $T$ and $T^{-1}$ are Lipschitz. An analog of Dvoretzky's theorem for $k \geq \omega$ would imply that every sufficiently large Banach space has an infinite dimensional subspace isomorphic to Hilbert space, which is known to be false.  Here we will see a specific example of this.

 The following is a well known result in Banach space {theory} (for a proof see the comment after Proposition 2.a.2 in \cite{Lindenstrauss1996}).

\begin{fact} \label{fact:no-no}
For any distinct $X,Y \in \{\ell_p: 1\leq p < \infty\} \cup \{c_0\}$, no subspace of $X$ is isomorphic to $Y$.
\end{fact}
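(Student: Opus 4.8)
\textbf{Proof proposal for Fact~\ref{fact:no-no}.}

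The statement is the classical fact that the spaces $\ell_p$ ($1 \le p < \infty$) and $c_0$ are pairwise "totally incomparable" in the sense that none embeds isomorphically into another. Since the excerpt explicitly says a proof can be found after Proposition~2.a.2 in \cite{Lindenstrauss1996}, I would present the standard Banach-space-theoretic argument rather than anything model-theoretic. The plan is to reduce everything to the behavior of normalized block basic sequences, using the fact that every infinite-dimensional subspace of $\ell_p$ or $c_0$ contains, up to $(1+\varepsilon)$-isomorphism, a subspace spanned by a block basic sequence of the unit vector basis, which in turn is $(1+\varepsilon)$-equivalent to the unit vector basis of the same space (this is the standard "stability of $\ell_p$ and $c_0$ under blocking" lemma). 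So if $Y$ were isomorphic to a subspace of $X$, then by passing to a further block subspace one would obtain that the unit vector basis of $Y$ is equivalent to a block basic sequence in $X$, hence equivalent to the unit vector basis of $X$; equivalence of unit vector bases of $\ell_p$ and $\ell_q$ (or $c_0$) for $p \neq q$ is then ruled out by a direct computation with the norms of sums $\sum_{i=1}^n e_i$ (which grow like $n^{1/p}$ in $\ell_p$ and are bounded in $c_0$), so $p$ is an isomorphism invariant and the spaces are mutually non-embeddable.

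The key steps, in order, are: (1) recall the gliding-hump / perturbation lemma that any normalized basic sequence in $\ell_p$ (resp.\ $c_0$) with terms whose supports tend to infinity has a subsequence that is a small perturbation of, hence $(1+\varepsilon)$-equivalent to, a block basic sequence of the canonical basis; (2) recall that a normalized block basic sequence of the canonical basis of $\ell_p$ (resp.\ $c_0$) is isometrically (resp.\ isometrically, using the max norm) equivalent to the canonical basis of $\ell_p$ (resp.\ $c_0$) itself --- this is the crucial special feature of these spaces; (3) observe that if $T : Y \to X$ is an isomorphic embedding, then $\{T e_n\}$ is a normalized-up-to-constants basic sequence in $X$ whose supports (after passing to a subsequence) can be made disjoint and gliding to infinity, so by (1) and (2) the canonical basis of $Y$ is equivalent to the canonical basis of $X$; (4) compute $\big\| \sum_{i=1}^n e_i \big\|$ in each space and conclude that the two bases can be equivalent only if the spaces coincide.

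The main obstacle --- or really the only nontrivial point --- is step (2), the blocking stability, together with making the "disjoint supports after a subsequence" reduction in step (3) precise: one needs that from an arbitrary semi-normalized sequence in $X$ one can extract a subsequence that is a small perturbation of a block sequence, which requires a standard but slightly fiddly diagonal/gliding-hump argument exploiting that tails of elements of $\ell_p$ or $c_0$ have small norm. Since all of this is entirely classical and is the content of the cited proof in \cite{Lindenstrauss1996}, I would not reproduce the gliding-hump estimates in detail; I would simply cite Proposition~2.a.2 of \cite{Lindenstrauss1996} (and its surrounding discussion) and note that the norm computation $\big\| \sum_{i \le n} e_i \big\|_{\ell_p} = n^{1/p}$ versus $\big\| \sum_{i \le n} e_i \big\|_{c_0} = 1$ supplies the final invariant distinguishing the spaces. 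In the paper itself this Fact is invoked only as a black box to exhibit the failure of an infinitary Dvoretzky-type phenomenon, so a citation with a one-line indication of the norm-growth invariant is the appropriate level of detail.
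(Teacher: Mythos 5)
The paper offers no proof of this Fact at all---it is stated as a known result with precisely the citation to the comment after Proposition~2.a.2 of \cite{Lindenstrauss1996} that you propose to give, so your approach coincides with the paper's. Your sketch of the classical block-basis argument is correct in outline (the one wrinkle you elide is that the unit vector basis of $\ell_1$ is not weakly null, so the cases involving $\ell_1$ are usually handled separately via the Schur property or separability of duals rather than by the gliding-hump selection), but since both you and the paper treat the result as a black box, nothing more is required.
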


Note that, whereas Corollary \ref{cor:ind-subsp} says that every Banach theory is consistent with the partial type of an indiscernible subspace, the following corollary says that this type can sometimes be omitted in arbitrarily large models (contrast this with the fact that the existence of an Erd\"os cardinal implies that you can find indiscernible sequences in any sufficiently large structure in a countable language \cite[Thm.\ 9.3]{Kanamori2003}).

\begin{cor} \label{cor:no-no-cor}
For $p \in [1,\infty) \smallsetminus \{2\}$, there are arbitrarily large models of $\mathrm{Th}(\ell_p)$ that do not contain any infinite dimensional subspaces isomorphic to a Hilbert space. 
\end{cor}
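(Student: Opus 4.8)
The plan is to realize arbitrarily large models of $\mathrm{Th}(\ell_p)$ as the spaces $\ell_p(I)$ for large index sets $I$, and to show these omit every infinite dimensional Hilbert subspace by a separability-and-support argument that reduces the question to $\ell_p$ itself, where Fact~\ref{fact:no-no} applies directly.

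First I would fix $p\in[1,\infty)\smallsetminus\{2\}$ and an arbitrary infinite cardinal $\kappa$, and consider the Banach structure associated to $\ell_p(\kappa)$ (the unit ball of $\ell_p(\kappa)$ with the predicates $s_{\bar a}$ and functions $\sigma_{\bar a}$ of the Banach-theory setup). Its density character is $\kappa$ — the finitely supported vectors with rational coordinates are dense and there are $\kappa$ of them, while the standard basis is uniformly separated — so these give models of unbounded size. The key model-theoretic input is that $\ell_p(I)\equiv\ell_p$ for every infinite $I$; equivalently, that the inclusion of $\ell_p$ onto the copy supported on a countably infinite subset of $I$ is an elementary embedding. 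This is classical, and I expect it to be the main point to justify carefully. One route is to cite the known axiomatization of $\mathrm{Th}(\ell_p)$ (its models are exactly the $L_p$-spaces elementarily equivalent to $\ell_p$). A self-contained route is a back-and-forth / Tarski--Vaught argument: every bijection between subsets of the index sets extends to a linear isometry, so the pair $(\ell_p(I),\ell_p)$ has enough isometries fixing any given finite tuple from $\ell_p$ that, given a formula $\varphi$, parameters $\bar a\in\ell_p$, and $\varepsilon>0$, an element of $\ell_p(I)$ almost realizing $\inf_y\varphi$ can be moved by a coordinate-permutation-induced isometry to within $\varepsilon$ of an element of $\ell_p$. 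Permutation-invariance of the $\ell_p$ norm is exactly what makes this work, and the continuous (approximate) version of the Tarski--Vaught test then yields elementarity.

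Granting $\ell_p(I)\models\mathrm{Th}(\ell_p)$, suppose toward a contradiction that $\ell_p(I)$ contains an infinite dimensional closed subspace $E$ isomorphic (as a Banach space) to a Hilbert space $H$. Every infinite dimensional Hilbert space contains an isometric copy of the separable space $\ell_2$, so pulling this copy back along an isomorphism $E\cong H$ yields an infinite dimensional closed subspace $E_0\subseteq\ell_p(I)$ isomorphic to $\ell_2$. Now $E_0$ is separable: choosing a countable dense subset of $E_0$ and letting $J\subseteq I$ be the union of the (countable) supports of its members, $J$ is countable and $E_0\subseteq\ell_p(J)$, which is isometric to $\ell_p$ because $J$ is countably infinite. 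Hence $\ell_p$ has a subspace isomorphic to $\ell_2$; but $\ell_p$ and $\ell_2$ are distinct members of $\{\ell_q:1\le q<\infty\}$, so this contradicts Fact~\ref{fact:no-no}. Therefore $\ell_p(I)$ contains no infinite dimensional subspace isomorphic to a Hilbert space, and since $\kappa$ (hence $|I|$) was arbitrary we obtain arbitrarily large such models, in contrast with Corollary~\ref{cor:ind-subsp}.

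The only non-soft ingredient here is the equivalence $\ell_p(I)\equiv\ell_p$; the Hilbert-subspace argument itself is routine once one has Fact~\ref{fact:no-no}. If the referee-facing version wants to avoid the axiomatization of $\mathrm{Th}(\ell_p)$, I would spell out the back-and-forth in a short lemma stating that $\ell_p(A)\preceq\ell_p(B)$ whenever $A\subseteq B$, from which $\ell_p\equiv\ell_p(I)$ follows by taking a common superset; the delicate bookkeeping there is just the approximate nature of quantifiers in continuous logic, which I would handle exactly as in the paragraph above.
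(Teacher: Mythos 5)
Your proof is correct and follows essentially the same route as the paper: take $\ell_p(\kappa)$ as the large model, reduce any putative infinite dimensional Hilbertian subspace to a separable one supported on countably many coordinates, and apply Fact~\ref{fact:no-no} inside $\ell_p$. The only cosmetic difference is how $\ell_p(\kappa)\equiv\ell_p$ is justified — the paper's footnote builds a separable elementary substructure isomorphic to $\ell_p$ by a downward L\"owenheim--Skolem chain closed under supports, whereas you propose a permutation-isometry back-and-forth or citing the axiomatization — but these are interchangeable standard arguments.
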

\begin{proof}
Fix $p \in [1,\infty) \smallsetminus \{2\}$ and $\kappa \geq \aleph_0$.
%For any $p\in [1,\infty)\smallsetminus \{2\}$, we have that for any $\kappa \geq \aleph_0$.
  Let $\ell_p(\kappa)$ be the Banach space of functions $f:\kappa \rightarrow \mathbb{R}$ such that $\sum_{i<\kappa} |f(i)|^p < \infty$. Note that $\ell_p(\kappa) \equiv \ell_p$.\footnote{To see this, we can find an elementary sub-structure of $\ell_p(\kappa)$ that is isomorphic to $\ell_p$: Let $\mathfrak{L}_0$ be a separable elementary sub-structure of $\ell_p(\kappa)$. For each $i<\omega$, given $\mathfrak{L}_i$, let $B_i$ be the set of all $f \in \ell_p(\kappa)$ that are the indicator function of a singleton $\{i\}$ for some $i$ in the support of some element of $\mathfrak{L}_i$. $B_i$ is countable. Let $\mathfrak{L}_{i+1}$ be a separable elementary sub-structure of $\ell_p(\kappa)$ containing $\mathfrak{L}_i\cup B_i$. $\overline{\bigcup_{i<\omega}\mathfrak{L}_{i+1}}$ is equal to the span of $\bigcup_{i<\omega} B_i$ and so is a separable elementary sub-structure of $\ell_p(\kappa)$ isomorphic to $\ell_p$.} %\footnote{For a cheap proof of this, pass to a forcing extension $V[G]$ where $\kappa$ is countable. Now $(\ell_p)^V$ and $(\ell_p(\kappa))^V$ will in general no longer be metrically complete, but their completions will both be isometrically isomorphic to $(\ell_p)^{V[G]}$, so by absoluteness $\ell_p(\kappa) \equiv \ell_p$.} %since every separable subspace $W \subset \ell_p(\kappa)$ is contained in a subspace $W^\prime$ which is isometrically isomorphic to $\ell_p$.
 Pick a subspace $V \subseteq \ell_p(\kappa)$. If $V$ is isomorphic to a Hilbert
space, then any separable $V_0 \subseteq V$ will also be isomorphic to a Hilbert
space. There exists a countable set $A \subseteq \kappa$ such that $V_0
\subseteq \ell_p(A) \subseteq \ell_p(\kappa)$. By Fact \ref{fact:no-no}, $V_0$
is not isomorphic to a Hilbert space, which is a contradiction. Thus no such $V$
can exist.
\end{proof}

%\begin{proof}
%Let $p$ be your favorite number in the set $[1,\infty)\smallsetminus \{2\}$. For any $\kappa \geq \aleph_0$, consider $\ell_p(\kappa)$, the Banach space of functions $f:\kappa \rightarrow \mathbb{R}$ such that $\sum_{i<\kappa} |f(i)|^p < \infty$. Note that such functions can only be non-zero on countably many inputs.  Let $V\subseteq \ell_p(\kappa)$ be an infinite dimensional subspace. Assume that there is a Hilbert space $H$ and a continuous linear bijection $A:V \rightarrow H$ with continuous inverse. Let $V_0$ be a separable infinite dimensional subspace of $V$ and let $H_0 = A(V_0)$. Clearly we have that $V_0$ and $H_0$ are isomorphic. Let $V_{00}$ be a countable dense subset of $V_0$. Let $X = \bigcup_{v \in V_{00}} \mathrm{supp}(v)$, where the support, $\mathrm{supp}(v) \subseteq \kappa$, is the set of $i<\kappa$ such that $v(i)\neq 0$. Note that the support is always countable, so $X$ is countable as well. Furthermore note that $\mathrm{supp}(v) \subseteq X$ for any $v\in V_0$, so now we have that $V_0 \subseteq \ell_p(X) \cong \ell_p$. Therefore we have an infinite dimensional subspace of $\ell_p$ that is isomorphic to $\ell_2$, which is a contradiction. Thus no such $H$ and $A$ can exist.
%\end{proof}

%Odell and Schlumprecht's negative solution \cite{odell1994} to the distortion problem gives examples for any $\varepsilon > 0$ of a uniformly continuous function $f: S^\infty \rightarrow \mathbb{R}$ such that for all infinite dimensional subspaces $V \subseteq \ell_2$, there are $x,y\in V \cap S^\infty$ such that $|f(x) - f(y)| > \varepsilon$.

Even assuming we start with a Hilbert space we do not get an analog of the infinitary pigeonhole principle (i.e.\ a generalization of Fact \ref{fact:DM-thm}). The discussion by H\'ajek and Novotn\'y in \cite[after Thm.\ 1]{Hajek2018} of a result of Maurey \cite{Maurey1995} implies that there is a Hilbert theory $T$ with a unary predicate $P$ such that for some $\e>0$ there are arbitrarily large models $\mathfrak{M}$ of $T$ such that for any infinite dimensional subspace $V \subseteq \mathfrak{M}$ there are unit vectors $a,b\in V$ with $|P^{\mathfrak{M}}(a)-P^{\mathfrak{M}}(b)| \geq \e$. %(In the terminology of the relevant Banach space theory literature, $P^{\mathfrak{M}}$ fails to be `oscillation stable.')  %for every $p$ with $1<p<\infty$ there is a Banach theory $T$ extending $\mathrm{Th}(\ell_p)$ 

Stability of a theory often has the effect of making Ramsey phenomena more prevalent in its models, so there is a natural question as to whether anything similar will happen here. Recall that a function $f:S(X)\rightarrow \mathbb{R}$ on the unit sphere $S(X)$ of a Banach space $X$ is \emph{oscillation stable} if for every infinite dimensional subspace $Y \subseteq X$ and every $\e>0$ there is an infinite dimensional subspace $Z \subseteq Y$ such that for any $a,b\in S(Z)$, $|f(a)-f(b)|\leq \e$.

\begin{quest} %Originally said sufficiently large, but that doesn't really make sense.
Does (model theoretic) stability imply oscillation stability? That is to say, if $T$ is a stable Banach theory, is every unary formula oscillation stable on models of $T$?
\end{quest}

\subsubsection{(Type-)Definability of Indiscernible Subspaces and Complex Banach Structures} \label{subsec:comp}

A central question in the study of \insep\ categorical Banach space theories is the degree of definability of the `minimal Hilbert space' that controls a given inseparable model of the theory. Results of Henson and Raynaud in \cite{HensonRaynaud} imply that in general the Hilbert space may not be definable.  In \cite{SHELAH2019106738}, Shelah and Usvyatsov ask whether or not the Hilbert space can be taken to be type-definable or a zeroset. In Example \ref{ex:no-def} we present a simple, but hopefully clarifying, example showing that this is slightly too much to ask.

It is somewhat uncomfortable that even in complex Hilbert structures we are only thinking about \emph{real} indiscernible subspaces rather than \emph{complex} indiscernible subspaces.
% One problem is that Ramsey-Dvoretzky-Milman phenomena only deal with real subspaces in general.
In particular, our existing Definition~\ref{defn:main-defn} is incompatible with complex structure:

\begin{prop} \label{prop:no-comp}
Let $T$ be a complex Banach theory. Let $V$ be an indiscernible subspace in some model of $T$. For any non-zero $a\in V$ and $\lambda \in \mathbb{C} \smallsetminus \{0\}$, if $\lambda a \in V$, then $\lambda \in \mathbb{R}$.
\end{prop}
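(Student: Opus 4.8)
The plan is to exploit the rigidity of indiscernible subspaces together with the fact that an indiscernible subspace is isometric to a real Hilbert space (the preceding proposition). The key point is that in a complex Banach theory, scalar multiplication by a fixed complex $\lambda$ with $|\lambda| \leq 1$ is a $\varnothing$-definable function (it is among the $\sigma_\lambda$ in the language), so the quantifier-free type of a tuple controls the quantifier-free type of its image under this map. In particular, if $a, \lambda a \in V$ with $V$ an indiscernible subspace over $A$, then for any other unit vector $b \in V$, the pair $(b, \lambda b)$ should be ``congruent'' to $(a/\|a\|, \lambda a/\|a\|)$ in the sense of Definition~\ref{defn:main-defn}, hence have the same type over $A$; but $\lambda a \in V$ means $\lambda a$ is in a prescribed real-linear position relative to $a$, and I want to derive that $\lambda$ must be real to avoid a contradiction with indiscernibility.

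First I would reduce to the case $\|a\| = 1$ by rescaling (replace $a$ by $a/\|a\|$; note $\lambda a/\|a\| \in V$ as well since $V$ is a real subspace closed under real scaling, and we may also assume $|\lambda| \leq 1$ after possibly multiplying by a positive real, which does not affect whether $\lambda \in \mathbb{R}$). Next, since $V$ is a real Hilbert space, write $\lambda a = ra + v$ where $r \in \mathbb{R}$ and $v \in V$ is orthogonal to $a$, and note $\|v\|^2 = |\lambda|^2 - r^2$. If $v = 0$ then $\lambda a = ra$, so $\lambda = r \in \mathbb{R}$ and we are done. So suppose $v \neq 0$, and normalize $e = v/\|v\|$, a unit vector in $V$ orthogonal to $a$. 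The core computation is this: the tuple $(a, e)$ is an orthonormal pair in $V$, so by indiscernibility any orthonormal pair in $V$ has the same type over $A$ as $(a,e)$; applying the definable map $x \mapsto \lambda x$ to the first coordinate, the quantity $\|\lambda x_0 - x_1\|$ (and $\|\lambda x_0\|$, $\langle \lambda x_0, x_1\rangle$, etc.) is a fixed function of the type of $(x_0, x_1)$.

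The main obstacle — and the heart of the argument — is extracting the contradiction from the existence of two independent complex directions. Here is the idea: consider the orthonormal pair $(a, e)$ and also the orthonormal pair $(e, a)$ (just swap them); both lie in $V$, so $(a,e) \equiv_A (e,a)$. Apply $\sigma_\lambda$ to the first coordinate of each: from $(a,e)$ we get a pair whose relevant metric invariants encode that $\lambda a = ra + \|v\| e$, in particular $d(\lambda a, e)^2 = \|\lambda a\|^2 - 2r'\cdots$; computing carefully, $\|\lambda a - e\|^2 = |\lambda|^2 - 2r\operatorname{Re}(\cdot) + 1$, but since everything is forced to be real by the structure of $V$ we get $\|\lambda a - e\|^2 = |\lambda|^2 + 1 - 2r$. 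On the other hand, applying $\sigma_\lambda$ to the first coordinate of $(e,a)$ gives $\|\lambda e - a\|^2$; by the congruence $(a,e) \equiv_A (e,a)$ and the fact that $\sigma_\lambda(x_0)$ composed with projection to coordinate $1$ is type-determined, these two numbers must be equal, forcing a symmetry that (combined with $\lambda e \in V$, obtained by running the whole argument with $e$ in place of $a$) pins down $\operatorname{Im}(\lambda) = 0$. I expect the cleanest route is: show $\lambda e \in V$ as well (same hypothesis applies to every nonzero vector of $V$, since all unit vectors of $V$ realize the same type over $A$ and $a$ was arbitrary among them up to scaling — one must check this carefully), then observe $\langle \lambda a, \lambda e \rangle_{\text{in }V}$ is real while $\overline{\lambda}\lambda = |\lambda|^2$ forces consistency only when the ``rotation'' part vanishes; concretely, iterating $x \mapsto \lambda x$ on a fixed unit vector must stay inside the real Hilbert space $V$, but if $\lambda = e^{i\theta}$ with $\theta \notin \pi\mathbb{Z}$ this generates a dense complex line inside $V$, and two such vectors $a$ and $\lambda a$ with $\langle a, \lambda a\rangle_{\mathbb{C}} = \overline{\lambda}$ non-real contradict that the real inner product on $V$ recovered via the real polarization identity agrees with $\operatorname{Re}\langle \cdot,\cdot\rangle_{\mathbb{C}}$ while simultaneously the complex scalar relation forces $\langle a, \lambda a\rangle$ to be $\overline{\lambda} \notin \mathbb{R}$. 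Writing this last step out as a short explicit norm computation ($\|a - \lambda a\|^2 = \|(1-\lambda)a\|^2 = |1-\lambda|^2 = 2 - 2\operatorname{Re}\lambda$, versus $\|a - \lambda a\|^2 = \|a\|^2 - 2\langle a, \lambda a\rangle_{V,\mathbb{R}} + \|\lambda a\|^2 = 2 - 2\langle a, \lambda a\rangle_{V,\mathbb{R}}$, giving $\langle a, \lambda a\rangle_{V,\mathbb{R}} = \operatorname{Re}\lambda$; then using $\|a + \lambda a\|^2$ similarly and the definable map $x \mapsto ix$ to probe the imaginary part) should close it, since applying the real-linear map ``multiplication by $i$'' (definable since $\sigma_i$ is in the language) to $a \in V$ must land in $V$ by the same reasoning, and then $\|a - ia\|^2 = 2$ in $\mathbb{C}$ but also $= \|a\|^2 + \|ia\|^2$ only if $\langle a, ia\rangle_{V,\mathbb{R}} = 0$, which is consistent, so the genuine contradiction must come from a three-vector or iterated configuration — I anticipate needing $(a, \lambda a, \lambda^2 a)$ and comparing with an indiscernible relabeling. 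That bookkeeping is the step I expect to be delicate; everything else is routine Hilbert-space identities and the definition of indiscernible subspace.
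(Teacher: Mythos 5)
You have not closed the argument, and you say so yourself: the write-up ends with ``the genuine contradiction must come from a three-vector or iterated configuration \dots\ That bookkeeping is the step I expect to be delicate,'' which is precisely the step that constitutes the proof. Everything you do establish --- $\langle a,\lambda a\rangle_{V,\mathbb{R}}=\operatorname{Re}\lambda$, the orthogonal decomposition $\lambda a = ra+v$, the congruence $(a,e)\equiv_A(e,a)$ --- is consistent with $\lambda\notin\mathbb{R}$ and yields no contradiction on its own. Two of your intermediate claims are also unjustified: ``applying multiplication by $i$ to $a\in V$ must land in $V$ by the same reasoning'' is not something you can assume (indeed the proposition is exactly the assertion that $ia\notin V$ for $a\neq 0$), and ``$\lambda e\in V$, obtained by running the whole argument with $e$ in place of $a$'' does not follow from $a\equiv_A e$, because membership in $V$ is not a property of the type over $A$ --- $V$ is merely a subspace, not a definable set.

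The missing idea is a two-element swap, and it makes the whole Hilbert-space apparatus unnecessary. First reduce to $\lambda=i$: if $\lambda=r+si$ with $s\neq 0$ and $a,\lambda a\in V$, then $ia=s^{-1}(\lambda a-ra)\in V$ because $V$ is a \emph{real} subspace. Now suppose $a,ia\in V$ with $a\neq 0$. The pairs $(a,ia)$ and $(ia,a)$ are congruent in the sense of Definition~\ref{defn:main-defn} (all pairwise distances and distances to $\mathbf{0}$ agree, since $\lVert ia\rVert=\lVert a\rVert$), so indiscernibility of $V$ gives $(a,ia)\equiv_A(ia,a)$. But $d(ix,y)$ is an atomic formula of the language (it is $\lVert ix-y\rVert$), and it evaluates to $0$ on $(a,ia)$ while on $(ia,a)$ it evaluates to $\lVert i(ia)-a\rVert=\lVert -a-a\rVert=2\lVert a\rVert\neq 0$. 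This contradicts equality of types, so no such $a$ exists. Your computations with $(a,e)$ and with $\langle a,\lambda a\rangle_V$ circle this move --- you swap pairs and you invoke $\sigma_\lambda$ as a definable map --- but you never apply the swap to the pair $(a,\lambda a)$ itself, which is where the asymmetry $\lambda(\lambda a)=\lambda^2 a\neq a$ lives.
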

\begin{proof}
Assume that for some non-zero vector $a$, both $a$ and $ia$ are in $V$. We have that $(a,ia)\equiv(ia,a)$, but $(a,ia)\models d(ix,y)=0$ and $(ia,a)\not\models d(ix,y)=0$, which contradicts indiscernibility. Therefore we cannot have that both $a$ and $ia$ are in $V$. The same statement for $a$ and $\lambda a$ with $\lambda \in \mathbb{C}\smallsetminus \mathbb{R}$ follows immediately, since $a,\lambda a \in V \Rightarrow ia \in V$.  %$a$ and $ia$ are real orthogonal (i.e.\ in terms of the polarization identity), so we have that $(a,ia)\equiv(ia,a)$, but $(a,ia)\models d(ix,y)=0$ and $(ia,a)\not\models d(ix,y)=0$.
\end{proof}

We could define a notion of \emph{complex indiscernible subspaces} in which types are uniquely determined by (complex-valued) inner products, and that may be appropriate for complex Banach theories, as evidenced by the following. In the case of complex Hilbert space and other Hilbert spaces with a unitary Lie group action, Proposition~\ref{prop:no-comp} is the reason that indiscernible subspaces can fail to be type-definable. We will explicitly give the simplest example of this.%, a $U(1)$ action on a real Hilbert space (i.e.\ a complex structure on a real Hilbert space).

%We can construct more elaborate examples. For instance if we take $\ell_\infty^{\oplus n}$ with the natural action of $\mathrm{SO}(n)$ or $\mathrm{SU}(n)$ we get essentially the same story (the $\mathrm{SO}(2)$ case is Example \ref{ex:no-def}). 

%DUPLICATED ABOVE
%\begin{prop} \label{prop:no-comp}
%Let $T$ be an expansion of complex Hilbert space. Let $V$ be an indiscernible subspace in some model of $T$. For any non-zero $a\in V$ and $\lambda \in \mathbb{C} \smallsetminus \{0\}$, if $\lambda a \in V$, then $\lambda \in \mathbb{R}$.
%\end{prop}
%\begin{proof}
%Assume that for some non-zero vector $a$, both $a$ and $ia$ are in $V$. $a$ and $ia$ are real orthogonal, so we have that $(a,ia)\equiv(ia,a)$, but $(a,ia)\models d(ix,y)=0$ and $(ia,a)\not\models d(ix,y)=0$.
%\end{proof}

\begin{ex} \label{ex:no-def}
Let $T$ be the theory of an infinite dimensional complex Hilbert space and let $\mathfrak{C}$ be the monster model of $T$. $T$ is \insep\ categorical, but for any partial type $\Sigma$ over any small set of parameters $A$, $\Sigma(\mathfrak{C})$ is not an infinite dimensional indiscernible (real) subspace (over $\varnothing$).

%but if $\Sigma$ is a partial type over some set $A$ such that for some model $\mathfrak{H}\models T$ (not necessarily containing $A$), $\Sigma(\mathfrak{H})$ is an infinite dimensional indiscernible subspace over some set of parameters $B$ (not necessarily in $\mathfrak{H}$), then there is a model $\mathfrak{M} \succ \mathfrak{H}$ such that $\Sigma(\mathfrak{M})$ is not an indiscernible subspace over any parameter set.
\end{ex}
\begin{proof} [Verification]
$T$ is clearly \insep\ categorical by the same reasoning that the theory of real infinite dimensional Hilbert spaces is \insep\ categorical (being an infinite dimensional complex Hilbert space is first-order and there is a unique infinite dimensional complex Hilbert space of each infinite density character).  %If $\Sigma(\mathfrak{N})$ is not an indiscernible subspace we are done, so assume that it is an indiscernible subspace. 

If $\Sigma(\mathfrak{C})$ is not an infinite dimensional subspace of $\mathfrak{C}$, then we are done, so assume that $\Sigma(\mathfrak{C})$ is an infinite dimensional subspace of $\mathfrak{C}$. Let $\mathfrak{N}$ be a small model containing $A$. Since $\mathfrak{N}$ is a subspace of $\mathfrak{C}$, $\Sigma(\mathfrak{N}) = \Sigma(\mathfrak{C})\cap \mathfrak{N}$ is a subspace of $\mathfrak{N}$. Let $v \in \Sigma(\mathfrak{C})\smallsetminus \Sigma(\mathfrak{N})$. This implies that $v\in \mathfrak{C} \smallsetminus \mathfrak{N}$, so we can write $v$ as $v_\parallel+ v_\perp$, where $v_\parallel$ is the orthogonal projection of $v$ onto $\mathfrak{N}$ and $v_\perp$ is complex orthogonal to $\mathfrak{N}$. Necessarily we have that $v_\perp \neq 0$. Let $\mathfrak{N}^\perp$ be the orthocomplement of $\mathfrak{N}$ in $\mathfrak{C}$. If we write elements of $\mathfrak{C}$ as $(x,y)$ with $x\in \mathfrak{N}$ and $y\in \mathfrak{N}^\perp$, then the maps  $(x,y)\mapsto (x,-y)$, $(x,y)\mapsto (x,iy)$, and $(x,y)\mapsto(x,-iy)$ are automorphisms of  $\mathfrak{C}$ fixing $\mathfrak{N}$. Therefore $(v_\parallel + v_\perp) \equiv_{\mathfrak{N}} (v_\parallel - v_\perp) \equiv_{\mathfrak{N}} (v_\parallel + iv_\perp) \equiv_{\mathfrak{N}} (v_\parallel -iv_\perp)$, so we must have that $(v_\parallel - v_\perp),(v_\parallel + iv_\perp),( v_\parallel - iv_\perp) \in \Sigma(\mathfrak{C})$ as well. Since $\Sigma(\mathfrak{C})$ is a subspace, we have that $b_\perp \in \Sigma(\mathfrak{C})$ and $ib_\perp \in \Sigma(\mathfrak{C})$. Thus by Proposition \ref{prop:no-comp}, $\Sigma(\mathfrak{C})$ is not an indiscernible subspace over $\varnothing$.
\end{proof}

This example is a special case of this more general construction: If $G$ is a compact Lie group with an irreducible  unitary representation on $\mathbb{R}^n$ for some $n$ (i.e.\ the group action is transitive on the unit sphere), then we can extend this action to $\ell_2$ by taking the Hilbert space direct sum of countably many copies of the irreducible unitary representation of $G$, and we can think of this as a structure by adding function symbols for the elements of $G$. %structure $(\ell_2^{\oplus n}, G)$, with $\ell_2$ as a real Hilbert space, and 
The theory of this structure will be totally categorical and satisfy the conclusion of Example \ref{ex:no-def}. %For the sake of simplicity we will only present the complex Hilbert space case (i.e.\ when $G= \mathrm{U}(1)=\mathrm{SO}(2)$).

Example \ref{ex:no-def} is analogous to the fact that in many strongly minimal theories the set of generic elements in a model is not itself a basis/Morley sequence. The immediate response would be to ask the question of whether or not the unit sphere of the complex linear span (or more generally the `$G$-linear span,' i.e.\ the linear span of $G\cdot V$) of the indiscernible subspace in a minimal{ }wide type agrees with the set of realizations of that minimal{ }wide type, but this can overshoot:

\begin{ex} \label{ex:bad-comp}
Consider the structure whose universe is (the unit ball of) $\ell_2 \oplus \ell_2$ (where we are taking $\ell_2$ as a real Hilbert space), with a complex action $(x,y)\mapsto (-y,x)$ and orthogonal projections $P_0$ and $P_1$ for the sets $\ell_2 \oplus \{\mathbf{0}\}$ and $\{\mathbf{0}\} \oplus \ell_2$, respectively. Let $T$ be the theory of this structure. This is a totally categorical complex Hilbert structure, but for any complete type $p$ and $\mathfrak{M}\models T $, $p(\mathfrak{M})$ does not contain the unit sphere of a non-trivial complex subspace.
\end{ex}
\begin{proof} [Verification]
$T$ is bi-interpretable with a real Hilbert space, so it is totally categorical. For any complete type $p$, there are unique values of $\left\lVert P_0(x) \right\rVert$ and $\left\lVert P_1(x) \right\rVert$ that are consistent with $p$, so the set of realizations of $p$ in any model cannot contain $\{\lambda a\}_{\lambda \in \mathrm{U}(1)}$ for $a$, a unit vector, and $\mathrm{U}(1) \subset \mathbb{C}$, the set of unit complex numbers. 
%A non-trivial complex indiscernible subspace $V$ would necessarily contain a unit vector $a$ such that $D_0(a)=0$, implying that $D_1(ia)=0$, but this clearly violates indiscernibility.
\end{proof}
The issue, of course, being that, while we declared by fiat that this is a complex Hilbert structure, the expanded structure does not respect the complex structure.

So, on the one hand, Example \ref{ex:bad-comp} shows that in general the unit sphere of the complex span won't be contained in the minimal{ }wide type. On the other hand, a priori the set of realizations of the minimal{ }wide type could contain more than just the unit sphere of the complex span, such as if we have an $\mathrm{SU}(n)$ action. The complex (or $G$-linear) span of a set is of course part of the algebraic closure of the set in question, so this suggests a small refinement of the original question of Shelah and Usvyatsov:

\begin{quest}
If $T$ is an \insep\ categorical Banach {theory}, $p$ is a  minimal{ }wide type, and $\mathfrak{M}$ is a model of $T$ which is prime over an indiscernible subspace $V$ in $p$, does it follow that $p(\mathfrak{M})$ is the unit sphere of a subspace contained in the algebraic closure of $V$? %such that $p(\mathfrak{M})\subseteq \mathrm{acl}(V)$?
\end{quest}

This would be analogous to the statement that if $p$ is a strongly minimal type in an uncountably categorical discrete theory and $\mathfrak{M}$ is a model prime over a Morley sequence $I$ in $p$, then $p(\mathfrak{M})\subseteq \mathrm{acl}(I)$.

\subsubsection{Non-minimal Wide Types}

The following example shows, unsurprisingly, that Theorem \ref{thm:main} does not hold for non-minimal{ }wide types. %, even in a weaker form.

\begin{ex}
Let $T$ be the theory of (the unit ball of) the infinite Hilbert space sum $\ell_2 \oplus \ell_2 \oplus \dots$, where we add a predicate $D$ that is the distance to $S^\infty \sqcup S^\infty \sqcup \dots$, where $S^\infty$ is the unit sphere of the corresponding copy of $\ell_2$. This theory is $\omega$-stable. The partial type $\{D = 0\}$ has a unique global non-forking extension $p$ that is wide, but the unit sphere of the linear span of any Morley sequence in $p$ is not contained in $p(\mathfrak{C})$.
\end{ex}
\begin{proof} [Verification]
This follows from the fact that on $D$ the equivalence relation `$x$ and $y$ are contained in a common unit sphere' is definable by a formula, namely \[E(x,y) = \inf_{z,w \in D}(d(x,z)\dotdiv 1) + (d(z,w)\dotdiv 1) + (d(w,y)\dotdiv 1),\]
where $a \dotdiv b = \max\{a-b,0\}$. If $x,y$ are in the same sphere, then let $S$ be a great circle passing through $x$ and $y$ and choose $z$ and $w$ evenly spaced along the shorter path of $S$. It will always hold that $d(x,z),d(z,w),d(w,y) \leq 1$, so we will have $E(x,y)=0$. On the other hand, if $x$ and $y$ are in different spheres, then $E(x,y)= \sqrt{2} -1$.

Therefore a Morley sequence in $p$ is just any sequence of elements of $D$ which are pairwise non-$E$-equivalent and the unit sphere of the span of any such set is clearly not contained in $D$.
% Thus if $A$ is a parameter set containing some realizations of $p$ and $q$ is an extension of $p$ contained in one of the spheres realized in $A$, then $q$ is a forks over $A$
\end{proof}

\subsubsection{Minimal Wide Types That are Not Strongly Minimal Wide}%{Stable Theory with No Strongly Minimal Wide Types}

The following two examples are analogous to the simplest examples of weakly minimal theories in discrete logic, namely the superstable theory of a countable sequence of independent unary predicates, each infinite and co-infinite, and the $\omega$-stable theory of a countable sequence of disjoint infinite and co-infinite unary predicates.

%The theory in the following example is very closely related to the theory of a Hilbert space with a generic automorphism (studied in ). They are mutually interpretable and 

\label{sec:stab-with-no-stronk-min-wide}
\begin{ex}[Stable Theory with No Strongly Minimal Wide Types]\label{ex:stab-with-no-stronk-min-wide}
  Let $\frk{A}$ be (the unit ball of) the complex Hilbert space $L_2[0,1]$ (i.e.\ square integrable functions on $[0,1]$ with the standard Lebesgue measure) together with the linear operator $X$ defined by $(Xf)(x)=xf(x)$.\footnote{$\frk{A}$ can be either a real or a complex Hilbert space, but accounts of the spectral theorem are easier to find for complex Hilbert spaces.} Let $T$ be the theory of $\frk{A}$.

  The theory $T$ is superstable, but not $\omega$-stable, and has no strongly minimal wide types. 
\end{ex}
\begin{proof} [Verification]
  $X^{\frk{A}}$ is clearly a self-adjoint bounded operator whose spectrum is $[0,1]$ (as a subset of $\mathbb{C}$). The self-adjunction of $X$ is clearly first-order. Recall that an operator is invertible if and only if it is bounded below and has dense image. In a Hilbert space, if an operator fails to have dense image then this is witnessed by some unit vector that is orthogonal to the image of the operator (any element of the orthocomplement of the image). For any complex number $\lambda$, consider the sentence
  \begin{align*}
  \varphi_\lambda= \min\left\{\vphantom{\sum_{i=1}^{\infty}}\right. & \inf_x \left\lVert Xx - \lambda x \right \rVert + (1 \dotdiv \left\lVert x  \right \rVert),\\
    &\left. \sum_{i=1}^\infty 2^{-i}\inf_x\sup_y1\dotdiv d(x,s_i(Xy))\right\}.
  \end{align*}
Semantically, if $\frk{M}$ is a Hilbert structure in which $X$ is a bounded operator, $\frk{M} \models \varphi_\lambda = 0$ if and only if either $X-\lambda I$ fails to be bounded below (the first line) or $X-\lambda I$ fails to have dense image (the second line). So for any fixed $\lambda$, we have that $\frk{B} \models \varphi_\lambda=0$ if and only if $\lambda $ is in the spectrum of $X^{\frk{B}}$. So if $\frk{B} \models T$, then $X^{\frk{B}}$ must have $[0,1]$ as its spectrum. 
   %since for any fixed complex number $\lambda$, there is a formula $\varphi$ such that $\frk{B}\models \varphi=0$ (where $\frk{B}$ is any complex Hilbert structure on which $X$ is a linear operator of norm at most $1$) if and only if $X-\lambda I$ fails to be invertible (namely, $\inf_x \left\lVert Xx - \lambda x \right \rVert + (1 \dotdiv \left\lVert x  \right \rVert) = 0$), we have that for any model $\frk{B}$ of $T$, $X^{\frk{B}}$ has spectrum $[0,1]$.

We want to characterize the type space $S_1(C)$ for any set of parameters $C$ in the monster model $\frk{C}$ of $T$. We may assume without loss of generality that $C$ is a linear subspace that is closed under $X$ (this is all contained in $\mathrm{dcl}(C)$).

Note that to characterize the types in $S_1(C)$, it is enough to characterize types $p(x)$ which entail $\langle  x,c \rangle=0$ for every $c \in C$ as well as $\lVert  x \rVert = 1$. Call such types \emph{orthonormal}.\footnote{Note that wide types are necessarily orthonormal, but the converse does not hold.} This is because every type $p \in S_1(C)$ has a unique $c \in C$ for which $d(p,c)$ is minimal (this, as well as everything else in this paragraph, is true in any Hilbert structure and over any set of parameters which is a subspace). If $a$ is some realization of $p$, then, assuming $p$ is not realized in $C$, we can consider the type of the vector $\frac{a-c}{\lVert  a-c \rVert}$. Let $q$ be the type of this vector. We have that $p$ and $q$ are interdefinable over $C$. This means that once we have a characterization of the orthonormal types, we get that every type p in $S_1(C)$ that is not realized in $C$ is uniquely determined by a triple $(c,\alpha,q)$ with $\lVert  c \rVert^2 + |\alpha|^2 \leq 1$, $\alpha\neq 0$, and $q$ an orthonormal type. Realizations of $p$ precisely correspond to vectors of the form $c+\alpha f$, with $f$ a realization of $q$. Note that if $O(C)$ is the set of orthonormal types in $S_1(C)$ (which is closed), then this argument establishes that the metric density character of $S_1(C)$ (with regards to the $d$-metric) is no greater than $\dc C + \dc O(C)$, where $\dc X$ is the density character of $X$. We will show that $\dc O(C) \leq 2^{\aleph_0}$, regardless of the choice of $C$, which will establish that $T$ is superstable.

Any vector $f \in \frk{C}$ induces a linear map $g\mapsto \langle f,g(X)f \rangle$ on the space of complex polynomials $g$. By the spectral theorem we know that this map extends uniquely to a map on the space of continuous functions from $[0,1]$ to $\mathbb{C}$. Furthermore, we know that this map takes the constant function $1$ to $1$, has operator norm $1$, and is positive semi-definite, so by the Riesz representation theorem there is a unique Borel probability measure $\mu_f$ on $[0,1]$ such that for any polynomial $g$, $\langle f,g(X)f \rangle=\int gd\mu_f$. Clearly the type of $f$ fixes $\mu_f$. We want to show that for any Borel probability measure (B.p.m.)\footnote{Note that a Borel probability measure on a metric space is automatically regular.} $\nu$ on $[0,1]$ there is an orthonormal type $p$ such that $\mu_f = \nu$ for any realization $f$ of $p$ and that an orthonormal type $p$ is determined by $\mu_f$ for any $f$ realizing $p$.

To show that such types exist for any B.p.m.\ $\nu$ on $[0,1]$, let $\frk{B}$ be a model containing $\frk{A}$ and $C$, fix a non-principal ultrafilter $\mathcal{U}$ on $\omega$, and consider the ultrapower $\frk{B}^{\mathcal{U}}$.

Construct a sequence of finite sets of intervals $\{I_n\}_{n<\omega}$ (where we require that intervals have positive length) with the following properties.
\begin{enumerate}
\item For each $n<\omega$, and each interval of the form $[i2^{-n},(i+1)2^{-n})$ for some $i<2^n$, there is precisely one interval $J\in I_n$ satisfying $J\subseteq[i2^{-n},(i+1)2^{-n}]$.
\item These are the only intervals in each $I_n$.
\item For $n\neq m$, $J\in I_n$, and $K \in I_m$, $J\cap K = \varnothing$. 
\end{enumerate}

It is not hard to construct such a sequence. Let $I_n^i$ be the interval in $I_n$ contained in $[i2^{-n},(i+1)2^{-n})$. For each $n<\omega$, let $f_n(x) = \sum_{i<2^{n}} \nu([i2^{-n},(i+1)2^{-n}))\chi_{I_n^i}(x)$, where $\chi_J(x)$ is the indicator function of $J$. 

It is clear that for each $n<\omega$, $f_n$ is a non-negative element of $L_1[0,1]$ with norm $1$. Furthermore, for any distinct $n,m<\omega$, $f_nf_m=0$. It is also not hard to show that for any fixed polynomial $p(x)$, $\int p(x)f_n(x)d\lambda \rightarrow \int p(x)d\nu$ as $n\to \infty $, where $\lambda$ is the Lebesgue measure on $[0,1]$.

Let $g$ be the element of $\frk{B}^{\mathcal{U}}$ corresponding to the sequence $\{\sqrt{f_n}\}_{n<\omega}$. By construction we have that $\langle g, p(X)g \rangle = \int p(x)d\nu$ for every polynomial $p(x)$. This implies that $\mu_g=\nu$. Also note that $\lVert g \rVert= 1$.

Now to show that $\tp(g/C)$ is orthonormal, for each $n<\omega$, let $h_n = \sqrt{f_n}$ (thought of as an element of $\frk{B}$). Let $H = \cspan\{h_n\}_{n<\omega}$. For any $c \in C$, let $c_\parallel$ be the orthogonal projection of $c$ onto $h$. Since the sequence $\langle h_n, c \rangle = \langle h_n, c_{\parallel} \rangle$ is square summable, it limits to $0$. Therefore we have that $\langle g, c \rangle=0$. Since we can do this for any $c \in C$, $g$ realizes an orthonormal type over $C$. (It actually realizes an orthonormal type over all of $\frk{B}$. Also note that in this paragraph we really only used the fact that $h_n$ is an orthonormal sequence.)

So we have that there is an orthonormal type for every B.p.m.\ on $[0,1]$.

If $p \in S_1(C)$ is an orthonormal type, we'll write $\mu_p$ for $\mu_f$ for some realization $f$ of $p$ ($\mu_p$ doesn't depend on the choice of $f$). We now need to show that for orthonormal types, $p$ is uniquely determined by $\mu_p$.

Fix orthonormal $p,q \in S_1(C)$ satisfying $\mu_p = \mu_q$. Let $\frk{B}_0$ be an elementary extension of $\frk{B}$ realizing both $p$ and $q$. Let $f$ and $g$ be these realizations. We are going to construct an elementary chain starting with $\frk{B}_0$.

Recall that there are $2^{\aleph_0}$ many B.p.m.s on $[0,1]$. Let $\{\nu_i\}_{i<2^{\aleph_0}}$ be an enumeration of the B.p.m.s on $[0,1]$. Let $\kappa = (2^{\aleph_0}\cdot \dc \frk{B}_0)^+$. By basic cardinal arithmetic, $\kappa = 2^{\aleph_0}\cdot \kappa$, so we may regard ordinals $i<\kappa$ as an ordered pair $(j,k)$, with $j<2^{\aleph_0}$ and $k<\kappa$. Given $\frk{B}_i$, construct $\frk{B}_{i+1}$ as an elementary extension of $\frk{B}_i$ that satisfies the following conditions.
\begin{itemize}
\item $\frk{B}_{i+1}$ has density character at most $\kappa$.
\item $\frk{B}_{i+1}$ realizes an orthonormal type $p\in S_1(\frk{B}_i)$ satisfying $\mu_p = \nu_j$, where $j$ is the first element of the ordered pair $(j,k)$ corresponding to $i$.
\end{itemize}

For $i<\kappa$ a limit ordinal, let $\frk{B}_i = \overline{\bigcup_{j<i}\frk{B}_j}$. Note that since $\kappa$ is a regular cardinal, $\dc \frk{B}_i\leq \kappa$.

Finally let $\frk{B}_{\kappa} = \bigcup_{i<\kappa}\frk{B}_i$. Since $\kappa$ is regular and uncountable, $\frk{B}_{\kappa}$ is complete. We are going to construct an automorphism of $\frk{B}_{\kappa}$ taking $f$ to $g$. Let $\{b_i\}_{i<\kappa}$ be an enumeration of a dense subset of $\frk{B}_\kappa$.

Let $f_0 = f$ and $g_0 = g$. We will proceed with a back-and-forth construction of length $\kappa$.

For any set of vectors $A$, let $\cspan_XA$ be the smallest closed subspace containing $A$ and closed under $X$. It is clear that the dimension of $\cspan_XA$ is no greater than $\aleph_0 + \dc A$. For any $i\leq\kappa$, let $f_{<i}$ represent the sequence $\{f_j\}_{j<i}$ and likewise for $g_{<i}$.

At even stage $i\cdot 2 < \kappa$ with $i>0$, given $f_{<i}$ and $g_{<i}$, find the first element of $\{b_i\}_{i<\kappa}$ not contained in  $\cspan_X(\frk{B}_0 f_{<i})$. Call this element $b_k$. Find a unit vector $f_i$, orthogonal to $\cspan_X(\frk{B}_0 f_{<i})$, such that $b_k \in \cspan\left( \{f_i\}\allowbreak\cup\cspan_X(\frk{B}  f_{<i}) \right)$. This is always possible by construction. (Also note that, up to multiplication by a unit norm complex number, $f_i$ is unique.) The odd stages proceed analogously.

Given the full sequences $f_{<\kappa}$ and $g_{<\kappa}$, by construction we have ensured that $\cspan_Xf_{<\kappa}=\cspan_Xf_{<\kappa}=\frk{B}_{\kappa}$. Let $\sigma_0$ be the partial function from $\frk{B}_{\kappa}$ to $\frk{B}_{\kappa}$ mapping $f_i$ to $g_i$ for each $i<\kappa$ as well as $h$ to itself for each $h\in \frk{B}_{0}$. By orthogonality, this clearly extends to a linear map on $\cspan(\frk{B}_0f_{<\kappa})$. Furthermore, the theory $T$ says that $X$ is a self-adjoint operator, which implies that for any vector $h$ in a model of $T$ and any $n,m<\omega$, $\langle X^nh,X^mh \rangle=\langle h,X^{n+m}h \rangle=\int x^{n+m}d\mu_h(x)$. So since these inner products only depend on $\mu_{f_i}$, we have that  $\sigma_0$ extends to a linear map that respects $X$ on all of $\cspan_X(\frk{B}_0f_{<\kappa})$, where by respecting $X$ we mean that $\sigma(Xh)=X\sigma(h)$. Let $\sigma$ be this extension. By construction, the domain of $\sigma$ is all of  $\frk{B}_\kappa$ and the range of $\sigma$ is also $\frk{B}_\kappa$. Since $\sigma$ is a linear map that respects $X$, it is an automorphism of $\frk{B}_\kappa$. Furthermore, we have that $\sigma(f_0)=g_0$, and so we have that $\tp(f_0/\frk{B}_0)=\tp(g_0/\frk{B}_0)$, and so in particular $\tp(f/C)=\tp(g/C)$, as required.

We have now established that $O(C)$, the set of orthonormal types over $C$, does not depend on the choice of $C$, as it always corresponds to the set of B.p.m.s on $[0,1]$. This set has cardinality $2^{\aleph_0}$ and so in particular has $\dc O(C) \leq 2^{\aleph_0}$. Thus $T$ is superstable.

To show that $T$ is not $\omega$-stable we need to show that there is an $\e > 0$ and an uncountable set of types in $O(C)$ which are $({>}\e)$-separated for any set of parameters $C$. For each $r \in [0,1]$, let $\delta_r$ be the Dirac measure centered at $r$. If we let $p_r$ be the type such that $\mu_{p_r}=\delta_r$, then we have that realizations of $p_r$ are eigenvectors of $X$ with eigenvalue $r$. Since $X$ is a self-adjoint operator, eigenvectors with distinct eigenvalues are necessarily orthogonal, therefore we have that $d(p_r,p_s)=\sqrt{2}$ for any distinct $r,s\in [0,1]$. Therefore no type space $S_1(C)$ has countable metric density character, and $T$ is not $\omega$-stable. 

Now, finally, we need to show that $T$ has no strongly minimal wide types. To do this it is sufficient to show that no $p \in O(C)$ is relatively $d$-atomic. Given $p \in O(C)$, we will construct a sequence $q_i \in O(C)$ limiting to $p$ topologically but for which there is some $\e > 0$ such that $d(p,q_i)>\e $ for all $i<\omega$. 

\emph{Claim:} The logical topology on $O(C)$ agrees with the topology of weak convergence of measure (i.e.\ the coarsest topology for which $\mu \mapsto \int f d\mu$ is continuous for each continuous function $f:[0,1]\rightarrow \mathbb{R}$), thinking of the types as their corresponding B.p.m.s.

\emph{Proof of claim.} It is clear that the logical topology on $O(C)$ is finer than the topology of weak convergence, since for any polynomial $f(x)$, the function $O(C)\rightarrow \mathbb{R}$ defined by $p\mapsto \int f(x)d\mu_p$ is continuous, and these functions generate the topology of weak convergence. Since polynomials are dense in the space of continuous functions on $[0,1]$ under the uniform norm, we have that $p\mapsto \int f(x)d\mu_p$ is continuous for each continuous $f:[0,1]\to \mathbb{R}$. Now the fact that the topologies agree follows from the fact that if $\tau_0$ and $\tau _1$ are two compact Hausdorff topologies on the same set such that $\tau_0 \subseteq \tau _1$, then in fact $\tau_0 = \tau_1$.\hfill \hfill $\square_{\text{claim}}$

So now given $p\in O(C)$ we just need to construct a sequence of B.p.m.s $\{\nu_i\}_{i<\omega}$ converging weakly to $\mu_p$, but which are all singular with regards to it, which will guarantee that $\lVert \nu_i - \mu_p \rVert_{\mathrm{tv}}=2$ for all $i<\omega$, where $\lVert  \cdot \rVert_{\mathrm{tv}}$ is the total variation norm. Then we will use this to give a lower bound on $d(p,q_i)$, where $q_i$ satisfies $\mu_{q_i}=\nu_i$, for all $i<\omega$ (in particular we will show that any realization of $p$ and any realization of $q_i$ must be orthogonal).

Let $A$ be the set of all $r \in [0,1]$ such that $\mu_p(\{r\})> 0$. Since $\mu_p$ is a probability measure, $A$ can be at most countable. For each $i<\omega$, let $\nu_i$ be a finite sum of Dirac measures, $\sum_{k<2^i}\mu_p([k2^{-i},(k+1)2^{-i})) \delta_{r_k^i}$, with each $r_k^i$ chosen so that $r_k^i \in [k2^{-i},(k+1)2^{-i})\smallsetminus A$. This is always possible since $A$ is at most countable.

Clearly we have by construction that $\nu_i$ limits to $\mu_p$ weakly, so now we just need to show that that realizations of $p$ and $q_i$ must actually be orthogonal. Let $a$ be a realization of $p$ and $b$ a realization of $q_i$. For any $\e > 0$, find an open set $U\subset [0,1]$ not containing any $r_k^i$ for any $i<2^{k}$ such that $\mu_p(U)> 1 - \e $. Find a real polynomial $f(x)$ such that for any $x \in U$, $|f(x)-1|<\e$; for each $i<2^{n}$, $|f(r_k^i)|<\e$; and for each $x \in [0,1]$, $-\e < f(x) < 1+\e$.

Consider $\langle a,f(X)b \rangle$. Note that since $f$ is a real polynomial, $f(X)$ is a self-adjoint operator, so we have that $\langle a,f(X)b \rangle=\langle f(X)a,b \rangle$. $|\langle  f(X)b,f(X)b \rangle | = |\langle  b, f(X)^2b \rangle| = \left| \int f^2 d\nu_i \right| < \e^2$, so $\lVert  f(x)b \rVert < \e $. Since $a$ is a unit vector, this implies that $|\langle  a, f(X)b \rangle|= |\langle f(X)a,b \rangle| < \e $ as well. Now consider
\begin{align*}
  \langle  a - f(X)a, a- f(X)a \rangle &= \langle  a,a \rangle - \langle  a, f(X)a \rangle - \langle  f(X)a,a \rangle + \langle f(X)a,f(X)a \rangle \\
  &= 1 - 2 \langle  a,f(X)a \rangle + \langle a,f(X)^2a \rangle\\
                                       &= 1 - 2 \int f d\mu_p + \int f^2 d\mu_p\\
  &= 1 + \int f^2 - 2f d\mu_p.
\end{align*}
Considering the $U$ and $[0,1]\smallsetminus U$ parts of the integral, we get that $|\langle a - f(X)a, a-f(X)a \rangle| < (\e + \e^2) + \e(2+2\e + 1 + 2\e + \e^2)=4\e+5\e^2 + \e^3$.

So now, since $b$ is a unit vector, we have that
\begin{align*}
  |\langle a,b \rangle| &\leq |\langle a-f(X)a,b \rangle| + |\langle f(X)a,b \rangle|\\
  & < 4\e + 5\e^2+\e^3 + \e = 5\e + 5\e^2 + \e^3.
\end{align*}

Since we can do this for any $\e >0$, we have that $\langle  a,b \rangle= 0$ for any $a\models p$ and $b \models q_i$, as required.

Since this is true of each $q_i$, we have that the sequence $\{q_i\}_{i<\omega}$ does not limit to $p$ in the $d$-metric and thus $p$ is not relatively $d$-atomic in $O(C)$.

Since this is true for any $p \in O(C)$ and since all wide types in $S_1(C)$ are contained in $O(C)$, $T$ has no strongly minimal wide types.
\end{proof}

It is also possible to construct an example of an $\omega$-stable theory with a minimal wide type that is not strongly minimal wide.

\begin{ex}
  Let $\frk{A}$ be (the unit ball of) the infinite Hilbert space sum $\ell_2 \oplus \ell_2 \oplus \dots$ together with a linear operator $M$ defined so that for any vector $a$ confined to the $n$th summand, $Ma = 2^{-n}a$. In other words, $M$ is a linear functional on a infinite dimensional Hilbert space whose eigenvalues are of the form $2^{-n}$ and each have an infinite dimensional eigenspace.

  Let $T$ be the theory of $\frk{A}$. $T$ is $\omega$-stable and has a unique type $p \in S_1(T)$ with the property that any realization $a$ of $p$ has norm $1$ and satisfies $Ma=0$. This type is minimal wide but not strongly minimal wide. %wide and has a unique global wide extension. %This extension is minimal wide but not strongly minimal wide.
\end{ex}
 \begin{proof} [Verification]
%   $p(x)$ will be axiomatized by the statement $Mx = \mathbf{0}$ (i.e. $d(Mx,\mathbf{0})=0$). 
  
  It is not hard to verify that if $\frk{B}$ is any model of $T$, then $\frk{B}$ is a Hilbert space that decomposes into a direct sum of eigenspaces $\bigoplus_{\lambda \in \{0,2^{-0},2^{-1},2^{-2},\dots\}} V_\lambda$, where $V_\lambda$ is the eigenspace of $M$ corresponding to the eigenvalue $\lambda$.\footnote{Note that this relies on the fact that the spectrum of $M$ is countable. The analogous statement is not true for operators such as $X$ in Example \ref{ex:stab-with-no-stronk-min-wide}, as witnessed by the model $\frk{A}$.} It is also not hard to verify that for any such model each space $V_\lambda$ for $\lambda > 0$ must be infinite dimensional and that models of $T$ are precisely of this form. The approximately $\omega$-saturated separable model of $T$ is the one in which each $V_\lambda$ has countably infinite dimension.\footnote{It is actually $\omega$-saturated, not just approximately $\omega$-saturated.} By looking at the automorphisms of this structure, we can conclude that elements of $S_1(T)$ are in a one-to-one correspondence with square summable functions $f:\{0,2^{-0},2^{-1},2^{-2},\dots\}\to[0,1]$ and that the $d$-metric corresponds to the $\ell_2$ metric on this set. We have that the type corresponding to the function $g$ defined by $g(0)=1$ and $g(2^{-n})=0$ for all $n<\omega$ is axiomatized by $Mx=\mathbf{0}$ (i.e.\ $d(Mx,\mathbf{0})=0$). Let $p(x)$ be this type. It is clear that this type is wide.

  The space of global types $S_1(\frk{C})$ has a one-to-one correspondence with pairs $(c,f)$, where $c \in \frk{C}$ and $f$ is a function as before, satisfying $\left\lVert c \right\rVert^2_2 + \left\lVert  f \right\rVert^2_2 \leq 1$. Furthermore, we have that if $q$ and $r$ correspond to $(c,f)$ and $(e,h)$, respectively, then $d(q,r)= \sqrt{\left\lVert  c-e \right\rVert^2_2 + \left\lVert f-h  \right\rVert^2_2}$. Furthermore, we have that if $q$ and $r$ correspond to $(c,f)$ and $(e,h)$, respectively, then $d(q,r)= \sqrt{\left\lVert  c-e \right\rVert^2_2 + \left\lVert f-h  \right\rVert^2_2}$. (This characterization is true of the space of types over any model, not just $\frk{C}$, which allows us to show that $T$ is $\omega$-stable.)

  No type corresponding to a pair $(c,f)$ with $c \neq \mathbf{0}$ can be wide. This implies that the only wide global extension of $p$ is the type corresponding to $(0,g)$, and hence that $p$ is minimal wide. We also have that the types corresponding to pairs of the form $(\mathbf{0},g)$, with $g(\lambda)=1$ if $\lambda = 2^{-n}$ (for some particular $n$) and $g(\lambda) = 0$ otherwise, are wide for any $n<\omega$. Call the corresponding type $q_n$. For any such type we have that $d(p,q_n) = 1$, but nevertheless the sequence $\{q_n\}_{n<\omega}$ limits to $p$ topologically, therefore $p$ is not $d$-atomic in the set of wide global types, and is therefore not strongly minimal wide.
\end{proof}
This example is of course in some sense the Hilbert space analog of the theory of the discrete structure $\frk{M}$ in a language consisting of a countable sequence of unary predicates $\{U_i\}_{i<\omega}$ which partition $\frk{M}$ into infinite sets, with the unique minimal type not contained in any $U_i$ corresponding to $p(x)$. Whereas in discrete logic it is known that any minimal type in an uncountably categorical theory is actually strongly minimal, the analogous statements for minimal and minimal wide types in continuous logic are unknown.  %While this is easy to arrange in an $\omega$-stable theory, no such example can be uncountably categorical, since uncountably categorical (discrete) theories have finite Morley rank. It is easy to construct an uncountably categorical continuous theory in which there is no uniform bound on the $\e$-Morley rank of the theory as $\e \to 0$. What is unclear is whether or not it is possible for the $\e$-Morley rank to be infinite for some fixed $\e > 0$. 
\begin{quest}
  If $T$ is an \insep\ categorical Banach theory, is every minimal wide type strongly minimal wide? 
\end{quest}
% THIS EXAMPLE CAN'T WORK BECAUSE THERE'S ONLY ONE WIDE TYPE PERIOD, SO IT MUST BE STRONGLY MINIMAL WIDE.
% \begin{ex}
%   Let $\frk{H}$ be (the unit ball of) the countably infinite dimensional Hilbert space and let $B$ be an orthonormal basis of $\frk{H}$. Let $T$ be the theory of $\frk{H}$ together with a distance predicate for $B$. We will write $B$ for the corresponding definable set in arbitrary models of $T$.

  % $T$ is $\omega$-stable. There is a unique type $p \in S_1(T)$ with the property that any realization of $p$ has norm $1$ and is orthogonal to any realization of $B$. This type has a unique minimal wide global extension. This extension is not strongly minimal wide.
  % % Let $T$ be the theory of (the unit ball of) the infinite dimensional Hilbert space 
% \end{ex}
%\section{Counterexamples (?)}
%\label{sec:counterex}
% \begin{proof}
  
% \end{proof}

%WHAT ARE THE STRONGLY MINIMAL WIDE TYPES IN THIS THEORY?
%This seems analogous to the behavior of non-generic types in stable groups.

%\section*{References}

%\appendix

\bibliographystyle{plain}
\bibliography{../ref}

%

%\bibliographystyle{habbrv}
%\bibliography{ref}

\end{document}